\newtheorem{Thm}{Theorem}[subsection]
\newtheorem{Prop}[Thm]{Proposition}
\newtheorem{Lem}[Thm]{Lemma}
\newtheorem{Def}[Thm]{Definition}
\theoremstyle{remark}
\newtheorem{Rmk}[Thm]{Remark}
\newtheorem{Ex}[Thm]{Example}
\newtheorem*{Ack}{Acknowledgments}
\numberwithin{equation}{subsection}
\newcommand{\Order}{\mathcal{O}}
\newcommand{\into}{\hookrightarrow}
\newcommand{\onto}{\twoheadrightarrow}
\newcommand{\isomto}{\overset{\sim}{\to}}
\newcommand{\compose}{\mathbin{\circ}}
\newcommand{\tensor}{\mathbin{\otimes}}
\newcommand{\closure}[1]{\overline{#1}}
\newcommand{\N}{\mathbb{N}}
\newcommand{\Z}{\mathbb{Z}}
\newcommand{\Q}{\mathbb{Q}}
\newcommand{\F}{\mathbb{F}}
\newcommand{\et}{\mathrm{et}}
\newcommand{\Et}{\mathrm{Et}}
\newcommand{\fppf}{\mathrm{fppf}}
\newcommand{\zar}{\mathrm{zar}}
\newcommand{\id}{\mathrm{id}}
\newcommand{\Gm}{\mathbf{G}_{m}}
\newcommand{\Ga}{\mathbf{G}_{a}}
\newcommand{\tor}{\mathrm{tor}}
\newcommand{\dirlim}{\varinjlim}
\newcommand{\invlim}{\varprojlim}
\newcommand{\sep}{\mathrm{sep}}
\newcommand{\ur}{\mathrm{ur}}
\mathchardef\mhyphen="2D
\newcommand{\Mod}[1]{#1\mhyphen\mathrm{Mod}}
\newcommand{\rat}{\mathrm{rat}}
\newcommand{\ind}{\mathrm{ind}}
\newcommand{\pro}{\mathrm{pro}}
\newcommand{\perf}{\mathrm{perf}}
\newcommand{\perar}{\mathrm{perar}}
\newcommand{\set}{\mathrm{set}}
\newcommand{\Alg}{\mathrm{Alg}}
\newcommand{\Ind}{\mathrm{I}}
\newcommand{\Pro}{\mathrm{P}}
\newcommand{\alg}[1]{\mathbf{#1}}
\newcommand{\var}{\;\cdot\;}
\newcommand{\Ch}{\mathrm{Ch}}
\newcommand{\ideal}[1]{\mathfrak{#1}}
\newcommand{\ctensor}{\mathbin{\Hat{\otimes}}}
\newcommand{\algebrize}{\Acute{\mathsf{a}}}
\newcommand{\Frob}{\mathrm{Fr}}
\newcommand{\nis}{\mathrm{nis}}
\newcommand{\algfrak}[1]{\bm{\mathfrak{#1}}}
\newcommand{\boundary}{\partial}
\newcommand{\Zar}{\mathrm{Zar}}
\newcommand{\genby}[1]{\langle #1 \rangle}
\newcommand{\Sch}{\mathrm{Sch}}
\newcommand{\Affine}{\mathbb{A}}
\newcommand{\Weil}{\mathfrak{R}}
\DeclareMathOperator{\Gal}{Gal}
\DeclareMathOperator{\Hom}{Hom}
\DeclareMathOperator{\Aut}{Aut}
\DeclareMathOperator{\Ker}{Ker}
\DeclareMathOperator{\Coker}{Coker}
\DeclareMathOperator{\Ext}{Ext}
\DeclareMathOperator{\Tor}{Tor}
\DeclareMathOperator{\Spec}{Spec}
\DeclareMathOperator{\Ab}{Ab}
\DeclareMathOperator{\Set}{Set}
\DeclareMathOperator{\Pic}{Pic}
\DeclareMathOperator{\Tr}{Tr}
\let\Im\relax
\DeclareMathOperator{\Im}{Im}
\DeclareMathOperator{\sheafhom}{\alg{Hom}}
\DeclareMathOperator{\sheafext}{\alg{Ext}}
\DeclareMathOperator{\Res}{Res}
\DeclareMathOperator{\gr}{gr}
\DeclareMathOperator{\dlog}{dlog}
\DeclareMathOperator{\Br}{Br}
\DeclareMathOperator{\For}{For}
\title[Arithmetic duality in two-dimension]
	{Arithmetic duality for two-dimensional local rings with perfect residue field}
\author{Takashi Suzuki}
\address{
	Department of Mathematics, Chuo University,
	1-13-27 Kasuga, Bunkyo-ku, Tokyo 112-8551, Japan
}
\email{tsuzuki@gug.math.chuo-u.ac.jp}
\thanks{Partially supported by JSPS Grant-in-Aid 18J00415.}
\date{May 11, 2023}
\subjclass[2010]{Primary: 11G45; Secondary: 14F20, 19F05, 11S25}
\keywords{Arithmetic duality; Two-dimensional local rings; Grothendieck topologies; $p$-adic nearby cycles}
\begin{document}

\begin{abstract}
We give a refinement of Saito's arithmetic duality for two-dimensional local rings
by giving algebraic group structures for arithmetic cohomology groups.
\end{abstract}

\maketitle

\tableofcontents


\section{Introduction}
\label{0125}


\subsection{Aim of the paper}
\label{0472}

Let $A$ be a complete noetherian normal two-dimensional local ring
with perfect residue field $F$ of characteristic $p > 0$.
When $F$ is finite, Saito \cite{Sai86,Sai87} gives
arithmetic duality and class field theory for $A$
by combining Kato's two-dimensional local class field theories \cite{Kat79} for
local fields at all height one prime ideals of $A$.
On the other hand, Serre \cite{Ser61} and Hazewinkel \cite[Appendix]{DG70b} refine
local class field theory for one-dimensional local fields
by allowing arbitrary perfect residue fields
and giving pro-algebraic group structures for unit groups.
In \cite{Suz22Duality}, we give a functorial reformulation of Serre-Hazewinkel's theory
by using the so-called ``rational \'etale site''.

In this paper, we refine Saito's theory in the style of Serre and Hazewinkel,
by allowing arbitrary perfect $F$
and giving ind-pro-algebraic group structures for arithmetic cohomology groups attached to $A$,
in the case where $A$ has mixed characteristic.
We use the formalism of the ind-rational pro-\'etale site \cite{Suz20}
and the perfect artinian \'etale site \cite{Suz21},
which are improvements of the rational \'etale site.

Our results are arithmetic in nature but, at the same time, also purely algebro-geometric,
since the residue field is a completely general perfect field.
They also reflect the intricacy of the singularity of $A$ and its resolutions.
For these reasons, we hope that our work will be of interest for not only number theorists
but also pure algebraic geometers.

However, we have to admit that the results of this paper are not in the best possible form.
Saito's theory treats the divisor class group of $A$,
the Brauer group of the punctured spectrum,
$K_{2}$-id\`ele class groups, unramified or ramified abelian extensions of the fraction field,
homology of graphs of resolutions and Hasse principles.
Compared to this rich theory, our theory is more abstract and not so complete,
since it is still on the level of cohomology groups.
Also, we do not give finiteness statements in the ideal strongest form.
Deducing more concrete consequences in Saito's style from our theory
as well as appropriate finiteness statements
is the subject of our next papers \cite{Suz22Nilp} and \cite{Suz22CFT}.


\subsection{Notation}
\label{0126}

We need some notation to state our results.
Let $F$ a perfect field of characteristic $p > 0$.
Let $\Alg_{u} / F$ be the category of perfections (inverse limits along Frobenius morphisms)
of commutative unipotent algebraic groups over $F$
with group scheme morphisms (\cite{Ser60}).
It is an abelian category.
Let $\Pro \Alg_{u} / F$ and $\Ind \Alg_{u} / F$ be its pro-category and ind-category, respectively,
and $\Ind \Pro \Alg_{u} / F$ the ind-category of $\Pro \Alg_{u} / F$.
Let $D^{b}(\Ind \Pro \Alg_{u} / F)$ be the bounded derived category of $\Ind \Pro \Alg_{u} / F$.

An $F$-algebra is said to be \emph{rational} if
it can be written as a finite product $F_{1}' \times \dots \times F_{n}'$,
where each $F'_{i}$ is the perfection of a finitely generated field over $F$ (\cite{Suz22Duality}).
Let $F^{\rat}$ be the category of rational $F$-algebras
with $F$-algebra homomorphisms.
Let $F^{\ind\rat}$ be the ind-category of $F^{\rat}$
(viewed as a full subcategory of the category of $F$-algebras).
Let $\Spec F^{\ind\rat}_{\pro\et}$ be the category $F^{\ind\rat}$
endowed with the pro-\'etale topology (\cite{Suz20}).
Let $\Ab(F^{\ind\rat}_{\pro\et})$ be the category of sheaves
of abelian groups on $\Spec F^{\ind\rat}_{\pro\et}$
and $D(F^{\ind\rat}_{\pro\et})$ its derived category.
Let $R \Gamma(F, \var) \colon D(F^{\ind\rat}_{\pro\et}) \to D(\Ab)$
be the cohomology functor for $\Spec F^{\ind\rat}_{\pro\et}$.
Let $\tensor^{L} = \tensor_{\Z}^{L}$ be the derived tensor product functor
for $\Spec F^{\ind\rat}_{\pro\et}$.
Let $R \sheafhom_{F^{\ind\rat}_{\pro\et}}$ be the derived sheaf-Hom functor
for $\Spec F^{\ind\rat}_{\pro\et}$.
We denote $(\var)^{\vee} = R \sheafhom_{F^{\ind\rat}_{\pro\et}}(\var, \Q_{p} / \Z_{p})$.

The Yoneda functor $\Ind \Pro \Alg_{u} / F \to \Ab(F^{\ind\rat}_{\pro\et})$
induces a fully faithful embedding
$D^{b}(\Ind \Pro \Alg_{u} / F) \into D(F^{\ind\rat}_{\pro\et})$
(\cite[Proposition 2.3.4]{Suz20}).
The functor $(\var)^{\vee}$ restricts to a contravariant equivalence between
$D^{b}(\Pro \Alg_{u} / F)$ and $D^{b}(\Ind \Alg_{u} / F)$
(\cite[Proposition 2.4.1 (b)]{Suz20}).
For the perfection of the additive group $\Ga$,
we have $\Ga^{\vee} \cong \Ga[-1]$.

Let $A$ be a complete noetherian normal two-dimensional local ring with $p \ne 0$ in $A$
whose residue field is the above $F$.
Let $\ideal{m}$ be its maximal ideal.
Set $X = \Spec A \setminus \{\ideal{m}\}$.
Let $U$ be a dense open subscheme of $X$.
Let $R \Gamma(U, \var) \colon D(U_{\et}) \to D(\Ab)$ be
the cohomology functor for the small \'etale site $U_{\et}$ of $U$.
Define the compactly supported cohomology $R \Gamma_{c}(U, \var)$ to be
the composite $R \Gamma(X, j_{!}(\var))$,
where $j_{!} \colon D(U_{\et}) \to D(X_{\et})$
is the extension-by-zero functor.
For any $q \in \Z$, set $H^{q}_{c}(U, \var) = H^{q} R \Gamma_{c}(U, \var)$.

Let $\closure{F}$ be an algebraic closure of $F$.
Let $\closure{A}$ be the corresponding completed unramified extension of $A$.
Set $\closure{U} = U \times_{\Spec A} \Spec \closure{A}$.
The above construction applied to $\closure{A}$ and $\closure{U}$ gives a functor
$R \Gamma_{c}(\closure{U}, \var) \colon D(\closure{U}_{\et}) \to D(\Ab)$.

Let $n \ge 1$ and $r$ be integers.
For $r \ge 0$,
let $\Z / p^{n} \Z(r) \in D(U_{\et})$ (or $D(\closure{U}_{\et})$) be
the Bloch cycle complex mod $p^{n}$
or, equivalently up to quasi-isomorphism,
the $p$-adic \'etale Tate twist $\mathfrak{T}_{n}(r)$
(\cite{Sch94}, \cite{Gei04Ded}, \cite{Sat07}, \cite{Sat13}).
Its restriction to $U' = U \cap \Spec A[1 / p]$ is
the usual Tate twist $\Z / p^{n} \Z(r)$ of $\Z / p^{n} \Z$.
For $r < 0$, we understand $\Z / p^{n} \Z(r) \in D(U_{\et})$ to be
$j_{!}'(\Z / p^{n} \Z(r))$,
where $j' \colon U' \into U$ is the inclusion.


\subsection{Main theorems}
\label{0127}

We will prove that the cohomology and the compactly supported cohomology of $U$ have
canonical structures as objects of $\Ind \Pro \Alg_{u} / F$:

\begin{Thm} \label{0128} \mbox{}
	\begin{enumerate}
		\item \label{0129}
			There exist canonical objects
				\[
						R \alg{\Gamma}(\alg{U}, \Z / p^{n} \Z(r)),
						R \alg{\Gamma}_{c}(\alg{U}, \Z / p^{n} \Z(r))
					\in
						D^{b}(\Ind \Pro \Alg_{u} / F)
				\]
			together with canonical isomorphisms
				\begin{gather*}
							R \Gamma \bigl(
								F,
								R \alg{\Gamma}(\alg{U}, \Z / p^{n} \Z(r))
							\bigr)
						\cong
							R \Gamma(U, \Z / p^{n} \Z(r)),
					\\
							R \Gamma \bigl(
								F,
								R \alg{\Gamma}_{c}(\alg{U}, \Z / p^{n} \Z(r))
							\bigr)
						\cong
							R \Gamma_{c}(U, \Z / p^{n} \Z(r))
				\end{gather*}
			in $D(\Ab)$.
		\item \label{0130}
			For any $q \in \Z$,
			let $\alg{H}^{q}(\alg{U}, \Z / p^{n} \Z(r))$ and $\alg{H}^{q}_{c}(\alg{U}, \Z / p^{n} \Z(r))$
			be the $q$-th cohomology objects
			of $R \alg{\Gamma}(\alg{U}, \Z / p^{n} \Z(r))$ and $R \alg{\Gamma}_{c}(\alg{U}, \Z / p^{n} \Z(r))$,
			respectively.
			
			Then the base changes of $\alg{H}^{q}(\alg{U}, \Z / p^{n} \Z(r))$
			and $\alg{H}_{c}^{q}(\alg{U}, \Z / p^{n} \Z(r))$ to $\closure{F}$
			with the natural $\Gal(\closure{F} / F)$-equivariant structures
			are canonically isomorphic to
			$\alg{H}^{q}(\closure{\alg{U}}, \Z / p^{n} \Z(r))$
			and $\alg{H}_{c}^{q}(\closure{\alg{U}}, \Z / p^{n} \Z(r))$,
			respectively.
		\item \label{0131}
			In particular, for any $q \in \Z$, we have
				\begin{gather*}
							\alg{H}^{q}(\alg{U}, \Z / p^{n} \Z(r))(\closure{F})
						\cong
							H^{q}(\closure{U}, \Z / p^{n} \Z(r)),
					\\
							\alg{H}^{q}_{c}(\alg{U}, \Z / p^{n} \Z(r))(\closure{F})
						\cong
							H^{q}_{c}(\closure{U}, \Z / p^{n} \Z(r))
				\end{gather*}
			as $\Gal(\closure{F} / F)$-modules,
	\end{enumerate}
\end{Thm}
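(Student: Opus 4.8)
The plan is to construct $R\alg{\Gamma}(\alg{U}, \Z / p^{n} \Z(r))$ and $R\alg{\Gamma}_{c}(\alg{U}, \Z / p^{n} \Z(r))$ directly as complexes of sheaves on $\Spec F^{\ind\rat}_{\pro\et}$ by spreading $A$ out over its residue field. For an ind-rational $F$-algebra $F'$, the completeness of $A$ and the perfectness of $F$ produce the completed unramified extension $A_{F'}$ of $A$ with residue ring $F'$, functorial in $F'$ and equal to $\closure{A}$ when $F' = \closure{F}$; this is the ind-rational analogue of the passage from $\closure{F}$ to $\closure{A}$. Set $U_{F'} = U \times_{\Spec A} \Spec A_{F'}$ and $X_{F'} = \Spec A_{F'} \setminus \{\ideal{m} A_{F'}\}$, with $j_{F'}$ the induced immersion. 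First I would verify that $F' \mapsto R \Gamma(U_{F'}, \Z / p^{n} \Z(r))$ and $F' \mapsto R \Gamma(X_{F'}, (j_{F'})_{!} \Z / p^{n} \Z(r))$ define objects of $D(F^{\ind\rat}_{\pro\et})$, i.e.\ satisfy (hyper)descent for the pro-\'etale topology; this uses that a pro-\'etale cover of $F'$ pulls back to a cover of $U_{F'}$ compatible with the $p$-adic \'etale Tate twist, in the style of \cite{Suz20}, \cite{Suz21}.

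The core is to show that these two complexes lie in the essential image of $D^{b}(\Ind \Pro \Alg_{u} / F)$. A uniform bound on cohomological amplitude is immediate, since $U_{F'}$ is one-dimensional and $\mathfrak{T}_{n}(r)$ has cohomological dimension bounded independently of $F'$; the real point is the algebraicity of each cohomology sheaf. For this I would use two reductions. First, $\mathfrak{T}_{n}(r)$ is described by $p$-adic nearby cycles (Bloch--Kato, Hyodo, Sato): it is glued from its restriction $\Z / p^{n} \Z(r)$ on the generic fibre $U'$ and from logarithmic de Rham--Witt sheaves on the fibre over $p$, the latter supported at the finitely many height-one primes of $A$ containing $p$, where the relevant local rings are one-dimensional and of equal characteristic. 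Second, since $X_{F'}$ is the punctured spectrum of $A_{F'}$, its cohomology is analyzed through a resolution $\widetilde{X} \to \Spec A$ of the singularity (base changed to $A_{F'}$), which is an isomorphism over $X$: a localization triangle expresses $R \Gamma(X_{F'}, \var)$ in terms of the cohomology of the regular proper $A_{F'}$-scheme $\widetilde{X}_{F'}$ and the local cohomology along the exceptional fibre, a proper curve over $F'$, while a local-to-global analysis of the one-dimensional regular $X_{F'}$ via its generic and closed points reduces further to the Galois cohomology of the fraction field $K$ of $A$ and to the local contributions at the closed points, the two-dimensional local fields at the height-one primes $\ideal{p}$ of $A$. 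All of the resulting ingredients are already known to carry ind-pro-algebraic structures over $F$: the cohomology of $K$ and of the two-dimensional local fields, via Kato's two-dimensional local class field theory \cite{Kat79} in the reformulation of \cite{Suz20}, \cite{Suz21}; and the cohomology of the proper curves over $F'$ occurring above --- the cohomology of $\widetilde{X}_{F'}$ itself reducing to this via $p$-adic nearby cycles over $\Spec A_{F'}$ --- together with the cohomology of the equal-characteristic one-dimensional local rings with logarithmic de Rham--Witt coefficients, which by the Artin--Schreier--Witt sequence is built from coherent cohomology and hence from successive extensions of perfections of $\Ga$, in the style of the equal-characteristic theory of \cite{Suz13}. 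Since $\Ind \Pro \Alg_{u} / F$ is abelian and stable under cones, kernels, cokernels, and extensions, assembling these ingredients through the relevant triangles keeps everything in $D^{b}(\Ind \Pro \Alg_{u} / F)$; the compactly supported complex is treated the same way via $X_{F'}$ and $(j_{F'})_{!}$.

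Granting the construction, the isomorphism $R \Gamma(F, R\alg{\Gamma}(\alg{U}, \Z / p^{n} \Z(r))) \cong R \Gamma(U, \Z / p^{n} \Z(r))$ and its compactly supported counterpart should be essentially formal: the value of the defining presheaf at $F' = F$ is $R \Gamma(U, \Z / p^{n} \Z(r))$ itself, and one needs only that the derived global section functor $R \Gamma(F, \var)$ of $\Spec F^{\ind\rat}_{\pro\et}$, applied to a sheaf of the shape $F' \mapsto R \Gamma(Y_{F'}, \mathcal{F})$, returns that value --- a general acyclicity statement for $\Spec F^{\ind\rat}_{\pro\et}$ of the kind established in \cite{Suz20}, which amounts to a Hochschild--Serre comparison between the tower $\{U_{F'}\}_{F'}$ and the pro-(finite \'etale) cover $\closure{U} \to U$ with group $\Gal(\closure{F} / F)$. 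Parts \eqref{0130} and \eqref{0131} then follow by base change of the site along $F \into \closure{F}$: the whole construction is manifestly compatible with replacing $(F, A)$ by $(\closure{F}, \closure{A})$, and evaluating the resulting cohomology objects at $\closure{F}$ returns $H^{q}(\closure{U}, \Z / p^{n} \Z(r))$ and $H^{q}_{c}(\closure{U}, \Z / p^{n} \Z(r))$ because $\closure{F}$ is a point of the site over which these sheaves are acyclic, with the evident $\Gal(\closure{F} / F)$-action.

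The step I expect to be the main obstacle is the algebraicity of the cohomology sheaves --- in particular, controlling the nearby-cycle contributions and the resolution of the singularity at $\ideal{m}$, and their interaction with the height-one local terms, all uniformly in $F'$. The behaviour of $\mathfrak{T}_{n}(r)$ under a resolution of $\Spec A$ and the intricacy of the singularity of $A$ enter here, and this is also the source of the less-than-optimal finiteness statements noted in the introduction.
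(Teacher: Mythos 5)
Your overall architecture --- spread $A$ out over perfect extensions $F'$ of $F$, realize the cohomology as a sheaf/functor in $F'$ (the paper does this via the relative site $\alg{U}_{\et} \to \Spec F^{\perar}_{\et}$ and then applies $\algebrize$), deduce the comparison isomorphisms from an acyclicity statement, and get the $\Gal(\closure{F}/F)$-equivariant base change from functoriality --- is the same as the paper's, and those parts are essentially formal once the representability is known. The genuine gap is in the central algebraicity step. Your proposed reduction of $R\Gamma(X_{F'},\var)$ ``via its generic and closed points'' to the Galois cohomology of the fraction field $K$ together with the local terms at \emph{all} height-one primes does not algebraize: $F' \mapsto H^{q}(K_{F'},\Lambda_{n}(r))$ is not known to be representable by any ind-pro-algebraic group (it is never algebraized in the paper, and for $F$ infinite it is genuinely out of reach of \cite{Kat79}/\cite{Suz20}/\cite{Suz21}, contrary to your claim that all ingredients are ``already known''), and the direct sum over the infinitely many height-one primes is not controlled. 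The paper instead localizes along the special fibre $Y$ of the resolution, so that only finitely many local pieces occur: henselian local rings at the closed points of $Y$ (which after the embedded resolution are ``enough resolved'' and handled by Saito-style symbol computations), and henselian/completed neighborhoods of smooth affine opens of the components of $Y$, i.e.\ $p$-adic tubular neighborhoods of smooth affine curves.

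The second, related defect is your treatment of those curve contributions. What has to be algebraized there is the generic-fibre cohomology $H^{q}(R_{\eta,T},\Lambda)$ of the tubular neighborhood, not merely the log de Rham--Witt cohomology of the special fibre; reducing it to ``coherent cohomology via Artin--Schreier--Witt'' is not correct. The actual mechanism (Section \ref{0217}) is to take $p$-adic nearby cycles down to the curve, put the Bloch--Kato filtration by symbols on them, identify the graded pieces with $\Ga$, $\Omega^{1}$, $\nu(1)$, $\Lambda$, $\Gm/\Gm^{p}$, etc., as sheaves on the relative site of the curve, and only then push down to $F$ --- and one must use complete (not henselian) local fields at the boundary of the curve, which forces the fibered-site formalism. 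Without this local theory (and the analogous symbol-filtration analysis for the two-dimensional local fields of mixed characteristic, which is also established in this paper rather than in the cited references), the membership of $R\pi_{\alg{U},\ast}\mathfrak{T}_{n}(r)$ and $R\pi_{\alg{U},!}\mathfrak{T}_{n}(r)$ in $\genby{\mathcal{W}_{F}}_{F^{\perar}_{\et}}$ --- which is exactly what Statement \eqref{0129} rests on --- is not proved.
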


We will prove the following refinement of Saito's arithmetic duality
\cite{Sai86,Sai87}:

\begin{Thm} \label{0132}
	Set $r' = 2 - r$.
	Then there exists a canonical morphism
		\[
					R \alg{\Gamma}(\alg{U}, \Z / p^{n} \Z(r))
				\tensor^{L}
					R \alg{\Gamma}_{c}(\alg{U}, \Z / p^{n} \Z(r'))
			\to
				\Q_{p} / \Z_{p}[-3]
		\]
	in $D(F^{\ind\rat}_{\pro\et})$ such that the induced morphisms
		\begin{gather*}
					R \alg{\Gamma}(\alg{U}, \Z / p^{n} \Z(r))
				\to
					R \alg{\Gamma}_{c}(\alg{U}, \Z / p^{n} \Z(r'))^{\vee}[-3],
			\\
					R \alg{\Gamma}_{c}(\alg{U}, \Z / p^{n} \Z(r'))
				\to
					R \alg{\Gamma}(\alg{U}, \Z / p^{n} \Z(r))^{\vee}[-3]
		\end{gather*}
	are isomorphisms.
\end{Thm}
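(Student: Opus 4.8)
The plan is to build the pairing from the multiplicative structure of the $p$-adic \'etale Tate twists together with a trace map, and then to prove perfectness by d\'evissage and a localization argument along the special fibre.

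\textbf{Construction of the pairing.} From the product morphism $\Z / p^{n} \Z(r) \tensor^{L} \Z / p^{n} \Z(r') \to \Z / p^{n} \Z(2)$ on $U$ (which for $r < 0$ or $r' < 0$ uses the convention involving $j_{!}'$) and the projection formula for $j \colon U \into X$, one obtains a pairing
\[
	R \Gamma(U, \Z / p^{n} \Z(r)) \tensor^{L} R \Gamma_{c}(U, \Z / p^{n} \Z(r'))
	\to R \Gamma_{c}(U, \Z / p^{n} \Z(2)) .
\]
A trace $R \Gamma_{c}(U, \Z / p^{n} \Z(2)) \to \Q_{p} / \Z_{p}[-3]$ is then produced from the trace map of the $p$-adic \'etale Tate twist formalism: using the localization triangle along the special fibre and the structure of $p$-adic nearby cycles, one identifies $H^{3}_{c}(U, \Z / p^{n} \Z(2))$ with $\Z / p^{n} \Z$ and checks that the higher compactly supported cohomology vanishes. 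All of this is functorial for the ind-rational pro-\'etale site, so Theorem~\ref{0128} promotes it to a morphism in $D(F^{\ind\rat}_{\pro\et})$, and the two adjoint morphisms in the statement are thereby defined.

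\textbf{Reductions.} It suffices to show that the two adjoint morphisms are isomorphisms. Because $(\var)^{\vee}$ commutes with base change along $F \to \closure{F}$ and a morphism between the objects in question is an isomorphism if and only if it is so after base change to $\closure{F}$ (descent for the unipotent cohomology objects), Theorem~\ref{0128} lets us assume $F = \closure{F}$; the claim then amounts to a duality of complexes of sheaves on $\Spec F^{\ind\rat}_{\pro\et}$ with values in unipotent ind-pro-algebraic groups. A d\'evissage using the distinguished triangles relating $\Z / p^{n} \Z(r)$ for varying $n$, together with the five lemma, reduces us to $n = 1$. Finally, for a dense open $V \subseteq U$, the excision triangles attached to the finitely many closed points of $U \setminus V$ --- at each of which the residue field is a one-dimensional local field over $F$ and the corresponding local term is controlled by a Serre--Hazewinkel-style refinement of Kato's two-dimensional local duality --- reduce the statement for $U$ to that for $V$, hence to the case $U = X$.

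\textbf{The core case.} For $U = X$ the pairing becomes the self-duality
\[
	R \alg{\Gamma}(\alg{X}, \Z / p \Z(r)) \tensor^{L} R \alg{\Gamma}(\alg{X}, \Z / p \Z(2 - r))
	\to \Q_{p} / \Z_{p}[-3] .
\]
Write $U' = X \cap \Spec A[1 / p]$, a regular one-dimensional scheme over the complete discretely valued field $K = \mathrm{Frac}(W(\closure{F}))$, and $Y = X \setminus U'$, a finite disjoint union of spectra of characteristic-$p$ one-dimensional local fields over $\closure{F}$. The localization triangles attached to $Y \into X$ and to $U' \into X$, applied for the twists $r$ and $2 - r$, produce a morphism of distinguished triangles comparing the pairing on $X$ with a pairing on $U'$ and one on $Y$. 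On $U'$ the coefficients are ordinary Tate twists, and the required perfectness follows from the (algebraized) Tate--Poincar\'e duality for the regular scheme $U'$ over the local field $K$. On the $Y$-part, purity identifies the local cohomology with the cohomology of $p$-adic nearby cycle sheaves, which by Bloch--Kato are built from logarithmic de Rham--Witt sheaves; perfectness there reduces, via their Cartier self-duality, to the refined arithmetic duality for the one-dimensional local fields constituting $Y$. Granting the compatibility of the three trace maps, the five lemma then gives perfectness on $X$, hence on every $U$.

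\textbf{Main obstacle.} The crux is twofold. First, one must lift the dualities on the $U'$-part and the $Y$-part to the level of ind-pro-algebraic groups over an \emph{arbitrary} perfect field $F$: this rests on the precise structure of $\Z / p^{n} \Z(r)$ near the special fibre (the $p$-adic nearby cycles of Bloch--Kato and their Cartier-type self-duality) and on carrying the Serre--Hazewinkel algebraic structures through Sato's duality formalism --- this is the bulk of the work. Second, one must check that the trace maps on $X$, on $U'$, on $Y$, and at the closed points removed in passing from $X$ to a general $U$ are compatible, that is, that the resulting diagram of pairings of localization triangles commutes; this is an intricate diagram chase with cup products, the projection formula, and Gysin maps, including the attendant twists and signs. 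Already the canonical construction of the trace $R \alg{\Gamma}_{c}(\alg{U}, \Z / p^{n} \Z(2)) \to \Q_{p} / \Z_{p}[-3]$ at the algebraic level and the identification of $\alg{H}^{3}_{c}(\alg{U}, \Z / p^{n} \Z(2))$ with $\Z / p^{n} \Z$ (which uses the geometric connectedness of $\Spec(\closure{A} / p)$) belong to this circle of difficulties.
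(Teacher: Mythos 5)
Your construction of the pairing (product on Tate twists, projection formula, trace on $H^{3}_{c}$), the d\'evissage to $n = 1$, and the reduction from a general dense open $U$ to $X$ by excision at finitely many height one primes (with the local terms handled by the refined two-dimensional local duality) all match the paper's strategy in outline. But there are two genuine gaps. First, the reduction to $F = \closure{F}$ is not justified: perfectness is a statement about $R \sheafhom_{F^{\ind\rat}_{\pro\et}}(\var, \Q_{p}/\Z_{p})$, a genuinely derived duality (e.g.\ $\Ga^{\vee} \cong \Ga[-1]$ via Breen's Ext computations), and the compatibility of this functor with the infinite base change $F \to \closure{F}$ is precisely the kind of assertion that needs proof, not an instance of "descent". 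The paper never makes this reduction; it proves the duality over an arbitrary perfect $F$ by first passing to the perfect artinian \emph{Zariski} site, where higher Ext groups vanish and the pairing can be computed explicitly by symbols and residues (Proposition \ref{0026}), and only then transferring back to the \'etale and pro-\'etale sites.

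Second, and fatally, the core case $U = X$ is not handled by your proposed decomposition $X = U' \sqcup Y$ with $U' = X \cap \Spec A[1/p]$. The scheme $U'$ is \emph{not} of finite type over $K = \mathrm{Frac}\, W(\closure{F})$ --- it is the "generic fibre" of the complete two-dimensional local ring $A$, essentially a rigid-analytic object --- so there is no off-the-shelf Tate--Poincar\'e duality for it; constructing such a duality is exactly the problem being solved, not an input. The argument that actually works (both in Saito's original paper and here) requires choosing a resolution of singularities $\mathfrak{X} \to \Spec A$ with strict normal crossing special fibre and localizing along the exceptional divisor. This produces two distinct local theories that must be developed from scratch: at the closed points of the exceptional divisor one gets two-dimensional regular local rings in "good position", where the duality is proved by Saito-style filtrations of $H^{q}(R, \Lambda)$ by symbols (Sections \ref{0060} and \ref{0113}); at the generic points one gets $p$-adic tubular neighborhoods of smooth affine curves over $F$, where one needs a new duality built from $p$-adic nearby cycles, the Bloch--Kato filtration, and compact supports taken with respect to \emph{complete} (not henselian) local fields, forcing the fibered-site formalism (Sections \ref{0217}, \ref{0297}, \ref{0303}). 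Your proposal acknowledges that the nearby-cycle analysis is "the bulk of the work", but the skeleton you propose for organizing it --- a single localization triangle on $X$ itself with the $U'$-part disposed of by a duality that does not exist --- would not carry that work.
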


The group $H^{1}(U, \Z / p^{n} \Z)$ classifies abelian $p$-coverings of $U$.
We have an exact sequence
	\[
			0
		\to
			\Pic(U) / p^{n} \Pic(U)
		\to
			H^{2}(U, \Z / p^{n} \Z(1))
		\to
			\Br(U)[p^{n}]
		\to
			0.
	\]
The group $H^{3}_{c}(U, \Z / p^{n} \Z(2))$ contains
the $K_{2}$-id\`ele class group of $U$ (\cite[\S 1]{Sai87}) mod $p^{n}$.
The above theorems thus give ind-pro-algebraic group structures for these important groups
and show their duality.


\subsection{Ideas of proof}
\label{0476}

The basic idea lies in Saito's computations \cite[(4.11) (A), (B)]{Sai86}
of his duality pairing when $F$ is finite.
He introduces a filtration on $K_{1}$ and $K_{2}$ by symbols
and writes the induced pairings on the graded pieces as the pairing
	\[
			F[[t]] \times (\Omega_{F((t))}^{1} / \Omega_{F[[t]]}^{1})
		\stackrel{\mathrm{residue}}{\longrightarrow}
			F
		\stackrel{\mathrm{Tr}_{F / \F_{p}}}{\longrightarrow}
			\Z / p \Z
	\]
and similar pairings with $F[[t]] / F[[t]]^{p}$,
$F((t))^{\times} / F((t))^{\times p}$,
$F((t)) / (\mathrm{Frob} - 1) F((t))$ and so on.
Now we view $F[[t]]$ as a pro-algebraic group $\prod_{n \ge 0} \Ga t^{n}$
and $\Omega_{F((t))}^{1} / \Omega_{F[[t]]}^{1}$ as an ind-algebraic group
$\bigoplus_{n \ge 1} \Ga t^{-n} dt$.
We replace the trace map $F \to \Z / p \Z$ by the morphism
$\Ga \to \Z / p \Z[1]$ to the shift of $\Z / p \Z$ in the derived category
coming from the Artin-Schreier extension class
$0 \to \Z / p \Z \to \Ga \to \Ga \to 0$.
The multiplication morphism $\Ga \times \Ga \to \Ga$ followed by
the morphism $\Ga \to \Z / p \Z[1]$ is a perfect pairing in a suitable sense.

How do we perform such replacement for the entire $K_{1}$ and $K_{2}$ and cohomology groups
in a canonical manner?
For a perfect (possibly transcendental) field extension $F' / F$,
define the ``base change'', ``$F' \tensor_{F} A$'', by
	\[
			\alg{A}(F')
		=
			W(F') \ctensor_{W(F)} A
		=
			\invlim_{n \ge 1}
				(W(F') \tensor_{W(F)} A / \ideal{m}^{n}),
	\]
where $W$ denotes the $p$-typical Witt vectors of infinite length
and $\ideal{m}$ the maximal ideal of $A$.
This ring $\alg{A}(F')$ is the same kind of object as $A$,
except that the residue field is now $F'$.
Now we can run Saito's theory for the ring $\alg{A}(F')$ instead of $A$.
Then everything becomes a functor in $F'$.
The group $F[[t]]$ above is now the functor $F' \mapsto F'[[t]]$.
By the theory of the perfect artinian \'etale site,
such functors in perfect field extensions uniquely determine ind-pro-algebraic groups of interest
together with their derived categorical information.

We need to treat functors in $F'$ \'etale locally in derived categories.
This treatment is best done
with Artin-Milne's relative site constructions
\cite[\S3, ``an auxiliary site $X / S_{\mathrm{perf}}$'']{AM76}.
Roughly speaking, we bundle the \'etale sites of $\alg{A}(F')$ for all $F'$ together into a single site,
so that it admits a morphism of sites to the perfect artinian \'etale site.
The cohomology groups as functors in $F'$ are nothing but derived pushforward sheaves.
In Section \ref{0029}, we develop a general theory of relative sites
and its compact support cohomology and cup product formalism.

With this machinery, we can mostly follow Saito's theory.
In Sections \ref{0169}, \ref{0174} and \ref{0190},
we first develop the local theory,
refining Kato's two-dimensional local class field theory
with ind-pro-algebraic group structures.
With Saito's computations \cite[(4.11) (A), (B)]{Sai86} recalled in
Section \ref{0060},
the case of ``enough regular'' $A$ is done in Section \ref{0113}.
For the general $A$, we take a resolution of singularities of $A$ just as Saito does.
An additional local theory shows up from generic points
of the special fiber (or the reduced exceptional divisor) of the resolution.
This local theory is treated in Section \ref{0217}.


\subsection{Technical difficulties}
\label{0477}

Our duality pairings are hard to calculate
since they are genuinely derived (as can be seen from the shift $\Z / p \Z[1]$ above)
and we have to take care of higher Ext groups.
To overcome this, the key idea is to change the topology.
If we do not work \'etale locally but instead Zariski locally,
then higher Ext groups vanish in the situations at hand,
and we can explicitly calculate duality pairings using Galois symbols and differential forms.
We still need to ensure that
passage from the perfect artinian \'etale site to its Zariski topology version
does not lose much information.
This requires a detailed analysis of Eilenberg-Mac Lane's cubical construction,
which is done in Section \ref{0133}.

In Saito's theory, the additional local theory
coming from generic points of the special fiber of the resolution
is Kato's class field theory \cite{Kat82} for local fields
with residue field a one-dimensional function field.
Unfortunately, this does not translate nicely to our setting.
The reason is that a function field over $F$ is not an object
functorial in (transcendental) extensions $F'$ of $F$:
if $k$ is such a function field, then $k \tensor_{F} F'$ is not necessarily a field.
What works instead is that for a smooth affine curve $V$ with function field $k$,
the object $V \times_{F} F'$ is again a smooth affine curve (over $F'$).
This means that we need to develop a duality theory
for $p$-adic tubular neighborhoods of smooth affine curves.
This is the most precise description of Section \ref{0217}.

This local theory has two difficulties.
First, we use $p$-adic nearby cycle constructions
to reduce the study of duality for $p$-adic tubular neighborhoods
to the classical duality theory for smooth affine curves.
But in the latter duality theory, we have to use the version of compact support cohomology
with local components given by cohomology of \emph{complete} local fields
rather than henselian local fields
(since henselian local fields of positive characteristic do not satisfy a duality).
We have already encountered this type of compact support cohomology in \cite{SuzCurve},
and it is very complicated and does not work well with relative site constructions.
For example, if one considers the additive algebraic group $\Ga$
as a sheaf on the (small!) \'etale site of $V$
and pulls it back to a complete local field $\Hat{k}_{x}$,
the result is no longer $\Ga$.
We deal with this problem by considering the morphism of sites
	\[
			\bigsqcup_{x \text{ a boundary point of } V}
				\Spec \Hat{k}_{x, \et}
		\to
			V_{\et}
	\]
(or its relative site version) as a \emph{fibered site} over the poset $\{\bullet \le \bullet\}$
and taking its total site as the main setting of compact support cohomology and cup product
for tubular neighborhoods.
The generalities on this setting are explained in Section \ref{0325}.
This difficulty also affects the step on resolution of singularities,
where we need fibered sites over more complicated posets.
Hence we need more on fibered sites in Section \ref{0322}
and more on completions in Section \ref{0303}.

The second difficulty is that the calculations of the duality pairing
for cohomology of $p$-adic tubular neighborhoods of affine curves is hard and necessarily indirect
since affine curves are not local objects.
We first need to take the nearby cycle functor to the affine curve,
work Nisnevich locally over the curve to write the pairing,
and then push it down to the base field.
This step is explained in Sections \ref{0227} and \ref{0250}.

\begin{Ack}
	The author would like to thank Kazuya Kato for his suggestion of this project.
\end{Ack}


\section{Notation and general constructions}
\label{0475}

Throughout this paper, let $F$ be a perfect field of characteristic $p > 0$.
All algebraic groups and group schemes are assumed commutative.
The symbol $\subset$ includes the case of equality.


\subsection{Categories}

The categories of sets and abelian groups are denoted by $\Set$ and $\Ab$, respectively.
For an abelian category $\mathcal{A}$,
the category of complexes in $\mathcal{A}$ in cohomological grading is denoted by $\Ch(\mathcal{A})$.
If $A \to B$ is a morphism in $\Ch(\mathcal{A})$,
then its mapping cone is denoted by $[A \to B]$.
The homotopy category of $\Ch(\mathcal{A})$ is denoted by $K(\mathcal{A})$
with derived category $D(\mathcal{A})$.
The full subcategory of $\Ch(\mathcal{A})$ consisting of
bounded below, bounded above and bounded complexes are denoted by
$\Ch^{+}(\mathcal{A})$, $\Ch^{-}(\mathcal{A})$ and $\Ch^{b}(\mathcal{A})$, respectively.
Similar notation applies to $K(\mathcal{A})$ and $D(\mathcal{A})$.
If we say $A \to B \to C$ is a distinguished triangle in a triangulated category,
we implicitly assume that a morphism $C \to A[1]$ to the shift of $A$ is given,
and the triangle $A \to B \to C \to A[1]$ is distinguished.


\subsection{Sites and derived categories}

A good reference for these is \cite{KS06}.
All sites in this paper are defined by given pretopologies.
For a site $S$, we mean by $X \in S$ an object of the underlying category of $S$
and by $Y \to X$ in $S$ a morphism in the underlying category of $S$.
The categories of sheaves of sets and abelian groups on $S$ are denoted by
$\Set(S)$ and $\Ab(S)$, respectively.
Denote $\Ch(\Ab(S))$ by $\Ch(S)$.
Similar notation defines $K(S)$ and $D(S)$.
Let $D_{\tor}^{+}(S) \subset D^{+}(S)$ be the full subcategory
consisting of objects whose cohomology sheaves are torsion.
The sheaf-Hom functor for $\Ab(S)$ is denoted by $\sheafhom_{S}$
and its $n$-th right derived functor by $\sheafext_{S}^{n}$.
Denote $\tensor = \tensor_{\Z}$ and $\tensor^{L} = \tensor_{\Z}^{L}$.
For objects $G, H, K \in D(S)$,
we say that a morphism $G \tensor^{L} H \to K$ is a perfect pairing
if the two induced morphisms $G \to R \sheafhom_{S}(H, K)$ and
$H \to R \sheafhom_{S}(G, K)$ are both isomorphisms.

A premorphism of sites $f \colon S' \to S$ between sites defined by pretopologies
is a functor $f^{-1}$ from the underlying category of $S$ to the underlying category of $S'$
that sends covering families to covering families
such that the natural morphism $f^{-1}(Y \times_{X} Z) \to f^{-1} Y \times_{f^{-1} X} f^{-1} Z$
is an isomorphism whenever $Y \to X$ appears in a covering family.
Its pushforward functor $f_{\ast} \colon \Ab(S') \to \Ab(S)$ sends acyclic sheaves to acyclic sheaves,
where a sheaf $G \in \Ab(S)$ is said to be acyclic
if $H^{n}(X, G) = 0$ for all $X \in S$ and $n \ge 1$.
Acyclic sheaves calculate $R f_{\ast}$ (\cite[Proposition 2.4.2]{SuzCurve}).
The pullback functors for sheaves of sets and abelian groups are
denoted by $f^{\ast \set}$ and $f^{\ast}$, respectively.
Note that $f^{\ast \set}$ and $f^{\ast}$ are not necessarily exact functors.
The left derived functor $L f^{\ast} \colon D(S) \to D(S')$ of $f^{\ast}$ exists
and is left adjoint to $R f_{\ast} \colon D(S') \to D(S)$
(\cite[the paragraph after Definition 2.3]{Suz21}).
An object $G \in D(S)$ is said to be $f$-compatible (\cite[Definition 2.5 (a)]{Suz21})
if the natural morphism $L (f|_{X})^{\ast}(G|_{X}) \to (L f^{\ast} G)|_{f^{-1} X}$
is an isomorphism for all $X \in S$,
where $G|_{X} \in D(S / X)$ is the restriction to the localization $S / X$
and $f|_{X} \colon S' / f^{-1} X \to S / X$ is the premorphism induced on the localizations.
The object $G$ is said to be $f$-acyclic (\cite[Definition 2.5 (b)]{Suz21})
if the natural morphism $G \to R f_{\ast} L f^{\ast} G$ is an isomorphism.
If $f^{\ast \set}$ is exact (that is, commutes with finite inverse limits),
we say that $f$ is a morphism of sites.
In this case, we simply write $f^{\ast \set} = f^{\ast}$.

Let $f \colon T \to S$ be a morphism of sites
defined by a functor $f^{-1}$ on the underlying categories.
Recall from \cite[Proposition 2.4]{Suz21} (or \cite[Tags 0B6C, 0B6D]{Sta21})
that there exist canonical morphisms
	\begin{gather}
			\notag
				R f_{\ast} R \sheafhom_{T}(G, H)
			\to
				R \sheafhom_{S}(R f_{\ast} G, R f_{\ast} H),
		\\ \label{0327}
				R f_{\ast} G \tensor^{L} R f_{\ast} H
			\to
				R f_{\ast}(G \tensor^{L} H)
	\end{gather}
in $D(S)$ functorial in $G, H \in D(T)$.
The second morphism results from the first,
and the first morphism is the derived functor of the morphism
	\[
			f_{\ast} \sheafhom_{T}(G, H)
		\to
			\sheafhom_{S}(f_{\ast} G, f_{\ast} H)
	\]
of functoriality of $f_{\ast}$,
which is a morphism in $\Ch(S)$ functorial in $G, H \in \Ch(T)$.
Here we used the following fact (proved during \cite[Proposition 2.4]{Suz21}):

\begin{Prop} \label{0326}
	Let $G, H \in \Ch(T)$.
	Assume that $H$ is K-injective.
	Then the complex $f_{\ast} \sheafhom_{T}(G, H)$ represents
	$R f_{\ast} R \sheafhom_{T}(G, H)$.
\end{Prop}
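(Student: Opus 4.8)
The plan is to split the statement into two parts: that the plain internal-hom complex $\sheafhom_{T}(G, H)$ already represents $R \sheafhom_{T}(G, H)$ once $H$ is K-injective, and that applying $f_{\ast}$ to $\sheafhom_{T}(G, H)$ computes $R f_{\ast}$ of it. The point requiring care is that $\sheafhom_{T}(G, H)$ is in general \emph{not} a K-injective complex (it is so precisely when $G$ is K-flat), so the second part cannot be reduced to the triviality that $R f_{\ast}$ is computed by $f_{\ast}$ on K-injective complexes; one has to isolate the weaker acyclicity that $\sheafhom_{T}(G, H)$ does possess.

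First I would check that $\sheafhom_{T}(G, H) \cong R \sheafhom_{T}(G, H)$. Since $R \sheafhom_{T}(-, -)$ is computed by a K-injective resolution in the second variable, it suffices that $\sheafhom_{T}(-, H)$ sends quasi-isomorphisms to quasi-isomorphisms when $H$ is K-injective. Testing this on cohomology sheaves reduces it to the statement that $\Hom^{\bullet}_{T / Y}(-, H|_{Y})$ preserves quasi-isomorphisms for every $Y \in T$, which holds because $H|_{Y}$ is again K-injective --- the restriction functor along the localization $T / Y \to T$ is right adjoint to the exact extension-by-zero functor $j_{!}$, hence preserves K-injectivity --- together with the usual characterization of K-injectivity via the homotopy category.

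The main content is an object-wise acyclicity: for every $Y \in T$ the natural map $\Gamma(Y, \sheafhom_{T}(G, H)) \to R \Gamma(Y, \sheafhom_{T}(G, H))$ is a quasi-isomorphism. Granting the previous paragraph, $\sheafhom_{T}(G, H)$ represents $R \sheafhom_{T}(G, H)$; since $R \sheafhom$ is compatible with restriction along the localization $T / Y \to T$ and internal hom is adjoint to tensor, we obtain $R \Gamma(Y, \sheafhom_{T}(G, H)) \cong R \Hom_{T / Y}(G|_{Y}, H|_{Y})$, whose $n$-th cohomology is $\Hom_{D(T / Y)}(G|_{Y}, H|_{Y}[n])$. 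On the other hand $H^{n} \Gamma(Y, \sheafhom_{T}(G, H)) = H^{n} \Hom^{\bullet}_{T / Y}(G|_{Y}, H|_{Y}) = \Hom_{K(T / Y)}(G|_{Y}, H|_{Y}[n])$, and this coincides with $\Hom_{D(T / Y)}(G|_{Y}, H|_{Y}[n])$ because $H|_{Y}$ is K-injective. Comparing the two computations, the natural map is an isomorphism on all cohomology sheaves.

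To conclude: $f_{\ast}$ preserves K-injectives, being right adjoint to the exact $f^{\ast}$, so for every $X \in S$ and every complex $C$ on $T$ one has $R \Gamma(X, R f_{\ast} C) \cong R \Gamma(f^{-1} X, C)$, whence $\mathcal{H}^{n}(R f_{\ast} C)$ is the sheafification of $X \mapsto H^{n} R \Gamma(f^{-1} X, C)$. Taking $C = \sheafhom_{T}(G, H)$ and applying the object-wise acyclicity, $H^{n} R \Gamma(f^{-1} X, C) = H^{n} \Gamma(f^{-1} X, C) = H^{n} \Gamma(X, f_{\ast} C)$, whose sheafification over $X$ is $\mathcal{H}^{n}(f_{\ast} C)$; as the comparison maps are the evident natural ones, the canonical morphism $f_{\ast} \sheafhom_{T}(G, H) \to R f_{\ast} \sheafhom_{T}(G, H)$ is a quasi-isomorphism, and with the first step this yields $R f_{\ast} R \sheafhom_{T}(G, H) \cong R f_{\ast} \sheafhom_{T}(G, H) \cong f_{\ast} \sheafhom_{T}(G, H)$. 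I expect the object-wise acyclicity of the third paragraph to be the main obstacle: it is what genuinely uses K-injectivity of $H$ rather than mere termwise injectivity, it forces one to pass through the localizations $T / Y$ and the compatibility of $R \sheafhom$ with restriction, and it is precisely where the failure of $\sheafhom_{T}(G, H)$ to be K-injective must be circumvented. The same content may instead be packaged by showing $\sheafhom_{T}(G, H)$ is weakly K-injective --- for acyclic K-flat $A$ on $T$, the complex-level tensor--hom adjunction gives $\Hom_{K(T)}(A, \sheafhom_{T}(G, H)) \cong \Hom_{K(T)}(\mathrm{Tot}(A \otimes G), H) = 0$ since $\mathrm{Tot}(A \otimes G)$ is acyclic and $H$ is K-injective --- and then invoking that weakly K-injective complexes compute $R f_{\ast}$.
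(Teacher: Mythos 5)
Your proof is correct. The paper itself contains no argument for this proposition---it is imported as a fact ``proved during'' \cite[Proposition 2.4]{Suz21}---but what you wrote is the standard proof and is essentially what that reference does: the decisive point is exactly your object-wise computation identifying $H^{n} \Gamma(Y, \sheafhom_{T}(G, H))$ with $\Hom_{K(T / Y)}(G|_{Y}, H|_{Y}[n])$ and $H^{n} R \Gamma(Y, \sheafhom_{T}(G, H))$ with $\Hom_{D(T / Y)}(G|_{Y}, H|_{Y}[n])$, these agreeing because restriction along a localization preserves K-injectivity. Your final step, deducing $f_{\ast} C \isomto R f_{\ast} C$ by sheafifying an isomorphism of presheaves rather than by invoking an acyclic-resolution criterion (which is unavailable for unbounded complexes), closes the argument correctly.
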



\subsection{The ind-rational and perfect aritinian \'etale sites}

The perfection of an $F$-algebra (resp.\ an $F$-scheme) is
the direct (resp.\ inverse) limit along Frobenius morphisms on it.
A scheme $Y'$ over an $\F_{p}$-scheme $Y$ is said to be relatively perfect over $Y$
if its relative Frobenius morphism $Y' \to Y'^{(p)}$ over $Y$ is an isomorphism.
For a ring $A$ complete with respect to an ideal $I$ containing $p$
and a relatively perfect $A / I$-algebra $B'$,
its Kato canonical lifting over $A$ (\cite[\S 1, Definition 1]{Kat82}) is
a unique complete $A$-algebra $A'$
together with an isomorphism $A' / I A' \cong B'$ over $A / I$
such that $A' / I^{n} A'$ is formally \'etale over $A / I^{n}$ for all $n$.
If $B'$ is flat over $A / I$ and $A$ is noetherian,
then it is characterized as a unique complete flat $A$-algebra
together with an isomorphism $A' / I A' \cong B'$ over $A / I$
(\cite[\S 1, Lemma 1]{Kat82} plus \cite[Tag 0912]{Sta21}).

We recall the ind-rational pro-\'etale site $\Spec F^{\ind\rat}_{\pro\et}$ from \cite{Suz20}.
An $F$-algebra is said to be rational if it is a finite direct product of
perfections of finitely generated fields over $F$.
An ind-rational $F$-algebra is a direct limit of a filtered direct system consisting of
rational $F$-algebras.
Let $F^{\ind\rat}$ be the category of ind-rational $F$-algebras
with $F$-algebra homomorphisms.
Then $\Spec F^{\ind\rat}_{\pro\et}$ is (the opposite category of) the category $F^{\ind\rat}$
equipped with the pro-\'etale topology.
We denote $\Ab(F^{\ind\rat}_{\pro\et}) = \Ab(\Spec F^{\ind\rat}_{\pro\et})$,
and use similar such notation as $\sheafhom_{F^{\ind\rat}_{\pro\et}}$.

We recall the perfect artinian \'etale site $\Spec F^{\perar}_{\et}$
from \cite{Suz21}.
Let $F^{\perar}$ be the category of perfect artinian $F$-algebras
(or, equivalently, finite direct products of perfect field extensions of $F$)
with $F$-algebra homomorphisms.
Then $\Spec F^{\perar}_{\et}$ is the category $F^{\perar}$ endowed with the \'etale topology.

We also use the Zariski topology on $F^{\perar}$.
Define $\Spec F^{\perar}_{\zar}$ to be the category $F^{\perar}$
endowed with the topology
where a covering of an object $F' \in F^{\perar}$
is a finite family $\{F' \to F'_{i}\}$ of objects over $F'$
such that each $F' \to F'_{i}$ is a projection onto a direct factor
and $\prod_{i} F'_{i}$ is faithfully flat over $F'$.
We call it the perfect artinian Zariski site.

We recall the perfect pro-fppf site $\Spec F^{\perf}_{\pro\fppf}$ \cite{Suz22Duality}.
A perfect $F$-algebra homomorphism $R \to S$ is said to be
flat of finite presentation (in the perfect algebra sense)
if $S$ is the perfection of a flat $R$-algebra of finite presentation.
It is said to be flat of ind-finite presentation
if $S$ is the direct limit of a filtered direct system of perfect $R$-algebras
flat of finite presentation.
Define $\Spec F^{\perf}_{\pro\fppf}$ to be the category of perfect $F$-algebras
where a covering of an object $R$ is a finite family $\{R_{i}\}$ of $R$-algebras
flat of ind-finite presentation such that $\prod_{i} R_{i}$ is faithfully flat over $R$.

We recall the functor
$\algebrize \colon D(F^{\perar}_{\et}) \to D(F^{\ind\rat}_{\pro\et})$
\cite{Suz21}.
Let
	\begin{equation} \label{0460}
			\Spec F^{\perf}_{\pro\fppf}
		\stackrel{f}{\to}
			\Spec F^{\ind\rat}_{\pro\et}
		\stackrel{g}{\to}
			\Spec F^{\perar}_{\et}
	\end{equation}
be the premorphisms defined by the inclusion functors on the underlying categories.
Let $h = g \compose f$.
Then $\algebrize = R f_{\ast} L h^{\ast}$.
For objects $G, H \in D(F^{\perar}_{\et})$ such that $G$ or $H$ is $h$-compatible,
we have a canonical morphism
	\begin{equation} \label{0453}
			\algebrize G \tensor^{L} \algebrize H
		\to
			\algebrize(G \tensor^{L} H)
	\end{equation}
in $D(F^{\ind\rat}_{\pro\et})$ functorial in such $G$ and $H$
(\cite[Proposition 7.8]{Suz21}).


\subsection{Quasi-algebraic groups}

A (commutative) quasi-algebraic group over $F$ is
the perfection of a group scheme of finite type over $F$
(\cite{Ser60}).
The category of quasi-algebraic groups over $F$ with $F$-group scheme morphisms
is denoted by $\Alg / F$.
The identity component and the group of components of $G \in \Alg / F$ are denoted
by $G^{0}$ and $\pi_{0}(G)$, respectively.
By abuse of notation, $\Ga$, $\Gm$ and $W_{n}$ ($p$-typical Witt vectors of length $n$)
are the perfections of the corresponding group schemes of finite type.
We say that a quasi-algebraic group is unipotent if it is the perfection of such a group.
Let $\Alg_{u} / F \subset \Alg / F$ be the full subcategory of unipotent groups.

The procategory of $\Alg / F$ is denoted by $\Pro \Alg / F$.
We consider its full subcategory $\Pro' \Alg / F$ consisting of objects isomorphic to
filtered inverse systems with affine transition morphisms.
Its objects are schemes.
The ind-category of $\Pro' \Alg / F$ is denoted by $\Ind \Pro' \Alg / F$.
The ind-category $\Ind \Pro \Alg_{u} / F$ of the pro-category of $\Alg_{u} / F$
is a full subcategory of $\Ind \Pro' \Alg / F$.
The functors $G \mapsto G^{0}, \pi_{0}(G)$ naturally extend to $\Ind \Pro' \Alg / F$.
An object $G \in \Ind \Pro' \Alg / F$ is said to be connected
if $G^{0} \isomto G$.
The Yoneda functor gives a functor from $\Ind \Pro' \Alg / F$
to $\Ab(F^{\ind\rat}_{\pro\et})$, $\Ab(F^{\perar}_{\et})$ or $\Ab(F^{\perar}_{\zar})$.
The image of $G \in \Ind \Pro' \Alg / F$ by this functor is denoted by the same symbol $G$
by abuse of notation.


\subsection{\'Etale Tate twists}

For each $n \ge 1$, we set
	\[
			\Lambda_{n}
		=
			\Z / p^{n} \Z,
		\quad
			\Lambda
		=
			\Lambda_{1}
		=
			\Z / p \Z,
		\quad
			\Lambda_{\infty}
		=
			\Q_{p} / \Z_{p}.
	\]
They are abelian groups, but also viewed as constant sheaves on the sites at hand.

For a regular $\F_{p}$-scheme $Y$, $n \ge 1$ and $r \in \Z$,
we denote the logarithmic Hodge-Witt sheaf on $Y_{\et}$ by
$\nu_{n}(r) = W_{n} \Omega_{Y, \log}^{r}$
(\cite[Definition 2.6]{Shi07}) if $r \ge 0$
and set $\nu_{n}(r) = 0$ otherwise.
We set $\nu(r) = \nu_{1}(r)$ and $\nu_{\infty}(r) = \dirlim_{n} \nu_{n}(r)$.

Let $X$ be a regular scheme of dimension $\le 1$ such that
$U := X \times_{\Spec \Z} \Spec \Z[1 / p]$ is dense.
Let $j \colon U \into X$ be the inclusion.
For any $x \in X \setminus U$, $n \ge 1$ and $r \in \Z$,
we have the Bloch-Kato boundary morphism
$i_{x}^{\ast} R^{r} j_{\ast} \Lambda_{n}(r) \to \nu_{n}(r - 1)$
in $\Ab(x_{\et})$,
where $i_{x} \colon x \into X$ is the inclusion
(\cite[(1.3) (ii)]{Kat86Hasse}, \cite[Theorem (1.4) (i)]{BK86}).
Define $\mathfrak{T}_{n}(r)$ to be the canonical mapping fiber of the resulting morphism
	\begin{equation} \label{0000}
				\tau_{\le r}
				R j_{\ast} \Lambda_{n}(r)
			\to
				\bigoplus_{x \in X \setminus U}
					i_{x, \ast} \nu_{n}(r - 1)[-r]
	\end{equation}
in $D(X_{\et})$ if $r \ge 0$
and define $\mathfrak{T}_{n}(r) = j_{!} \Lambda_{n}(r)$ otherwise.
Set $\mathfrak{T}(r) = \mathfrak{T}_{1}(r)$.


\subsection{Two-dimensional local rings}
\label{0463}

When given a two-dimensional noetherian henselian normal local ring $A$ with residue field $F$,
we use the following notation.
Let $\ideal{m}$ be the maximal ideal of $A$.
Let $X = \Spec A \setminus \{\ideal{m}\}$.
Let $P$ be the set of height one prime ideals of $A$.
For $\ideal{p} \in P$,
let $A_{\ideal{p}}^{h}$ be the henselian local ring of $A$ at $\ideal{p}$,
$K_{\ideal{p}}^{h}$ its fraction field,
$\kappa(\ideal{p})$ the residue field of $A_{\ideal{p}}^{h}$,
and $F_{\ideal{p}}$ the residue field of $\kappa(\ideal{p})$ with respect to the natural valuation.


\section{Derived sheaf-Hom for algebraic groups}
\label{0133}

In \cite[\S 2]{Suz20}, we studied ind-pro-algebraic groups over $F$
as sheaves over $\Spec F^{\ind\rat}_{\pro\et}$
and a duality operation for them.
In \cite{Suz21}, we studied a similar theory over $\Spec F^{\perar}_{\et}$.
In this paper, in Section \ref{0309},
we provide some more about a duality operation over $\Spec F^{\perar}_{\et}$.
After proving a technical result in Sections \ref{0154} and \ref{0155},
we give a similar theory (including a duality operation) over $\Spec F^{\perar}_{\zar}$.
In later sections of this paper, most (not all) of the duality results will be proved
first over $\Spec F^{\perar}_{\zar}$.
They will then be brought to $\Spec F^{\perar}_{\et}$
and finally to $\Spec F^{\ind\rat}_{\pro\et}$.
In Section \ref{0518},
we study how the functor
$\algebrize \colon D(F^{\perar}_{\et}) \to D(F^{\ind\rat}_{\pro\et})$
recalled at \eqref{0460} behaves under base change
and commutes with the associated equivariant structures (or descent data).


\subsection{Derived Hom in the perfect artinian \'etale site}
\label{0309}

Here is a class of ind-pro-algebraic groups over $F$
that can be treated nicely with $\Spec F^{\perar}_{\et}$:

\begin{Def} \label{0149}
	Consider the following two types of objects of $\Ind \Pro \Alg_{u} / F$:
	\begin{enumerate}
		\item \label{0005}
			Objects isomorphic to $\invlim_{n \ge 0} G_{n}$,
			where each $G_{n} \in \Alg / F$ is connected unipotent
			and the transition morphisms $G_{n + 1} \to G_{n}$ are surjective
			with connected kernel.
		\item \label{0006}
			Objects isomorphic to $\dirlim_{n \ge 0} G_{n}$,
			where each $G_{n} \in \Alg / F$ is connected unipotent
			and the transition morphisms $G_{n} \to G_{n + 1}$ are injective.
	\end{enumerate}
	Define $\mathcal{W}_{F}$ to be the full subcategory of $\Ind \Pro \Alg_{u} / F$
	consisting of objects $G$ that admit a subobject $G'$ of Type \eqref{0005}
	such that $(G / G')^{0}$ is of Type \eqref{0006}
	and $\pi_{0}(G / G')$ is finite \'etale $p$-primary.
\end{Def}

These conditions mean that the connected part of an object of $\mathcal{W}_{F}$ is
built up from $\Ga$ by countable successive extensions in an ``ind-pro'' manner.
The category $\mathcal{W}_{F}$ is an exact subcategory of $\Ind \Pro \Alg_{u} / F$.
It is also a full subcategory of $\Ab(F^{\perar}_{\et})$
via the Yoneda functor $\mathcal{W}_{F} \to \Ab(F^{\perar}_{\et})$
by \cite[Proposition 7.1]{Suz21}.

In \cite[\S 2]{Suz20}, we have established a fully faithful embedding
$D^{b}(\Ind \Pro \Alg / F) \into D(F^{\ind\rat}_{\pro\et})$
and a duality operation $R \sheafhom_{F^{\ind\rat}_{\pro\et}}(\var, \Lambda_{\infty})$
(Serre duality) for several kinds of objects of $D^{b}(\Ind \Pro \Alg / F)$.
In this subsection, we will bring these results to $\mathcal{W}_{F} \subset \Ab(F^{\perar}_{\et})$
and its triangulated version.
This is a continuation of and complements to \cite{Suz21}.

Let $f, g$ and $h = g \compose f$ be the premorphisms given in \eqref{0460}.
With regards to objects of $\mathcal{W}_{F}$,
we can freely pass between $\Spec F^{\perar}_{\et}$ and $\Spec F^{\ind\rat}_{\pro\et}$:

\begin{Prop} \label{0492}
	Let $G \in \mathcal{W}_{F}$.
	Then $\algebrize G \cong G$ in $D(F^{\ind\rat}_{\pro\et})$
	and $R g_{\ast} G \cong G$ in $D(F^{\perar}_{\et})$.
\end{Prop}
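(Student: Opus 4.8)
The plan is to reduce both assertions to the basic compatibility and acyclicity properties of the premorphisms in \eqref{0460} together with the structural description of objects of $\mathcal{W}_{F}$ as iterated ``ind-pro'' extensions of $\Ga$. First I would treat the elementary building blocks: for $G = \Ga$ (and more generally for $G = W_{n}$ or any connected unipotent quasi-algebraic group), one checks directly that $G$ is $h$-acyclic and $g$-acyclic, so that $\algebrize G = R f_{\ast} L h^{\ast} G \cong G$ in $D(F^{\ind\rat}_{\pro\et})$ and $R g_{\ast} G \cong G$ in $D(F^{\perar}_{\et})$. Concretely, $H^{n}(\Spec F', \Ga) = 0$ for $n \ge 1$ and every $F'$ in each of the three categories (this is the vanishing of higher cohomology of the structure sheaf, which survives perfection and holds pro-\'etale, fppf, and \'etale locally over artinian or ind-rational bases); the corresponding statement for $\Lambda_{n}$-torsors reduces to the same via the Artin--Schreier--Witt sequence $0 \to \Lambda_{n} \to W_{n} \xrightarrow{\Frob - 1} W_{n} \to 0$.

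Next I would propagate these vanishings along the two types of limits in Definition \ref{0149}. For Type \eqref{0005}, an object $\invlim_{n} G_{n}$ with surjective transition maps and connected kernels: the derived inverse limit is handled using that the system of cohomology groups $H^{q}(\Spec F', G_{n})$ has surjective transition maps in degree $0$ (by connectedness of the kernels, which kills the relevant $H^{1}$) and vanishes in positive degrees, so $\lim^{1}$ contributes nothing and $R\Gamma$ commutes with the inverse limit; the same bookkeeping applies to $R f_{\ast}$, $R g_{\ast}$ and $L h^{\ast}$ because these are computed sectionwise on a generating family. For Type \eqref{0006}, a filtered direct limit with injective transition maps: here one uses that $R f_{\ast}$, $R g_{\ast}$ and cohomology over the sites in question commute with filtered colimits (pro-\'etale and \'etale cohomology of a fixed object commutes with filtered colimits of sheaves, since the objects are quasi-compact quasi-separated; and $L h^{\ast}$ is a left adjoint). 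Finally, the finite \'etale $p$-primary $\pi_{0}$ part is handled by a short devissage to $\Z/p^{n}\Z$-sheaves, again via Artin--Schreier--Witt, reducing to the $W_{n}$ case already done. Assembling: every $G \in \mathcal{W}_{F}$ sits in a short exact sequence $0 \to G' \to G \to G'' \to 0$ with $G'$ of Type \eqref{0005}, $(G'')^{0}$ of Type \eqref{0006}, and $\pi_{0}(G'')$ finite \'etale $p$-primary, so the two-out-of-three property in the triangulated category $D(F^{\ind\rat}_{\pro\et})$ (resp.\ $D(F^{\perar}_{\et})$) applied to the distinguished triangle $\algebrize G' \to \algebrize G \to \algebrize G''$ (resp.\ the analogous triangle for $R g_{\ast}$) finishes the argument.

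I expect the main obstacle to be the careful handling of the derived inverse limit in Type \eqref{0005}, specifically ensuring that $L h^{\ast}$ (which is only a \emph{left} derived functor and need not commute with inverse limits in general) still behaves correctly on these particular pro-systems, and that the pro-\'etale site $\Spec F^{\ind\rat}_{\pro\et}$ is set up so that $\algebrize$ sends the pro-system $\invlim_{n} G_{n}$ to the ``expected'' derived limit rather than something larger. The resolution should invoke the fact recorded in the excerpt that $\mathcal{W}_{F}$ is a full subcategory of $\Ab(F^{\perar}_{\et})$ via \cite[Proposition 7.1]{Suz21} and that the Yoneda embedding $\Ind\Pro\Alg_{u}/F \into \Ab(F^{\ind\rat}_{\pro\et})$ is fully faithful and exact on the relevant subcategory \cite[Proposition 2.3.4]{Suz20}: this pins down $\algebrize G$ uniquely once we know its sections over rational $F$-algebras agree with those of $G$, which is exactly what the sectionwise vanishing computation above provides. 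So the real work is the cohomology vanishing for $\Ga$ (and $W_{n}$) over the three sites plus the two limit-interchange lemmas; everything else is formal devissage.
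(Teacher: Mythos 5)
The paper's proof of this proposition consists entirely of citing \cite[Proposition 7.9 (b)]{Suz21} and \cite[Proposition (2.4.2)]{Suz20}, and your sketch is essentially a reconstruction of the arguments those references give: acyclicity of $\Ga$ (hence $W_{n}$) over the three sites, Artin--Schreier--Witt d\'evissage for the $\Lambda_{n}$ and $\pi_{0}$ parts, limit-interchange for the two types of ind/pro systems, and the two-out-of-three reduction along the filtration $G' \subset G$ from Definition \ref{0149}. So the approach is correct and matches the one the paper relies on; the only difference is that the paper outsources all of this work to the cited propositions rather than carrying it out.
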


\begin{proof}
	These are \cite[Proposition 7.9 (b)]{Suz21} and \cite[Proposition (2.4.2)]{Suz20}.
\end{proof}

We will treat a triangulated version of this statement.
For this, we need:

\begin{Def} \label{0151}
	Define $\genby{\mathcal{W}_{F}}_{F^{\perar}_{\et}}$
	(resp.\ $\genby{\mathcal{W}_{F}}_{F^{\ind\rat}_{\pro\et}}$)
	to be the smallest full triangulated subcategory of
	$D(F^{\perar}_{\et})$ (resp.\ $D(F^{\ind\rat}_{\pro\et})$)
	closed under direct summands containing objects of $\mathcal{W}_{F}$ placed in degree zero.
\end{Def}

Unfortunately, cohomology objects of an object of $\genby{\mathcal{W}_{F}}_{F^{\perar}_{\et}}$
in $D(F^{\perar}_{\et})$
is not necessarily representable by an ind-pro-algebraic group.
That is why we will need to eventually pass to $\Spec F^{\ind\rat}_{\pro\et}$
where we have full control of $\Ind \Pro \Alg / F$.

The functor $\algebrize$ applied to objects of this category preserves
(\'etale or pro-\'etale) cohomological information:

\begin{Prop} \label{0153}
	Any $G \in \genby{\mathcal{W}_{F}}_{F^{\perar}_{\et}}$ is
	$h$-compatible and $h$-acyclic.
	In particular, we have
	$R \Gamma(F', \algebrize(G)) \cong R \Gamma(F', G)$
	for any $F' \in F^{\perar}$.
\end{Prop}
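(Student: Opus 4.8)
The plan is to reduce the statement for a general object of $\genby{\mathcal{W}_{F}}_{F^{\perar}_{\et}}$ to the case of a single object of $\mathcal{W}_{F}$ placed in degree zero, for which the desired properties are essentially the content of Proposition \ref{0492} together with the basic formalism of $h$-compatibility and $h$-acyclicity recalled in the preliminaries. First I would observe that the class of $h$-compatible objects and the class of $h$-acyclic objects are each closed under shifts, under cones (distinguished triangles), and under direct summands in $D(F^{\perar}_{\et})$: this is standard since $L h^{\ast}$ and $R h_{\ast}$ are triangulated functors and the natural transformations defining compatibility ($L(h|_{X})^{\ast}(G|_{X}) \to (Lh^{\ast}G)|_{h^{-1}X}$) and acyclicity ($G \to Rh_{\ast}Lh^{\ast}G$) are morphisms of triangulated functors, so the full subcategory of $D(F^{\perar}_{\et})$ on which each is an isomorphism is a thick (full, triangulated, summand-closed) subcategory. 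Since $\genby{\mathcal{W}_{F}}_{F^{\perar}_{\et}}$ is by Definition \ref{0151} the smallest such subcategory containing $\mathcal{W}_{F}$ in degree zero, it therefore suffices to check that every $G \in \mathcal{W}_{F}$, viewed as a sheaf in degree zero, is $h$-compatible and $h$-acyclic.

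For a single $G \in \mathcal{W}_{F}$, the $h$-acyclicity amounts to the assertion $G \cong Rh_{\ast}Lh^{\ast}G$. Writing $h = g \compose f$ as in \eqref{0460}, one has $Lh^{\ast} = Lf^{\ast} \compose Lg^{\ast}$ and $Rh_{\ast} = Rg_{\ast} \compose Rf_{\ast}$; the composite $Rf_{\ast}Lh^{\ast}$ is exactly the functor $\algebrize$, so $Rh_{\ast}Lh^{\ast}G \cong Rg_{\ast}(\algebrize G)$. By Proposition \ref{0492} we have $\algebrize G \cong G$ in $D(F^{\ind\rat}_{\pro\et})$ and $Rg_{\ast}G \cong G$ in $D(F^{\perar}_{\et})$, and composing these two identifications (after checking the adjunction unit $G \to Rh_{\ast}Lh^{\ast}G$ is the one induced by these isomorphisms, which is a diagram chase using the unit for $f$ and the unit for $g$) gives $h$-acyclicity. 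For $h$-compatibility, the point is that localizing at an object $X = \Spec F' \in F^{\perar}$ does not take us outside the same kind of situation: $F^{\perar}/F'$, $F^{\ind\rat}/F'$ and $F^{\perf}/F'$ are again sites of the same type over the base field $F'$ (every perfect artinian $F$-algebra over $F'$ is a perfect artinian $F'$-algebra, etc.), the premorphisms restrict compatibly, and $G|_{X}$ is again (represented by) an object of $\mathcal{W}_{F'}$. So the same Proposition \ref{0492}, now applied over $F'$, shows that both $L(h|_{X})^{\ast}(G|_{X})$ and $(Lh^{\ast}G)|_{h^{-1}X}$ compute $G|_{X}$, and one checks the comparison morphism is the canonical one. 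This is exactly the style of argument already carried out in \cite[Proposition 7.9]{Suz21}, so I would cite it where appropriate.

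The final sentence of the statement is then immediate: $h$-acyclicity gives $R\Gamma(F', G) \cong R\Gamma(F', Rh_{\ast}Lh^{\ast}G)$, and since taking cohomology over $F' \in F^{\perar}$ on $\Spec F^{\perar}_{\et}$ and then $Rh_{\ast}$ (composed appropriately with the section functor on $\Spec F^{\perf}_{\pro\fppf}$ or via $Rf_{\ast}$) is compatible with the section functor $R\Gamma(F', \var)$ on $\Spec F^{\ind\rat}_{\pro\et}$ — because $f$ and $g$ are premorphisms of sites and $R\Gamma$ is computed by composing with the terminal object — one gets $R\Gamma(F', \algebrize G) = R\Gamma(F', Rf_{\ast}Lh^{\ast}G) \cong R\Gamma(F', Lh^{\ast}G)$ compatibly, and hence $R\Gamma(F', \algebrize G) \cong R\Gamma(F', G)$. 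More directly, one can just note $R\Gamma(F', \algebrize G) \cong R\Gamma(F', Rg_{\ast}\algebrize G) \cong R\Gamma(F', Rh_{\ast}Lh^{\ast}G) \cong R\Gamma(F', G)$ using $h$-acyclicity established above.

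The main obstacle I anticipate is not any deep geometry — Proposition \ref{0492} does all the real work — but rather the bookkeeping in the reduction: one must verify carefully that "$h$-compatible" and "$h$-acyclic" objects really do form a triangulated subcategory closed under summands (the compatibility condition in particular involves a natural transformation whose triangulatedness should be checked, since $Lh^{\ast}$ on a localization is only characterized up to the usual subtleties), and that localization $F^{\perar} \rightsquigarrow F^{\perar}/F'$ genuinely preserves all the relevant structures and sends $\mathcal{W}_{F}$ into $\mathcal{W}_{F'}$ compatibly with the premorphisms. Once these generalities are in place the proof is short.
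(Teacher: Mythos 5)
Your argument is correct and takes essentially the same route as the paper: the paper's proof is a bare citation of \cite{Suz21} (Propositions 7.2 and 7.10), and those results encapsulate exactly the reduction you spell out — thickness of the full subcategories of $h$-compatible and $h$-acyclic objects, plus the statements for a single $G \in \mathcal{W}_{F}$ that are recorded here as Proposition \ref{0492}. No gaps.
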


\begin{proof}
	This follows from \cite[Proposition 7.2 and Proposition 7.10]{Suz21}.
\end{proof}

In particular, the product structure \eqref{0453} is available
for objects of $\genby{\mathcal{W}_{F}}_{F^{\perar}_{\et}}$.
Here is the triangulated version of Proposition \ref{0492}:

\begin{Prop} \label{0152}
	The functors $\algebrize \colon D(F^{\perar}_{\et}) \to D(F^{\ind\rat}_{\pro\et})$ and
	$R g_{\ast} \colon D(F^{\ind\rat}_{\pro\et}) \to D(F^{\perar}_{\et})$
	restrict to equivalences between
	$\genby{\mathcal{W}_{F}}_{F^{\perar}_{\et}}$
	and $\genby{\mathcal{W}_{F}}_{F^{\ind\rat}_{\pro\et}}$
	inverse to each other.
	In particular, $\algebrize$ maps
	$\genby{\mathcal{W}_{F}}_{F^{\perar}_{\et}}$
	into $D^{b}(\Ind \Pro \Alg_{u} / F)$.
\end{Prop}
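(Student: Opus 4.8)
The plan is to bootstrap from the pointwise isomorphisms of Proposition \ref{0492} and the acyclicity statement of Proposition \ref{0153}, by a dévissage along the generators $\mathcal{W}_F$.

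First I would check that the two functors preserve the generated subcategories. Since $\algebrize$ is triangulated and additive and $\genby{\mathcal{W}_F}_{F^{\ind\rat}_{\pro\et}}$ is a thick subcategory, the full subcategory of $D(F^{\perar}_{\et})$ consisting of objects $X$ with $\algebrize X \in \genby{\mathcal{W}_F}_{F^{\ind\rat}_{\pro\et}}$ is itself thick; by Proposition \ref{0492} it contains $\mathcal{W}_F$ placed in degree zero, hence it contains all of $\genby{\mathcal{W}_F}_{F^{\perar}_{\et}}$. The identical argument with $Rg_{\ast}$, using the other half of Proposition \ref{0492}, shows that $Rg_{\ast}$ carries $\genby{\mathcal{W}_F}_{F^{\ind\rat}_{\pro\et}}$ into $\genby{\mathcal{W}_F}_{F^{\perar}_{\et}}$. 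Because $\genby{\mathcal{W}_F}_{F^{\ind\rat}_{\pro\et}}$ is by definition the smallest thick subcategory of $D(F^{\ind\rat}_{\pro\et})$ containing $\mathcal{W}_F$, and $D^{b}(\Ind\Pro\Alg_{u}/F)$ is a thick full triangulated subcategory of $D(F^{\ind\rat}_{\pro\et})$ containing $\mathcal{W}_F$ (its essential image is closed under summands, $D^{b}$ of an abelian category being idempotent complete), this first step already gives the last assertion: $\algebrize$ maps $\genby{\mathcal{W}_F}_{F^{\perar}_{\et}}$ into $D^{b}(\Ind\Pro\Alg_{u}/F)$.

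Next I would identify one composite. Since $\algebrize = Rf_{\ast} Lh^{\ast}$ and $Rg_{\ast} \compose Rf_{\ast} = Rh_{\ast}$, the composite $Rg_{\ast} \compose \algebrize$ is $Rh_{\ast} Lh^{\ast}$, and the unit $\id \to Rh_{\ast} Lh^{\ast}$ is an isomorphism on every object of $\genby{\mathcal{W}_F}_{F^{\perar}_{\et}}$ precisely because such objects are $h$-acyclic by Proposition \ref{0153}. Hence $Rg_{\ast} \compose \algebrize \cong \id$ on $\genby{\mathcal{W}_F}_{F^{\perar}_{\et}}$, which will pin down $Rg_{\ast}$ as the quasi-inverse once the equivalence is established.

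The remaining and principal point is that $\algebrize$ is fully faithful on $\genby{\mathcal{W}_F}_{F^{\perar}_{\et}}$. The clean route should be a general property of $\algebrize$ on $h$-compatible $h$-acyclic objects from \cite{Suz21}, so that this is immediate from Proposition \ref{0153}; concretely one can argue by dévissage in both arguments, using the long exact sequences of $\Hom$-groups and the five lemma, to reduce to showing that for $G, G' \in \mathcal{W}_F$ and every $n \in \Z$ the map $\Ext^{n}_{F^{\perar}_{\et}}(G, G') \to \Ext^{n}_{F^{\ind\rat}_{\pro\et}}(\algebrize G, \algebrize G')$ is an isomorphism. By Proposition \ref{0492} the target is $\Ext^{n}_{F^{\ind\rat}_{\pro\et}}(G, G')$, so this is exactly an agreement of $\Ext$-groups over the two sites for objects of $\mathcal{W}_F$, furnished by \cite{Suz20,Suz21}. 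Granting full faithfulness, $\algebrize$ restricted to $\genby{\mathcal{W}_F}_{F^{\perar}_{\et}}$ is a fully faithful triangulated functor into $D(F^{\ind\rat}_{\pro\et})$ and $\genby{\mathcal{W}_F}_{F^{\perar}_{\et}}$ is idempotent complete, so the essential image is thick; it contains $\mathcal{W}_F$ and, by the first step, is contained in $\genby{\mathcal{W}_F}_{F^{\ind\rat}_{\pro\et}}$, hence equals it. Thus $\algebrize$ is an equivalence $\genby{\mathcal{W}_F}_{F^{\perar}_{\et}} \isomto \genby{\mathcal{W}_F}_{F^{\ind\rat}_{\pro\et}}$, and by the previous paragraph its quasi-inverse is $Rg_{\ast}$; in particular both composites are naturally isomorphic to the identity on the respective subcategories. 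I expect the main obstacle to be isolating and citing — or, if it is not available in exactly this form, proving — the $\Ext$-comparison over the two sites for objects of $\mathcal{W}_F$ (equivalently, the full faithfulness of $\algebrize$ on $h$-acyclic $h$-compatible objects); once that is in hand, everything else is formal triangulated-category dévissage.
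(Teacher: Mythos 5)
Your first two steps are sound and agree with what the paper does (the preservation of the two generated subcategories is the same thickness d\'evissage the paper leaves implicit, and the identification $R g_{\ast} \compose \algebrize = R h_{\ast} L h^{\ast} \cong \id$ on $\genby{\mathcal{W}_{F}}_{F^{\perar}_{\et}}$ via $h$-acyclicity is exactly the paper's first natural transformation). The gap is your third step. A natural isomorphism $\id \cong R g_{\ast} \compose \algebrize$ only makes $\algebrize$ faithful (it splits the map on Hom-groups); fullness needs genuinely new input. The $\Ext$-comparison you propose to cite --- that $\Ext^{n}_{F^{\perar}_{\et}}(G, G') \to \Ext^{n}_{F^{\ind\rat}_{\pro\et}}(G, G')$ is an isomorphism for $G, G' \in \mathcal{W}_{F}$ --- is not available in the references in that form and is essentially the content of the proposition itself. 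By adjunction and Proposition \ref{0492} it would follow from $L g^{\ast} G \cong G$ for $G \in \mathcal{W}_{F}$, but that is not among the recorded facts: one only has $L h^{\ast} G \cong G$, $L f^{\ast} G \cong G$ and $R f_{\ast} G \cong G$. Proposition \ref{0153} ($h$-compatibility and $h$-acyclicity) gives a comparison of cohomology, not of Hom-groups between two such objects, so there is no ``general property'' to invoke there either; and the paper explicitly warns that cohomology objects on the $F^{\perar}_{\et}$ side need not be representable, so that side is genuinely less controlled.

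The paper closes the loop differently: instead of proving full faithfulness it exhibits the other composite. From $L h^{\ast} \cong L f^{\ast} L g^{\ast}$ one gets a zig-zag of natural transformations
\[
		\algebrize R g_{\ast}
	=
		R f_{\ast} L h^{\ast} R g_{\ast}
	\cong
		R f_{\ast} L f^{\ast} L g^{\ast} R g_{\ast}
	\to
		R f_{\ast} L f^{\ast}
	\gets
		\id,
\]
the middle arrow being $R f_{\ast} L f^{\ast}$ of the counit for $g$ and the right arrow the unit for $f$. Both arrows are isomorphisms on every $G \in \mathcal{W}_{F}$ by Proposition \ref{0492} together with $L f^{\ast} G \cong G$ and $R f_{\ast} G \cong G$ (\cite[Proposition 7.6]{Suz21}), hence on all of $\genby{\mathcal{W}_{F}}_{F^{\ind\rat}_{\pro\et}}$ by the same d\'evissage you already use. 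Having both composites naturally isomorphic to the identity yields the mutually inverse equivalences at once, with no separate full-faithfulness or $\Ext$-comparison step. If you replace your third step by this zig-zag argument, the rest of your write-up (including the deduction of the final assertion about $D^{b}(\Ind \Pro \Alg_{u} / F)$) goes through.
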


\begin{proof}
	We have $R h_{\ast} \cong R g_{\ast} R f_{\ast} $
	and $L h^{\ast} \cong L f^{\ast} L g^{\ast}$
	by \cite[Proposition 2.6]{Suz21}.
	Hence we have natural transformations
		\begin{gather*}
					\id
				\to
					R h_{\ast} L h^{\ast}
				\cong
					R g_{\ast} R f_{\ast} L h^{\ast}
				=
					R g_{\ast} \algebrize,
			\\
					\id
				\to
					R f_{\ast} L f^{\ast}
				\gets
					R f_{\ast} L f^{\ast} L g^{\ast} R g_{\ast}
				\cong
					R f_{\ast} L h^{\ast} R g_{\ast}
				=
					\algebrize R g_{\ast}.
		\end{gather*}
	For $G \in \mathcal{W}_{F}$, we have $L f^{\ast} G \cong G$ and $R f_{\ast} G \cong G$
	by \cite[Proposition 7.6]{Suz21}.
	Hence Proposition \ref{0492} shows that
	the above natural transformations induce isomorphisms for $G \in \mathcal{W}_{F}$.
	Therefore they induce isomorphisms for the mentioned generated subcategories.
\end{proof}

Cohomology objects of objects of $\genby{\mathcal{W}_{F}}_{F^{\perar}_{\et}}$
do not necessarily belong to $\mathcal{W}_{F}$.
When they do, applying $\algebrize$ preserves cohomology objects:

\begin{Prop} \label{0501}
	Let $G \in D^{b}(F^{\perar}_{\et})$.
	Assume that $H^{q} G \in \mathcal{W}_{F}$ for all $q$
	(which implies $G \in \genby{\mathcal{W}_{F}}_{F^{\perar}_{\et}}$).
	Then $H^{q} \algebrize G \cong H^{q} G$ in $\Ind \Pro \Alg_{u} / F$.
\end{Prop}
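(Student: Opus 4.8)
The plan is to reduce the statement to the previously established Proposition \ref{0153} (which gives $h$-compatibility and $h$-acyclicity for objects of $\genby{\mathcal{W}_{F}}_{F^{\perar}_{\et}}$) together with the $t$-exactness properties of the functors involved. First I would observe that since each cohomology object $H^{q} G$ lies in $\mathcal{W}_{F} \subset \genby{\mathcal{W}_{F}}_{F^{\perar}_{\et}}$, and the latter is a triangulated subcategory closed under direct summands, a straightforward dévissage using the truncation triangles $\tau_{\le q} G \to \tau_{\le q+1} G \to H^{q+1}G[-q-1]$ shows that $G$ itself lies in $\genby{\mathcal{W}_{F}}_{F^{\perar}_{\et}}$ (this is the parenthetical claim, and it is the input needed to invoke Proposition \ref{0153}). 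Consequently $\algebrize G = R f_\ast L h^\ast G$, and by Proposition \ref{0153} the object $G$ is $h$-compatible and $h$-acyclic.

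Next I would compute the cohomology objects of $\algebrize G$ by a spectral sequence or direct dévissage. The key point is that it suffices to check the isomorphism $H^q \algebrize G \cong H^q G$ after evaluating on all $F' \in F^{\perar}$ with their equivariant structure, since the Yoneda embedding $\Ind \Pro \Alg_u/F \into \Ab(F^{\perar}_{\et})$ is fully faithful on $\mathcal{W}_F$ and sheafification detects isomorphisms. But Proposition \ref{0153} already gives $R\Gamma(F', \algebrize G) \cong R\Gamma(F', G)$ for every $F' \in F^{\perar}$, compatibly in $F'$. Since each $H^q G \in \mathcal{W}_F$ is representable and (by \cite[Proposition 7.1]{Suz21}) acyclic on $\Spec F^{\perar}_{\et}$, the hypercohomology spectral sequence $E_2^{s,t} = H^s(F', H^t G) \Rightarrow H^{s+t} R\Gamma(F', G)$ degenerates, so $H^q R\Gamma(F', G) = (H^q G)(F')$. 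The same computation applies to $\algebrize G$ once one knows $H^q \algebrize G \in \mathcal{W}_F$; to get that, I would use Proposition \ref{0152}, which tells us $\algebrize G \in D^b(\Ind \Pro \Alg_u/F)$, so its cohomology objects are honest ind-pro-unipotent groups — but a priori not in $\mathcal{W}_F$.

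To pin down that $H^q \algebrize G$ actually lies in $\mathcal{W}_F$ and equals $H^q G$, the cleanest route is to argue that $\algebrize$ is $t$-exact on $\genby{\mathcal{W}_{F}}_{F^{\perar}_{\et}}$ with respect to the natural $t$-structures, using that $\algebrize$ restricted to $\mathcal{W}_F$ is the identity (Proposition \ref{0492}) and that $\algebrize$ commutes with the truncation triangles by exactness considerations: since $\algebrize G = Rf_\ast Lh^\ast G$ and $G$ is $h$-compatible, one controls $Lh^\ast$ and $Rf_\ast$ termwise on the Postnikov tower built from the $H^q G \in \mathcal{W}_F$, each stage of which $\algebrize$ fixes. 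Then the five lemma applied to the long exact cohomology sequences of $\algebrize$ applied to these triangles, together with the base case $H^q\algebrize(H^q G[-q]) \cong H^q G$, gives the result by induction on the length of $G$.

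The main obstacle I expect is precisely this $t$-exactness claim: a priori $\algebrize$ is only defined as a composite of derived functors $Rf_\ast L h^\ast$, and there is no reason for it to be exact on an arbitrary abelian category of sheaves. The crucial leverage is that on $\mathcal{W}_F$ the functors $Lg^\ast$, $Lf^\ast$, $Rf_\ast$, $Rg_\ast$ are all \emph{underived} — this is the content of \cite[Proposition 7.6]{Suz21} and the $h$-acyclicity in Proposition \ref{0153} — so that restricted to complexes with cohomology in $\mathcal{W}_F$ the functor $\algebrize$ computes termwise and is exact. Making this dévissage precise (choosing representatives, controlling the transition maps in the Postnikov system so that each stage stays inside $\genby{\mathcal{W}_{F}}_{F^{\perar}_{\et}}$, and confirming the isomorphisms are the canonical ones functorial in $G$) is the technical heart; everything else is formal.
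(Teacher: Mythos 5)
Your proposal is correct in substance, but it takes a genuinely different route from the paper: the paper's entire proof is a citation to \cite[Proposition 7.9 (b)]{Suz21} (the same external result underlying Proposition \ref{0492}), whereas you rederive the bounded-complex statement internally by d\'evissage from the degree-zero case. Your reduction is sound, and in fact simpler than you make it out to be: once you know $\algebrize(H^{q}G) \cong H^{q}G$ concentrated in degree zero (Proposition \ref{0492}) and proceed by induction on the Postnikov tower $\tau_{\le q} G \to \tau_{\le q+1} G \to H^{q+1}G[-q-1]$, the long exact cohomology sequence of $\algebrize$ applied to each triangle forces $H^{j}\algebrize \tau_{\le q+1}G \cong H^{j}G$ for all $j$ directly --- no $t$-exactness of $\algebrize$ or termwise control of $Lh^{\ast}$ and $Rf_{\ast}$ is needed, so the step you identify as the ``technical heart'' can be bypassed entirely by the purely formal triangulated argument. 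The only point genuinely requiring care is the one you flag at the end, namely that the resulting isomorphisms are the canonical ones (induced by the unit/counit of Proposition \ref{0152}) and functorial in $G$; this follows by running the same induction on the natural transformation viewed as a morphism of triangles. What the paper's approach buys is brevity by outsourcing; what yours buys is a self-contained deduction from the degree-zero statement, at the cost of having to track canonicity through the d\'evissage.
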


\begin{proof}
	This is \cite[Proposition 7.9 (b)]{Suz21}.
\end{proof}

We recall Serre duality in the particular case of $\genby{\mathcal{W}_{F}}_{F^{\ind\rat}_{\pro\et}}$:

\begin{Prop} \mbox{} \label{0008}
	\begin{enumerate}
		\item
			The functor $R \sheafhom_{F^{\ind\rat}_{\pro\et}}(\var, \Lambda_{\infty})$
			gives a contravariant autoequivalence on
			$\genby{\mathcal{W}_{F}}_{F^{\ind\rat}_{\pro\et}}$
			with inverse itself.
		\item
			If $G \in \mathcal{W}_{F}$ is connected,
			then $R \sheafhom_{F^{\ind\rat}_{\pro\et}}(G, \Lambda_{\infty})$ is concentrated in degree $1$
			whose cohomology is a connected group in $\mathcal{W}_{F}$.
			If $G$ is of Type \eqref{0005},
			then it is of Type \eqref{0006},
			and vice versa.
		\item
			If $G \in \mathcal{W}_{F}$ is finite,
			then $R \sheafhom_{F^{\ind\rat}_{\pro\et}}(G, \Lambda_{\infty})$ is concentrated in degree $0$
			whose cohomology is the Pontryagin dual of $G$.
	\end{enumerate}
\end{Prop}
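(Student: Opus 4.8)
The plan is to reduce Serre duality over $\Spec F^{\ind\rat}_{\pro\et}$ for the category $\genby{\mathcal{W}_{F}}_{F^{\ind\rat}_{\pro\et}}$ to the duality results already established in \cite{Suz20}. The key observation is that every object of $\mathcal{W}_{F}$ lies in $\Ind \Pro \Alg_{u} / F \subset \Ind \Pro \Alg / F$, so Serre duality for $D^{b}(\Ind \Pro \Alg / F)$ from \cite[\S 2.3--2.4]{Suz20} applies verbatim. Concretely, parts (b) and (c) are essentially the statement of \cite[Proposition 2.4.1]{Suz20} (or its analogue) specialized to the two types of generators: the functor $(\var)^{\vee} = R \sheafhom_{F^{\ind\rat}_{\pro\et}}(\var, \Lambda_{\infty})$ interchanges $D^{b}(\Pro \Alg_{u} / F)$ and $D^{b}(\Ind \Alg_{u} / F)$, a connected object of Type \eqref{0005} is a countable inverse limit with connected surjective transition maps hence lies in $\Pro \Alg_{u} / F$ (up to the issue of $\pi_{0}$, which vanishes by connectedness), and dually for Type \eqref{0006}; the concentration in a single degree and the connectedness of the dual follow because $\Ga^{\vee} \cong \Ga[-1]$ and both $R\sheafhom$ and the relevant limits/colimits are exact in the pertinent range. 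For the finite case, a finite object of $\mathcal{W}_{F}$ is finite \'etale $p$-primary, and one uses that $R\sheafhom_{F^{\ind\rat}_{\pro\et}}(M, \Lambda_{\infty})$ for such $M$ is the Pontryagin dual placed in degree $0$, which is again finite \'etale $p$-primary, hence in $\mathcal{W}_{F}$.

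For part (a), I would argue as follows. First, $(\var)^{\vee}$ sends $\genby{\mathcal{W}_{F}}_{F^{\ind\rat}_{\pro\et}}$ into itself: by parts (b) and (c) it sends each object of $\mathcal{W}_{F}$ to an object all of whose cohomology objects lie in $\mathcal{W}_{F}$ — more precisely, an object $G \in \mathcal{W}_{F}$ sits in an extension of a finite \'etale $p$-primary group, a connected Type \eqref{0006} quotient, and a connected Type \eqref{0005} subobject, so $G^{\vee}$ sits in a distinguished triangle built from their duals, each of which is (a shift of) an object of $\mathcal{W}_{F}$; hence $G^{\vee} \in \genby{\mathcal{W}_{F}}_{F^{\ind\rat}_{\pro\et}}$. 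Since $\genby{\mathcal{W}_{F}}_{F^{\ind\rat}_{\pro\et}}$ is by definition the smallest full triangulated subcategory closed under direct summands containing $\mathcal{W}_{F}$, and $(\var)^{\vee}$ is triangulated (it is a derived functor) and commutes with direct summands, the image of $\genby{\mathcal{W}_{F}}_{F^{\ind\rat}_{\pro\et}}$ under $(\var)^{\vee}$ is contained in $\genby{\mathcal{W}_{F}}_{F^{\ind\rat}_{\pro\et}}$. Second, the biduality morphism $\id \to (\var)^{\vee\vee}$ is an isomorphism on every $G \in \mathcal{W}_{F}$: for the generators this is again \cite{Suz20} (biduality for $\Ga$ and for finite \'etale $p$-primary groups, extended to the two ind/pro types by exactness of the relevant limits), and the collection of objects on which a natural transformation of triangulated functors is an isomorphism is a triangulated subcategory closed under direct summands, so it contains all of $\genby{\mathcal{W}_{F}}_{F^{\ind\rat}_{\pro\et}}$. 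This gives that $(\var)^{\vee}$ is a contravariant autoequivalence with inverse itself.

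The main obstacle I anticipate is not conceptual but bookkeeping: one must verify that for the two ``Type \eqref{0005}'' and ``Type \eqref{0006}'' presentations the derived sheaf-Hom commutes appropriately with the countable inverse limit (respectively direct limit) defining the object, so that $R\sheafhom_{F^{\ind\rat}_{\pro\et}}(\invlim_n G_n, \Lambda_\infty) \cong \dirlim_n R\sheafhom_{F^{\ind\rat}_{\pro\et}}(G_n, \Lambda_\infty)$ and the $R^1\invlim$ (or the failure of $\dirlim$ to be exact) terms either vanish or are controlled; this is exactly the kind of thing handled in \cite[\S 2]{Suz20}, and the connectedness and surjectivity hypotheses on the transition morphisms in Definition \ref{0149} are precisely what make the relevant $\invlim$-systems Mittag-Leffler, killing the derived-limit correction. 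A secondary point to get right is that an object of $\mathcal{W}_{F}$ is only an \emph{extension} of the three standard pieces rather than a direct sum, so one genuinely needs the triangulated (not merely additive) closure properties invoked above; once that is granted the argument is a formal dévissage. I therefore expect the proof to consist of a short reduction to \cite{Suz20} and \cite{Suz21} together with a dévissage along the defining triangle of $\mathcal{W}_{F}$, with no essentially new computation required.
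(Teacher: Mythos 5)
Your proposal is correct and follows essentially the same route as the paper, whose entire proof is a citation of \cite[Proposition 2.4.1 (a), (b)]{Suz20}; your additional dévissage along the defining extension of an object of $\mathcal{W}_{F}$ and the formal triangulated-hull argument for part (a) are exactly the (implicit) content of that reduction. No gap.
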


\begin{proof}
	This follows from \cite[Proposition 2.4.1 (a), (b)]{Suz20}.
\end{proof}

Now the functor $R \sheafhom(\var, \Lambda_{\infty})$ can be considered
in either $\Spec F^{\ind\rat}_{\pro\et}$ or $\Spec F^{\perar}_{\et}$
as long as it is applied to objects coming from $\mathcal{W}_{F}$:

\begin{Prop} \label{0496}
	Let $G \in \genby{\mathcal{W}_{F}}_{F^{\perar}_{\et}}$
	and set $G' = \algebrize G$.
	Then
		\begin{gather} \label{0493}
					R \sheafhom_{F^{\perar}_{\et}}(G, \Lambda_{\infty})
				\cong
					R g_{\ast}
					R \sheafhom_{F^{\ind\rat}_{\pro\et}}(G', \Lambda_{\infty}),
			\\ \label{0494}
					\algebrize
					R \sheafhom_{F^{\perar}_{\et}}(G, \Lambda_{\infty})
				\cong
					R \sheafhom_{F^{\ind\rat}_{\pro\et}}(G', \Lambda_{\infty}).
		\end{gather}
\end{Prop}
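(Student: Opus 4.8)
The plan is to deduce both isomorphisms from a single computation over $\mathcal{W}_{F}$, establishing \eqref{0494} first and then obtaining \eqref{0493} from it. To construct the comparison morphism of \eqref{0494} canonically, I would dualize the lax-monoidality morphism \eqref{0453}. By Proposition \ref{0153} every $G \in \genby{\mathcal{W}_{F}}_{F^{\perar}_{\et}}$ is $h$-compatible, so \eqref{0453} applies with $H = R \sheafhom_{F^{\perar}_{\et}}(G, \Lambda_{\infty})$; composing it with the image under $\algebrize$ of the evaluation morphism $R \sheafhom_{F^{\perar}_{\et}}(G, \Lambda_{\infty}) \tensor^{L} G \to \Lambda_{\infty}$ and with the canonical isomorphism $\algebrize \Lambda_{\infty} \cong \Lambda_{\infty}$ (valid since $\Lambda_{\infty} = \dirlim_{n} \Lambda_{n}$, each $\Lambda_{n}$ lies in $\mathcal{W}_{F}$, Proposition \ref{0492} applies, and $\algebrize$ commutes with filtered colimits), one gets a morphism $\algebrize R \sheafhom_{F^{\perar}_{\et}}(G, \Lambda_{\infty}) \tensor^{L} \algebrize G \to \Lambda_{\infty}$, hence by $\tensor^{L}$--$R \sheafhom$ adjunction the natural morphism
\[
		\algebrize R \sheafhom_{F^{\perar}_{\et}}(G, \Lambda_{\infty})
	\to
		R \sheafhom_{F^{\ind\rat}_{\pro\et}}(\algebrize G, \Lambda_{\infty}).
\]

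Both sides of this morphism are contravariant triangulated functors of $G$ on $\genby{\mathcal{W}_{F}}_{F^{\perar}_{\et}}$, so by Definition \ref{0151} it suffices to prove it is an isomorphism when $G$ is a single object of $\mathcal{W}_{F}$, and then to propagate through distinguished triangles, shifts and direct summands. For such $G$ we have $\algebrize G \cong G$ by Proposition \ref{0492}, and by Proposition \ref{0008} the object $R \sheafhom_{F^{\ind\rat}_{\pro\et}}(G, \Lambda_{\infty})$ is concentrated in a single degree ($0$ if $G$ is finite, $1$ if $G$ is connected) with cohomology again in $\mathcal{W}_{F}$; applying Proposition \ref{0492} degreewise, the same holds after applying $R g_{\ast}$. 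So everything reduces to computing $R \sheafhom_{F^{\perar}_{\et}}(G, \Lambda_{\infty})$ directly and matching it, via the comparison morphism, with $R \sheafhom_{F^{\ind\rat}_{\pro\et}}(G, \Lambda_{\infty})$. Here I would follow the pattern of \cite[Proposition 2.4.1]{Suz20}: the exact structure of $\mathcal{W}_{F}$ recorded in Definition \ref{0149} reduces the problem to the finite \'etale $p$-primary groups, for which $R \sheafhom_{F^{\perar}_{\et}}(\var, \Lambda_{\infty})$ is Pontryagin dual in degree $0$, and to $\Ga$, for which the Artin--Schreier sequence $0 \to \Lambda \to \Ga \xrightarrow{\Frob - 1} \Ga \to 0$ (exact on $\Spec F^{\perar}_{\et}$ since $F$ is perfect) exhibits $R \sheafhom_{F^{\perar}_{\et}}(\Ga, \Lambda) \cong \Ga[-1]$ with perfect induced pairing; the passage to the pro-limits of Type \eqref{0005} and ind-limits of Type \eqref{0006} is handled as over $\Spec F^{\ind\rat}_{\pro\et}$. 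Once \eqref{0494} is established, it shows in particular that $R \sheafhom_{F^{\perar}_{\et}}(G, \Lambda_{\infty}) \in \genby{\mathcal{W}_{F}}_{F^{\perar}_{\et}}$; applying $R g_{\ast}$ and using $R g_{\ast} \algebrize \cong \id$ on this subcategory (Proposition \ref{0152}) together with Proposition \ref{0008} then yields \eqref{0493}.

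The main obstacle is the direct computation of $R \sheafhom_{F^{\perar}_{\et}}(G, \Lambda_{\infty})$ for $G \in \mathcal{W}_{F}$ over the \emph{\'etale} site: one must control the higher $\sheafext$ sheaves --- which, unlike the Zariski situation treated later in the paper, need not vanish --- and, more seriously, show that $R \sheafhom_{F^{\perar}_{\et}}(\var, \Lambda_{\infty})$ interacts correctly in the derived sense with the countable pro- and ind-limits defining the connected objects of $\mathcal{W}_{F}$, so that the answer stays concentrated in one degree with the cohomology group predicted by Proposition \ref{0008}. A secondary but essential point is functoriality: one must verify that the abstractly constructed comparison morphism of \eqref{0494}, rather than merely some isomorphism, realizes these identifications, which amounts to tracing the evaluation and lax-monoidality morphisms \eqref{0453} through the Artin--Schreier presentation of $\Lambda$.
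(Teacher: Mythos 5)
Your overall architecture (reduce to $G \in \mathcal{W}_{F}$, establish one of the two isomorphisms, deduce the other via Propositions \ref{0008} and \ref{0152}) matches the paper's, and your derivation of \eqref{0493} from \eqref{0494} is fine. But at the decisive step you diverge from the paper, and your route has a genuine gap there. The paper never computes $R \sheafhom_{F^{\perar}_{\et}}(G, \Lambda_{\infty})$ directly: it identifies it with $R h_{\ast} R \sheafhom_{F^{\perf}_{\pro\fppf}}(G, \Lambda_{\infty})$ using the $h$-compatibility of $G$ and \cite[Proposition 2.8]{Suz21}, and then with $R g_{\ast} R \sheafhom_{F^{\ind\rat}_{\pro\et}}(G, \Lambda_{\infty})$ using \cite[Theorem (2.3.1), Proposition (2.3.2)]{Suz20}, so that the hard Ext computations are the ones already done in \cite{Suz20} over $\Spec F^{\perf}_{\pro\fppf}$ and $\Spec F^{\ind\rat}_{\pro\et}$. (It also proves \eqref{0493} first and gets \eqref{0494} from Proposition \ref{0152}, the reverse of your order, but that is immaterial.)

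The gap is exactly at what you call the ``main obstacle'': carrying out the Serre-duality computation \emph{directly} over $\Spec F^{\perar}_{\et}$ is not something one can do ``as over $\Spec F^{\ind\rat}_{\pro\et}$''. The computations of $\sheafext^{i}(\Ga, \Lambda)$ and their compatibility with the countable pro- and ind-limits of Types \eqref{0005} and \eqref{0006} rest on Mac Lane/Breen-type resolutions of $\Ga$ by sums of $\Z[\Ga^{n}]$, and computing $R\Hom$ out of $\Z[\Ga^{n}]$ amounts to computing cohomology of the sheaf $\Ga^{n}$; over $F^{\perar}$ the affine space $\Affine^{n}$ is not an object of the site, and in the \'etale topology the evaluation of a sheaf at the points of $\Ga^{n}$ contributes nonvanishing higher Galois cohomology. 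This is precisely why the paper's own Mac Lane-resolution argument (Propositions \ref{0020}--\ref{0022}) is carried out only for the \emph{Zariski} topology on $F^{\perar}$, where $H^{j}(F'_{\zar}, H) = 0$ for $j \ge 1$ kills these terms, and why the \'etale statement is obtained by transport through $\Spec F^{\perf}_{\pro\fppf}$ rather than by direct computation. To make your route work you would either have to establish the analogues of Breen's vanishing results over $\Spec F^{\perar}_{\et}$ from scratch, or fall back on the comparison with the pro-fppf site --- which is the paper's proof. (A minor secondary point: your comparison morphism is constructed via \eqref{0453} and $\algebrize \Lambda_{\infty} \cong \Lambda_{\infty}$; note $\Lambda_{\infty} \notin \mathcal{W}_{F}$ since its $\pi_{0}$ is not finite, so you need $\algebrize$ to commute with the filtered colimit $\dirlim_{n} \Lambda_{n}$, i.e.\ that $R f_{\ast}$ commutes with this colimit --- plausible but not free.)
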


\begin{proof}
	We may assume $G \in \mathcal{W}_{F}$ (so we identify $G' = G$).
	By the $h$-compatibility of $G$ (Proposition \ref{0153}),
	$L h^{\ast} G \cong G$ (\cite[Theorem 3.15]{Suz21}) and
	$R h_{\ast} \Lambda_{\infty} \cong \Lambda_{\infty}$,
	we have
		\[
				R \sheafhom_{F^{\perar}_{\et}}(G, \Lambda_{\infty})
			\cong
				R h_{\ast}
				R \sheafhom_{F^{\perf}_{\pro\fppf}}(G, \Lambda_{\infty})
		\]
	by \cite[Proposition 2.8]{Suz21}.
	We have
		\[
				R f_{\ast}
				R \sheafhom_{F^{\perf}_{\pro\fppf}}(G, \Lambda_{\infty})
			\cong
				R \sheafhom_{F^{\ind\rat}_{\pro\et}}(G, \Lambda_{\infty})
		\]
	by \cite[Theorem (2.3.1), Proposition (2.3.2)]{Suz20}.
	Taking $R g_{\ast}$ of both sides, we obtain \eqref{0493}.
	
	This implies that
	$R \sheafhom_{F^{\perar}_{\et}}(G, \Lambda_{\infty})$
	is in $\genby{\mathcal{W}_{F}}_{F^{\perar}_{\et}}$
	by Propositions \ref{0008} and \ref{0152}.
	Hence \eqref{0494} follows from \eqref{0493} by
	Proposition \ref{0152}.
\end{proof}

Therefore Serre duality is valid also in $\Spec F^{\perar}_{\et}$:

\begin{Prop} \label{0497}
	Proposition \ref{0008} also holds
	with all the instances of ``$F^{\ind\rat}_{\pro\et}$''
	replaced by ``$F^{\perar}_{\et}$''.
\end{Prop}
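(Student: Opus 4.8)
The plan is to deduce Proposition \ref{0497} from Proposition \ref{0008} by transporting each of its three assertions across the equivalence $\algebrize \colon \genby{\mathcal{W}_{F}}_{F^{\perar}_{\et}} \isomto \genby{\mathcal{W}_{F}}_{F^{\ind\rat}_{\pro\et}}$ of Proposition \ref{0152}, the bridge being the compatibility of the two duality operations recorded in Proposition \ref{0496}. Write $S_{\et} := R \sheafhom_{F^{\perar}_{\et}}(\var, \Lambda_{\infty})$ and $S_{\ir} := R \sheafhom_{F^{\ind\rat}_{\pro\et}}(\var, \Lambda_{\infty})$. The proof of Proposition \ref{0496} already shows that $S_{\et}$ carries $\genby{\mathcal{W}_{F}}_{F^{\perar}_{\et}}$ into itself, and \eqref{0494} provides a functorial isomorphism $\algebrize \compose S_{\et} \cong S_{\ir} \compose \algebrize$ on that category; applying $R g_{\ast}$ and using $R g_{\ast} \compose \algebrize \cong \id$ (Proposition \ref{0152}) I would rewrite this as $S_{\et} \cong R g_{\ast} \compose S_{\ir} \compose \algebrize$. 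This identity will serve as the workhorse for all three assertions.

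For the autoequivalence statement (the first assertion of Proposition \ref{0008}), I would compose the displayed isomorphism with itself; using $\algebrize \compose R g_{\ast} \cong \id$ on $\genby{\mathcal{W}_{F}}_{F^{\ind\rat}_{\pro\et}}$ together with the involutivity $S_{\ir} \compose S_{\ir} \cong \id$ supplied by Proposition \ref{0008}, this collapses to $S_{\et} \compose S_{\et} \cong R g_{\ast} \compose S_{\ir} \compose S_{\ir} \compose \algebrize \cong R g_{\ast} \compose \algebrize \cong \id$. Hence $S_{\et}$ is a contravariant autoequivalence of $\genby{\mathcal{W}_{F}}_{F^{\perar}_{\et}}$ with inverse itself.

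For the explicit shape of the dual in the connected and finite cases, I would take $G \in \mathcal{W}_{F}$, so that $\algebrize G \cong G$ by Proposition \ref{0492}, and combine \eqref{0493} with Proposition \ref{0492} again to get $S_{\et}(G) \cong R g_{\ast} S_{\ir}(G)$. If $G$ is connected, Proposition \ref{0008} gives $S_{\ir}(G) \cong H[-1]$ for a connected $H \in \mathcal{W}_{F}$, whence $S_{\et}(G) \cong (R g_{\ast} H)[-1] \cong H[-1]$ by Proposition \ref{0492}: it is concentrated in degree $1$ with the same connected cohomology $H \in \mathcal{W}_{F}$, and the interchange between objects of Type \eqref{0005} and Type \eqref{0006} carries over unchanged because $H$ itself is unchanged. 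If $G$ is finite, Proposition \ref{0008} gives $S_{\ir}(G)$ concentrated in degree $0$ with cohomology the Pontryagin dual of $G$, and applying $R g_{\ast}$ — again the identity on this object, since it lies in $\mathcal{W}_{F}$ — leaves this description intact.

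I expect the only delicate point to be the ``concentrated in a single degree'' part of the connected and finite cases: the equivalence $\algebrize$ on its own controls objects only up to the triangulated structure, and it is exactly Proposition \ref{0492} — that $R g_{\ast}$ restricts to the identity on $\mathcal{W}_{F} \subset \Ab(F^{\perar}_{\et})$, hence on shifts of its objects — that promotes this to the assertion about cohomology sitting in the prescribed degree. The remainder is a formal manipulation of the compatibilities from Propositions \ref{0152} and \ref{0496}, and requires no further input.
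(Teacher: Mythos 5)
Your proposal is correct and is exactly the argument the paper intends: its proof of Proposition \ref{0497} simply cites Propositions \ref{0496}, \ref{0152} and \ref{0492}, and your write-up is the spelled-out version of how those three combine (the identity $R \sheafhom_{F^{\perar}_{\et}}(\var, \Lambda_{\infty}) \cong R g_{\ast} \compose R \sheafhom_{F^{\ind\rat}_{\pro\et}}(\var, \Lambda_{\infty}) \compose \algebrize$ plus the fact that $R g_{\ast}$ and $\algebrize$ act as the identity on objects of $\mathcal{W}_{F}$). No gaps; you also correctly flag that the degree-concentration claims are the only place where Proposition \ref{0492}, rather than the bare triangulated equivalence, is genuinely needed.
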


\begin{proof}
	This follows from Propositions \ref{0496}, \ref{0152} and \ref{0492}.
\end{proof}

In this paper, we will prove duality statements first over $\Spec F^{\perar}_{\et}$
and then bring them to $\Spec F^{\ind\rat}_{\pro\et}$.
This is possible because of the following:

\begin{Prop} \label{0010}
	Let $G, G' \in \genby{\mathcal{W}_{F}}_{F^{\perar}_{\et}}$.
	Let
		\[
				G \tensor^{L} G' \to \Lambda_{\infty}
		\]
	be a morphism in $D(F^{\perar}_{\et})$.
	Then it is a perfect pairing if and only if the induced morphism
		\[
				\algebrize G \tensor^{L} \algebrize G'
			\to
				\Lambda_{\infty}
		\]
	by \eqref{0453} is a perfect pairing in $D(F^{\ind\rat}_{\pro\et})$.
\end{Prop}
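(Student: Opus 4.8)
The plan is to translate the perfect-pairing condition into a statement about the two adjoint morphisms and then transport everything through $\algebrize$ and $R g_{\ast}$ using the equivalence of Proposition \ref{0152}. First I would recall that a morphism $G \tensor^{L} G' \to \Lambda_{\infty}$ in $D(F^{\perar}_{\et})$ is a perfect pairing precisely when the two induced morphisms $G \to R \sheafhom_{F^{\perar}_{\et}}(G', \Lambda_{\infty})$ and $G' \to R \sheafhom_{F^{\perar}_{\et}}(G, \Lambda_{\infty})$ are isomorphisms, and similarly over $F^{\ind\rat}_{\pro\et}$. By Proposition \ref{0496}, applying $\algebrize$ to $R \sheafhom_{F^{\perar}_{\et}}(G', \Lambda_{\infty})$ yields $R \sheafhom_{F^{\ind\rat}_{\pro\et}}(\algebrize G', \Lambda_{\infty})$, so the key point is to check that $\algebrize$ carries the adjoint morphism $G \to R \sheafhom_{F^{\perar}_{\et}}(G', \Lambda_{\infty})$ to the adjoint morphism $\algebrize G \to R \sheafhom_{F^{\ind\rat}_{\pro\et}}(\algebrize G', \Lambda_{\infty})$ associated to the pushed-forward pairing \eqref{0453}. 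This is a compatibility between the cup-product morphism \eqref{0453} and the internal-Hom–tensor adjunction, which should follow from the construction of \eqref{0453} via the lax-monoidal structure of $R f_{\ast}$ together with the projection-type morphism \eqref{0327}; concretely, one traces through the definitions of the unit/counit on both sites and uses that $L h^{\ast}$ is monoidal and $G$ is $h$-compatible and $h$-acyclic (Proposition \ref{0153}).

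Granting that compatibility, the argument is then formal. If $G \tensor^{L} G' \to \Lambda_{\infty}$ is a perfect pairing over $F^{\perar}_{\et}$, then the adjoint $G \isomto R \sheafhom_{F^{\perar}_{\et}}(G', \Lambda_{\infty})$ is an isomorphism; applying the functor $\algebrize$ (which is exact and, by the compatibility above, sends this to the adjoint of the pushed pairing) together with \eqref{0494} gives $\algebrize G \isomto R \sheafhom_{F^{\ind\rat}_{\pro\et}}(\algebrize G', \Lambda_{\infty})$, and symmetrically in $G'$, so the pushed pairing is perfect. Conversely, suppose $\algebrize G \tensor^{L} \algebrize G' \to \Lambda_{\infty}$ is perfect. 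I would apply $R g_{\ast}$ and use Proposition \ref{0152}, which says $R g_{\ast} \algebrize G \cong G$ and $R g_{\ast} \algebrize G' \cong G'$ on $\genby{\mathcal{W}_{F}}_{F^{\perar}_{\et}}$, together with \eqref{0493} to identify $R g_{\ast} R \sheafhom_{F^{\ind\rat}_{\pro\et}}(\algebrize G', \Lambda_{\infty})$ with $R \sheafhom_{F^{\perar}_{\et}}(G', \Lambda_{\infty})$; the same compatibility of adjoints (now for $R g_{\ast}$) recovers the original adjoint morphism over $F^{\perar}_{\et}$ as an isomorphism. Since $\algebrize$ and $R g_{\ast}$ are mutually inverse equivalences on the relevant subcategories, there is no loss of information in either direction, and by the reduction in the proof of Proposition \ref{0496} it suffices to treat $G, G' \in \mathcal{W}_{F}$, where $L f^{\ast} G \cong G$, $R f_{\ast} G \cong G$, so all the coherence morphisms in sight are isomorphisms.

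The main obstacle I anticipate is exactly the first step: verifying that the cup-product morphism \eqref{0453} intertwines the tensor–Hom adjunctions on the two sites, i.e.\ that ``$\algebrize$ of the adjoint is the adjoint of the push''. This is not quite formal because $\algebrize = R f_{\ast} L h^{\ast}$ is a composite and \eqref{0453} is built from the lax monoidality of $R f_{\ast}$ plus the identification $L h^{\ast} G \cong G$; one must check that the canonical morphism $R f_{\ast} R \sheafhom_{F^{\perf}_{\pro\fppf}}(-, -) \to R \sheafhom_{F^{\ind\rat}_{\pro\et}}(R f_{\ast} -, R f_{\ast} -)$ of \eqref{0327} is compatible with the unit and counit of the $(L f^{\ast}, R f_{\ast})$ adjunction in the precise diagrammatic sense needed. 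I expect this to follow by combining Proposition \ref{0326} (so that everything can be computed with honest complexes, $f_{\ast} \sheafhom$ being the underived representative when the target is K-injective) with the general yoga that the projection morphism and the evaluation morphism for internal Hom are adjoint to the same map; the $h$-compatibility and $h$-acyclicity of objects of $\mathcal{W}_{F}$ (Proposition \ref{0153}) are what make the relevant base-change morphisms isomorphisms so that the diagram chase goes through.
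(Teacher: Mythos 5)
Your proposal is correct and takes essentially the same approach as the paper, whose entire proof reads ``This follows from Proposition \ref{0496}'' --- that is, exactly the identification of the adjoint morphisms via \eqref{0493} and \eqref{0494} combined with the equivalence of Proposition \ref{0152}. The compatibility of \eqref{0453} with the tensor--Hom adjunctions that you single out as the main obstacle is precisely what the paper leaves implicit, so your more careful treatment of that diagram chase is a refinement rather than a divergence.
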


\begin{proof}
	This follows from Proposition \ref{0496}.
\end{proof}

In some situations, the base field is not $F$ but a finite extension $F'$ of $F$.
For example, consider residue fields of a curve over $F$ at closed points.
We can bring duality statements over $F'$ to $F$.
To state this, let $\alpha \colon \Spec F'^{\perar}_{\et} \to \Spec F^{\perar}_{\et}$ be the natural morphism.
Set $\Weil_{F' / F} = \alpha_{\ast}$,
which is the Weil restriction functor (\cite[\S 7.6]{BLR90}).

\begin{Prop} \label{0452}
	Let $F'$ be a finite extension of $F$.
	Let $G \tensor^{L} G' \to \Lambda_{\infty}$ be a perfect pairing in $D(F'^{\perar}_{\et})$.
	Then the composite morphism
		\[
					\Weil_{F' / F} G
				\tensor^{L}
					\Weil_{F' / F} G'
			\to
				\Weil_{F' / F} \Lambda_{\infty}
			\to
				\Lambda_{\infty}
		\]
	in $D(F^{\perar}_{\et})$ is a perfect pairing,
	where the last morphism is the norm map.
\end{Prop}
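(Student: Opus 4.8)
The plan is to deduce this from the case $F' = F$ (Proposition \ref{0010}) together with the fact that $\Weil_{F'/F} = \alpha_\ast$ is an exact functor that preserves the relevant classes of objects and is compatible with the derived sheaf-Hom and tensor formalism. First I would observe that $\alpha \colon \Spec F'^{\perar}_{\et} \to \Spec F^{\perar}_{\et}$ is a morphism of sites whose pushforward $\alpha_\ast$ is exact: this is because $F'/F$ is finite, so every object of $F^{\perar}$ has an $\alpha$-cover by objects of the form $F'' \tensor_F F'$ over which $\alpha$ becomes (a finite disjoint union of) identities, and Weil restriction along a finite extension is exact on sheaves. In particular $R\alpha_\ast = \alpha_\ast = \Weil_{F'/F}$ already on the level of derived categories, and $\Weil_{F'/F}$ carries $\genby{\mathcal{W}_{F'}}_{F'^{\perar}_{\et}}$ into $\genby{\mathcal{W}_{F}}_{F^{\perar}_{\et}}$ (the underlying group schemes are Weil restrictions, which stay connected unipotent or finite \'etale $p$-primary as appropriate, and countable ind-pro presentations are preserved).

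The heart of the argument is a projection-formula-type identity
\[
		R\sheafhom_{F^{\perar}_{\et}}(\Weil_{F'/F} G,\ \Lambda_\infty)
	\cong
		\Weil_{F'/F}\, R\sheafhom_{F'^{\perar}_{\et}}(G,\ \alpha^{\ast}\Lambda_\infty)
	\cong
		\Weil_{F'/F}\, R\sheafhom_{F'^{\perar}_{\et}}(G,\ \Lambda_\infty),
\]
valid for $G \in \genby{\mathcal{W}_{F'}}_{F'^{\perar}_{\et}}$, where the first isomorphism is adjunction between $\alpha^\ast$ and $R\alpha_\ast = \alpha_\ast$ combined with the canonical map \eqref{0327} (which is an isomorphism here because $\alpha$ is, locally on the base, a finite disjoint union of isomorphisms, so higher pushforwards and the comparison maps for $\sheafhom$ degenerate), and the second uses $\alpha^\ast \Lambda_\infty \cong \Lambda_\infty$. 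Granting this, the map $\Weil_{F'/F} G \to R\sheafhom_{F^{\perar}_{\et}}(\Weil_{F'/F} G', \Lambda_\infty)[0]$ induced by the composite pairing is identified with $\Weil_{F'/F}$ applied to the map $G \to R\sheafhom_{F'^{\perar}_{\et}}(G', \Lambda_\infty)[0]$ induced by the original pairing — here I need to check that the norm map $\Weil_{F'/F}\Lambda_\infty \to \Lambda_\infty$ is the counit of the adjunction composed with the trace, so that the two adjunction manipulations match up on the nose. Since the original pairing is perfect, that map is an isomorphism in $D(F'^{\perar}_{\et})$, hence so is its image under the exact functor $\Weil_{F'/F}$, giving one of the two required isomorphisms; the other follows by symmetry in $G$ and $G'$.

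I expect the main obstacle to be the bookkeeping in the projection formula: verifying that the canonical morphism in \eqref{0327} for $f = \alpha$ is an isomorphism when applied to $R\sheafhom$, and that under it the norm/counit map corresponds correctly, so that the composite pairing in the statement really is $\Weil_{F'/F}$ of the given pairing and not merely abstractly isomorphic to it. This is where one must use concretely that $\alpha$ is finite \'etale locally split — equivalently, that $F^{\perar}_{\et}$ has enough objects over which $F' \tensor_F (\var)$ splits as a product of copies of the base, reducing everything to the identity morphism of sites where all the comparison maps are tautologically isomorphisms. A secondary point to confirm is that $\Weil_{F'/F}$ sends $\genby{\mathcal{W}_{F'}}_{F'^{\perar}_{\et}}$ into $\genby{\mathcal{W}_{F}}_{F^{\perar}_{\et}}$, so that Serre duality in the form of Proposition \ref{0497} is available on both sides; this is a routine check on the standard presentations but should be stated explicitly.
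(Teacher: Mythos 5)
Your proposal is correct and follows essentially the same route as the paper: the whole content is the duality isomorphism for the finite \'etale morphism $\alpha$, namely $\alpha_{\ast} R\sheafhom_{F'^{\perar}_{\et}}(G, \alpha^{\ast}H) \cong R\sheafhom_{F^{\perar}_{\et}}(\alpha_{\ast}G, H)$, proved by splitting $\alpha$ locally on the base (as in Milne, \'Etale Cohomology, Chapter V, Proposition 1.13), after which perfectness is transported formally. Your remarks about $\genby{\mathcal{W}_{F'}}$ are not actually needed, since the statement is purely about perfect pairings of arbitrary objects.
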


\begin{proof}
	This follows from the duality for finite \'etale morphisms
		\[
				\alpha_{\ast}
				R \sheafhom_{F^{\perar}_{1, \et}}(G, \alpha^{\ast} H)
			\cong
				R \sheafhom_{F^{\perar}_{\et}}(\alpha_{\ast} G, H)
		\]
	(where $H \in D(F^{\perar}_{\et})$),
	which can be proven in the same way as usual
	(\cite[Chapter V, Proposition 1.13]{Mil80}).
\end{proof}


\subsection{The cubical construction and Mac Lane's resolution}
\label{0154}

We recall the cubical construction $Q(G)$ and Mac Lane's resolution $M(G)$
of an abelian group $G$ (\cite{ML57}).
See also \cite[Section 13.2 and Exercise E.13.2.1]{Lod98}.
These constructions are useful for explicitly describing $\Ext$ groups over sites
in terms of cohomology groups.
The goal is to show that,
in order to prove duality statements over $\Spec F^{\perar}_{\et}$,
it is enough to prove them over $\Spec F^{\perar}_{\zar}$ (Proposition \ref{0026}).

For $n \ge 0$, set $2^{n} = \{0, 1\}^{n}$,
whose elements are $n$-tuples $(\varepsilon(1), \dots, \varepsilon(n))$
of numbers $\varepsilon(1), \dots, \varepsilon(n) \in 2 = \{0, 1\}$.
Let $Q'(G)$ be the graded abelian group in non-negative degrees
whose $n$-th term is $Q'_{n}(G) = \Z[G^{2^{n}}]$,
which is freely generated by functions $t \colon 2^{n} \to G$.
For each $1 \le i \le n + 1$ and $t \in G^{2^{n + 1}}$,
define functions $R_{i} t, S_{i} t, P_{i} t \in G^{2^{n}}$ by
	\begin{align*}
				(R_{i} t)(\varepsilon(1), \dots, \varepsilon(n))
		&	=
				t(\varepsilon(1), \dots, \varepsilon(i - 1), 0, \varepsilon(i), \dots, \varepsilon(n)),
		\\
				(S_{i} t)(\varepsilon(1), \dots, \varepsilon(n))
		&	=
				t(\varepsilon(1), \dots, \varepsilon(i - 1), 1, \varepsilon(i), \dots, \varepsilon(n)),
		\\
				(P_{i} t)(\varepsilon(1), \dots, \varepsilon(n))
		&	=
					t(\varepsilon(1), \dots, \varepsilon(i - 1), 0, \varepsilon(i), \dots, \varepsilon(n))
		\\
		&	\qquad
				+
					t(\varepsilon(1), \dots, \varepsilon(i - 1), 1, \varepsilon(i), \dots, \varepsilon(n)),
	\end{align*}
where the sum is the group operation in $G$.
Define $\boundary t \in \Z[G^{2^{n}}]$ by
	\[
			\boundary t
		=
			\sum_{i = 1}^{n + 1}
				(-1)^{i} (P_{i} t - R_{i} t - S_{i} t),
	\]
where the right-hand side is a (formal) sum in $\Z[G^{2^{n}}]$ (not sums in $G^{2^{n}}$).
Then $\boundary \compose \boundary = 0$.
We consider $Q'(G)$ as a complex (in the homological grading) with differential $\boundary$.

For each $1 \le i \le n$, a function $t \in G^{2^{n}}$ is said to be an $i$-slab
if $t(\varepsilon(1), \dots, \varepsilon(n)) = 0$ whenever $\varepsilon(i) = 0$
or if $t(\varepsilon(1), \dots, \varepsilon(n)) = 0$ whenever $\varepsilon(i) = 1$.
For each $1 \le i \le n - 1$ (where $n \ge 1$),
a function $t \in G^{2^{n}}$ is said to be an $i$-diagonal
if $t(\varepsilon(1), \dots, \varepsilon(n)) = 0$
whenever $\varepsilon(i) \ne \varepsilon(i + 1)$.
Let $N_{G} \subset Q'(G)$ be the graded subgroup generated by all slabs and diagonals.
It is a subcomplex.
Each term $N_{G, n} \subset Q'_{n}(G)$ is a direct summand
and the splitting $Q'_{n}(G) \cong N_{G, n} \oplus Q_{n}(G)$ can be taken functorially in $G$
(\cite[Section 5]{Pir96}, \cite[Proposition 2.6]{JP91}).
This splitting does not respect $\partial$, though.

Now define a complex by $Q(G) = Q'(G) / N_{G}$.
It has a $G$-augmentation given by $Q_{0}(G) = \Z[G] / {\Z \cdot (0)} \onto G$, $(g) \mapsto g$.
In particular, we have a $\Z$-augmented complex $Q(\Z)$.
For $t \in G^{2^{m}}$ and $u \in \Z^{2^{n}}$,
define a function $t u \in G^{2^{m + n}}$ by
	\[
			(t u)(\varepsilon(1), \dots, \varepsilon(m + n))
		=
				t(\varepsilon(1), \dots, \varepsilon(m))
			\cdot
				u(\varepsilon(m + 1), \dots, \varepsilon(m + n)),
	\]
where the dot on the right-hand side is the $\Z$-action on the abelian group $G$.
The linear extension of this operation defines a morphism
$Q(G) \tensor_{\Z} Q(\Z) \to Q(G)$ of graded abelian groups.
When $G = \Z$, this gives a differential graded ring structure on $Q(\Z)$,
and when $G$ is arbitrary,
it gives a differential graded right $Q(\Z)$-module structure on $Q(G)$.
The augmentation $Q(\Z) \onto \Z$ is a morphism of differential graded rings
when $\Z$ is viewed as concentrated in degree zero.
In particular, $\Z$ can be viewed as a differential graded left $Q(\Z)$-module.

Define $M(G) = (\dots \to M_{1}(G) \to M_{0}(G))$ to be the two-sided bar construction $B(Q(G), Q(\Z), \Z)$
(\cite[Appendix A]{GM74};
the notation in \cite[Section 7]{ML57} is
$Q(G) \tensor_{Q(\Z)} B(Q(\Z), \eta_{Q})$).
We do not review the definition of two-sided bar constructions.
It is a $G$-augmented complex.
As a graded abelian group (forgetting the differential),
the complex $B(Q(G), Q(\Z), \Z)$ as a functor in $G$ is given by
$Q(G) \tensor_{\Z} \mathcal{B}$
for some graded abelian group $\mathcal{B}$ that does not depend on $G$.
(The group $\mathcal{B}$ is given by $\Bar{B}(0, Q(\Z), \eta_{Q})$
in the notation of \cite[(7-5)]{ML57}
and $B(\Z, Q(\Z), \Z)$ as a two-sided bar construction.)
Each term $\mathcal{B}_{n}$ of $\mathcal{B}$ is free.
The augmentation $M(G) \to G$ gives a resolution
$\dots \to M_{1}(G) \to M_{0}(G) \to G \to 0$ of $G$.

The above constructions are functorial in $G$.
Hence they extend to any sheaves over any sites.
That is, for any sheaf $G \in \Ab(S)$ on a site $S$,
we have sheaves and complexes of sheaves $\Z[G]$, $Q'(G)$, $Q(G)$, $M(G)$ in $\Ab(S)$
by the sheafifications of the presheaves $X \mapsto \Z[G(X)], Q'(G(X)), Q(G(X)), M(G(X))$, respectively.
The complex $M(G)$ is a resolution of $G$ since sheafification is exact.


\subsection{The underlying set complexes}
\label{0155}

We need to describe $R \Hom_{F^{\perar}_{\Zar}}$ from a connected group $G$ to a constant group $H$
in order to prove Proposition \ref{0026} in the next subsection.
We take Mac Lane's resolution of $G$.
When describing $R \Hom_{F^{\perar}_{\Zar}}$ from $M(G)$ to $H$,
what matter are only the underlying sets of self products $G^{2^{n}}$ of $G$
and various maps between them.
Thus the problem is to describe the ``underlying set version'' of $M(G)$
or, better, of $Q(G)$.

Let $\Spec F_{\Zar}$ be the category of $F$-algebras (or $F$-schemes)
endowed with the Zariski topology.
Let $\Set$ be the topos of sets.
The functor sending an $F$-scheme $X$ to its underlying set $|X|$
defines a premorphism of sites $f \colon \Set \to \Spec F_{\Zar}$.
The pullback $f^{\ast} \Z[X]$ of the free abelian sheaf generated by a representable sheaf $X$
is given by the free abelian group $\Z[|X|]$ generated by $|X|$.
For a (commutative) group scheme $G$ over $F$,
define $|Q'|(G) = f^{\ast} Q'(G)$, $|Q|(G) = f^{\ast} Q(G)$ and $|M|(G) = f^{\ast} M(G)$.
They are complexes of abelian groups.
The $n$-th term $|Q'|_{n}(G)$ of the complex $|Q'|(G)$ is given by $\Z[|G^{2^{n}}|]$.
Here is the key technical result:

\begin{Prop} \label{0020}
	Let $G$ be an integral group scheme over $F$.
	Then $|Q|(G)$ is an exact complex.
\end{Prop}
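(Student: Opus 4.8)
The plan is to produce an explicit contracting homotopy on the underlying-set complex, using the generic point that $G$ carries because it is integral. Since $F$ is perfect and $G$ has the rational point $0$, the scheme $G$ integral implies $G$ geometrically integral; hence every self-product $G^{2^{n}}$ is integral---so has a well-defined generic point---and so is every fibre of a projection $G^{2^{n}}\times_{F}G\to G^{2^{n}}$ (such a fibre is $G\times_{F}\kappa(t)$). If $\dim G=0$ then $G$ is the trivial group and $|Q|(G)=0$, so assume $\dim G\ge 1$. I would first reduce to the unnormalized complex $|Q'|(G)$: the functorial splitting $Q'_{n}(G)\cong N_{G,n}\oplus Q_{n}(G)$ makes $0\to N_{G}\to Q'(G)\to Q(G)\to 0$ a short exact sequence of complexes of sheaves that is split in each degree, hence preserved by the additive functor $f^{\ast}$, and it yields a degreewise-split short exact sequence $0\to|N|_{G}\to|Q'|(G)\to|Q|(G)\to 0$ together with a long exact homology sequence. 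Now $N_{G}$ is acyclic (this acyclicity is part of Mac Lane's theory, being what makes $Q(G)\to Q'(G)$ a quasi-isomorphism; see \cite{ML57}), and its terms $N_{G,n}$, being direct summands of the free abelian sheaves $\Z[G^{2^{n}}]$, are flat and hence $L f^{\ast}$-acyclic; therefore $|N|_{G}=f^{\ast}N_{G}$ represents $L f^{\ast}(N_{G})\simeq 0$ and is acyclic, so by the long exact sequence it suffices to show that $|Q'|(G)$ is exact.

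For this I would construct a contracting homotopy $s_{n}\colon|Q'|_{n}(G)\to|Q'|_{n+1}(G)$ by ``adjoining the generic element of $G$ along a new cube-direction''. Let $\phi\colon G^{2^{n}}\times_{F}G\to G^{2^{n+1}}$ be the morphism sending $(x,y)$ to the cube $z$ with $z_{(\varepsilon,0)}=x_{\varepsilon}$ and $z_{(\varepsilon,1)}=x_{\varepsilon}+y$ for all $\varepsilon\in 2^{n}$; for a point $t\in|G^{2^{n}}|$ let $\xi_{t}\in|G^{2^{n}}\times_{F}G|$ be the generic point of the fibre of the first projection over $t$, and set $s_{n}(t)=\phi(\xi_{t})$, extended $\Z$-linearly. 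The face $R_{n+1}s_{n}(t)$ in the new direction is exactly $t$, while $S_{n+1}s_{n}(t)$, the term $P_{n+1}s_{n}(t)$, and all the faces $R_{i},S_{i},P_{i}$ with $i\le n$ are obtained from $\xi_{t}$ by morphisms dominant onto integral targets, so they carry $\xi_{t}$ to the corresponding generic points; these ``generic'' terms either coincide in pairs and cancel or else match the corresponding terms of $s_{n-1}(\partial t)$, so that $\partial s_{n}(t)+s_{n-1}(\partial t)=\pm[t]$. In degree $0$ this is simply the statement $\partial[\xi_{g}]=[g]$ for $g\in|G|$, where $\xi_{g}$ is the generic point of the fibre of a projection $G\times_{F}G\to G$ over $g$: the other projection and the addition map both carry $\xi_{g}$ to the generic point of $G$, so the two superfluous terms cancel. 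Hence $|Q'|(G)$, and therefore $|Q|(G)$, is exact.

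The step I expect to be the main obstacle is the verification that $\partial s+s\partial=\pm\,\mathrm{id}$ on $|Q'|(G)$: this is a bookkeeping argument using the cubical face identities of Section~\ref{0154}, keeping track of signs and of exactly which composite cubes built out of $\xi_{t}$ are carried to which generic points, so that every contribution other than $\pm[t]$ cancels in pairs. This is precisely where integrality is used---to make ``the generic point'' unambiguous and stable under the structure maps---and where $F$ being perfect enters, guaranteeing that the products $G^{2^{n}}$ and the fibres in question are integral. It is worth noting that already the degree-zero assertion, that $\partial\colon|Q|_{1}(G)\to|Q|_{0}(G)$ is surjective, fails without the integrality hypothesis: for the constant group scheme $\Z/p\Z$, which is not integral, one has $H_{0}(|Q|(\Z/p\Z))\cong\Z/p\Z\neq 0$.
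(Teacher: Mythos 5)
Your overall architecture (reduce to the unnormalized complex $|Q'|(G)$, contract it, then transfer exactness to $|Q|(G)$ via the normalization) is a legitimate alternative to the paper's argument, but the contracting homotopy you construct on $|Q'|(G)$ does not satisfy $\partial s+s\partial=\pm\,\mathrm{id}$, and this is not a sign-bookkeeping issue. Write $Z=\overline{\{t\}}$ and let $\Delta\colon G\to G^{2^{n}}$ be the diagonal; your $s_{n}(t)$ is the generic point of $\overline{\{(x,\,x+\Delta(y)):x\in Z,\ y\in G\}}$. Then indeed $R_{n+1}s_{n}(t)=t$, but $S_{n+1}s_{n}(t)=\overline{Z+\Delta(G)}$ and $P_{n+1}s_{n}(t)=\overline{2Z+\Delta(G)}$: these are diagonal sweeps of the $n$-cube $Z$, they are distinct points in general, and nothing in $s_{n-1}(\partial t)$ (whose terms are translation graphs over $(n-1)$-cubes) can cancel them. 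In addition, for $i\le n$ one computes $P_{i}\bigl(s_{n}(t)\bigr)=\overline{\phi(P_{i}(Z)\times 2G)}$, which differs from $s_{n-1}(\overline{P_{i}t})$ whenever multiplication by $2$ is not surjective on $G$ (e.g.\ $G=\Ga$ with $p=2$). Concretely, for $n=1$ and a rational point $t=(a,b)$ with $a\ne b$, the element $\partial s_{1}(t)+s_{0}(\partial t)$ contains the uncancelled terms $[\overline{\{(2a+y,2b+y)\}}]-[\overline{\{(a+y,b+y)\}}]$ and the points $[\eta_{\{a\}\times G}]$, $[\eta_{\{b\}\times G}]$ each with coefficient $2$, so it is not $\pm[t]$. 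The homotopy that does work on $|Q'|(G)$ is the one sketched in Section \ref{0476}: send $X$ to the generic point of $G^{2^{n}}\times X$, i.e.\ put the generic point of the \emph{full} product, not a translate of $X$, in the new $0$-face. Then $R_{1}$ and $P_{1}$ of $G^{2^{n}}\times X$ are both the generic point of $G^{2^{n}}$ (the sum map $G^{2^{n}}\times X\onto G^{2^{n}}$ is surjective) and cancel exactly, $S_{1}$ gives $X$, and the faces $j\ge 2$ reproduce $V'(\partial X)$, yielding $\partial V'+V'\partial=\mathrm{id}$ on the nose.

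Once the homotopy is corrected, the remaining difficulty — which is the entire content of the paper's proof — is that $V'$ does not preserve slabs and diagonals and hence does not descend to $|Q|(G)$. The paper handles this by modifying $V'$ to the operator $V$ of \eqref{0479} and proving that $\varphi=\partial V+V\partial-\mathrm{id}$ is nilpotent in each degree (Propositions \ref{0011}--\ref{0019}), which still forces $|Q|(G)$ to be exact. Your proposal routes around this through the degreewise-split sequence $0\to|N|_{G}\to|Q'|(G)\to|Q|(G)\to 0$ together with the claims that $N_{G}$ is acyclic as a complex of sheaves and that its flat terms make $f^{\ast}N_{G}$ compute $Lf^{\ast}N_{G}\simeq 0$. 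The second claim is fine, but the first is exactly the normalization theorem for the cubical construction, which you invoke without proof; if you supply it (it is in Eilenberg--Mac Lane's treatment), this detour is a genuinely different and arguably shorter way to finish than the nilpotence argument. As written, however, the proof fails at the homotopy identity.
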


This in particular claims that
$\partial \colon |Q|_{1}(G) \to |Q|_{0}(G)$ is surjective
(no augmentation considered).

Below we will prove the proposition.
Here is the idea of the proof.
Below we identify a point of a scheme and the corresponding irreducible closed subset
or the integral closed subscheme.
For an irreducible closed $X \subset G$,
consider $V'(X) := G \times_{F} X \subset G^{2}$ as an element of $|Q'|_{1}(G)$.
Since the group operation map $G \times G \to G$ restricts to a surjection
$G \times X \onto G$, we have $P_{1}(X) = G$.
Therefore
	\[
			\partial V'(X)
		=
			R_{1}(X) + S_{1}(X) - P_{1}(X)
		=
			G + X - G
		=
			X.
	\]
Thus any element of $|Q'_{0}|(G)$ is a boundary.
For any $n \ge 0$ and irreducible closed $X \subset G^{2^{n}}$,
we can similarly define $V'(X) = G^{2^{n}} \times X \subset G^{2^{n + 1}}$
and show $\partial V'(X) + V' \partial(X) = X$,
thus proving $|Q'|(G)$ is an exact complex.
The problem is that $V'$ does not preserve slabs and diagonals
and hence does not induce a map $|Q_{n}|(G) \to |Q_{n + 1}|(G)$.
For example, if $X = G \times 0 \subset G^{2}$ is a $1$-slab,
then
	\[
			V'(X)
		=
			\begin{pmatrix}
				G & G \\
				G & 0
			\end{pmatrix}
		\subset
			G^{4},
	\]
which is neither a slab or a diagonal.
To overcome this, we modify $V'(X) = G^{2^{n}} \times X$
so that if $X$ has a zero entry somewhere
(meaning the projection to that entry is the set $\{0\}$),
then we replace the corresponding entry of $G^{2^{n}}$ by zero (see \eqref{0479}).
This new map $V$ gives a well-defined map $|Q_{n}|(G) \to |Q_{n + 1}|(G)$.
Unfortunately, it no longer satisfies $\partial V(X) + V \partial(X) = X$.
Nonetheless, we can prove a slightly weaker property,
namely, $\varphi := \partial V + V \partial - \id$
is nilpotent in each degree (Proposition \ref{0019}),
since applying $\varphi$ more and more makes $X$ closer and closer to the regular shape
where entries are either zero or $G$ (Proposition \ref{0018}).
This nilpotence is enough to conclude the exactness of $|Q|(G)$.

Now we start proving the proposition.
For any $n \ge 0$ and any $(\varepsilon(1), \dots, \varepsilon(n)) \in 2^{n}$,
let $p_{(\varepsilon(1), \dots, \varepsilon(n))} \colon G^{2^{n}} \onto G$
be the projection onto the product factor
corresponding to $(\varepsilon(1), \dots, \varepsilon(n))$.
For an irreducible closed $X \subset G^{2^{n}}$,
define an integer $r_{X}(\varepsilon(1), \dots, \varepsilon(n)) \in \{0, 1\}$ to be
$0$ if $p_{(\varepsilon(1), \dots, \varepsilon(n))}(X) = \{0\}$
and $1$ otherwise.
Define
	\begin{equation} \label{0479}
			V(X)
		=
				\biggl(
					\prod_{(\varepsilon(1), \dots, \varepsilon(n))}
						G^{r_{X}(\varepsilon(1), \dots, \varepsilon(n))}
				\biggr)
			\times
				X
		\subset
			G^{2^{n}} \times G^{2^{n}}
	\end{equation}
where products are fiber products over $F$.
We view $V(X)$ as a closed subscheme of $G^{2^{n + 1}}$ via the assignment
	\begin{align*}
		&
					\bigl(
						t(0, \varepsilon(1), \dots, \varepsilon(n)),
						t(1, \varepsilon(1), \dots, \varepsilon(n))
					\bigr)_{\varepsilon(1), \dots, \varepsilon(n)}
				\in
					V(X)
		\\
		&	\leftrightarrow
					\bigl(
						t(\varepsilon(1), \dots, \varepsilon(n + 1))
					\bigr)_{\varepsilon(1), \dots, \varepsilon(n + 1)}
				\in
					G^{2^{n + 1}}.
	\end{align*}
This defines a map $V \colon |G^{2^{n}}| \to |G^{2^{n + 1}}|$,
which linearly extends to a homomorphism
$V \colon |Q'|_{n}(G) \to |Q'|_{n + 1}(G)$.
If $X$ is a slab or a diagonal, then so is $V(X)$.
Hence it induces a homomorphism
$V \colon |Q|_{n}(G) \to |Q|_{n + 1}(G)$.
Define an endomorphism $\varphi$ of the chain complex $|Q|(G)$ by
$\varphi = \partial V + V \partial - \id$.
The terms of $\varphi(X)$ are more ``regular'' than $X$
in the following sense:

\begin{Prop} \label{0011}
	Let $X \subset G^{2^{n}}$ be irreducible closed.
	Then $\varphi(X)$ is a $\Z$-linear combination
	of the following types of irreducible subsets of $G^{2^{n}}$:
		\begin{equation} \label{0012}
				\prod_{\varepsilon(1), \dots, \varepsilon(n - 1)}
					G^{r(\varepsilon(1), \dots, \varepsilon(n - 1))}
			\times
				\closure{P_{j}(X)},
		\end{equation}
	where the integers $r(\varepsilon(1), \dots, \varepsilon(n - 1))$ are either $0$ or $1$,
	the integer $j$ satisfies $1 \le j \le n$,
	and the scheme $\closure{P_{j}(X)}$ is the closure of $P_{j}(X)$ in $G^{2^{n - 1}}$,
	satisfying the following property:
	\begin{equation} \label{0481}
		\begin{minipage}[c]{300pt}
			if $r(\varepsilon(1), \dots, \varepsilon(n - 1)) = 0$,
			then $p_{\varepsilon(1), \dots, \varepsilon(n - 1)}(P_{j}(X)) = \{0\}$.
		\end{minipage}
	\end{equation}
\end{Prop}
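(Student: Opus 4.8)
The plan is to compute $\varphi(X) = \boundary V(X) + V \boundary(X) - X$ directly for an irreducible closed $X \subset G^{2^{n}}$, and to observe that once the contributions of the $R$- and $S$-type faces cancel between $\boundary V(X)$ and $V \boundary(X)$, only expressions of type \eqref{0012} survive. Write $Y = \prod_{\varepsilon \in 2^{n}} G^{r_{X}(\varepsilon)}$, so that $V(X) = Y \times X \subset G^{2^{n}} \times G^{2^{n}} = G^{2^{n + 1}}$, where the coordinate $\varepsilon(1)$ introduced by $V$ splits $G^{2^{n + 1}}$ into the $\varepsilon(1) = 0$ part carrying $Y$ and the $\varepsilon(1) = 1$ part carrying $X$. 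Throughout I will use that the set-theoretic image of an irreducible closed subset under a morphism of $F$-schemes is irreducible with irreducible closure, that $\closure{A \times B} = \closure{A} \times \closure{B}$ when one of $A$, $B$ is geometrically irreducible, and that an integral group scheme over $F$ coincides with its identity component and is hence geometrically irreducible — so that any product $\prod_{\varepsilon} G^{r(\varepsilon)}$ is geometrically irreducible and the subsets appearing in \eqref{0012} are genuinely irreducible. All computations may be carried out in $|Q'|(G)$ and then projected to the quotient $|Q|(G)$, since $V$ preserves slabs and diagonals.

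The first step is the $i = 1$ term of $\boundary V(X)$. By construction $R_{1} V(X) = Y$ and $S_{1} V(X) = X$, and I claim $P_{1} V(X) = Y$ as well: coordinatewise, $a_{\varepsilon} + b_{\varepsilon}$ with $a_{\varepsilon}$ ranging over $G^{r_{X}(\varepsilon)}$ and $b_{\varepsilon}$ over $p_{\varepsilon}(X)$ sweeps out all of $G$ when $r_{X}(\varepsilon) = 1$ and equals $\{0\}$ when $r_{X}(\varepsilon) = 0$ (using that $r_{X}(\varepsilon) = 0$ forces $p_{\varepsilon}(X) = \{0\}$). Hence the $i = 1$ term equals $(-1)^{1}\bigl(\closure{P_{1} V(X)} - \closure{R_{1} V(X)} - \closure{S_{1} V(X)}\bigr) = -(Y - Y - X) = X$, which exactly cancels the $-\id$ in $\varphi$.

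Next I will treat $2 \le i \le n + 1$; write $j = i - 1$. Since $R_{i}, S_{i}, P_{i}$ act on an old coordinate and respect the $\varepsilon(1)$-splitting of $V(X) = Y \times X$, one gets $R_{i} V(X) = R_{j}(Y) \times R_{j}(X)$, $S_{i} V(X) = S_{j}(Y) \times S_{j}(X)$ and $P_{i} V(X) = P_{j}(Y) \times P_{j}(X)$, each living in $G^{2^{n - 1}} \times G^{2^{n - 1}} = G^{2^{n}}$, with $R_{j}(Y), S_{j}(Y), P_{j}(Y)$ again products of copies of $G$ and of $\{0\}$. In parallel I will apply $V$ to each term of $\boundary X = \sum_{k = 1}^{n} (-1)^{k}\bigl(\closure{P_{k} X} - \closure{R_{k} X} - \closure{S_{k} X}\bigr)$. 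The crux is that $V$ is compatible with the coordinate projections: since $p_{\varepsilon'}(R_{k} X) = p_{\varepsilon'_{0}}(X)$, where $\varepsilon'_{0}$ denotes $\varepsilon'$ with $0$ inserted in the $k$-th place, the regularizing datum satisfies $r_{\closure{R_{k} X}}(\varepsilon') = r_{X}(\varepsilon'_{0})$, hence $V(\closure{R_{k} X}) = R_{k}(Y) \times \closure{R_{k} X} = \closure{R_{k}(Y) \times R_{k}(X)}$, and symmetrically for $S_{k}$. Lining up the signs of these terms in $(\boundary V(X) - X) + V \boundary(X)$, the $R$-terms appear with coefficient $(-1)^{j + 1} + (-1)^{j} = 0$ and cancel, and the $S$-terms likewise.

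After these cancellations one is left with $\varphi(X) = \sum_{j = 1}^{n} (-1)^{j + 1}\bigl(\closure{P_{j}(Y) \times P_{j}(X)} - V(\closure{P_{j}(X)})\bigr)$. Here $\closure{P_{j}(Y) \times P_{j}(X)} = P_{j}(Y) \times \closure{P_{j} X}$ and $V(\closure{P_{j} X}) = \bigl(\prod_{\varepsilon'} G^{r_{\closure{P_{j} X}}(\varepsilon')}\bigr) \times \closure{P_{j} X}$ are both of type \eqref{0012} with $1 \le j \le n$, because $P_{j}(Y)$ and $\prod_{\varepsilon'} G^{r_{\closure{P_{j} X}}(\varepsilon')}$ are products of copies of $G$ and of $\{0\}$ over $2^{n - 1}$. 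It then remains to check \eqref{0481}: for the second term it is the definition of $r_{\closure{P_{j} X}}$, and for the first, $P_{j}(Y)$ has exponent $0$ at $\varepsilon'$ exactly when $r_{X}(\varepsilon'_{0}) = r_{X}(\varepsilon'_{1}) = 0$, i.e.\ $p_{\varepsilon'_{0}}(X) = p_{\varepsilon'_{1}}(X) = \{0\}$, so that $p_{\varepsilon'}(P_{j} X) = p_{\varepsilon'_{0}}(X) + p_{\varepsilon'_{1}}(X) = \{0\}$. I expect the main obstacle to be precisely the bookkeeping that produces the cancellation of the $R$- and $S$-terms: one must track carefully which coordinate $V$ inserts into the cube $2^{n + 1}$, how $R_{i}, S_{i}, P_{i}$ with $i \ge 2$ descend to operations on $2^{n}$, and that $V$ commutes with the projections both at the level of the regularizing datum $r$ and at the level of closures of products.
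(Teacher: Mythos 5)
Your proof is correct and follows essentially the same route as the paper's: the $i=1$ face of $\boundary V(X)$ cancels the $-\id$, the identities $V(\closure{R_{j}X}) = \closure{R_{j+1}V(X)}$ and $V(\closure{S_{j}X}) = \closure{S_{j+1}V(X)}$ make the $R$- and $S$-terms cancel between $\boundary V(X)$ and $V\boundary(X)$, and the surviving $P$-terms $P_{j}(Y)\times\closure{P_{j}X}$ and $V(\closure{P_{j}X})$ are exactly of type \eqref{0012} with Property \eqref{0481}. The only difference is cosmetic: you make the cancellation and the irreducibility of the resulting pieces explicit, which the paper leaves implicit.
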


\begin{proof}
	We have
		$
				X
			\subset
				\prod_{\varepsilon(1), \dots, \varepsilon(n)}
					G^{r_{X}(\varepsilon(1), \dots, \varepsilon(n))}
		$
	as a closed subscheme of $G^{2^{n}}$
	by the definition of $r_{X}(\varepsilon(1), \dots, \varepsilon(n))$.
	Hence the group operation map
	$G^{2^{n}} \times G^{2^{n}} \to G^{2^{n}}$
	restricts to a surjection
		$
				V(X)
			\onto
				\prod_{\varepsilon(1), \dots, \varepsilon(n)}
					G^{r_{X}(\varepsilon(1), \dots, \varepsilon(n))}
		$.
	This means that
		$
				P_{1}(V(X))
			=
				\prod_{\varepsilon(1), \dots, \varepsilon(n)}
					G^{r_{X}(\varepsilon(1), \dots, \varepsilon(n))}
		$.
	Therefore
		\begin{align*}
		&		\closure{P_{1}(V(X))} - \closure{R_{1}(V(X))} - \closure{S_{1}(V(X))}
		\\
		&	=
					\prod_{\varepsilon(1), \dots, \varepsilon(n)}
						G^{r_{X}(\varepsilon(1), \dots, \varepsilon(n))}
				-
					\prod_{\varepsilon(1), \dots, \varepsilon(n)}
						G^{r_{X}(\varepsilon(1), \dots, \varepsilon(n))}
				+
					X
		\\
		&	=
				X.
		\end{align*}
	
	Let $1 \le j \le n$.
	For each $\varepsilon(1), \dots, \varepsilon(n - 1) \in 2$,
	define $r_{j}(\varepsilon(1), \dots, \varepsilon(n - 1)))$ to be
	the maximum of the set
		\[
			\bigl\{
				r_{X}(\varepsilon(1), \dots, \varepsilon(j - 1), 0, \varepsilon(j), \dots, \varepsilon(n - 1)),\,
				r_{X}(\varepsilon(1), \dots, \varepsilon(j - 1), 1, \varepsilon(j), \dots, \varepsilon(n - 1))
			\bigr\}.
		\]
	Then the the group operation $G \times G \to G$
	restricts to a surjection
		\[
					G^{r_{X}(\varepsilon(1), \dots, \varepsilon(j - 1), 0, \varepsilon(j), \varepsilon(n - 1))}
				\times
					G^{r_{X}(\varepsilon(1), \dots, \varepsilon(j - 1), 1, \varepsilon(j), \varepsilon(n - 1))}
			\onto
				G^{r_{j}(\varepsilon(1), \dots, \varepsilon(n - 1))}.
		\]
	Hence
		\[
				\closure{P_{j + 1}(V(X))}
			=
					\prod_{\varepsilon(1), \dots, \varepsilon(n - 1)}
						G^{r_{j}(\varepsilon(1), \dots, \varepsilon(n - 1))}
				\times
					\closure{P_{j}(X)},
		\]
	which is of the form \eqref{0012} satisfying Property \eqref{0481}.
	We have
		\begin{gather*}
					\closure{R_{j + 1}(V(X))}
				=
						\prod_{\varepsilon(1), \dots, \varepsilon(n - 1)}
							G^{r_{X}(\varepsilon(1), \dots, \varepsilon(j - 1), 0, \varepsilon(j), \dots, \varepsilon(n - 1))}
					\times
						\closure{R_{j}(X)},
			\\
					\closure{S_{j + 1}(V(X))}
				=
						\prod_{\varepsilon(1), \dots, \varepsilon(n - 1)}
							G^{r_{X}(\varepsilon(1), \dots, \varepsilon(j - 1), 1, \varepsilon(j), \dots, \varepsilon(n - 1))}
					\times
						\closure{S_{j}(X)}.
		\end{gather*}
	
	On the other hand, we have
		\[
				V(\closure{P_{j}(X)})
			=
				\biggl(
					\prod_{\varepsilon(1), \dots, \varepsilon(n - 1)}
						G^{r_{\closure{P_{j}(X)}}(\varepsilon(1), \dots, \varepsilon(n - 1))}
				\biggr)
			\times
				\closure{P_{j}(X)}
		\]
	by definition, which is of the form \eqref{0012} satisfying Property \eqref{0481}.
	Also,
		\begin{gather*}
					r_{\closure{R_{j}(X)}}(\varepsilon(1), \dots, \varepsilon(n - 1))
				=
					r_{X}(\varepsilon(1), \dots, \varepsilon(j - 1), 0, \varepsilon(j), \dots, \varepsilon(n - 1)),
			\\
					r_{\closure{S_{j}(X)}}(\varepsilon(1), \dots, \varepsilon(n - 1))
				=
					r_{X}(\varepsilon(1), \dots, \varepsilon(j - 1), 1, \varepsilon(j), \dots, \varepsilon(n - 1)).
		\end{gather*}
	Combining these with the above presentation of
	$\closure{R_{j + 1}(V(X))}$ and $\closure{S_{j + 1}(V(X))}$, we have
		\[
				V(\closure{R_{j}(X)})
			=
				\closure{R_{j + 1}(V(X))},
			\quad
				V(\closure{S_{j}(X)})
			=
				\closure{S_{j + 1}(V(X))}.
		\]
	Now the result follows by the definition of $\partial$ for $Q'$.
\end{proof}

The map $\varphi$ is zero on a most regular $X$:

\begin{Prop} \label{0013}
	Let $r \in 2^{2^{n}}$.
	Then
		\[
				\varphi \biggl(
					\prod_{\varepsilon(1), \dots, \varepsilon(n)}
						G^{r(\varepsilon(1), \dots, \varepsilon(n))}
				\biggr)
			=
				0.
		\]
\end{Prop}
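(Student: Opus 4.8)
The plan is to establish the stronger identity $\partial V(X) + V\partial(X) = X$ in $|Q'|_n(G)$ itself (hence also in the quotient $|Q|_n(G)$) for the fully regular element $X = \prod_{\varepsilon(1),\dots,\varepsilon(n)} G^{r(\varepsilon(1),\dots,\varepsilon(n))}$; since $\varphi = \partial V + V\partial - \id$, this yields $\varphi(X) = 0$. First I would observe that $X$ is an integral closed subscheme of $G^{2^n}$: over the perfect field $F$ an integral group scheme is smooth and connected, hence geometrically integral, so every $G^{2^m}$ is integral and $X$, obtained by setting $t(\varepsilon(1),\dots,\varepsilon(n)) = 0$ at certain $\varepsilon$, is integral too. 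Thus $X$ is a genuine generator of $|Q'|_n(G)$ and $r_X = r$, so by \eqref{0479} $V(X) = \bigl(\prod_\varepsilon G^{r(\varepsilon)}\bigr) \times X$ is again fully regular: it is the subscheme of $G^{2^{n+1}}$ associated with the function $\tilde r(\varepsilon(1),\dots,\varepsilon(n+1)) = r(\varepsilon(2),\dots,\varepsilon(n+1))$, which does not depend on its first argument.

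The core of the argument is the behaviour of $R_i$, $S_i$, $P_i$ on $V(X)$. Since $\tilde r$ ignores the first coordinate, $R_1(V(X)) = S_1(V(X)) = P_1(V(X)) = X$: slicing $\tilde r$ at $\varepsilon(1) = 0$ or at $\varepsilon(1) = 1$ recovers $r$, and $P_1$ on a fully regular subscheme replaces a pair of functions by their coordinatewise maximum, here $\max(r,r) = r$. For $1 \le j \le n$ one has, in $|Q'|_n(G)$, the identities $R_{j+1}(V(X)) = V(R_j X)$, $S_{j+1}(V(X)) = V(S_j X)$ and $P_{j+1}(V(X)) = V(P_j X)$. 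These are the identities obtained in the course of the proof of Proposition \ref{0011}, where the closures that occur there are redundant for fully regular $X$ because $R_j X$, $S_j X$, $P_j X$ are then fully regular, hence already closed; they may also be checked directly on the associated $\{0,1\}$-valued functions, the point being that $V$ (which inserts a new first coordinate) carries the operator on functions on $2^n$ to the correspondingly indexed operator on functions on $2^{n+1}$. In particular $\partial X = \sum_{j=1}^n (-1)^j (P_j X - R_j X - S_j X)$ is a $\Z$-linear combination of fully regular subschemes, so $V(\partial X)$ is defined by linear extension.

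Finally I would substitute into $\partial V(X) = \sum_{i=1}^{n+1} (-1)^i\bigl(P_i V(X) - R_i V(X) - S_i V(X)\bigr)$. The $i = 1$ summand is $-(X - X - X) = X$; for $1 \le j \le n$ the $i = j + 1$ summand is $(-1)^{j+1} V(P_j X - R_j X - S_j X)$, and these sum to $-V(\partial X)$. Hence $\partial V(X) = X - V(\partial X)$, which is the asserted identity, and so $\varphi(X) = 0$. I do not anticipate a genuine obstacle here: the only delicate point is the bookkeeping — keeping the index shift straight between the operators on functions on $2^{n+1}$ and on $2^n$, tracking the signs in $\partial$, and noticing that the extra ``$i = 1$'' boundary term, the one without a counterpart coming from $V\partial$, is exactly what produces $X$.
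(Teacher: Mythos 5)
Your proof is correct and follows essentially the same route as the paper: both arguments come down to the identities $R_{j+1}(V(X)) = V(R_j X)$, $S_{j+1}(V(X)) = V(S_j X)$, $P_{j+1}(V(X)) = V(P_j X)$ from the proof of Proposition \ref{0011}, with the only new content being that for fully regular $X$ the maximum defining $P_{j+1}(V(X))$ coincides with $r_{\closure{P_j(X)}}$, so that $\varphi(X)=0$. Your write-up just makes explicit the bookkeeping (the $i=1$ term producing $X$, the redundancy of the closures) that the paper leaves implicit.
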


\begin{proof}
	Let $X = \prod_{\varepsilon(1), \dots, \varepsilon(n)}
	G^{r(\varepsilon(1), \dots, \varepsilon(n))}$.
	Then $r_{X} = r$.
	From the proof of Proposition \ref{0011},
	we need to show that
		\begin{align*}
			&		\max
					\bigl\{
						r(\varepsilon(1), \dots, \varepsilon(j - 1), 0, \varepsilon(j), \dots, \varepsilon(n - 1)),\,
						r(\varepsilon(1), \dots, \varepsilon(j - 1), 1, \varepsilon(j), \dots, \varepsilon(n - 1))
					\bigr\}
			\\
			&	=
					r_{\closure{P_{j}(X)}}(\varepsilon(1), \dots, \varepsilon(n - 1)).
		\end{align*}
	This follows from the definition of $r_{\closure{P_{j}(X)}}$.
\end{proof}

Now comes the induction step.
Let $n \ge 0$ and $1 \le i \le n$.
Consider the closed subschemes of $G^{2^{n} - 2^{n - i}} \times G^{2^{n - i}}$ of the form:
	\begin{equation} \label{0014}
			\Biggl(
				\prod_{\substack{
					\varepsilon(1), \dots, \varepsilon(n) \text{ s.t.} \\
					(\varepsilon(1), \dots, \varepsilon(i)) \ne (1, \dots, 1)
				}}
					G^{s(\varepsilon(1), \dots, \varepsilon(n))}
			\Biggr)
		\times
			Y,
	\end{equation}
where the integers $s(\varepsilon(1), \dots, \varepsilon(n))$ for $\varepsilon(1), \dots, \varepsilon(n) \in 2$
with $(\varepsilon(1), \dots, \varepsilon(i)) \ne (1, \dots, 1)$ are either $0$ or $1$,
the scheme $Y$ is an integral closed subscheme of $G^{2^{n - i}}$,
satisfying the following property:
	\begin{equation} \label{0480}
		\begin{minipage}[c]{300pt}
			For any $\varepsilon(1), \dots, \varepsilon(n) \in 2$,
			if exactly only one of $\varepsilon(1), \dots, \varepsilon(i)$ is $0$
			and $s(\varepsilon(1), \dots, \varepsilon(n)) = 0$,
			then $p_{\varepsilon(i + 1), \dots, \varepsilon(n)}(Y) = \{0\}$.
		\end{minipage}
	\end{equation}
We view them as closed subschemes of $G^{2^{n}}$ via the restriction of the assignment
	\[
			G^{2^{n} - 2^{n - i}} \times G^{2^{n - i}}
		\leftrightarrow
			G^{2^{n}}
	\]
given by
	\begin{align*}
		&		\Bigl(
					\bigl(
						t(\varepsilon(1), \dots, \varepsilon(n))
					\bigr)_{\substack{
						\varepsilon(1), \dots, \varepsilon(n) \text{ s.t.} \\
						(\varepsilon(1), \dots, \varepsilon(i)) \ne (1, \dots, 1)
					}},
					\bigl(
						t(1, \dots, 1, \varepsilon(i + 1), \dots, \varepsilon(n))
					\bigr)_{\varepsilon(i + 1), \dots, \varepsilon(n)}
				\Bigr)
		\\
		&	\leftrightarrow
				\bigl(
					t(\varepsilon(1), \dots, \varepsilon(n))
				\bigr)_{\varepsilon(i + 1), \dots, \varepsilon(n)}
	\end{align*}
Also consider the closed subschemes of $G^{2^{n}}$ of the form:
	\begin{equation} \label{0017}
		\prod_{\varepsilon(1), \dots, \varepsilon(n)}
			G^{s(\varepsilon(1), \dots, \varepsilon(n))},
	\end{equation}
where $s \in 2^{2^{n}}$.
Define $C_{n, i}$ to be the subgroup of $|Q|_{n}(G)$ generated by
the closed subschemes of $G^{2^{n}}$ of the form
\eqref{0014} and
\eqref{0017}.
Also define $C_{n, 0} = |Q|_{n}(G)$,
and define $C_{n, n + 1}$ to be the subgroup of $|Q|_{n}(G)$ generated by
the closed subschemes of $G^{2^{n}}$ of the form
\eqref{0017}.

\begin{Prop} \label{0018}
	Let $n \ge 0$ and $0 \le i \le n$.
	Then $\varphi \colon |Q|_{n}(G) \to |Q|_{n}(G)$ maps
	$C_{n, i}$ into $C_{n, i + 1}$.
\end{Prop}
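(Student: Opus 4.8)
The plan is to verify the containment on generators of $C_{n,i}$ and to re-examine the computation underlying Proposition \ref{0011}, now keeping track of how the shape \eqref{0014} and Property \eqref{0480} are promoted from level $i$ to level $i+1$. Recall $C_{n,i}$ is generated by the subschemes of the form \eqref{0014} (for $1 \le i \le n$) together with those of the form \eqref{0017}, while $C_{n,0} = |Q|_{n}(G)$; since $G$ is integral over the perfect field $F$ it is smooth and geometrically integral, so all the subschemes below are integral and hence honest generators of $|Q'|_{\bullet}(G) = \Z[|G^{2^{\bullet}}|]$. On a generator of the form \eqref{0017} we have $\varphi = 0$ by Proposition \ref{0013}, and $0 \in C_{n,i+1}$; and the case $i = 0$ is precisely Proposition \ref{0011}, because the subschemes \eqref{0012} satisfying \eqref{0481} are exactly those of the form \eqref{0014} with $i$ replaced by $1$. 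So the real task is to treat a generator
\[
	X = \Biggl( \prod_{(\varepsilon(1), \dots, \varepsilon(i)) \ne (1, \dots, 1)} G^{s(\varepsilon(1), \dots, \varepsilon(n))} \Biggr) \times Y
\]
of the form \eqref{0014} with $1 \le i \le n$.

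First I would record that $V(X)$ is again of the form \eqref{0014}, now at level $i+1$, satisfying Property \eqref{0480}: inspecting \eqref{0479}, the coordinate of $V(X)$ at $(\varepsilon(1), \dots, \varepsilon(n+1))$ is a regular entry ($\{0\}$ or $G$, governed by $r_{X}$) unless $(\varepsilon(1), \dots, \varepsilon(i+1)) = (1, \dots, 1)$, where it is the corresponding coordinate of $Y$; and Property \eqref{0480} for $X$, together with $r_{X}(1, \dots, 1, \varepsilon(i+2), \dots) = r_{Y}(\varepsilon(i+2), \dots)$ and the surjectivity of the multiplication $G^{a} \times G^{b} \onto G^{\max(a,b)}$, yields Property \eqref{0480} for $V(X)$ at level $i+1$. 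Next I would run the manipulation from the proof of Proposition \ref{0011} unchanged — the $\closure{R}$- and $\closure{S}$-terms of $\partial V(X)$ cancel the $V(\closure{R_{j}(X)})$- and $V(\closure{S_{j}(X)})$-terms, and the $j = 1$ summand of $\partial V(X)$ cancels the term $-X$ — to obtain
\[
	\varphi(X) = \sum_{j=1}^{n} (-1)^{j+1}\bigl( \closure{P_{j+1}(V(X))} - V(\closure{P_{j}(X)}) \bigr),
\]
with $\closure{P_{j+1}(V(X))}$ and $V(\closure{P_{j}(X)})$ each of the form $(\text{box}) \times \closure{P_{j}(X)}$.

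The heart of the matter is the structure of $\closure{P_{j}(X)}$. If $j \le i$, the operator $P_{j}$ merges one of the first $i$ coordinates, and I claim $\closure{P_{j}(X)}$ is then of the form \eqref{0017}: a coordinate indexed by a tuple whose remaining first $i-1$ entries are not all $1$ is a sum $G^{a} + G^{b} = G^{\max(a,b)}$ of two free entries, while a coordinate indexed by a tuple whose remaining first $i-1$ entries are all $1$ is $G^{s_{0}} + (\text{a coordinate of }Y)$, which equals $G$ if $s_{0} = 1$ and, by Property \eqref{0480} applied to the tuple carrying a single $0$ at position $j$, equals $\{0\}$ if $s_{0} = 0$ — in either case a fixed copy of $G$ or $\{0\}$ independent of the variable part of $Y$, so $\closure{P_{j}(X)}$ is a genuine box and both $\closure{P_{j+1}(V(X))}$ and $V(\closure{P_{j}(X)})$ are of the form \eqref{0017}, hence in $C_{n,i+1}$. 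If $j > i$, then $P_{j}$ merges a coordinate of $Y$, and $\closure{P_{j}(X)}$ is of the form \eqref{0014} at level $i$ with $Y$ replaced by $\closure{P_{j-i}(Y)}$; Property \eqref{0480} for it follows from that of $X$ and the surjectivity of $G^{a} \times G^{b} \onto G^{\max(a,b)}$, exactly as in Propositions \ref{0011} and \ref{0013}. Consequently $\closure{P_{j+1}(V(X))}$ and $V(\closure{P_{j}(X)})$, being a box times $\closure{P_{j}(X)}$, are of the form \eqref{0014} at level $i+1$ satisfying Property \eqref{0480} (the check of \eqref{0480} being once more a matching of index sets plus surjectivity of multiplication), hence in $C_{n,i+1}$.

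The step I expect to be the main obstacle is the bookkeeping: correctly identifying, after applying $V$ and then a $P_{j}$, which coordinate of the output plays the role of the $(i+1)$st coordinate in \eqref{0014}, and verifying Property \eqref{0480} at level $i+1$ in each subcase — in particular the subcase where the distinguished $0$ occupies the coordinate freshly introduced by $V$. None of this is conceptually deep once one observes that Property \eqref{0480} is exactly what forces the cancellation $\{0\} + \{0\} = \{0\}$ in the range $j \le i$ and what is stable under ``add two coordinates'' in the range $j > i$; but the index manipulations must be carried out with care.
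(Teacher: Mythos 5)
Your proposal is correct and follows essentially the same route as the paper: reduce to generators, kill the boxes \eqref{0017} via Proposition \ref{0013}, expand $\varphi(X)$ into terms of the form $(\text{box}) \times \closure{P_{j}(X)}$ as in Proposition \ref{0011}, and then split into the cases $j \le i$ (where Property \eqref{0480} forces $\closure{P_{j}(X)}$ to collapse to a box of the form \eqref{0017}) and $j > i$ (where $\closure{P_{j}(X)}$ has the form \eqref{0014} with $Y$ replaced by $\closure{P_{j-i}(Y)}$, and Property \eqref{0480} at level $i+1$ is verified from \eqref{0480} for $X$ together with \eqref{0481} in the subcase where the distinguished $0$ sits in the coordinate introduced by $V$). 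The bookkeeping you flag as the remaining obstacle is exactly the content of the paper's two-subcase verification, and your outline of it is accurate.
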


\begin{proof}
	The statement for $i = 0$ follows from
	Proposition \ref{0011}.
	Assume $1 \le i \le n$.
	The subschemes of the form \eqref{0017} are killed by $\varphi$
	by Proposition \ref{0013}.
	Let $X \in C_{n, i}$ be a subscheme of $G^{2^{n}}$ of the form
	\eqref{0014}.
	Then $\varphi(X)$ is a $\Z$-linear combination of terms of the form
	\eqref{0012}
	by Proposition \ref{0011}.
	Let $X'$ be any such term of the form \eqref{0012}
	(in particular, $1 \le j \le n$).
	We want to show that $X' \in C_{n, i + 1}$.
	
	Assume first that $j \le i$.
	Define $s'(\varepsilon(1), \dots, \varepsilon(n - 1))$ to be the maximum of
		\[
			s(\varepsilon(1), \dots, \varepsilon(j - 1), 0, \varepsilon(j), \dots, \varepsilon(n - 1))
		\]
	and
		\[
			s(\varepsilon(1), \dots, \varepsilon(j - 1), 1, \varepsilon(j), \dots, \varepsilon(n - 1))
		\]
	if $(\varepsilon(1), \dots, \varepsilon(i - 1)) \ne (1, \dots, 1)$
	and
		\[
			s(1, \dots, 1, 0, 1, \dots, 1, \varepsilon(i), \dots, \varepsilon(n - 1))
		\]
	(where the $0$ is in the $j$-th position) if
	$(\varepsilon(1), \dots, \varepsilon(i - 1)) = (1, \dots, 1)$.
	Then
		\[
				\closure{P_{j}(X)}
			=
				\prod_{\varepsilon(1), \dots, \varepsilon(n - 1)}
					G^{s'(\varepsilon(1), \dots, \varepsilon(n - 1))}
		\]
	by Property \eqref{0480} for $s$.
	Therefore $X'$ is of the form \eqref{0017},
	thus $X' \in C_{n, i + 1}$.
	
	Assume next that $j \ge i + 1$.
	For $\varepsilon(2), \dots, \varepsilon(n) \in \{0, 1\}$ such that
	$(\varepsilon(2), \dots, \varepsilon(i + 1)) \ne (1, \dots, 1)$,
	define $s'(1, \varepsilon(2), \dots, \varepsilon(n))$ to be the maximum of
		\[
			s(\varepsilon(2), \dots, \varepsilon(j), 0, \varepsilon(j + 1), \dots, \varepsilon(n))
		\]
	and
		\[
			s(\varepsilon(2), \dots, \varepsilon(j), 1, \varepsilon(j + 1), \dots, \varepsilon(n)).
		\]
	Set $Y' = \closure{P_{j - i}(Y)}$.
	Then
		\begin{equation} \label{0458}
				\closure{P_{j}(X)}
			=
					\Biggl(
						\prod_{\substack{
							\varepsilon(2), \dots, \varepsilon(n) \text{ s.t.} \\
							(\varepsilon(2), \dots, \varepsilon(i + 1)) \ne (1, \dots, 1)
						}}
							G^{s'(1, \varepsilon(2), \dots, \varepsilon(n))}
					\Biggr)
				\times
					Y'.
		\end{equation}
	For arbitrary $\varepsilon(2), \dots, \varepsilon(n) \in \{0, 1\}$,
	set
		\[
				s'(0, \varepsilon(2), \dots, \varepsilon(n))
			=
				r(\varepsilon(2), \dots, \varepsilon(n)).
		\]
	Then
		\[
				X'
			=
					\Biggl(
						\prod_{\substack{
							\varepsilon(1), \dots, \varepsilon(n) \text{ s.t.} \\
							(\varepsilon(1), \dots, \varepsilon(i + 1)) \ne (1, \dots, 1)
						}}
							G^{s'(\varepsilon(1), \dots, \varepsilon(n))}
					\Biggr)
				\times
					Y',
		\]
	
	We want to show that $X'$ is of the form \eqref{0014}
	(with $i$ incremented by $1$) and hence $X' \in C_{n, i + 1}$.
	We will check that $s'$ satisfies Property \eqref{0480}.
	Let $\varepsilon(1), \dots, \varepsilon(n) \in 2$ be such that
	exactly only one of $\varepsilon(1), \dots, \varepsilon(i + 1)$ is $0$
	and $s'(\varepsilon(1), \dots, \varepsilon(n)) = 0$.
	First assume that $\varepsilon(1) = 0$
	(and hence $\varepsilon(2) = \dots = \varepsilon(i + 1) = 1$).
	Then $r(1, \dots, 1, \varepsilon(i + 2), \dots, \varepsilon(n)) = 0$.
	By Property \eqref{0481}, we have
	$p_{1, \dots, 1, \varepsilon(i + 2), \dots, \varepsilon(n)}(P_{j}(X)) = 0$.
	Hence by \eqref{0458}, we have
	$p_{\varepsilon(i + 2), \dots, \varepsilon(n)}(Y') = 0$.
	Thus Property \eqref{0480} for $s'$ is verified in this case.
	
	Next assume that $\varepsilon(1) = 1$
	(and hence exactly only one of $\varepsilon(2), \dots, \varepsilon(i + 1)$ is $0$). 
	Then
		\begin{align*}
			&
					s(\varepsilon(2), \dots, \varepsilon(j), 0, \varepsilon(j + 1), \dots, \varepsilon(n))
			\\
			&	=
					s(\varepsilon(2), \dots, \varepsilon(j), 1, \varepsilon(j + 1), \dots, \varepsilon(n))
				=
					0.
		\end{align*}
	Hence by Property \eqref{0480} for $s$, we have
		\begin{align*}
			&
					p_{\varepsilon(i + 2), \dots, \varepsilon(j), 0, \varepsilon(j + 1), \dots, \varepsilon(n)}(Y)
			\\
			&	=
					p_{\varepsilon(i + 2), \dots, \varepsilon(j), 1, \varepsilon(j + 1), \dots, \varepsilon(n)}(Y)
				=
					0.
		\end{align*}
	As $Y' = \closure{P_{j - i}(Y)}$, this implies
	$p_{\varepsilon(i + 2), \dots, \varepsilon(n)}(Y') = 0$.
	Thus Property \eqref{0480} for $s'$ is verified also in this case.
	
	Therefore $X'$ is of the form \eqref{0014} and hence $X' \in C_{n, i + 1}$,
	proving the proposition.
\end{proof}

\begin{Prop} \label{0019}
	For any $n \ge 0$, the $(n + 2)$-times iterate $\varphi^{n + 2}$ of $\varphi$
	on the degree $n$ part $|Q|_{n}(G)$ is zero.
\end{Prop}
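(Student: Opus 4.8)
The plan is to combine the filtration results just established. Recall that Proposition \ref{0018} produces a filtration
$$
|Q|_{n}(G) = C_{n, 0} \supset C_{n, 1} \supset \dots \supset C_{n, n} \supset C_{n, n + 1}
$$
with $\varphi(C_{n, i}) \subset C_{n, i + 1}$ for all $0 \le i \le n$. Hence $\varphi^{n + 1}$ maps $|Q|_{n}(G) = C_{n, 0}$ into $C_{n, n + 1}$, which by definition is the subgroup generated by the ``fully regular'' subschemes of the form \eqref{0017}, i.e.\ products $\prod_{\varepsilon(1), \dots, \varepsilon(n)} G^{s(\varepsilon(1), \dots, \varepsilon(n))}$ with $s \in 2^{2^{n}}$. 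First I would observe that $\varphi$ kills every generator of $C_{n, n + 1}$: this is exactly Proposition \ref{0013}, which says $\varphi$ vanishes on each such product. Therefore $\varphi(C_{n, n + 1}) = 0$, and composing, $\varphi^{n + 2} = \varphi \compose \varphi^{n + 1}$ is zero on $|Q|_{n}(G)$.

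A small point to verify is that $C_{n, n + 1}$ makes sense and contains $\varphi^{n+1}(|Q|_n(G))$: the filtration indices run $0$ through $n$ in Proposition \ref{0018} (so $\varphi(C_{n,n}) \subset C_{n,n+1}$ is the last inclusion it gives), and iterating $n+1$ times from $C_{n,0}$ lands in $C_{n, n+1}$ as claimed. One also wants $C_{n,n+1}$, as defined, to be the group spanned by the subschemes \eqref{0017}, which it is by fiat. So the only nontrivial inputs are Propositions \ref{0018} and \ref{0013}, both already proved.

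The main obstacle is not in this final step — it is entirely formal once the filtration is in place — but rather lies in the already-completed Proposition \ref{0018}, whose inductive bookkeeping over the partial regularity conditions \eqref{0480} and \eqref{0481} is the genuine content. Given that, the present statement is a one-line deduction: $\varphi^{n + 2} = 0$ on $|Q|_{n}(G)$ because $\varphi^{n + 1}$ pushes everything into $C_{n, n + 1}$ and $\varphi$ annihilates $C_{n, n + 1}$. (This nilpotence is precisely what is needed in the next subsection to conclude that $\id = \partial V + V \partial - \varphi$ is, up to the nilpotent error $\varphi$, a chain homotopy, forcing the exactness of $|Q|(G)$ asserted in Proposition \ref{0020}.)
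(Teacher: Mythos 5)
Your argument is exactly the paper's: Proposition \ref{0018} gives $\varphi^{n+1}(|Q|_{n}(G)) \subset C_{n, n+1}$, and Proposition \ref{0013} shows $\varphi$ kills the generators \eqref{0017} of $C_{n, n+1}$, so $\varphi^{n+2} = 0$. Correct, and identical in approach to the paper's proof.
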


\begin{proof}
	By Proposition \ref{0018},
	we have $\varphi^{n + 1}(|Q|_{n}(G)) \subset C_{n, n + 1}$.
	But $\varphi$ is zero on $C_{n, n + 1}$
	by Proposition \ref{0013}.
\end{proof}

\begin{proof}[Proof of Proposition \ref{0020}]
	This follows from Proposition \ref{0019}.
\end{proof}

\begin{Prop} \label{0021}
	The complex $|M|(G)$ is exact.
\end{Prop}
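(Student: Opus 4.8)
The plan is to deduce this from Proposition \ref{0020} using the bar-construction description of $M(G)$. Recall that $M(G) = B(Q(G), Q(\Z), \Z)$ and that, forgetting the differential, $M(G) \cong Q(G) \tensor_\Z \mathcal{B}$ functorially in $G$, where $\mathcal{B} = B(\Z, Q(\Z), \Z)$ is a fixed complex, bounded below, with each term $\mathcal{B}_n$ free. Since each $\mathcal{B}_k$ is free, $Q(G)_j \tensor_\Z \mathcal{B}_k$ is a direct sum of copies of $Q(G)_j$, and $f^{\ast}$ is additive and commutes with direct sums (being a left adjoint); hence applying $f^{\ast}$ termwise yields $|M|(G) \cong |Q|(G) \tensor_\Z \mathcal{B}$ as graded abelian groups. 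The first thing to verify is that this is compatible with the differentials, i.e.\ that $|M|(G) = B(|Q|(G), Q(\Z), \Z)$: the bar differential of $M(G)$ is a $\Z$-linear combination of composites of the internal differentials of $Q(G)$ and $Q(\Z)$, the action $Q(G) \tensor Q(\Z) \to Q(G)$, the multiplication of $Q(\Z)$ and its augmentation $Q(\Z) \to \Z$, each tensored with identity maps; since $Q(\Z)$ and $\mathcal{B}$ are constant with free terms, $f^{\ast}$ commutes with all the tensor products occurring and carries these structure maps to the corresponding ones over $\Set$, hence carries the bar differential of $M(G)$ to that of $B(|Q|(G), Q(\Z), \Z)$. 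One must not invoke monoidality of $f^{\ast}$ here (it need not be monoidal), only the triviality that $f^{\ast}$ commutes with $\var \tensor_\Z F$ for a free abelian group $F$.

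Next I would use the bar filtration $F_{\bullet}$ on $|M|(G)$, where $F_p$ is spanned by the terms involving at most $p$ bar factors. It is exhaustive, satisfies $F_{-1} = 0$, is finite in each total degree (in degree $n$ only bar-lengths $p \le n$ occur), and each $F_p$ is a subcomplex; so $|M|(G) = \bigcup_p F_p$ and $H_{\ast}(|M|(G)) = \dirlim_p H_{\ast}(F_p)$. The bar differential lowers the filtration by at most one, and the part preserving it is the internal differential, so $\gr_p^{F} |M|(G) \cong |Q|(G) \tensor_\Z \gr_p^{F} \mathcal{B}$ with the total tensor-product differential, and $\gr_p^{F} \mathcal{B}$ is a bounded-below complex of free abelian groups.

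It remains to see each $\gr_p^{F} |M|(G)$ is exact, which is where Proposition \ref{0020} enters: $|Q|(G)$ is exact, so its tensor product over $\Z$ with any single flat abelian group is exact, and filtering $|Q|(G) \tensor_\Z \gr_p^{F} \mathcal{B}$ by the brutal truncation of the second factor --- a filtration which is bounded below and finite in each total degree --- exhibits $|Q|(G) \tensor_\Z \gr_p^{F} \mathcal{B}$ as exact. Since $F_p$ is obtained from the $\gr_q^{F} |M|(G)$ with $q \le p$ by finitely many extensions, a short induction up the filtration using the homology long exact sequence gives $H_{\ast}(F_p) = 0$, and therefore $H_{\ast}(|M|(G)) = \dirlim_p H_{\ast}(F_p) = 0$, i.e.\ $|M|(G)$ is exact. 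The only step genuinely requiring care is the compatibility of $f^{\ast}$ with the bar construction in the first paragraph; granting that, the statement is a formal consequence of Proposition \ref{0020}.
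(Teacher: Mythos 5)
Your proof is correct. The paper's own proof starts from the same two inputs---the identification $|M|(G) = B(|Q|(G), Q(\Z), \Z)$ and the exactness of $|Q|(G)$ from Proposition \ref{0020}---but then concludes in one line by invoking the Eilenberg--Moore spectral sequence
\[
		E^{2}_{i j}
	=
		\Tor^{H Q(\Z)}_{i, j}(H |Q|(G), \Z)
	\Longrightarrow
		H_{i + j} |M|(G)
\]
of \cite{BMR14}, which vanishes because $H |Q|(G) = 0$. Your argument is the elementary unpacking of this: you work at the level of the associated graded of the bar filtration (in effect the $E^{1}$ page), where the vanishing of $H |Q|(G)$ already kills everything because the terms of $Q(\Z)$, and hence of $\gr^{F}_{p} \mathcal{B}$, are free, and you replace convergence of the spectral sequence by the observation that the filtration is bounded below and finite in each total degree, together with the long exact sequence and passage to the colimit. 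This buys self-containedness (no citation of the spectral sequence or its convergence) at the cost of length. You also take care to verify that $f^{\ast}$ carries $M(G)$ to $B(|Q|(G), Q(\Z), \Z)$ including the differentials, a point the paper's proof asserts without comment; your justification---additivity of the left adjoint $f^{\ast}$ and its commutation with $\var \tensor_{\Z} F$ for $F$ free, rather than any monoidality of $f^{\ast}$---is exactly the right one.
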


\begin{proof}
	Since $|M|(G) = B(|Q|(G), Q(\Z), \Z)$ and each term of $Q(\Z)$ is a free $\Z$-module,
	we have the (homologically graded) Eilenberg-Moore spectral sequence
		\[
				E_{i j}^{2}
			=
				\Tor_{i, j}^{HQ(\Z)}(H|Q|(G), \Z)
			\Longrightarrow
				H_{i + j} |M|(G)
		\]
	by \cite[Proposition 10.19 and the paragraph thereafter]{BMR14}.
	Since $H|Q|(G) = 0$ by Proposition \ref{0020},
	the result follows.
\end{proof}


\subsection{Derived Hom in the perfect artinian Zariski site}
\label{0157}

Here is a consequence of Proposition \ref{0021}:

\begin{Prop} \label{0022}
	Let $G \in \Ind \Pro' \Alg / F$ be connected.
	Let $H$ be a constant group over $F$.
	Then $R \sheafhom_{F^{\perar}_{\zar}}(G, H) = 0$.
\end{Prop}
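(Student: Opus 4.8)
The plan is to resolve $G$ by Mac Lane's resolution $M(G)$ and then, via the underlying-set premorphism, to transport the computation to the topos $\Set$, where Proposition \ref{0021} forces everything to vanish.

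First I would reduce to the case that $G$ is an integral group scheme over $F$. Since $R \sheafhom_{F^{\perar}_{\zar}}(-, H)$ turns the filtered colimit defining an object of $\Ind \Pro' \Alg / F$ into a homotopy limit, and a connected object of $\Pro' \Alg / F$ is represented, after passing to a cofinal subsystem with surjective transition morphisms, by an integral scheme, it is enough to treat $G \in \Alg / F$ connected. Such a $G$ is the perfection of a connected group scheme of finite type over the perfect field $F$; perfections are reduced, a reduced connected group scheme over a perfect field is smooth and hence geometrically integral, and the perfection of an integral scheme is integral, so $G$ is an integral group scheme. Next, the vanishing $R \sheafhom_{F^{\perar}_{\zar}}(G, H) = 0$ is a local statement: the only coverings in $\Spec F^{\perar}_{\zar}$ are finite decompositions into clopen direct factors, so $\Spec K$ carries only the trivial covering for every field $K$, the site has cohomological dimension zero, and therefore $\sheafext^{q}_{F^{\perar}_{\zar}}(G, H) = 0$ for all $q$ as soon as $R \Hom_{D(K^{\perar}_{\zar})}(G_{K}, H) = 0$ for every perfect field $K \supseteq F$. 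Replacing $F$ by $K$, it remains to prove $R \Hom_{D(F^{\perar}_{\zar})}(G, H) = 0$ for $G$ an integral group scheme over an arbitrary perfect field $F$.

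Now let $f \colon \Set \to \Spec F^{\perar}_{\zar}$ be the premorphism sending $\Spec B$ to its underlying set, and let $H_{0} = H$ viewed in $\Ab \cong \Ab(\Set)$. Since $|\Spec B|$ is a finite discrete space for every $B \in F^{\perar}$, one has $R f_{\ast} H_{0} = f_{\ast} H_{0} = H$, so the adjunction between $L f^{\ast}$ and $R f_{\ast}$ gives
\[
		R \Hom_{D(F^{\perar}_{\zar})}(G, H)
	\cong
		R \Hom_{D(F^{\perar}_{\zar})}(G, R f_{\ast} H_{0})
	\cong
		R \Hom_{D(\Set)}(L f^{\ast} G, H_{0}).
\]
Apply Mac Lane's resolution $M(G) \to G$. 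Each term $M_{n}(G)$ is, functorially in $G$, a direct summand of a direct sum of the free sheaves $\Z[G^{2^{m}}]$; such free sheaves are $f^{\ast}$-acyclic with $f^{\ast} \Z[G^{2^{m}}] = \Z[|G^{2^{m}}|]$, so $L f^{\ast} G$ is computed by $f^{\ast} M(G) = |M|(G)$. By Proposition \ref{0021} the complex $|M|(G)$ is exact, hence $L f^{\ast} G = 0$ in $D(\Set)$, and therefore $R \Hom_{D(F^{\perar}_{\zar})}(G, H) = 0$. Combined with the reduction above, this yields $R \sheafhom_{F^{\perar}_{\zar}}(G, H) = 0$.

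The decisive step, and the only substantial one, is the exactness of $|M|(G) = f^{\ast} M(G)$, which is precisely Proposition \ref{0021}; the entire analysis of the cubical construction (the auxiliary maps $V$, the nilpotence of $\varphi = \partial V + V \partial - \id$, and the Eilenberg--Moore spectral sequence) is what makes that proposition available. The remaining points I expect to be routine but still requiring care are: that the free sheaves $\Z[Y]$ are $f^{\ast}$-acyclic and that $f^{\ast}$ commutes with the direct sums and the direct-summand splittings occurring in $M(G)$, so that $f^{\ast} M(G)$ genuinely represents $L f^{\ast} G$; that $R f_{\ast} H_{0} = H$; and the passage from the sheaf-Hom statement to the global $R \Hom$ over each perfect residue field, which uses that $\Spec K$ carries only the trivial covering in $\Spec F^{\perar}_{\zar}$.
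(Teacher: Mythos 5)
Your proof is correct and follows essentially the same route as the paper: both reduce to an integral (pro-)algebraic group, resolve it by Mac Lane's resolution, and feed the exactness of $|M|(G)$ (Proposition \ref{0021}) into the computation, your adjunction $R\Hom(G, Rf_{\ast}H_{0}) \cong R\Hom(Lf^{\ast}G, H_{0})$ with $Lf^{\ast}G = 0$ being a repackaging of the paper's identification $\Hom_{F^{\perar}_{\zar}}(M(G), H) \cong \Hom_{\Ab}(|M|(G), H)$. One small caution on your first reduction: a connected object of $\Pro' \Alg / F$ is itself an integral scheme, so you should apply the integral-group-scheme case to it directly, rather than "reducing to $G \in \Alg/F$" via a cofinal subsystem -- $R\sheafhom$ out of an inverse limit of sheaves is not determined by the terms of the system, though this does not affect the rest of your argument since you do ultimately prove the statement for arbitrary integral group schemes.
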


\begin{proof}
	We may assume that $G$ is a connected pro-algebraic group over $F$.
	It is enough to show that
	$R \Hom_{F^{\perar}_{\zar}}(G, H) = 0$
	(since $F$ can be replaced by an arbitrary field in $F^{\perar}$).
	Let $M(G)$ be Mac Lane's resolution of $G$ in $\Ab(F^{\perar}_{\zar})$.
	Then $R \Hom_{F^{\perar}_{\zar}}(G, H) \cong R \Hom_{F^{\perar}_{\zar}}(M(G), H)$.
	For any $n \ge 0$, the sheaf $M_{n}(G)$ is a direct factor
	of a direct sum of sheaves of the form $\Z[G^{m}]$ for various $m \ge 0$.
	The sheaf $\Z[G^{m}] \in \Ab(F^{\perar}_{\zar})$ is a direct sum of
	sheaves of the form $\Z[\Spec F']$ for various fields $F' \in F^{\perar}$.
	We have
		\[
				\Ext_{F^{\perar}_{\zar}}^{j}(\Z[\Spec F'], H)
			\cong
				H^{j}(F'_{\zar}, H)
			=
				0
		\]
	for $j \ge 1$.
	Hence $R \Hom_{F^{\perar}_{\zar}}(M(G), H)$ is represented by
	the complex $\Hom_{F^{\perar}_{\zar}}(M(G), H)$.
	Since $H$ is constant, we have
		\[
				\Hom_{F^{\perar}_{\zar}}(\Z[\Spec F'], H)
			\cong
				\Hom_{\Ab}(\Z, H).
		\]
	Hence we have
		\[
				\Hom_{F^{\perar}_{\zar}}(\Z[G^{n}], H)
			\cong
				\Hom_{\Ab}(\Z[|G^{n}|], H).
		\]
	Hence
		\[
				\Hom_{F^{\perar}_{\zar}}(M(G), H)
			\cong
				\Hom_{\Ab}(|M|(G), H)
		\]
	as complexes in $\Ab$.
	As each term of $|M|(G)$ is free, this complex represents
	$R \Hom_{\Ab}(|M|(G), H)$,
	which is zero by Proposition \ref{0021}.
\end{proof}

Let 
	\[
			\varepsilon
		\colon
			\Spec F^{\perar}_{\et}
		\to
			\Spec F^{\perar}_{\zar}
	\]
be the morphism of sites defined by the identity functor.

\begin{Prop} \label{0023}
	Let $G \in \genby{\mathcal{W}_{F}}_{F^{\perar}_{\et}}$.
	Let $H$ be a constant group over $F$.
	Then
		\[
			R \sheafhom_{F^{\perar}_{\zar}}(R \varepsilon_{\ast} G, H) = 0.
		\]
\end{Prop}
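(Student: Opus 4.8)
The plan is a dévissage that reduces the statement to connected quasi-algebraic unipotent groups, where Proposition \ref{0022} already applies. The point is that on the \'etale site a finite \'etale $p$-primary group is quasi-isomorphic to a two-term complex of connected unipotent groups, so after applying $R\varepsilon_{\ast}$ the finite \'etale pieces ``dissolve'' into connected ones, for which the Hom into a constant group vanishes.

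First I would note that the full subcategory of $D(F^{\perar}_{\et})$ consisting of those $G$ with $R\sheafhom_{F^{\perar}_{\zar}}(R\varepsilon_{\ast}G, H) = 0$ is triangulated and closed under direct summands, since $R\varepsilon_{\ast}$ is triangulated and $R\sheafhom_{F^{\perar}_{\zar}}(\var, H)$ is contravariant triangulated; by Definition \ref{0151} it therefore suffices to treat $G \in \mathcal{W}_{F}$ placed in degree zero. By Definition \ref{0149} such a $G$ fits, inside $D(F^{\perar}_{\et})$, into two distinguished triangles whose terms are a connected object of Type \eqref{0005}, a connected object of Type \eqref{0006}, and a finite \'etale $p$-primary group (each of which is again an object of $\mathcal{W}_{F}$). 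Hence it is enough to establish the vanishing in these three cases.

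For $G \in \mathcal{W}_{F}$ connected I would show $R\varepsilon_{\ast}G \cong G$ in $D(F^{\perar}_{\zar})$ and then apply Proposition \ref{0022}. Since $\varepsilon^{-1}$ is the identity functor, $R^{q}\varepsilon_{\ast}G$ is the Zariski sheafification of $F' \mapsto H^{q}(F'_{\et}, G)$, which is already a sheaf on $\Spec F^{\perar}_{\zar}$ because it is compatible with finite products; so the claim reduces to $H^{q}(F'_{\et}, G) = 0$ for every perfect field extension $F'/F$ and every $q \ge 1$. For $G = \Ga$ this is the vanishing of quasi-coherent cohomology over a point, and a connected object of $\mathcal{W}_{F}$ is built from $\Ga$ by ind-pro iterated extensions; the vanishing propagates through short exact sequences (long exact sequence), through filtered colimits for Type \eqref{0006} objects, and through the countable inverse limits of Type \eqref{0005} objects, using that the connectedness of the transition kernels forces $H^{1}(F'_{\et}, \var) = 0$ on them, hence surjectivity of the transition maps on $F'$-points, hence the Mittag--Leffler condition killing $\varprojlim^{1}$ --- this is the kind of limit argument used in \cite{Suz20, Suz21}. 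For $G$ finite \'etale $p$-primary I would use that $G = \Ker(\pi)$ for an \'etale isogeny $\pi\colon L_{0} \to L_{1}$ of connected unipotent groups over $F$ (for $G = \Lambda_{n}$ one takes $\pi = \Frob - 1$ on $W_{n}$, and such a presentation exists in general). Since $\pi$ is \'etale and surjective, $0 \to G \to L_{0} \xrightarrow{\pi} L_{1} \to 0$ is exact in $\Ab(F^{\perar}_{\et})$, so $G$ is represented in $D(F^{\perar}_{\et})$ by $[L_{0} \xrightarrow{\pi} L_{1}]$ in degrees $0$ and $1$; by the connected case $R\varepsilon_{\ast}G$ is represented in $D(F^{\perar}_{\zar})$ by $[L_{0} \to L_{1}]$, which lies in the triangulated subcategory generated by $L_{0}$ and $L_{1}$. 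Applying $R\sheafhom_{F^{\perar}_{\zar}}(\var, H)$ and using Proposition \ref{0022} for the connected groups $L_{0}, L_{1}$ gives the vanishing.

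I expect the main obstacle to be the connected case, specifically the vanishing of positive-degree \'etale cohomology of the pro-algebraic (Type \eqref{0005}) connected unipotent groups over arbitrary perfect fields, where the inverse-limit and $\varprojlim^{1}$ bookkeeping (and the discreteness of the relevant Galois modules) has to be handled carefully; a secondary point requiring care is the structural input in the finite \'etale case that every finite \'etale $p$-primary commutative group scheme over a perfect field of characteristic $p$ is the kernel of an \'etale isogeny between connected unipotent groups.
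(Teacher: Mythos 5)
Your proposal is correct and follows essentially the same route as the paper's proof: reduce to $G \in \mathcal{W}_{F}$, dévissage via the Type \eqref{0005}/Type \eqref{0006}/finite \'etale $p$-primary pieces, establish $R\varepsilon_{\ast}G \cong G$ for the connected pieces via vanishing of higher \'etale cohomology over perfect fields (the paper cites \cite[Proposition 2.4.2 (b)]{Suz20} for the limit bookkeeping you spell out), and resolve the finite \'etale $p$-primary piece by a two-term complex of connected unipotent groups before invoking Proposition \ref{0022}. The only cosmetic difference is that the paper phrases the last step as an embedding of $G$ into a connected unipotent quasi-algebraic group rather than as the kernel of an explicit \'etale isogeny, which amounts to the same resolution.
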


\begin{proof}
	We may assume $G \in \mathcal{W}_{F}$.
	Let $G' \subset G$ be of Type \eqref{0005}
	such that $(G / G')^{0}$ is of Type \eqref{0006}
	and $\pi_{0}(G / G')$ is finite \'etale $p$-primary.
	Then for any $F' \in F^{\perar}$,
	we have $H^{m}(F'_{\et}, G') \cong H^{m}(F'_{\pro\et}, G') = 0$ for $m \ge 1$
	as in the proof of \cite[Proposition 2.4.2 (b)]{Suz20}.
	In particular, the exact sequence
	$0 \to G' \to G \to G / G' \to 0$ in $\Ind \Pro \Alg / F$
	remains exact in $\Ab(F^{\perar}_{\zar})$
	and $R \varepsilon_{\ast} G' \cong G'$.
	As the statement is true if $G = G'$ by
	Proposition \ref{0022},
	we may assume that $G' = 0$.
	The case where $G$ is of Type \eqref{0006}
	follows from Proposition \ref{0022}.
	Hence we may assume $G$ is finite \'etale $p$-primary.
	But then $G$ embeds into a connected unipotent quasi-algebraic group.
	Hence Proposition \ref{0022}
	again implies the result.
\end{proof}

The sheaf $\Lambda_{\infty}$ is a nice dualizing object over $\Spec F^{\perar}_{\et}$
by Proposition \ref{0497}.
It has the following Zariski counterpart.
Let $\Frob \colon W_{n} \to W_{n}$ be the Frobenius morphism
on the group scheme of $p$-typical Witt vectors of length $n$ over $F$.
Set $\wp = \Frob - 1$.
Define $\xi_{n} = W_{n} / \wp W_{n}$ in $\Ab(F^{\perar}_{\zar})$
and set $\xi_{\infty} = \dirlim_{n} \xi_{n}$ and $\xi = \xi_{1}$.
The exact sequence $0 \to \Lambda_{n} \to W_{n} \stackrel{\wp}{\to} W_{n} \to 0$
gives $R^{n} \varepsilon_{\ast} \Lambda_{n} = 0$ for $n \ge 2$
and $R^{1} \varepsilon_{\ast} \Lambda_{n} \cong \xi_{n}$.
Hence we have a canonical morphism
	\begin{equation} \label{0024}
			R \varepsilon_{\ast} \Lambda_{\infty}
		\to
			\xi_{\infty}[-1]
	\end{equation}
in $D(F^{\perar}_{\zar})$.
The sheaf $\xi_{\infty}$ is $p$-divisible with $p^{n}$-torsion part $\xi_{n}$ for any $n$.
The functor $R \sheafhom_{F^{\perar}_{\zar}}(\var, \xi_{\infty})$ is a Zariski topology version
of $R \sheafhom_{F^{\perar}_{\et}}(\var, \Lambda_{\infty})$
in the following sense:

\begin{Prop} \label{0025}
	Let $G \in \genby{\mathcal{W}_{F}}_{F^{\perar}_{\et}}$.
	Then the morphism
		\begin{align*}
					R \varepsilon_{\ast}
					R \sheafhom_{F^{\perar}_{\et}}(G, \Lambda_{\infty})
			&	\cong
					R \sheafhom_{F^{\perar}_{\zar}}(R \varepsilon_{\ast} G, R \varepsilon_{\ast} \Lambda_{\infty})
			\\
			&	\to
					R \sheafhom_{F^{\perar}_{\zar}}(R \varepsilon_{\ast} G, \xi_{\infty})[-1]
		\end{align*}
	is an isomorphism.
\end{Prop}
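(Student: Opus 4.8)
The plan is to reduce to the case where $G$ is a single object of $\mathcal{W}_{F}$ (both sides commute with the finite colimits and direct summands used to build $\genby{\mathcal{W}_{F}}_{F^{\perar}_{\et}}$), and then to compare the two natural maps degree by degree using the filtration of $G$ by type. First I would observe that the first isomorphism in the statement is the canonical morphism \eqref{0327} for the morphism of sites $\varepsilon$, and it is an isomorphism here because $G$ has vanishing higher \'etale cohomology over every $F' \in F^{\perar}$ (as recalled in the proof of Proposition \ref{0023}, via \cite[Proposition 2.4.2 (b)]{Suz20}): this makes $R \varepsilon_{\ast} G \cong \varepsilon_{\ast} G$ computable by the same Mac Lane resolution used before, so that $R \sheafhom_{F^{\perar}_{\zar}}(R\varepsilon_{\ast} G, R\varepsilon_{\ast}\Lambda_{\infty})$ is genuinely the derived Hom out of $\varepsilon_{\ast}G$.

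The crux is that the second arrow, induced by \eqref{0024}, becomes an isomorphism after applying $R\sheafhom_{F^{\perar}_{\zar}}(R\varepsilon_{\ast}G, \var)$. The mapping cone of \eqref{0024} is $\tau_{\ge 2} R\varepsilon_{\ast}\Lambda_{\infty}$, so it suffices to show $R\sheafhom_{F^{\perar}_{\zar}}(R\varepsilon_{\ast}G, \tau_{\ge 2}R\varepsilon_{\ast}\Lambda_{\infty}) = 0$. Writing $\Lambda_{\infty} = \dirlim_{n}\Lambda_{n}$ and using the exact sequences $0 \to \Lambda_{n} \to W_{n} \stackrel{\wp}{\to} W_{n} \to 0$, one gets $R^{m}\varepsilon_{\ast}\Lambda_{n} = 0$ for $m \ge 2$ and $R^{1}\varepsilon_{\ast}\Lambda_{n} \cong \xi_{n}$; passing to the colimit, $\tau_{\ge 2}R\varepsilon_{\ast}\Lambda_{\infty}$ vanishes entirely, so in fact \eqref{0024} is already an isomorphism in $D(F^{\perar}_{\zar})$. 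Then the whole statement collapses to: the canonical map
\[
	R\varepsilon_{\ast} R\sheafhom_{F^{\perar}_{\et}}(G, \Lambda_{\infty}) \to R\sheafhom_{F^{\perar}_{\zar}}(R\varepsilon_{\ast}G, R\varepsilon_{\ast}\Lambda_{\infty})
\]
is an isomorphism, which I would prove by the projection-type formula \cite[Proposition 2.8]{Suz21}, exactly as in the proof of Proposition \ref{0496}, once one knows $L\varepsilon^{\ast}(R\varepsilon_{\ast}G) \cong G$ and $R\varepsilon_{\ast}\Lambda_{\infty}$ has the stated cohomology. The first of these follows from the $h$-compatibility machinery (Propositions \ref{0153}, \ref{0023}) together with \cite[Theorem 3.15]{Suz21}: indeed, $R\sheafhom_{F^{\perar}_{\zar}}(R\varepsilon_{\ast}G, H) = 0$ for constant $H$ (Proposition \ref{0023}) forces the comparison to see only the unipotent/connected structure, where $\varepsilon^{\ast}$ acts as the identity on representable sheaves.

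The main obstacle I anticipate is the bookkeeping in identifying $R\sheafhom_{F^{\perar}_{\et}}(G, \Lambda_{\infty})$ again as an object of $\genby{\mathcal{W}_{F}}_{F^{\perar}_{\et}}$ so that Proposition \ref{0023} applies to \emph{it} as well — that is, making sure both the source and the target of the comparison map consist of objects whose Zariski higher cohomology out of $R\varepsilon_{\ast}$ is controlled. This is handled by Proposition \ref{0497} (Serre duality over $\Spec F^{\perar}_{\et}$), which guarantees $R\sheafhom_{F^{\perar}_{\et}}(G, \Lambda_{\infty}) \in \genby{\mathcal{W}_{F}}_{F^{\perar}_{\et}}$, so that Proposition \ref{0023} can be invoked on it and the difference of the two sides is annihilated after one more application of $R\varepsilon_{\ast}$. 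Granting these inputs, the argument is a short formal chase rather than a computation.
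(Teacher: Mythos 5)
There is a genuine error at the crux of your argument. You claim that the mapping cone of \eqref{0024} is $\tau_{\ge 2} R\varepsilon_{\ast}\Lambda_{\infty}$ and hence vanishes, so that \eqref{0024} is already an isomorphism. It is not: $R\varepsilon_{\ast}\Lambda_{\infty}$ is concentrated in degrees $0$ and $1$, with $H^{0} = \varepsilon_{\ast}\Lambda_{\infty} = \Lambda_{\infty}$ the (nonzero) constant sheaf and $H^{1} = \xi_{\infty}$, and \eqref{0024} is the truncation onto $\tau_{\ge 1}$. The correct distinguished triangle is
\[
	\Lambda_{\infty} \to R\varepsilon_{\ast}\Lambda_{\infty} \to \xi_{\infty}[-1],
\]
with the constant sheaf $\Lambda_{\infty}$ in degree $0$ as the fiber; you computed $R^{m}\varepsilon_{\ast}\Lambda_{n}$ only for $m \ge 1$ and dropped the degree-zero term.

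Consequently the step you actually need --- which is the entire content of the paper's proof --- is missing: applying $R\sheafhom_{F^{\perar}_{\zar}}(R\varepsilon_{\ast}G, \var)$ to the triangle above, the second arrow of the proposition is an isomorphism precisely because $R\sheafhom_{F^{\perar}_{\zar}}(R\varepsilon_{\ast}G, \Lambda_{\infty}) = 0$ for the constant group $\Lambda_{\infty} = \Q_{p}/\Z_{p}$, which is Proposition \ref{0023}. You do cite Proposition \ref{0023}, but only in a side remark about the comparison map ``seeing only the unipotent structure,'' not at the one point where it carries the proof. The rest of your write-up then chases the first identification $R\varepsilon_{\ast}R\sheafhom_{F^{\perar}_{\et}}(G,\Lambda_{\infty}) \cong R\sheafhom_{F^{\perar}_{\zar}}(R\varepsilon_{\ast}G, R\varepsilon_{\ast}\Lambda_{\infty})$, which the paper treats as the standard adjunction (using $\varepsilon^{\ast}R\varepsilon_{\ast}G \cong G$); your remarks there are reasonable but that is not where the substance of the proposition lies.
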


\begin{proof}
	This follows from
	Proposition \ref{0023}.
\end{proof}

Here is the promised result in this section:

\begin{Prop} \label{0026}
	Let $G, G' \in \genby{\mathcal{W}_{F}}_{F^{\perar}_{\et}}$.
	Let
		\[
				G \tensor^{L} G' \to \Lambda_{\infty}
		\]
	be a morphism in $D(F^{\perar}_{\et})$.
	Then it is a perfect pairing if and only if the induced morphism
		\[
				R \varepsilon_{\ast} G \tensor^{L} R \varepsilon_{\ast} G'
			\to
				R \varepsilon_{\ast} \Lambda_{\infty}
			\to
				\xi_{\infty}[-1]
		\]
	is a perfect pairing in $D(F^{\perar}_{\zar})$.
\end{Prop}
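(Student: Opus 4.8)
The plan is to pass to adjoints and invoke Proposition~\ref{0025}, which identifies $R \varepsilon_{\ast}$ applied to the \'etale derived Hom into $\Lambda_{\infty}$ with the Zariski derived Hom into $\xi_{\infty}[-1]$. Write $\mu \colon G \tensor^{L} G' \to \Lambda_{\infty}$ for the given morphism and $\nu \colon R \varepsilon_{\ast} G \tensor^{L} R \varepsilon_{\ast} G' \to \xi_{\infty}[-1]$ for the induced Zariski morphism, i.e.\ the composite of \eqref{0327} for $\varepsilon$, of $R \varepsilon_{\ast} \mu$, and of \eqref{0024}. By definition, $\mu$ is a perfect pairing if and only if its two adjoints $\hat{\mu} \colon G \to R \sheafhom_{F^{\perar}_{\et}}(G', \Lambda_{\infty})$ and $\hat{\mu}' \colon G' \to R \sheafhom_{F^{\perar}_{\et}}(G, \Lambda_{\infty})$ are isomorphisms, and similarly $\nu$ is a perfect pairing if and only if its adjoints $\hat{\nu} \colon R \varepsilon_{\ast} G \to R \sheafhom_{F^{\perar}_{\zar}}(R \varepsilon_{\ast} G', \xi_{\infty}[-1])$ and $\hat{\nu}'$ are isomorphisms. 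By the proof of Proposition~\ref{0496}, the objects $R \sheafhom_{F^{\perar}_{\et}}(G', \Lambda_{\infty})$ and $R \sheafhom_{F^{\perar}_{\et}}(G, \Lambda_{\infty})$ again lie in $\genby{\mathcal{W}_{F}}_{F^{\perar}_{\et}}$, so $\hat{\mu}$ and $\hat{\mu}'$ are morphisms between objects of $\genby{\mathcal{W}_{F}}_{F^{\perar}_{\et}}$.

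Next I would show that $R \varepsilon_{\ast}$ is conservative on $\genby{\mathcal{W}_{F}}_{F^{\perar}_{\et}}$. The full subcategory of $D(F^{\perar}_{\et})$ on which the counit $\varepsilon^{\ast} R \varepsilon_{\ast} \to \id$ of the adjunction $\varepsilon^{\ast} \dashv R \varepsilon_{\ast}$ is an isomorphism is triangulated and closed under direct summands (here $L \varepsilon^{\ast} = \varepsilon^{\ast}$, $\varepsilon^{\ast}$ being exact), so it is enough to see that it contains $\mathcal{W}_{F}$, and by d\'evissage along the defining filtration of an object of $\mathcal{W}_{F}$ this reduces to two cases. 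If $G$ is of Type~\eqref{0005} or \eqref{0006} (in particular if $G$ is a connected unipotent quasi-algebraic group), then $R \varepsilon_{\ast} G \cong G$ as in the proof of Proposition~\ref{0023} (such a group has no higher \'etale cohomology over a perfect field) and $L \varepsilon^{\ast} G \cong G$ since $G$ is representable, resp.\ a filtered colimit of representables, so the counit is the identity. If $G$ is finite \'etale $p$-primary, then, as noted in the proof of Proposition~\ref{0023}, $G$ embeds into a connected unipotent quasi-algebraic group $H$, and $H / G$ is again connected unipotent quasi-algebraic; the exact sequence $0 \to G \to H \to H / G \to 0$ together with the triangulated structure then reduces to the previous case. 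Hence the counit is an isomorphism on $\genby{\mathcal{W}_{F}}_{F^{\perar}_{\et}}$, and therefore a morphism there is an isomorphism if and only if its image under $R \varepsilon_{\ast}$ is.

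It remains to identify $\hat{\nu}$ with the composite of $R \varepsilon_{\ast} \hat{\mu}$ and the canonical isomorphism $R \varepsilon_{\ast} R \sheafhom_{F^{\perar}_{\et}}(G', \Lambda_{\infty}) \isomto R \sheafhom_{F^{\perar}_{\zar}}(R \varepsilon_{\ast} G', \xi_{\infty}[-1])$ of Proposition~\ref{0025}, and likewise $\hat{\nu}'$ with $R \varepsilon_{\ast} \hat{\mu}'$. This is a diagram chase: one writes $\hat{\nu}$ as the adjoint of $\nu$, substitutes $\mu = \mathrm{ev} \compose (\hat{\mu} \tensor^{L} \id_{G'})$ (the defining property of $\hat{\mu}$), uses the naturality of \eqref{0327}, and uses that the lax closed-structure morphism $R \varepsilon_{\ast} R \sheafhom_{F^{\perar}_{\et}}(\var, \var) \to R \sheafhom_{F^{\perar}_{\zar}}(R \varepsilon_{\ast} \var, R \varepsilon_{\ast} \var)$ is, by its construction via Proposition~\ref{0326} and \cite[Proposition 2.4]{Suz21}, the adjoint of the composite $R \varepsilon_{\ast}(\var) \tensor^{L} R \varepsilon_{\ast}(\var) \to R \varepsilon_{\ast}(\var \tensor^{L} \var) \to R \varepsilon_{\ast}(\var)$ whose first arrow is \eqref{0327} and whose second arrow is $R \varepsilon_{\ast}$ applied to evaluation. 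Granting this identification, and recalling that the targets of $\hat{\mu}, \hat{\mu}'$ lie in $\genby{\mathcal{W}_{F}}_{F^{\perar}_{\et}}$: by conservativity $\hat{\mu}$ is an isomorphism if and only if $R \varepsilon_{\ast} \hat{\mu}$ is, and by the identification together with Proposition~\ref{0025} this holds if and only if $\hat{\nu}$ is an isomorphism; the same holds for $\hat{\mu}'$ and $\hat{\nu}'$. Hence $\mu$ is a perfect pairing if and only if $\nu$ is.

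I expect the compatibility in the third paragraph to be the main obstacle: matching the adjoint of the Zariski pairing $\nu$ with $R \varepsilon_{\ast}$ of the \'etale adjoint $\hat{\mu}$ requires careful bookkeeping of the adjunction isomorphisms and of the interaction of \eqref{0327}, the lax closed-structure morphism, and \eqref{0024}. It is conceptually routine but is the only genuinely non-formal point once Propositions~\ref{0025} and \ref{0496} are in hand.
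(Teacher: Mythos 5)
Your proposal is correct and follows essentially the same route as the paper: identify the Zariski dual $R \sheafhom_{F^{\perar}_{\zar}}(R \varepsilon_{\ast} G', \xi_{\infty})[-1]$ with $R \varepsilon_{\ast}$ of the \'etale dual via Proposition~\ref{0025}, then use that $R \varepsilon_{\ast}$ reflects isomorphisms because $\varepsilon^{\ast} R \varepsilon_{\ast} \cong \id$. The only divergence is that your d\'evissage establishing $\varepsilon^{\ast} R \varepsilon_{\ast} \cong \id$ on $\genby{\mathcal{W}_{F}}_{F^{\perar}_{\et}}$ is unnecessary work: since $\varepsilon$ is the identity on underlying categories, $\varepsilon_{\ast}$ of a K-injective \'etale complex is the same complex viewed Zariski-locally and $\varepsilon^{\ast}$ (exact, given by \'etale sheafification) returns it unchanged, so the counit is an isomorphism on all of $D(F^{\perar}_{\et})$, which is what the paper invokes.
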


\begin{proof}
	By Proposition \ref{0025},
	we have
		\[
				R \sheafhom_{F^{\perar}_{\zar}}(R \varepsilon_{\ast} G, \xi_{\infty})[-1]
			\cong
				R \varepsilon_{\ast}
				R \sheafhom_{F^{\perar}_{\et}}(G, \Lambda_{\infty}).
		\]
	As $\varepsilon^{\ast} R \varepsilon_{\ast} \cong \id$, the result follows.
\end{proof}

We give two concrete duality statements in $\Spec F^{\perar}_{\zar}$.
In this paper, a Tate vector space over $F$ is
an ind-pro-object in finite-dimensional $F$-vector spaces
isomorphic to $F^{I} \oplus F^{\oplus J}$, where $I$ and $J$ are at most countable sets.
It can be functorially viewed as an ind-pro-algebraic group over $F$,
namely $\Ga^{I} \oplus \Ga^{\oplus J}$,
which we call a Tate vector group.

\begin{Prop} \label{0027}
	Let $V$ be a Tate vector space over $F$
	and $V'$ its dual.
	Let $G$ and $G'$ be the ind-pro-algebraic groups over $F$
	associated with $V$ and $V'$, respectively.
	Let $G \times G' \to \Ga$ be the evaluation pairing.
	Consider the natural pairing
		\[
			G \times G' \to \Ga \onto \xi \into \xi_{\infty}
		\]
	in $\Ab(F^{\perar}_{\zar})$.
	The induced morphism
		\[
			G \tensor^{L} G' \to \xi_{\infty}
		\]
	is a perfect pairing in $D(F^{\perar}_{\zar})$.
\end{Prop}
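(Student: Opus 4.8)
The plan is to reduce the assertion to the base case $\Ga^{\vee} \cong \Ga[-1]$ over the pro-\'etale site, by transporting the statement along the comparison results established earlier in this section. First I would rewrite the pairing as $G[-1] \tensor^{L} G' \to \xi_{\infty}[-1]$. Since $G$ and $G'$ are connected objects of $\mathcal{W}_{F}$, we have $R \varepsilon_{\ast} G \cong G$ and $R \varepsilon_{\ast} G' \cong G'$ (Proposition \ref{0023} and its proof), and I would check that this pairing is the image, under $R \varepsilon_{\ast}$ followed by the morphism $R \varepsilon_{\ast} \Lambda_{\infty} \to \xi_{\infty}[-1]$ of \eqref{0024}, of the \'etale evaluation pairing $G[-1] \tensor^{L} G' \to \Lambda_{\infty}$ obtained from the bilinear morphism $G \times G' \to \Ga$ and the Artin--Schreier class $\Ga \to \Lambda_{\infty}[1]$. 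By Proposition \ref{0026} this reduces the Zariski statement to perfectness of the \'etale pairing, and the comparison of the two pairings itself comes down to the fact that the morphism $\Ga = W_{1} \to R^{1} \varepsilon_{\ast} \Lambda_{1} = \xi_{1}$ arising from $0 \to \Lambda_{1} \to W_{1} \overset{\wp}{\to} W_{1} \to 0$ is the canonical projection $W_{1} \onto W_{1} / \wp W_{1}$, which is immediate. By Propositions \ref{0010} and \ref{0492}, perfectness of the \'etale pairing over $F^{\perar}_{\et}$ is equivalent to perfectness, over $\Spec F^{\ind\rat}_{\pro\et}$, of the corresponding evaluation pairing $G \tensor^{L} G' \to \Lambda_{\infty}[1]$.

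To prove this, fix an isomorphism $V \cong F^{I} \oplus F^{\oplus J}$ with $I$ and $J$ at most countable, so that $G \cong \Ga^{I} \oplus \Ga^{\oplus J}$ and $G' \cong \Ga^{\oplus I} \oplus \Ga^{J}$. Because elements of $\Ga^{\oplus I}$ and $\Ga^{\oplus J}$ have finite support, the evaluation pairing is the orthogonal direct sum of the pairings $\Ga^{I} \tensor^{L} \Ga^{\oplus I} \to \Lambda_{\infty}[1]$ and $\Ga^{\oplus J} \tensor^{L} \Ga^{J} \to \Lambda_{\infty}[1]$, and a block-diagonal pairing is perfect if and only if each block is. As perfectness of a pairing $A \tensor^{L} B \to X$ is symmetric in $A$ and $B$, the second block is, up to interchanging the two tensor factors, an instance of the first. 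Hence it suffices to show, for $I$ at most countable, that the two morphisms
	\[
			\Ga^{I} \to R \sheafhom_{F^{\ind\rat}_{\pro\et}}(\Ga^{\oplus I}, \Lambda_{\infty})[1],
		\qquad
			\Ga^{\oplus I} \to R \sheafhom_{F^{\ind\rat}_{\pro\et}}(\Ga^{I}, \Lambda_{\infty})[1]
	\]
induced by the evaluation pairing are isomorphisms.

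For the first morphism, additivity of $R \sheafhom$ in the first variable identifies it with the product over $I$ of the morphism $\Ga \to R \sheafhom_{F^{\ind\rat}_{\pro\et}}(\Ga, \Lambda_{\infty})[1]$ induced by the multiplication--Artin--Schreier pairing, which is an isomorphism --- this is precisely $\Ga^{\vee} \cong \Ga[-1]$ together with the fact that the identification is realized by that pairing (\cite{Suz20}). For the second morphism, I would write $\Ga^{I} \cong \invlim_{I'} \Ga^{I'}$ as a cofiltered limit in $\Pro \Alg_{u} / F$ over the finite subsets $I' \subset I$. Using the structure theory of Serre duality recalled in Proposition \ref{0008} --- in particular that $(\var)^{\vee}$ is a contravariant equivalence between $D^{b}(\Pro \Alg_{u} / F)$ and $D^{b}(\Ind \Alg_{u} / F)$ taking objects of Type \eqref{0005} to objects of Type \eqref{0006} --- one identifies $R \sheafhom_{F^{\ind\rat}_{\pro\et}}(\Ga^{I}, \Lambda_{\infty})$ with $\dirlim_{I'} R \sheafhom_{F^{\ind\rat}_{\pro\et}}(\Ga^{I'}, \Lambda_{\infty}) \cong \dirlim_{I'} \Ga^{I'}[-1] \cong \Ga^{\oplus I}[-1]$, and the pairing $\Ga^{I} \tensor^{L} \Ga^{\oplus I} \to \Lambda_{\infty}[1]$ with the filtered colimit over $I'$ of the evaluation pairings $\Ga^{I'} \tensor^{L} \Ga^{I'} \to \Lambda_{\infty}[1]$. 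Each of the latter is perfect, being an orthogonal sum of $\# I'$ copies of the $\Ga$-pairing; hence the morphism is an isomorphism at every finite level and therefore in the colimit.

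The step I expect to be the main obstacle is this last identification: that $R \sheafhom_{F^{\ind\rat}_{\pro\et}}(\var, \Lambda_{\infty})$ turns the cofiltered limit $\Ga^{I} = \invlim_{I'} \Ga^{I'}$ into the filtered colimit $\dirlim_{I'} \Ga^{I'}[-1]$ in a way compatible with the evaluation pairings. This is a formal consequence of the structural properties of Serre duality on $D^{b}(\Pro \Alg_{u} / F)$ established in \cite{Suz20} and recalled in Proposition \ref{0008}, so the argument amounts to assembling those statements correctly; the remaining points --- the Artin--Schreier comparison of the first paragraph and the vanishing of the cross terms --- are routine.
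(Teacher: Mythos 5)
Your proposal is correct and follows essentially the same route as the paper: the paper's proof is exactly the chain you describe, namely that the composite $G \tensor^{L} G' \to \Ga \to \Lambda[1] \to \Lambda_{\infty}[1]$ is a perfect pairing in $D(F^{\ind\rat}_{\pro\et})$ by Serre duality, transported to $\Spec F^{\perar}_{\et}$ by Proposition \ref{0010} and then to $\Spec F^{\perar}_{\zar}$ by Proposition \ref{0026}. The only difference is that where you re-derive the perfectness of the pro-\'etale evaluation pairing for Tate vector groups from the single case of $\Ga$ (with the limit/colimit commutation deferred to the structure theory of \cite{Suz20}), the paper simply cites \cite[Proposition 2.4.1]{Suz20} and \cite[Chapter III, Lemma 0.13 (c)]{Mil06} for that input.
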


\begin{proof}
	The composite morphism
		\[
			G \otimes^{L} G' \to \Ga \to \Lambda[1] \to \Lambda_{\infty}[1]
		\]
	in $D(F^{\ind\rat}_{\pro\et})$ is a perfect pairing by Serre duality
	(\cite[Proposition 2.4.1]{Suz20}, \cite[Chapter III, Lemma 0.13 (c)]{Mil06}).
	By Proposition \ref{0010},
	this implies that the same morphism considered in $D(F^{\perar}_{\et})$
	is a perfect pairing.
	By Proposition \ref{0026}, we get the result.
\end{proof}

\begin{Prop} \label{0028}
	The multiplication pairing
		\[
				\Lambda \tensor^{L} \xi
			\to
				\xi
			\to
				\xi_{\infty}
		\]
	is a perfect pairing in $D(F^{\perar}_{\zar})$.
\end{Prop}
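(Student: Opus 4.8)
The plan is to reduce to Serre duality over $\Spec F^{\perar}_{\et}$ for the constant group $\Lambda = \Z / p \Z$, transport it to $\Spec F^{\perar}_{\zar}$ via Proposition \ref{0026}, and then extract the multiplication pairing by a truncation argument. Since $\Lambda$ is finite \'etale $p$-primary we have $\Lambda \in \mathcal{W}_{F}$, and by Proposition \ref{0497} the object $R \sheafhom_{F^{\perar}_{\et}}(\Lambda, \Lambda_{\infty})$ is the Pontryagin dual of $\Lambda$ concentrated in degree $0$. As $\Z / p \Z$ is canonically self-Pontryagin-dual, this says precisely that the multiplication pairing $\Lambda \tensor^{L} \Lambda \to \Lambda_{1} \into \Lambda_{\infty}$ is a perfect pairing in $D(F^{\perar}_{\et})$. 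Applying Proposition \ref{0026} with $G = G' = \Lambda$, the induced pairing $R \varepsilon_{\ast} \Lambda \tensor^{L} R \varepsilon_{\ast} \Lambda \to \xi_{\infty}[-1]$ is perfect in $D(F^{\perar}_{\zar})$. Finally, from the exact sequence $0 \to \Lambda \to \Ga \stackrel{\wp}{\to} \Ga \to 0$ in $\Ab(F^{\perar}_{\et})$ and the vanishing $R^{q} \varepsilon_{\ast} \Ga = 0$ for $q \geq 1$ (additive Galois cohomology of fields vanishes, exactly as in the derivation of \eqref{0024}), one obtains
\[
		\Xi := R \varepsilon_{\ast} \Lambda \cong [\Ga \stackrel{\wp}{\to} \Ga]
\]
placed in degrees $0, 1$, with $H^{0} \Xi = \Lambda$, $H^{1} \Xi = \xi$, and a truncation triangle $\Lambda \to \Xi \to \xi[-1] \xrightarrow{+1}$.

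It remains to check that the two morphisms induced by the multiplication pairing $\Lambda \tensor^{L} \xi \to \xi \into \xi_{\infty}$ are isomorphisms. The morphism $\xi \to R \sheafhom_{F^{\perar}_{\zar}}(\Lambda, \xi_{\infty})$ is straightforward: resolving $\Lambda$ by $\Z \stackrel{p}{\to} \Z$ and using that $\xi_{\infty}$ is $p$-divisible with $\xi_{\infty}[p] = \xi$ gives $R \sheafhom_{F^{\perar}_{\zar}}(\Lambda, \xi_{\infty}) \cong \xi$ in degree $0$, the induced morphism being the identity. For the morphism $\Lambda \to R \sheafhom_{F^{\perar}_{\zar}}(\xi, \xi_{\infty})$, I would apply $R \sheafhom_{F^{\perar}_{\zar}}(\var, \xi_{\infty}[-1])$ to the truncation triangle; using the perfect self-pairing of $\Xi$ just obtained (so $R \sheafhom_{F^{\perar}_{\zar}}(\Xi, \xi_{\infty}[-1]) \cong \Xi$) together with $R \sheafhom_{F^{\perar}_{\zar}}(\Lambda, \xi_{\infty}) \cong \xi$, this produces a distinguished triangle
\[
		R \sheafhom_{F^{\perar}_{\zar}}(\xi, \xi_{\infty}) \to \Xi \stackrel{\varphi}{\to} \xi[-1] \xrightarrow{+1},
\]
in which $\varphi$ is the self-pairing isomorphism $\Xi \isomto R \sheafhom_{F^{\perar}_{\zar}}(\Xi, \xi_{\infty}[-1])$ followed by restriction along $\Lambda \into \Xi$. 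Since $\Hom_{D(F^{\perar}_{\zar})}(\Xi, \xi[-1]) \cong \Hom_{F^{\perar}_{\zar}}(\xi, \xi)$ (apply $\Hom(\var, \xi[-1])$ to the truncation triangle and use $\Hom_{D(F^{\perar}_{\zar})}(\Lambda, \xi[-1]) = \Hom_{D(F^{\perar}_{\zar})}(\Lambda, \xi[-2]) = 0$), the morphism $\varphi$ is determined by $H^{1}(\varphi)$; I claim $H^{1}(\varphi) = \id_{\xi}$, which forces $\varphi$ to be the truncation morphism $\Xi \to \tau_{\geq 1} \Xi = \xi[-1]$, whence $R \sheafhom_{F^{\perar}_{\zar}}(\xi, \xi_{\infty}) \cong \tau_{\leq 0} \Xi = \Lambda$. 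Granting this, the multiplication-induced morphism $\Lambda \to R \sheafhom_{F^{\perar}_{\zar}}(\xi, \xi_{\infty}) = \Lambda$ is a nonzero $\Z / p \Z$-linear endomorphism of $\Z / p \Z$, since $1 \in \Lambda$ acts on $\xi$ as $\id_{\xi} \neq 0$, hence an isomorphism, completing the argument.

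The main obstacle is the claim $H^{1}(\varphi) = \id_{\xi}$, equivalently that on $H^{0} \Xi \tensor H^{1} \Xi$ the transported pairing $\Xi \tensor^{L} \Xi \to \xi_{\infty}[-1]$ is the natural multiplication $\Lambda \tensor \xi \to \xi \into \xi_{\infty}$. Verifying this means tracing the multiplication pairing $\Lambda \tensor^{L} \Lambda \to \Lambda_{\infty}$ through $R \varepsilon_{\ast}$ and through the morphism \eqref{0024}, using the explicit Artin-Schreier description of the isomorphism $R^{1} \varepsilon_{\ast} \Lambda_{n} \cong \xi_{n}$ and the compatibility of cup products with the boundary maps of $0 \to \Lambda \to \Ga \stackrel{\wp}{\to} \Ga \to 0$. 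This is where the genuine content sits; the rest is formal manipulation of truncation triangles and of the vanishing of morphisms in $D(F^{\perar}_{\zar})$ from an object concentrated in degrees $\leq 0$ into one concentrated in degree $1$.
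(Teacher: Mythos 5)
Your route is the paper's route: both arguments rest on the self-duality of $\Xi := R \varepsilon_{\ast} \Lambda$ coming from Proposition \ref{0025} applied to $G = \Lambda$, and both reduce the statement to identifying the dual of the inclusion $\Lambda \into \Xi$ with the truncation morphism $\Xi \to H^{1}\Xi[-1] = \xi[-1]$. The one step you leave open --- your claim $H^{1}(\varphi) = \id_{\xi}$ --- is exactly the step the paper settles, and it does not require the cup-product chase you anticipate: the isomorphism $\Xi \isomto R \sheafhom_{F^{\perar}_{\zar}}(\Xi, \xi_{\infty})[-1]$ of Proposition \ref{0025} is \emph{by construction} the composite
\[
		R \varepsilon_{\ast} R \sheafhom_{F^{\perar}_{\et}}(\Lambda, \Lambda_{\infty})
	\to
		R \sheafhom_{F^{\perar}_{\zar}}(R \varepsilon_{\ast} \Lambda, R \varepsilon_{\ast} \Lambda_{\infty})
	\to
		R \sheafhom_{F^{\perar}_{\zar}}(R \varepsilon_{\ast} \Lambda, \xi_{\infty})[-1],
\]
so composing with restriction along the unit $\Lambda \to R \varepsilon_{\ast} \Lambda$ and using $R \sheafhom_{F^{\perar}_{\zar}}(\Lambda, \var) = [\,\var \stackrel{p}{\to} \var\,][-1]$ together with the $p$-divisibility of $\Lambda_{\infty}$ and $\xi_{\infty}$, one finds that $\varphi$ is literally the morphism \eqref{0024} evaluated against $\Lambda$, i.e.\ the projection of $\Xi$ onto $\xi[-1]$. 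Note also that your argument only needs $H^{1}(\varphi)$ to be an \emph{isomorphism}, not the identity, since the fiber of $\varphi$ is then still $\tau_{\le 0} \Xi = \Lambda$; and your final observation (that the multiplication-induced endomorphism of $\Lambda$ is nonzero, hence invertible) then closes the proof exactly as in the paper.
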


\begin{proof}
	The isomorphism
		\[
				\xi
			\isomto
				R \sheafhom_{F^{\perar}_{\zar}}(\Lambda, \xi_{\infty})
		\]
	is obvious.
	We show
		\[
				\Lambda
			\isomto
				R \sheafhom_{F^{\perar}_{\zar}}(\xi, \xi_{\infty}).
		\]
	We have isomorphisms
		\[
				R \varepsilon_{\ast} \Lambda
			\isomto
				R \sheafhom_{F^{\perar}_{\zar}}(
					R \varepsilon_{\ast} \Lambda,
					R \varepsilon_{\ast} \Lambda_{\infty}
				)
			\cong
				R \sheafhom_{F^{\perar}_{\zar}}(
					R \varepsilon_{\ast} \Lambda,
					\xi_{\infty}
				)[-1]
		\]
	by Proposition \ref{0025}.
	This isomorphism followed by the morphism
		\[
				R \sheafhom_{F^{\perar}_{\zar}}(R \varepsilon_{\ast} \Lambda, \xi_{\infty})[-1]
			\to
				R \sheafhom_{F^{\perar}_{\zar}}(\Lambda, \xi_{\infty})[-1]
			=
				\xi[-1]
		\]
	is the natural  morphism $R \varepsilon_{\ast} \Lambda \to \xi[-1]$.
	Hence $\Lambda \cong R \sheafhom_{F^{\perar}_{\zar}}(\xi, \xi_{\infty})$.
\end{proof}


\subsection{Equivariant structures with respect to base change}
\label{0518}

When a sheaf on a site is base-changed to the localization at some object,
it naturally acquires an equivariant structure (or a descent datum).
In this subsection, we give some basics about this topic,
paying attention to functoriality with respect to
premorphisms of sites without exact pullback functors.
This is needed to understand the behavior of the functor $\algebrize$ under base change
and the associated equivariant structures.
The proof of Statement \eqref{0130} of Theorem \ref{0128}
will be a straightforward consequence of the results of this subsection.

Let $S$ be a site and $\Sigma$ a group.
Assume an action of $\Sigma$ on the underlying category of $S$ is given.
This means that for any $\sigma \in \Sigma$,
we are given an equivalence of categories
$\sigma^{-1} \colon S \isomto S$, $X \mapsto \sigma^{-1} X$,
such that the composite $\tau^{-1} \sigma^{-1} X \colon S \isomto S$ is
naturally $(\sigma \tau)^{-1}$.
Assume moreover that $\sigma^{-1}$ for any $\sigma$ preserves covering families.
The equivalence of sites $S \isomto S$ induced by the functor $\sigma^{-1}$
is denoted by $\sigma$
(so $(\sigma_{\ast} C)(X) = C(\sigma^{-1} X)$ for a sheaf $C$ and $X \in S$).

Let $S_{\Sigma}$ be the category where the objects are the objects of $S$
and the set of morphisms $\Hom_{S_{\Sigma}}(X, Y)$ for objects $X, Y$
is $\bigsqcup_{\sigma \in \Sigma} \Hom_{S}(X, \sigma^{-1} Y)$.
A morphism is denoted by $(f, \sigma)$,
where $\sigma \in \Sigma$ and $f \colon X \to \sigma^{-1} Y$ in $S$.
The composite of $(f, \sigma) \colon X \to Y$ and $(g, \tau) \colon Y \to Z$
is $((\sigma^{-1} g) \compose f, \tau \sigma)$.
A family of morphisms $\{(f_{\lambda}, \sigma_{\lambda}) \colon X_{\lambda} \to X\}$
with fixed target is said to be a covering
if $\{\sigma_{\lambda} f_{\lambda} \colon \sigma_{\lambda} X_{\lambda} \to X\}$
is a covering in $S$ (where $\sigma_{\lambda}$ is the inverse of $\sigma_{\lambda}^{-1}$).
This defines a pretopology on $S_{\Sigma}$,
and we consider $S_{\Sigma}$ as a site with this pretopology.

The functor from the underlying category of $S$ to the underlying category of $S_{\Sigma}$
given by sending $X$ to $X$ and $f \colon X \to Y$ to $(f, \id) \colon X \to Y$
defines a premorphism of sites
	\begin{equation} \label{0519}
		\eta^{\Sigma} \colon S_{\Sigma} \to S.
	\end{equation}
The pushforward functor $\eta^{\Sigma}_{\ast}$ is called the forgetful functor
and denoted by $\For^{\Sigma}$.
Any sheaf on $S_{\Sigma}$ thus defines a sheaf on $S$.
Note that $(\id, \sigma) \colon \sigma^{-1} X \to X$ is an isomorphism
for any $X \in S$ and $\sigma \in \Sigma$.
Hence any sheaf on $S_{\Sigma}$ can be identified with its forgetful image $C \in \Set(S)$
together with a $\Sigma$-equivariant structure
$\varphi_{\sigma} \colon C \isomto \sigma_{\ast} C$
(satisfying $(\sigma_{\ast} \varphi_{\tau}) \compose \varphi_{\sigma} = \varphi_{\sigma \tau}$).
Thus $\Set(S_{\Sigma})$ can be identified with
the category of $\Sigma$-equivariant sheaves on $S$.
Here are basic properties of the forgetful functor:

\begin{Prop} \label{0520}
	The forgetful functor $\For^{\Sigma} \colon \Ab(S_{\Sigma}) \to \Ab(S)$ on abelian sheaves
	is exact and conservative.
	It sends the free abelian sheaf $\Z[X]$ on an object $X$
	to $\bigoplus_{\sigma \in \Sigma} \Z[\sigma^{-1} X]$.
\end{Prop}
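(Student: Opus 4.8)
The plan is to read all three assertions off the identification, recorded above, of $\Set(S_{\Sigma})$ with the category of $\Sigma$-equivariant sheaves on $S$, under which $\For^{\Sigma} = \eta^{\Sigma}_{\ast}$ is the functor forgetting the equivariant structure. First I would note that this identification passes verbatim to abelian sheaves: a sheaf of abelian groups on $S_{\Sigma}$ is an abelian group object of $\Set(S_{\Sigma})$, hence a pair $(C, (\varphi_{\sigma})_{\sigma \in \Sigma})$ with $C \in \Ab(S)$ and isomorphisms $\varphi_{\sigma} \colon C \isomto \sigma_{\ast} C$ satisfying the cocycle identity, and under this correspondence $\For^{\Sigma}$ becomes $(C, (\varphi_{\sigma})) \mapsto C$ --- indeed $\eta^{\Sigma}_{\ast}$ commutes with the forgetful functor to set-valued sheaves, both amounting to restriction of a sheaf along the inclusion $S \subset S_{\Sigma}$, which is the identity on objects.

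Exactness is then immediate. Each $\sigma \colon S \isomto S$ is an equivalence of sites, so $\sigma_{\ast} \colon \Ab(S) \to \Ab(S)$ is exact; hence for a morphism $f \colon (C, (\varphi_{\sigma})) \to (D, (\psi_{\sigma}))$ in $\Ab(S_{\Sigma})$ the kernel and cokernel of the underlying morphism $C \to D$ in $\Ab(S)$ inherit, through the $\varphi_{\sigma}$, a canonical equivariant structure making them the kernel and cokernel of $f$ in $\Ab(S_{\Sigma})$. Thus $\For^{\Sigma}$ preserves kernels and cokernels and is exact. It is faithful by construction, and an exact faithful functor between abelian categories is conservative (it reflects zero objects, so $\For^{\Sigma} f$ an isomorphism forces $\Ker f = \Coker f = 0$, hence $f$ an isomorphism).

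For the claim about $\Z[X]$, I would pass through the set-valued picture. By the definition of the morphism sets of $S_{\Sigma}$ we have $\Hom_{S_{\Sigma}}(Y, X) = \bigsqcup_{\sigma \in \Sigma} \Hom_{S}(Y, \sigma^{-1} X)$ for $X \in S_{\Sigma}$ and $Y \in S$; since $\For^{\Sigma}$ is restriction of a (pre)sheaf along $S \subset S_{\Sigma}$ and therefore commutes with sheafification, it carries the sheaf on $S_{\Sigma}$ represented by $X$ to the coproduct $\coprod_{\sigma \in \Sigma}$ in $\Set(S)$ of the sheaves represented by the $\sigma^{-1} X$. Applying $\Z[-]$, which equals sheafification composed with the free-abelian-presheaf functor and hence also commutes with $\For^{\Sigma}$, and which converts this coproduct into the direct sum of the $\Z[\sigma^{-1} X]$, yields $\For^{\Sigma} \Z[X] \cong \bigoplus_{\sigma \in \Sigma} \Z[\sigma^{-1} X]$.

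The one genuinely site-theoretic input, and the step I expect to require the most care, is precisely the compatibility of $\For^{\Sigma}$ with sheafification used twice above --- equivalently, that the covering sieves of an object $Y \in S$ are the same, up to the canonical isomorphisms $(\id, \sigma)$, whether computed in $S$ or in $S_{\Sigma}$. This rests on the premorphism axioms for $\eta^{\Sigma}$: every covering family of $Y$ in $S_{\Sigma}$ is isomorphic, as a covering family of $Y$, to one induced from a covering family of $Y$ in the site $S$ (replace each leg $(f_{\lambda}, \sigma_{\lambda})$ by $(\sigma_{\lambda}(f_{\lambda}), \id)$ via the isomorphism $\sigma_{\lambda} Y_{\lambda} \cong Y_{\lambda}$), and the fibre products occurring in the associated \v Cech nerves are computed by the corresponding fibre products in $S$ (the premorphism condition for $\eta^{\Sigma}$); hence the plus-construction over $S$ and over $S_{\Sigma}$ agree on presheaves restricted from $S_{\Sigma}$. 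Granting this, an infinite group $\Sigma$ poses no difficulty, since sheafification commutes with arbitrary coproducts; the rest is formal bookkeeping with the equivariant description.
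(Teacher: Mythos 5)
Your proposal is correct and follows essentially the same route as the paper: the paper declares exactness and conservativity obvious, computes the restriction of the representable presheaf as $\bigsqcup_{\sigma \in \Sigma} \sigma^{-1} X$, and invokes cocontinuity of the underlying functor of $\eta^{\Sigma}$ (via the Stacks Project) for the commutation of $\For^{\Sigma}$ with sheafification — the step you reprove by hand with the plus-construction. Your write-up merely makes these terse steps explicit.
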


\begin{proof}
	The exactness and the conservativity are obvious.
	For a representable presheaf $X$ of sets,
	the presheaf $\For^{\Sigma} X$ is given by
	$\bigsqcup_{\sigma \in \Sigma} \sigma^{-1} X$.
	The forgetful functor commutes with sheafification
	since the underlying functor of $\eta^{\Sigma}$ is cocontinuous
	and by \cite[Tag 00XM]{Sta21}.
	This gives the statement about $\Z[X]$.
\end{proof}

Let $S'$ be another site with $\Sigma$-action.
Let $f \colon S' \to S$ be a premorphism defined by a functor $f^{-1}$ on the underlying categories
that commutes with the $\Sigma$-actions
(that is, $f^{-1} \sigma^{-1} X \cong \sigma^{-1} f^{-1} X$
for all $\sigma \in \Sigma$ and $X \in S$ naturally).
It induces a premorphism $f^{\Sigma} \colon S'_{\Sigma} \to S_{\Sigma}$
with a commutative diagram
	\[
		\begin{CD}
				S'_{\Sigma}
			@> f^{\Sigma} >>
				S_{\Sigma}
			\\ @V \eta^{\Sigma} VV @VV \eta^{\Sigma} V \\
				S'
			@>> f >
				S,
		\end{CD}
	\]
where the vertical arrows are \eqref{0519} for $S'$ and for $S$.
In particular, we have a commutative diagram
	\begin{equation} \label{0523}
		\begin{CD}
				D(S'_{\Sigma})
			@> R f_{\ast}^{\Sigma} >>
				D(S_{\Sigma})
			\\ @V \For^{\Sigma} VV @VV \For^{\Sigma} V \\
				D(S')
			@>> R f_{\ast} >
				D(S).
		\end{CD}
	\end{equation}

A particular case of this situation is where $S$ is the terminal site
and $f^{-1}$ sends the unique object of $S$ to a terminal object of $S'$.
In this case, $\Set(S_{\Sigma})$ is just the category of $\Sigma$-sets.
Denote the functor $f_{\ast}^{\Sigma}$ by $\Gamma^{\Sigma}(S', \var)$.
It sends a $\Sigma$-equivariant sheaf $C'$ on $S'$
to its global section $\Gamma(S', C')$ with the natural $\Sigma$-action.
Hence $R f_{\ast}^{\Sigma} = R \Gamma^{\Sigma}(S', \var)$ is the functor
$R \Gamma(S', \var)$ upgraded as a functor valued in $D(\Mod{\Sigma})$,
where $\Mod{\Sigma}$ is the category of $\Sigma$-modules,
and $R^{q} f_{\ast}^{\Sigma} G' = H^{q} R \Gamma^{\Sigma}(S', G')$
for any $q$ and $G' \in D(S'_{\Sigma})$ is
$H^{q}(S', G')$ with the natural $\Sigma$-action
by \eqref{0523}.

The forgetful functor is also compatible with derived pullback:

\begin{Prop} \label{0522}
	The diagram
		\[
			\begin{CD}
					D(S_{\Sigma})
				@> L f^{\Sigma, \ast} >>
					D(S'_{\Sigma})
				\\ @V \For^{\Sigma} VV @VV \For^{\Sigma} V \\
					D(S)
				@>> L f^{\ast} >
					D(S')
			\end{CD}
		\]
	commutes (where $f^{\Sigma, \ast} = (f^{\Sigma})^{\ast}$).
\end{Prop}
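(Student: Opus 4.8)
The plan is to realize the asserted commutativity as an instance of base change for the strictly commuting square of premorphisms
\[
    \eta^{\Sigma} \compose f^{\Sigma} = f \compose \eta^{\Sigma}
\]
displayed before the statement (the two copies of $\eta^{\Sigma}$ being \eqref{0519} for $S$ and for $S'$), and then to check that the resulting transformation is an isomorphism on a generating family. Concretely, the unit of $L f^{\Sigma, \ast} \dashv R f^{\Sigma}_{\ast}$, the identification $\For^{\Sigma} \compose R f^{\Sigma}_{\ast} \cong R f_{\ast} \compose \For^{\Sigma}$ of \eqref{0523}, and the counit of $L f^{\ast} \dashv R f_{\ast}$ assemble into a canonical natural transformation
\[
    \beta \colon L f^{\ast} \compose \For^{\Sigma} \Longrightarrow \For^{\Sigma} \compose L f^{\Sigma, \ast}
\]
of triangulated functors $D(S_{\Sigma}) \to D(S')$ (it is the mate of \eqref{0523}); the claim is that $\beta$ is invertible. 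Note that $\For^{\Sigma}$ is exact by Proposition \ref{0520}, so it needs no resolution to be formed on derived categories, while $L f^{\ast}$ and $L f^{\Sigma, \ast}$ are the honest left derived functors.

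First I would observe that both composites commute with arbitrary direct sums: $L f^{\ast}$ and $L f^{\Sigma, \ast}$ do as left adjoints, and $\For^{\Sigma} = \eta^{\Sigma}_{\ast}$ does because the underlying functor of $\eta^{\Sigma}$ is cocontinuous (as used in the proof of Proposition \ref{0520}), so that $\eta^{\Sigma}_{\ast}$ preserves all colimits. Next I would record that the objects $\Z[X]$ for $X \in S_{\Sigma}$ generate $D(S_{\Sigma})$ as a localizing subcategory: for $G \in D(S_{\Sigma})$ one has $\Hom_{D(S_{\Sigma})}(\Z[X][-n], G) \cong H^{n}(X, G)$, computed with a K-injective resolution of $G$, so the vanishing of all these groups forces all cohomology sheaves of $G$ to vanish and hence $G \cong 0$. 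Since the full subcategory of objects on which $\beta$ is an isomorphism is triangulated and closed under direct sums, it then suffices to show that $\beta_{\Z[X]}$ is an isomorphism for every $X \in S_{\Sigma}$.

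Finally I would verify this on $\Z[X]$ by a direct computation. Each $\Z[X]$ is flat --- the functor $\var \tensor \Z[X]$ is the sheafification of a termwise direct sum of copies of the argument, hence exact --- so $L f^{\Sigma, \ast} \Z[X] = f^{\Sigma, \ast} \Z[X] = \Z[f^{-1} X]$, and likewise $L f^{\ast} \Z[Y] = \Z[f^{-1} Y]$; moreover $\For^{\Sigma} \Z[Y] \cong \bigoplus_{\sigma \in \Sigma} \Z[\sigma^{-1} Y]$ by Proposition \ref{0520}. Using that $\For^{\Sigma}$ and $L f^{\ast}$ commute with direct sums and that $f^{-1}$ commutes with the $\Sigma$-actions on $S$ and $S'$, both $L f^{\ast} \For^{\Sigma} \Z[X]$ and $\For^{\Sigma} L f^{\Sigma, \ast} \Z[X]$ are canonically $\bigoplus_{\sigma \in \Sigma} \Z[\sigma^{-1} f^{-1} X]$, and one checks that $\beta_{\Z[X]}$ is exactly this identification by unwinding the unit and counit on representable free sheaves. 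I expect the main obstacle to be precisely this last bookkeeping step: tracing $\beta$ through the isomorphism of Proposition \ref{0520} and the adjunction (co)units to recognize it as the evident permutation isomorphism --- keeping the abuse of notation ``$\For^{\Sigma}$ for both sites'' consistent at each step, and making sure that flatness of $\Z[X]$ genuinely legitimizes dropping the $L$ from $L f^{\ast} \Z[X]$ even though $f$ is merely a premorphism and $f^{\ast}$ need not be exact. Everything else --- exactness of $\For^{\Sigma}$, compatibility with coproducts, and the generation of $D(S_{\Sigma})$ by $\{\Z[X]\}$ --- is soft.
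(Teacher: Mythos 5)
Your proposal is correct and follows essentially the same route as the paper: both arguments reduce the derived commutativity to the agreement of the underived functors on the free sheaves $\Z[X]$ (computed via Proposition \ref{0520} and the $\Sigma$-equivariance of $f^{-1}$) together with the acyclicity of $\Z[X]$ for the left derived pullback. The only point where the paper is more careful is this last acyclicity, $L_{n} f^{\ast} \Z[X] = 0$ for $n \ge 1$, which it cites from \cite[Lemma 3.7.2]{Suz13} rather than deducing from flatness of $\Z[X]$ for $\tensor$ --- which, as you yourself flag, is not quite the relevant notion for a premorphism whose $f^{\ast}$ need not be exact.
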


\begin{proof}
	The diagram
		\[
			\begin{CD}
					\Ab(S_{\Sigma})
				@> f^{\Sigma, \ast} >>
					\Ab(S'_{\Sigma})
				\\ @V \For^{\Sigma} VV @VV \For^{\Sigma} V \\
					\Ab(S)
				@>> f^{\ast} >
					\Ab(S')
			\end{CD}
		\]
	commutes since
	the two functors $\Ab(S_{\Sigma}) \rightrightarrows \Ab(S')$ coming from the diagram
	both send $\Z[X]$ (with $X \in S$) to
		$
				\bigoplus_{\sigma \in \Sigma}
					\Z[\sigma^{-1} f^{-1} X]
			\cong
				\bigoplus_{\sigma \in \Sigma} \Z[f^{-1} \sigma^{-1} X]
		$
	by Proposition \ref{0520}.
	Since $L_{n} f^{\ast} \Z[X] = 0$ for all $n \ge 1$ by \cite[Lemma 3.7.2]{Suz22Duality},
	this derives.
\end{proof}

Next, let $S$ be a site and $\Sigma$ a group (but no action given).
Let $X_{0} \in S$.
Assume a group homomorphism $\Sigma \to \Aut_{S}(X_{0})$ is given
(so we have an isomorphism $\sigma \colon X_{0} \isomto X_{0}$ in $S$
for any $\sigma \in \Sigma$ in a compatible way).
For any $X / X_{0} \in S / X_{0}$ and $\sigma \in \Sigma$,
we have a new object $\sigma^{-1} X / X_{0} \in S / X_{0}$ defined by the commutative diagram
	\[
		\begin{CD}
				\sigma^{-1} X
			@> \sigma > \sim >
				X
			\\ @VVV @VVV \\
				X_{0}
			@> \sigma > \sim >
				X_{0}
		\end{CD}
	\]
in $S$.
Then $\Sigma$ acts on the underlying category of the localization $S / X_{0}$
by sending $X / X_{0}$ to $\sigma^{-1} X / X_{0}$ for $\sigma \in \Sigma$.
It preserves covering families.
Hence the previous paragraphs define a site $(S / X_{0})_{\Sigma}$
and a premorphism \eqref{0519}, which we now denote by
$\eta_{X_{0}}^{\Sigma} \colon (S / X_{0})_{\Sigma} \to S / X_{0}$.
Let $\For_{X_{0}}^{\Sigma} = \eta_{X_{0}, \ast}^{\Sigma}$ be the forgetful functor.

The functor from the underlying category of $(S / X_{0})_{\Sigma}$ to the underlying category of $S$
given by sending $X / X_{0}$ to $X$ and $(f, \sigma) \colon X \to Y$ to the composite
$X \stackrel{f}{\to} \sigma^{-1} Y \stackrel{\sigma}{\to} Y$
defines a premorphism
	\[
			\theta_{X_{0}}^{\Sigma}
		\colon
			S
		\to
			(S / X_{0})_{\Sigma}.
	\]
Its pushforward functor $\theta_{X_{0}, \ast}^{\Sigma}$
sends $C \in \Set(S)$ to its restriction $C|_{X_{0}} \in \Set(S / X_{0})$
with the natural $\Sigma$-equivariant structure.
Call it the $\Sigma$-restriction functor and denote it by $(\var)|_{X_{0}}^{\Sigma}$.
It is an exact functor.
Now we have two premorphisms
	\[
			S
		\stackrel{\theta_{X_{0}}^{\Sigma}}{\to}
			(S / X_{0})_{\Sigma}
		\stackrel{\eta_{X_{0}}^{\Sigma}}{\to}
			S / X_{0},
	\]
whose composite is the natural localization premorphism $S \to S / X_{0}$
(defined by the functor $X / X_{0} \mapsto X$).
In particular, we have $\For_{X_{0}}^{\Sigma} \compose |_{X_{0}}^{\Sigma} \cong |_{X_{0}}$.

Let $f \colon S' \to S$ be a premorphism from another site $S'$
defined by a functor $f^{-1}$ on the underlying categories.
Set $X_{0}' = f^{-1} X_{0}$.
We have the localization $f|_{X_{0}} \colon S' / X_{0}' \to S / X_{0}$ of $f$.
It induces a premorphism
$f|_{X_{0}}^{\Sigma} \colon (S' / X_{0}')_{\Sigma} \to (S / X_{0})_{\Sigma}$
as above.
The diagram
	\[
		\begin{CD}
				S'
			@> f >>
				S
			\\
			@V \theta_{X_{0}'}^{\Sigma} VV
			@VV \theta_{X_{0}}^{\Sigma} V
			\\
				(S' / X_{0}')_{\Sigma}
			@>> f|_{X_{0}}^{\Sigma} >
				(S / X_{0})_{\Sigma}
		\end{CD}
	\]
commutes.
In particular, we have a commutative diagram
	\begin{equation} \label{0525}
		\begin{CD}
				D(S')
			@> R f_{\ast} >>
				D(S)
			\\
			@V |_{X_{0}'}^{\Sigma} VV
			@VV |_{X_{0}}^{\Sigma} V
			\\
				D(S' / X_{0}')_{\Sigma}
			@>> R (f|_{X_{0}}^{\Sigma})_{\ast} >
				D(S / X_{0})_{\Sigma},
		\end{CD}
	\end{equation}
where $D(S / X_{0})_{\Sigma}$ means $D((S / X_{0})_{\Sigma})$.

Some restriction is needed for the compatibility with derived pullback:

\begin{Prop} \label{0524}
	The diagram
		\[
			\begin{CD}
					D(S)
				@> L f^{\ast} >>
					D(S')
				\\
				@V |_{X_{0}}^{\Sigma} VV
				@VV |_{X_{0}}^{\Sigma} V
				\\
					D(S' / X_{0}')_{\Sigma}
				@>> L (f|_{X_{0}}^{\Sigma})^{\ast} >
					D(S / X_{0})_{\Sigma}
			\end{CD}
		\]
	commutes on the full subcategory of $D(S)$
	consisting of $f$-compatible objects.
\end{Prop}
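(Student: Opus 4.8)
The plan is to reduce the statement to the very definition of $f$-compatibility by applying the exact conservative forgetful functor. Concretely, for an $f$-compatible $G \in D(S)$ one must show that the canonical base change morphism
	\[
			L (f|_{X_{0}}^{\Sigma})^{\ast} (G|_{X_{0}}^{\Sigma})
		\to
			(L f^{\ast} G)|_{X_{0}'}^{\Sigma}
	\]
attached to the commutative square of premorphisms relating $\theta_{X_{0}}^{\Sigma}$, $\theta_{X_{0}'}^{\Sigma}$, $f$ and $f|_{X_{0}}^{\Sigma}$ is an isomorphism; note that the $\Sigma$-restriction functors are exact, so they need no deriving and the diagram makes sense as stated.

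First I would record that the forgetful functor $\For_{X_{0}'}^{\Sigma} \colon \Ab((S' / X_{0}')_{\Sigma}) \to \Ab(S' / X_{0}')$ is exact and \emph{conservative} by Proposition \ref{0520}; being exact it commutes with the formation of cohomology objects, hence the induced triangulated functor on the derived categories is again conservative. So it suffices to prove that the morphism above becomes an isomorphism after applying $\For_{X_{0}'}^{\Sigma}$.

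Next I would apply Proposition \ref{0522} to the localized premorphism $f|_{X_{0}} \colon S' / X_{0}' \to S / X_{0}$, whose underlying functor commutes with the $\Sigma$-actions (these being induced by the composite $\Sigma \to \Aut_{S}(X_{0}) \to \Aut_{S'}(X_{0}')$), together with its $\Sigma$-equivariant lift $(f|_{X_{0}})^{\Sigma} = f|_{X_{0}}^{\Sigma}$; this gives $\For_{X_{0}'}^{\Sigma} \compose L (f|_{X_{0}}^{\Sigma})^{\ast} \cong L (f|_{X_{0}})^{\ast} \compose \For_{X_{0}}^{\Sigma}$. Combining this with the identities $\For_{X_{0}}^{\Sigma} \compose |_{X_{0}}^{\Sigma} \cong |_{X_{0}}$ and $\For_{X_{0}'}^{\Sigma} \compose |_{X_{0}'}^{\Sigma} \cong |_{X_{0}'}$ established in the text, the two composites $\For_{X_{0}'}^{\Sigma} \compose L (f|_{X_{0}}^{\Sigma})^{\ast} \compose |_{X_{0}}^{\Sigma}$ and $\For_{X_{0}'}^{\Sigma} \compose |_{X_{0}'}^{\Sigma} \compose L f^{\ast}$ become $L (f|_{X_{0}})^{\ast} \compose |_{X_{0}}$ and $|_{X_{0}'} \compose L f^{\ast}$, respectively. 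Under these identifications the $\For_{X_{0}'}^{\Sigma}$-image of the base change morphism becomes the natural morphism
	\[
			L (f|_{X_{0}})^{\ast} (G|_{X_{0}})
		\to
			(L f^{\ast} G)|_{X_{0}'},
	\]
which is an isomorphism for $f$-compatible $G$ precisely by \cite[Definition 2.5 (a)]{Suz21} applied with $X = X_{0}$. Since $\For_{X_{0}'}^{\Sigma}$ is conservative on derived categories, this finishes the argument.

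The main obstacle is the identification in the last step: one must verify that applying $\For_{X_{0}'}^{\Sigma}$ to the base change $2$-morphism of the premorphism square does produce the $f$-compatibility morphism. This amounts to tracing the base change morphism through the units and counits of the adjunctions $(L \theta_{X_{0}}^{\Sigma, \ast}, R \theta_{X_{0}, \ast}^{\Sigma})$ and $(L \theta_{X_{0}'}^{\Sigma, \ast}, R \theta_{X_{0}', \ast}^{\Sigma})$ and the strict commutativities of the relevant premorphism squares, using that $\For$ is compatible with $R f_{\ast}$ by \eqref{0523} and with $L f^{\ast}$ by Proposition \ref{0522} and intertwines the pertinent unit and counit maps by naturality. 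I expect this to be routine but somewhat fiddly bookkeeping; everything else is formal.
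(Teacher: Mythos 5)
Your proposal is correct and follows essentially the same route as the paper: apply the (exact, conservative) forgetful functor, use Proposition \ref{0522} to commute it past the derived pullbacks, identify the resulting morphism with the $f$-compatibility morphism $L(f|_{X_0})^\ast(G|_{X_0}) \to (Lf^\ast G)|_{X_0}$, and conclude by conservativity from Proposition \ref{0520}. The "fiddly bookkeeping" you flag at the end is exactly what the paper absorbs into its citation of Proposition \ref{0522}, so no gap.
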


\begin{proof}
	Let $G \in D(S)$ be $f$-compatible.
	We want to show that the natural morphism
	$L (f|_{X_{0}}^{\Sigma})^{\ast}(G|_{X_{0}}^{\Sigma}) \to (L f^{\ast} G)|_{X_{0}}^{\Sigma}$
	is an isomorphism.
	Applying the forgetful functor $\For_{X_{0}}^{\Sigma}$ to this morphism
	yields the natural morphism
	$L (f|_{X_{0}})^{\ast}(G|_{X_{0}}) \to (L f^{\ast} G)|_{X_{0}}$
	by Proposition \ref{0522}.
	This latter morphism is an isomorphism by assumption.
	Since $\For_{X_{0}}^{\Sigma}$ is conservative
	by Proposition \ref{0520}, we get the result.
\end{proof}

Now we apply these results to the sites in \eqref{0460}.
For any field $F_{0} \in F^{\perar}$,
the localizations of the sites
	\[
			\Spec F^{\perf}_{\pro\fppf},
		\quad
			\Spec F^{\ind\rat}_{\pro\et},
		\quad
			\Spec F^{\perar}_{\et}
	\]
at $F_{0}$ are
	\[
			\Spec F^{\perf}_{0, \pro\fppf},
		\quad
			\Spec F^{\ind\rat}_{0, \pro\et},
		\quad
			\Spec F^{\perar}_{0, \et},
	\]
respectively.
See \cite[the paragraphs after Definition 2.1.3]{Suz22Duality} for related subtleties
around $\Spec F^{\ind\rat}_{\pro\et}$.
Let
	\[
			h_{F_{0}}
		\colon
			\Spec F^{\perf}_{0, \pro\fppf}
		\stackrel{f_{F_{0}}}{\to}
			\Spec F^{\ind\rat}_{0, \pro\et}
		\stackrel{g_{F_{0}}}{\to}
			\Spec F^{\perar}_{0, \et}
	\]
be the premorphisms defined by the inclusion functors
and set $\algebrize_{F_{0}} = R f_{F_{0}, \ast} L h_{F_{0}}^{\ast}$.
Let $\Sigma = \Aut(F_{0} / F)$.
Then we have the $\Sigma$-equivariant versions
	\[
			h_{F_{0}}^{\Sigma}
		\colon
			(\Spec F^{\perf}_{0, \pro\fppf})_{\Sigma}
		\stackrel{f_{F_{0}}^{\Sigma}}{\to}
			(\Spec F^{\ind\rat}_{0, \pro\et})_{\Sigma}
		\stackrel{g_{F_{0}}^{\Sigma}}{\to}
			(\Spec F^{\perar}_{0, \et})_{\Sigma}
	\]
and $\algebrize_{F_{0}}^{\Sigma} = R f_{F_{0}, \ast}^{\Sigma} L h_{F_{0}}^{\Sigma, \ast}$.
We have a commutative diagram
	\[
		\begin{CD}
				D(F^{\perar}_{0, \et})_{\Sigma}
			@> \algebrize_{F_{0}}^{\Sigma} >>
				D(F^{\perf}_{0, \pro\et})_{\Sigma}
			\\
			@V \For_{F_{0}}^{\Sigma} VV
			@VV \For_{F_{0}}^{\Sigma} V
			\\
				D(F^{\perar}_{0, \et})
			@>> \algebrize_{F_{0}} >
				D(F^{\perf}_{0, \pro\et})
		\end{CD}
	\]
by \eqref{0523} and Proposition \ref{0522}.

\begin{Prop} \label{0528}
	Let $G \in D(F^{\perar}_{\et})$ be $h$-compatible
	(for example, $G \in \genby{\mathcal{W}_{F}}_{F^{\perar}_{\et}}$).
	Then
		\begin{equation} \label{0526}
				(\algebrize G)|_{F_{0}}^{\Sigma}
			\cong
				\algebrize_{F_{0}}^{\Sigma}(G|_{F_{0}}^{\Sigma})
		\end{equation}
	in $D(F^{\ind\rat}_{0, \pro\et})_{\Sigma}$.
	In particular,
		\[
				H^{q} \bigl(
					(\algebrize G)|_{F_{0}}
				\bigr)
			\cong
				H^{q} \bigl(
					\algebrize_{F_{0}}(G|_{F_{0}})
				\bigr)
		\]
	as $\Sigma$-equivariant sheaves on $\Spec F^{\ind\rat}_{0, \pro\et}$ for all $q$,
		\[
				R \Gamma^{\Sigma} \bigl(
					F_{0},
					(\algebrize G)|_{F_{0}}^{\Sigma}
				\bigr)
			\cong
				R \Gamma^{\Sigma} \bigl(
					F_{0},
					\algebrize_{F_{0}}^{\Sigma}(G|_{F_{0}}^{\Sigma})
				\bigr)
		\]
	in $D(\Mod{\Sigma})$ and
		\[
				H^{q} \bigl(
					F_{0},
					\algebrize G)
				\bigr)
			\cong
				H^{q} \bigl(
					F_{0},
					\algebrize_{F_{0}}(G|_{F_{0}})
				\bigr)
		\]
	as $\Sigma$-modules for all $q$.
	
	If moreover the isomorphic objects \eqref{0526} are bounded below
	(for example, if $G \in \genby{\mathcal{W}_{F}}_{F^{\perar}_{\et}}$),
	then we have an isomorphism between the spectral sequence
		\[
				E_{2}^{i j}
			=
				H^{i} \bigl(
					F_{0},
					H^{j}(\algebrize G)
				\bigr)
			\Longrightarrow
				H^{i + j}(F_{0}, \algebrize G)
		\]
	of $\Sigma$-modules and the spectral sequence
		\[
				E_{2}^{i j}
			=
				H^{i} \Bigl(
					F_{0},
					H^{j} \bigl(
						\algebrize_{F_{0}}(G|_{F_{0}})
					\bigr)
				\Bigr)
			\Longrightarrow
				H^{i + j} \bigl(
					F_{0},
					\algebrize_{F_{0}}(G|_{F_{0}})
				\bigr)
		\]
	of $\Sigma$-modules compatible with the $E_{\infty}$-terms.
\end{Prop}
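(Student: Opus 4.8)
The plan is to deduce every assertion from the single isomorphism \eqref{0526}, and to obtain \eqref{0526} by composing two base-change compatibilities established in this subsection: the compatibility of $R f_{\ast}$ with $\Sigma$-restriction, encoded in the commutative diagram \eqref{0525}, and the compatibility of $L h^{\ast}$ with $\Sigma$-restriction, which is Proposition \ref{0524}. The hypothesis that $G$ be $h$-compatible is exactly what is needed to invoke Proposition \ref{0524} for the composite premorphism $h$, rather than for $f$ alone.

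First I would record the setup noted just before the statement: $F_{0}$, a perfect field extension of $F$, lies in the underlying categories of all three sites of \eqref{0460} (it is a finite product of perfect fields, so it is in $F^{\perar}$; it is perfect, so it is in $F^{\perf}$; and it is the filtered colimit of the perfections of its finitely generated subfields, so it is in $F^{\ind\rat}$), the localizations at $F_{0}$ are the subscript-$0$ sites, and $h^{-1} F_{0} = f^{-1} F_{0} = F_{0}$ since the relevant structure functors are inclusions. Writing $\algebrize G = R f_{\ast} L h^{\ast} G$ and applying the diagram \eqref{0525} to the premorphism $f$ with $X_{0} = F_{0}$ gives
	\[
			(\algebrize G)|_{F_{0}}^{\Sigma}
		\cong
			R f_{F_{0}, \ast}^{\Sigma}\bigl( (L h^{\ast} G)|_{F_{0}}^{\Sigma} \bigr).
	\]
Since $G$ is $h$-compatible --- which holds in particular for $G \in \genby{\mathcal{W}_{F}}_{F^{\perar}_{\et}}$ by Proposition \ref{0153} --- Proposition \ref{0524} applied to the premorphism $h$ with $X_{0} = F_{0}$ identifies $(L h^{\ast} G)|_{F_{0}}^{\Sigma}$ with $L h_{F_{0}}^{\Sigma, \ast}\bigl(G|_{F_{0}}^{\Sigma}\bigr)$. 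Composing the two identifications gives
	\[
			(\algebrize G)|_{F_{0}}^{\Sigma}
		\cong
			R f_{F_{0}, \ast}^{\Sigma} L h_{F_{0}}^{\Sigma, \ast}\bigl(G|_{F_{0}}^{\Sigma}\bigr)
		=
			\algebrize_{F_{0}}^{\Sigma}\bigl(G|_{F_{0}}^{\Sigma}\bigr),
	\]
which is \eqref{0526}.

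The remaining statements then follow formally. Applying the exact and conservative forgetful functor $\For_{F_{0}}^{\Sigma}$ (Proposition \ref{0520}) together with $\For_{F_{0}}^{\Sigma} \compose |_{F_{0}}^{\Sigma} \cong |_{F_{0}}$, and passing to cohomology objects --- which commutes with the exact $\For_{F_{0}}^{\Sigma}$ --- gives the isomorphism of cohomology sheaves as $\Sigma$-equivariant sheaves on the localization of $\Spec F^{\ind\rat}_{\pro\et}$ at $F_{0}$. Applying $R \Gamma^{\Sigma}(F_{0}, \var)$ to \eqref{0526} gives the isomorphism in $D(\Mod{\Sigma})$; taking $H^{q}$ of it and using that this functor recovers $H^{q}(F_{0}, \var)$ with its natural $\Sigma$-action (the discussion following \eqref{0523}) gives the isomorphism of $\Sigma$-modules. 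Finally, when the two sides of \eqref{0526} are bounded below --- which is automatic for $G \in \genby{\mathcal{W}_{F}}_{F^{\perar}_{\et}}$, since then $\algebrize G \in D^{b}(\Ind \Pro \Alg_{u} / F)$ by Proposition \ref{0152} --- the hypercohomology spectral sequence of $R \Gamma^{\Sigma}(F_{0}, \var)$ converges, and the two spectral sequences in the statement arise by applying $R \Gamma^{\Sigma}(F_{0}, \var)$ to the canonical filtrations of the two sides of \eqref{0526}; the equivariant isomorphism \eqref{0526} then induces an isomorphism of these spectral sequences of $\Sigma$-modules compatible with the abutment $E_{\infty}$-terms.

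I do not expect a real obstacle: with the relative-site formalism of Section \ref{0518} in place, the argument is essentially a chase of the two compatibility squares. The points that need care are (i) confirming that $F_{0}$ is an object of all three sites, so that the $\Sigma$-localizations are the subscript-$0$ sites and the identities $h^{-1} F_{0} = f^{-1} F_{0} = F_{0}$ hold; (ii) noting that Proposition \ref{0524} demands $h$-compatibility, not merely $f$-compatibility, for the composite premorphism $h$; and (iii) invoking boundedness below, via Proposition \ref{0152}, to ensure convergence of the spectral sequences in the last assertion.
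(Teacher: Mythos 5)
Your proof is correct and follows exactly the paper's argument: the paper's proof is precisely "apply Proposition \ref{0524} for $h$ and \eqref{0525} for $f$," which is the composition of the two compatibility squares you describe, and the remaining assertions follow formally from \eqref{0526} just as you say.
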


\begin{proof}
	Apply Proposition \ref{0524} for $h$ and \eqref{0525} for $f$.
\end{proof}


\section{Relative sites and cup product with support}
\label{0029}

Let $\Sch$ be the category of schemes.


\subsection{Relative sites}

The following defines something like ``$X \to \Spec F$''
when $X$ is not really an $F$-scheme
or, even if $X$ is an $F$-scheme,
when we want the base change $X \times_{F} F'$ to involve completions:

\begin{Def} \mbox{} \label{0312}
	\begin{enumerate}
		\item
			An \emph{$F^{\perar}$-scheme} is
			a functor $\alg{X}$ from the opposite of $F^{\perar}$ to $\Sch$
			satisfying the following two properties:
			\begin{enumerate}
				\item \label{0158}
					$\alg{X}$ commutes with finite coproducts.
				\item \label{0159}
					For any \'etale (resp.\ faithfully flat \'etale) morphism $F' \to F''$ in $F^{\perar}$,
					the morphism $\alg{X}(F'') \to \alg{X}(F')$ is
					finite \'etale (resp.\ finite faithfully flat \'etale),
					and we have $\alg{X}(F'' \tensor_{F'} F''') \isomto \alg{X}(F'') \times_{\alg{X}(F')} \alg{X}(F''')$
					for any other morphism $F' \to F'''$ in $F^{\perar}$.
			\end{enumerate}
			If $\alg{X}(F') = \Spec \alg{R}(F')$ is affine for all $F' \in F^{\perar}$,
			the functor $\alg{R}$ is called an \emph{$F^{\perar}$-algebra}.
		\item
			A morphism of $F^{\perar}$-schemes
			is a natural transformation $\alg{Y} \to \alg{X}$ of functors such that
			$\alg{Y}(F'') \isomto \alg{Y}(F') \times_{\alg{X}(F')} \alg{X}(F'')$
			for all \'etale morphisms $F' \to F''$ in $F^{\perar}$.
			A morphism of $F^{\perar}$-algebras $\alg{R} \to \alg{S}$ is similarly defined.
	\end{enumerate}
\end{Def}

The condition \eqref{0159} ensures that even though a scheme morphism
$\alg{X}(F) \to \Spec F$ does not necessarily exist or
$\alg{X}(F')$ is not necessarily $\alg{X}(F) \times_{F} F'$,
there is a certain morphism of sites
to $\Spec F^{\perar}_{\et}$ and $\Spec F^{\perar}_{\zar}$
(Proposition \ref{0034}).
We now define an ``\'etale site of $\alg{X}$''.

\begin{Def} \label{0160}
	An \emph{\'etale $\alg{X}$-scheme} is a pair $(X', F')$,
	where $F' \in F^{\perar}$ and $X'$ is an \'etale $\alg{X}(F')$-scheme.
	A morphism $(X'', F'') \to (X', F')$ of \'etale $\alg{X}$-schemes
	consists of an $F$-algebra homomorphism $F' \to F''$
	and a scheme morphism $X'' \to X'$ such that the diagram
		\[
			\begin{CD}
					X''
				@>>>
					X'
				\\ @VVV @VVV \\
					\alg{X}(F'')
				@>>>
					\alg{X}(F')
			\end{CD}
		\]
	commutes.
	Composition is defined in the obvious way.
\end{Def}

If $\alg{X}(F')$ is an affine scheme, $\Spec \alg{R}(F')$, for all $F' \in F^{\perar}$,
then we define the category of \'etale $\alg{R}$-algebras to be the opposite of
the full subcategory of the category of \'etale $\alg{X}$-schemes $(X', F')$ with $X'$ affine.
Its objects are pairs $(R', F')$ with $F' \in F^{\perar}$
and $R'$ an \'etale $\alg{R}(F')$-algebra.

For a scheme $X$,
let $X_{\tau}$ be either the small \'etale site
$X_{\et}$ (for $\tau = \et$) or
the small Nisnevich site $X_{\nis}$ (for $\tau = \nis$).
Let $\Spec F^{\perar}_{\tau}$ be $\Spec F^{\perar}_{\et}$ if $\tau = \et$
and $\Spec F^{\perar}_{\zar}$ if $\tau = \nis$.

\begin{Def} \mbox{}
	\begin{enumerate}
		\item
			A family of morphisms $\{(X'_{\lambda}, F'_{\lambda}) \to (X', F')\}$
			of \'etale $\alg{X}$-schemes with fixed target
			is said to be a $\tau$-covering
			if each $F' \to F'_{\lambda}$ is \'etale
			and $\{X'_{\lambda} \to X'\}$ is a covering for the site $X'_{\tau}$.
		\item
			This class of families defines a pretopology.
			Define $\alg{X}_{\tau}$ to be the category of \'etale $\alg{X}$-schemes
			endowed with the topology generated by this pretopology.
	\end{enumerate}
\end{Def}

Note that $\{F' \to F'_{\lambda}\}$ is not required to be a covering.
The topology of $\alg{X}_{\tau}$ is really about the $X'$ components
and not the $F'$ components.
This is needed for Propositions \ref{0030} and \ref{0031}.
Our default choice of the topology is $\et$.
The Nisnevich topology is used only in
Section \ref{0217} in an auxiliary manner.

The covering families for $\alg{X}_{\tau}$ are generated by
families $\{(X'_{\lambda}, F') \to (X', F')\}$ with $\{X'_{\lambda} \to X'\}$ a $\tau$-covering
and singleton families $\{(X'', F'') \to (X'', F')\}$ with $F' \to F''$ \'etale.

Define a morphism of sites
	\[
			\varepsilon
		\colon
			\alg{X}_{\et}
		\to
			\alg{X}_{\nis}
	\]
by the identity functor on the underlying categories.

\begin{Ex}
	We give some examples of sheaves on $\alg{X}_{\tau}$.
	Let $\Affine^{1}$ be the sheaf of rings on $\alg{X}_{\tau}$
	sending $(X', F') \mapsto \Gamma(X', \Order_{X'})$.
	Let $\Ga \in \Ab(\alg{X}_{\tau})$ be the underlying additive group of $\Affine^{1}$.
	Let $\Gm \in \Ab(\alg{X}_{\tau})$ be $(\Affine^{1})^{\times}$.
	For a quasi-coherent sheaf $M$ on $\alg{X}(F)$,
	the sheaf $(X', F') \mapsto \Gamma(X', M)$
	(where $M$ is pulled back to $X'$ as an $\Order_{X'}$-module)
	is	denoted the same symbol $M$.
	For $q \ge 0$, let $\Omega^{r} \in \Ab(\alg{X}_{\tau})$ be the sheaf
	$(X', F') \mapsto \Gamma(X', \Omega_{X' / \Z}^{r})$.
	The differential maps $d \colon \Omega_{X' / \Z}^{r} \to \Omega_{X' / \Z}^{r + 1}$
	form a morphism of sheaves $d \colon \Omega^{r} \to \Omega^{r + 1}$.
	Since \'etale and Nisnevich cohomology with coefficients in
	a quasi-coherent $M$ and $\Omega_{X' / \Z}$ agree
	(both isomorphic to Zariski cohomology),
	we have $R^{q} \varepsilon_{\ast} M = R^{q} \varepsilon_{\ast} \Omega^{r} = 0$
	for all $q \ge 1$.
	Let $B \Omega^{r} \subset Z \Omega^{r} \subset \Omega^{r}$ be
	the image and the kernel of $d$, respectively.
	
	Assume that $\alg{X}(F')$ is regular of characteristic $p$ for all $F' \in F^{\perar}$.
	Then $\nu_{n}(r)$ can be viewed as a sheaf
	$(X', F') \mapsto \Gamma(X', \nu_{n}(r))$
	on $\alg{X}_{\tau}$
	(where the right-hand side is the sheaf $\nu_{n}(r)$ on $X'_{\et}$).
	The Cartier operator gives a morphism of sheaves
	$C \colon Z \Omega^{r} \to \Omega^{r}$
	such that $\nu(r) := \nu_{1}(r) = \Ker(C - 1)$.
	If $\tau = \et$, the morphism $C - 1$ is surjective.
	If $\tau = \nis$, we set $\xi(r) = \Coker(C - 1)$ and $\xi = \xi(0)$.

	Assume that $\alg{X}(F')$ is regular of dimension $\le 1$
	such that $\alg{X}(F') \times_{\Spec \Z} \Spec \Z[1 / p]$ is dense for all $F'$.
	Then $\mathfrak{T}_{n}(r)$ can be viewed as a sheaf
	$(X', F') \mapsto \Gamma(X', \mathfrak{T}_{n}(r))$
	on $\alg{X}_{\tau}$.
\end{Ex}

\begin{Prop} \label{0030}
	Let $(X'', F'')$ be an \'etale $\alg{X}$-scheme.
	Let $F' \to F''$ be an \'etale morphism in $F^{\perar}$.
	Then the sheafification of the morphism $(X'', F'') \to (X'', F')$ of representable presheaves of sets
	in $\alg{X}_{\tau}$ is an isomorphism.
\end{Prop}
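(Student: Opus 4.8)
The plan is to check that the map of representable presheaves of sets $h_{c}\colon h_{(X'',F'')}\to h_{(X'',F')}$ attached to the canonical morphism $c\colon (X'',F'')\to(X'',F')$ — whose scheme component is $\id_{X''}$ and whose $F$-algebra component is the given \'etale map $F'\to F''$ — is both locally surjective and locally injective for the topology of $\alg{X}_{\tau}$; any such morphism of presheaves sheafifies to an isomorphism. Local surjectivity is immediate from the hypotheses: $F'\to F''$ is \'etale and $\id_{X''}$ is a $\tau$-covering of $X''$, so $\{(X'',F'')\to(X'',F')\}$ is a covering family of $\alg{X}_{\tau}$, and pulling it back along an arbitrary object over $(X'',F')$ shows that every section of $h_{(X'',F')}$ locally lifts to $h_{(X'',F'')}$.

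The substance is local injectivity. Fix $(Z,H)\in\alg{X}_{\tau}$ and two sections $s_{1},s_{2}\in h_{(X'',F'')}(Z,H)$, say $s_{i}=(\psi_{i}\colon F''\to H,\ g_{i}\colon Z\to X'')$ with the compatibility $b\compose g_{i}=\alg{X}(\psi_{i})\compose a$, where $a\colon Z\to\alg{X}(H)$ and $b\colon X''\to\alg{X}(F'')$ are the structure maps, and assume $s_{1},s_{2}$ have the same image in $h_{(X'',F')}(Z,H)$, that is, $g_{1}=g_{2}=:g$ and $\psi_{1},\psi_{2}$ agree after composition with $F'\to F''$. Then $H$ is canonically an $F'$-algebra and $\psi_{1},\psi_{2}$ are $F'$-algebra homomorphisms $F''\to H$. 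Since $F''$ is \'etale over $F'$, the image of the diagonal idempotent of $F''\otimes_{F'}F''$ under $f_{1}\otimes f_{2}\mapsto\psi_{1}(f_{1})\psi_{2}(f_{2})$ is an idempotent of $H$, giving a splitting $H=H_{0}\times H_{1}$ such that $\psi_{1}$ and $\psi_{2}$ agree modulo $H_{0}$ while $F''\otimes_{F'}F''\to H_{1}$ factors through the complementary idempotent. The first fact shows $s_{1}$ and $s_{2}$ restrict to the same section over $(Z_{0},H_{0})$ with $Z_{0}=Z\times_{\alg{X}(H)}\alg{X}(H_{0})$, so it remains to prove $Z_{1}:=Z\times_{\alg{X}(H)}\alg{X}(H_{1})=\emptyset$; then $Z=Z_{0}$, the singleton $\{(Z,H_{0})\to(Z,H)\}$ — scheme component an isomorphism, $F$-algebra component the \'etale projection $H\to H_{0}$ — is a covering, and $s_{1}=s_{2}$ on it.

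Here conditions \eqref{0158} and \eqref{0159} of Definition \ref{0312} do the work. By \eqref{0159} one has $\alg{X}(F''\otimes_{F'}H_{1})\cong\alg{X}(F'')\times_{\alg{X}(F')}\alg{X}(H_{1})$, so $\psi_{1}|_{H_{1}}$ and $\psi_{2}|_{H_{1}}$ correspond to two sections $\tau_{1},\tau_{2}$ of the finite \'etale projection $\pi\colon\alg{X}(F''\otimes_{F'}H_{1})\to\alg{X}(H_{1})$, with $\alg{X}(\psi_{i}|_{H_{1}})=\mathrm{pr}_{1}\compose\tau_{i}$; and using \eqref{0158} (so that the idempotents of $F''\otimes_{F'}H_{1}$ cutting out $\tau_{1}$ and $\tau_{2}$ give clopen pieces of $\alg{X}(F''\otimes_{F'}H_{1})$) together with the factorization through the complementary idempotent, a routine computation with these idempotents shows that the images of $\tau_{1}$ and $\tau_{2}$ are disjoint. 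On the other hand, restricting $b\compose g=\alg{X}(\psi_{i})\compose a$ to $Z_{1}$ — along which $a$ factors as $Z_{1}\xrightarrow{a_{1}}\alg{X}(H_{1})\hookrightarrow\alg{X}(H)$ — yields $\alg{X}(\psi_{1}|_{H_{1}})\compose a_{1}=\alg{X}(\psi_{2}|_{H_{1}})\compose a_{1}$; since each $\tau_{i}$ is a section of $\pi$, the two maps $\tau_{i}\compose a_{1}\colon Z_{1}\to\alg{X}(F''\otimes_{F'}H_{1})$ then agree in both coordinates and hence coincide, while $\tau_{1}\compose a_{1}$ factors through the image of $\tau_{1}$ and $\tau_{2}\compose a_{1}$ through the image of $\tau_{2}$. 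As these images are disjoint, the common map has empty source, so $Z_{1}=\emptyset$, which completes local injectivity; therefore $h_{c}$ sheafifies to an isomorphism.

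The one genuinely delicate step is the emptiness of $Z_{1}$: it is exactly where the purely ring-theoretic separation of the two liftings $\psi_{1},\psi_{2}$ of $F'\to H$ must be converted into a geometric statement about disjoint clopen loci inside an \'etale cover of $\alg{X}(H_{1})$, and it is precisely conditions \eqref{0158} and \eqref{0159} of Definition \ref{0312} that make this conversion legitimate — the same insensitivity of the topology of $\alg{X}_{\tau}$ to the $F$-direction highlighted in the remark just before the proposition.
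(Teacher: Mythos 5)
Your proof is correct, but it takes a genuinely different route from the paper's. The paper never looks at sections of the representable presheaves over test objects: it tests against an arbitrary sheaf $C\in\Set(\alg{X}_{\tau})$, writes the sheaf (equalizer) sequence $C(X'',F')\to C(X'',F'')\rightrightarrows C(X'',F''\tensor_{F'}F'')$ for the covering $(X'',F'')\to(X'',F')$, and observes that because the multiplication map $F''\tensor_{F'}F''\onto F''$ is a projection onto a direct factor, the singleton $(X'',F'')\to(X'',F''\tensor_{F'}F'')$ is also a covering, so $C(X'',F''\tensor_{F'}F'')\to C(X'',F'')$ is injective; since both parallel maps become the identity after this injection, they coincide, the equalizer is all of $C(X'',F'')$, and $C(X'',F')\to C(X'',F'')$ is bijective for every sheaf $C$, which by Yoneda gives the statement. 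You instead work on the other side of the adjunction, verifying directly that $h_{(X'',F'')}\to h_{(X'',F')}$ is a local epimorphism and a local monomorphism over an arbitrary $(Z,H)$; this is what forces the analysis of where two liftings $\psi_{1},\psi_{2}$ of $F'\to H$ agree, the splitting $H=H_{0}\times H_{1}$ by the image of the diagonal idempotent, and the emptiness of $Z_{1}$. Your argument is sound — the ``routine computation'' showing $e_{1}e_{2}=0$ (hence disjointness of the images of $\tau_{1},\tau_{2}$) does go through, using the separability identity $e\cdot(f\tensor 1)=e\cdot(1\tensor f)$ for the diagonal idempotent $e$ of $F''/F'$ together with the fact that $e$ dies in $H_{1}$ — but it is worth noting that this is precisely the step a careful reader would want written out, and that the paper's equalizer argument sidesteps it entirely at the cost of being less explicit about why the $F$-direction collapses. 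What your approach buys is a concrete description of the sheafification mechanism over every test object; what the paper's buys is brevity and complete formality.
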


\begin{proof}
	Let $C \in \Set(\alg{X}_{\tau})$ be any sheaf.
	Since $(X'', F'') \to (X'', F')$ is a $\tau$-covering,
	the sequence
		\[
				C(X'', F')
			\to
				C(X'', F'')
			\rightrightarrows
				C(X'', F'' \tensor_{F'} F'')
		\]
	is an equalizer.
	Since $F'$ is \'etale over $F''$,
	the multiplication map $F'' \tensor_{F'} F'' \onto F''$ is a projection onto a direct factor.
	In particular, the morphism $(X'', F'') \to (X'', F'' \tensor_{F'} F'')$ is a $\tau$-covering.
	Hence the induced map
	$C(X'', F'' \tensor_{F'} F'') \to C(X'', F'')$ is injective.
	With this and the above sequence,
	we know that the map $C(X'', F') \to C(X'', F'')$ is bijective.
	As $C$ is arbitrary, this implies the result.
\end{proof}

The cohomology theory of $\alg{X}_{\tau}$ is the cohomology theories of $\alg{X}(F')_{\tau}$
bundled together:

\begin{Prop} \label{0031}
	Let $(X', F')$ be an \'etale $\alg{X}$-scheme.
	Let $\alg{X}_{\tau} / (X', F')$ be
	the localization of $\alg{X}_{\tau}$ at $(X', F')$.
	Consider the functor $X'_{1} \mapsto (X_{1}', F')$
	from the category of \'etale $X'$-schemes
	to the category of \'etale $\alg{X}$-schemes over $(X', F')$.
	This defines a morphism of sites
		\[
				h_{(X', F')}
			\colon
				\alg{X}_{\tau} / (X', F')
			\to
				X'_{\tau}.
		\]
	Its pushforward functor is exact.
\end{Prop}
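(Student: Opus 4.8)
The plan is to verify that the underlying functor $h^{-1}$ of $h := h_{(X',F')}$ is continuous and left exact — which makes $h$ a morphism of sites — and then to deduce exactness of the pushforward $h_{\ast}$ by transporting surjectivity questions from $\alg{X}_{\tau}$ back down to $X'_{\tau}$ via Proposition \ref{0030}.

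First I would record that $h^{-1} \colon X'_{\tau} \to \alg{X}_{\tau} / (X', F')$, $X'_{1} \mapsto (X'_{1}, F')$, identifies $X'_{\tau}$ with the full subcategory of $\alg{X}_{\tau} / (X', F')$ on the objects of the form $(X'_{1}, F')$ whose structure morphism to $(X', F')$ has identity component in $F^{\perar}$; here one uses that in such an object $X'_{1}$ is automatically \'etale over $X'$ because $X' \to \alg{X}(F')$ is \'etale. Next I would check that $h^{-1}$ sends $\tau$-coverings to covering families (the first generating type) and commutes with fibre products — the fibre product of $(X'_{1}, F')$ and $(X'_{2}, F')$ over $(X', F')$ in $\alg{X}_{\tau}$ being $(X'_{1} \times_{X'} X'_{2}, F')$, which is immediate from the definition of morphisms of \'etale $\alg{X}$-schemes having the same component in $F^{\perar}$ — so that $h$ is a premorphism of sites. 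The site $X'_{\tau}$ has a final object $X'$ and fibre products (equalizers of $Y \rightrightarrows Z$ over $X'$ being open subschemes of $Y$, as \'etale morphisms are unramified), hence all finite limits; and the identifications above show that $h^{-1}$ preserves the final object and fibre products, hence all finite limits. By a standard criterion, a continuous functor out of a finitely complete site that preserves finite limits makes $h^{\ast\set}$ exact; thus $h$ is a morphism of sites.

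For the second assertion, $h_{\ast}$ is right adjoint to $h^{\ast}$, hence left exact, so it suffices to show it preserves epimorphisms. Let $C \onto C''$ be a surjection in $\Ab(\alg{X}_{\tau} / (X', F'))$, let $X'_{1} \in X'_{\tau}$, and let $s'' \in C''(X'_{1}, F') = (h_{\ast} C'')(X'_{1})$. Surjectivity gives a covering family $\{(Y_{\lambda}, G_{\lambda}) \to (X'_{1}, F')\}$ in $\alg{X}_{\tau}$ and sections $s_{\lambda} \in C(Y_{\lambda}, G_{\lambda})$ lifting the pullbacks of $s''$. The point is that every morphism in a covering family of $\alg{X}_{\tau}$ has \'etale component in $F^{\perar}$ (each of the two generating types of covering families does, and this class is closed under composition); hence each $F' \to G_{\lambda}$ is \'etale, so by Proposition \ref{0030} the morphism $(Y_{\lambda}, G_{\lambda}) \to (Y_{\lambda}, F')$ becomes an isomorphism after sheafification, and the induced maps $C(Y_{\lambda}, F') \isomto C(Y_{\lambda}, G_{\lambda})$ and $C''(Y_{\lambda}, F') \isomto C''(Y_{\lambda}, G_{\lambda})$ are the bijections produced in the proof of Proposition \ref{0030}, compatibly with $C \to C''$. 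Since the topology of $\alg{X}_{\tau}$ concerns only the scheme components, $\{Y_{\lambda} \to X'_{1}\}$ — each $Y_{\lambda}$ regarded as \'etale over $\alg{X}(F')$ via $\alg{X}(G_{\lambda}) \to \alg{X}(F')$ — is a $\tau$-covering of $X'_{1}$, that is, a covering of $h^{-1}(X'_{1})$; transporting the $s_{\lambda}$ along the above bijections produces sections of $h_{\ast} C$ over this $\tau$-covering lifting $s''$. Therefore $h_{\ast} C \onto h_{\ast} C''$ is an epimorphism, and $h_{\ast}$ is exact.

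The step I expect to be the main obstacle is this last reduction: showing precisely that an arbitrary covering family of $(X'_{1}, F')$ in $\alg{X}_{\tau}$ admits a refinement equal to the image under $h^{-1}$ of a $\tau$-covering of $X'_{1}$, i.e.\ that the $F^{\perar}$-extension parts of a covering family can be eliminated by replacing each $G_{\lambda}$ with $F'$ via Proposition \ref{0030}. This requires a careful analysis of how the generating covering families ($\tau$-coverings with fixed coefficient algebra, interleaved with singleton \'etale coefficient extensions) compose, and a verification that after applying Proposition \ref{0030} leg by leg such a composite is dominated by a single $\tau$-covering of $X'_{1}$. Everything else is the standard ``continuous plus left exact implies morphism of sites'' mechanism together with routine manipulations of \'etale $\alg{X}$-schemes.
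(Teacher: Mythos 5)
Your proof is correct and follows essentially the same route as the paper's: the functor $X'_{1} \mapsto (X'_{1}, F')$ commutes with finite inverse limits (so the pullback is exact and $h$ is a morphism of sites), and exactness of the pushforward is obtained by lifting a section along a covering family and then using Proposition \ref{0030} to replace the coefficient algebras $F'_{\lambda}$ by $F'$. The ``main obstacle'' you flag at the end is not actually one: the pretopology on $\alg{X}_{\tau}$ is defined directly (not merely generated) by families in which each $F' \to F'_{\lambda}$ is \'etale and $\{X'_{\lambda} \to X'\}$ is a $\tau$-covering, and local surjectivity of a map of sheaves is detected by such pretopology coverings, so no analysis of composites of the two generating types is needed.
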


\begin{proof}
	First, the category of \'etale $X'$-schemes has finite inverse limits
	and the functor $X'_{1} \mapsto (X'_{1}, F')$ commutes with these limits.
	Hence this functor defines a morphism of sites
	(that is, its pullback functor is exact).
	Let $C \to D$ be an epimorphism of sheaves of sets
	on $\alg{X}_{\tau} / (X', F')$.
	Let $X'_{1}$ be an \'etale $X'$-scheme
	and let $x \in (h_{(X', F'), \ast} D)(X'_{1}) = D(X'_{1}, F')$ be an arbitrary element.
	There exist a $\tau$-covering $\{(X'_{\lambda}, F'_{\lambda}) \to (X'_{1}, F')\}_{\lambda}$
	and elements $x_{\lambda} \in C(X'_{\lambda}, F'_{\lambda})$ such that
	the images of $x$ and $x_{\lambda}$ in $D(X'_{\lambda}, F'_{\lambda})$ agree.
	By Proposition \ref{0030},
	we may take $F'_{\lambda} = F'$ for all $\lambda$.
	Then $x_{\lambda} \in (h_{(X', F'), \ast} D)(X'_{\lambda})$.
	As $\{X'_{\lambda} \to X'_{1}\}_{\lambda}$ is a $\tau$-covering,
	this shows that $f_{\ast} C \to h_{(X', F'), \ast} D$ is an epimorphism.
\end{proof}

\begin{Def} \label{0161}
	In the notation of Proposition \ref{0031},
	for a sheaf $C$ on $\alg{X}_{\tau}$
	(of sets or abelian groups),
	we call $h_{(X', F'), \ast}(C|_{(X', F')})$ the \emph{restriction of $C$ to $X'_{\tau}$}
	and denote it by $C|_{X'_{\tau}}$.
\end{Def}

If $\alg{X}(F') = \Spec \alg{R}(F')$ is affine for all $F'$
and $X' = \Spec R'$ is affine,
then $C|_{X'_{\tau}}$ is also denoted by $C|_{R'_{\tau}}$.
The restrictions as defined above determine the sheaf $C$:

\begin{Prop} \label{0032}
	Consider the following set of data:
		\begin{enumerate}
			\item \label{0162}
				$C_{F'} \in \Set(\alg{X}(F')_{\tau})$ for each $F' \in F^{\perar}$.
			\item \label{0163}
				$\varphi_{F'' / F'} \in \Hom_{\alg{X}(F')_{\tau}}(C_{F'}, f_{F'' / F', \ast} C_{F''})$
				for each morphism $F' \to F''$ in $F^{\perar}$,
				where $f_{F'' / F'} \colon \alg{X}(F'')_{\tau} \to \alg{X}(F')_{\tau}$
				is the natural morphism.
		\end{enumerate}
	Assume the following conditions:
		\begin{enumerate}
			\item \label{0164}
				For any morphisms $F' \to F'' \to F'''$ in $F^{\perar}$, the composite
					\[
							C_{F'}
						\xrightarrow{\varphi_{F'' / F'}}
							f_{F'' / F', \ast} C_{F''}
						\xrightarrow{f_{F'' / F', \ast} \varphi_{F''' / F''}}
							f_{F''' / F', \ast} C_{F'''}
					\]
				is $\varphi_{F''' / F'}$.
			\item \label{0165}
				For any \'etale morphism $F' \to F''$ in $F^{\perar}$,
				the morphism
					\[
						f_{F'' / F'}^{\ast} C_{F'} \to C_{F''}
					\]
				induced by $\varphi_{F'' / F'}$ by adjunction is an isomorphism.
		\end{enumerate}
	Then there exists a unique sheaf $C \in \Set(\alg{X}_{\tau})$
	together with an isomorphism $C|_{\alg{X}(F')_{\tau}} \cong C_{F'}$ for each $F' \in F^{\perar}$
	such that $\varphi_{F'' / F'}$ corresponds to the morphism
		\[
				C|_{\alg{X}(F')_{\tau}}
			\to
				f_{F'' / F', \ast} (C|_{\alg{X}(F'')_{\tau}})
		\]
	given by the natural map $C(X', F') \to C(X' \times_{\alg{X}(F')} \alg{X}(F''), F'')$
	for any \'etale $\alg{X}(F')$-scheme $X'$.
	Conversely, any $C \in \Set(\alg{X}_{\tau})$ gives rise to
	a data $\{C_{F'}, \varphi_{F'' / F'}\}$ as above by setting
	$C_{F'} = C|_{\alg{X}(F')_{\tau}}$ and
	$\varphi_{F'' / F'}$ the natural morphism
	satisfying the above conditions.
\end{Prop}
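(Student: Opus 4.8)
The plan is to construct $C$ directly as a presheaf on $\alg{X}_{\tau}$ and then verify the sheaf axiom only on the two kinds of generating covering families for $\alg{X}_{\tau}$: a $\tau$-covering $\{(X'_{\lambda}, F') \to (X', F')\}$ with a fixed $F'$, and a singleton $\{(X'', F'') \to (X'', F')\}$ with $F' \to F''$ \'etale. Uniqueness and the converse direction are then formal.

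First I would define a presheaf $C$ on $\alg{X}_{\tau}$ by $C(X', F') = C_{F'}(X')$, where $X'$ is regarded as an \'etale $\alg{X}(F')$-scheme. Any morphism $(Y, E) \to (X', E')$ of \'etale $\alg{X}$-schemes factors as $(Y, E) \to (X' \times_{\alg{X}(E')} \alg{X}(E), E) \to (X', E')$, the first arrow being a morphism of \'etale $\alg{X}(E)$-schemes; so $\varphi_{E / E'}$ followed by functoriality of $C_{E}$ provides the transition map $C(X', E') \to C(Y, E)$. The cocycle relation~\eqref{0164} together with functoriality of the $C_{E}$ makes $C$ a presheaf, and by the description of the localization in Proposition~\ref{0031} one has $C|_{\alg{X}(F')_{\tau}} = C_{F'}$ by construction, with the induced compatibility morphisms equal to the $\varphi_{F'' / F'}$.

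The core step is to show $C$ is a sheaf, for which (as in the proofs of Propositions~\ref{0030} and~\ref{0031}) it suffices to test the sheaf axiom on the two generating types. For a $\tau$-covering $\{(X'_{\lambda}, F') \to (X', F')\}$ with fixed $F'$, the fiber products in $\alg{X}_{\tau}$ are $(X'_{\lambda} \times_{X'} X'_{\mu}, F')$, so the sheaf axiom becomes verbatim the one for $C_{F'}$, which holds by~\eqref{0162}. For a singleton $\{(X'', F'') \to (X'', F')\}$ with $F' \to F''$ \'etale, the scheme $X''$ is automatically \'etale over $\alg{X}(F')$ by~\eqref{0159}, and evaluating the isomorphism of~\eqref{0165} at $X''$ identifies the transition map $C(X'', F') \to C(X'', F'')$ with an isomorphism. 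The two projections $(X'', F'') \times_{(X'', F')} (X'', F'') \rightrightarrows (X'', F'')$ become equal after composition with $(X'', F'') \to (X'', F')$; precomposing the two induced maps $C(X'', F'') \rightrightarrows C\bigl((X'', F'') \times_{(X'', F')} (X'', F'')\bigr)$ with the isomorphism $C(X'', F') \isomto C(X'', F'')$ makes them agree, hence they agree, their equalizer is all of $C(X'', F'')$, and the sheaf axiom follows. So $C$ is the required sheaf, and it is unique since a sheaf on $\alg{X}_{\tau}$ is determined as a functor by its values $C(X', F') = (C|_{\alg{X}(F')_{\tau}})(X')$ together with its compatibility morphisms, every morphism of $\alg{X}_{\tau}$ being a composite of an ``$F$-part'' and an ``$X$-part''.

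For the converse, given $C \in \Set(\alg{X}_{\tau})$ one sets $C_{F'} = C|_{\alg{X}(F')_{\tau}}$ and takes $\varphi_{F'' / F'}$ to be the natural restriction morphisms; then~\eqref{0164} is functoriality of restriction, and~\eqref{0165} is Proposition~\ref{0030}, which yields $C(X'', F'') \cong C(X'', F')$ for every \'etale $\alg{X}(F'')$-scheme $X''$, i.e.\ $f_{F'' / F'}^{\ast} C_{F'} \cong C_{F''}$. I expect the only genuine obstacle to be the sheaf check on the singleton covers---getting a workable hold on fiber products in $\alg{X}_{\tau}$ and using~\eqref{0165}---but the common-section argument above avoids computing those fiber products explicitly.
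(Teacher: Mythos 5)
Your proposal is correct and follows essentially the same route as the paper: define $C(X', F') = C_{F'}(X')$ with transition maps obtained from $\varphi$ followed by restriction, and verify the sheaf axiom by decomposing an arbitrary covering into a $\tau$-covering over a fixed $F'$ (handled by the sheaf property of $C_{F'}$) and singleton covers $(X'', F'') \to (X'', F')$. The paper's proof is terser and leaves the singleton-cover check implicit; your use of condition \eqref{0165} together with the equality of the two projections after composing down to $(X'', F')$ is exactly the detail needed there.
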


\begin{proof}
	To define $C$, set $C(X', F') = C_{F'}(X')$ for any \'etale $\alg{X}$-scheme $(X', F')$.
	For a morphism $(X'', F'') \to (X', F')$,
	define a map $C(X', F') \to C(X'', F'')$ by the composite
		\[
				C_{F'}(X')
			\xrightarrow{\varphi_{F'' / F', X'}}
				C_{F''}(X' \times_{\alg{X}(F')} \alg{X}(F''))
			\to
				C_{F''}(X'').
		\]
	Then $C \in \Set(\alg{X}_{\tau})$
	since a covering $\{(X'_{i}, F'_{i})\}$ of an object $(X', F')$ decomposed as a covering
	$\{(X'_{i}, F')\}$ of $(X', F')$ and a covering
	$(X'_{i}, F'_{i})$ of $(X'_{i}, F')$ for each $i$.
	The converse direction is easy.
\end{proof}

The topos-theoretic points of $\alg{X}_{\tau}$ are described by
those of $\alg{X}(F')_{\tau}$:

\begin{Prop} \label{0166}
	Let $F' \in F^{\perar}$ be a field.
	Let $x'$ be a point of $\alg{X}(F')_{\tau}$
	(that is, the $\Spec$ of the henselian or strict henselian local ring of $\alg{X}(F')$ at a point
	if $\tau = \nis$ or $\tau = \et$, respectively).
	\begin{enumerate}
		\item \label{0167}
			For a sheaf $C \in \Set(\alg{X}_{\tau})$,
			consider the stalk $C_{x'} := (C|_{\alg{X}(F')_{\tau}})_{x'}$
			of $C|_{\alg{X}(F')_{\tau}} \in \Set(\alg{X}(F')_{\tau})$ at $x'$.
			The functor $C \mapsto C_{x'}$ gives a morphism of topoi
			$p_{(x', F')} \colon \Set \to \Set(\alg{X}_{\tau})$,
			which is a topos-theoretic point of $\Set(\alg{X}_{\tau})$.
		\item \label{0168}
			The family of points $\{p_{(x', F')}\}$ over all such pairs $(x', F')$ is conservative.
	\end{enumerate}
\end{Prop}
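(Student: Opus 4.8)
The plan for \eqref{0167} is to exhibit the stalk functor $C \mapsto C_{x'}$ as the inverse image part of a composite of geometric morphisms. By Definition \ref{0161}, the functor $C \mapsto C|_{\alg{X}(F')_{\tau}}$ from $\Set(\alg{X}_{\tau})$ to $\Set(\alg{X}(F')_{\tau})$ factors as the restriction to the localization $\alg{X}_{\tau}/(\alg{X}(F'),F')$ followed by $h_{(\alg{X}(F'),F'),\ast}$. The first functor is exact and preserves all small colimits, being the inverse image of a localization morphism. For the second, note that by Proposition \ref{0030} enlarging the residue coordinate along an étale map is invisible to sheaves, so the topology of $\alg{X}_{\tau}$ on the objects $(X'_{1},F')$ lying over $(\alg{X}(F'),F')$ reduces to the $\tau$-topology on étale $\alg{X}(F')$-schemes; consequently $h_{(\alg{X}(F'),F'),\ast}$ computes sheafification exactly as in the small site of $\alg{X}(F')$, so it is left exact and exact (Proposition \ref{0031}) and preserves all small colimits. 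Hence $C \mapsto C|_{\alg{X}(F')_{\tau}}$ is left exact and preserves all small colimits, so — the categories involved being presentable — it is the inverse image part of a morphism of topoi $\Set(\alg{X}(F')_{\tau}) \to \Set(\alg{X}_{\tau})$. Composing with the point $x'$, i.e.\ with the geometric morphism $\Set \to \Set(\alg{X}(F')_{\tau})$ whose inverse image is the stalk functor at $x'$ on the small site of the scheme $\alg{X}(F')$, produces a geometric morphism $p_{(x',F')} \colon \Set \to \Set(\alg{X}_{\tau})$ with inverse image $C \mapsto (C|_{\alg{X}(F')_{\tau}})_{x'} = C_{x'}$; this is the required topos-theoretic point.

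For \eqref{0168}, I would verify that a morphism $\phi \colon C \to D$ in $\Set(\alg{X}_{\tau})$ inducing bijections $C_{x'} \isomto D_{x'}$ for every pair $(x',F')$ with $F'$ a field is an isomorphism. Fix a field $F' \in F^{\perar}$; since the small étale (resp.\ Nisnevich) site of the scheme $\alg{X}(F')$ has enough points and these points are exactly the $x'$ of the statement, the hypothesis forces $\phi|_{\alg{X}(F')_{\tau}}$ to be an isomorphism. As every object of $F^{\perar}$ is a finite product of fields, $\alg{X}$ sends finite coproducts to finite coproducts by \eqref{0158}, and by Proposition \ref{0030} passing from $(X'',F'')$ to $(X'',F')$ along an étale map $F' \to F''$ does not change sections, it follows that $\phi_{(X'',F'')} \colon C(X'',F'') \to D(X'',F'')$ is bijective for every $(X'',F'') \in \alg{X}_{\tau}$ (equivalently, one may invoke Proposition \ref{0032}). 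Hence $\phi$ is an isomorphism, so the family $\{p_{(x',F')}\}$ is conservative.

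The main obstacle is the behaviour of $h_{(\alg{X}(F'),F'),\ast}$ used in \eqref{0167}: one needs not just that it is exact (Proposition \ref{0031}) but that it preserves all small colimits, so that $C \mapsto C|_{\alg{X}(F')_{\tau}}$ genuinely is the inverse image of a geometric morphism rather than merely a left-exact functor. This rests on Proposition \ref{0030}, to the effect that the topology of $\alg{X}_{\tau}$ "above" a fixed field $F'$ is the small $\tau$-topology of $\alg{X}(F')$; granting this, everything else is the standard formalism of geometric morphisms together with the fact that the small étale and Nisnevich sites of schemes have enough points.
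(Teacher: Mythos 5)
Your proposal is correct and takes essentially the same approach as the paper: the paper's proof likewise observes that $C \mapsto C|_{\alg{X}(F')_{\tau}}$ commutes with all small direct and inverse limits, concludes that $C \mapsto C_{x'}$ preserves colimits and finite limits and hence is a point, and derives conservativity from Proposition \ref{0032}. Your write-up merely makes explicit the intermediate steps (the factorization through the localization and $h_{(\alg{X}(F'),F'),\ast}$, and the appeal to enough points on the small sites $\alg{X}(F')_{\tau}$) that the paper leaves implicit.
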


\begin{proof}
	The functor $C \mapsto C|_{\alg{X}(F')_{\tau}}$ commutes with all (small) direct and inverse limits.
	Hence the functor $C \mapsto C_{x'}$ commutes with all direct limits and all finite inverse limits.
	Therefore it gives a point.
	The conservativity follows from
	Proposition \ref{0032}.
\end{proof}

Now we define the ``structure morphism'' for $\alg{X}_{\tau}$.
Its relative duality theory, in the case of two-dimensional local rings,
will be the main theorem of this paper.

\begin{Prop} \label{0034}
	The (contravariant) functor $F' \mapsto (\alg{X}(F'), F')$ defines a morphism of sites
		\[
				\pi_{\alg{X}, \tau}
			\colon
				\alg{X}_{\tau}
			\to
				\Spec F^{\perar}_{\tau}.
		\]
\end{Prop}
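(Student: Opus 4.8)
The plan is to exhibit the defining functor on underlying categories and verify the axioms of a morphism of sites one at a time. The functor $\pi_{\alg{X}, \tau}^{-1}$ sends $F' \in F^{\perar}$ to the \'etale $\alg{X}$-scheme $(\alg{X}(F'), F')$, where $\alg{X}(F')$ is regarded as an \'etale $\alg{X}(F')$-scheme via the identity; a morphism $F' \to F''$ in $F^{\perar}$ yields, by functoriality of $\alg{X}$, a scheme morphism $\alg{X}(F'') \to \alg{X}(F')$ over the corresponding algebra map, hence a morphism $(\alg{X}(F''), F'') \to (\alg{X}(F'), F')$ of \'etale $\alg{X}$-schemes (recall that a morphism of \'etale $\alg{X}$-schemes requires only a scheme morphism on the first coordinate, not an \'etale one). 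So $\pi_{\alg{X}, \tau}^{-1}$ is a functor.

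First I would check preservation of coverings, separating the two topologies. If $\tau = \et$ and $\{F' \to F'_{i}\}_{i}$ is a finite \'etale covering, then $\prod_{i} F'_{i}$ is faithfully flat \'etale over $F'$, so by condition \eqref{0159} the morphism $\alg{X}(\prod_{i} F'_{i}) \to \alg{X}(F')$ is finite faithfully flat \'etale, and by \eqref{0158} its source is $\coprod_{i} \alg{X}(F'_{i})$; hence each $\alg{X}(F'_{i}) \to \alg{X}(F')$ is finite \'etale and the family is jointly surjective, making $\{(\alg{X}(F'_{i}), F'_{i}) \to (\alg{X}(F'), F')\}$ a covering in $\alg{X}_{\et}$. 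If $\tau = \nis$ and $\{F' \to F'_{i}\}$ is a covering in $\Spec F^{\perar}_{\zar}$, each $F' \to F'_{i}$ is a projection onto a direct factor, so by \eqref{0158} each $\alg{X}(F'_{i}) \to \alg{X}(F')$ is an open and closed immersion, and faithful flatness of $\prod_{i} F'_{i}$ makes these jointly surjective; a jointly surjective family of clopen immersions is a Nisnevich covering. For the premorphism condition I would observe that when $F' \to F'_{i}$ is \'etale (so occurs in a covering) and $F' \to F'''$ is arbitrary, the ring $F'_{i} \tensor_{F'} F'''$ lies in $F^{\perar}$, being finite \'etale over the artinian ring $F'''$; fiber products in $\alg{X}_{\tau}$ of objects over $(\alg{X}(F'), F')$ are computed on the scheme coordinate over the tensor product of the field coordinates; and then $\pi_{\alg{X}, \tau}^{-1}(F'_{i} \tensor_{F'} F''') \isomto \pi_{\alg{X}, \tau}^{-1}(F'_{i}) \times_{\pi_{\alg{X}, \tau}^{-1}(F')} \pi_{\alg{X}, \tau}^{-1}(F''')$ is exactly the base-change isomorphism in condition \eqref{0159}.

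The main work lies in exactness of the pullback functor $\pi_{\alg{X}, \tau}^{\ast \set}$, which is what upgrades the premorphism to a morphism of sites. This cannot be obtained from the standard criterion ``a continuous, finite-limit-preserving functor is a morphism of sites'', because the underlying category of $\Spec F^{\perar}_{\tau}$, the opposite of $F^{\perar}$, lacks general fiber products: a tensor product of perfect transcendental field extensions of $F$ need not be artinian. Instead I would test exactness on stalks via the conservative family of topos-theoretic points $p_{(x', F')}$ of $\alg{X}_{\tau}$ from Proposition \ref{0166}. For $G \in \Set(\Spec F^{\perar}_{\tau})$ one computes the stalk $(\pi_{\alg{X}, \tau}^{\ast \set} G)_{x'}$ from the colimit formula for the presheaf-level pullback, the colimit running over the \'etale (resp.\ Nisnevich) neighborhoods $U'$ of $x'$ in $\alg{X}(F')$ together with morphisms $\alg{X}(F'') \to U'$ above $\alg{X}(F')$ for $F' \to F''$ in $F^{\perar}$, and using Proposition \ref{0030} to normalize the field coordinate of each index. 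The point to prove is that this indexing category is filtered --- equivalently, that $\pi_{\alg{X}, \tau}^{-1}$ is a flat functor --- after passage to the strict henselization at $x'$, so that the stalk is a filtered colimit and hence a left-exact functor of $G$. As the points $p_{(x', F')}$ are jointly conservative, this gives exactness of $\pi_{\alg{X}, \tau}^{\ast \set}$. Verifying the filteredness of this indexing category, using the behavior of $\alg{X}$ under \'etale base change from Definition \ref{0312} and Propositions \ref{0030}--\ref{0032}, is the technical heart; the rest is formal.
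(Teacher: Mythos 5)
Your verification that $F' \mapsto (\alg{X}(F'), F')$ is a premorphism --- preservation of coverings in both topologies and the base-change condition --- is correct; this is what the paper compresses into ``by assumption on $\alg{X}$''. You are also right that the standard criterion (source category has finite limits, functor preserves them) is unavailable, since $(F^{\perar})^{\op}$ lacks fiber products. The problem is what you put in its place: the stalkwise argument via the points of Proposition \ref{0166} hinges on the filteredness of an indexing category, and you explicitly defer that verification (``the technical heart'') without carrying it out. As written, the exactness of $\pi_{\alg{X}, \tau}^{\ast \set}$ --- the only nontrivial content of the proposition --- is not established.

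The missing observation, which is the paper's entire proof of this step, is that $\pi_{\alg{X}, \tau}^{-1} \colon F' \mapsto (\alg{X}(F'), F')$ has a \emph{left adjoint}, namely the forgetful functor $(X', F') \mapsto F'$ to the opposite of $F^{\perar}$. Indeed, a morphism $(X', F') \to (\alg{X}(F''), F'')$ consists of a homomorphism $F'' \to F'$ and a scheme morphism $X' \to \alg{X}(F'')$ lying over $\alg{X}(F') \to \alg{X}(F'')$; since the structure map of $(\alg{X}(F''), F'')$ is the identity of $\alg{X}(F'')$, the scheme morphism is forced to be the composite $X' \to \alg{X}(F') \to \alg{X}(F'')$, so such morphisms are in natural bijection with $\Hom_{F^{\perar}}(F'', F')$. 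Consequently the comma category indexing the colimit in the presheaf-pullback formula at $(X', F')$ acquires an object corresponding to $(F', \id)$ which is cofinal, and the colimit collapses: the presheaf pullback of $G$ is simply $(X', F') \mapsto G(F')$, with no colimit left to take. This is manifestly exact in $G$, and sheafification preserves finite limits, so $\pi_{\alg{X}, \tau}^{\ast \set}$ is exact. (The same observation is what would make your indexing categories filtered --- they are better than filtered, having the relevant (co)final object --- but once one sees the adjoint, the detour through points and strict henselizations is unnecessary.)
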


\begin{proof}
	By assumption on $\alg{X}$, the stated functor defines a premorphism of sites.
	The functor has a left adjoint given by
	$(X', F') \mapsto F'$
	(as a functor from the category of \'etale $\alg{X}$-schemes to the opposite of $F^{\perar}$).
	Hence its pullback functor is exact.
\end{proof}

In particular, we have its right derived pushforward functor
	\[
			R \pi_{\alg{X}, \tau, \ast}
		\colon
			D(\alg{X}_{\tau})
		\to
			D(F^{\perar}_{\tau}).
	\]
Our default choice of $\tau$ is $\et$,
so $\pi_{\alg{X}, \et}$ is also simply denoted by $\pi_{\alg{X}}$.
If $\alg{X}(F') = \Spec \alg{R}(F')$ is affine for all $F'$,
then $\pi_{\alg{X}, \tau}$ is also denoted by $\pi_{\alg{R}, \tau}$.

We will see some functoriality of the above constructions.

\begin{Prop} \label{0527}
	Let $F_{0} \in F^{\perar}$ be a field.
	Let $\alg{X}_{F_{0}}$ be the restriction of $\alg{X}$ to $F_{0}^{\perar}$.
	Then $\alg{X}_{F_{0}}$ is an $F_{0}^{\perar}$-scheme.
	We have
		\[
				\alg{X}_{\tau} / (\alg{X}(F_{0}), F_{0})
			\cong
				\alg{X}_{F_{0}, \tau},
		\]
	with which $\pi_{\alg{X}, \tau}|_{F_{0}}$ and $\pi_{\alg{X}_{F_{0}}}$
	(both targeting $\Spec F^{\perar}_{0, \et}$) are compatible.
\end{Prop}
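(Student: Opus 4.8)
The plan is to unwind the two constructions and match them directly. First I would check that $\alg{X}_{F_{0}}$ is an $F_{0}^{\perar}$-scheme: viewing $F_{0}^{\perar}$ as the category of perfect artinian $F_{0}$-algebras, the restriction $\alg{X}_{F_{0}}$ is the composite of $\alg{X} \colon (F^{\perar})^{\op} \to \Sch$ with the (faithful) forgetful functor $(F_{0}^{\perar})^{\op} \to (F^{\perar})^{\op}$. Finite coproducts in $F_{0}^{\perar}$ are finite products of rings, formed exactly as in $F^{\perar}$, so condition \eqref{0158} of Definition \ref{0312} is inherited; and an \'etale (resp.\ faithfully flat \'etale) homomorphism of $F_{0}$-algebras is, after forgetting the $F_{0}$-structure, an \'etale (resp.\ faithfully flat \'etale) homomorphism of $F$-algebras between the same rings, while the relative tensor products $F'' \tensor_{F'} F'''$ do not depend on whether one works over $F_{0}$ or over $F$. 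Hence condition \eqref{0159} is inherited as well.

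The key step is the identification of sites. An object of $\alg{X}_{\tau}/(\alg{X}(F_{0}), F_{0})$ is an object $(X', F')$ of $\alg{X}_{\tau}$ equipped with a morphism $(X', F') \to (\alg{X}(F_{0}), F_{0})$ in $\alg{X}_{\tau}$. By Definition \ref{0160}, such a morphism consists of an $F$-algebra homomorphism $F_{0} \to F'$ and a scheme morphism $X' \to \alg{X}(F_{0})$ making the relevant square commute; but the structure arrow of $\alg{X}(F_{0})$ as an \'etale $\alg{X}(F_{0})$-scheme is the identity, so commutativity forces the scheme morphism to be the composite $X' \to \alg{X}(F') \to \alg{X}(F_{0})$. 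Thus the only genuine datum is the homomorphism $F_{0} \to F'$, i.e.\ an $F_{0}$-algebra structure on $F'$ exhibiting it as an object of $F_{0}^{\perar}$; objects of the localization are therefore exactly the pairs $(X', F')$ with $F' \in F_{0}^{\perar}$ and $X'$ \'etale over $\alg{X}(F') = \alg{X}_{F_{0}}(F')$, which are the objects of $\alg{X}_{F_{0}, \tau}$. The same bookkeeping identifies a morphism in the localization with an $F_{0}$-algebra homomorphism $F' \to F''$ together with a compatible scheme morphism $X'' \to X'$, i.e.\ with a morphism in $\alg{X}_{F_{0}, \tau}$. Finally, a family $\{(X'_{\lambda}, F'_{\lambda}) \to (X', F')\}$ is a covering in $\alg{X}_{\tau}/(\alg{X}(F_{0}), F_{0})$ iff it is one in $\alg{X}_{\tau}$, i.e.\ each $F' \to F'_{\lambda}$ is \'etale and $\{X'_{\lambda} \to X'\}$ is a $\tau$-covering; since \'etaleness of an $F_{0}$-algebra homomorphism coincides with \'etaleness of the underlying $F$-algebra homomorphism, this is exactly the covering condition defining $\alg{X}_{F_{0}, \tau}$. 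Hence the evident functor is an isomorphism of sites.

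For the compatibility of structure morphisms, recall that $\pi_{\alg{X}, \tau}$ (Proposition \ref{0034}) is the premorphism given on underlying categories by $F' \mapsto (\alg{X}(F'), F')$, so that $\pi_{\alg{X}, \tau}^{-1}(F_{0}) = (\alg{X}(F_{0}), F_{0})$; its localization $\pi_{\alg{X}, \tau}|_{F_{0}} \colon \alg{X}_{\tau}/(\alg{X}(F_{0}), F_{0}) \to \Spec F^{\perar}_{\tau}/F_{0}$ is then given on underlying categories by $(F'/F_{0}) \mapsto ((\alg{X}(F'), F')/(\alg{X}(F_{0}), F_{0}))$, where $\Spec F^{\perar}_{\tau}/F_{0}$ is the $\tau$-site of $F_{0}^{\perar}$ (for $\tau = \et$, the site $\Spec F^{\perar}_{0, \et}$ of Section \ref{0518}; localizing the \'etale, resp.\ Zariski, site of $F^{\perar}$ at $F_{0}$ recovers the corresponding site of $F_{0}^{\perar}$). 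Under the identification just established this functor is precisely $F' \mapsto (\alg{X}_{F_{0}}(F'), F')$, which defines $\pi_{\alg{X}_{F_{0}}}$; so the two morphisms of sites coincide. None of this is genuinely difficult; the one point that requires a little care is the observation that in a morphism $(X', F') \to (\alg{X}(F_{0}), F_{0})$ of $\alg{X}_{\tau}$ the scheme component is uniquely determined, so that the localization is controlled by the $F_{0}$-algebra structures alone, together with the routine check that $\Spec F^{\perar}_{\tau}$ localized at $F_{0}$ is the $\tau$-site of $F_{0}^{\perar}$.
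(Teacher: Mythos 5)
Your proof is correct, and it simply spells out the definition-unwinding that the paper dismisses with the single word ``Obvious.'' The key observations you isolate --- that the scheme component of a morphism $(X', F') \to (\alg{X}(F_{0}), F_{0})$ is forced, so the localization is governed by the $F_{0}$-algebra structures alone, and that the localization of $\Spec F^{\perar}_{\tau}$ at $F_{0}$ is the corresponding site of $F_{0}^{\perar}$ --- are exactly the right points to check.
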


\begin{proof}
	Obvious.
\end{proof}

Let $\alg{Y} \to \alg{X}$ be a morphism of $F^{\perar}$-schemes.
For an \'etale $\alg{X}$-scheme $(X', F')$, define
	\[
			\alg{Y}(X', F')
		=
			X' \times_{\alg{X}(F')} \alg{Y}(F').
	\]
If $\alg{X}(F') = \Spec \alg{R}(F')$ and $\alg{Y}(F') = \Spec \alg{S}(F')$ are
affine for all $F'$,
then we also define
	\[
			\alg{S}(R', F')
		=
			R' \tensor_{\alg{R}(F')} \alg{S}(F')
	\]
for $(R', F') \in R_{\tau} / F^{\perar}$,
so that $\alg{Y}(X', F') = \Spec \alg{S}(R', F')$.

\begin{Prop} \label{0036}
	The functor $(X', F') \mapsto (\alg{Y}(X', F'), F')$
	defines a morphism of sites
		\[
				\pi_{\alg{Y} / \alg{X}, \tau}
			\colon
				\alg{Y}_{\tau}
			\to
				\alg{X}_{\tau}
		\]
	We have $\pi_{\alg{X}, \tau} \compose \pi_{\alg{Y} / \alg{X}, \tau} = \pi_{\alg{Y}, \tau}$.
\end{Prop}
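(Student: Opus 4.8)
The plan is to establish, in order: (1) that $u := \pi_{\alg{Y}/\alg{X},\tau}^{-1}$, the assignment $(X',F') \mapsto (\alg{Y}(X',F'),F')$, is well defined on underlying categories; (2) that $u$ sends covering families to covering families and commutes with the fibre products occurring in them, so that $\pi_{\alg{Y}/\alg{X},\tau}$ is a premorphism of sites; (3) that $\pi_{\alg{Y}/\alg{X},\tau}^{\ast \set}$ is exact, which upgrades it to a morphism of sites; and (4) the composition identity.

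For (1): since $\alg{Y}(X',F') = X' \times_{\alg{X}(F')} \alg{Y}(F')$ with $X'$ étale over $\alg{X}(F')$, base-change stability of étaleness makes $(\alg{Y}(X',F'),F')$ an étale $\alg{Y}$-scheme, and a morphism $(X'',F'') \to (X',F')$ induces a compatible morphism of the fibre products, where one uses Definition \ref{0312}(2) (namely $\alg{Y}(F'') \isomto \alg{Y}(F') \times_{\alg{X}(F')} \alg{X}(F'')$ for étale $F' \to F''$) to verify the cartesianness required of a morphism of étale $\alg{Y}$-schemes. For (2) it is enough to check on the two generating kinds of covers. Given $\{(X'_\lambda,F') \to (X',F')\}$ with $\{X'_\lambda \to X'\}$ a $\tau$-covering, one has $\alg{Y}(X'_\lambda,F') = X'_\lambda \times_{X'} \alg{Y}(X',F')$, so its image is the base change of a $\tau$-covering of schemes along $\alg{Y}(X',F') \to X'$, hence a $\tau$-covering. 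Given a singleton $\{(X'',F'') \to (X'',F')\}$ with $F' \to F''$ étale, Definition \ref{0312}(2) for $\alg{Y} \to \alg{X}$ gives $\alg{X}(F'') \times_{\alg{X}(F')} \alg{Y}(F') \isomto \alg{Y}(F'')$, so $\alg{Y}(X'',F'') \to \alg{Y}(X'',F')$ is an isomorphism and the image is, up to it, a generating singleton cover of $\alg{Y}_\tau$. Commutation of $u$ with the fibre products $Y \times_X Z$ along covering morphisms is the same bookkeeping with fibre products, invoking Definition \ref{0312}(2) and condition \eqref{0159}; here the relevant fibre products do exist in the underlying category because for the two cover types the field component is either unchanged or of the form $F'' \tensor_{F'} F'_1$ with $F' \to F''$ étale, which again lies in $F^{\perar}$.

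The main obstacle is (3), exactness of $f^{\ast \set} := \pi_{\alg{Y}/\alg{X},\tau}^{\ast \set}$: because the underlying category of $\alg{X}_\tau$ lacks general finite inverse limits one cannot argue as in Proposition \ref{0031}, and there is no evident left adjoint to $u$ as in Proposition \ref{0034}. Instead I would check exactness on stalks. For a field $F' \in F^{\perar}$, restriction of a sheaf from $\alg{X}_\tau$ to the layer $\alg{X}(F')_\tau$ is exact (it is $h_{(\alg{X}(F'),F'),\ast}$ applied to the restriction to the localization at $(\alg{X}(F'),F')$, both exact by Proposition \ref{0031}), and on this layer $u$ is precisely base change of small $\tau$-sites of schemes along $q \colon \alg{Y}(F') \to \alg{X}(F')$; consequently the restriction of $f^{\ast \set}C$ to $\alg{Y}(F')_\tau$ is the scheme-theoretic pullback $q^{\ast \set}(C|_{\alg{X}(F')_\tau})$, which one checks by comparing the two exact functors on representable sheaves. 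Taking the stalk at a point $z'$ of $\alg{Y}(F')_\tau$ with image $x'$ in $\alg{X}(F')$ then identifies $(f^{\ast \set}C)_{z'}$ with the stalk $C_{x'}$ of $C$. Since the family of points $\{p_{(z',F')}\}$ is conservative (Proposition \ref{0166}) and every stalk functor is exact, the comparison map $f^{\ast \set}(\invlim D) \to \invlim(f^{\ast \set} \compose D)$ for a finite diagram $D$ becomes an isomorphism after taking each stalk $(-)_{z'}$, hence is an isomorphism, so $f^{\ast \set}$ is exact. Finally (4) is immediate: both $u \compose \pi_{\alg{X},\tau}^{-1}$ and $\pi_{\alg{Y},\tau}^{-1}$ send $F'$ to $(\alg{X}(F') \times_{\alg{X}(F')} \alg{Y}(F'), F') = (\alg{Y}(F'),F')$.
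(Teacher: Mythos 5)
Your proposal is correct. The premorphism verification and the composition identity match the paper's proof: the paper likewise reduces to the two generating types of covers, proves the singleton case $\{(X'',F'')\to(X'',F')\}$ via the isomorphism $\alg{Y}(X'',F'')\isomto\alg{Y}(X'',F')$ coming from Definition \ref{0312}, checks the cartesian squares the same way, and dismisses the $\{(X'_\lambda,F')\to(X',F')\}$ case as easier. Where you genuinely diverge is the exactness of $\pi_{\alg{Y}/\alg{X},\tau}^{\ast\set}$. The paper writes down the presheaf pullback explicitly: its value at $(Y',F')$ is $\dirlim_{X'} C(X',F')$ over the (cofiltered) category of étale $X'\to\alg{X}(F')$ fitting under $Y'$, and concludes exactness directly because filtered colimits and sheafification are exact. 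You instead first establish the layer compatibility $(f^{\ast\set}C)|_{\alg{Y}(F')_\tau}\cong q^{\ast\set}(C|_{\alg{X}(F')_\tau})$ and then test finite limits on the conservative family of points from Proposition \ref{0166}. Both routes work; note two things about the trade-off. First, your layer-compatibility step is exactly the content of Proposition \ref{0311}, which the paper deduces \emph{from} the explicit colimit description in this proof — so you must prove it independently, as you propose, and the "comparison on colimit-preserving functors agreeing on representables" (they are cocontinuous, not exact, which is the right word here) unwinds to essentially the same filtered-colimit computation the paper performs directly. Second, the paper's explicit formula is reused later (Propositions \ref{0311} and \ref{0314}), so obtaining it as a by-product is an advantage of the paper's route; your route buys nothing extra but is a legitimate alternative.
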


\begin{proof}
	We show that the functor defines a premorphism of sites.
	Let $(X'', F'')$ be an \'etale $\alg{X}$-scheme
	and $F' \to F''$ an \'etale morphism in $F^{\perar}$.
	By the definition of morphisms of \'etale $\alg{X}$-schemes,
	we have $\alg{Y}(X'', F'') \isomto \alg{Y}(X'', F')$.
	Hence $(\alg{Y}(X'', F''), F'') \to (\alg{Y}(X'', F'), F')$ is a covering.
	For any other morphism $F' \to F'''$ in $F^{\perar}$,
	we have $\alg{Y}(X''', F'' \tensor_{F'} F''') \isomto \alg{Y}(X''', F''')$
	similarly.
	Therefore the diagram
		\[
			\begin{CD}
					\bigl(
						\alg{Y}(X''', F'' \tensor_{F'} F'''), F'' \tensor_{F'} F'''
					\bigr)
				@>>>
					(\alg{Y}(X''', F'''), F''')
				\\ @VVV @VVV \\
					(\alg{Y}(X'', F''), F'')
				@>>>
					(\alg{Y}(X'', F'), F')
			\end{CD}
		\]
	is cartesian.
	The case of a covering of the form $\{(X'_{\lambda}, F') \to (X', F')\}$
	(where $\{X'_{\lambda} \to X'\}$ is a $\tau$-covering) is easier.
	Thus the functor defines a premorphism of sites.
	
	The pullback functor for this premorphism sends a sheaf $C \in \Set(\alg{X}_{\tau})$
	to the sheafification of the presheaf
		\[
				(Y', F')
			\mapsto
				\dirlim_{X'}
					C(X', F')
		\]
	on $\alg{Y}_{\tau}$,
	where $X'$ runs through the diagrams
		\[
			\begin{CD}
					Y' @>>> X'
				\\ @VVV @VVV \\
					\alg{Y}(F') @>> \pi_{\alg{Y}(F') / \alg{X}(F')} > \alg{X}(F')
			\end{CD}
		\]
	with $X' \to \alg{X}(F')$ \'etale.
	These diagrams form (the opposite of) a filtered category,
	so its direct limit is exact.
	The second statement is obvious.
\end{proof}

If $\alg{X}(F') = \Spec \alg{R}(F')$ and $\alg{Y}(F') = \Spec \alg{S}(F')$ are
affine for all $F'$,
then $\pi_{\alg{Y} / \alg{X}, \tau}$ is also denoted by $\pi_{\alg{S} / \alg{R}, \tau}$.

The pullback functor $\pi_{\alg{Y} / \alg{X}, \tau}^{\ast}$ is nothing but
the pullback functors for the scheme morphisms
$\pi_{\alg{Y}(F') / \alg{X}(F')} \colon \alg{Y}(F') \to \alg{X}(F')$
bundled together:

\begin{Prop} \label{0311}
	For any $C \in \Set(\alg{X}_{\tau})$ and any $F' \in F^{\perar}$,
	the natural morphism
		\[
				\pi_{\alg{Y}(F') / \alg{X}(F')}^{\ast}(C|_{\alg{X}(F')_{\tau}})
			\to
				(\pi_{\alg{Y} / \alg{X}, \tau}^{\ast} C)|_{\alg{Y}(F')_{\tau}}
		\]
	is an isomorphism.
\end{Prop}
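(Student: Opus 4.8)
Unwinding the definitions, the morphism in the statement is obtained, by sheafification, from a tautological identification of presheaves, and the crux of the matter is that sheafification is compatible with restriction to the ``fiber over $F'$''. I recall from the proof of Proposition \ref{0036} that $\pi_{\alg{Y} / \alg{X}, \tau}^{\ast} C$ is the sheafification in $\alg{Y}_{\tau}$ of the presheaf $P$ with $P(Y', F'') = \dirlim_{X'} C(X', F'')$, the filtered colimit running over \'etale $\alg{X}(F'')$-schemes $X'$ equipped with a morphism $Y' \to X'$ over $\alg{X}(F'')$ (the passage from the general comma category to this index being the same point as in that proof, using Proposition \ref{0030}). Hence the restriction of $P$ to the objects lying over $F'$ is exactly the presheaf $P_{F'} \colon Y' \mapsto \dirlim_{Y' \to X'} (C|_{\alg{X}(F')_{\tau}})(X')$ on $\alg{Y}(F')_{\tau}$, whose sheafification in $\alg{Y}(F')_{\tau}$ is $\pi_{\alg{Y}(F') / \alg{X}(F')}^{\ast}(C|_{\alg{X}(F')_{\tau}})$; while $(\pi_{\alg{Y} / \alg{X}, \tau}^{\ast} C)|_{\alg{Y}(F')_{\tau}}$ is, by Definition \ref{0161} and Proposition \ref{0031}, the sheaf $Y' \mapsto (\pi_{\alg{Y} / \alg{X}, \tau}^{\ast} C)(Y', F')$ on $\alg{Y}(F')_{\tau}$, that is, the values at objects over $F'$ of the sheafification of $P$.

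Everything thus reduces to showing that, for every \'etale $\alg{Y}(F')$-scheme $Y'$, the value at $(Y', F')$ of the sheafification of $P$ computed in $\alg{Y}_{\tau}$ equals the value at $Y'$ of the sheafification of $P_{F'}$ computed in $\alg{Y}(F')_{\tau}$. This I would deduce from the description of covering families recalled just before Proposition \ref{0034}, together with Proposition \ref{0030} and Definition \ref{0312}\eqref{0159}: every covering of $(Y', F')$ in $\alg{Y}_{\tau}$ is refined by one of the form $\{(Y'_{\lambda}, F') \to (Y', F')\}$ with $\{Y'_{\lambda} \to Y'\}$ a $\tau$-covering of $Y'$ in $\alg{Y}(F')_{\tau}$, and all iterated fiber products occurring in the \v{C}ech complex of such a covering again lie over $F'$; consequently the plus-construction at $(Y', F')$, and hence its iterate the sheafification, involves only the values of $P$ at objects over $F'$ and coincides with the plus-construction for $\alg{Y}(F')_{\tau}$ applied to $P_{F'}$. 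This yields the required equality, naturally in $Y'$, and chasing the identifications shows that the resulting isomorphism is the morphism in the statement. (One may also first reduce to $F'$ a field using Definition \ref{0312}\eqref{0158}, but this is not needed.)

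The main obstacle is exactly this last bookkeeping: one has to verify carefully that the \'etale base-field coverings $\{(Y', F'') \to (Y', F')\}$ contribute nothing new, so that the \v{C}ech and plus constructions for $\alg{Y}_{\tau}$ never leave the fiber over $F'$; everything else is formal, the cofinality statement being the same one already used in Proposition \ref{0036}. Alternatively, one may argue on stalks: by Proposition \ref{0166} it suffices to check the morphism at each point $p_{(y', F')}$, and since $\pi_{\alg{Y} / \alg{X}, \tau}$ is a morphism of sites one computes directly, from the colimit description of $P$ together with the description of the neighborhoods of $p_{(y', F')}$, that the stalk of $\pi_{\alg{Y} / \alg{X}, \tau}^{\ast} C$ there is $(C|_{\alg{X}(F')_{\tau}})_{x'}$, where $x'$ is the image of $y'$ under $\alg{Y}(F') \to \alg{X}(F')$; this is also the stalk of $\pi_{\alg{Y}(F') / \alg{X}(F')}^{\ast}(C|_{\alg{X}(F')_{\tau}})$, so the morphism is an isomorphism.
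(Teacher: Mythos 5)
Your proposal is correct and takes essentially the same route as the paper, which simply invokes the explicit description of $\pi_{\alg{Y} / \alg{X}, \tau}^{\ast} C$ as the sheafification of the filtered colimit presheaf given in the proof of Proposition \ref{0036}. The extra step you spell out — that sheafification over $\alg{Y}_{\tau}$ at objects over $F'$ agrees with sheafification over $\alg{Y}(F')_{\tau}$ of the restricted presheaf, via the generation of coverings and Proposition \ref{0030} — is exactly the point the paper leaves implicit (it is the same refinement argument already used in the proof of Proposition \ref{0031}).
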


\begin{proof}
	This follows from the description of $\pi_{\alg{Y} / \alg{X}, \tau}^{\ast} C$
	in the proof of Proposition \ref{0036}.
\end{proof}

Set $X = \alg{X}(F)$.
Let $Y$ be an $X$-scheme.
For any $F' \in F^{\perar}$,
define $\alg{Y}(F') = Y \times_{X} \alg{X}(F')$.
Then the functor $\alg{Y}$ is an $F^{\perar}$-scheme.
Hence the natural morphism $\alg{Y} \to \alg{X}$ of $F^{\perar}$-schemes
defines a morphism of sites $\alg{Y}_{\tau} \to \alg{X}_{\tau}$ as above.

From $\alg{X}$, we have its henselian local rings as $F^{\perar}$-schemes in the following manner.
Assume that the morphism $\alg{X}(F') \to X$ is affine for all $F' \in F^{\perar}$.
Let $x \in X$ be a point (of the underlying set).
Then the scheme $\Spec \Order_{X, x} \times_{X} \alg{X}(F')$ is affine
and $\Spec \kappa(x) \times_{X} \alg{X}(F')$ is a closed subscheme of it
(where $\kappa(x)$ is the residue field of $\Order_{X, x}$).
Define $\Spec \alg{O}_{X, x}^{h}(F')$ to be the henselization of the pair
	\[
		\bigl(
			\Spec \Order_{X, x} \times_{X} \alg{X}(F'),
			\Spec \kappa(x) \times_{X} \alg{X}(F')
		\bigr).
	\]
Then the functor $\alg{O}_{X, x}^{h}$ is an $F^{\perar}$-algebra.
The natural morphism $\Spec \alg{O}_{X, x}^{h}(F') \to \alg{X}(F')$ defines
a morphism of $F^{\perar}$-schemes $\Spec \alg{O}_{X, x}^{h} \to \alg{X}$ and hence
a morphism of sites
	\[
			\pi_{\alg{O}_{X, x}^{h} / \alg{X}, \tau}
		\colon
			\Spec \alg{O}_{X, x, \tau}^{h}
		\to
			\alg{X}_{\tau}
	\]
as above.

We have a natural transformation
	\[
			R(
				\pi_{\alg{O}_{X, x}^{h}, \tau, \ast}
				\pi_{\alg{O}_{X, x}^{h} / \alg{X}, \tau}^{\ast}
			)
		\to
			R(\pi_{\alg{O}_{X, x}^{h}, \tau, \ast})
			\pi_{\alg{O}_{X, x}^{h} / \alg{X}, \tau}^{\ast}
		\colon
			D(\alg{X}_{\tau})
		\to
			D(F^{\perar}_{\tau}).
	\]
This is an isomorphism on $D^{+}(\alg{X}_{\tau})$ by the following:

\begin{Prop} \label{0314}
	The functor
		$
				\pi_{\alg{O}_{X, x}^{h} / \alg{X}, \tau}^{\ast}
			\colon
				\Ab(\alg{X}_{\tau})
			\to
				\Ab(\alg{O}_{X, x, \tau}^{h})
		$
	sends acyclic sheaves to acyclic sheaves.
\end{Prop}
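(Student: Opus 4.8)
The strategy is to reduce the acyclicity statement, fibre by fibre, to the classical continuity of $\tau$-cohomology along cofiltered limits of schemes, using the material of Section \ref{0029}.

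The first point is that acyclicity on a relative site $\alg{Y}_{\tau}$ can be tested on the small sites $\alg{Y}(F')_{\tau}$. For an \'etale $\alg{Y}$-scheme $(Y', F')$, the morphism of sites $h_{(Y', F')} \colon \alg{Y}_{\tau} / (Y', F') \to Y'_{\tau}$ of Proposition \ref{0031} has exact pushforward and its underlying functor preserves terminal objects; combined with the fact that cohomology of an object of a site agrees with cohomology computed in the corresponding localized site, this yields a natural isomorphism $H^{n}((Y', F'), C) \cong H^{n}(Y', C|_{\alg{Y}(F')_{\tau}})$ for every $C \in \Ab(\alg{Y}_{\tau})$, where on the right $C|_{\alg{Y}(F')_{\tau}}$ is viewed on the small site $\alg{Y}(F')_{\tau}$. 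Hence $C$ is acyclic on $\alg{Y}_{\tau}$ if and only if $C|_{\alg{Y}(F')_{\tau}}$ is acyclic on the small site $\alg{Y}(F')_{\tau}$ for every $F' \in F^{\perar}$.

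Applying this to $\alg{Y} = \alg{X}$ and to $\alg{Y} = \Spec \alg{O}_{X, x}^{h}$, and invoking Proposition \ref{0311}, which identifies $(\pi_{\alg{O}_{X, x}^{h} / \alg{X}, \tau}^{\ast} G)|_{\alg{O}_{X, x}^{h}(F')_{\tau}}$ with $\phi_{F'}^{\ast}(G|_{\alg{X}(F')_{\tau}})$ for the natural scheme morphism $\phi_{F'} \colon \Spec \alg{O}_{X, x}^{h}(F') \to \alg{X}(F')$, the problem becomes the following: if $G_{F'} := G|_{\alg{X}(F')_{\tau}}$ is acyclic on $\alg{X}(F')_{\tau}$, then $\phi_{F'}^{\ast} G_{F'}$ is acyclic on $\alg{O}_{X, x}^{h}(F')_{\tau}$. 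For this I would present $\Spec \alg{O}_{X, x}^{h}(F')$ as a cofiltered limit $\invlim_{\mu} W_{\mu}$ of affine \'etale $\alg{X}(F')$-schemes with affine transition morphisms: combine $\Spec \Order_{X, x} = \invlim_{V} V$ over affine open neighbourhoods $V$ of $x$ in $X$ — which after base change along $\alg{X}(F') \to X$ become opens of $\alg{X}(F')$ — with the standard presentation of the henselization of a pair as a cofiltered limit of its affine \'etale neighbourhoods. An arbitrary \'etale $\alg{O}_{X, x}^{h}(F')$-scheme $Y'$ then descends, by the limit theorems for \'etale morphisms, to an \'etale $W_{\mu_{0}}$-scheme $Y'_{\mu_{0}}$ for some $\mu_{0}$, and each $Y'_{\mu_{0}} \times_{W_{\mu_{0}}} W_{\mu}$ (for $\mu \ge \mu_{0}$) is an \'etale $\alg{X}(F')$-scheme, hence an object of the small site $\alg{X}(F')_{\tau}$ on which $G_{F'}$ has vanishing higher cohomology. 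By continuity of \'etale (resp.\ Nisnevich) cohomology along cofiltered limits of quasi-compact quasi-separated schemes with affine transition maps, $H^{n}(Y', \phi_{F'}^{\ast} G_{F'}) \cong \dirlim_{\mu \ge \mu_{0}} H^{n}(Y'_{\mu_{0}} \times_{W_{\mu_{0}}} W_{\mu}, G_{F'}) = 0$ for $n \ge 1$, which is exactly the acyclicity of $\phi_{F'}^{\ast} G_{F'}$.

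The only delicate point is the bookkeeping in the last step: checking that the \'etale neighbourhoods of the henselized pair genuinely spread out to \'etale schemes over $\alg{X}(F')$ itself, and that all the schemes involved remain quasi-compact and quasi-separated so that the continuity theorem applies. Everything else is a formal reduction via Propositions \ref{0031} and \ref{0311}.
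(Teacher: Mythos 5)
Your proposal is correct and follows essentially the same route as the paper: reduce via Proposition \ref{0031} to acyclicity of the restriction on each small site $\alg{O}_{X, x}^{h}(F')_{\tau}$, identify that restriction with $\pi_{\alg{O}_{X, x}^{h}(F') / \alg{X}(F')}^{\ast}(G|_{\alg{X}(F')_{\tau}})$ via Proposition \ref{0311}, and conclude because $\Spec \alg{O}_{X, x}^{h}(F')$ is a filtered inverse limit of affine \'etale $\alg{X}(F')$-schemes and cohomology commutes with such limits. The paper's proof is exactly this, citing \cite[Expos\'e VII, Corollaire 5.9]{AGV72b} for the final continuity step that you spell out in more detail.
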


\begin{proof}
	Let $G \in \Ab(\alg{X}_{\tau})$ be acyclic.
	Let $F' \in F^{\perar}$.
	By Proposition \ref{0031},
	it is enough to show that the restriction
		$
				(
					\pi_{\alg{O}_{X, x}^{h} / \alg{X}, \tau}^{\ast} G
				)|_{\alg{O}_{X, x}^{h}(F')_{\tau}}
			\in
				\Ab(\alg{O}_{X, x}^{h}(F')_{\tau})
		$
	is acyclic.
	Hence by Proposition \ref{0311},
	it is enough to show that
		$
				\pi_{\alg{O}_{X, x}^{h}(F') / \alg{X}(F')}^{\ast}(
					 G|_{\alg{X}(F')_{\tau}}
				)
			\in
				\Ab(\alg{O}_{X, x}^{h}(F')_{\tau})
		$
	is acyclic.
	The acyclicity of $G$ implies the acyclicity of
	$G|_{\alg{X}(F')_{\tau}} \in \Ab(\alg{X}(F')_{\tau})$.
	Hence
		$
			\pi_{\alg{O}_{X, x}^{h}(F') / \alg{X}(F')}^{\ast}(
				 G|_{\alg{X}(F')_{\tau}}
			)
		$
	is acyclic
	since $\Spec \alg{O}_{X, x}^{h}(F')$ is a filtered inverse limit of
	affine \'etale schemes over $\alg{X}(F')$
	and cohomology commutes with such limits \cite[Expos\'e VII, Corollaire 5.9]{AGV72b}.
\end{proof}


\subsection{Henselian neighborhoods}

We give two kinds of base change results
for closed immersions.
Let $\alg{A}$ be an $F^{\perar}$-algebra.
Assume that $\alg{A}(F')$ is a finite product of henselian local rings for all $F' \in F^{\perar}$
and that $\alg{A}(F') \to \alg{A}(F'')$ maps the Jacobson radical into the Jacobson radical
for all morphisms $F' \to F''$ in $F^{\perar}$.
Let $\alg{k}(F')$ be the quotient of $\alg{A}(F')$ by the Jacobson radical
(which is a finite product of fields).

\begin{Prop}
	The functor $\alg{k}$ is an $F^{\perar}$-algebra.
\end{Prop}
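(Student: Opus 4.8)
The plan is to check conditions \eqref{0158} and \eqref{0159} of Definition \ref{0312} for the functor $\alg{k}$, the sole nontrivial input being the structure theory of finite \'etale algebras over finite products of henselian local rings. First one notes that $\alg{k}$ is a well-defined functor $(F^{\perar})^{\op} \to \Sch$: for a morphism $F' \to F''$ in $F^{\perar}$, the ring map $\alg{A}(F') \to \alg{A}(F'')$ carries the Jacobson radical of $\alg{A}(F')$ into that of $\alg{A}(F'')$ by hypothesis, hence descends to a map $\alg{k}(F') \to \alg{k}(F'')$, and composites and identities are inherited from $\alg{A}$. Each $\alg{k}(F')$ is a finite product of fields, so $\Spec \alg{k}(F')$ is a finite disjoint union of spectra of fields.

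For condition \eqref{0158}: since $\alg{A}$ commutes with finite coproducts, $\alg{A}(\prod_{i} F'_{i}) \cong \prod_{i} \alg{A}(F'_{i})$ as rings, and the Jacobson radical of a finite product of rings is the product of the Jacobson radicals; hence $\alg{k}(\prod_{i} F'_{i}) \cong \prod_{i} \alg{k}(F'_{i})$, so $\Spec \alg{k}$ sends finite products in $F^{\perar}$ to finite disjoint unions of schemes.

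The heart is condition \eqref{0159}. Fix an \'etale morphism $F' \to F''$ in $F^{\perar}$, so that $\alg{A}(F') \to \alg{A}(F'')$ is finite \'etale. The key claim is that $\mathrm{Jac}(\alg{A}(F'')) = \mathrm{Jac}(\alg{A}(F')) \cdot \alg{A}(F'')$, whence $\alg{k}(F'') \cong \alg{k}(F') \tensor_{\alg{A}(F')} \alg{A}(F'')$. To see this, write $\alg{A}(F') = \prod_{i} A_{i}$ with each $A_{i}$ henselian local; then $\alg{A}(F'') \tensor_{\alg{A}(F')} A_{i}$ is finite \'etale over $A_{i}$, hence a finite product of local rings each finite \'etale over $A_{i}$, and each such local factor $C$, being unramified over $A_{i}$, satisfies $\mathfrak{m}_{i} C = \mathfrak{m}_{C}$; assembling over $i$ gives the claim. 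Consequently $\alg{k}(F') \to \alg{k}(F'')$ is a base change of the finite \'etale map $\alg{A}(F') \to \alg{A}(F'')$, hence finite \'etale, and it is surjective on spectra when $F' \to F''$ is faithfully flat (the fibre of $\Spec \alg{A}(F'') \to \Spec \alg{A}(F')$ over a closed point is nonempty and consists of closed points), so finite faithfully flat \'etale in that case. For the base-change identity, given any $F' \to F'''$, the ring $\alg{A}(F'') \tensor_{\alg{A}(F')} \alg{A}(F''') \cong \alg{A}(F'' \tensor_{F'} F''')$ is finite \'etale over $\alg{A}(F''')$, so by the same structure theory its Jacobson radical is extended from $\alg{A}(F''')$; quotienting gives $\alg{k}(F'' \tensor_{F'} F''') \cong \alg{A}(F'') \tensor_{\alg{A}(F')} \alg{k}(F''')$, and the right side equals $\alg{k}(F'') \tensor_{\alg{k}(F')} \alg{k}(F''')$ since $\alg{k}(F''')$ is a $\alg{k}(F')$-algebra (again because $\alg{A}(F') \to \alg{A}(F''')$ preserves Jacobson radicals) and $\alg{k}(F'') \cong \alg{A}(F'') \tensor_{\alg{A}(F')} \alg{k}(F')$. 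Applying $\Spec$ yields the required isomorphism $\alg{k}(F'' \tensor_{F'} F''') \isomto \alg{k}(F'') \times_{\alg{k}(F')} \alg{k}(F''')$.

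The only genuinely nontrivial point, and hence the main obstacle, is the claim that a finite \'etale algebra over a finite product of henselian local rings is again of that form with Jacobson radical extended from the base; it reduces to the local case, where it follows from the decomposition of a finite algebra over a henselian local ring into local factors together with the fact that \'etale morphisms are unramified. Everything else is bookkeeping with finite products and base change.
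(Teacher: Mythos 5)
Your proof is correct and follows essentially the same route as the paper's: the one real point in both is that a finite \'etale algebra over a finite product of henselian local rings has Jacobson radical extended from the base, which gives $\alg{k}(F'') \cong \alg{A}(F'')\tensor_{\alg{A}(F')}\alg{k}(F')$ for \'etale $F'\to F''$ and, applied to $F'''\to F''\tensor_{F'}F'''$, the fiber-product identity. You spell out the local decomposition and the tensor-product bookkeeping in more detail than the paper, but the argument is the same.
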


\begin{proof}
	It obviously preserves finite products.
	Let $F' \to F''$ be \'etale in $F^{\perar}$.
	Since $\alg{A}(F') \to \alg{A}(F'')$ is finite \'etale,
	the image of the Jacobson radical of $\alg{A}(F')$ generates the Jacobson radical of $\alg{A}(F'')$,
	and the induced map $\alg{k}(F') \to \alg{k}(F'')$ is finite \'etale.
	Let $F' \to F'''$ be another morphism in $F^{\perar}$.
	Then the natural map
	$\alg{A}(F'') \tensor_{\alg{A}(F')} \alg{A}(F''') \to \alg{A}(F'' \tensor_{F'} F''')$
	is an isomorphism.
	Since $\alg{A}(F') \to \alg{A}(F'')$ is \'etale,
	the ideal of $\alg{A}(F'') \tensor_{\alg{A}(F')} \alg{A}(F''')$
	generated by the image of the Jacobson radical of $\alg{A}(F''')$
	contains the image of the Jacobson radical of $\alg{A}(F'')$.
	On the other hand, the image of the Jacobson radical of $\alg{A}(F''')$ in $\alg{A}(F'' \tensor_{F'} F''')$
	generates the Jacobson radical
	since $F''' \to F'' \tensor_{F'} F'''$ is \'etale.
	Therefore, on the quotients, we have
	$\alg{k}(F'') \tensor_{\alg{k}(F')} \alg{k}(F''') \isomto \alg{k}(F'' \tensor_{F'} F''')$.
\end{proof}

The natural morphism $\alg{A} \onto \alg{k}$ of $F^{\perar}$-algebras then defines a morphism of sites
	\[
			i_{\alg{k}, \tau}
		:=
			\pi_{\alg{k} / \alg{A}, \tau}
		\colon
			\Spec \alg{k}_{\tau}
		\to
			\Spec \alg{A}_{\tau}.
	\]
On the other hand, for an \'etale $\alg{k}$-algebra $(k', F')$,
the $\alg{k}(F')$-algebra $k'$ is finite \'etale,
so it admits a canonical finite \'etale lifting to $\alg{A}(F')$.
Denote this lifting by $\alg{A}(k', F')$.

\begin{Prop}
	The functor $(k', F') \mapsto (\alg{A}(k', F'), F')$ defines a morphism of sites
		\[
				\pi_{\alg{A} / \alg{k}, \tau}
			\colon
				\Spec \alg{A}_{\tau}
			\to
				\Spec \alg{k}_{\tau}.
		\]
\end{Prop}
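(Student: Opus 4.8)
The plan is to argue in the spirit of Propositions \ref{0034} and \ref{0036}: first check that the stated functor $u \colon (k', F') \mapsto (\alg{A}(k', F'), F')$ is a well-defined premorphism of sites, and then that its pullback functor is exact. For well-definedness and functoriality, the key input is that for each $F' \in F^{\perar}$ the reduction modulo the Jacobson radical $J(F')$ is an equivalence from the category of finite \'etale $\alg{A}(F')$-algebras to that of finite \'etale $\alg{k}(F')$-algebras — the standard rigidity theorem for the henselian pair $(\alg{A}(F'), J(F'))$ — and that $\alg{A}(\var, F')$ is its quasi-inverse. Uniqueness in this equivalence makes $u$ functorial and, moreover, yields the base-change compatibility $\alg{A}(k' \tensor_{\alg{k}(F')} \alg{k}(F''), F'') \isomto \alg{A}(k', F') \tensor_{\alg{A}(F')} \alg{A}(F'')$ and, for $F' \to F''$ \'etale, the identification $\alg{A}(k'', F'') \isomto \alg{A}(k'', F')$ (using $\alg{A}(F'') \isomto \alg{A}(\alg{k}(F''), F')$, which itself comes from uniqueness of lifts).

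Next I would check that $u$ sends $\tau$-coverings to $\tau$-coverings and is compatible with the fibered products appearing in coverings, so that it defines a premorphism $\pi_{\alg{A} / \alg{k}, \tau} \colon \Spec \alg{A}_{\tau} \to \Spec \alg{k}_{\tau}$. For a covering $\{(k'_{\lambda}, F') \to (k', F')\}$ with $\{\Spec k'_{\lambda} \to \Spec k'\}$ a $\tau$-covering, the morphisms $\Spec \alg{A}(k'_{\lambda}, F') \to \Spec \alg{A}(k', F')$ are finite \'etale and reduce modulo the Jacobson radical to $\{\Spec k'_{\lambda} \to \Spec k'\}$; since $\Spec \alg{A}(k', F')$ is a finite product of henselian local rings, a finite \'etale morphism into it is an \'etale (resp.\ Nisnevich) covering if and only if its reduction is, because surjectivity and the existence of residually trivial sections are detected on the closed fibers. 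For the singleton coverings $\{(\alg{A}(k'', F''), F'') \to (\alg{A}(k'', F'), F')\}$ with $F' \to F''$ \'etale, the identification $\alg{A}(k'', F'') \isomto \alg{A}(k'', F')$ exhibits them as coverings of the second generating type of $\Spec \alg{A}_{\tau}$. Compatibility with the relevant fibered products is a similar consequence of uniqueness of lifts and exactness of reduction.

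The main point — and the step I expect to be the real obstacle — is exactness of $\pi_{\alg{A} / \alg{k}, \tau}^{\ast}$. As in the proof of Proposition \ref{0036}, $\pi_{\alg{A} / \alg{k}, \tau}^{\ast} C$ is the sheafification of the presheaf $(A', F') \mapsto \dirlim C(k'', F'')$, the colimit taken over the comma category of morphisms $(A', F') \to u(k'', F'') = (\alg{A}(k'', F''), F'')$. Base-changing $\alg{A}(k'', F'')$ along $\alg{A}(F'') \to \alg{A}(F')$ and using the compatibility of canonical lifts with base change — together with the hypothesis that the transition maps carry Jacobson radicals into Jacobson radicals, so that $\alg{A}(k'', F'') \tensor_{\alg{A}(F'')} \alg{A}(F')$ has reduction $k'' \tensor_{\alg{k}(F'')} \alg{k}(F')$ — shows that the full subcategory of this comma category with $F'' = F'$ is cofinal. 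Over that subcategory the indexing category is that of pairs $(k''$ finite \'etale over $\alg{k}(F')$, an $\alg{A}(F')$-algebra map $\alg{A}(k'', F') \to A')$, and it is filtered: the join of $(k''_{1}, \psi_{1})$ and $(k''_{2}, \psi_{2})$ is $k''_{1} \tensor_{\alg{k}(F')} k''_{2}$ with canonical lift $\alg{A}(k''_{1}, F') \tensor_{\alg{A}(F')} \alg{A}(k''_{2}, F')$ and the map to $A'$ induced by $\psi_{1}$ and $\psi_{2}$, and parallel arrows are coequalized similarly. Hence the colimit is exact, $\pi_{\alg{A} / \alg{k}, \tau}^{\ast}$ is exact, and $\pi_{\alg{A} / \alg{k}, \tau}$ is a morphism of sites. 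The delicate ingredients throughout are the rigidity of finite \'etale algebras over henselian pairs and its compatibility with base change, and the reduction of covering and mapping statements over $\Spec \alg{A}(F')$ to statements over the closed fibers — exactly the henselian-local facts underlying Kato's canonical liftings.
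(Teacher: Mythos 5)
Your proposal is correct and follows essentially the same route as the paper: functoriality and base-change compatibility via uniqueness of finite \'etale liftings over the henselian pairs $(\alg{A}(F'), J(F'))$, detection of coverings on the reductions modulo the Jacobson radicals, and exactness of the pullback by identifying the index category (after restricting to $F'' = F'$) as the filtered category of pairs of an \'etale $\alg{k}(F')$-algebra $k''$ with an $\alg{A}(F')$-algebra map $\alg{A}(k'', F') \to A'$. The paper simply asserts the filteredness and the reduction to this index category where you spell out the cofinality and the construction of joins, but the content is the same.
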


\begin{proof}
	Let $(k', F') \to (k'', F'')$ be a morphism of \'etale $\alg{k}$-algebras
	that appears in a covering family for the site $\Spec \alg{k}_{\tau}$,
	that is, that both $F' \to F''$ and $k' \to k''$ are \'etale.
	In the commutative diagram
		\[
			\begin{CD}
					\alg{A}(F')
				@>>>
					\alg{A}(F'')
				\\ @VVV @VVV \\
					\alg{A}(F', k')
				@>>>
					\alg{A}(F'', k''),
			\end{CD}
		\]
	all the morphisms but the lower horizontal one is finite \'etale.
	It follows that the lower horizontal map is finite \'etale.
	Hence $(\alg{A}(F', k'), F') \to (\alg{A}(F'', k''), F'')$ appears
	in a covering family for $\Spec \alg{A}_{\tau}$.
	
	The quotient of the map $\alg{A}(k', F') \to \alg{A}(k'', F'')$ by the Jacobson radicals
	is $k' \to k''$.
	It follows that if $k' \to k''$ is a $\tau$-covering,
	then so is $\alg{A}(k', F') \to \alg{A}(k'', F'')$.
	
	Let $(k', F') \to (k''', F''')$ be another morphism of \'etale $\alg{k}$-algebras.
	Then both $\alg{A}(k'', F'') \tensor_{\alg{A}(k', F')} \alg{A}(k''', F''')$
	and $\alg{A}(k'' \tensor_{k'} k''', F'' \tensor_{F'} F''')$
	are finite \'etale liftings of the finite \'etale $k'''$-algebra $k'' \tensor_{k'} k'''$
	to $\alg{A}(k''', F''')$.
	Hence the natural morphism between them is an isomorphism.
	Thus the stated functor defines a premorphism of sites.
	
	The pullback $\pi_{\alg{A} / \alg{k}, \tau}^{\ast \set} C$ of a sheaf $C \in \Set(\alg{k}_{\tau})$
	is given by the $\tau$-sheafification of the presheaf that sends
	an \'etale $\alg{A}$-algebra $(A', F')$ to the direct limit of the sets $C(k', F')$,
	where the index category consists of an \'etale $\alg{k}(F')$-algebra $k'$
	and an $\alg{A}(F')$-algebra homomorphism $\alg{A}(k', F') \to A'$.
	This index category is filtered.
	Since filtered direct limits are exact,
	we know that $\pi_{\alg{A} / \alg{k}, \tau}^{\ast \set}$ is exact.
	Thus $\pi_{\alg{A} / \alg{k}, \tau}$ is a morphism of sites.
\end{proof}

\begin{Prop} \label{0313}
	The right adjoint to the functor $(A', F') \mapsto (A' \tensor_{\alg{A}(F')} \alg{k}(F'), F')$
	from \'etale $\alg{A}$-algebras to \'etale $\alg{k}$-algebras
	is given by the functor $(k'', F'') \mapsto (\alg{A}(k'', F''), F'')$.
	In particular, we have $i_{\alg{k}, \tau}^{\ast} \cong \pi_{\alg{A} / \alg{k}, \tau, \ast}$,
	hence
		\[
					\pi_{\alg{k}, \tau, \ast} i_{\alg{k}, \tau}^{\ast}
				\cong
					\pi_{\alg{A}, \tau, \ast},
			\quad
					R \pi_{\alg{k}, \tau, \ast} i_{\alg{k}, \tau}^{\ast}
				\cong
					R \pi_{\alg{A}, \tau, \ast}.
		\]
\end{Prop}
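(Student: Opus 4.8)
The plan is to prove the adjunction at the level of \'etale algebras and then read off the remaining assertions from it together with Proposition~\ref{0036}. Write $u$ for the functor $(A', F') \mapsto (A' \tensor_{\alg{A}(F')} \alg{k}(F'), F')$ and $v$ for the functor $(k'', F'') \mapsto (\alg{A}(k'', F''), F'')$; by construction $u = i_{\alg{k}}^{-1} \colon \Spec \alg{A}_{\tau} \to \Spec \alg{k}_{\tau}$ is the functor underlying $i_{\alg{k}, \tau}$, and $v = \pi_{\alg{A} / \alg{k}}^{-1} \colon \Spec \alg{k}_{\tau} \to \Spec \alg{A}_{\tau}$ is the functor underlying $\pi_{\alg{A} / \alg{k}, \tau}$. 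So the adjunction claim is precisely that $u \dashv v$.

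First I would establish the fiberwise statement: for a fixed $F' \in F^{\perar}$, an \'etale $\alg{A}(F')$-algebra $B$ and an \'etale $\alg{k}(F')$-algebra $E$ (automatically finite \'etale since $\alg{k}(F')$ is a finite product of fields), the canonical reduction map $\alg{A}(E, F') \onto E$ induces a bijection
	\[
			\Hom_{\alg{A}(F')}(B, \alg{A}(E, F'))
		\isomto
			\Hom_{\alg{k}(F')}(B \tensor_{\alg{A}(F')} \alg{k}(F'), E),
	\]
natural in $B$ and $E$. The key input is that $\alg{A}(F')$, being a finite product of henselian local rings, has its finite algebra $\alg{A}(E, F')$ also a finite product of henselian local rings, so that $(\alg{A}(E, F'), \ideal{J}\alg{A}(E, F'))$ is a henselian pair, where $\ideal{J}$ denotes the Jacobson radical of $\alg{A}(F')$ and $\ideal{J}\alg{A}(E, F')$, being its extension, is the Jacobson radical of $\alg{A}(E, F')$ with quotient $E$. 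Rewriting $\Hom_{\alg{A}(F')}(B, \alg{A}(E, F'))$ as the set of sections of the \'etale morphism $\Spec(B \tensor_{\alg{A}(F')} \alg{A}(E, F')) \to \Spec \alg{A}(E, F')$ and invoking the unique lifting of sections of \'etale morphisms over a henselian pair, this set is identified with the sections over the closed subscheme $\Spec E$, which is the right-hand side; unwinding the identification exhibits it as post-composition with $\alg{A}(E, F') \onto E$, whence naturality.

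Next I would globalize. A morphism $u(A', F') \to (k'', F'')$ in $\Spec \alg{k}_{\tau}$ is the datum of a morphism $F' \to F''$ together with an $\alg{k}(F'')$-algebra homomorphism $(A' \tensor_{\alg{A}(F')} \alg{A}(F'')) \tensor_{\alg{A}(F'')} \alg{k}(F'') \to k''$, while a morphism $(A', F') \to v(k'', F'')$ in $\Spec \alg{A}_{\tau}$ is the datum of a morphism $F' \to F''$ together with an $\alg{A}(F'')$-algebra homomorphism $A' \tensor_{\alg{A}(F')} \alg{A}(F'') \to \alg{A}(k'', F'')$. For each fixed $F' \to F''$ the two sets of algebra homomorphisms are matched by the fiberwise statement applied at $F''$ to the \'etale $\alg{A}(F'')$-algebra $A' \tensor_{\alg{A}(F')} \alg{A}(F'')$ and to $k''$; since reduction modulo Jacobson radicals and the canonical lifting are compatible with base change in $F'$, the matching is natural in both variables and assembles into the adjunction $u \dashv v$.

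Finally I would deduce the displayed statements. Since $i_{\alg{k}, \tau}$ is the premorphism with underlying functor $u$ and $u$ has right adjoint $v$, the comma categories through which the presheaf-level pullback $i_{\alg{k}, \tau}^{\ast} C$ is computed at $(k'', F'')$ are equivalent to the slice category over $v(k'', F'')$, which has a terminal object; hence the presheaf underlying $i_{\alg{k}, \tau}^{\ast} C$ before sheafification is $(k'', F'') \mapsto C(\alg{A}(k'', F''), F'') = (\pi_{\alg{A} / \alg{k}, \tau, \ast} C)(k'', F'')$, and as $v$ carries covering families to covering families this is already a sheaf, so $i_{\alg{k}, \tau}^{\ast} \cong \pi_{\alg{A} / \alg{k}, \tau, \ast}$ on sheaves of sets and likewise on abelian sheaves. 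The composite premorphism $\pi_{\alg{k}, \tau} \compose \pi_{\alg{A} / \alg{k}, \tau}$ has underlying functor $F' \mapsto (\alg{A}(\alg{k}(F'), F'), F') = (\alg{A}(F'), F')$, hence equals $\pi_{\alg{A}, \tau}$, so $\pi_{\alg{A}, \tau, \ast} = \pi_{\alg{k}, \tau, \ast} \pi_{\alg{A} / \alg{k}, \tau, \ast} \cong \pi_{\alg{k}, \tau, \ast} i_{\alg{k}, \tau}^{\ast}$. For the derived version, $i_{\alg{k}, \tau} = \pi_{\alg{k} / \alg{A}, \tau}$ is a morphism of sites by Proposition~\ref{0036}, so $i_{\alg{k}, \tau}^{\ast} \cong \pi_{\alg{A} / \alg{k}, \tau, \ast}$ is exact and sends acyclic sheaves to acyclic sheaves; composing derived pushforwards gives $R \pi_{\alg{A}, \tau, \ast} \cong R \pi_{\alg{k}, \tau, \ast} \compose R \pi_{\alg{A} / \alg{k}, \tau, \ast} \cong R \pi_{\alg{k}, \tau, \ast} i_{\alg{k}, \tau}^{\ast}$. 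The step I expect to be the main obstacle is the fiberwise statement: pinning down the precise form of the henselian-pair section-lifting property needed here (with $\alg{A}(F')$ only a finite product of henselian local rings and $B$ \'etale but not finite over it) and checking that the bijections it produces are natural enough to yield an honest adjunction rather than merely a family of bijections.
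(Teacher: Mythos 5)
Your proof is correct and follows essentially the same route as the paper: both reduce the adjunction to a Hom-set bijection over each fixed $F''$ and settle it by a henselian lifting argument, then read off the pushforward identifications from the adjunction. The only (immaterial) difference is that the paper factors the map $A' \tensor_{\alg{A}(F')} \alg{A}(F'') \to \alg{A}(k'', F'')$ through the maximal finite quotient and invokes the functoriality of finite \'etale liftings, whereas you apply the section-lifting property of the henselian pair $(\alg{A}(E, F'), \ideal{J}\alg{A}(E, F'))$ directly to the not-necessarily-finite \'etale algebra.
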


\begin{proof}
	Morphisms $(A' \tensor_{\alg{A}(F')} \alg{k}(F'), F') \to (k'', F'')$
	bijectively corresponds to morphisms
	$A' \tensor_{\alg{A}(F')} \alg{k}(F'') \to k''$
	of finite \'etale $\alg{k}(F'')$-algebras.
	Morphisms $(A', F') \to (\alg{A}(k'', F''), F'')$
	bijectively corresponds to morphisms
	$A' \tensor_{\alg{A}(F')} \alg{A}(F'') \to \alg{A}(k'', F'')$
	of \'etale $\alg{A}(F'')$-algebras.
	The latter morphisms factor through
	the maximal quotient of $A' \tensor_{\alg{A}(F')} \alg{A}(F'')$
	finite over $\alg{A}(F'')$.
	Hence these morphisms correspond bijectively
	by the functoriality of finite \'etale liftings.
\end{proof}

In particular, $i_{\alg{k}, \tau}^{\ast}$ sends K-injectives to K-injectives.

A little more generally,
let $\alg{A}$ be an $F^{\perar}$-algebra
and $\alg{B}$ another $F^{\perar}$-algebra.
Let $\alg{A} \onto \alg{B}$ be a (section-wise) surjection of $F^{\perar}$-algebras
with kernel $\alg{I}$.
Assume that $(\alg{A}(F'), \alg{I}(F'))$ is a henselian pair for all $F' \in F^{\perar}$.
Set $i_{\alg{B}, \et} = \pi_{\alg{B} / \alg{A}, \et}$.
We have a natural morphism
	\[
			R \pi_{\alg{A}, \et, \ast}
		\to
			R \pi_{\alg{B}, \et, \ast} i_{\alg{B}, \et}^{\ast}
		\colon
			D(\alg{A}_{\et})
		\to
			D(F^{\perar}_{\et}).
	\]

\begin{Prop} \label{0341}
	For any $G \in D_{\tor}^{+}(\alg{A}_{\et})$,
	the above morphism applied to $G$,
		\[
				R \pi_{\alg{A}, \et, \ast} G
			\to
				R \pi_{\alg{B}, \et, \ast} i_{\alg{B}, \et}^{\ast} G,
		\]
	is an isomorphism.
\end{Prop}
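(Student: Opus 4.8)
The plan is to reduce the statement to Gabber's affine analogue of the proper base change theorem for henselian pairs, applied section-wise over $F^{\perar}$. By Proposition \ref{0036} we have $\pi_{\alg{B}, \et} = \pi_{\alg{A}, \et} \compose i_{\alg{B}, \et}$ (recall $i_{\alg{B}, \et} = \pi_{\alg{B} / \alg{A}, \et}$), so the morphism in question is the image under $R \pi_{\alg{A}, \et, \ast}$ of the unit morphism $G \to R i_{\alg{B}, \et, \ast} i_{\alg{B}, \et}^{\ast} G$. Writing $C$ for the cone of this unit, it is enough to prove $R \pi_{\alg{A}, \et, \ast} C = 0$. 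By the relative site formalism (Proposition \ref{0031} together with the standard computation of higher direct images), $R^{q} \pi_{\alg{A}, \et, \ast} C$ is the sheaf on $\Spec F^{\perar}_{\et}$ associated with the presheaf $F' \mapsto H^{q}(\alg{A}(F')_{\et}, C|_{\alg{A}(F')_{\et}})$; hence it suffices to show $H^{q}(\alg{A}(F')_{\et}, C|_{\alg{A}(F')_{\et}}) = 0$ for every field $F' \in F^{\perar}$ and every $q$. (Alternatively one may pass to stalks at geometric points $\closure{F}'$ of $\Spec F^{\perar}_{\et}$ via Proposition \ref{0166}, reducing to the cohomology of the scheme $\alg{A}(\closure{F}')$.)

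The next step is to identify $C|_{\alg{A}(F')_{\et}}$. Over each $F'$, the morphism $i_{\alg{B}, \et} = \pi_{\alg{B} / \alg{A}, \et}$ restricts to the morphism of small \'etale sites $i_{\alg{B}(F'), \et} \colon \alg{B}(F')_{\et} \to \alg{A}(F')_{\et}$ induced by the closed immersion $\Spec \alg{B}(F') \into \Spec \alg{A}(F')$ (compatibility of the pullback with restriction is Proposition \ref{0311}). Moreover $\pi_{\alg{B} / \alg{A}, \et, \ast} = i_{\alg{B}, \et, \ast}$ is an exact functor: section-wise it is the pushforward along the closed immersion $\Spec \alg{B}(F') \into \Spec \alg{A}(F')$, which is exact, and exactness of a morphism of sheaves on $\alg{A}_{\et}$ may be checked section-wise by Proposition \ref{0166}. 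Consequently $R i_{\alg{B}, \et, \ast} = i_{\alg{B}, \et, \ast}$ commutes with restriction to $\alg{A}(F')_{\et}$, and with $G' := G|_{\alg{A}(F')_{\et}} \in D_{\tor}^{+}(\alg{A}(F')_{\et})$ we obtain $C|_{\alg{A}(F')_{\et}} \cong [\, G' \to i_{\alg{B}(F'), \et, \ast} i_{\alg{B}(F'), \et}^{\ast} G' \,]$. Taking $R \Gamma(\alg{A}(F')_{\et}, \var)$ and using $R \Gamma(\alg{A}(F')_{\et}, i_{\alg{B}(F'), \et, \ast}(\var)) \cong R \Gamma(\alg{B}(F')_{\et}, \var)$, the desired vanishing becomes the assertion that the restriction map $R \Gamma(\alg{A}(F')_{\et}, G') \to R \Gamma(\alg{B}(F')_{\et}, i_{\alg{B}(F'), \et}^{\ast} G')$ is an isomorphism.

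Since $(\alg{A}(F'), \alg{I}(F'))$ is a henselian pair and the cohomology sheaves of $G'$ are torsion, this last isomorphism is Gabber's affine analogue of the proper base change theorem, applied to each cohomology sheaf of $G'$; the passage from single sheaves to the bounded-below complex $G'$ is via the hypercohomology spectral sequence, which converges and is respected by the restriction map. I expect the main obstacle to be not the arithmetic input --- which is entirely supplied by Gabber's theorem --- but the bookkeeping of the relative-site formalism: namely, verifying that $R \pi_{\alg{A}, \et, \ast}$ really is computed section-wise over $F^{\perar}$ and that $R i_{\alg{B}, \et, \ast}$ is exact and commutes with restriction to $\alg{A}(F')_{\et}$. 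These rest on the fact that section-wise $i_{\alg{B}(F')}$ is a closed immersion whose pushforward is exact, together with Propositions \ref{0031}, \ref{0311} and \ref{0166}, and they parallel the arguments of Propositions \ref{0313} and \ref{0314}.
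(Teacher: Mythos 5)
Your proposal is correct and takes essentially the same route as the paper: the paper's proof reduces to a single sheaf in degree zero, passes to sections over each $F' \in F^{\perar}$, and invokes Gabber's affine analogue of proper base change for the henselian pair $(\alg{A}(F'), \alg{I}(F'))$, exactly as you do. Your version merely spells out the relative-site bookkeeping (exactness of $i_{\alg{B}, \et, \ast}$, compatibility with restriction, section-wise computation of $R \pi_{\alg{A}, \et, \ast}$) that the paper leaves implicit.
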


\begin{proof}
	We may assume that $G$ is concentrated in degree zero.
	For $F' \in F^{\perar}$, we want to show that the morphism
		\[
				R \Gamma(\alg{A}(F'), G)
			\to
				R \Gamma(\alg{B}(F'), i_{\alg{B}, \et}^{\ast} G)
		\]
	is an isomorphism.
	But this is Gabber's affine analogue of proper base change \cite[Theorem 1]{Gab94}.
\end{proof}


\subsection{Shriek functors and cup product}
\label{0381}

We develop a theory of compact support cohomology and cup product
for relative sites.

Let $\alg{X}$ be an $F^{\perar}$-scheme.
Set $X = \alg{X}(F)$.
Let $i \colon Z \into X$ be a closed immersion.
Set $j \colon U = X \setminus Z \into X$.
Then $Z$ and $U$ define $F^{\perar}$-schemes $\alg{Z}$ and $\alg{U}$, respectively,
as seen after Proposition \ref{0311}.
The morphisms $i$ and $j$ define natural morphisms
$i \colon \alg{Z} \to \alg{X}$ and $j \colon \alg{U} \to \alg{X}$ of $F^{\perar}$-schemes,
which then define morphisms of sites
$i \colon \alg{Z}_{\et} \to \alg{X}_{\et}$ and $j \colon \alg{U}_{\et} \to \alg{X}_{\et}$.
For any $F' \in F^{\perar}$,
we have the usual shriek functors
$i_{F'}^{!} \colon \Ab(\alg{X}(F')_{\et}) \to \Ab(\alg{Z}(F')_{\et})$
and $j_{F', !} \colon \Ab(\alg{U}(F')_{\et}) \to \Ab(\alg{X}(F')_{\et})$
(see \cite[Tags 0F4Z and 0F59]{Sta21} for example).
For $G \in \Ab(\alg{X}(F')_{\et})$ and $H \in \Ab(\alg{U}(F')_{\et})$,
by Proposition \ref{0032},
the sheaves $i_{F'}^{!}(G|_{\alg{X}(F')})$ and $j_{F', !}(H|_{\alg{U}(F')})$
together with natural functoriality with respect to varying $F' \in F^{\perar}$
define objects of $\Ab(\alg{Z}_{\et})$ and $\Ab(\alg{X}_{\et})$, respectively.
We denote them by $i^{!} G$ and $j_{!} H$, respectively.
The obtained functors $i^{!} \colon \Ab(\alg{X}_{\et}) \to \Ab(\alg{Z}_{\et})$
and $j_{!} \colon \Ab(\alg{U}_{\et}) \to \Ab(\alg{X}_{\et})$
are right adjoint and left adjoint, respectively,
to $i_{\ast} \colon \Ab(\alg{Z}_{\et}) \to \Ab(\alg{X}_{\et})$
and $j^{\ast} \colon \Ab(\alg{X}_{\et}) \to \Ab(\alg{U}_{\et})$.
The functor $i^{!}$ is left exact and the functor $j_{!}$ is exact.
We have distinguished triangles
	\begin{equation} \label{0461}
				i_{\ast} R i^{!} G
			\to
				G
			\to
				R j_{\ast} j^{\ast} G,
		\quad
				j_{!} j^{\ast} G
			\to
				G
			\to
				i_{\ast} i^{\ast} G
	\end{equation}
in $D(\alg{X}_{\et})$ functorial in $G \in D(\alg{X}_{\et})$
(see \cite[Lemmas 6.1.11 and 6.1.16]{BS15} and \cite[0GKL]{Sta21} for example).
Hence the functors $R i^{!}$, $j_{!}$ are the right derived functors
of the functors of additive categories with translation (\cite[Definition 10.1.1]{KS06})
	\begin{gather*}
				[i^{\ast} \to i^{\ast} j_{\ast} j^{\ast}][-1]
			\colon
				\Ch(\alg{X}_{\et})
			\to
				\Ch(\alg{Z}_{\et}),
		\\
				[j_{\ast} \to i_{\ast} i^{\ast} j_{\ast}][-1]
			\colon
				\Ch(\alg{U}_{\et})
			\to
				\Ch(\alg{X}_{\et}),
	\end{gather*}
respectively.
See \cite[Sections 13.3, 14.3]{KS06} for derived functors of functors of additive categories with translation.

We define
	\[
			R \pi_{\alg{U}, !}
		:=
			R \pi_{\alg{X}, \ast}
			j_{!}
		\colon
			D(\alg{U}_{\et})
		\to
			D(F^{\perar}_{\et}).
	\]
This of course depends not only on $\alg{U}$
but also on $\alg{X}$ and the embedding $j \colon U \into X$.
We have distinguished triangles
	\begin{gather} \label{0370}
				R \pi_{\alg{Z}, \ast} R i^{!} G
			\to
				R \pi_{\alg{X}, \ast} G
			\to
				R \pi_{\alg{U}, \ast} j^{\ast} G,
		\\ \label{0371}
				R \pi_{\alg{U}, !} j^{\ast} G
			\to
				R \pi_{\alg{X}, \ast} G
			\to
				R \pi_{\alg{Z}, \ast} i^{\ast} G
	\end{gather}
in $D(F^{\perar}_{\et})$ functorial in $G \in D(\alg{X}_{\et})$.
The functor $R \pi_{\alg{Z}, \ast} R i^{!}$ is the right derived functor of
$[\pi_{\alg{X}, \ast} \to \pi_{\alg{U}, \ast} j^{\ast}][-1]$.

We have natural morphisms
	\begin{equation} \label{0364}
				i^{\ast} G \tensor^{L} R i^{!} H
			\to
				R i^{!}(G \tensor^{L} H),
		\quad
				R j_{\ast} G' \tensor^{L} j_{!} H'
			\isomto
				j_{!}(G' \tensor^{L} H')
	\end{equation}
in $D(\alg{Z}_{\et})$, $D(\alg{X}_{\et})$, respectively,
functorial in $G, H \in D(\alg{X}_{\et})$,
$G', H' \in D(\alg{U}_{\et})$, respectively.
Applying \eqref{0327} for $R \pi_{\alg{Z}, \ast}$ and $R \pi_{\alg{X}, \ast}$,
we obtain canonical morphisms
	\begin{gather} \label{0368}
					R \pi_{\alg{Z}, \ast} i^{\ast} G
				\tensor^{L}
					R \pi_{\alg{Z}, \ast} R i^{!} H
			\to
				R \pi_{\alg{Z}, \ast} R i^{!}(G \tensor^{L} H),
		\\ \label{0369}
					R \pi_{\alg{U}, \ast} G'
				\tensor^{L}
					R \pi_{\alg{U}, !} H'
			\to
				R \pi_{\alg{U}, !}(G' \tensor^{L} H')
	\end{gather}
in $D(F^{\perar}_{\et})$.

We will see how the localization triangles \eqref{0370} and \eqref{0371}
interact with the cup product morphisms \eqref{0368} and \eqref{0369}.
This will be useful to relate duality statements for $U$
and duality statements for a smaller open set $U'$
(see the proof of Proposition \ref{0448}
and the proof of Proposition \ref{0112} in Section \ref{0306}).

\begin{Prop} \label{0367}
	Let $G, H \in D(\alg{X}_{\et})$.
	Consider the morphisms
		\begin{align*}
					i_{\ast} R i^{!} R \sheafhom_{\alg{X}_{\et}}(G, H)
			&	\to
					i_{\ast} R \sheafhom_{\alg{Z}_{\et}}(i^{\ast} G, R i^{!} H)
			\\
			&	\to
					R \sheafhom_{\alg{X}_{\et}}(i_{\ast} i^{\ast} G, i_{\ast} R i^{!} H)
			\\
			&	\to
					R \sheafhom_{\alg{X}_{\et}}(i_{\ast} i^{\ast} G, H),
		\end{align*}
		\begin{align*}
					R j_{\ast} j^{\ast} R \sheafhom_{\alg{X}_{\et}}(G, H)
			&	\cong
					R j_{\ast} R \sheafhom_{\alg{U}_{\et}}(j^{\ast} G, j^{\ast} H)
			\\
			&	\to
					R \sheafhom_{\alg{X}_{\et}}(j_{!} j^{\ast} G, j_{!} j^{\ast} H)
			\\
			&	\to
					R \sheafhom_{\alg{X}_{\et}}(j_{!} j^{\ast} G, H)
		\end{align*}
	in $D(\alg{X}_{\et})$ obtained by applying \eqref{0364}.
	They form a morphism of distinguished triangles
		\[
			\begin{CD}
					i_{\ast} R i^{!} R \sheafhom_{\alg{X}_{\et}}(G, H)
				@>>>
					R \sheafhom_{\alg{X}_{\et}}(G, H)
				@>>>
					R j_{\ast} j^{\ast} R \sheafhom_{\alg{X}_{\et}}(G, H)
				\\ @VVV @| @VVV \\
					R \sheafhom_{\alg{X}_{\et}}(i_{\ast} i^{\ast} G, H)
				@>>>
					R \sheafhom_{\alg{X}_{\et}}(G, H)
				@>>>
					R \sheafhom_{\alg{X}_{\et}}(j_{!} j^{\ast} G, H),
			\end{CD}
		\]
	where the rows are the triangles \eqref{0461}.
\end{Prop}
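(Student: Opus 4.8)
The plan is to reduce the statement to the level of chain complexes of sheaves and there to identify both rows of the diagram with a single honest fiber sequence. First I would take $H$ to be K-injective, so that by Proposition \ref{0326} each of the six objects $R\sheafhom$ in the diagram is represented by the naive $\sheafhom$, and $R\sheafhom_{\alg{X}_{\et}}(\var, H) = \sheafhom_{\alg{X}_{\et}}(\var, H)$ is an exact contravariant functor sending quasi-isomorphisms to quasi-isomorphisms. Applied to the short exact sequence $0 \to j_{!} j^{\ast} G \to G \to i_{\ast} i^{\ast} G \to 0$ underlying the second triangle of \eqref{0461}, it produces the short exact sequence of complexes
	\[
			0
		\to
			\sheafhom_{\alg{X}_{\et}}(i_{\ast} i^{\ast} G, H)
		\to
			\sheafhom_{\alg{X}_{\et}}(G, H)
		\to
			\sheafhom_{\alg{X}_{\et}}(j_{!} j^{\ast} G, H)
		\to
			0,
	\]
which is the bottom row. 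Since $j^{\ast}$ (right adjoint to the exact $j_{!}$) and $i^{!}$ (right adjoint to the exact $i_{\ast}$) preserve K-injectives, $\sheafhom_{\alg{X}_{\et}}(G,H)$ is K-injective, and $R j_{\ast} j^{\ast}\sheafhom_{\alg{X}_{\et}}(G,H) = j_{\ast} j^{\ast}\sheafhom_{\alg{X}_{\et}}(G,H) = j_{\ast}\sheafhom_{\alg{U}_{\et}}(j^{\ast}G, j^{\ast}H)$, while $i_{\ast} R i^{!}\sheafhom_{\alg{X}_{\et}}(G,H) = i_{\ast} i^{!}\sheafhom_{\alg{X}_{\et}}(G,H)$ is the honest subcomplex of sections supported on $\alg{Z}$, namely $\ker\bigl(\sheafhom_{\alg{X}_{\et}}(G,H) \to j_{\ast}\sheafhom_{\alg{U}_{\et}}(j^{\ast}G, j^{\ast}H)\bigr)$; for K-injective $\sheafhom_{\alg{X}_{\et}}(G,H)$ this last map is degreewise surjective, so this is also its honest mapping fiber. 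Thus the top row is represented by an honest fiber sequence. All the functors $i_{\ast}, i^{\ast}, i^{!}, j_{!}, j^{\ast}, j_{\ast}$ on the relative sites are built section-wise from the scheme-theoretic ones via Proposition \ref{0032}, so their adjunctions, exactness properties, and the projection formula for the closed immersion $i$ are inherited from the classical case of each $\alg{X}(F')$.

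Next I would identify the two outer vertical morphisms with the canonical adjunction isomorphisms
	\[
			R\sheafhom_{\alg{X}_{\et}}(j_{!} j^{\ast} G, H)
		\cong
			R j_{\ast} R\sheafhom_{\alg{U}_{\et}}(j^{\ast}G, j^{\ast}H),
		\qquad
			R\sheafhom_{\alg{X}_{\et}}(i_{\ast} i^{\ast} G, H)
		\cong
			i_{\ast} R\sheafhom_{\alg{Z}_{\et}}(i^{\ast}G, R i^{!} H),
	\]
the first coming from $j_{!} \dashv j^{\ast}$ with $j^{\ast}$ exact, the second from $i_{\ast} \dashv i^{!}$ together with the projection formula $i_{\ast}(B \tensor i^{\ast} G) \cong i_{\ast} B \tensor G$ and the exactness of $i_{\ast}$. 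Unwinding the morphisms \eqref{0327} and \eqref{0364} shows that the composites displayed in the statement are precisely these isomorphisms (in particular, the first arrow in each vertical column, built from \eqref{0364}, is an isomorphism $i^{!}\sheafhom_{\alg{X}_{\et}}(G,H) \cong \sheafhom_{\alg{Z}_{\et}}(i^{\ast}G, i^{!}H)$, resp.\ an isomorphism after applying $j^{\ast}$). Granting this, both rows — after these identifications and those of the previous paragraph — become the honest fiber sequence of the single chain map $\sheafhom_{\alg{X}_{\et}}(G, H) \to \sheafhom_{\alg{X}_{\et}}(j_{!} j^{\ast} G, H)$ induced by the counit $j_{!} j^{\ast} G \to G$, and the two subcomplexes $i_{\ast} i^{!}\sheafhom_{\alg{X}_{\et}}(G,H)$ and $\sheafhom_{\alg{X}_{\et}}(i_{\ast} i^{\ast} G, H)$ of $\sheafhom_{\alg{X}_{\et}}(G,H)$ get compared.

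It then remains to verify that the two outer squares commute, the middle column being the identity. Commutativity of the right square is the assertion that the composite $\sheafhom_{\alg{X}_{\et}}(G,H) \to j_{\ast}\sheafhom_{\alg{U}_{\et}}(j^{\ast}G, j^{\ast}H) \cong \sheafhom_{\alg{X}_{\et}}(j_{!} j^{\ast} G, H)$ — localization followed by the unit $\id \to j_{\ast} j^{\ast}$, then the adjunction isomorphism — equals restriction along the counit $j_{!} j^{\ast} G \to G$; this is one of the triangle identities for $j_{!} \dashv j^{\ast}$, a direct computation on sections over each $(X', F')$. Commutativity of the left square is the dual assertion: the composite $i_{\ast} i^{!}\sheafhom_{\alg{X}_{\et}}(G,H) \cong \sheafhom_{\alg{X}_{\et}}(i_{\ast} i^{\ast} G, H) \to \sheafhom_{\alg{X}_{\et}}(G,H)$, where the last arrow is restriction along the unit $G \to i_{\ast} i^{\ast} G$, agrees with the fiber inclusion $i_{\ast} i^{!}\sheafhom_{\alg{X}_{\et}}(G,H) \hookrightarrow \sheafhom_{\alg{X}_{\et}}(G,H)$ from the first triangle of \eqref{0461}; this follows from the triangle identity for $i_{\ast} \dashv i^{!}$ together with the compatibility of the projection formula with the $\sheafhom$-swap isomorphism. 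I expect no genuine obstacle here: the entire content is bookkeeping — checking that the canonical morphisms \eqref{0327} and \eqref{0364} are the adjunction morphisms one expects and that the resulting identifications of the two rows with a common fiber sequence are compatible. The zig-zag verifications are routine, but must be carried out carefully at the chain level, since a merely abstract isomorphism of the two distinguished triangles would not pin down the vertical maps in the stated form.
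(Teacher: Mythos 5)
Your proposal is correct and is exactly the expansion of what the paper dismisses with the single line ``This is formal'': reduce to the chain level with a K-injective representative of $H$, identify both rows with the honest fiber sequence obtained by applying $\sheafhom_{\alg{X}_{\et}}(\var, H)$ to $0 \to j_{!} j^{\ast} G \to G \to i_{\ast} i^{\ast} G \to 0$, and check the outer squares via the triangle identities for $j_{!} \dashv j^{\ast}$ and $i_{\ast} \dashv i^{!}$. The only point worth making explicit is that you should take the K-injective resolution to have injective components (always possible in a Grothendieck abelian category), so that $\sheafhom_{\alg{X}_{\et}}(\var, H)$ is degreewise exact and $H \to j_{\ast} j^{\ast} H$ is degreewise surjective as you use.
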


\begin{proof}
	This is formal.
\end{proof}

Assume that $i \colon Z \into X$ factorizes as closed immersions
$i'' \colon Z \into Z'$ and $i' \colon Z' \into X$.
Set $U' = X \setminus Z'$.
Let $j'' \colon U' \into U$ and $j' = j \compose j'' \colon U' \into X$ be the inclusions.
Set $W = U \cap Z'$.
Let $i'|_{U} \colon W \into U$ and $j|_{Z'} \colon W \into Z'$ be the inclusions.
Note that
	\[
			R \pi_{\alg{U}, !} (i'|_{U})_{\ast}
		=
			R \pi_{\alg{X}, \ast} j_{!} (i'|_{U})_{\ast}
		=
			R \pi_{\alg{X}, \ast} i'_{\ast} (j|_{Z'})_{!}
		=
			R \pi_{\alg{Z'}, \ast} (j|_{Z'})_{!}
		=
			R \pi_{\alg{W}, !},
	\]
where $R \pi_{\alg{W}, !}$ is defined
with respect to the embedding $j|_{Z'} \colon W \into Z'$.
Hence applying $R \pi_{\alg{U}, !}$ to the distinguished triangle
	\[
			j''_{!} j''^{\ast} G
		\to
			G
		\to
			(i'|_{U})_{\ast} (i'|_{U})^{\ast} G
	\]
yields a distinguished triangle
	\begin{equation} \label{0372}
			R \pi_{\alg{U}', !} j''^{\ast} G
		\to
			R \pi_{\alg{U}, !} G
		\to
			R \pi_{\alg{W}, !} (i'|_{U})^{\ast} G
	\end{equation}
in $D(F^{\perar}_{\et})$ functorial in $G \in D(\alg{U}_{\et})$.
Also we have a distinguished triangle
	\begin{equation} \label{0373}
			R \pi_{\alg{W}, \ast} R (i'|_{U})^{!} G
		\to
			R \pi_{\alg{U}, \ast} G
		\to
			R \pi_{\alg{U}', \ast} j''^{\ast} G.
	\end{equation}

Applying Proposition \ref{0367} to
$\alg{U}'_{\et} \stackrel{j''}{\to} \alg{U}_{\et} \stackrel{i'|_{U}}{\gets} \alg{W}$,
we obtain a morphism of distinguished triangles
	\[
		\begin{CD}
				(i'|_{U})_{\ast} R (i'|_{U})^{!} [G, H]_{U}
			@>>>
				[G, H]_{U}
			@>>>
				R j''_{\ast} j''^{\ast} [G, H]_{U}
			\\ @VVV @| @VVV \\
				[(i'|_{U})_{\ast} (i'|_{U})^{\ast} G, H]_{U}
			@>>>
				[G, H]_{U}
			@>>>
				[j''_{!} j''^{\ast} G, H]_{U}.
		\end{CD}
	\]
in $D(\alg{U}_{\et})$ functorial in $G, H \in D(\alg{U}_{\et})$,
where we abbreviated $R \sheafhom_{\alg{U}_{\et}}$ as $[\var, \var]_{U}$.
Applying $R \pi_{\alg{U}, \ast}$ and using \eqref{0369},
we obtain:

\begin{Prop} \label{0430}
	We have a morphism of distinguished triangles
		\begin{equation}
			\begin{CD}
					R \pi_{\alg{W}, \ast}
					R (i'|_{U})^{!} [G, H]_{U}
				@>>>
					R \pi_{\alg{U}, \ast}
					[G, H]_{U}
				@>>>
					R \pi_{\alg{U}', \ast}
					[j''^{\ast} G, j''^{\ast} H]_{U'}
				\\ @VVV @VVV @VVV \\
					[
						R \pi_{\alg{W}, !} (i'|_{U})^{\ast} G,
						R \pi_{\alg{U}, !} H
					]_{F}
				@>>>
					[
						R \pi_{\alg{U}, !} G,
						R \pi_{\alg{U}, !} H
					]_{F}
				@>>>
					[
						R \pi_{\alg{U}', !} j''^{\ast} G,
						R \pi_{\alg{U}, !} H
					]_{F}
			\end{CD}
		\end{equation}
	in $D(F^{\perar}_{\et})$,
	where we abbreviated $R \sheafhom_{F^{\perar}_{\et}}$ as $[\var, \var]_{F}$.
	Here the upper triangle is \eqref{0373} and the lower \eqref{0372}.
	The vertical morphisms are \eqref{0368} and \eqref{0369}
	followed by the natural morphisms
	$R \pi_{\alg{W}, !} R (i'|_{U})^{!} H \to R \pi_{\alg{U}, !} H$
	and $R \pi_{\alg{U}', !} j''^{\ast} H \to R \pi_{\alg{U}, !} H$.
\end{Prop}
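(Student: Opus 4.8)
The plan is to deduce this from Proposition~\ref{0367}, which has already been applied in the paragraph preceding the statement to the two immersions $j''\colon\alg{U}'_{\et}\to\alg{U}_{\et}$ and $i'|_U\colon\alg{W}\to\alg{U}_{\et}$, with $\alg{U}$ in the role of $\alg{X}$ there. That application produces a morphism of distinguished triangles in $D(\alg{U}_{\et})$, and the remaining task is to apply $R\pi_{\alg{U},\ast}$ to that diagram and to rewrite both rows and both vertical maps in terms of the functors over $\Spec F^{\perar}_{\et}$, using the canonical morphisms \eqref{0327}, \eqref{0364}, \eqref{0368}, \eqref{0369} and the shriek identities recalled before \eqref{0372}.

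First I would handle the upper row. Applying $R\pi_{\alg{U},\ast}$ to it and using $\pi_{\alg{U}}\compose (i'|_U)=\pi_{\alg{W}}$ and $\pi_{\alg{U}}\compose j''=\pi_{\alg{U}'}$ (Proposition~\ref{0036}), together with the exactness of the closed pushforward $(i'|_U)_\ast$ and the canonical isomorphism $j''^{\ast}R\sheafhom_{\alg{U}_{\et}}(G,H)\cong R\sheafhom_{\alg{U}'_{\et}}(j''^{\ast}G,j''^{\ast}H)$ for the localization-type morphism $j''$, one identifies the upper row with the triangle \eqref{0373} applied to $R\sheafhom_{\alg{U}_{\et}}(G,H)$.

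Next I would handle the lower row. The key observation is that \eqref{0369}, applied with $G'=R\sheafhom_{\alg{U}_{\et}}(A,H)$ and $H'=A$ and followed by $R\pi_{\alg{U},!}$ of the evaluation $R\sheafhom_{\alg{U}_{\et}}(A,H)\tensor^{L}A\to H$, gives for every $A\in D(\alg{U}_{\et})$ a canonical morphism $R\pi_{\alg{U},\ast}R\sheafhom_{\alg{U}_{\et}}(A,H)\to R\sheafhom_{F^{\perar}_{\et}}(R\pi_{\alg{U},!}A,R\pi_{\alg{U},!}H)$, functorial in $A$. Feeding $A\in\{(i'|_U)_\ast(i'|_U)^{\ast}G,\;G,\;j''_!j''^{\ast}G\}$ into this and using $R\pi_{\alg{U},!}(i'|_U)_\ast\cong R\pi_{\alg{W},!}$ and $R\pi_{\alg{U},!}j''_!\cong R\pi_{\alg{U}',!}$ (recalled before \eqref{0372}), the $R\pi_{\alg{U},\ast}$-image of the lower row of the $D(\alg{U}_{\et})$ diagram becomes $R\sheafhom_{F^{\perar}_{\et}}(-,R\pi_{\alg{U},!}H)$ applied to the triangle \eqref{0372}, as asserted. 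Identifying the two vertical maps then comes down to unwinding the definition of the vertical maps of Proposition~\ref{0367} in terms of \eqref{0364}, and comparing with the construction of \eqref{0368} and \eqref{0369} out of \eqref{0327} and \eqref{0364}; this shows they are \eqref{0368} resp.\ \eqref{0369} post-composed with the natural maps $R\pi_{\alg{W},!}R(i'|_U)^{!}H\to R\pi_{\alg{U},!}H$ and $R\pi_{\alg{U}',!}j''^{\ast}H\to R\pi_{\alg{U},!}H$.

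The only genuine work — and the expected main obstacle — is this last bookkeeping: checking that the two squares relating the $D(\alg{U}_{\et})$-level triangle morphism to the $D(F^{\perar}_{\et})$-level one commute, i.e.\ that the adjunction and evaluation morphisms used to pass from $R\sheafhom_{\alg{U}_{\et}}$ to $R\sheafhom_{F^{\perar}_{\et}}$ are compatible with the $R i^{!}$- and $j_!$-based maps of Proposition~\ref{0367}. Since everything in sight is assembled from unit and counit morphisms of the relevant adjunctions together with the projection-formula isomorphism of \eqref{0364} and the morphisms of \eqref{0327}, this is formal and follows by the standard yoga of these canonical maps; no further input on the schemes $\alg{X}$, $\alg{Z}$, $\alg{Z'}$ is required beyond what has already been used, so the proof reduces to ``this is formal'' in the same spirit as Proposition~\ref{0367}.
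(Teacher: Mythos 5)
Your proposal is correct and follows essentially the same route as the paper: the paper obtains Proposition \ref{0430} precisely by applying Proposition \ref{0367} to $\alg{U}'_{\et} \stackrel{j''}{\to} \alg{U}_{\et} \stackrel{i'|_{U}}{\gets} \alg{W}$, then applying $R \pi_{\alg{U}, \ast}$ and invoking \eqref{0369} together with the identities $R \pi_{\alg{U}, !} (i'|_{U})_{\ast} \cong R \pi_{\alg{W}, !}$ and $R \pi_{\alg{U}, !} j''_{!} \cong R \pi_{\alg{U}', !}$. Your elaboration of the lower row via the evaluation morphism and the final formal compatibility check are exactly the (unwritten) bookkeeping the paper leaves implicit.
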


We will bring the above constructions to $D(F^{\ind\rat}_{\pro\et})$.
Define
	\begin{gather*}
				R \alg{\Gamma}(\alg{U}, \var)
			:=
				\algebrize
				R \pi_{\alg{U}, \ast},
			\quad
				R \alg{\Gamma}_{c}(\alg{U}, \var)
			:=
				\algebrize
				R \pi_{\alg{U}, !}
			\colon
		\\
				D(\alg{U}_{\et})
			\to
				D(F^{\ind\rat}_{\pro\et}).
	\end{gather*}
Set $\alg{H}^{q} = H^{q} R \alg{\Gamma}$ and $\alg{H}_{c}^{q} = H^{q} R \alg{\Gamma}_{c}$.
Let $G, H \in D(\alg{U}_{\et})$ be such that
$R \pi_{\alg{U}, \ast} G$ and $R \pi_{\alg{U}, !} H$ are $h$-acyclic and $h$-compatible
(which is the case if $R \pi_{\alg{U}, \ast} G$ and $R \pi_{\alg{U}, !} H$ are
in $\genby{\mathcal{W}_{F}}_{F^{\perar}_{\et}}$
by Proposition \ref{0153}).
Then for any $F' \in F^{\perar}$, we have
	\[
				R \Gamma(F', R \alg{\Gamma}(\alg{U}, G))
			\cong
				R \Gamma(\alg{U}(F'), G),
		\quad
				R \Gamma(F', R \alg{\Gamma}_{c}(\alg{U}, H))
			\cong
				R \Gamma_{c}(\alg{U}(F'), H)
	\]
in $D(\Ab)$ functorial in such $F'$, $G$ and $H$,
where $R \Gamma_{c}(\alg{U}(F'), \var)$ is
defined as the composite of $R \Gamma(\alg{U}(F'), \var)$
and the extension-by-zero functor $D(\alg{U}(F')_{\et}) \to D(\alg{X}(F')_{\et})$.
The morphism \eqref{0369} induces a morphism
	\[
				R \alg{\Gamma}(\alg{U}, G)
			\tensor^{L}
				R \alg{\Gamma}_{c}(\alg{U}, H)
		\to
			R \alg{\Gamma}_{c}(\alg{U}, G \tensor^{L} H)
	\]
in $D(F^{\ind\rat}_{\pro\et})$ functorial in such $G$ and $H$
by \eqref{0453}.
For any $F' \in F^{\perar}$, applying the functor $R \Gamma(F', \var)$ yields a morphism
	\[
				R \Gamma(\alg{U}(F'), G)
			\tensor^{L}
				R \Gamma_{c}(\alg{U}(F'), H)
		\to
			R \Gamma_{c}(\alg{U}(F'), G \tensor^{L} H),
	\]
which agrees with the usual cup product morphism.

We will see how the above constructions behave under base change
with the associated equivariant structures.
For a field $F_{0} \in F^{\perar}$ with $\Sigma = \Aut(F_{0} / F)$,
we have the restrictions $\alg{X}_{F_{0}}$ and $\alg{U}_{F_{0}}$
of $\alg{X}$ and $\alg{U}$, respectively, to $F_{0}^{\perar}$.
They are $F_{0}^{\perar}$-schemes by Proposition \ref{0527}.
Hence we have the restrictions of the above constructions
	\begin{gather*}
				\pi_{\alg{U}_{F_{0}}}
			\colon
				\alg{U}_{F_{0}, \et}
			\to
				\Spec F^{\perar}_{0, \et},
		\\
				R \pi_{\alg{U}_{F_{0}}, !}
			\colon
				D(\alg{U}_{F_{0}, \et})
			\to
				D(F^{\perar}_{0, \et})
	\end{gather*}
and their $\Sigma$-equivariant versions (Section \ref{0518})
	\begin{gather*}
				\pi_{\alg{U}_{F_{0}}}^{\Sigma}
			\colon
				(\alg{U}_{F_{0}, \et})_{\Sigma}
			\to
				(\Spec F^{\perar}_{0, \et})_{\Sigma},
		\\
				R \pi_{\alg{U}_{F_{0}}, !}^{\Sigma}
			\colon
				D(\alg{U}_{F_{0}, \et})_{\Sigma}
			\to
				D(F^{\perar}_{0, \et})_{\Sigma}.
	\end{gather*}
Define
	\begin{gather*}
				R \alg{\Gamma}^{\Sigma}(\alg{U}_{F_{0}}, \var)
			:=
				\algebrize_{F_{0}}^{\Sigma}
				R \pi_{\alg{U}_{F_{0}}, \ast}^{\Sigma},
			\quad
				R \alg{\Gamma}_{c}^{\Sigma}(\alg{U}_{F_{0}}, \var)
			:=
				\algebrize_{F_{0}}^{\Sigma}
				R \pi_{\alg{U}_{F_{0}}, !}^{\Sigma}
			\colon
		\\
				D(\alg{U}_{F_{0}, \et})_{\Sigma}
			\to
				D(F^{\ind\rat}_{0, \pro\et})_{\Sigma}.
	\end{gather*}
Denote the restriction functor
$D(\alg{U}_{\et}) \to D(\alg{U}_{F_{0}, \et})$ by $(\var)|_{\alg{U}_{F_{0}}}$
and its $\Sigma$-equivariant version
$D(\alg{U}_{\et}) \to D(\alg{U}_{F_{0}, \et})_{\Sigma}$ by $(\var)|_{\alg{U}_{F_{0}}}^{\Sigma}$.

\begin{Prop} \label{0530}
	Let $G \in D(\alg{U}_{\et})$.
	Assume that $R \pi_{\alg{U}, \ast} G$ and $R \pi_{\alg{U}, !} G$ are
	$h$-compatible
	(for example, that they are objects of $\genby{\mathcal{W}_{F}}_{F^{\perar}_{\et}}$).
	Then
		\begin{equation} \label{0529}
			\begin{gathered}
						R \alg{\Gamma}(\alg{U}, G)|_{F_{0}}^{\Sigma}
					\cong
						R \alg{\Gamma}^{\Sigma}(
							\alg{U}_{F_{0}},
							G|_{\alg{U}_{F_{0}}}^{\Sigma}
						),
				\\
						R \alg{\Gamma}_{c}(\alg{U}, G)|_{F_{0}}^{\Sigma}
					\cong
						R \alg{\Gamma}_{c}^{\Sigma}(
							\alg{U}_{F_{0}},
							G|_{\alg{U}_{F_{0}}}^{\Sigma}
						)
			\end{gathered}
		\end{equation}
	in $D(F^{\ind\rat}_{0, \pro\et})_{\Sigma}$.
	In particular,
		\begin{gather*}
					\alg{H}^{q}(\alg{U}, G)|_{F_{0}}
				\cong
					\alg{H}^{q} \bigl(
						\alg{U}_{F_{0}},
						G|_{\alg{U}_{F_{0}}}
					\bigr),
			\\
					\alg{H}_{c}^{q}(\alg{U}, G)|_{F_{0}}
				\cong
					\alg{H}_{c}^{q} \bigl(
						\alg{U}_{F_{0}},
						G|_{\alg{U}_{F_{0}}}
					\bigr)
		\end{gather*}
	as $\Sigma$-equivariant sheaves on $\Spec F^{\ind\rat}_{0, \pro\et}$ for all $q$,
		\begin{gather*}
					R \Gamma^{\Sigma} \bigl(
						F_{0},
						R \alg{\Gamma}(\alg{U}, G)|_{F_{0}}^{\Sigma}
					\bigr)
				\cong
					R \Gamma^{\Sigma} \bigl(
						F_{0},
						R \alg{\Gamma}^{\Sigma}(
							\alg{U}_{F_{0}},
							G|_{\alg{U}_{F_{0}}}^{\Sigma}
						)
					\bigr),
			\\
					R \Gamma^{\Sigma} \bigl(
						F_{0},
						R \alg{\Gamma}_{c}(\alg{U}, G)|_{F_{0}}^{\Sigma}
					\bigr)
				\cong
					R \Gamma^{\Sigma} \bigl(
						F_{0},
						R \alg{\Gamma}_{c}^{\Sigma}(
							\alg{U}_{F_{0}},
							G|_{\alg{U}_{F_{0}}}^{\Sigma}
						)
					\bigr)
		\end{gather*}
	in $D(\Mod{\Sigma})$ and
		\begin{gather*}
					H^{q} \bigl(
						F_{0},
						R \alg{\Gamma}(\alg{U}, G)
					\bigr)
				\cong
					H^{q} \bigl(
						F_{0},
						R \alg{\Gamma}(
							\alg{U}_{F_{0}},
							G|_{\alg{U}_{F_{0}}}
						)
					\bigr),
			\\
					H^{q} \bigl(
						F_{0},
						R \alg{\Gamma}_{c}(\alg{U}, G)
					\bigr)
				\cong
					H^{q} \bigl(
						F_{0},
						R \alg{\Gamma}_{c}(
							\alg{U}_{F_{0}},
							G|_{\alg{U}_{F_{0}}}
						)
					\bigr)
		\end{gather*}
	as $\Sigma$-modules for all $q$.
	
	If moreover the isomorphic objects \eqref{0529} are bounded below
	(for example, if $R \pi_{\alg{U}, \ast} G$ and $R \pi_{\alg{U}, !} G$ are
	objects of $\genby{\mathcal{W}_{F}}_{F^{\perar}_{\et}}$),
	then we have an isomorphism between the spectral sequence
		\[
				E_{2}^{i j}
			=
				H^{i} \bigl(
					F_{0},
					\alg{H}^{j}(\alg{U}, G)
				\bigr)
			\Longrightarrow
				H^{i + j} \bigl(
					F_{0},
					R \alg{\Gamma}(\alg{U}, G)
				\bigr)
		\]
	of $\Sigma$-modules and the spectral sequence
		\[
				E_{2}^{i j}
			=
				H^{i} \bigl(
					F_{0},
					\alg{H}^{j}(
						\alg{U}_{F_{0}},
						G|_{\alg{U}_{F_{0}}}
					)
				\bigr)
			\Longrightarrow
				H^{i + j} \bigl(
					F_{0},
					R \alg{\Gamma}(
						\alg{U}_{F_{0}},
						G|_{\alg{U}_{F_{0}}}
					)
				\bigr)
		\]
	of $\Sigma$-modules compatible with the $E_{\infty}$-terms,
	and also we have an isomorphism between the spectral sequence
		\[
				E_{2}^{i j}
			=
				H^{i} \bigl(
					F_{0},
					\alg{H}_{c}^{j}(\alg{U}, G)
				\bigr)
			\Longrightarrow
				H^{i + j} \bigl(
					F_{0},
					R \alg{\Gamma}_{c}(\alg{U}, G)
				\bigr)
		\]
	of $\Sigma$-modules and the spectral sequence
		\[
				E_{2}^{i j}
			=
				H^{i} \bigl(
					F_{0},
					\alg{H}_{c}^{j}(
						\alg{U}_{F_{0}},
						G|_{\alg{U}_{F_{0}}}
					)
				\bigr)
			\Longrightarrow
				H^{i + j} \bigl(
					F_{0},
					R \alg{\Gamma}_{c}(
						\alg{U}_{F_{0}},
						G|_{\alg{U}_{F_{0}}}
					)
				\bigr)
		\]
	of $\Sigma$-modules compatible with the $E_{\infty}$-terms.
\end{Prop}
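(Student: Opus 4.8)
The plan is to deduce the isomorphisms \eqref{0529} from three ingredients already in place: Proposition \ref{0528}, which handles the interaction of $\algebrize$ with $\Sigma$-restriction; the compatibility of derived pushforward with $\Sigma$-restriction recorded in diagram \eqref{0525}; and the site identification $\alg{U}_{\et} / (\alg{U}(F_{0}), F_{0}) \cong \alg{U}_{F_{0}, \et}$ of Proposition \ref{0527}. All the subsidiary assertions — on cohomology objects, on $R \Gamma^{\Sigma}$, on $H^{q}$ as $\Sigma$-modules, and on the hypercohomology spectral sequences — will then fall out of the corollaries already packaged in Proposition \ref{0528}, applied to $R \pi_{\alg{U}, \ast} G$ and $R \pi_{\alg{U}, !} G$.

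First I would treat the non-compact case. Since $R \alg{\Gamma}(\alg{U}, G) = \algebrize R \pi_{\alg{U}, \ast} G$ and $R \pi_{\alg{U}, \ast} G$ is $h$-compatible by hypothesis, Proposition \ref{0528} gives $R \alg{\Gamma}(\alg{U}, G)|_{F_{0}}^{\Sigma} \cong \algebrize_{F_{0}}^{\Sigma}((R \pi_{\alg{U}, \ast} G)|_{F_{0}}^{\Sigma})$. Next I would apply \eqref{0525} with $f = \pi_{\alg{U}}$, $X_{0} = \Spec F_{0}$ and $\Sigma = \Aut(F_{0} / F)$: here $f^{-1} X_{0} = (\alg{U}(F_{0}), F_{0})$, and Proposition \ref{0527} (applied with $\alg{U}$ in place of $\alg{X}$) identifies the localization with $\alg{U}_{F_{0}, \et}$ and $\pi_{\alg{U}}|_{F_{0}}$ with $\pi_{\alg{U}_{F_{0}}}$, hence their $\Sigma$-equivariant versions; this yields $(R \pi_{\alg{U}, \ast} G)|_{F_{0}}^{\Sigma} \cong R \pi_{\alg{U}_{F_{0}}, \ast}^{\Sigma}(G|_{\alg{U}_{F_{0}}}^{\Sigma})$, and composing gives the first isomorphism in \eqref{0529}. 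For the compact-support version I would write $R \pi_{\alg{U}, !} = R \pi_{\alg{X}, \ast} j_{!}$, observe that $j_{!}$, being exact and assembled section-wise from the classical $j_{F', !}$ via Proposition \ref{0032}, commutes with $\Sigma$-restriction in the sense that $(j_{!} G)|_{\alg{X}_{F_{0}}}^{\Sigma} \cong j_{F_{0}, !}^{\Sigma}(G|_{\alg{U}_{F_{0}}}^{\Sigma})$, and then run the same argument with $\pi_{\alg{X}}$ in place of $\pi_{\alg{U}}$ to obtain the second isomorphism in \eqref{0529}.

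Once \eqref{0529} is established the rest is bookkeeping. Passing to $H^{q}$ and applying the forgetful functor $\For_{F_{0}}^{\Sigma}$, which is exact by Proposition \ref{0520}, gives the stated isomorphisms of $\Sigma$-equivariant sheaves; applying $R \Gamma^{\Sigma}(F_{0}, \var)$ and using that $R \Gamma^{\Sigma}(F_{0}, (\var)|_{F_{0}}^{\Sigma})$ computes $R \Gamma(F_{0}, \var)$ with its natural $\Sigma$-action, as recorded after \eqref{0523}, gives the isomorphisms in $D(\Mod{\Sigma})$, and then $\For_{F_{0}}^{\Sigma}$ followed by $H^{q}$ gives those of $\Sigma$-modules; finally, when the objects in \eqref{0529} are bounded below the second hypercohomology spectral sequences are available and \eqref{0529}, being compatible with all the structures in sight, induces the asserted isomorphism of spectral sequences compatibly with the $E_{\infty}$-terms. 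Equivalently, each of these is the corresponding corollary of Proposition \ref{0528}, transported along the identifications above.

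I do not expect a genuine obstacle here: as the introduction to Section \ref{0518} announces, the statement is a formal consequence of the relative-site and $\Sigma$-equivariant formalism developed there. The only point that demands care is coherence — verifying that the identification of Proposition \ref{0527}, the commutation \eqref{0525} and the isomorphisms of Proposition \ref{0528} are all compatible with the descent data in a single coherent chain — but since the constructions of Section \ref{0518} were designed precisely to make such compatibilities automatic, this should reduce to unwinding definitions.
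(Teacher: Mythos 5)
Your proposal is correct and follows essentially the same route as the paper, whose entire proof is the citation ``This follows from Proposition \ref{0528}''; you have simply spelled out the chain (Proposition \ref{0528} for $\algebrize$, diagram \eqref{0525} for the pushforward, Proposition \ref{0527} for the site identification, and the section-wise definition of $j_{!}$) that the paper leaves implicit.
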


\begin{proof}
	This follows from Proposition \ref{0528}.
\end{proof}


\subsection{The case of finite boundary}
\label{0436}

We continue the notation of Section \ref{0381}.
Assume moreover the following two conditions:
\begin{enumerate}
	\item
		$\alg{Z}(F')$ is a finite set of closed points of $\alg{X}(F')$ for all $F'$, and
	\item
		$\alg{X}(F') \to \alg{X}(F'')$ is affine for all morphisms $F' \to F''$ in $F^{\perar}$.
\end{enumerate}
We only need this case in this paper.
We will give more explicit descriptions of the constructions of the previous subsection.

For $x \in Z$, set $\alg{X}_{x}^{h} = \Spec \alg{O}_{X, x}^{h}$,
and $\alg{U}_{x}^{h} = \alg{X}_{x}^{h} \times_{\alg{X}} \alg{U}$
(which are well-defined by the second condition above).
The inclusion $x \into X$ defines an $F^{\perar}$-scheme $\alg{x}$
and morphisms $i_{x} \colon \alg{x} \to \alg{X}$
and $i_{x}^{h} \colon \alg{x} \to \alg{X}_{x}^{h}$.
We have a commutative diagram
	\[
		\begin{CD}
				\alg{U}_{x, \et}^{h}
			@> j_{x}^{h} >>
				\alg{X}_{x, \et}^{h}
			@< i_{x}^{h} <<
				\alg{x}_{\et}
			\\
			@V \pi_{\alg{U}_{x}^{h} / \alg{U}} VV
			@VV \pi_{\alg{X}_{x}^{h} / \alg{X}} V
			@|
			\\
				\alg{U}_{\et}
			@>> j >
				\alg{X}_{\et}
			@<< i_{x} <
				\alg{x}_{\et}
		\end{CD}
	\]
of morphisms of sites,
with the composite morphism
$\pi_{\alg{U}_{x}^{h} / \alg{X}} \colon \alg{U}_{x, \et}^{h} \to \alg{X}_{\et}$.
By Proposition \ref{0313}, we have
	\[
			R \pi_{\alg{X}_{x}, \ast}
		\cong
			R \pi_{\alg{x}, \ast}
			i_{x}^{h, \ast}.
	\]
Define
	\begin{equation} \label{0433}
			R \pi_{\alg{X}_{x}, !}
		:=
			R \pi_{\alg{x}, \ast}
			R i_{x}^{h, !}
		\colon
			D(\alg{X}_{x, \et})
		\to
			D(F^{\perar}_{\et}).
	\end{equation}
Then \eqref{0370} gives a distinguished triangle
	\begin{equation} \label{0434}
			R \pi_{\alg{X}_{x}, !} G
		\to
			R \pi_{\alg{X}_{x}, \ast} G
		\to
			R \pi_{\alg{U}_{x}, \ast} j_{x}^{h, \ast} G
	\end{equation}
and \eqref{0368} gives a morphism
	\begin{equation} \label{0443}
				R \pi_{\alg{X}_{x}, !} G
			\tensor^{L}
				R \pi_{\alg{X}_{x}, \ast} H
		\to
			R \pi_{\alg{X}_{x}, !}(G \tensor^{L} H).
	\end{equation}
The functor $R i^{!} G$ decomposes into local factors:

\begin{Prop} \label{0404}
	For $G \in D^{+}(\alg{X}_{\et})$, the natural morphism
		\[
				i_{\ast} R i^{!} G
			\to
				\bigoplus_{x \in Z}
					R \pi_{\alg{X}_{x}^{h} / \alg{X}, \ast}
					i_{x, \ast}^{h}
					R i_{x}^{h, !}
					\pi_{\alg{X}_{x}^{h} / \alg{X}}^{\ast} G
		\]
	in $D(\alg{X}_{\et})$ is an isomorphism.
	In particular, we have
		\[
				R \pi_{\alg{Z}, \ast} R i^{!} G
			\isomto
				\bigoplus_{x \in Z}
					R \pi_{\alg{X}_{x}^{h}, !}
					\pi_{\alg{X}_{x}^{h} / \alg{X}}^{\ast} G
		\]
	in $D(F^{\perar}_{\et})$.
\end{Prop}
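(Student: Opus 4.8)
The plan is to reduce the statement, via the section-wise construction of all the functors involved, to the classical excision isomorphism for étale cohomology with support in a closed point, and then to read off the ``in particular'' part formally.

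First I would use that a morphism in $D^{+}(\alg{X}_{\et})$ is an isomorphism if and only if its restriction to $\alg{X}(F')_{\et}$ is an isomorphism for every $F' \in F^{\perar}$ (Proposition \ref{0032}), and that the functors $R i^{!}$, $R i_{x}^{h, !}$, the pullbacks $\pi_{\alg{X}_{x}^{h} / \alg{X}}^{\ast}$, and the relevant restriction functors were all constructed section-wise (Propositions \ref{0032}, \ref{0031}, \ref{0311}). Next, since $\alg{Z}(F')$ is a finite disjoint union of closed points for every $F'$, the site $\alg{Z}_{\et}$ is the disjoint union $\bigsqcup_{x \in Z} \alg{x}_{\et}$, so $i_{\ast} R i^{!} G \cong \bigoplus_{x \in Z} (i_{x})_{\ast}\bigl((R i^{!} G)|_{\alg{x}_{\et}}\bigr)$; and from the commutative diagram of this subsection together with the exactness of pushforward along the closed immersions $i_{x}^{h}$ and $i_{x}$, one has $R \pi_{\alg{X}_{x}^{h} / \alg{X}, \ast} \compose i_{x, \ast}^{h} = (i_{x})_{\ast}$ on $D^{+}(\alg{x}_{\et})$. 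Hence the morphism of the proposition is, summand by summand, the image under $(i_{x})_{\ast}$ of the natural morphism
\[
		(R i^{!} G)|_{\alg{x}_{\et}}
	\to
		R i_{x}^{h, !} \pi_{\alg{X}_{x}^{h} / \alg{X}}^{\ast} G
\]
in $D^{+}(\alg{x}_{\et})$, and it suffices to show each of these is an isomorphism.

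To do that, I would restrict this last morphism to $\alg{x}(F')_{\et}$ as above: for each closed point $x'$ of $X' := \alg{X}(F')$ lying over some $x \in Z$ and $G' := G|_{\alg{X}(F')_{\et}} \in D^{+}(X'_{\et})$, it becomes the natural comparison morphism $R i_{x'}^{!} G' \to R i_{x'}^{h, !}\bigl(G'|_{\Spec \Order_{X', x'}^{h}}\bigr)$ for sections with support in $x'$. This is the standard excision isomorphism for étale cohomology with support in a point: it follows from the fact that étale cohomology commutes with the filtered inverse limit of affine étale neighborhoods defining $\Spec \Order_{X', x'}^{h}$ (cf.\ \cite[Expos\'e VII, Corollaire 5.9]{AGV72b}, as already used in the proof of Proposition \ref{0314}), boundedness below being exactly what makes the limit argument go through. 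This establishes the first isomorphism of the proposition.

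The ``in particular'' statement then follows by applying $R \pi_{\alg{X}, \ast}$ to the isomorphism just proved: since $\pi_{\alg{X}} \compose i = \pi_{\alg{Z}}$, the left-hand side becomes $R \pi_{\alg{Z}, \ast} R i^{!} G$, and since $\pi_{\alg{X}} \compose \pi_{\alg{X}_{x}^{h} / \alg{X}} = \pi_{\alg{X}_{x}^{h}}$ and $R \pi_{\alg{X}_{x}^{h}, \ast} \compose i_{x, \ast}^{h} = R \pi_{\alg{x}, \ast}$, each summand on the right becomes $R \pi_{\alg{x}, \ast} R i_{x}^{h, !} \pi_{\alg{X}_{x}^{h} / \alg{X}}^{\ast} G = R \pi_{\alg{X}_{x}^{h}, !} \pi_{\alg{X}_{x}^{h} / \alg{X}}^{\ast} G$ by the definition \eqref{0433}. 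I expect the main obstacle to be the bookkeeping of the first step: one must check carefully that every shriek and pullback functor in sight is genuinely computed section-wise, and that the disjoint-union decomposition of $\alg{Z}$ and the identity $R \pi_{\alg{X}_{x}^{h} / \alg{X}, \ast} \compose i_{x, \ast}^{h} = (i_{x})_{\ast}$ are compatible with restriction, so that the problem reduces cleanly to the classical excision statement; the excision input itself is standard and needs only to be quoted.
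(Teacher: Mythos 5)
Your proposal is correct and follows essentially the same route as the paper's proof: the paper likewise reduces to checking on each restriction to $\alg{X}(F')_{\et}$ and then quotes the classical excision isomorphism for \'etale cohomology with support in a closed point (citing \cite[Chapter III, Corollary 1.28]{Mil80}), the only cosmetic difference being that the paper first reduces to $G$ a single sheaf rather than handling $D^{+}$ directly via the limit argument. Your more detailed bookkeeping of the section-wise constructions and the disjoint-union decomposition is a faithful expansion of the same argument.
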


\begin{proof}
	We may assume $G \in \Ab(\alg{X}_{\et})$.
	By restricting to $\alg{X}(F')_{\et}$ for all $F' \in F^{\perar}$,
	this reduces to the usual excision isomorphism for \'etale cohomology
	\cite[Chapter III, Corollary 1.28]{Mil80}.
\end{proof}

Therefore \eqref{0370} can be written as a distinguished triangle
	\[
			\bigoplus_{x \in Z}
				R \pi_{\alg{X}_{x}^{h}, !}
				\pi_{\alg{X}_{x}^{h} / \alg{X}}^{\ast} G
		\to
				R \pi_{\alg{X}, \ast} G
			\to
				R \pi_{\alg{U}, \ast} j^{\ast} G
	\]
in $D(F^{\perar}_{\et})$ functorial in $G \in D^{+}(\alg{X}_{\et})$.

\begin{Prop}
	The functor $i^{\ast} \colon \Ab(\alg{X}_{\et}) \to \Ab(\alg{Z}_{\et})$
	sends acyclic sheaves to acyclic sheaves.
\end{Prop}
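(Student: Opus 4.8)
The plan is to decompose $i^{\ast}$ as a composite of two functors each of which visibly preserves acyclicity, using the henselian-neighborhood machinery recalled earlier in this subsection.

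First I would reduce to a single point. By the standing assumptions of this subsection the scheme $Z = \alg{X}(F)$ is a finite set of closed points, so $\alg{Z} = \bigsqcup_{x \in Z} \alg{x}$ and $i^{\ast} G \cong \bigoplus_{x \in Z} i_{x}^{\ast} G$ for any $G \in \Ab(\alg{X}_{\et})$. Since $Z$ is finite and $H^{n}(\var, G)$ is additive, a finite direct sum of acyclic sheaves is acyclic; hence it suffices to show that each $i_{x}^{\ast} \colon \Ab(\alg{X}_{\et}) \to \Ab(\alg{x}_{\et})$ sends acyclic sheaves to acyclic sheaves.

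From the commutative diagram of sites displayed in this subsection we have $i_{x} = \pi_{\alg{X}_{x}^{h} / \alg{X}} \compose i_{x}^{h}$, hence $i_{x}^{\ast} = i_{x}^{h, \ast} \compose \pi_{\alg{X}_{x}^{h} / \alg{X}}^{\ast}$. Now $\pi_{\alg{X}_{x}^{h} / \alg{X}, \et}^{\ast}$ sends acyclic sheaves to acyclic sheaves by Proposition \ref{0314} applied with $\tau = \et$. For $i_{x}^{h, \ast}$, the key observation is that $i_{x}^{h} \colon \alg{x}_{\et} \to \alg{X}_{x, \et}^{h}$ is precisely the morphism $i_{\alg{k}, \et}$ attached to the surjection of $F^{\perar}$-algebras $\alg{O}_{X, x}^{h} \onto \alg{x}$ (quotient by the Jacobson radical), so Proposition \ref{0313} yields an isomorphism $i_{x}^{h, \ast} \cong \pi_{\alg{O}_{X, x}^{h} / \alg{x}, \et, \ast}$; in other words $i_{x}^{h, \ast}$ is the pushforward functor of a morphism of sites. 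Pushforward functors of premorphisms send acyclic sheaves to acyclic sheaves. Composing the two steps, $i_{x}^{\ast}$ sends acyclic sheaves to acyclic sheaves, and the proposition follows.

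The one point needing care is that Proposition \ref{0313} genuinely applies to $\alg{O}_{X, x}^{h}$, i.e.\ that $\alg{O}_{X, x}^{h}(F')$ is a finite product of henselian local rings with quotient by the Jacobson radical equal to $\alg{x}(F')$ and that the transition maps respect Jacobson radicals; this is immediate from the construction of $\alg{O}_{X, x}^{h}$ recalled before Proposition \ref{0313} together with the assumptions of this subsection, which force the relevant special fibres to be finite products of fields. Alternatively, one could bypass Proposition \ref{0313} and argue directly after restricting to $\alg{X}(F')_{\et}$ via Propositions \ref{0031} and \ref{0311}: the claim then becomes the scheme-theoretic fact that for a closed point $x$ of a scheme, pullback to $\Spec \kappa(x)$ carries acyclic \'etale sheaves to acyclic ones, which follows by writing $\Spec \alg{O}_{X, x}^{h}(F')$ as a cofiltered limit of affine \'etale $\alg{X}(F')$-schemes, using that the \'etale cohomology of a henselian local ring is that of its closed point, and invoking the continuity of \'etale cohomology \cite[Expos\'e VII, Corollaire 5.9]{AGV72b}.
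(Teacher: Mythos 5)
Your proof is correct and follows essentially the same route as the paper: reduce to each $x \in Z$, factor $i_{x}^{\ast} = i_{x}^{h, \ast} \compose \pi_{\alg{X}_{x}^{h} / \alg{X}}^{\ast}$, and invoke Proposition \ref{0314} for the first factor and Proposition \ref{0313} (identifying $i_{x}^{h, \ast}$ with the pushforward $\pi_{\alg{X}_{x}^{h} / \alg{x}, \ast}$) for the second. The extra verification of the hypotheses of Proposition \ref{0313} and the alternative stalkwise argument are fine but not needed beyond what the paper records.
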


\begin{proof}
	For each $x \in Z$, the pullback of $i^{\ast}$ to $\Ab(\alg{x}_{\et})$ can be written as
	$i_{x}^{\ast} = i_{x}^{h, \ast} \pi_{\alg{X}_{x}^{h} / \alg{X}}^{\ast}$.
	The functor $\pi_{\alg{X}_{x}^{h} / \alg{X}}^{\ast}$ sends acyclics to acyclics
	by Proposition \ref{0314}
	and the functor $i_{x}^{h, \ast} \cong \pi_{\alg{X}_{x}^{h} / \alg{x}, \ast}$ sends acyclics to acyclics
	by Proposition \ref{0313}.
\end{proof}

Therefore $R \pi_{\alg{U}, !}$ restricted to $D^{+}(\alg{U}_{\et})$ is the right derived functor of
	\[
		[
				\pi_{\alg{U}, \ast}
			\to
				\pi_{\alg{Z}, \ast} i^{\ast} j_{\ast}
		][-1].
	\]
We have
	\[
			\pi_{\alg{Z}, \ast} i^{\ast}
		\cong
			\bigoplus_{x \in Z}
				\pi_{\alg{x}, \ast}
				i_{x}^{\ast}
		\cong
			\bigoplus_{x \in Z}
				\pi_{\alg{x}, \ast}
				i_{x}^{h, \ast}
				\pi_{\alg{X}_{x}^{h} / \alg{X}}^{\ast}
		\cong
			\bigoplus_{x \in Z}
				\pi_{\alg{X}_{x}^{h}, \ast}
				\pi_{\alg{X}_{x}^{h} / \alg{X}}^{\ast},
	\]
where the last isomorphism is by Proposition \ref{0313}.
Hence, on $\Ab(\alg{U}_{\et})$, we have
	\[
			\pi_{\alg{Z}, \ast} i^{\ast} j_{\ast}
		\cong
			\bigoplus_{x \in Z}
				\pi_{\alg{X}_{x}^{h}, \ast}
				\pi_{\alg{X}_{x}^{h} / \alg{X}}^{\ast}
				j_{\ast}
		\cong
			\bigoplus_{x \in Z}
				\pi_{\alg{X}_{x}^{h}, \ast}
				j_{x, \ast}^{h}
				\pi_{\alg{U}_{x}^{h} / \alg{U}}^{\ast}
		\cong
			\bigoplus_{x \in Z}
				\pi_{\alg{U}_{x}^{h}, \ast}
				\pi_{\alg{U}_{x}^{h} / \alg{U}}^{\ast}.
	\]
Therefore we have the following explicit description
of $R \pi_{\alg{U}, !}$:

\begin{Prop} \label{0378}
	The functor $R \pi_{\alg{U}, !}$ restricted to $D^{+}(\alg{U}_{\et})$ is
	the right derived functor of
		\begin{equation} \label{0374}
			\left[
					\pi_{\alg{U}, \ast}
				\to
					\bigoplus_{x \in Z}
						\pi_{\alg{U}_{x}^{h}, \ast}
						\pi_{\alg{U}_{x}^{h} / \alg{U}}^{\ast}
			\right][-1].
		\end{equation}
\end{Prop}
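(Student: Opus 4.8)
The plan is to read the statement off from the two facts assembled just before it. Recall first that, because $i^{\ast}$ carries acyclic sheaves on $\alg{X}_{\et}$ to acyclic sheaves on $\alg{Z}_{\et}$ and $R \pi_{\alg{U}, !} = R \pi_{\alg{X}, \ast} j_{!}$ with $j_{!}$ the right derived functor of $[j_{\ast} \to i_{\ast} i^{\ast} j_{\ast}][-1]$, the composite $R \pi_{\alg{U}, !}$, restricted to $D^{+}(\alg{U}_{\et})$, is the right derived functor of the functor of additive categories with translation $[\pi_{\alg{U}, \ast} \to \pi_{\alg{Z}, \ast} i^{\ast} j_{\ast}][-1] \colon \Ch(\alg{U}_{\et}) \to \Ch(F^{\perar}_{\et})$; indeed acyclic sheaves on $\alg{U}_{\et}$ form a class adapted to this functor, since $j_{\ast}$ (a morphism of sites), then $i^{\ast}$, then $\pi_{\alg{Z}, \ast}$, as well as $\pi_{\alg{U}, \ast}$, all take acyclic values on them.

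Next I would record the natural isomorphism of functors $\Ab(\alg{U}_{\et}) \to \Ab(F^{\perar}_{\et})$
\[
		\pi_{\alg{Z}, \ast} i^{\ast} j_{\ast}
	\;\cong\;
		\bigoplus_{x \in Z} \pi_{\alg{U}_{x}^{h}, \ast} \, \pi_{\alg{U}_{x}^{h} / \alg{U}}^{\ast},
\]
obtained by first decomposing $\pi_{\alg{Z}, \ast} i^{\ast} \cong \bigoplus_{x \in Z} \pi_{\alg{X}_{x}^{h}, \ast} \, \pi_{\alg{X}_{x}^{h} / \alg{X}}^{\ast}$, which follows fibrewise over each $F' \in F^{\perar}$ from the excision isomorphism for \'etale cohomology (as in the proof of Proposition \ref{0404}), and then composing with the identifications $\pi_{\alg{X}_{x}^{h} / \alg{X}}^{\ast} j_{\ast} \cong j_{x, \ast}^{h} \, \pi_{\alg{U}_{x}^{h} / \alg{U}}^{\ast}$ and $\pi_{\alg{X}_{x}^{h}, \ast} j_{x, \ast}^{h} = \pi_{\alg{U}_{x}^{h}, \ast}$, the last one via Proposition \ref{0313}.

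Finally, a natural isomorphism between two functors of additive categories with translation induces a natural isomorphism between their right derived functors, and the isomorphism above is compatible with the structure maps out of $\pi_{\alg{U}, \ast}$ that define the two mapping cones; hence the two derived-functor descriptions agree. Thus $R \pi_{\alg{U}, !}$, restricted to $D^{+}(\alg{U}_{\et})$, is the right derived functor of \eqref{0374}. I do not anticipate a genuine obstacle here: the argument is bookkeeping with the excision isomorphism together with the formal properties of derived functors of additive categories with translation, and the only point that needs care is that all the identifications employed are natural in the sheaf and compatible with the maps out of $\pi_{\alg{U}, \ast}$.
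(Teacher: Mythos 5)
Your argument is correct and is essentially the paper's own: the paper likewise composes $R \pi_{\alg{X}, \ast}$ with $j_{!}$ viewed as the right derived functor of $[j_{\ast} \to i_{\ast} i^{\ast} j_{\ast}][-1]$, using the acyclicity-preservation of $i^{\ast}$, and then rewrites $\pi_{\alg{Z}, \ast} i^{\ast} j_{\ast}$ as $\bigoplus_{x \in Z} \pi_{\alg{U}_{x}^{h}, \ast} \pi_{\alg{U}_{x}^{h} / \alg{U}}^{\ast}$ exactly as you do. The only cosmetic discrepancy is your appeal to excision for the decomposition $\pi_{\alg{Z}, \ast} i^{\ast} \cong \bigoplus_{x \in Z} \pi_{\alg{X}_{x}^{h}, \ast} \pi_{\alg{X}_{x}^{h} / \alg{X}}^{\ast}$: excision is the statement for $R i^{!}$ (Proposition \ref{0404}), whereas for the underived $i^{\ast}$ the paper instead factors $i_{x}^{\ast} = i_{x}^{h, \ast} \pi_{\alg{X}_{x}^{h} / \alg{X}}^{\ast}$ and invokes Proposition \ref{0313} to identify $\pi_{\alg{x}, \ast} i_{x}^{h, \ast}$ with $\pi_{\alg{X}_{x}^{h}, \ast}$ --- a correct tool you already have in hand, just attached to a different step in your write-up.
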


We thus have a distinguished triangle
	\begin{equation} \label{0380}
			R \pi_{\alg{U}, !} G
		\to
			R \pi_{\alg{U}, \ast} G
		\to
			\bigoplus_{x \in Z}
				R \pi_{\alg{U}_{x}^{h}, \ast}
				\pi_{\alg{U}_{x}^{h} / \alg{U}}^{\ast} G
	\end{equation}
in $D(F^{\perar}_{\et})$ functorial in $G \in D^{+}(\alg{U}_{\et})$.
The presentation \eqref{0374} gives a priori another cup product morphism,
but it is the same as the previous cup product morphism
(under the bounded below condition):

\begin{Prop} \label{0376}
	Let $\pi_{\alg{U}, !}'$ be the functor \eqref{0374}.
	For $G, H \in \Ch^{+}(\alg{U}_{\et})$, let
		\[
				\pi_{\alg{U}, \ast}
				\sheafhom_{\alg{U}_{\et}}(G, H)
			\to
				\sheafhom_{F^{\perar}_{\et}}(\pi_{\alg{U}, !}' G, \pi_{\alg{U}, !}' H)
		\]
	be the morphism in $\Ch(F^{\perar}_{\et})$
	defined by the functoriality of $\pi_{\alg{U}, !}'$.
	Then the induced morphism
		\begin{equation} \label{0375}
				R \pi_{\alg{U}, \ast}
				R \sheafhom_{\alg{U}_{\et}}(G, H)
			\to
				R \sheafhom_{F^{\perar}_{\et}}(R \pi_{\alg{U}, !}' G, R \pi_{\alg{U}, !}' H)
		\end{equation}
	in $D(F^{\perar}_{\et})$ for $G, H \in D^{+}(\alg{U}_{\et})$
	agrees with the morphism
		\[
				R \pi_{\alg{U}, \ast}
				R \sheafhom_{\alg{U}_{\et}}(G, H)
			\to
				R \sheafhom_{F^{\perar}_{\et}}(R \pi_{\alg{U}, !} G, R \pi_{\alg{U}, !} H)
		\]
	coming from \eqref{0369} via the above isomorphism
	$R \pi_{\alg{U}, !} \cong R \pi_{\alg{U}, !}'$.
\end{Prop}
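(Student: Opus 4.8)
The plan is to unwind both morphisms to the level of complexes, where they become induced by one and the same functoriality of pushforwards, and then invoke the triangle‑compatibility of the cup product already proven in Proposition \ref{0367}. The point is that Proposition \ref{0378} identifies $R \pi_{\alg{U}, !}$ with $R \pi_{\alg{X}, \ast}$ applied to the fiber
\[
	j_{!} G \;\cong\; \bigl[\, R j_{\ast} G \to i_{\ast} i^{\ast} R j_{\ast} G \,\bigr][-1]
\]
of the second triangle in \eqref{0461} (evaluated at $R j_{\ast} G$, using $j^{\ast} R j_{\ast} G \cong G$), and both cup product structures on $R \pi_{\alg{U}, !}$ are the lax‑monoidal structure \eqref{0327} of $R \pi_{\alg{X}, \ast}$ transported through this identification.

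Concretely, I would first fix a K‑injective resolution $H \to H^{\bullet}$ in $\Ch(\alg{U}_{\et})$, so that $\sheafhom_{\alg{U}_{\et}}(G, H^{\bullet})$ is K‑injective and, by Proposition \ref{0326}, $\pi_{\alg{U}, \ast} \sheafhom_{\alg{U}_{\et}}(G, H^{\bullet})$ represents $R \pi_{\alg{U}, \ast} R \sheafhom_{\alg{U}_{\et}}(G, H)$. Since $j_{!}$ is exact, $j^{\ast}$ preserves K‑injectives (exact left adjoint $j_{!}$), and the pullbacks $\pi_{\alg{X}_{x}^{h} / \alg{X}}^{\ast}$, $\pi_{\alg{U}_{x}^{h} / \alg{U}}^{\ast}$ and the relevant pushforwards preserve acyclic sheaves (Propositions \ref{0313} and \ref{0314}), all functors occurring in $\pi_{\alg{U}, !}'$ of \eqref{0374} and in the composite $R \pi_{\alg{X}, \ast} j_{!}$ can be evaluated on compatible K‑injective, resp.\ $\pi_{\alg{X}, \ast}$‑acyclic, representatives. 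In particular the quasi‑isomorphism $R \pi_{\alg{U}, !} \cong R \pi_{\alg{U}, !}'$ of Proposition \ref{0378} is then represented at the chain level by the evident comparison between the complex computing $R \pi_{\alg{X}, \ast} j_{!}$ on a K‑injective $\alg{X}_{\et}$‑resolution of $j_{!} H^{\bullet}$ and the mapping‑cone complex $\pi_{\alg{U}, !}'(H^{\bullet})$; this comparison is exactly $R \pi_{\alg{X}, \ast}$ of the fiber presentation above.

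It then remains to check that this chain‑level quasi‑isomorphism intertwines the two cup products, i.e.\ is a morphism of $R \pi_{\alg{U}, \ast}$‑module (equivalently lax‑monoidal internal‑Hom) functors. The morphism coming from \eqref{0369} is by construction $R \pi_{\alg{X}, \ast}$ applied to the projection‑formula isomorphism \eqref{0364} for $j_{!}$ together with \eqref{0327}; the morphism \eqref{0375} is the functoriality of the mapping cone $\pi_{\alg{U}, !}'$, which is the functoriality of $\pi_{\alg{U}, \ast}$ and of $\bigoplus_{x} \pi_{\alg{U}_{x}^{h}, \ast} \pi_{\alg{U}_{x}^{h} / \alg{U}}^{\ast}$ assembled along the restriction to punctured henselian neighborhoods — again honest (derived) pushforwards whose lax structures are those of \eqref{0327}. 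So the required compatibility is precisely the statement that the localization triangle \eqref{0380} (an instance of \eqref{0371} for the object $R j_{\ast} G$) is compatible with the cup products \eqref{0368} and \eqref{0369}, which is Proposition \ref{0367} (and its consequence Proposition \ref{0430}, applied to $\alg{X}$ with closed subscheme $Z$ and the tautological identity $R \pi_{\alg{U}, !} (i'|_{U})_{\ast} = R \pi_{\alg{W}, !}$). Once that is in hand, the two maps out of the mapping cone $\pi_{\alg{U}, !}'$ agree on both constituent terms and on the connecting morphism, hence coincide.

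The main obstacle I expect is purely combinatorial: tracking the differential and the sign of the shift $[-1]$ in $\pi_{\alg{U}, !}' = [\,\cdot\,][-1]$ and verifying that the connecting morphism of \eqref{0380} carries the local cup product \eqref{0368} to the total one \eqref{0369} with no spurious correction term. This is exactly the content Proposition \ref{0430} packages as a morphism of distinguished triangles, so feeding it the present configuration reduces the verification to comparing two maps out of a cone that agree on the pieces — which forces them to agree in $D(F^{\perar}_{\et})$.
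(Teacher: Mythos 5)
Your proposal is correct and is in the spirit of the paper's own proof, which consists of the single sentence ``This is formal'': the intended argument is exactly the chain-level unwinding you describe, namely that both morphisms are right derived functors (via Proposition \ref{0326} and the preservation of K-injectives/acyclics by the functors entering \eqref{0374}) of one and the same chain-level functoriality morphism, through the chain-level realization of $R \pi_{\alg{U}, !} \cong R \pi_{\alg{U}, !}'$ as $R\pi_{\alg{X},\ast}$ of the fiber presentation of $j_{!}$.

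Two small cautions. First, your closing inference ``two maps out of a cone that agree on the pieces are forced to agree in $D(F^{\perar}_{\et})$'' is not valid in the derived category alone (the difference could be a nonzero composite through the connecting morphism); it is only valid because your comparison is carried out on explicit complexes, where a map out of a mapping cone is literally determined by its graded components. So the conclusion must be drawn at the chain level, which your K-injective setup already permits — just don't retreat to a purely triangulated-category argument at the last step. Second, Propositions \ref{0367} and \ref{0430} concern a different configuration (the interaction of internal Hom with the localization triangles, resp.\ a further decomposition $Z \subset Z'$) and are not really needed here; the compatibility you want is already contained in the fact that \eqref{0327} is by definition the derived functor of the underived functoriality morphism $f_{\ast}\sheafhom_{T}(G,H) \to \sheafhom_{S}(f_{\ast}G, f_{\ast}H)$, applied termwise to the two constituents of \eqref{0374}.
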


\begin{proof}
	This is formal.
\end{proof}

\begin{Lem} \label{0136}
	Let $S$ be a site.
	Let $G, H \in D^{+}(S)$.
	Then $G \tensor^{L} H \in D^{+}(S)$.
\end{Lem}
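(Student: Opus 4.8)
The plan is to deduce this from the single genuinely non-formal input, namely that $\Z$, regarded as the constant sheaf of rings on $S$, has tor-dimension $\le 1$: every abelian sheaf $\mathcal{F}$ on $S$ admits a two-term flat resolution $0 \to \mathcal{Q}_{1} \to \mathcal{Q}_{0} \to \mathcal{F} \to 0$. Indeed, take $\mathcal{Q}_{0}$ to be a direct sum of sheaves of the form $\Z[X]$, $X \in S$, surjecting onto $\mathcal{F}$, and $\mathcal{Q}_{1} = \Ker(\mathcal{Q}_{0} \to \mathcal{F})$. The sheaf $\mathcal{Q}_{0}$ is flat, and moreover multiplication by any $n \ge 1$ is injective on it (it is injective on the presheaf of free abelian groups $\bigoplus \Z[\Hom(\var, X)]$ and sheafification is exact), so its subsheaf $\mathcal{Q}_{1}$ is $\Z$-torsion-free, hence flat. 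This last implication — a subsheaf of a $\Z$-torsion-free abelian sheaf is flat — is the one step where one must pause; it is standard and reflects that $\Z$ is a principal ideal domain.

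Granting this, fix integers $a$, $b$ with $H^{i}(G) = 0$ for $i < a$ and $H^{j}(H) = 0$ for $j < b$, and represent $G$, $H$ by complexes concentrated in degrees $\ge a$, $\ge b$ respectively (canonical truncations). I would first establish the sub-claim that $G \tensor^{L} \mathcal{F} \in D^{\ge a - 1}(S)$ for every abelian sheaf $\mathcal{F}$. For this, resolve $\mathcal{F}$ by the bounded complex of flats $[\mathcal{Q}_{1} \to \mathcal{Q}_{0}]$ (in degrees $-1$ and $0$), which is K-flat; then $G \tensor^{L} \mathcal{F}$ is computed by the total complex of the double complex $G^{p} \tensor_{\Z} [\mathcal{Q}_{1} \to \mathcal{Q}_{0}]$, whose degree-$n$ term is $G^{n} \tensor \mathcal{Q}_{0} \oplus G^{n+1} \tensor \mathcal{Q}_{1}$ and hence vanishes for $n < a - 1$.

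Next I would pass from a single sheaf in the second slot to $H$. Write $H = \dirlim_{m} \tau_{\le m} H$ as the filtered union of its bounded canonical truncations, each lying in $D^{[b, m]}(S)$. Since derived tensor product and formation of cohomology sheaves commute with filtered colimits, $H^{n}(G \tensor^{L} H) \cong \dirlim_{m} H^{n}(G \tensor^{L} \tau_{\le m} H)$. Each $\tau_{\le m} H$ carries a finite filtration in $D(S)$ with graded pieces $H^{j}(H)[-j]$ for $b \le j \le m$; applying the triangulated functor $G \tensor^{L}(\var)$ gives a finite filtration of $G \tensor^{L} \tau_{\le m} H$ with graded pieces $(G \tensor^{L} H^{j}(H))[-j]$, which lie in $D^{\ge a - 1 + j}(S) \subset D^{\ge a + b - 1}(S)$ by the sub-claim and $j \ge b$. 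Hence $G \tensor^{L} \tau_{\le m} H \in D^{\ge a + b - 1}(S)$ uniformly in $m$, so $G \tensor^{L} H \in D^{\ge a + b - 1}(S) \subset D^{+}(S)$. The main obstacle is the flatness of $\mathcal{Q}_{1}$ flagged in the first paragraph; everything after it is routine bookkeeping with double complexes, K-flat resolutions and filtered colimits.
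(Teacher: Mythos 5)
Your proof is correct and rests on the same key input as the paper's: the constant sheaf $\Z$ has tor-dimension $\le 1$ on any site, after which one concludes by d\'evissage on bounded-below complexes (the paper states this as $\Tor_{\ge 2}^{\Ab(S)} = 0$, deduced from $\Tor_{\ge 2}^{\Ab} = 0$ via the section-wise description of sheaf Tor, and cites \cite[Tag 07K8 (4c)]{Sta21} for the d\'evissage). Your explicit two-term flat resolution and the truncation/colimit bookkeeping are a spelled-out version of the same argument; the one step you flag, that a $\Z$-torsion-free abelian sheaf on a general site is flat, is true and is itself most directly justified by the same observation that $\Tor_{i}(\mathcal{F},\mathcal{G})$ is the sheafification of $U \mapsto \Tor_{i}^{\Z}(\mathcal{F}(U),\mathcal{G}(U))$.
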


\begin{proof}
	This is obvious if $G, H \in \Ab(S)$
	since $\Tor_{\ge 2}^{\Ab} = 0$ and hence $\Tor_{\ge 2}^{\Ab(S)} = 0$.
	This implies the general case
	by a two-variable version of \cite[Tag 07K8 (4c)]{Sta21}.
\end{proof}

\begin{Prop} \label{0379}
	Let $G, H \in D^{+}(\alg{U}_{\et})$.
	Then the morphism
		\[
					R \pi_{\alg{U}, \ast}' G'
				\tensor^{L}
					R \pi_{\alg{U}, !}' H'
			\to
				R \pi_{\alg{U}, !}'(G' \tensor^{L} H')
		\]
	defined by \eqref{0375} agrees with the morphism \eqref{0369}
	via the above isomorphism
	$R \pi_{\alg{U}, !} \cong R \pi_{\alg{U}, !}'$.
\end{Prop}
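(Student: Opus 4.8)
The plan is to deduce Proposition \ref{0379} from Proposition \ref{0376} together with the purely formal fact that a cup-product morphism of tensor type is completely determined by its companion cup-product morphism of $R\sheafhom$ type. Concretely, recall the mechanism by which the second morphism of \eqref{0327} ``results from'' the first: from a morphism $\Phi\colon R\pi_{\alg{U}, \ast}\, R\sheafhom_{\alg{U}_{\et}}(G, H) \to R\sheafhom_{F^{\perar}_{\et}}(R\pi_{\alg{U}, !} G, R\pi_{\alg{U}, !} H)$ natural in $G$ and $H$, one substitutes $G \tensor^{L} H$ for the second argument, precomposes with $R\pi_{\alg{U}, \ast}$ applied to the coevaluation $H \to R\sheafhom_{\alg{U}_{\et}}(G, G \tensor^{L} H)$, and then uses the tensor--$R\sheafhom$ adjunction, obtaining (up to the symmetry isomorphism for $\tensor^{L}$) the tensor-type cup product $R\pi_{\alg{U}, \ast} G \tensor^{L} R\pi_{\alg{U}, !} H \to R\pi_{\alg{U}, !}(G \tensor^{L} H)$. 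This assignment $\Phi \mapsto (\text{tensor morphism})$ is functorial in $\Phi$: if two natural transformations of the above shape coincide, so do the tensor-type morphisms they produce. By Lemma \ref{0136} the object $G \tensor^{L} H$ again lies in $D^{+}(\alg{U}_{\et})$, so the identification $R\pi_{\alg{U}, !} \cong R\pi_{\alg{U}, !}'$ of Proposition \ref{0378} and the recipe just described apply uniformly to $G$, $H$ and $G \tensor^{L} H$.

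Next I would check that both tensor-type morphisms in the statement arise by exactly this recipe. The morphism \eqref{0369} is, by its construction in the paragraph preceding it, the image under the above recipe of the $R\sheafhom$-type morphism obtained by applying $R\pi_{\alg{X}, \ast}$ to the $R\sheafhom$-version of the projection-formula isomorphism $R j_{\ast} G \tensor^{L} j_{!} H \isomto j_{!}(G \tensor^{L} H)$ of \eqref{0364}; since that isomorphism is inserted only as a postcomposed isomorphism, it does not disturb the adjunction bookkeeping, and the resulting $R\sheafhom$-type morphism is precisely the one ``coming from \eqref{0369}'' in the sense of Proposition \ref{0376}. The morphism ``defined by \eqref{0375}'', on the other hand, is by construction the image under the same recipe of the $R\sheafhom$-type morphism \eqref{0375} coming from the functoriality of the explicit functor \eqref{0374}. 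Proposition \ref{0376} asserts that these two $R\sheafhom$-type morphisms agree, and applying the recipe gives the desired equality of the two tensor-type morphisms.

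The one point requiring genuine care — and where the phrase ``this is formal'' conceals the actual work — is verifying that the passage from an $R\sheafhom$-type morphism to its tensor companion is literally the same construction in the two cases: that the chain-level cup product attached to the functoriality of $\pi_{\alg{U}, !}'$ on \eqref{0374} derives to precisely the adjunction recipe above, and that in the construction of \eqref{0369} the projection-formula isomorphism of \eqref{0364} enters only as a postcomposition and does not secretly reshuffle the coevaluation map. Both are settled by unwinding the definitions of the morphisms \eqref{0327}, \eqref{0364}, \eqref{0368}, \eqref{0369} and of the functor \eqref{0374}, so I do not expect any obstacle beyond this diagram-chase; it is the bookkeeping, not any new idea, that makes the statement slightly delicate to write out.
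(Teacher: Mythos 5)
Your proposal is correct and follows the same route as the paper, whose entire proof reads ``This follows from Proposition \ref{0376} and Lemma \ref{0136}'': one uses Proposition \ref{0376} to identify the two $R\sheafhom$-type morphisms and Lemma \ref{0136} to ensure $G \tensor^{L} H$ stays in $D^{+}$, so that the formal passage from the $R\sheafhom$-type to the tensor-type cup product applies uniformly. Your write-up merely makes explicit the adjunction bookkeeping that the paper leaves implicit.
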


\begin{proof}
	This follows from Proposition \ref{0376} and Lemma \ref{0136}.
\end{proof}


\subsection{Fibered sites and cup product}
\label{0325}

We want a version of \eqref{0374} where henselian localizations are replaced by completions.
But this is not obvious since the pullback functor for
the morphism $\Hat{U}_{x, \et} \to U_{\et}$ from the punctured completion
does not send acyclic sheaves to acyclic sheaves
and hence we cannot naturally take derived functors.
We deal with this problem by considering the category of tuples
$(G, (G_{x})_{x \in Z}, (\varphi_{x})_{x \in Z})$,
where $G$ is a sheaf on $U_{\et}$, 
$G_{x}$ is a sheaf on $\Hat{U}_{x, \et}$
and $\varphi_{x}$ is a morphism $\pi_{\Hat{U}_{x} / U}^{\ast} G \to G_{x}$.
An injective resolution in this category consists of
an injective resolution for $G$ in $\Ab(U_{\et})$ and
an injective resolution for $G_{x}$ in $\Ab(\Hat{U}_{x, \et})$ for each $x$
with a compatibility between them,
so that there is no problem in taking derived functors.
Actually the category of such tuples is the category of sheaves on the ``total site''
of a certain fibered site.
We will develop this machinery,
especially its compact support and cup product formalism.

Let $I$ be a finite poset (viewed as a category).
Let $\{S_{i}\}_{i \in I} = (\{S_{i}\}_{i \in I}, \{f_{j i}\}_{i \le j})$ be
a fibered site (\cite[Expos\'e VI, \S 7.2.1]{AGV72b}).
This means that $S_{i}$ for each $i \in I$ is a site and
$f_{j i} \colon S_{j} \to S_{i}$ for each $j \ge i$ is a morphism of sites
such that $f_{j i} \compose f_{k j} \cong f_{k i}$ naturally
(where we more precisely assume this on the level of the underlying functors
$f_{k j}^{-1} \compose f_{j i}^{-1} \cong f_{k i}^{-1}$).
Assume for simplicity that
$S_{i}$ contains an initial object $\emptyset$,
the empty family $\{\,\}$ is a covering of $\emptyset$
(so that the sheafification of $\emptyset$ is the initial sheaf)
and the underlying functor $f_{j i}^{-1}$ of $f_{j i}$ sends $\emptyset$ to $\emptyset$.
(Note that we can always add an initial object to a site without changing its topos.)

We denote the total site of $\{S_{i}\}_{i \in I}$
(\cite[Expos\'e VI, \S D\'efinition 7.4.1]{AGV72b})
by $(S_{i})_{i \in I}$.
More explicitly, an object is a tuple $(\{X_{i}\}_{i \in I}, \{\varphi_{i j}\}_{i \le j})$,
where $X_{i} \in S_{i}$ and
$\varphi_{i j} \colon f_{j i}^{-1} X_{i} \to X_{j}$ is a morphism in $S_{j}$
satisfying the cocycle condition:
$\varphi_{j k} \compose (f_{k j}^{-1} \varphi_{i j}) = \varphi_{i k}$
for $i \le j \le k$.
A morphism from $(\{X_{i}'\}, \{\varphi_{i j}'\})$ to $(\{X_{i}\}, \{\varphi_{i j}\})$
is a set of morphisms $X_{i}' \to X_{i}$ over $i \in I$ that is compatible with
$\{\varphi_{i j}'\}$ and $\{\varphi_{i j}\}$.
A covering of an object $(\{X_{i}\}, \{\varphi_{i j}\})$ is a family of morphisms
$(\{X_{\lambda i}\}, \{\varphi_{\lambda i j}\}) \to (\{X_{i}\}, \{\varphi_{i j}\})$
indexed by $\lambda \in \Lambda$
such that the family $X_{\lambda i} \to X_{i}$ indexed by $\lambda$ is a covering for all $i$.
(In \cite[Expos\'e VI, \S D\'efinition 7.4.1]{AGV72b},
the total site is defined with a slightly different underlying category,
but the above site defines an equivalent topos,
which can be seen from the description below.)

An object of the topos $\Set(S_{i})_{i \in I} = \Set((S_{i})_{i \in I})$ consists of a tuple
$F = (\{F_{i}\}_{i \in I}, \{\varphi_{i j}\}_{i \le j})$,
where $F_{i} \in \Set(S_{i})$ and
$\varphi_{i j} \colon f_{j i}^{\ast} F_{i} \to F_{j}$ is a morphism in $\Set(S_{j})$
(or equivalently, $\varphi_{i j} \colon F_{i} \to f_{j i, \ast} F_{j}$ in $\Set(S_{i})$)
satisfying the cocycle condition: $\varphi_{j k} \compose (f_{k j}^{\ast} \varphi_{i j}) = \varphi_{i k}$
for $i \le j \le k$.
A morphism from $(\{F_{i}'\}, \{\varphi_{i j}'\})$ to $(\{F_{i}\}, \{\varphi_{i j}\})$
is a set of morphisms $F_{i}' \to F_{i}$ over $i \in I$ that is compatible with
$\{\varphi_{i j}'\}$ and $\{\varphi_{i j}\}$.
We call $F_{i}$ the $i$-th projection of $F$.

For each $i' \in I$ and $X_{i'} \in S_{i'}$,
consider the object $q_{i'}^{-1} X_{i'} := (\{X_{i}\}, \{\varphi_{i j}\})$
of $(S_{i})_{i \in I}$ defined by
$X_{i} =  f_{i i'}^{-1} X_{i'}$ for $i \ge i'$ and $X_{i} = \emptyset$ else
and $\varphi_{i j}$ is the natural isomorphism
$f_{j i}^{-1} f_{i i'}^{-1} X_{i'} \isomto f_{j i'}^{-1} X_{i'}$ if $i \ge i'$
and $\emptyset$ else.
Then this functor $q_{i'}^{-1}$ defines a premorphism of sites
$q_{i'} \colon (S_{i})_{i \in I} \to S_{i'}$.
We call $q_{i'}$ the $i'$-th projection premorphism.
For $(F_{i})_{i \in I} \in \Set(S_{i})_{i \in I}$,
we have $q_{i', \ast}(F_{i})_{i \in I} = F_{i'}$.
Hence for $j' \ge i'$, the $\varphi$ part of the data gives a natural transformation
	\begin{equation} \label{0336}
			\varphi_{i j}
		\colon
			q_{i', \ast}
		\to
			f_{j' i', \ast} q_{j', \ast}
		\colon
			\Set(S_{i})_{i \in I}
		\to
			\Set(S_{i'}).
	\end{equation}
For $F_{i'} \in \Set(S_{i'})$,
the sheaf $q_{i'}^{\ast \set} F_{i'} = (F_{i})_{i \in I}$ is given by
$F_{i} = f_{i i'}^{\ast} F_{i'}$ if $i \ge i'$ and $F_{i} = \emptyset$ else.
In particular, $q_{i'}$ is a morphism of sites if $i'$ is the minimum element of $I$.
(But it is not if $i'$ is not the minimum
since $q_{i'}^{\ast \set}$ does not preserve the final object then,
and the minimum element might not exist.)
For $G_{i'} \in \Ab(S_{i'})$,
the abelian sheaf pullback $q_{i'}^{\ast} G_{i'} = (G_{i})_{i \in I}$ is given by
$G_{i} = f_{i i'}^{\ast} G_{i'}$ if $i \ge i'$ and $G_{i} = 0$ else.
Hence $q_{i'}^{\ast}$ is exact for any $i'$,
so $q_{i', \ast} \colon \Ab(S_{i})_{i \in I} \to \Ab(S_{i'})$ sends
(K-)injectives to (K-)injectives and (K-)acyclics to (K-)acyclics.

Here we already used the description of $\Ab(S_{i})_{i \in I}$
as the (abelian) category of tuples
$(\{G_{i}\}_{i \in I}, \{\varphi_{i j}\}_{i \le j})$,
where $G_{i} \in \Ab(S_{i})$ and
$\varphi_{i j} \colon f_{j i}^{\ast} G_{i} \to G_{j}$ is a morphism in $\Ab(S_{j})$
satisfying the cocycle condition.
Similarly, the category of complexes $\Ch(S_{i})_{i \in I}$ is the category of tuples
$(G_{i} \in \Ch(S_{i}), f_{j i}^{\ast} G_{i} \to G_{j} \in \Ch(S_{j}))$.
We have a canonical morphism
$\varphi_{i j} \colon f_{j i}^{\ast} G_{i} \to G_{j}$ in $D(S_{j})$ by \eqref{0336}
or, equivalently, $\varphi_{i j} \colon G_{i} \to R f_{i j, \ast} G_{j}$ in $D(S_{i})$.
The cohomology sheaves of $G \in D(S_{i})_{i \in I}$ in terms of tuples is given as follows.

\begin{Prop} \label{0352}
	Let $G \in D(S_{i})_{i \in I}$, $G_{i}$ and $\varphi_{i j}$ be as above.
	Then for any $q \in \Z$, the sheaf $H^{q} G \in \Ab(S_{i})_{i \in I}$ is given by the tuple
	$(\{H^{q} G_{i}\}, \{H^{q} \varphi_{i j}\})$,
	where $H^{q} \varphi_{i j}$ is viewed as $f_{j i}^{\ast} H^{q} G_{i} \to H^{q} G_{j}$,
	or $H^{q} G_{i} \to f_{j i, \ast} H^{q} G_{j}$.
\end{Prop}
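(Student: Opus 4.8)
The plan is to reduce the statement to a single complex representative and to the exactness of the relevant pullback functors. First I would choose a complex $(\{G_i\}_{i\in I},\{\varphi_{ij}\}_{i\le j})$ in $\Ch(S_i)_{i\in I}$ representing $G$; by the description of $\Ch(S_i)_{i\in I}$ recalled above, this is the same datum as complexes $G_i\in\Ch(S_i)$ together with chain maps $\varphi_{ij}\colon f_{ji}^{\ast}G_i\to G_j$ satisfying the cocycle condition. To identify the derived pushforwards cleanly I would take this representative K-injective, so that each projection $G_i=q_{i,\ast}G$ is K-injective in $\Ch(S_i)$; since $q_{i,\ast}$ is exact, $G_i$ still represents $q_{i,\ast}G\in D(S_i)$, i.e.\ the object denoted $G_i$ in the statement, and by construction (unwinding \eqref{0336} and the $f_{ji}^{\ast}\dashv Rf_{ij,\ast}$ adjunction, which is legitimate as $f_{ji}^{\ast}$ is exact so $Lf_{ji}^{\ast}=f_{ji}^{\ast}$) the chain map $\varphi_{ij}$ represents the canonical morphism $\varphi_{ij}$ of the same name in $D(S_j)$.

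Next I would record that kernels, cokernels, and hence cohomology objects of complexes, are computed \emph{projectionwise} in $\Ab(S_i)_{i\in I}$: for a morphism of tuples the projections of its kernel (resp.\ cokernel) are the kernels (resp.\ cokernels) of the projected morphisms, equipped with the transition maps induced from the given ones, as one checks directly from the universal properties (in particular each $q_{i,\ast}$ is exact). Applying this to the complex $(\{G_i\},\{\varphi_{ij}\})$ shows that the $i$-th projection of $H^qG$ is $H^q(G_i)$, with transition maps induced by the chain maps $\varphi_{ij}$. Since $f_{ji}$ is a morphism of sites, $f_{ji}^{\ast}$ is exact and commutes with the formation of $H^q$; hence the induced transition map of $H^qG$ is precisely $H^q(\varphi_{ij})\colon f_{ji}^{\ast}H^q(G_i)=H^q(f_{ji}^{\ast}G_i)\to H^q(G_j)$. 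Combined with the identification from the previous paragraph of the chain-level $\varphi_{ij}$ with the canonical morphism $\varphi_{ij}$ in $D(S_j)$, this gives the asserted description of $H^qG$ as the tuple $(\{H^qG_i\},\{H^q\varphi_{ij}\})$; the ``equivalently'' reformulation $H^qG_i\to f_{ji,\ast}H^qG_j$ is just its image under the $(f_{ji}^{\ast},f_{ji,\ast})$-adjunction.

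The genuinely routine part is the projectionwise computation of kernels and cokernels, which is immediate. The one point requiring care — the only place where one must go back to definitions rather than merely manipulate complexes — is the identification in the first paragraph of the chain-level structure maps of a K-injective representative with the ``canonical morphism $\varphi_{ij}$ in $D(S_j)$'' produced by \eqref{0336}: one checks that deriving the underived natural transformation $q_{i,\ast}\to f_{ji,\ast}q_{j,\ast}$ and evaluating it on a K-injective complex (whose $j$-th projection is again K-injective, since $q_{j,\ast}$ preserves K-injectives) reproduces it, and that transporting across the exact-pullback adjunction changes nothing. I do not expect any essential difficulty here beyond this bookkeeping.
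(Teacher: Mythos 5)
Your proposal is correct and is exactly the paper's argument: represent $G$ by a K-injective complex, use that each $q_{i,\ast}$ is exact and preserves K-injectives, and take cohomology projectionwise. You have merely spelled out the bookkeeping that the paper's one-line proof leaves implicit.
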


\begin{proof}
	Represent $G$ by a K-injective complex and then take the $q$-th cohomology.
\end{proof}

As a special case, take $I = \{\bullet \le \bullet\}$ to be
the two-element set with one non-trivial relation.
Then $\{S_{i}\}_{i \in I}$ corresponds to a single morphism of sites $f \colon T \to S$.
Denote the total site $(T \to S)$ by $S_{c}$
(where $c$ stands for ``compactified by $T$'').
Let $\pi_{S} \colon S \to C$ be a morphism of sites to another site $C$.
We have the projection premorphisms
$q_{S} \colon S_{c} \to S$ and $q_{T} \colon S_{c} \to T$.
Set $\pi_{T} = \pi_{S} \compose f \colon T \to C$,
$\Bar{\pi}_{S} = \pi_{S} \compose q_{S} \colon S_{c} \to C$
and $\Bar{\pi}_{T} = \pi_{T} \compose q_{T} \colon S_{c} \to C$.
From \eqref{0336}, we have a natural transformation
$\Bar{\pi}_{S, \ast} \to \Bar{\pi}_{T, \ast}$.
Define
	\begin{equation} \label{0386}
			\Bar{\pi}_{S, !}
		:=
			[\Bar{\pi}_{S, \ast} \to \Bar{\pi}_{T, \ast}][-1]
		\colon
			\Ch(S_{c})
		\to
			\Ch(C).
	\end{equation}
We have its right derived functor
	\begin{equation} \label{0334}
			R \Bar{\pi}_{S, !}
		\colon
			D(S_{c})
		\to
			D(C).
	\end{equation}
We have a distinguished triangle
	\[
			R \Bar{\pi}_{S, !} G
		\to
			R \Bar{\pi}_{S, \ast} G
		\to
			R \Bar{\pi}_{T, \ast} G
	\]
in $D(C)$ functorial in $G \in D(S_{c})$.
The functoriality of $\Bar{\pi}_{S, !}$ gives a canonical morphism
	\[
			\Bar{\pi}_{S, \ast} \sheafhom_{S_{c}}(G, H)
		\to
			\sheafhom_{C}(\Bar{\pi}_{S, !} G, \Bar{\pi}_{S, !} H)
	\]
in $\Ch(C)$ functorial in $G, H \in \Ch(S_{c})$.
By Proposition \ref{0326}, this morphism induces a morphism
	\[
			R \Bar{\pi}_{S, \ast} R \sheafhom_{S_{c}}(G, H)
		\to
			R \sheafhom_{C}(R \Bar{\pi}_{S, !} G, R \Bar{\pi}_{S, !} H)
	\]
in $D(C)$ functorial in $G, H \in D(S_{c})$.
Hence we have a canonical morphism
	\begin{equation} \label{0328}
			R \Bar{\pi}_{S, \ast} G \tensor^{L} R \Bar{\pi}_{S, !} H
		\to
			R \Bar{\pi}_{S, !}(G \tensor^{L} H)
	\end{equation}
in $D(C)$ functorial in $G, H \in D(S_{c})$.

We will see how the above is functorial in $T \to S$.
Let
	\[
		\begin{CD}
				T'
			@> g_{T} >>
				T
			\\ @V f' VV @VV f V \\
				S'
			@> g_{S} >>
				S
		\end{CD}
	\]
be a commutative diagram of morphisms of sites.
Denote the total site $(T' \to S')$ by $S'_{c}$.
The diagram induces a morphism of sites $S_{c}' \to S_{c}$,
which we denote by $g$.
Set $\pi_{S'} = \pi_{S} \compose g_{S} \colon S' \to C$.
We can do the above constructions for
$T' \stackrel{f'}{\to} S' \stackrel{\pi_{S'}}{\to} C$,
resulting $q_{S'} \colon S_{c}' \to S'$, $q_{T'} \colon S_{c}' \to T'$,
$\Bar{\pi}_{S'}, \Bar{\pi}_{T'} \colon S_{c}' \to C$ and
$\Bar{\pi}_{S', !} \colon \Ch(S_{c}') \to \Ch(C)$.
We have $\Bar{\pi}_{S, \ast} g_{\ast} \cong \Bar{\pi}_{S', \ast}$,
so $R \Bar{\pi}_{S, \ast} R g_{\ast} \cong R \Bar{\pi}_{S', \ast}$,
and we have a natural transformation
	\begin{equation} \label{0467}
			R \Bar{\pi}_{S, \ast}
		\to
			R \Bar{\pi}_{S', \ast} g^{\ast}.
	\end{equation}
Also $\Bar{\pi}_{S, !} g_{\ast} \cong \Bar{\pi}_{S', !}$,
so $R \Bar{\pi}_{S, !} R g_{\ast} \cong R \Bar{\pi}_{S', !}$,
and we have a natural transformation
	\begin{equation} \label{0468}
			R \Bar{\pi}_{S, !}
		\to
			R \Bar{\pi}_{S', !} g^{\ast}.
	\end{equation}
The functors $R g_{\ast}$ and $g^{\ast}$ are described as follows.

\begin{Prop} \label{0345} \mbox{}
	\begin{enumerate}
		\item
			Let $G' \in D(S_{c}')$.
			Set $G'_{S} = q_{S', \ast} G'$, $G'_{T} = q_{T', \ast} G'$
			and $\varphi' \colon G'_{S} \to R f'_{\ast} G'_{T}$ the induced morphism.
			Then for any $q \in \Z$, the sheaf $R^{q} g_{\ast} G'$ is given by the triple
				\[
					(R^{q} g_{S, \ast} G'_{S},
					R^{q} g_{T, \ast} G'_{T},
					R^{q} g_{S, \ast} \varphi'),
				\]
			where we view the morphism $\varphi'$ as $f'^{\ast} G'_{S} \to G'_{T}$,
			the morphism $R g_{S, \ast} \varphi'$ as
			$f^{\ast} R g_{S, \ast} G'_{S} \to R g_{T, \ast} G'_{T}$
			and hence the morphism $R^{q} g_{S, \ast} \varphi'$ as
			$R^{q} g_{S, \ast} G'_{S} \to f_{\ast} R^{q} g_{T, \ast} G'_{T}$.
		\item
			Let $G \in D(S_{c})$.
			Set $G_{S} = q_{S, \ast} G$, $G_{T} = q_{T, \ast} G$
			and $\varphi \colon G_{S} \to R f_{\ast} G_{T}$ the induced morphism.
			Then for any $q \in \Z$, the sheaf $H^{q} g^{\ast} G$ is given by the triple
				\[
					(g_{S}^{\ast} H^{q} G_{S},
					g_{T}^{\ast} H^{q} G_{T},
					g_{S}^{\ast} H^{q} \varphi),
				\]
			where we view the morphism $\varphi$ as $f^{\ast} G_{S} \to G_{T}$,
			the morphism $g_{T}^{\ast} H^{q} \varphi$ as
			$f'^{\ast} g_{S}^{\ast} H^{q} G_{S} \to g_{T}^{\ast} H^{q} G_{T}$
			and hence the morphism $g_{S}^{\ast} H^{q} \varphi$ as
			$g_{S}^{\ast} H^{q} G_{S} \to f'_{\ast} g_{T}^{\ast} H^{q} G_{T}$.
	\end{enumerate}
\end{Prop}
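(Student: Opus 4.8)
The plan is to compute each of $R g_{\ast}$ and $g^{\ast}$ termwise in the language of triples, reducing the two assertions to the corresponding single-site statements together with Proposition \ref{0352}. The structural fact I will use throughout is that the projection premorphisms commute with $g$, that is, $q_{S} \compose g = g_{S} \compose q_{S'}$ and $q_{T} \compose g = g_{T} \compose q_{T'}$; this holds because the underlying functors agree ($g^{-1} q_{S}^{-1} X = q_{S'}^{-1} g_{S}^{-1} X$ for $X \in S$, using the commutativity $f \compose g_{T} = g_{S} \compose f'$ of the underlying functors of the given square, and similarly on the $T$-side), so that on pushforwards $q_{S, \ast} g_{\ast} \cong g_{S, \ast} q_{S', \ast}$ and $q_{T, \ast} g_{\ast} \cong g_{T, \ast} q_{T', \ast}$.

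For (1), I would first choose a K-injective resolution $G' \isomfrom I'$ in $\Ch(S_{c}')$. Since $q_{S'}^{\ast}$ and $q_{T'}^{\ast}$ are exact on abelian sheaves, their right adjoints $q_{S', \ast}$ and $q_{T', \ast}$ preserve K-injectives, so $I'_{S} := q_{S', \ast} I'$ and $I'_{T} := q_{T', \ast} I'$ are K-injective complexes representing $G'_{S}$ and $G'_{T}$, and (as $f'^{\ast}$ is exact) the chain-level maps $f'^{\ast} I'_{S} \to I'_{T}$ packaged in $I'$ represent $\varphi' \colon G'_{S} \to R f'_{\ast} G'_{T}$. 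As $g$ is a morphism of sites, $g_{\ast} I'$ represents $R g_{\ast} G'$; by the commutation identities above, its $S$- and $T$-projections are $g_{S, \ast} I'_{S}$ and $g_{T, \ast} I'_{T}$, which represent $R g_{S, \ast} G'_{S}$ and $R g_{T, \ast} G'_{T}$, and its transition morphism is $g_{\ast}$ of the one on $I'$, hence represents $R g_{S, \ast} \varphi'$. Taking $H^{q}$ and invoking Proposition \ref{0352} to express cohomology sheaves of objects of $D(S_{c})$ as triples then gives the claimed description; the only thing requiring care is to rewrite $R^{q} g_{S, \ast} \varphi'$ in its various adjoint forms.

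For (2), the key observation is that $g$, $g_{S}$ and $g_{T}$ are all morphisms of sites, so the pullback functors $g^{\ast}$, $g_{S}^{\ast}$, $g_{T}^{\ast}$ on abelian sheaves are exact; hence $g^{\ast} \colon D(S_{c}) \to D(S_{c}')$ commutes with the formation of cohomology sheaves, $H^{q} g^{\ast} G \cong g^{\ast} H^{q} G$. By Proposition \ref{0352}, $H^{q} G$ is the triple $(H^{q} G_{S}, H^{q} G_{T}, H^{q} \varphi)$. Applying $g^{\ast}$ to a triple $(A, B, \psi)$ returns $(g_{S}^{\ast} A, g_{T}^{\ast} B, g_{T}^{\ast} \psi)$, where $g_{T}^{\ast} \psi$ is read as a map $f'^{\ast} g_{S}^{\ast} A \to g_{T}^{\ast} B$ via the identity $f'^{\ast} g_{S}^{\ast} = g_{T}^{\ast} f^{\ast}$; specializing to $A = H^{q} G_{S}$, $B = H^{q} G_{T}$, $\psi = H^{q} \varphi$ yields exactly the asserted triple, once $g_{T}^{\ast} H^{q} \varphi$ is put in its adjoint form $g_{S}^{\ast} H^{q} \varphi$.

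I do not anticipate a genuine obstacle: both parts are essentially bookkeeping, powered on one side by exactness of the $q_{i'}^{\ast}$ and the morphism-of-sites hypotheses (so that K-injectives are preserved, respectively $H^{q}$ commutes with $g^{\ast}$) and on the other by the commutation of the projection premorphisms with $g$. The only delicate point, if any, is to verify that the transition morphism of each output triple is identified with the correct adjoint of $\varphi'$ (respectively $\varphi$) after applying $g_{\ast}$ (respectively $g^{\ast}$), so that the output genuinely lies in the topos of triples rather than being merely a diagram with the right vertices.
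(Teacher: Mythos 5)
Your proof is correct and is essentially the paper's argument spelled out: the paper simply cites Proposition \ref{0352}, whose proof is to represent objects of the total site by K-injective complexes (using that the projection pushforwards preserve K-injectives and that the pullbacks involved are exact) and read off cohomology termwise, exactly as you do. The commutation identities $q_{S,\ast} g_{\ast} \cong g_{S,\ast} q_{S',\ast}$ and $f'^{\ast} g_{S}^{\ast} \cong g_{T}^{\ast} f^{\ast}$ that you verify are the only inputs beyond that, and you handle them correctly.
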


\begin{proof}
	This follows from Proposition \ref{0352}.
\end{proof}

The cup product is functorial:

\begin{Prop} \label{0329}
	Let $G', H' \in D(S_{c}')$.
	Then the diagram
		\[
			\begin{CD}
						R \Bar{\pi}_{S, \ast} R g_{\ast} G'
					\tensor^{L}
						R \Bar{\pi}_{S, !} R g_{\ast} H'
				@>>>
					R \Bar{\pi}_{S, !} R g_{\ast}(G' \tensor^{L} H')
				\\ @| @| \\
						R \Bar{\pi}_{S', \ast} G'
					\tensor^{L}
						R \Bar{\pi}_{S', !} H'
				@>>>
					R \Bar{\pi}_{S', \ast}(G' \tensor^{L} H')
			\end{CD}
		\]
	in $D(C)$ is commutative,
	where the upper horizontal morphism is the composite of
	\eqref{0328} for $T \to S \to C$ and \eqref{0327} for $g$
	and the lower horizontal morphism is \eqref{0328} for $T' \to S' \to C$.
\end{Prop}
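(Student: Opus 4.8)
\textit{Proof proposal.} The assertion is \emph{formal}: it expresses the compatibility of the cup‑product construction \eqref{0328} with the pushforward $Rg_{\ast}$, and every morphism in the square is the derived functor of an explicit chain‑level morphism built from functoriality of pushforward functors and the tensor--hom adjunction. The plan is to realize the whole square as the derived version of a square in $\Ch(C)$ which commutes on the nose.

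First I would observe that the identifications $R\Bar{\pi}_{S,\ast}Rg_{\ast}\cong R\Bar{\pi}_{S',\ast}$ and $R\Bar{\pi}_{S,!}Rg_{\ast}\cong R\Bar{\pi}_{S',!}$ labelling the vertical arrows already come from strict identities at the level of complexes: one checks that $\Bar{\pi}_{S}\compose g=\Bar{\pi}_{S'}$ and $\Bar{\pi}_{T}\compose g=\Bar{\pi}_{T'}$ as premorphisms of sites, hence $\Bar{\pi}_{S,\ast}g_{\ast}=\Bar{\pi}_{S',\ast}$ and $\Bar{\pi}_{T,\ast}g_{\ast}=\Bar{\pi}_{T',\ast}$, compatibly with the natural transformation $\Bar{\pi}_{S,\ast}\to\Bar{\pi}_{T,\ast}$, so that taking the mapping cone \eqref{0386} gives $\Bar{\pi}_{S,!}g_{\ast}=\Bar{\pi}_{S',!}$ strictly in $\Ch$. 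Since $g^{\ast}$ on abelian sheaves is exact, $g_{\ast}$ preserves K-injectives; and $\Bar{\pi}_{S,\ast}$, $\Bar{\pi}_{T,\ast}$ and (via the defining triangle) $\Bar{\pi}_{S,!}$ compute their derived functors directly on K-injectives. Thus the strict identities derive to the stated ones.

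Next I would replace the cup products by their $R\sheafhom$ avatars. The morphisms \eqref{0327} and \eqref{0328} are, by definition, the adjuncts under the tensor--hom adjunction of the functoriality morphisms $Rg_{\ast}R\sheafhom_{S'_{c}}(G',H')\to R\sheafhom_{S_{c}}(Rg_{\ast}G',Rg_{\ast}H')$ and $R\Bar{\pi}_{S,\ast}R\sheafhom_{S_{c}}(G,H)\to R\sheafhom_{C}(R\Bar{\pi}_{S,!}G,R\Bar{\pi}_{S,!}H)$, precomposed with units of the adjunction. Since the isomorphism $\Hom_{D(C)}(A\tensor^{L}B,D)\cong\Hom_{D(C)}(A,R\sheafhom_{C}(B,D))$ is natural in all three arguments and compatible with $R\Bar{\pi}_{S,\ast}$ and $Rg_{\ast}$, it suffices to prove commutativity of the induced square of $R\sheafhom$‑morphisms. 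Fixing a K-injective resolution $H'\to J'$ over $S'_{c}$, the complex $g_{\ast}J'$ is K-injective over $S_{c}$ and $\Bar{\pi}_{S,!}(g_{\ast}J')=\Bar{\pi}_{S',!}(J')$ is K-injective over $C$; by Proposition \ref{0326} every $R\sheafhom$‑morphism in the square is then represented by the evident chain‑level functoriality morphism $h_{\ast}\sheafhom(G,H)\to\sheafhom(h_{\ast}G,h_{\ast}H)$ for $h\in\{\Bar{\pi}_{S},\Bar{\pi}_{S'},g\}$, together with its localizations over objects of $C$ for the internal‑hom version. This square of chain‑level morphisms commutes by naturality of the assignment sending a morphism of sites $h$ to the functoriality morphism of $h_{\ast}$ on $\sheafhom$, with respect to composition, applied to $\Bar{\pi}_{S}\compose g=\Bar{\pi}_{S'}$ and $\Bar{\pi}_{T}\compose g=\Bar{\pi}_{T'}$, combined with the strict identities above and functoriality of the mapping cone. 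Deriving, the $R\sheafhom$‑square commutes, hence so does the original.

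The only real difficulty is organizational, in the passage through the tensor--hom adjunction: one must check that taking adjuncts is natural in both $G$ and $H$, so that it commutes with precomposition by the unit $G\to R\sheafhom_{S_{c}}(H,G\tensor^{L}H)$ and with the morphism $Rg_{\ast}G'\tensor^{L}Rg_{\ast}H'\to Rg_{\ast}(G'\tensor^{L}H')$ occurring in the upper horizontal arrow, so that the tensor‑form square really reduces to the $R\sheafhom$‑form square. This is a routine diagram chase with no geometric input, using only that the tensor--hom adjunction isomorphisms are natural transformations of bifunctors and are compatible with the pushforward functors involved.
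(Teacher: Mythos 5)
Your proposal is correct and follows the same route as the paper: the paper's proof also reduces the tensor-form square to the corresponding $R\sheafhom$-form square and then to the underived chain-level diagram, whose commutativity is immediate from naturality. Your write-up simply fills in the details (strict identities $\Bar{\pi}_{S,\ast}g_{\ast}=\Bar{\pi}_{S',\ast}$, $\Bar{\pi}_{S,!}g_{\ast}=\Bar{\pi}_{S',!}$, preservation of K-injectives, and Proposition \ref{0326}) that the paper leaves implicit.
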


\begin{proof}
	It is enough to show that the similarly defined diagram
		\[
			\begin{CD}
					R \Bar{\pi}_{S, \ast} R g_{\ast}
					R \sheafhom_{S_{c}'}(G', H')
				@>>>
					R \sheafhom_{C}(
						R \Bar{\pi}_{S, !} R g_{\ast} G',
						R \Bar{\pi}_{S, !} R g_{\ast} H'
					)
				\\ @| @| \\
					R \Bar{\pi}_{S', \ast}
					R \sheafhom_{S_{c}'}(G', H')
				@>>>
					R \sheafhom_{C}(
						R \Bar{\pi}_{S', !} G',
						R \Bar{\pi}_{S', !} H'
					)
			\end{CD}
		\]
	is commutative.
	This reduces to the commutativity of the corresponding underived diagram,
	which is easy.
\end{proof}

\begin{Prop} \label{0332}
	Let $G, H \in D(S_{c})$.
	Then the digram
		\[
			\begin{CD}
						R \Bar{\pi}_{S, \ast} G
					\tensor^{L}
						R \Bar{\pi}_{S, !} H
				@>>>
					R \Bar{\pi}_{S, !}(G \tensor^{L} H)
				\\ @VVV @VVV \\
						R \Bar{\pi}_{S', \ast} g^{\ast} G
					\tensor^{L}
						R \Bar{\pi}_{S', !} g^{\ast} H
				@>>>
					R \Bar{\pi}_{S', !} g^{\ast}(G \tensor^{L} H)
			\end{CD}
		\]
	is commutative,
	where the upper horizontal morphism is \eqref{0328} for $T \to S \to C$
	and the lower horizontal morphism is the composite of
	\eqref{0328} for $T' \to S' \to C$ and the commutativity of $g^{\ast}$ with $\tensor^{L}$.
\end{Prop}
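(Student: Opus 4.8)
The plan is to deduce this from Proposition \ref{0329} by expressing the base-change morphisms \eqref{0467} and \eqref{0468} through the adjunction unit. First I would record the identity $\Bar{\pi}_{S'} = \Bar{\pi}_{S} \compose g$ of premorphisms of sites: on underlying functors one checks $q_{S} \compose g = g_{S} \compose q_{S'}$ and $q_{T} \compose g = g_{T} \compose q_{T'}$ (the projections of the ``fibre-product'' total site $S'_{c}$ are the base changes of the projections of $S_{c}$), using the given commutativity $g_{T}^{-1} f^{-1} \cong f'^{-1} g_{S}^{-1}$ and that $g_{S}^{-1}$ preserves the initial object. Consequently $R \Bar{\pi}_{S, \ast} R g_{\ast} \cong R \Bar{\pi}_{S', \ast}$, and, since the mapping-cone construction \eqref{0386} commutes with the exact additive functor $g_{\ast}$, also $\Bar{\pi}_{S, !} g_{\ast} \cong \Bar{\pi}_{S', !}$ on complexes, hence $R \Bar{\pi}_{S, !} R g_{\ast} \cong R \Bar{\pi}_{S', !}$ (here $g_{\ast}$ preserves K-injectives because $g^{\ast} = L g^{\ast}$ is exact). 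Under these identifications, \eqref{0467} and \eqref{0468} are $R \Bar{\pi}_{S, \ast}$, respectively $R \Bar{\pi}_{S, !}$, applied to the adjunction unit $\id \to R g_{\ast} g^{\ast}$.

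Next I would factor the square of the proposition into two squares joined along the morphism
	\[
				R \Bar{\pi}_{S, \ast} R g_{\ast} g^{\ast} G
			\tensor^{L}
				R \Bar{\pi}_{S, !} R g_{\ast} g^{\ast} H
		\to
			R \Bar{\pi}_{S, !}(R g_{\ast} g^{\ast} G \tensor^{L} R g_{\ast} g^{\ast} H)
	\]
obtained by applying \eqref{0328} for $T \to S \to C$ to $R g_{\ast} g^{\ast} G$ and $R g_{\ast} g^{\ast} H$. The upper square is commutative by the naturality of \eqref{0328} in its two arguments, applied to the units $G \to R g_{\ast} g^{\ast} G$ and $H \to R g_{\ast} g^{\ast} H$, together with the fact that these units composed with the lax morphism \eqref{0327} for $g$ realize again the unit $G \tensor^{L} H \to R g_{\ast} g^{\ast}(G \tensor^{L} H)$; this last compatibility is exactly the assertion that $(g^{\ast}, R g_{\ast})$ is a monoidal adjunction, the lax structure \eqref{0327} being compatible with the (strong) monoidal structure on $g^{\ast}$. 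The lower square, after the substitution $G' = g^{\ast} G$ and $H' = g^{\ast} H$ and incorporating the passage via \eqref{0327}, is precisely Proposition \ref{0329}; under $R \Bar{\pi}_{S, !} R g_{\ast} \cong R \Bar{\pi}_{S', !}$ the right edge of the pasted square becomes $R \Bar{\pi}_{S, !}$ of the unit on $G \tensor^{L} H$, that is \eqref{0468} for $G \tensor^{L} H$, modulo the canonical isomorphism $g^{\ast}(G \tensor^{L} H) \cong g^{\ast} G \tensor^{L} g^{\ast} H$.

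Pasting the two commutative squares yields the statement. The only step that is not purely formal is the monoidal-adjunction compatibility invoked in the second paragraph, and I expect it to be the main point; I would handle it exactly as Proposition \ref{0329} was handled, namely by passing first through the tensor--$\sheafhom$ adjunction to the corresponding statement for $R \sheafhom$, then using Proposition \ref{0326} to represent the derived sheaf-Hom by the underived $\sheafhom$ on a K-injective target, thereby reducing to the commutativity of an honest diagram of functors on $\Ch$, which is immediate.
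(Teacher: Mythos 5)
Your proposal is correct and follows exactly the route the paper intends: the paper's proof of this proposition is simply ``This follows from Proposition \ref{0329}'', and your two-square pasting (naturality of \eqref{0328} along the adjunction units plus the monoidal compatibility of the unit with \eqref{0327}, followed by Proposition \ref{0329} applied to $g^{\ast} G$, $g^{\ast} H$) is the standard unwinding of that deduction. The identifications $R \Bar{\pi}_{S, \ast} R g_{\ast} \cong R \Bar{\pi}_{S', \ast}$ and $R \Bar{\pi}_{S, !} R g_{\ast} \cong R \Bar{\pi}_{S', !}$ you re-derive are already recorded in the paper just before \eqref{0467} and \eqref{0468}, so nothing is missing.
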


\begin{proof}
	This follows from Proposition \ref{0329}.
\end{proof}

Assume that $T$ can be written as a disjoint union
$T_{1} \sqcup \dots \sqcup T_{n}$ of sites $T_{1}, \dots, T_{n}$.
Write $f \colon T \to S$ as a disjoint union of $f_{i} \colon T_{i} \to S$.
Set $\pi_{T_{i}} = \pi_{S} \compose f_{i} \colon T_{i} \to C$.
Let $q_{T_{i}} \colon S_{c} \to T_{i}$ be the premorphism of sites
defined by the inclusion functor on the underlying categories.
Set $\Bar{\pi}_{T_{i}} = \pi_{T_{i}} \compose q_{T_{i}} \colon S_{c} \to C$.
Then $\Bar{\pi}_{T, \ast} \cong \bigoplus_{i} \Bar{\pi}_{T_{i}, \ast}$, and hence
	\[
			\Bar{\pi}_{S, !}
		\cong
			\left[
				\Bar{\pi}_{S, \ast} \to \bigoplus_{i} \Bar{\pi}_{T_{i}, \ast}
			\right][-1].
	\]
Therefore we have a distinguished triangle
	\[
			R \Bar{\pi}_{S, !} G
		\to
			R \Bar{\pi}_{S, \ast} G
		\to
			\bigoplus_{i} \Bar{\pi}_{T_{i}, \ast} G
	\]
in $D(C)$ functorial in $G \in D(S_{c})$.

Now consider the setting of Section \ref{0436},
so we have morphisms
	\[
			\bigsqcup_{x \in Z}
				\alg{U}_{x, \et}^{h}
		\to
			\alg{U}_{\et}
		\to
			\Spec F^{\perar}_{\et}.
	\]
Denote the total site
$(\bigsqcup_{x \in Z} \alg{U}_{x, \et}^{h} \to \alg{U}_{\et})$
by $\alg{U}_{c, \et}$.
We will see how the formalism of this subsection is compatible with
that of the previous subsection.

We have a natural projection morphism $q_{\alg{U}} \colon \alg{U}_{c, \et} \to \alg{U}_{\et}$.
The morphisms $\pi_{\alg{U}}$ and $\pi_{\alg{U}_{x}^{h}}$ induce morphisms
	\[
			\Bar{\pi}_{\alg{U}},
			\Bar{\pi}_{\alg{U}_{x}^{h}}
		\colon
			\Spec \alg{U}_{c, \et}
		\to
			\Spec F^{\perar}_{\et}.
	\]
We have a functor
	\[
			\Bar{\pi}_{\alg{U}, !}
		=
			\left[
					\Bar{\pi}_{\alg{U}, \ast}
				\to
					\bigoplus_{x \in Z}
						\Bar{\pi}_{\alg{U}_{x}^{h}, \ast}
			\right][-1]
		\colon
			\Ch(\alg{U}_{c, \et})
		\to
			\Ch(F^{\perar}_{\et}).
	\]
We have a morphism
	\begin{equation} \label{0377}
				R \Bar{\pi}_{\alg{U}, \ast} G
			\tensor^{L}
				R \Bar{\pi}_{\alg{U}, !} H
		\to
			R \Bar{\pi}_{\alg{U}, !}(G \tensor^{L} H)
	\end{equation}
in $D(F^{\perar}_{\tau})$ functorial in $G, H \in D(\alg{U}_{c, \et})$.

\begin{Prop} \label{0397}
	Let $G, H \in D^{+}(\alg{U}_{\et})$.
	Then
		\[
					R \Bar{\pi}_{\alg{U}, \ast} q_{\alg{U}}^{\ast} G
				\cong
					R \pi_{\alg{U}, \ast} G,
			\quad
					R \Bar{\pi}_{\alg{U}, !} q_{\alg{U}}^{\ast} G
				\cong
					R \pi_{\alg{U}, !} G.
		\]
	Via these isomorphisms, the morphism
		\[
					R \Bar{\pi}_{\alg{U}, \ast} q_{\alg{U}}^{\ast} G
				\tensor^{L}
					R \Bar{\pi}_{\alg{U}, !} q_{\alg{U}}^{\ast} H
			\to
				R \Bar{\pi}_{\alg{U}, !} q_{\alg{U}}^{\ast}(G \tensor^{L} H)
		\]
	defined by \eqref{0377} agrees with the morphism \eqref{0369}.
\end{Prop}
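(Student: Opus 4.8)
The plan is to reduce the whole statement to a chain-level identity of functors, after which the two isomorphisms and the cup-product compatibility are all formal. Write $q = q_{\alg{U}} \colon \alg{U}_{c, \et} \to \alg{U}_{\et}$ for the base projection and, for $x \in Z$, let $q_{x} \colon \alg{U}_{c, \et} \to \alg{U}_{x, \et}^{h}$ be the projection onto the $x$-th boundary component (the premorphism $q_{T_{i}}$ of the disjoint-union discussion, with $T_{i} = \alg{U}_{x, \et}^{h}$). By the description of the pullback functor for a total site given just after \eqref{0336}, $q^{\ast} G$ is the object of $\Ab(\alg{U}_{c, \et})$ whose $\alg{U}_{\et}$-component is $G$, whose $\alg{U}_{x, \et}^{h}$-component is $\pi_{\alg{U}_{x}^{h} / \alg{U}}^{\ast} G$, and all of whose transition morphisms are identities; in particular $q_{\alg{U}, \ast} q^{\ast} = \id$ and $q_{x, \ast} q^{\ast} = \pi_{\alg{U}_{x}^{h} / \alg{U}}^{\ast}$ at the level of complexes. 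Two elementary facts will be used throughout: each component functor $q_{\alg{U}, \ast}$, $q_{x, \ast}$ is \emph{exact}, since kernels and cokernels in $\Ab(\alg{U}_{c, \et})$ are formed componentwise (Proposition \ref{0352}); and each of them sends K-injectives to K-injectives, being the right adjoint of an exact pullback functor (for $q_{\alg{U}, \ast}$ this is already recorded in the discussion of fibered sites).

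First I would treat the $\ast$-isomorphism. As $q_{\alg{U}, \ast}$ is exact, preserves K-injectives and satisfies $q_{\alg{U}, \ast} q^{\ast} = \id$, the composite $\Bar{\pi}_{\alg{U}, \ast} = \pi_{\alg{U}, \ast} \compose q_{\alg{U}, \ast}$ derives correctly and $R \Bar{\pi}_{\alg{U}, \ast} q^{\ast} G \cong R \pi_{\alg{U}, \ast} q_{\alg{U}, \ast} q^{\ast} G = R \pi_{\alg{U}, \ast} G$; likewise $R \Bar{\pi}_{\alg{U}_{x}^{h}, \ast} q^{\ast} G \cong R \pi_{\alg{U}_{x}^{h}, \ast} \pi_{\alg{U}_{x}^{h} / \alg{U}}^{\ast} G$. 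To obtain the $!$-isomorphism I would invoke the distinguished triangle attached to \eqref{0386}, namely $R \Bar{\pi}_{\alg{U}, !} q^{\ast} G \to R \Bar{\pi}_{\alg{U}, \ast} q^{\ast} G \to \bigoplus_{x} R \Bar{\pi}_{\alg{U}_{x}^{h}, \ast} q^{\ast} G$: by the previous sentence its two right-hand terms coincide with those of the localization triangle \eqref{0380}, and the connecting map, which is \eqref{0336} evaluated on $q^{\ast} G$, reduces — precisely because the transition morphisms of the tuple $q^{\ast} G$ are identities — to the natural transformation $\pi_{\alg{U}, \ast} \to \pi_{\alg{U}_{x}^{h}, \ast} \pi_{\alg{U}_{x}^{h} / \alg{U}}^{\ast}$ entering \eqref{0374}, i.e.\ to the connecting map of \eqref{0380}. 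The triangulated axioms then produce a morphism of triangles which is the identity on the second and third terms, hence an isomorphism on the first, giving $R \Bar{\pi}_{\alg{U}, !} q^{\ast} G \cong R \pi_{\alg{U}, !} G$. The same transition-maps-are-identities observation shows, more sharply, that $\Bar{\pi}_{\alg{U}, !} \compose q^{\ast}$ coincides with the functor $\pi_{\alg{U}, !}'$ of \eqref{0374} as a functor $\Ch^{+}(\alg{U}_{\et}) \to \Ch(F^{\perar}_{\et})$; this is what I would carry into the last step.

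For the cup products I would appeal to Propositions \ref{0376} and \ref{0379}, which identify \eqref{0369} with the morphism built from the chain-level functoriality morphism $\pi_{\alg{U}, \ast} \sheafhom_{\alg{U}_{\et}}(-, -) \to \sheafhom_{F^{\perar}_{\et}}(\pi_{\alg{U}, !}'(-), \pi_{\alg{U}, !}'(-))$ of $\pi_{\alg{U}, !}'$, then Proposition \ref{0326}, then the standard adjunction passage to the $\tensor^{L}$-form; by construction \eqref{0377} is obtained by the identical recipe applied to $\Bar{\pi}_{\alg{U}, !}$ on $\alg{U}_{c, \et}$. Since $q^{\ast}$ commutes with $\tensor$ and $q_{\alg{U}, \ast} q^{\ast} = \id$, a Yoneda argument gives a natural isomorphism $q_{\alg{U}, \ast} \sheafhom_{\alg{U}_{c, \et}}(q^{\ast} G, q^{\ast} H) \cong \sheafhom_{\alg{U}_{\et}}(G, H)$; inserting this together with $\Bar{\pi}_{\alg{U}, \ast} q^{\ast} = \pi_{\alg{U}, \ast}$ and $\Bar{\pi}_{\alg{U}, !} \compose q^{\ast} = \pi_{\alg{U}, !}'$ into the recipe identifies the $\Bar{\pi}_{\alg{U}, !}$-construction restricted along $q^{\ast}$ with the $\pi_{\alg{U}, !}'$-construction, and the asserted agreement of the two cup-product morphisms follows from the naturality of Proposition \ref{0326} and of the adjunction step.

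The point I expect to be the main obstacle is the bookkeeping underlying the middle and last paragraphs: verifying that the identity transition morphisms of $q^{\ast} G$ genuinely turn \eqref{0336} into the natural transformation of \eqref{0374} on the nose, so that the triangles — respectively the two cup-product recipes — are literally identified and not merely up to some automorphism, and then carrying this identification faithfully through the internal-Hom adjunction in the cup-product construction. Everything else is the componentwise-exactness formalism of the total site together with direct appeals to Propositions \ref{0352}, \ref{0326}, \ref{0376}, \ref{0378} and \ref{0379}.
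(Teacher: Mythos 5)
Your proposal is correct and follows essentially the same route as the paper, whose proof is simply to invoke Propositions \ref{0378} and \ref{0379}: your identification of $\Bar{\pi}_{\alg{U},!}\compose q_{\alg{U}}^{\ast}$ with the mapping-fiber functor $\pi_{\alg{U},!}'$ of \eqref{0374} at the chain level is exactly the content being used there, and your cup-product step is the content of Proposition \ref{0379}. The extra bookkeeping you supply (componentwise exactness, identity transition maps of $q_{\alg{U}}^{\ast}G$) is a faithful unfolding of what those propositions already encapsulate.
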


\begin{proof}
	This follows from Propositions \ref{0378} and \ref{0379}.
\end{proof}


\subsection{More fibered sites}
\label{0322}

We consider the situation of the beginning of Section \ref{0325},
so we have a fibered site over a finite poset $I$.
We take $I$ to be the commutative diagram
	\[
		\begin{CD}
			@.
				\bullet
			@<<<
				\bullet
			@.
			\\ @. @AAA @AAA @. \\
				\bullet
			@<<<
				\bullet
			@<<<
				\bullet
			@<<<
				\bullet.
		\end{CD}
	\]
Let
	\begin{equation} \label{0394}
		\begin{CD}
			@.
				T_{3}
			@> f_{32} >>
				T_{2}
			@.
			\\ @. @V f_{31} VV @VV f_{2} V @. \\
				T
			@>> f_{01} >
				T_{1}
			@>> f_{1} >
				S
			@>> \pi_{S} >
				C,
		\end{CD}
	\end{equation}
be our fibered site.
(One might want to denote $T$ by $T_{0}$ for notational consistency.)
Let $\pi_{T} \colon T \to C$, $\pi_{T_{i}} \colon T_{i} \to C$,
$f \colon T \to S$, $f_{i} \colon T_{i} \to S$ be the appropriate composite morphisms.
Let
	\begin{gather*}
			S_{c} = (T \to S),
		\\
			T_{\bullet} = (T_{1} \gets T_{3} \to T_{2}),
		\\
			T_{\bullet, c} = (T \to T_{1} \gets T_{3} \to T_{2})
	\end{gather*}
be the total sites.
The above commutative diagram induces morphisms
$f_{\bullet} \colon T_{\bullet} \to S$ and $\Bar{f}_{\bullet} \colon T_{\bullet, c} \to S_{c}$.
Let $f_{0 \bullet} \colon T \to T_{\bullet}$ be the morphism defined by the functor sending
$(X_{1}, X_{2}, X_{3}, f_{31}^{-1} X_{1} \to X_{3} \gets f_{32}^{-1} X_{2})$
to $f_{01}^{-1} X_{1}$.
Then $T_{\bullet, c} \cong (T \to T_{\bullet})$.

\begin{Ex}
	A toy example is the \'etale sites of the spectra of the $F$-algebra homomorphisms
		\[
			\begin{CD}
				@.
					F((u))
				@<<<
					F[u^{\pm}]
				@.
				\\ @. @AAA @AAA @. \\
					F((t))
				@<<<
					F[[t, u]] / (u t)
				@<<<
					F[[t]][u] / (u t)
				@<<<
					F.
			\end{CD}
		\]
	This diagram is aimed at describing the cohomology of $\Spec F[[t]][u] / (u t)$
	with support on the closed subscheme $\Spec F[u]$ (whose complement is $\Spec F((t))$)
	in terms of the simpler pieces $\Spec F[u^{\pm}]$ (which is global and regular) and
	$\Spec F[[t, u]] / (u t)$ (which is local)
	with gluing data $\Spec F((u))$.
	Actual examples we will see in Sections \ref{0297} and \ref{0303}
	will be more involved $p$-adic versions of this diagram.
\end{Ex}

We want to describe $\pi_{S, \ast}$ and
$\Bar{\pi}_{S, !} = [\Bar{\pi}_{S, \ast} \to \Bar{\pi}_{T, \ast}][-1]$
in terms of $\pi_{T_{i}}$.
For $i = 1, 2, 3$, we denote the composite of the natural projection $T_{\bullet} \to T_{i}$
and $\pi_{T_{i}} \colon T_{i} \to C$ simply by $\pi_{T_{i}}$ unless confusion occurs.
Hence we have natural transformations
	\[
			\pi_{T_{1}, \ast}
		\stackrel{\varphi_{13}}{\to}
			\pi_{T_{3}, \ast}
		\stackrel{\varphi_{23}}{\gets}
			\pi_{T_{2}, \ast}
		\colon
			\Ch(T_{\bullet})
		\to
			\Ch(C)
	\]
Define
	\begin{equation} \label{0399}
			\pi_{T_{\bullet}, \ast}
		:=
			[\pi_{T_{1}, \ast} \oplus \pi_{T_{2}, \ast} \to \pi_{T_{3}, \ast}][-1]
		\colon
			\Ch(T_{\bullet})
		\to
			\Ch(C),
	\end{equation}
where the morphism in the mapping cone is $\varphi_{13} - \varphi_{23}$.
Note that this functor is not induced from any functor $\Ab(T_{\bullet}) \to \Ab(C)$;
it is genuinely on the level of complexes.
For $i = 1, 2$, the morphism $f_{i} \colon T_{i} \to S$ induces a natural transformation
	\begin{gather*}
				\varphi_{i}
			\colon
				\pi_{S, \ast} f_{\bullet, \ast}
			\to
				\pi_{T_{i}, \ast}
			\colon
				\Ch(T_{\bullet})
			\to
				\Ch(C),
			\quad
				\text{or}
		\\
				\varphi_{i}
			\colon
				\pi_{S, \ast}
			\to
				\pi_{T_{i}, \ast}  f_{\bullet}^{\ast}
			\colon
				\Ch(S)
			\to
				\Ch(C).
	\end{gather*}
The direct sum $(\varphi_{1}, \varphi_{2})$ defines a morphism
$\pi_{S, \ast} \to (\pi_{T_{1}, \ast} \oplus \pi_{T_{2}, \ast}) f_{\bullet}^{\ast}$,
whose composite with $\varphi_{13} - \varphi_{23}$ is zero.
Hence this defines a morphism
	\begin{equation} \label{0387}
			(\varphi_{1}, \varphi_{2})
		\colon
			\pi_{S, \ast}
		\to
			\pi_{T_{\bullet}, \ast} f_{\bullet}^{\ast}
		\colon
			\Ch(S)
		\to
			\Ch(C).
	\end{equation}
We define a cup product morphism for $R \pi_{T_{\bullet}, \ast}$:

\begin{Prop} \label{0385}
	For $G, H \in D(T_{\bullet})$,
	there exists a canonical morphism
		\[
				R \pi_{T_{\bullet}, \ast} G \tensor^{L} R \pi_{T_{\bullet}, \ast} H
			\to
				R \pi_{T_{\bullet}, \ast}(G \tensor^{L} H)
		\]
	in $D(C)$ functorial in $G, H$.
\end{Prop}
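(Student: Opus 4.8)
The plan is to realize the complex-level functor $\pi_{T_{\bullet}, \ast}$ of \eqref{0399} as a model for the derived pushforward along a genuine morphism of sites, and then simply to invoke the cup product \eqref{0327} for that morphism. First I would observe that the functor $C \to T_{\bullet}$ sending an object $X$ to the object $(\pi_{T_{1}}^{-1} X, \pi_{T_{2}}^{-1} X, \pi_{T_{3}}^{-1} X)$ equipped with the canonical gluing isomorphisms $f_{3 i}^{-1} \pi_{T_{i}}^{-1} X \isomto \pi_{T_{3}}^{-1} X$ (coming from $\pi_{T_{1}} \compose f_{31} \cong \pi_{T_{3}} \cong \pi_{T_{2}} \compose f_{32}$) sends covering families to covering families and commutes with the fiber products appearing in covering families, since each $\pi_{T_{i}}$ does and since both are computed projection-wise in $T_{\bullet}$. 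Hence this functor defines a premorphism of sites $\pi_{T_{\bullet}} \colon T_{\bullet} \to C$; and its pullback on abelian (resp.\ set-valued) sheaves is $F \mapsto (\pi_{T_{1}}^{\ast} F, \pi_{T_{2}}^{\ast} F, \pi_{T_{3}}^{\ast} F)$, which is exact because exactness (and finite inverse limits) in $\Ab(T_{\bullet})$ (resp.\ $\Set(T_{\bullet})$) is tested on the three projections, by Proposition \ref{0352}, and each $\pi_{T_{i}}^{\ast}$ is exact. So $\pi_{T_{\bullet}}$ is a morphism of sites, and its pushforward on sheaves is the equalizer $G \mapsto \Ker\bigl(\pi_{T_{1}, \ast} G_{1} \oplus \pi_{T_{2}, \ast} G_{2} \xrightarrow{\varphi_{13} - \varphi_{23}} \pi_{T_{3}, \ast} G_{3}\bigr)$, where $G_{i} = q_{T_{i}, \ast} G$.

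Next I would identify $R \pi_{T_{\bullet}, \ast}$ (the derived pushforward for this morphism of sites) with the right derived functor of the complex-level functor \eqref{0399}. Take a K-injective resolution $G \to I$ with $I$ a complex of injective objects of $\Ab(T_{\bullet})$. Each projection $I_{i} = q_{T_{i}, \ast} I$ is K-injective on $T_{i}$, since $q_{T_{i}}^{\ast}$ is exact (Section \ref{0325}), so $R \pi_{T_{i}, \ast} G \cong \pi_{T_{i}, \ast} I_{i}$. One checks on a cogenerating family of injective objects of $\Ab(T_{\bullet})$ for the cospan-shaped index — namely those of the forms $(J_{1}, 0, 0)$, $(0, J_{2}, 0)$ and $(f_{31, \ast} J_{3}, f_{32, \ast} J_{3}, J_{3})$ with the $J_{i}$ injective — that the map $\pi_{T_{1}, \ast} I_{1}^{n} \oplus \pi_{T_{2}, \ast} I_{2}^{n} \to \pi_{T_{3}, \ast} I_{3}^{n}$ is surjective in every degree $n$; hence its kernel is quasi-isomorphic to the mapping fiber, giving $R \pi_{T_{\bullet}, \ast} G \cong [\pi_{T_{1}, \ast} I_{1} \oplus \pi_{T_{2}, \ast} I_{2} \to \pi_{T_{3}, \ast} I_{3}][-1]$, which is precisely the value on $G$ of the right derived functor of \eqref{0399}, and is also $[R \pi_{T_{1}, \ast} G_{1} \oplus R \pi_{T_{2}, \ast} G_{2} \to R \pi_{T_{3}, \ast} G_{3}][-1]$. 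With this identification, the morphism claimed by the proposition is simply \eqref{0327} for the morphism of sites $\pi_{T_{\bullet}}$, and its functoriality in $G$ and $H$ is inherited from that of \eqref{0327}.

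The main obstacle is the identification step, i.e.\ the degreewise surjectivity for K-injective $G$, which rests on having the explicit description of the cogenerating injectives of $\Ab(T_{\bullet})$ just quoted; once this is in hand the rest is formal. As an alternative that avoids introducing $\pi_{T_{\bullet}}$, one can build the pairing directly on the mapping fiber \eqref{0399} out of the complex-level pairings $\mu_{i} \colon \pi_{T_{i}, \ast}(\var) \tensor \pi_{T_{i}, \ast}(\var) \to \pi_{T_{i}, \ast}(\var \tensor \var)$ together with the compatibility of $\varphi_{13}$ and $\varphi_{23}$ with the $\mu_{i}$ (which is formal, using the definition of $\tensor$ on $T_{\bullet}$, the lax-monoidality of the $\pi_{T_{i}, \ast}$, and $\pi_{T_{i}, \ast} f_{3 i, \ast} \cong \pi_{T_{3}, \ast}$): the ``cross terms'' $\pi_{T_{1}, \ast} G_{1} \tensor \pi_{T_{2}, \ast} H_{2}$ feed into the cone direction via $\varphi_{13} \tensor \varphi_{23}$ followed by $\mu_{3}$, and one then right-derives via the $R \sheafhom$ mechanism of Proposition \ref{0326}, exactly as \eqref{0328} was obtained from $\Bar{\pi}_{S, !}$; there the only delicate point is the Koszul sign bookkeeping for tensor products of mapping cones. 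Either way, the construction will be compatible with restriction along morphisms of the diagram \eqref{0394}, which can be recorded afterwards in the style of Propositions \ref{0329} and \ref{0332}.
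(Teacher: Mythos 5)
Your main route is correct but genuinely different from the paper's. The paper never introduces a morphism of sites $T_{\bullet} \to C$; it constructs the lax‑monoidal structure directly at the chain level on the mapping‑fiber functor \eqref{0399}, by writing $\pi_{T_{\bullet}, \ast} G \tensor \pi_{T_{\bullet}, \ast} H$ and $\pi_{T_{\bullet}, \ast}(G \tensor H)$ as total complexes (\eqref{0382} and \eqref{0383}), giving explicit formulas on the degree‑$0$ and degree‑$1$ pieces (the degree‑$0$ cross terms $\pi_{T_1,\ast}G \tensor \pi_{T_2,\ast}H$ are simply killed, and the degree‑$1$ terms are handled by $\varphi_{13}(g_1)\tensor h_3 + g_3 \tensor \varphi_{23}(h_2)$, the other natural choice being homotopic via the degree‑$2$ term), and then deriving through $R\sheafhom$ and Proposition \ref{0326}. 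Your route instead exhibits \eqref{0399} as a model for $R$ of the genuine pushforward along $T_{\bullet} \to C$ and quotes \eqref{0327}; the identification step via the cogenerating injectives $(J_1,0,0)$, $(0,J_2,0)$, $(f_{31,\ast}J_3, f_{32,\ast}J_3, J_3)$ and the resulting degreewise (split) surjectivity is sound. What your approach buys is conceptual economy and no sign bookkeeping; what it costs is the identification lemma, plus one further check you should not omit if this morphism is to replace the paper's: the later compatibilities (Propositions \ref{0410} and \ref{0427}) are verified by reduction to the explicit underived chain maps, so you must confirm that your site‑theoretic pairing coincides with the explicit one — this does hold, because both restrict to the same lax‑monoidal structure on the kernel subcomplexes and the kernel‑to‑fiber quasi‑isomorphism intertwines them on injective resolutions, but it is an extra verification rather than a triviality. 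Finally, a small caution about your sketched ``alternative'': as literally described, routing the degree‑zero cross term $\pi_{T_1,\ast}G\tensor\pi_{T_2,\ast}H$ into the cone direction via $\varphi_{13}\tensor\varphi_{23}$ followed by $\mu_3$ would shift total degree by one and therefore cannot be a component of a chain map; the correct chain map (the paper's \eqref{0384}) sends those cross terms to zero and applies the $\varphi$'s only to the degree‑one terms of \eqref{0382}.
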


\begin{proof}
	First, for $G, H \in \Ch(T_{\bullet})$, we construct a canonical morphism
		\begin{equation} \label{0384}
				\pi_{T_{\bullet}, \ast} G \tensor \pi_{T_{\bullet}, \ast} H
			\to
				\pi_{T_{\bullet}, \ast}(G \tensor H)
		\end{equation}
	in $\Ch(C)$ functorial in $G, H$.
	The left-hand side is the total complex of the complex of double complexes
		\begin{equation} \label{0382}
			\begin{aligned}
							0
				&	\to
							(\pi_{T_{1}, \ast} G \oplus \pi_{T_{2}, \ast} G)
						\tensor
							(\pi_{T_{1}, \ast} H \oplus \pi_{T_{2}, \ast} H)
				\\
				&	\to
							(\pi_{T_{1}, \ast} G \oplus \pi_{T_{2}, \ast} G) \tensor \pi_{T_{3}, \ast} H
						\oplus
							\pi_{T_{3}, \ast} G \tensor (\pi_{T_{1}, \ast} H \oplus \pi_{T_{2}, \ast} H)
				\\
				&	\to
						\pi_{T_{3}, \ast} G \tensor \pi_{T_{3}, \ast} H
				\\
				&	\to
						0,
			\end{aligned}
		\end{equation}
	where these three non-zero terms are in horizontal (or external) degrees $0$, $1$ and $2$.
	The right-hand side is the total complex of the complex of double complexes
		\begin{equation} \label{0383}
				0
			\to
				\pi_{T_{1}, \ast}(G \tensor H) \oplus \pi_{T_{2}, \ast}(G \tensor H)
			\to
				\pi_{T_{3}, \ast}(G \tensor H)
			\to
				0,
		\end{equation}
	where these two non-zero terms are in horizontal (or external) degrees $0$ and $1$.
	We define a morphism from the degree zero term of \eqref{0382} to that of \eqref{0383} by
		\[
				(g_{1}, g_{2}) \tensor (h_{1}, h_{2})
			\mapsto
				(g_{1} \tensor h_{1}, g_{2} \tensor h_{2}).
		\]
	We define a morphism from the degree one term of \eqref{0382} to that of \eqref{0383} by
		\[
				\bigl(
					(g_{1}, g_{2}) \tensor h_{3},
					g_{3} \tensor (h_{1}, h_{2})
				\bigr)
			\mapsto
				\varphi_{1 3}(g_{1}) \tensor h_{3} + g_{3} \tensor \varphi_{2 3}(h_{2}).
		\]
	(The other choice is to map it to
		$
			\varphi_{2 3}(g_{2}) \tensor h_{3} + g_{3} \tensor \varphi_{1 3}(h_{1})
		$,
	but this choice yields a homotopically equivalent morphism
	thanks to the degree two term of \eqref{0382},
	so it will not make a difference on the derived category level.)
	These two morphisms form a morphism of complexes from \eqref{0382} to \eqref{0383}.
	Therefore they define the morphism \eqref{0384}.
	
	Now this induces a morphism
		\[
				\pi_{T_{\bullet}, \ast}
				\sheafhom_{T_{\bullet}}(G, H)
			\to
				\sheafhom_{C}(\pi_{T_{\bullet}, \ast} G, \pi_{T_{\bullet}, \ast} H).
		\]
	By Proposition \ref{0326}, this derives to a morphism
		\[
				R \pi_{T_{\bullet}, \ast}
				R \sheafhom_{T_{\bullet}}(G, H)
			\to
				R \sheafhom_{C}(R \pi_{T_{\bullet}, \ast} G, R \pi_{T_{\bullet}, \ast} H)
		\]
	in $D(C)$ functorial in $G, H \in D(T_{\bullet})$.
	From this, we obtain a desired morphism.
\end{proof}

Now we put a support data by $T$.
For $i = 1, 2, 3$, we denote the composite of the natural projection $T_{\bullet, c} \to T_{i}$
and $\pi_{T_{i}} \colon T_{i} \to C$ by $\Bar{\pi}_{T_{i}}$.
Similarly, we denote the composite
$T_{\bullet, c} \to T \to C$ by $\Bar{\pi}_{T}$.
Hence we have natural transformations
	\begin{gather*}
				\Bar{\pi}_{T, \ast}
			\stackrel{\varphi_{10}}{\gets}
				\Bar{\pi}_{T_{1}, \ast}
			\stackrel{\varphi_{13}}{\to}
				\Bar{\pi}_{T_{3}, \ast}
			\stackrel{\varphi_{23}}{\gets}
				\Bar{\pi}_{T_{2}, \ast}
			\colon
				\Ch(T_{\bullet, c})
			\to
				\Ch(C),
	\end{gather*}
Define a natural transformation of functors $\Ch(T_{\bullet, c}) \to \Ch(C)$
by the composite
	\[
			\varphi_{\bullet 0}
		\colon
			\Bar{\pi}_{T_{\bullet}, \ast}
		\to
			\Bar{\pi}_{T_{1}, \ast} \oplus \Bar{\pi}_{T_{2}, \ast}
		\to
			\Bar{\pi}_{T_{1}, \ast}
		\stackrel{\varphi_{10}}{\to}
			\Bar{\pi}_{T, \ast},
	\]
where the first morphism is the connecting morphism of mapping cone
and the second is the projection onto the first factor.
Define
	\begin{equation} \label{0400}
		\begin{gathered}
					\Bar{\pi}_{T_{\bullet}, !}
				:=
					[
							\Bar{\pi}_{T_{\bullet}, \ast}
						\to
							\Bar{\pi}_{T}, \ast
					][-1]
				\cong
					[
							\Bar{\pi}_{T_{1}, \ast} \oplus \Bar{\pi}_{T_{2}, \ast}
						\to
							\Bar{\pi}_{T_{3}, \ast} \oplus \Bar{\pi}_{T, \ast}
					][-1]
				\colon
			\\
					\Ch(T_{\bullet, c})
				\to
					\Ch(C),
		\end{gathered}
	\end{equation}
where the morphism in the first mapping cone is $\varphi_{\bullet 0}$
and the morphism in the second mapping cone is
$(t_{1}, t_{2}) \mapsto (\varphi_{13}(t_{1}) - \varphi_{23}(t_{2}), \varphi_{10}(t_{1}))$.
We have a commutative diagram
	\[
		\begin{CD}
				\Bar{\pi}_{S, \ast}
			@> \varphi >>
				\Bar{\pi}_{T, \ast}
			\\ @V (\varphi_{1}, \varphi_{2}) VV @| \\
				\Bar{\pi}_{T_{\bullet}, \ast} \Bar{f}_{\bullet}^{\ast}
			@>> \varphi_{\bullet 0} >
				\Bar{\pi}_{T, \ast} \Bar{f}_{\bullet}^{\ast}
		\end{CD}
	\]
of natural transformations between functors $\Ch(S_{c}) \to \Ch(C)$,
where $\Bar{\pi}_{T, \ast}$ on the upper and lower right corners mean
the essentially same functors defined on different domains
$\Ch(S_{c}) \to \Ch(C)$ and $\Ch(T_{\bullet, c}) \to \Ch(C)$, respectively.
This induces a morphism
	\begin{equation} \label{0398}
			(\varphi_{1}, \varphi_{2})
		\colon
			\Bar{\pi}_{S, !}
		\to
			\Bar{\pi}_{T_{\bullet}, !} \Bar{f}_{\bullet}^{\ast}
	\end{equation}
on the mapping fibers of the horizontal morphisms
and a morphism of distinguished triangles
	\begin{equation} \label{0409}
		\begin{CD}
				R \Bar{\pi}_{S, !} G
			@>>>
				R \Bar{\pi}_{S, \ast} G
			@>>>
				R \Bar{\pi}_{T, \ast} G
			\\ @VVV @VVV @| \\
				R \Bar{\pi}_{T_{\bullet}, !} \Bar{f}_{\bullet}^{\ast} G
			@>>>
				R \Bar{\pi}_{T_{\bullet}, \ast} \Bar{f}_{\bullet}^{\ast} G
			@>>>
				R \Bar{\pi}_{T, \ast} \Bar{f}_{\bullet}^{\ast} G
		\end{CD}
	\end{equation}
in $D(C)$ functorial in $G \in D(S_{c})$.
Similarly, we have a morphism of distinguished triangles
	\begin{equation} \label{0419}
		\begin{CD}
				R \Bar{\pi}_{S, !} G
			@>>>
				R \Bar{\pi}_{S, \ast} G
			@>>>
				R \Bar{\pi}_{T, \ast} G
			\\ @VVV @VVV @VVV \\
				R \Bar{\pi}_{T_{\bullet}, !} \Bar{f}_{\bullet}^{\ast} G
			@>>>
					R \Bar{\pi}_{T_{1}, \ast} \Bar{f}_{\bullet}^{\ast} G
				\oplus
					R \Bar{\pi}_{T_{2}, \ast} \Bar{f}_{\bullet}^{\ast} G
			@>>>
					R \Bar{\pi}_{T_{3}, \ast} \Bar{f}_{\bullet}^{\ast} G
				\oplus
					R \Bar{\pi}_{T, \ast} \Bar{f}_{\bullet}^{\ast} G.
		\end{CD}
	\end{equation}

For $G, H \in \Ch(T_{\bullet, c})$, the diagram
	\[
		\begin{CD}
					\Bar{\pi}_{T_{\bullet}, \ast} G
				\tensor
					\Bar{\pi}_{T_{\bullet}, \ast} H
			@>>>
				\Bar{\pi}_{T_{\bullet}, \ast}(G \tensor H)
			\\
			@V \id \tensor \varphi_{\bullet 0} VV
			@VV \varphi_{\bullet 0} V
			\\
					\Bar{\pi}_{T_{\bullet}, \ast} G
				\tensor
					\Bar{\pi}_{T, \ast} H
			@>>>
				\Bar{\pi}_{T, \ast}(G \tensor H)
		\end{CD}
	\]
is commutative, where the upper horizontal morphism is \eqref{0384}
and the lower one is $\varphi_{\bullet 0} \tensor \id$ followed by
the morphism \eqref{0327} for $\Bar{\pi}_{T, \ast}$.
Taking mapping fibers of the vertical morphisms, we obtain a morphism
	\[
				\Bar{\pi}_{T_{\bullet}, \ast} G
			\tensor
				\Bar{\pi}_{T_{\bullet}, !} H
		\to
			\Bar{\pi}_{T_{\bullet}, !}(G \tensor H)
	\]
in $\Ch(C)$ functorial in $G, H \in \Ch(T_{\bullet, c})$.
As in the final part of the proof of Proposition \ref{0385},
this induces a canonical morphism
	\begin{equation} \label{0389}
				R \Bar{\pi}_{T_{\bullet}, \ast} G
			\tensor^{L}
				R \Bar{\pi}_{T_{\bullet}, !} H
		\to
			R \Bar{\pi}_{T_{\bullet}, !}(G \tensor^{L} H)
	\end{equation}
in $D(C)$ functorial in $G, H \in D(T_{\bullet, c})$.

The cup products in the previous section and this section are compatible:

\begin{Prop} \label{0410}
	Let $G, H \in D(S_{c})$.
	Then the diagram
		\[
			\begin{CD}
						R \Bar{\pi}_{S, \ast} G
					\tensor^{L}
						R \Bar{\pi}_{S, !} H
				@>>>
					R \Bar{\pi}_{S, !}(G \tensor^{L} H)
				\\
				@V (\varphi_{1}, \varphi_{2}) VV
				@VV (\varphi_{1}, \varphi_{2}) V
				\\
						R \Bar{\pi}_{T_{\bullet}, \ast} \Bar{f}_{\bullet}^{\ast} G
					\tensor^{L}
						R \Bar{\pi}_{T_{\bullet}, !} \Bar{f}_{\bullet}^{\ast} H
				@>>>
					R \Bar{\pi}_{T_{\bullet}, !} \Bar{f}_{\bullet}^{\ast}(G \tensor^{L} H)
			\end{CD}
		\]
	in $D(C)$ is commutative,
	where the horizontal morphisms are \eqref{0328} and \eqref{0389}.
\end{Prop}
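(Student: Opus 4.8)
The plan is to give a formal proof, in the spirit of the proofs of Propositions~\ref{0329} and~\ref{0332}. First I would recall that \eqref{0328} and \eqref{0389} are, by construction, deduced via the $\tensor^{L}$--$R\sheafhom_{C}$ adjunction and the coevaluation maps $G \to R\sheafhom_{S_{c}}(H, G\tensor^{L}H)$ (and its analogue on $T_{\bullet, c}$) from the derived sheaf-Hom morphisms
\[
		R \Bar{\pi}_{S, \ast} R \sheafhom_{S_{c}}(\var, \var)
	\to
		R \sheafhom_{C}(R \Bar{\pi}_{S, !} \var, R \Bar{\pi}_{S, !} \var),
	\qquad
		R \Bar{\pi}_{T_{\bullet}, \ast} R \sheafhom_{T_{\bullet, c}}(\var, \var)
	\to
		R \sheafhom_{C}(R \Bar{\pi}_{T_{\bullet}, !} \var, R \Bar{\pi}_{T_{\bullet}, !} \var),
\]
which in turn arise, by Proposition~\ref{0326}, from the morphisms of complexes expressing the functoriality of the complex-level functors $\Bar{\pi}_{S, !} = [\Bar{\pi}_{S, \ast} \to \Bar{\pi}_{T, \ast}][-1]$ of \eqref{0386} and $\Bar{\pi}_{T_{\bullet}, !}$ of \eqref{0400}. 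Since coevaluation is natural, in particular compatible with the exact pullback $\Bar{f}_{\bullet}^{\ast}$, the assertion reduces to the compatibility of these sheaf-Hom morphisms with the natural transformation \eqref{0398} $(\varphi_{1}, \varphi_{2}) \colon \Bar{\pi}_{S, !} \to \Bar{\pi}_{T_{\bullet}, !} \Bar{f}_{\bullet}^{\ast}$, its non-shriek analogue $\Bar{\pi}_{S, \ast} \to \Bar{\pi}_{T_{\bullet}, \ast} \Bar{f}_{\bullet}^{\ast}$ (the left column of the commutative square of natural transformations preceding \eqref{0398}), and the lax-monoidal map $\Bar{f}_{\bullet}^{\ast} R\sheafhom_{S_{c}}(\var, \var) \to R\sheafhom_{T_{\bullet, c}}(\Bar{f}_{\bullet}^{\ast}\var, \Bar{f}_{\bullet}^{\ast}\var)$.

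Next I would pass to the underived level. Representing the relevant objects by K-injective complexes and using that $q_{S, \ast}$, $q_{T_{i}, \ast}$, $q_{T, \ast}$ preserve K-injectives while $\pi_{S}$, $\pi_{T_{i}}$, $\pi_{T}$ are morphisms of sites, Proposition~\ref{0326} reduces the above compatibility to the corresponding statement in $\Ch(C)$. Unwinding the mapping cones \eqref{0386}, \eqref{0400} and the explicit double-complex formulas \eqref{0382}--\eqref{0383} of the proof of Proposition~\ref{0385}, that underived statement is a diagram built entirely from the lax-monoidal maps \eqref{0327} of the pushforwards $\Bar{\pi}_{S, \ast}$, $\Bar{\pi}_{T_{i}, \ast}$, $\Bar{\pi}_{T, \ast}$, the structure natural transformations $\varphi_{i}$, $\varphi_{i j}$, $\varphi_{i 0}$, $\varphi_{\bullet 0}$ of \eqref{0394}, the pullback $\Bar{f}_{\bullet}^{\ast}$, and formation of mapping cones; its commutativity comes down to two routine facts: (i) the maps \eqref{0327} are natural in the transformations $\varphi$, the standard compatibility of the lax monoidal structure of $f_{\ast}$ with $2$-morphisms and composition of morphisms of sites; and (ii) the $\varphi$'s are mutually compatible, which is built into the definition of \eqref{0394} and the functoriality of pushforward under composition.

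The hard part will be the bookkeeping of the mapping cones, in particular matching the degree-one terms of \eqref{0382}--\eqref{0383}, where the proof of Proposition~\ref{0385} already records a homotopy-ambiguity between two natural choices of comparison map. I would handle this by checking that, exactly as in Proposition~\ref{0385}, the morphism induced in $D(C)$ does not depend on the choice, so that the identity holds in $D(C)$ regardless of how the cones are set up. A secondary, equally routine point is that $\Bar{f}_{\bullet}^{\ast}$ does not preserve K-injectivity, so one passes through K-injective resolutions of $\Bar{f}_{\bullet}^{\ast} G$ and $\Bar{f}_{\bullet}^{\ast} H$; the functoriality of \eqref{0328} and \eqref{0389} in their arguments absorbs this, just as the passage from Proposition~\ref{0329} to Proposition~\ref{0332} handles the analogous point there.
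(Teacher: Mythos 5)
Your proposal is correct and follows essentially the same route as the paper: the paper's proof is a cascading reduction (derived to $\Ch(C)$, then the shriek version to the plain pushforward version, then $\pi_{T_{\bullet}, \ast}$ to the individual $\pi_{T_{i}, \ast}$ for $i = 1, 2$, where commutativity is obvious), which is exactly your reduction to the underived compatibility of the lax-monoidal pushforward maps with the transformations $\varphi_{i}$. Your extra attention to the coevaluation formalism, the homotopy-ambiguity in the degree-one term of \eqref{0382}, and the non-preservation of K-injectivity by $\Bar{f}_{\bullet}^{\ast}$ only makes explicit what the paper leaves as "formal".
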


\begin{proof}
	It is enough to show that the diagram
		\[
			\begin{CD}
						\Bar{\pi}_{S, \ast} G
					\tensor
						\Bar{\pi}_{S, !} H
				@>>>
					\Bar{\pi}_{S, !}(G \tensor H)
				\\
				@V (\varphi_{1}, \varphi_{2}) VV
				@VV (\varphi_{1}, \varphi_{2}) V
				\\
						\Bar{\pi}_{T_{\bullet}, \ast} \Bar{f}_{\bullet}^{\ast} G
					\tensor
						\Bar{\pi}_{T_{\bullet}, !} \Bar{f}_{\bullet}^{\ast} H
				@>>>
					\Bar{\pi}_{T_{\bullet}, !} \Bar{f}_{\bullet}^{\ast}(G \tensor H)
			\end{CD}
		\]
	in $\Ch(C)$ is commutative for $G, H \in \Ch(S_{c})$.
	For this, it is enough to show that the diagram
		\[
			\begin{CD}
						\pi_{S, \ast} G
					\tensor
						\pi_{S, \ast} H
				@>>>
					\pi_{S, \ast}(G \tensor H)
				\\
				@V (\varphi_{1}, \varphi_{2}) VV
				@VV (\varphi_{1}, \varphi_{2}) V
				\\
						\pi_{T_{\bullet}, \ast} f_{\bullet}^{\ast} G
					\tensor
						\pi_{T_{\bullet}, \ast} f_{\bullet}^{\ast} H
				@>>>
					\pi_{T_{\bullet}, \ast} f_{\bullet}^{\ast}(G \tensor H)
			\end{CD}
		\]
	in $\Ch(C)$ is commutative for $G, H \in \Ch(S)$.
	For this, it is enough to show that the diagram
		\[
			\begin{CD}
						\pi_{S, \ast} G
					\tensor
						\pi_{S, \ast} H
				@>>>
					\pi_{S, \ast}(G \tensor H)
				\\
				@V \varphi_{i} VV
				@VV \varphi_{i} V
				\\
						\pi_{T_{i}, \ast} f_{i}^{\ast} G
					\tensor
						\pi_{T_{i}, \ast} f_{i}^{\ast} H
				@>>>
					\pi_{T_{i}, \ast} f_{i}^{\ast}(G \tensor H)
			\end{CD}
		\]
	in $\Ab(C)$ is commutative for $G, H \in \Ab(S)$ and $i = 1, 2$.
	But this is obvious.
\end{proof}

The cup product \eqref{0389} breaks down into pieces as follows.
We have distinguished triangles
	\begin{gather} \label{0390}
				R [\Bar{\pi}_{T_{2}, \ast} \to \Bar{\pi}_{T_{3}, \ast}][-1] G
			\to
				R \Bar{\pi}_{T_{\bullet}, \ast} G
			\to
				R \Bar{\pi}_{T_{1}, \ast} G,
		\\ \label{0391}
				R [\Bar{\pi}_{T_{1}, \ast} \to \Bar{\pi}_{T_{3}, \ast} \oplus \Bar{\pi}_{T, \ast}][-1] G
			\to
				R \Bar{\pi}_{T_{\bullet}, !} G
			\to
				R \Bar{\pi}_{T_{2}, \ast} G
	\end{gather}
in $D(C)$ functorial in $G \in D(T_{\bullet, c})$.
For $G, H \in D(T_{\bullet, c})$, we have morphisms
	\begin{gather} \label{0392}
		\begin{aligned}
			&			R \Bar{\pi}_{T_{2}, \ast} G
					\tensor^{L}
						R [\Bar{\pi}_{T_{2}, \ast} \to \Bar{\pi}_{T_{3}, \ast}][-1] H
			\\
			&	\to
					R [\Bar{\pi}_{T_{2}, \ast} \to \Bar{\pi}_{T_{3}, \ast}][-1](G \tensor^{L} H)
			\\
			&	\to
					R \Bar{\pi}_{T_{\bullet}, !}(G \tensor^{L} H),
		\end{aligned}
		\\ \label{0393}
		\begin{aligned}
			&			R \Bar{\pi}_{T_{1}, \ast} G
					\tensor^{L}
						R [\Bar{\pi}_{T_{1}, \ast} \to \Bar{\pi}_{T_{3}, \ast} \oplus \Bar{\pi}_{T, \ast}][-1] H
			\\
			&	\to
					R [\Bar{\pi}_{T_{1}, \ast} \to \Bar{\pi}_{T_{3}, \ast} \oplus \Bar{\pi}_{T, \ast}][-1](G \tensor^{L} H)
			\\
			&	\to
					R \Bar{\pi}_{T_{\bullet}, !}(G \tensor^{L} H).
		\end{aligned}
	\end{gather}

\begin{Prop} \label{0427}
	Let $G, H \in D(T_{\bullet, c})$.
	Denote
		\[
				(\var)^{\vee}
			=
				R \sheafhom_{C}(\var, R \Bar{\pi}_{T_{\bullet}, !}(G \tensor^{L} H)).
		\]
	Then the morphisms \eqref{0392}, \eqref{0389} and \eqref{0393} form
	a morphism of distinguished triangles
		\[
			\begin{CD}
					R [\Bar{\pi}_{T_{2}, \ast} \to \Bar{\pi}_{T_{3}, \ast}][-1] G
				@>>>
					R \Bar{\pi}_{T_{\bullet}, \ast} G
				@>>>
					R \Bar{\pi}_{T_{1}, \ast} G
				\\ @VVV @VVV @VVV \\
					(R \Bar{\pi}_{T_{2}, \ast} H)^{\vee}
				@>>>
					(R \Bar{\pi}_{T_{\bullet}, !} H)^{\vee}
				@>>>
					(R [\Bar{\pi}_{T_{1}, \ast} \to \Bar{\pi}_{T_{3}, \ast} \oplus \Bar{\pi}_{T, \ast}][-1] H)^{\vee}
			\end{CD}
		\]
	in $D(C)$ between \eqref{0390} and the dual of \eqref{0391}.
\end{Prop}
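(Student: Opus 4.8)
The plan is to reduce the statement to the level of complexes and to the $R\sheafhom_{C}$-formulation of the cup product, as in the proofs of Propositions \ref{0385} and \ref{0410}, and to recognize the two distinguished triangles \eqref{0390} and \eqref{0391} as the triangles attached to short exact sequences of complexes coming directly from the mapping-cone definitions \eqref{0399} and \eqref{0400}. Concretely, inside $\Bar{\pi}_{T_{\bullet}, \ast} = [\Bar{\pi}_{T_{1}, \ast} \oplus \Bar{\pi}_{T_{2}, \ast} \to \Bar{\pi}_{T_{3}, \ast}][-1]$ the subcomplex where the $\Bar{\pi}_{T_{1}, \ast}$-summand of the source vanishes is $[\Bar{\pi}_{T_{2}, \ast} \to \Bar{\pi}_{T_{3}, \ast}][-1]$, with quotient $\Bar{\pi}_{T_{1}, \ast}$, which is exactly \eqref{0390}; and inside $\Bar{\pi}_{T_{\bullet}, !} = [\Bar{\pi}_{T_{1}, \ast} \oplus \Bar{\pi}_{T_{2}, \ast} \to \Bar{\pi}_{T_{3}, \ast} \oplus \Bar{\pi}_{T, \ast}][-1]$ the subcomplex where the $\Bar{\pi}_{T_{2}, \ast}$-summand of the source vanishes is $[\Bar{\pi}_{T_{1}, \ast} \to \Bar{\pi}_{T_{3}, \ast} \oplus \Bar{\pi}_{T, \ast}][-1]$ (its differential using both $\varphi_{13}$ and $\varphi_{10}$), with quotient $\Bar{\pi}_{T_{2}, \ast}$, which is \eqref{0391}. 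Applying $R \sheafhom_{C}(\var, R \Bar{\pi}_{T_{\bullet}, !}(G \tensor^{L} H))$ to the latter short exact sequence produces the lower row of the asserted diagram.

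The key point is then a chain-level vanishing. Taking K-injective representatives, the cup product \eqref{0389} is induced by the explicit chain morphism \eqref{0384} together with the $\varphi_{\bullet 0}$-piece used to pass to $\Bar{\pi}_{T_{\bullet}, !}$. Inspecting the degree-zero and degree-one formulas $(g_{1}, g_{2}) \tensor (h_{1}, h_{2}) \mapsto (g_{1} \tensor h_{1}, g_{2} \tensor h_{2})$ and $((g_{1}, g_{2}) \tensor h_{3}, g_{3} \tensor (h_{1}, h_{2})) \mapsto \varphi_{13}(g_{1}) \tensor h_{3} + g_{3} \tensor \varphi_{23}(h_{2})$, one sees that the pairing vanishes identically on (the sub with $g_{1} = 0$ of the first, $\Bar{\pi}_{T_{\bullet}, \ast}$-factor) tensored with (the sub with $h_{2} = 0$ of the second, $\Bar{\pi}_{T_{\bullet}, !}$-factor): every surviving term carries a factor $g_{1}$ or $h_{2}$, the degree-two term maps to zero for degree reasons, and the $\Bar{\pi}_{T, \ast}$-component coming from $\varphi_{\bullet 0}$ is projection onto the $g_{1}$-slot, which also vanishes. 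Hence the pairing $R \Bar{\pi}_{T_{\bullet}, \ast} G \tensor^{L} R \Bar{\pi}_{T_{\bullet}, !} H \to R \Bar{\pi}_{T_{\bullet}, !}(G \tensor^{L} H)$ descends along the two two-step filtrations: it induces pairings on (sub $\tensor$ quotient) and (quotient $\tensor$ sub), which are \eqref{0392} and \eqref{0393} after using the symmetry of $\tensor^{L}$, and adjointizing via $R \sheafhom_{C}$ gives the three vertical arrows of the diagram.

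The conclusion is then the standard homological-algebra mechanism: a pairing of two-step-filtered complexes with $F^{1} \tensor F^{1} \to 0$ induces a morphism between the associated distinguished triangles after applying $R \sheafhom_{C}(\var, -)$, and this yields commutativity of all three squares, including the one involving the connecting morphisms; the passage from the chain level to $D(C)$ is formal via Proposition \ref{0326}, exactly as in Propositions \ref{0385} and \ref{0410}. The main obstacle I expect is the bookkeeping of signs and $[-1]$ shifts through the several nested mapping cones \eqref{0399} and \eqref{0400}, and checking that the asymmetric choice made in \eqref{0384} (taking $\varphi_{13}(g_{1}) \tensor h_{3} + g_{3} \tensor \varphi_{23}(h_{2})$ rather than its homotopic alternative) restricts cleanly to the subcomplexes, so that the squares commute on the nose rather than only up to homotopy. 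Since the relevant sub of the first factor is supported on the $\Bar{\pi}_{T_{2}}$-slot while the relevant sub of the second factor is supported on the $\Bar{\pi}_{T_{1}}$-slot, the two clauses of that formula never interact, so I expect no correcting homotopy is needed; pinning this down together with all signs is the only real work.
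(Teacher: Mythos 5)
Your proposal is correct and follows the same route as the paper: the paper's proof simply states that the claim reduces to the corresponding underived diagram in $K(C)$, which is "routine to check," and your argument is precisely that routine check carried out — identifying \eqref{0390} and \eqref{0391} with the sub/quotient decompositions of the mapping cones \eqref{0399} and \eqref{0400}, and verifying from the explicit formulas in \eqref{0384} and the $\varphi_{\bullet 0}$-component that the chain-level pairing vanishes on sub~$\tensor$~sub, so that it descends to a morphism of triangles after applying $R\sheafhom_{C}(\var,-)$.
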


\begin{proof}
	This reduces to the corresponding underived diagram in $K(C)$ for $G, H \in K(T_{\bullet, c})$,
	which is routine to check.
\end{proof}

Let
	\begin{equation} \label{0395}
		\begin{CD}
			@.
				T_{3}'
			@> f_{32}' >>
				T_{2}'
			@.
			\\ @. @V f_{31}' VV @VV f_{2}' V @. \\
				T'
			@>> f_{01}' >
				T_{1}'
			@>> f_{1}' >
				S'
			@>> \pi_{S'} >
				C,
		\end{CD}
	\end{equation}
be another fibered site over the same category $I$,
where we are assuming that the lower rightmost term is still the same $C$.
We can do the above constructions for this fibered site,
yielding functors $\Bar{\pi}_{T_{\bullet}', !}$ and so on.

Assume that we are given a morphism of fibered sites
from \eqref{0395} to \eqref{0394}
such that the morphism on the lower rightmost terms is the identity $C \to C$.
The data of such a morphism is given by morphisms of sites
$g \colon S' \to S$, $h_{i} \colon T_{i}' \to T_{i}$ for $i = 1, 2, 3$
and $h \colon T' \to T$ satisfying the obvious commutativity relations.
Let $\Bar{h}_{\bullet} \colon T_{\bullet, c}' \to T_{\bullet, c}$ be the naturally induced morphism.
We will encounter this situation in Section \ref{0303},
where the diagram \eqref{0394} uses henselizations
and the diagram \eqref{0395} uses completions.

We have morphisms
	\begin{equation} \label{0414}
				R \Bar{\pi}_{T_{\bullet}, \ast} G
			\to
				R \Bar{\pi}_{T_{\bullet}', \ast} \Bar{h}_{\bullet}^{\ast} G,
		\quad
				R \Bar{\pi}_{T_{\bullet}, !} G
			\to
				R \Bar{\pi}_{T_{\bullet}', !} \Bar{h}_{\bullet}^{\ast} G,
	\end{equation}
morphisms of distinguished triangles
	\begin{equation} \label{0416}
		\begin{CD}
				R \Bar{\pi}_{T_{\bullet}, !} G
			@>>>
				R \Bar{\pi}_{T_{\bullet}, \ast} G
			@>>>
				R \Bar{\pi}_{T, \ast} G
			\\ @VVV @VVV @VVV \\
				R \Bar{\pi}_{T_{\bullet}', !} \Bar{h}_{\bullet}^{\ast} G
			@>>>
				R \Bar{\pi}_{T_{\bullet}', \ast} \Bar{h}_{\bullet}^{\ast} G
			@>>>
				R \Bar{\pi}_{T', \ast} \Bar{h}_{\bullet}^{\ast} G,
		\end{CD}
	\end{equation}
	\begin{equation} \label{0420}
		\begin{CD}
				R \Bar{\pi}_{T_{\bullet}, !} G
			@>>>
					R \Bar{\pi}_{T_{1}, \ast} G
				\oplus
					R \Bar{\pi}_{T_{2}, \ast} G
			@>>>
					R \Bar{\pi}_{T_{3}, \ast} G
				\oplus
					R \Bar{\pi}_{T, \ast}G
			\\ @VVV @VVV @VVV \\
				R \Bar{\pi}_{T'_{\bullet}, !} \Bar{h}_{\bullet}^{\ast} G
			@>>>
					R \Bar{\pi}_{T'_{1}, \ast} \Bar{h}_{\bullet}^{\ast} G
				\oplus
					R \Bar{\pi}_{T'_{2}, \ast} \Bar{h}_{\bullet}^{\ast} G
			@>>>
					R \Bar{\pi}_{T'_{3}, \ast} \Bar{h}_{\bullet}^{\ast} G
				\oplus
					R \Bar{\pi}_{T', \ast} \Bar{h}_{\bullet}^{\ast} G,
		\end{CD}
	\end{equation}
	\[
		\begin{CD}
				R [\Bar{\pi}_{T_{2}, \ast} \to \Bar{\pi}_{T_{3}, \ast}][-1] G
			@>>>
				R \Bar{\pi}_{T_{\bullet}, \ast} G
			@>>>
				R \Bar{\pi}_{T_{1}, \ast} G
			\\ @VVV @VVV @VVV \\
				R [\Bar{\pi}_{T_{2}', \ast} \to \Bar{\pi}_{T_{3}', \ast}][-1] \Bar{h}_{\bullet}^{\ast} G
			@>>>
				R \Bar{\pi}_{T_{\bullet}', \ast} \Bar{h}_{\bullet}^{\ast} G
			@>>>
				R \Bar{\pi}_{T_{1}', \ast} \Bar{h}_{\bullet}^{\ast} G,
		\end{CD}
	\]
	\begin{equation} \label{0415}
		\begin{CD}
				R [\Bar{\pi}_{T_{1}, \ast} \to \Bar{\pi}_{T_{3}, \ast} \oplus \Bar{\pi}_{T, \ast}][-1] G
			@>>>
				R \Bar{\pi}_{T_{\bullet}, !} G
			@>>>
				R \Bar{\pi}_{T_{2}, \ast} G
			\\ @VVV @VVV @VVV \\
				R [\Bar{\pi}_{T_{1}', \ast} \to \Bar{\pi}_{T_{3}', \ast} \oplus \Bar{\pi}_{T, \ast}][-1] \Bar{h}_{\bullet}^{\ast} G
			@>>>
				R \Bar{\pi}_{T_{\bullet}', !} \Bar{h}_{\bullet}^{\ast} G
			@>>>
				R \Bar{\pi}_{T_{2}', \ast} \Bar{h}_{\bullet}^{\ast} G
		\end{CD}
	\end{equation}
and a commutative diagram
	\begin{equation} \label{0417}
		\begin{CD}
					R \Bar{\pi}_{T_{\bullet}, \ast} G
				\tensor^{L}
					R \Bar{\pi}_{T_{\bullet}, !} H
			@>>>
				R \Bar{\pi}_{T_{\bullet}, !}(G \tensor^{L} H)
			\\ @VVV @VVV \\
					R \Bar{\pi}_{T_{\bullet}', \ast} \Bar{h}_{\bullet}^{\ast} G
				\tensor^{L}
					R \Bar{\pi}_{T_{\bullet}', !} \Bar{h}_{\bullet}^{\ast} H
			@>>>
				R \Bar{\pi}_{T_{\bullet}', !} \Bar{h}_{\bullet}^{\ast}(G \tensor^{L} H)
		\end{CD}
	\end{equation}
in $D(C)$ for $G, H \in D(T_{\bullet, c})$.


\section{Setup for two-dimensional local fields}
\label{0169}

Recall that $F$ is a fixed perfect field of characteristic $p > 0$ throughout the paper.
Let $k$ be a complete discrete valuation field with residue field $F$.
Let $K$ be a henselian discrete valuation field of characteristic zero
with residue field $k$.
The field $k$ has either characteristic $p$ and $0$.
Let $\Hat{K}$ be the completion of $K$.
In this section, we fix some notation for two-dimensional local fields,
which will be used in both Sections \ref{0174} (where $k$ has characteristic $p$)
and \ref{0190} (where $k$ has characteristic zero).


\subsection{Relative \'etale sites for two-dimensional local fields}
\label{0037}

The ring of integers $\Order_{k}$ of $k$ has a canonical structure
as a complete $W(F)$-algebra,
where $W$ denotes the ring of $p$-typical Witt vectors of infinite length.
For a perfect artinian $F$-algebra $F'$,
define a $k$-algebra $\alg{k}(F')$ by
	\begin{equation} \label{0450}
			\alg{k}(F')
		=
			(W(F') \ctensor_{W(F)} \Order_{k}) \tensor_{\Order_{k}} k.
	\end{equation}
The functor $F' \mapsto \alg{k}(F')$ is an $F^{\perar}$-algebra.
If $F'$ is a field,
then $\alg{k}(F')$ is a complete discrete valuation field with residue field $F'$.
There is a notion of lifting of $\alg{k}$ to $\Order_{K}$:

\begin{Def} \label{0038}
	A \emph{lifting system} for $\Order_{K}$ consists of:
	\begin{enumerate}
		\item \label{0170}
			a functor $\alg{O}_{K}$ from the category of perfect artinian $F$-algebras
			to the category of $\Order_{K}$-algebras, and
		\item \label{0171}
			an isomorphism $\alg{O}_{K}(F') \tensor_{\Order_{K}} k \cong \alg{k}(F')$
			of $k$-algebras for any $F' \in F^{\perar}$ functorial in $F'$,
	\end{enumerate}
	satisfying:
	\begin{enumerate}
		\item
			$\alg{O}_{K}(F')$ is flat over $\Order_{K}$,
		\item
			$\alg{O}_{K}(F')$ is $\ideal{p}_{K} \alg{O}_{K}(F')$-adically separated and henselian, and
		\item
			$\alg{O}_{K}(F') \to \alg{O}_{K}(F'')$ is finite
			if $F' \to F''$ is \'etale.
	\end{enumerate}
\end{Def}

For a field $F' \in F^{\perar}$,
the ring $\alg{O}_{K}(F')$ is a henselian discrete valuation ring
with maximal ideal generated by a prime element of $\Order_{K}$
and with residue field $F'$.

If $k$ has characteristic $p$,
there is the following canonical such data:
For a perfect field extension $F'$ of $F$,
if we write $k \cong F((t))$, then $\alg{k}(F') \cong F'((t))$.
In particular, $\alg{k}(F')$ is relatively perfect over $k$.
The same is true if $F'$ is a perfect artinian $F$-algebra.
Define $\alg{O}_{K}(F')$ to be the Kato canonical lifting of $\alg{k}(F')$ over $\Hat{\Order}_{K}$.
Then this $\alg{O}_{K}(F')$ with the natural reduction map gives a lifting system.

Even if we start from a complete ring $A$ as in Introduction,
compact support cohomology requires non-complete henselian local fields,
which is why we need to treat more general lifting systems than the canonical one.

If $k$ has characteristic zero,
a lifting system (non-canonically) exists:
by fixing a continuous isomorphism $\Hat{\Order}_{K} \cong k[[\varpi_{K}]]$,
one may take $\alg{O}_{K}(F') = \alg{k}(F')[[\varpi_{K}]]$ with the obvious isomorphism
$\alg{O}_{K}(F') \tensor_{\Order_{K}} k \cong \alg{k}(F')$.

In general, fix a choice of a lifting system for $\Order_{K}$.
Set $\alg{K}(F') = \alg{O}_{K}(F') \tensor_{\Order_{K}} K$.
We have natural morphisms $\alg{K} \gets \alg{O}_{K} \to \alg{k}$
of $F^{\perar}$-algebras.
Applying them to the constructions in Section \ref{0029},
we obtain a commutative diagram of morphisms of sites
	\[
		\begin{CD}
				\Spec \alg{K}_{\et}
			@> j >>
				\Spec \alg{O}_{K, \et}
			@< i <<
				\Spec \alg{k}_{\et}
			\\
			@V \pi_{\alg{K}} VV
			@V \pi_{\alg{O}_{K}} VV
			@V \pi_{\alg{k}} VV
			\\
				\Spec F^{\perar}_{\et}
			@=
				\Spec F^{\perar}_{\et}
			@=
				\Spec F^{\perar}_{\et}.
		\end{CD}
	\]
With respect to $i$ and $j$,
we have functors $j_{!} \colon \Ab(\alg{K}_{\et}) \to \Ab(\alg{O}_{K, \et})$
and $i^{!} \colon \Ab(\alg{O}_{K, \et}) \to \Ab(\alg{k}_{\et})$
as in Section \ref{0381}
and a functor
	\[
			R \pi_{\alg{O}_{K}, !}
		=
			R \pi_{\alg{k}, \ast}
			R i^{!}
		\colon
			D(\alg{O}_{K, \et})
		\to
			D(F^{\perar}_{\et})
	\]
by \eqref{0433}.
We have a distinguished triangle
	\[
			R \pi_{\alg{O}_{K}, !} G
		\to
			R \pi_{\alg{O}_{K}, \ast} G
		\to
			R \pi_{\alg{K}, \ast} j^{\ast} G
	\]
in $D(F^{\perar}_{\et})$ functorial in $G \in D(\alg{O}_{K, \et})$ by \eqref{0434}.


\subsection{Invariance under completion}
\label{0172}

For any $F' \in F^{\perar}$,
let $\Hat{\alg{O}}_{K}(F')$ be the $\ideal{p}_{K} \alg{O}_{K}(F')$-adic completion of $\alg{O}_{K}(F')$.
Then the functor $\Hat{\alg{O}}_{K}$ is a lifting system for $\Order_{K}$.
Let $\Hat{\alg{K}}(F') = \Hat{\alg{O}}_{K}(F') \tensor_{\Order_{K}} K$.
As in Section \ref{0037}, we have a commutative diagram of morphisms of sites
	\[
		\begin{CD}
				\Spec \Hat{\alg{K}}_{\et}
			@> \Hat{\jmath} >>
				\Spec \Hat{\alg{O}}_{K, \et}
			@< \Hat{\imath} <<
				\Spec \alg{k}_{\et}
			\\
			@V \pi_{\Hat{\alg{K}}} VV
			@V \pi_{\Hat{\alg{O}}_{K}} VV
			@V \pi_{\alg{k}} VV
			\\
				\Spec F^{\perar}_{\et}
			@=
				\Spec F^{\perar}_{\et}
			@=
				\Spec F^{\perar}_{\et}.
		\end{CD}
	\]
We have natural morphisms $\alg{O}_{K} \to \Hat{\alg{O}}_{K}$
and $\alg{K} \to \Hat{\alg{K}}$ of $F^{\perar}$-algebras.
They define morphisms of sites
	\begin{gather*}
				\pi_{\Hat{\alg{O}}_{K} / \alg{O}_{K}}
			\colon
				\Spec \Hat{\alg{O}}_{K, \et}
			\to
				\Spec \alg{O}_{K, \et}
		\\
				\pi_{\Hat{\alg{K}} / \alg{K}}
			\colon
				\Spec \Hat{\alg{K}}_{\et}
			\to
				\Spec \alg{K}_{\et}
	\end{gather*}
by Proposition \ref{0036}.
Hence we have morphisms
	\begin{equation} \label{0039}
		\begin{gathered}
					R \pi_{\alg{O}_{K}, \ast}
				\to
					R \pi_{\Hat{\alg{O}}_{K}, \ast}
					\pi_{\Hat{\alg{O}}_{K} / \alg{O}_{K}}^{\ast},
			\quad
					R \pi_{\alg{K}, \ast}
				\to
					R \pi_{\Hat{\alg{K}}, \ast}
					\pi_{\Hat{\alg{K}} / \alg{K}}^{\ast},
			\\
					R \pi_{\alg{O}_{K}, !}
				\to
					R \pi_{\Hat{\alg{O}}_{K}, !}
					\pi_{\Hat{\alg{O}}_{K} / \alg{O}_{K}}^{\ast}
		\end{gathered}
	\end{equation}
such that the diagram
	\begin{equation} \label{0435}
		\begin{CD}
				R \pi_{\alg{O}_{K}, !}
			@>>>
				R \pi_{\alg{O}_{K}, \ast}
			@>>>
				R \pi_{\alg{K}, \ast} j^{\ast}
			\\ @VVV @VVV @VVV \\
				R \pi_{\Hat{\alg{O}}_{K}, !}
				\pi_{\Hat{\alg{O}}_{K} / \alg{O}_{K}}^{\ast}
			@>>>
				R \pi_{\Hat{\alg{O}}_{K}, \ast}
				\pi_{\Hat{\alg{O}}_{K} / \alg{O}_{K}}^{\ast}
			@>>>
				R \pi_{\Hat{\alg{K}}, \ast}
				\pi_{\Hat{\alg{K}} / \alg{K}}^{\ast} j^{\ast}
		\end{CD}
	\end{equation}
is a morphism of distinguished triangles,
where we used
	$
			\pi_{\Hat{\alg{K}} / \alg{K}}^{\ast}
			j^{\ast}
		\cong
			\Hat{\jmath}^{\ast}
			\pi_{\Hat{\alg{O}}_{K} / \alg{O}_{K}}^{\ast}
	$
for the lower triangle.

\begin{Prop} \label{0173}
	The morphisms \eqref{0039} are isomorphisms
	and hence the diagram \eqref{0435} is an isomorphism of distinguished triangles.
\end{Prop}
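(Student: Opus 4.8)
The plan is to reduce everything to the fibers over $\Spec F^{\perar}_{\et}$ and then to exploit the morphism of distinguished triangles \eqref{0435}. By reduction to fibers (Proposition \ref{0031}, ``the cohomology of $\alg{X}_{\et}$ is that of the $\alg{X}(F')_{\et}$ bundled together'') together with Proposition \ref{0311}, it suffices to prove that for every $F'\in F^{\perar}$ the scheme morphisms
	\[
			\Spec\Hat{\alg{O}}_K(F') \to \Spec\alg{O}_K(F'),
		\qquad
			\Spec\Hat{\alg{K}}(F') \to \Spec\alg{K}(F')
	\]
induce isomorphisms on the \'etale cohomology of every sheaf, compatibly with the triangles.

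For the middle term of \eqref{0035} I would argue as follows. The functors $\alg{O}_K$ and $\Hat{\alg{O}}_K$ both satisfy the hypotheses of Proposition \ref{0313} (section-wise henselian local rings with transition maps respecting the Jacobson radicals), with the same residue-ring functor $\alg{k}$, so that $R\pi_{\alg{O}_K,\ast}\cong R\pi_{\alg{k},\ast}\,i^{\ast}$ and $R\pi_{\Hat{\alg{O}}_K,\ast}\,\pi_{\Hat{\alg{O}}_K/\alg{O}_K}^{\ast}\cong R\pi_{\alg{k},\ast}\,\Hat{\imath}^{\ast}\,\pi_{\Hat{\alg{O}}_K/\alg{O}_K}^{\ast}$. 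Since $\alg{O}_K\to\Hat{\alg{O}}_K\to\alg{k}$ is the reduction map $\alg{O}_K\to\alg{k}$, we have $\pi_{\Hat{\alg{O}}_K/\alg{O}_K}\compose\Hat{\imath}=i$, hence $\Hat{\imath}^{\ast}\,\pi_{\Hat{\alg{O}}_K/\alg{O}_K}^{\ast}=i^{\ast}$, and the middle morphism of \eqref{0039} is an isomorphism. By the morphism of triangles \eqref{0435} it then remains only to treat the generic fibers, i.e.\ to show the third morphism of \eqref{0039} is an isomorphism (from which the first follows as well).

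The generic fiber is the substantive step. Here $\alg{K}(F')$ is the fraction field of the henselian discrete valuation ring $\alg{O}_K(F')$, and $\alg{K}(F')$ is dense in its completion $\Hat{\alg{K}}(F')$. Since the small \'etale topos of a field is the classifying topos of its absolute Galois group, it is enough to show that the natural map $\Gal(\Hat{\alg{K}}(F')^{\sep}/\Hat{\alg{K}}(F'))\to\Gal(\alg{K}(F')^{\sep}/\alg{K}(F'))$ is an isomorphism. Injectivity follows from the henselianness of $\alg{O}_K(F')$: a finite separable subextension of $\alg{K}(F')$ lying inside $\Hat{\alg{K}}(F')$ carries a unique valuation, hence has completion $\Hat{\alg{K}}(F')$ again, and so must be trivial. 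Surjectivity is Krasner's lemma: a finite separable extension of $\Hat{\alg{K}}(F')$ is cut out by a monic separable polynomial whose coefficients can be approximated $\ideal{p}_K$-adically closely by elements of the dense subfield $\alg{K}(F')$, and such an approximation has a root generating the same extension. Thus $\Spec\Hat{\alg{K}}(F')_{\et}\to\Spec\alg{K}(F')_{\et}$ is an equivalence of sites, compatible (via Proposition \ref{0311}) with pullback of sheaves, so all of the $R^q\pi_{\alg{K},\ast}$'s agree with the $R^q\pi_{\Hat{\alg{K}},\ast}$'s. The main obstacle is precisely this comparison of absolute Galois groups of a henselian discretely valued field and its completion: it is classical, but it is the only place where the valuation-theoretic structure of $\alg{O}_K(F')$ — rather than merely its being a henselian local ring — enters, and everything else is formal given Propositions \ref{0031}, \ref{0311}, \ref{0313} and the triangle \eqref{0435}.
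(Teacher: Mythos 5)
Your proof is correct and takes essentially the same route as the paper: reduce to the fibers over each $F' \in F^{\perar}$ via Proposition \ref{0031} and then use the classical invariance of \'etale cohomology of a henselian discrete valuation ring and of its fraction field under completion — which the paper simply cites as well known, and which you actually prove via Proposition \ref{0313} for the integral level and the comparison of absolute Galois groups for the generic fibre, deducing the shriek version from the triangle. One small slip in the Galois-theoretic step: you have injectivity and surjectivity interchanged — the Krasner/density argument shows every finite separable extension of $\Hat{\alg{K}}(F')$ is induced from $\alg{K}(F')$, which is precisely \emph{injectivity} of $\Gal(\Hat{\alg{K}}(F')^{\sep}/\Hat{\alg{K}}(F')) \to \Gal(\alg{K}(F')^{\sep}/\alg{K}(F'))$, while separable closedness of $\alg{K}(F')$ in $\Hat{\alg{K}}(F')$ gives \emph{surjectivity} — but since you supply both ingredients the argument is unaffected (and your reference to a display ``(0035)'' should read \eqref{0039}).
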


\begin{proof}
	Using Proposition \ref{0031},
	the statement reduces to the well-known isomorphisms
		\begin{gather*}
					R \Gamma(\Order_{K, \et}, \var)
				\isomto
					R \Gamma(
						\Order_{\Hat{K}, \et},
						\pi_{\Order_{\Hat{K}} / \Order_{K}}^{\ast}(\var)
					),
			\\
					R \Gamma(K_{\et}, \var)
				\isomto
					R \Gamma(
						\Hat{K}_{\et},
						\pi_{\Hat{K} / K}^{\ast}(\var)
					).
		\end{gather*}
\end{proof}


\section{Two-dimensional local fields of mixed characteristic}
\label{0174}

Let $k$ be a complete discrete valuation field of characteristic $p$ with residue field $F$.
Let $K$ be a henselian discrete valuation field of characteristic zero
with residue field $k$.
In this section, we formulate and prove a duality for $K$ and $\Order_{K}$
with ind-pro-algebraic group structures on cohomology.

Let $\Frob \colon \alg{k} \to \alg{k}$ be the natural transformation
given by the $p$-th power map on $\alg{k}$.
Set $\wp = \Frob - 1$.
For $n \ge 1$, let $\alg{k}^{p^{n}}$ and $\alg{k}^{\times p^{n}}$ be
the (presheaf) images of $\Frob^{n}$
on $\alg{k}$ and $\alg{k}^{\times}$, respectively.
Let $\Omega_{\alg{k}}^{1}$ be the functor $F' \mapsto \Omega_{\alg{k}(F')}^{1}$.
The Cartier operator on $\Omega_{\alg{k}(F')}^{1}$ defines a natural transformation
$C \colon \Omega_{\alg{k}}^{1} \to \Omega_{\alg{k}}^{1}$.
All these are in $\Ab(F^{\perar}_{\et})$ and hence in $\Ab(F^{\perar}_{\zar})$.


\subsection{Review of known results}
\label{0040}

The main arithmetic inputs are Kato's calculations
of Galois cohomology and his duality pairings in terms of symbols and differential forms.
These calculations are classical, and we recall them here.

Recall our notation $\xi_{n}(F) = W_{n}(F) / \wp W_{n}(F)$,
where $\wp = \Frob - 1$,
and $\xi(F) = \xi_{1}(F)$.

\begin{Prop}[{\cite[\S 5, Theorem 1]{Kat79}}] \label{0041}
	Let $n \ge 1$ and $r \in \Z$.
	Then we have $H^{q}(K, \Lambda_{n}(r)) = 0$ for $q \ge 4$.
	The boundary maps give morphisms
		\begin{equation} \label{0437}
				H^{2}(K, \Lambda_{n}(2))
			\to
				H^{1}(k, \Lambda_{n}(1))
			\to
				H^{0}(F, \Lambda_{n})
			=
				\Lambda_{n}
		\end{equation}
	and isomorphisms
		\begin{equation} \label{0455}
				H^{3}(K, \Lambda_{n}(2))
			\isomto
				H^{2}(k, \Lambda_{n}(1))
			\isomto
				H^{1}(F, \Lambda_{n})
			\cong
				\xi_{n}(F).
		\end{equation}
\end{Prop}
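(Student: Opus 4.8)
The statement we need is Kato's computation of Galois cohomology of the two-dimensional local field $K$ with $\Lambda_n(r)$-coefficients, specifically Proposition \ref{0041}: vanishing above degree $3$, the existence of the boundary maps \eqref{0437}, and the isomorphisms \eqref{0455}.

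The plan is to cite and unwind Kato's structure theory of higher local fields \cite{Kat79}. First I would recall that $K$ is a two-dimensional local field: a henselian (or complete) discrete valuation field of mixed characteristic whose residue field $k$ is itself a complete discrete valuation field of equal characteristic $p$ with perfect residue field $F$. The key input is the cohomological dimension bound: Kato's theorem \cite[\S 5, Theorem 1]{Kat79} gives $\operatorname{cd}_p(K) \le 3$ (more precisely, $H^q(K, \Lambda_n(r)) = 0$ for $q \ge 4$), which is the first assertion. This follows from the fact that $k$ has cohomological dimension $\le 2$ and $K$ adds one to this via the usual Galois cohomology spectral sequence / Serre's argument for henselian discrete valuation fields, combined with the finiteness of $\operatorname{cd}_p$ for $F$ perfect.

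Next I would produce the boundary maps and isomorphisms in \eqref{0437} and \eqref{0455} by iterating the Bloch--Kato residue (boundary) map for a discrete valuation field. For the mixed characteristic field $K$ with residue field $k$, the residue map $H^q(K, \Lambda_n(r)) \to H^{q-1}(k, \Lambda_n(r-1))$ is available; applying it once to $H^2(K,\Lambda_n(2))$ and $H^3(K,\Lambda_n(2))$ lands us in $H^1(k,\Lambda_n(1))$ and $H^2(k,\Lambda_n(1))$. Then for the equal-characteristic field $k$ with residue field $F$, one uses the analogous residue maps $H^q(k,\Lambda_n(r)) \to H^{q-1}(F,\Lambda_n(r-1))$ to descend to $H^0(F,\Lambda_n) = \Lambda_n$ and $H^1(F,\Lambda_n)$. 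The claim that the composites $H^3(K,\Lambda_n(2)) \to H^2(k,\Lambda_n(1)) \to H^1(F,\Lambda_n)$ are \emph{isomorphisms} is exactly the statement that the top-degree cohomology is ``purely ramified'' — this is Kato's residue/duality computation, where each residue map on the top cohomology group of an $i$-dimensional local field is an isomorphism onto the top cohomology of its residue $(i-1)$-dimensional local field. Finally, the identification $H^1(F,\Lambda_n) \cong \xi_n(F) = W_n(F)/\wp W_n(F)$ is Artin--Schreier--Witt theory: it comes from the exact sequence $0 \to \Lambda_n \to W_n \xrightarrow{\wp} W_n \to 0$ over $\Spec F_{\et}$ and the vanishing of $H^1(F, W_n)$ (as $F$ is a perfect — indeed any — field, $W_n$ being a successive extension of $\Ga$'s).

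The main obstacle is not any single deep theorem but rather bookkeeping: one must verify that the two residue maps (the mixed-characteristic one at the first step and the equal-characteristic one at the second step) compose correctly, that the twists $\Lambda_n(r) \rightsquigarrow \Lambda_n(r-1)$ bookkeep so that $\Lambda_n(2)$ becomes $\Lambda_n(1)$ becomes $\Lambda_n(0) = \Lambda_n$, and — most delicately — that on the top cohomology the residue map is genuinely an \emph{iso} rather than just a map. This last point is the heart of Kato's \cite[\S 5, Theorem 1]{Kat79}; since it is an established result I would simply invoke it, but I would take care to state precisely which normalization of the residue/twist is being used so that the subsequent sections (which feed these isomorphisms into the duality pairings via Artin--Schreier and differential-form descriptions) are consistent. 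Since everything here is classical, the proof is essentially a citation to \cite{Kat79} together with a short paragraph recalling Artin--Schreier--Witt theory for the final identification with $\xi_n(F)$.
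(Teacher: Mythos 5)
Your approach matches the paper's exactly: Proposition \ref{0041} is stated there as a pure citation to \cite[\S 5, Theorem 1]{Kat79}, with no proof given, and you likewise reduce everything to that citation plus Artin--Schreier--Witt theory for the identification $H^{1}(F, \Lambda_{n}) \cong \xi_{n}(F)$. That is acceptable.

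One caveat on your heuristic for the vanishing in degrees $\ge 4$: the claim that ``$k$ has cohomological dimension $\le 2$ and $K$ adds one via Serre's argument'' is not right as stated. Since $k = F((t))$ has characteristic $p$, its $p$-cohomological dimension in the Galois sense is $\le 1$ (Artin--Schreier), so the naive bound $\operatorname{cd}_{p}(K) \le \operatorname{cd}_{p}(k) + 1$ would give $\le 2$ and would contradict the nonvanishing of $H^{3}(K, \Lambda_{n}(2)) \cong \xi_{n}(F)$. The correct bound $\operatorname{cd}_{p}(K) \le 3$ comes from the imperfection of $k$ (here $[k : k^{p}] = p$), and establishing it is precisely the nontrivial content of Kato's theorem via his symbol filtration; the ``dimension $2$'' attached to $k$ refers to its arithmetic dimension measured by logarithmic de Rham--Witt cohomology $H^{q-1}(k, \nu_{n}(1))$, not to $\operatorname{cd}_{p}$ of its absolute Galois group. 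Since you ultimately invoke Kato for this step anyway, the proof stands, but the justification you sketch would fail if taken literally.
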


Here $H^{q}(k, \Lambda_{n}(1))$ for positive characteristic $k$
means flat cohomology,
which is isomorphic to $H^{q - 1}(k, \nu_{n}(1))$.
The morphism \eqref{0455} is compatible with the morphism \eqref{0437}
in the following sense (see \cite[\S 5, Proof of Theorem 1, Step 4]{Kat79}):
Let $K_{\closure{F}}$ be the unramified extension of $K$
corresponding to an algebraic closure $\closure{F}$ of $F$.
Then the composite
	\begin{equation} \label{0491}
			H^{3}(K, \Lambda_{n}(2))
		\cong
			H^{1} \bigl(
				\Gal(\closure{F} / F),
				H^{2}(K_{\closure{F}}, \Lambda_{n}(2))
			\bigr)
		\isomto
			H^{1}(F, \Lambda_{n})
		\cong
			\xi_{n}(F)
	\end{equation}
of the natural morphisms and \eqref{0437} for $K_{\closure{F}}$ is \eqref{0455}.

In particular, for any integers $q, q', r, r'$ with $r + r' = 2$ and $q + q' = 3$,
we have canonical pairings
	\begin{equation} \label{0042}
			H^{q}(K, \Lambda_{n}(r))
		\times
			H^{q'}(K, \Lambda_{n}(r'))
		\to
			H^{3}(K, \Lambda_{n}(2))
		\cong
			\xi_{n}(F),
	\end{equation}
	\[
			R \Gamma(K, \Lambda_{n}(r))
		\otimes^{L}
			R \Gamma(K, \Lambda_{n}(r'))
		\to
			R \Gamma(K, \Lambda_{n}(2))
		\to
			\xi_{n}(F)[-3].
	\]

Let $e_{K}$ be the absolute ramification index of $K$
and set $f_{K} = p e_{K} / (p - 1)$.
For each $r \ge 0$, we have the symbol map
$(K^{\times})^{\tensor r} \to H^{r}(K, \Lambda(r))$,
$(x_{1}, \dots, x_{r}) \mapsto \{x_{1}, \dots, x_{r}\}$
from the tensor power of $K^{\times}$.

Assume that $K$ contains a primitive $p$-th root of unity $\zeta_{p}$
and fix a prime element $\varpi_{K}$ of $K$.
Identify $\Lambda(r)$ with $\Lambda$ for any $r$ using $\zeta_{p}$.
For $m \ge 0$, let $U_{K}^{(m)} = 1 + \ideal{p}_{K}^{m}$ if $m \ge 1$
and $U_{K}^{(m)} = K^{\times}$ if $m = 0$.
Define $U^{m} H^{1}(K, \Lambda) \subset H^{1}(K, \Lambda)$ to be the image of $U_{K}^{(m)}$
via the isomorphism $H^{1}(K, \Lambda) \cong K^{\times} / K^{\times p}$
(where $K^{\times p} = (K^{\times})^{p}$ means the subgroup of $p$-th powers in $K^{\times}$).
Define $U^{m} H^{2}(K, \Lambda) \subset H^{2}(K, \Lambda)$ to be the subgroup generated by
elements of the form $\{x, y\}$ with $x \in U_{K}^{(m)}$ and $y \in K^{\times}$.
Set
	\[
			\gr^{m} H^{q}(K, \Lambda)
		=
			U^{m} H^{q}(K, \Lambda) / U^{m + 1} H^{q}(K, \Lambda)
	\]
for $q = 1, 2$.
For $0 < m < f_{K}$ and $p \mid m$,
let $b_{m}$ be an element of $K$ such that $v_{K}(b_{m}) = m / p$.
For $0 < m < f_{K}$ and $p \nmid m$,
let $c_{m}$ be an element of $K$ such that $v_{K}(c_{m}) = m$.

\begin{Prop}[{\cite[\S 2, Proposition 1]{Kat79}}] \label{0043}
	Let $m \ge 0$.
	\begin{enumerate}
		\item \label{0175}
			If $m = 0$, then
				\begin{align*}
							k^{\times} / k^{\times p}
					&	\isomto
							\gr^{m} H^{2}(K, \Lambda),
					\\
							x
					&	\mapsto
							\{\Tilde{x}, \varpi_{K}\},
				\end{align*}
			where $\Tilde{x}$ denotes any lift of $x$ to $\Order_{K}$.
		\item \label{0176}
			If $0 < m < f_{K}$ and $p \mid m$, then
				\begin{align*}
							k / k^{p}
					&	\isomto
							\gr^{m} H^{2}(K, \Lambda),
					\\
							x
					&	\mapsto
							\{1 + \Tilde{x} b_{m}^{p}, \varpi_{K}\}.
				\end{align*}
		\item \label{0177}
			If $0 < m < f_{K}$ and $p \nmid m$, then
				\begin{align*}
							\Omega_{k}^{1}
					&	\isomto
							\gr^{m} H^{2}(K, \Lambda),
					\\
							x \dlog(y)
					&	\mapsto
							\{1 + \Tilde{x} c_{m}, \Tilde{y}\},
				\end{align*}
			where $x \in k$ and $y \in k^{\times}$.
		\item \label{0178}
			If $m = f_{K}$, then
				\begin{align*}
								\xi(F)
							\oplus
								\xi(k)
					&	\isomto
							\gr^{m} H^{2}(K, \Lambda),
					\\
							(x, y)
					&	\mapsto
								\{1 + \Tilde{x} (\zeta_{p} - 1)^{p}, \Tilde{\varpi}_{k}\}
							+
								\{1 + \Tilde{y} (\zeta_{p} - 1)^{p}, \varpi_{K}\},
				\end{align*}
			where $\varpi_{k}$ is any prime element of $k$.
		\item \label{0179}
			If $m > f_{K}$, then $\gr^{m} H^{2}(K, \Lambda) = 0$.
	\end{enumerate}
\end{Prop}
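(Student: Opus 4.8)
This is \cite[\S 2, Proposition 1]{Kat79}, so I only sketch the strategy one would follow. The plan is to organize the computation through Milnor $K$-theory (equivalently, to work with the symbols $\{x,y\}$ directly in $H^{2}$, which the paper's own notation already invites). Since $K$ contains $\zeta_{p}$, the Kato symbol identifies $K_{2}^{M}(K) / p$ with $H^{2}(K, \Lambda(2))$ compatibly with the chosen isomorphism $\Lambda(2) \cong \Lambda$ (the bijectivity of the symbol for the higher local field $K$ being part of the circle of results of \cite{Kat79}). Under this identification $U^{m} H^{2}(K, \Lambda)$ is by definition the subgroup generated by symbols $\{u, x\}$ with $u \in U_{K}^{(m)}$ and $x \in K^{\times}$, so it is enough to describe each graded piece $\gr^{m} = U^{m} / U^{m + 1}$.

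First I would treat $m = 0$: the tame symbol $\{x, \varpi_{K}\}$ together with the isomorphism $K^{\times} / (K^{\times})^{p} U_{K}^{(1)} \cong k^{\times} / (k^{\times})^{p}$ yields the stated map, and one checks it is well defined and bijective on $\gr^{0}$. For $0 < m < f_{K}$ the key tools are bilinearity, the Steinberg relation, and the congruence $\{1 + a \varpi_{K}^{m}, 1 + b \varpi_{K}^{m'}\} \equiv \{1 + a b \varpi_{K}^{m + m'}, u\} \pmod{U^{m + m' + 1}}$ for a suitable unit $u$. Using these one reduces a general admissible symbol to one depending only on the residue of its entry in $k$ and on a $\dlog$; whether the outcome is linear in that residue alone (hence $k / k^{p}$, realised by $x \mapsto \{1 + \Tilde{x} b_{m}^{p}, \varpi_{K}\}$ when $p \mid m$) or bilinear through $\dlog$ (hence $\Omega_{k}^{1}$, realised by $x \, \dlog(y) \mapsto \{1 + \Tilde{x} c_{m}, \Tilde{y}\}$ when $p \nmid m$) is forced by the class of $m$ modulo $p$. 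Well-definedness on the quotient comes from checking that the pull-backs of the relations defining $K_{2}^{M}/p$ land in $U^{m + 1}$.

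The delicate case is the border $m = f_{K}$, where $v_{K}\bigl((\zeta_{p} - 1)^{p}\bigr) = p e_{K} / (p - 1) = f_{K}$, so both $\{1 + \Tilde{x}(\zeta_{p} - 1)^{p}, \varpi_{k}\}$ and $\{1 + \Tilde{y}(\zeta_{p} - 1)^{p}, \varpi_{K}\}$ are admissible. Here the $p$-th power map on principal units contributes at level $f_{K}$ through a Frobenius-semilinear operator (because $p j = e_{K} + j$ exactly at $j = e_{K} / (p - 1) = f_{K} - e_{K}$), and one shows these classes factor through the relevant Artin--Schreier--Witt quotients, producing the summand $\xi(F) \oplus \xi(k)$, and that the two families together exhaust $\gr^{f_{K}}$. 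Finally, for $m > f_{K}$ one has $m - e_{K} > e_{K} / (p - 1)$, whence $U_{K}^{(m)} = \bigl(U_{K}^{(m - e_{K})}\bigr)^{p} \subset (K^{\times})^{p}$; so every symbol with an entry in $U_{K}^{(m)}$ vanishes modulo $p$ and $\gr^{m} = 0$.

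The main obstacle is precisely the border case $m = f_{K}$, together with the verification that the displayed maps are \emph{isomorphisms} rather than merely surjections: this requires the careful symbol calculus around the ramification of $\Q_{p}(\zeta_{p})$ that is the technical heart of Kato's explicit higher local class field theory, and, on the residue-field side, the Bloch--Kato--Gabber identification of $K_{r}^{M}(k) / p$ with logarithmic differential forms, which is the ultimate source of the groups $\Omega_{k}^{1}$, $k / k^{p}$ and $\xi$ appearing in the statement. Pinning down the exact kernels of the symbol maps on each graded piece is the step that would require genuine work beyond this outline.
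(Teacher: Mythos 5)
The paper gives no proof of this proposition: it is quoted verbatim from Kato (\cite[\S 2, Proposition 1]{Kat79}), so there is no argument in the paper to compare against. Your sketch is a faithful outline of Kato's own method (filtration by symbols, the congruence calculus on graded pieces, the border case $m = f_{K}$ via $v_{K}((\zeta_{p}-1)^{p}) = f_{K}$, and the vanishing for $m > f_{K}$ from $U_{K}^{(m)} \subset (K^{\times})^{p}$), and it correctly flags that the injectivity statements are where the real work lies; this matches the paper's treatment, which simply defers to the cited source.
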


Here we used the fact $\xi(F) \isomto \Omega_{k}^{1} / (C - 1) \Omega_{k}^{1}$
via $x \mapsto \Tilde{x} \dlog(\varpi_{k})$.

\begin{Prop}[{\cite[\S 6]{Kat79}}] \label{0044}
	Let $m \ge 0$.
	\begin{enumerate}
		\item \label{0180}
			If $m = 0$, then
				\begin{align*}
							\Lambda \oplus k^{\times} / k^{\times p}
					&	\isomto
							\gr^{m} H^{1}(K, \Lambda),
					\\
							(i, x)
					&	\mapsto
							\{\varpi_{K}^{i} \Tilde{x}\}.
				\end{align*}
		\item \label{0181}
			If $0 < m < f_{K}$ and $p \mid m$, then
				\begin{align*}
							k / k^{p}
					&	\isomto
							\gr^{m} H^{1}(K, \Lambda),
					\\
							x
					&	\mapsto
							\left\{
								1 + \Tilde{x} \left(
									\frac{\zeta_{p} - 1}{b_{f_{K} - m}}
								\right)^{p}
							\right\}.
				\end{align*}
		\item \label{0182}
			If $0 < m < f_{K}$ and $p \nmid m$, then
				\begin{align*}
							k
					&	\isomto
							\gr^{m} H^{1}(K, \Lambda),
					\\
							x
					&	\mapsto
							\left\{
								1 + \Tilde{x} \frac{(\zeta_{p} - 1)^{p}}{c_{f_{K} - m}}
							\right\}.
				\end{align*}
		\item \label{0183}
			If $m = f_{K}$, then
				\begin{align*}
							\xi(k)
					&	\isomto
							\gr^{m} H^{1}(K, \Lambda),
					\\
							x
					&	\mapsto
							\{1 + \Tilde{x} (\zeta_{p} - 1)^{p}\}.
				\end{align*}
		\item \label{0184}
			If $m > f_{K}$, then $\gr^{m} H^{1}(K, \Lambda) = 0$.
	\end{enumerate}
\end{Prop}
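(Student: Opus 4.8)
The plan is to prove the statement by Kummer theory together with a direct analysis of the unit filtration. By the standard invariance of Galois cohomology under completion we may assume $K$ is complete. Since $\zeta_{p} \in K$, the symbol map for $r = 1$ is the Kummer isomorphism $K^{\times} / K^{\times p} \isomto H^{1}(K, \Lambda(1)) = H^{1}(K, \Lambda)$, under which $U^{m} H^{1}(K, \Lambda)$ is by definition the image of $U_{K}^{(m)}$. Thus it suffices to compute the successive quotients $U_{K}^{(m)} \big/ \bigl\langle U_{K}^{(m+1)},\, U_{K}^{(m)} \cap K^{\times p} \bigr\rangle$ for each $m \ge 0$. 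For $m = 0$ this is immediate from $K^{\times} / U_{K}^{(1)} \cong \Z \times k^{\times}$ (the valuation and the residue map, split by a prime element $\varpi_{K}$), which reduces mod $p$ to $\Lambda \oplus k^{\times} / k^{\times p}$ with the stated representatives $\varpi_{K}^{i} \Tilde{x}$. The whole problem is therefore to identify, for $m \ge 1$, which part of $K^{\times p}$ lands in the graded piece $U_{K}^{(m)} / U_{K}^{(m+1)} \cong k$.

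Next I would record the behaviour of the $p$-th power map on $U_{K}^{(n)}$ for $n \ge 1$: writing an element as $1 + x \varpi_{K}^{n}$ with $x \in \Order_{K}$, one has $(1 + x \varpi_{K}^{n})^{p} \equiv 1 + x^{p} \varpi_{K}^{pn}$ modulo higher terms when $n < e_{K} / (p - 1)$, and $(1 + x \varpi_{K}^{n})^{p} \equiv 1 + \Bar{u} x \varpi_{K}^{n + e_{K}}$ when $n > e_{K} / (p - 1)$, where $\Bar{u}$ is the image in $k$ of the unit $p \varpi_{K}^{-e_{K}}$. A short valuation bookkeeping (with Hensel's lemma promoting these congruences to actual membership in $K^{\times p}$) then gives: the $p$-th powers in $U_{K}^{(m)}$ reduce in degree $m$ to $k^{p} \subset k$ when $p \mid m$ and $0 < m < f_{K}$ — a proper subgroup since $k$ is imperfect — so $\gr^{m} H^{1}(K, \Lambda) \cong k / k^{p}$; they reduce to $0$ when $p \nmid m$ and $0 < m < f_{K}$, so $\gr^{m} H^{1}(K, \Lambda) \cong k$; and $U_{K}^{(m)} \subseteq K^{\times p}$ once $m > f_{K}$ (the $p$-th power map then induces isomorphisms on all graded pieces above degree $e_{K}/(p-1)$), so $\gr^{m} H^{1}(K, \Lambda) = 0$. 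Matching the precise representatives of \eqref{0181} and \eqref{0182} is then a matter of checking, using $v_{K}(\zeta_{p} - 1) = e_{K} / (p - 1)$, that the elements $1 + \Tilde{x}\bigl((\zeta_{p} - 1)/b_{f_{K} - m}\bigr)^{p}$ and $1 + \Tilde{x}(\zeta_{p} - 1)^{p} c_{f_{K} - m}^{-1}$ lie in $U_{K}^{(m)}$ and generate the relevant graded piece. (As a cross-check one may instead deduce this range from Proposition \ref{0043} via the cup product \eqref{0042} and perfectness of the induced pairing $\gr^{m} H^{1} \times \gr^{f_{K} - m} H^{2} \to \xi$, but the direct argument above is self-contained.)

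The remaining and genuinely delicate case is $m = f_{K}$. Here $n_{0} := e_{K} / (p - 1)$ is an integer — precisely because $\zeta_{p} \in K$ forces $v_{K}(\zeta_{p} - 1) = e_{K} / (p - 1)$ — and $p n_{0} = f_{K}$, so both leading terms of the $p$-th power expansion land in degree $f_{K}$: $(1 + x \varpi_{K}^{n_{0}})^{p} \equiv 1 + (x^{p} + \Bar{u} x) \varpi_{K}^{f_{K}}$ modulo $U_{K}^{(f_{K}+1)}$. Thus $\gr^{f_{K}} H^{1}(K, \Lambda)$ is the cokernel on $k$ of the additive polynomial $\psi(x) = x^{p} + \Bar{u} x$, and the point is to identify $k / \psi(k)$ with $\xi(k) = k / \wp(k)$. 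This is where $\zeta_{p} \in K$ is used a second time: the cyclotomic identity $p = \prod_{i=1}^{p-1}(1 - \zeta_{p}^{i})$ together with Wilson's theorem shows $p / (\zeta_{p} - 1)^{p-1} \equiv -1 \bmod{\ideal{p}_{K}}$, whence $-\Bar{u} = \Bar{c}^{\,p-1}$ for $\Bar{c}$ the image of $(\zeta_{p} - 1)\varpi_{K}^{-n_{0}}$; rescaling $x \mapsto \Bar{c}\,x$ turns $\psi$ into $\Bar{c}^{\,p}\wp$, so $k/\psi(k) \isomto \xi(k)$, and tracing through makes $1 + \Tilde{x}(\zeta_{p} - 1)^{p}$ the normalized representative. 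I expect this boundary case — the cyclotomic-units computation pinning down $\Bar{u}$ modulo $(p-1)$-st powers and the identification of the twisted Artin–Schreier cokernel with $\xi(k)$ — to be the main obstacle; everything else is uniform valuation arithmetic. In particular every step is insensitive to enlarging $F$ to any $F' \in F^{\perar}$, which is exactly what will make the relative version of this computation go through verbatim later.
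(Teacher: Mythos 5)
Your proposal is correct. The paper does not prove this statement at all: Proposition \ref{0044} is recalled verbatim from Kato \cite[\S 6]{Kat79} in the ``Review of known results'' subsection, so there is nothing in the text to compare against except the citation. Your argument --- reduce to the complete case, use the Kummer isomorphism $K^{\times}/K^{\times p} \cong H^{1}(K,\Lambda)$ so that $\gr^{m}$ becomes $U_{K}^{(m)}/\langle U_{K}^{(m+1)}, U_{K}^{(m)} \cap K^{\times p}\rangle$, and then track the leading term of the $p$-th power map on the unit filtration --- is the classical computation and is essentially what Kato does; in particular your treatment of the boundary case $m = f_{K}$, where $\bar{u} = \overline{p\varpi_{K}^{-e_{K}}} \equiv -\bar{c}^{\,p-1}$ via the cyclotomic identity and Wilson's theorem turns the twisted Artin--Schreier map $x \mapsto x^{p} + \bar{u}x$ into $\bar{c}^{\,p}\wp$ after rescaling, is exactly the point that needs care and you got it right.
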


The restriction of the pairing \eqref{0042} to
	$
			U^{m} H^{2}(K, \Lambda)
		\times
			U^{m'} H^{1}(K, \Lambda)
	$
is zero if $m + m' > f_{K}$.
Hence for $m + m' = f_{K}$, it induces a pairing
	\begin{equation} \label{0045}
				\gr^{m} H^{2}(K, \Lambda)
			\times
				\gr^{m'} H^{1}(K, \Lambda)
		\to
			\xi(F).
	\end{equation}
Let $\Res \colon \Omega_{k}^{1} \to F$ be the residue map.

\begin{Prop}[{\cite[\S 6]{Kat79}}] \label{0046}
	The pairing \eqref{0045},
	via the isomorphisms in
	Propositions \ref{0043}
	and \ref{0044},
	can be translated as follows.
	\begin{enumerate}
		\item \label{0185}
			If $m = 0$, then the pairing becomes
				\begin{align*}
							k^{\times} / k^{\times p} \times \xi(k)
					&	\to
							\xi(F),
					\\
							(x, y)
					&	\mapsto
							\Res(y \dlog(x)).
				\end{align*}
		\item \label{0186}
			If $0 < m < f_{K}$ and $p \mid m$, then the pairing becomes
				\begin{align*}
							k / k^{p} \times k / k^{p}
					&	\to
							\xi(F),
					\\
							(x, y)
					&	\mapsto
							\Res(x d y).
				\end{align*}
		\item \label{0187}
			If $0 < m < f_{K}$ and $p \nmid m$, then the pairing becomes
				\begin{align*}
							\Omega_{k}^{1} \times k
					&	\to
							\xi(F),
					\\
							(\omega, y)
					&	\mapsto
							- m \Res(y \omega).
				\end{align*}
		\item \label{0188}
			If $m = f_{K}$, then the pairing becomes the sum of the pairing
				\begin{align*}
							\xi(F) \times \Lambda
					&	\to
							\xi(F),
					\\
							(x, i)
					&	\mapsto
							i x
				\end{align*}
			and the pairing
				\begin{align*}
							\xi(k) \times k^{\times} / k^{\times p}
					&	\to
							\xi(F),
					\\
							(x, y)
					&	\mapsto
							- \Res(x \dlog y).
				\end{align*}
	\end{enumerate}
\end{Prop}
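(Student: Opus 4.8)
The plan is to follow Kato \cite[\S 6]{Kat79}: the statement is a translation of his explicit computation of the cup product pairing \eqref{0042} in bidegree $(2,1)$ into the language of differential forms, and the proof is in essence a long symbol calculation. First I would use the standing assumptions of the subsection — $K$ contains a primitive $p$-th root of unity $\zeta_p$ and a prime element $\varpi_K$ is fixed — so that $\Lambda(r) \cong \Lambda$ canonically and every class in sight is represented by a Galois symbol. By bilinearity it then suffices to evaluate the pairing on the generators of $\gr^{m} H^{2}(K, \Lambda)$ and of $\gr^{m'} H^{1}(K, \Lambda)$ furnished by Propositions \ref{0043} and \ref{0044}; these are all of the form $\{1 + \Tilde{x} a,\, \Tilde{y}\}$ or $\{1 + \Tilde{z} b\}$ with $a$ and $b$ the explicit elements $\varpi_K$, $b_m$, $c_m$, $(\zeta_p - 1)^p$ occurring there.

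Next I would compute the triple symbols $\{1 + \Tilde{x} a,\, \Tilde{y},\, \varpi_K\} \in H^{3}(K, \Lambda(2))$ that arise, using the standard relations in Milnor $K$-theory modulo $p$ — bilinearity, the Steinberg relation $\{u, 1 - u\} = 0$, and the congruence expressing $\{1 + w,\, 1 + w'\}$ for $v_K(w) + v_K(w')$ near $f_K$ in terms of logarithmic differentials of the arguments — and then apply the isomorphism \eqref{0455}. The point is that $H^{3}(K, \Lambda(2)) \isomto H^{2}(k, \Lambda(1)) \isomto H^{1}(F, \Lambda) \cong \xi(F)$ carries such a triple symbol to the residue of the associated logarithmic form: the first boundary map removes the $\varpi_K$ entry, and the second is the residue map $\Res \colon \Omega_{k}^{1} \to F$ composed with the Artin--Schreier identification $\xi(F) \isomto \Omega_{k}^{1} / (C - 1) \Omega_{k}^{1}$, $x \mapsto \Tilde{x}\, \dlog(\varpi_k)$, recalled after Proposition \ref{0043}. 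Reassembling case by case yields the stated formulas; the coefficient $-m$ in the case $0 < m < f_K$, $p \nmid m$ comes from $v_K(c_m) = m$ together with a sign incurred in permuting the entries of the symbol, and in the case $m = f_K$ the extra $\Lambda$-summand of $\gr^{0} H^{1}(K, \Lambda)$ pairs with the $\xi(F)$-summand of $\gr^{f_K} H^{2}(K, \Lambda)$ by the tautological multiplication, the cross terms vanishing since $f_K + m' > f_K$ for $m' > 0$.

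The main obstacle is the bookkeeping of normalizations rather than any conceptual difficulty. A long chain of identifications is in play — the trivialization of $\Lambda(r)$ by $\zeta_p$, the choices of $\varpi_K$ and $\varpi_k$, the Kato boundary maps, Artin--Schreier theory, and the residue map — each of which carries a potential sign, so extracting the clean formulas, in particular the sign in the case $m = f_K$ and the coefficient $-m$, demands careful tracking of all of them. Since this is exactly Kato's calculation, in practice I would simply invoke \cite[\S 6]{Kat79}, having first recorded the normalization $\xi(F) \isomto \Omega_{k}^{1} / (C - 1)\Omega_{k}^{1}$, $x \mapsto \Tilde{x}\, \dlog(\varpi_k)$, under which his pairing takes the form stated here.
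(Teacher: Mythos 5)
Your proposal matches the paper exactly: the paper gives no proof of this proposition at all, stating it as a direct citation of Kato's computation in \cite[\S 6]{Kat79}, which is precisely what you conclude by doing at the end. Your sketch of how that computation goes (symbol generators from Propositions \ref{0043} and \ref{0044}, triple symbols evaluated through the boundary maps \eqref{0455}, and the normalization $\xi(F) \isomto \Omega_{k}^{1}/(C-1)\Omega_{k}^{1}$ recalled after Proposition \ref{0043}) is a faithful reconstruction of the cited argument.
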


We give here some auxiliary results for $k$:

\begin{Prop} \label{0047}
	Denote the functor $\Hom_{\Ab}(\var, \xi(F))$ by $(\var)^{\vee}$.
	We have a morphism between exact sequences
		\[
			\begin{CD}
					0
				@>>>
					k / k^{p}
				@> d >>
					\Omega_{k}^{1}
				@> C >>
					\Omega_{k}^{1}
				@>>>
					0
				\\
				@. @VVV @VVV @VVV @.
				\\
					0
				@>>>
					(k / k^{p})^{\vee}
				@>> \mathrm{can}^{\vee} >
					k^{\vee}
				@>> \Frob^{\vee} >
					k^{\vee}
				@>>>
					0,
			\end{CD}
		\]
	where the left (resp.\ middle and right) vertical map is given by the pairing
	$(x, y) \mapsto \Res(y dx)$ (resp.\ $(\omega, y) \mapsto \Res(y \omega)$).
	We also have a morphism between exact sequences
		\[
			\begin{CD}
					0
				@>>>
					k^{\times} / k^{\times p}
				@> \dlog >>
					\Omega_{k}^{1}
				@> C - 1 >>
					\Omega_{k}^{1}
				@>>>
					\xi_{1}(F)
				@>>>
					0
				\\
				@. @VVV @VVV @VVV @VVV @.
				\\
					0
				@>>>
					\xi(k)^{\vee}
				@>> \mathrm{can}^{\vee} >
					k^{\vee}
				@>> \wp^{\vee} >
					k^{\vee}
				@>>>
					\Lambda^{\vee}
				@>>>
					0
			\end{CD}
		\]
	where the left (resp.\ two middle, resp.\ right) vertical map is given by the pairing
	$(x, y) \mapsto \Res(y \dlog(x))$
	(resp.\ $(\omega, y) \mapsto \Res(y \omega)$,
	resp.\ $(x, i) \mapsto i x$).
\end{Prop}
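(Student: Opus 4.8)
The plan is to realize each diagram as follows: the top row is an honest exact sequence coming from Cartier theory (plus Kato's computation of $\Coker(C-1)$), the bottom row is obtained by applying $\Hom_{\Ab}(\var,\xi(F))$ to a closely related exact sequence, and the vertical maps are the ones induced by the residue pairing $\Omega^{1}_{k}\times k\to\xi(F)$, $(\omega,y)\mapsto\overline{\Res(y\omega)}$ (and its variants, the bar denoting the image in $\xi(F)=F/\wp F$); commutativity is then a residue computation. First I would set up the top rows. Since $k$ is a complete discrete valuation field of equal characteristic $p$ with perfect residue field $F$, the Cohen structure theorem gives $k\cong F((t))$, so $\Omega^{1}_{k}$ is one-dimensional over $k$, $Z\Omega^{1}_{k}=\Omega^{1}_{k}$, the map $d\colon k\to\Omega^{1}_{k}$ has kernel $k^{p}$ and image $B\Omega^{1}_{k}$, and the Cartier operator $C\colon\Omega^{1}_{k}\to\Omega^{1}_{k}$ is surjective with kernel $B\Omega^{1}_{k}$; this gives exactness of the first top row. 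For the second top row, $\Ker(\dlog\colon k^{\times}\to\Omega^{1}_{k})=k^{\times p}$ is immediate, $\operatorname{Im}(\dlog)=\Ker(C-1)$ is the identification of logarithmic differentials (equivalently, the $H^{0}$ of $0\to\nu(1)\to\Omega^{1}\xrightarrow{C-1}\Omega^{1}\to0$ on $\Spec k_{\et}$, with $H^{0}(k,\nu(1))\cong k^{\times}/k^{\times p}$ via $\dlog$), and $\Coker(C-1)\cong\xi_{1}(F)$ is the isomorphism recalled after Proposition \ref{0043} (which, via Proposition \ref{0041}, identifies with $H^{2}(k,\Lambda(1))$).

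Next I would produce the bottom rows. The key observation is that every group appearing --- $k$, $k/k^{p}$, $k^{\times}/k^{\times p}$, $\xi(k)=k/\wp k$, $\Lambda$ and $\xi(F)$ --- is an $\F_{p}$-vector space, so on these groups $\Hom_{\Ab}(\var,\xi(F))$ coincides with $\Hom_{\F_{p}}(\var,\xi(F))$ and is in particular exact. The first bottom row is then the $\Hom_{\Ab}(\var,\xi(F))$-dual of the short exact sequence $0\to k\xrightarrow{\Frob}k\xrightarrow{\mathrm{can}}k/k^{p}\to0$ (with $\Frob$ injective because $k$ is a field), and the second bottom row is the dual of the four-term exact sequence $0\to\Lambda\into k\xrightarrow{\wp}k\xrightarrow{\mathrm{can}}\xi(k)\to0$ coming from Artin--Schreier theory.

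Then I would check that the vertical maps are well defined. The middle/right verticals are, in both diagrams, the map $\Omega^{1}_{k}\to k^{\vee}$, $\omega\mapsto(y\mapsto\overline{\Res(y\omega)})$, which is tautologically a homomorphism. The leftmost vertical of the first diagram, $x\mapsto(\bar y\mapsto\overline{\Res(y\,dx)})$, factors through $k/k^{p}$ in $y$ since $\Res(z^{p}\,dx)=\Res(d(z^{p}x))=0$, and is independent of the lift of $x$ since $\Res(y\,d(w^{p}))=0$, so it lands in $(k/k^{p})^{\vee}$. The leftmost vertical of the second diagram, $x\mapsto(\bar y\mapsto\overline{\Res(y\,\dlog x)})$, is well defined modulo $\wp k$ in $y$: combining $C(z^{p}\dlog x)=z\dlog x$ with $\Res\compose C=\Frob^{-1}\compose\Res$ and the identity $a-a^{1/p}=-\wp(a^{1/p})\in\wp F$ shows $\Res((z^{p}-z)\dlog x)\in\wp F$. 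The rightmost vertical $\xi_{1}(F)\to\Lambda^{\vee}$ is, under $\Lambda^{\vee}\cong\xi(F)$, induced by the residue isomorphism $\Omega^{1}_{k}/(C-1)\Omega^{1}_{k}\isomto\xi_{1}(F)$, which is well defined by the same $\Res\compose C$ compatibility and is the inverse of $x\mapsto\Tilde{x}\dlog(\varpi_{k})$. Finally, commutativity of each square is a direct residue computation: the squares built from $\mathrm{can}$ and $\mathrm{can}^{\vee}$ commute on the nose because precomposition with $\mathrm{can}$ merely restates the pairing formula on a larger group; the squares with $C$ (resp.\ $C-1$) over $\Frob^{\vee}$ (resp.\ $\wp^{\vee}$) commute after projection to $\xi(F)$, since the projection formula $C(y^{p}\omega)=y\,C\omega$ and $\Res\compose C=\Frob^{-1}\compose\Res$ give $\Res(y\,C\omega)=\Res(y^{p}\omega)^{1/p}\equiv\Res(y^{p}\omega)\pmod{\wp F}$, i.e.\ the residue pairing intertwines $C$ with $\Frob^{\vee}$ (subtract the identity for $C-1$ and $\wp^{\vee}$); and the rightmost square commutes because $\Res$ restricted to $\F_{p}\subset k$ is multiplication by $\Res(\omega)$ and the isomorphism $\Omega^{1}_{k}/(C-1)\Omega^{1}_{k}\isomto\xi_{1}(F)$ is $\Res$-induced.

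The main obstacle, such as it is, is purely bookkeeping: keeping the Frobenius twist in $\Res\compose C=\Frob^{-1}\compose\Res$ straight, so that the Cartier-operator squares are recognized to commute only after the harmless projection $F\onto\xi(F)$ (harmless because the target of $(\var)^{\vee}$ is $\xi(F)$), and noticing at the outset that $\Hom_{\Ab}(\var,\xi(F))$ is exact on all the groups in play, so that the bottom rows are genuinely exact four-term sequences.
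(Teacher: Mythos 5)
Your proposal is correct and follows essentially the same route as the paper's (very terse) proof: the rows' exactness is the standard Cartier/Artin--Schreier theory (the bottom rows being duals of exact sequences under the exact functor $\Hom_{\Ab}(\var,\xi(F))$ on $\F_{p}$-vector spaces), and commutativity of the squares rests on exactly the identities the paper cites, namely $C(y^{p}\omega)=y\,C\omega$, $\Res(C\omega)=\Res(\omega)^{1/p}$ in $F$, and $a^{1/p}=a$ in $\xi(F)$. Your additional well-definedness checks for the vertical maps are correct fillings-in of details the paper leaves implicit.
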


\begin{proof}
	The exactness of the rows is standard.
	The commutativity of the squares follows from the facts that
	$C(x^{p} \omega) = x C \omega$,
	$\Res(C \omega) = \Res(\omega)^{1 / p}$ in $F$
	and $\Res(\omega)^{1 / p} = \Res(\omega)$ in $\xi(F)$.
\end{proof}

\begin{Prop} \label{0451}
	Let $n \ge 1$.
	We have
		\[
				R^{q} \pi_{\alg{k}, \ast} \Lambda
			\cong
				\begin{cases}
						\Lambda
					&	\text{if }
						q = 0,
					\\
						\alg{k} / \wp \alg{k}
					&	\text{if }
						q = 1,
					\\
						0
					&	\text{otherwise},
				\end{cases}
		\]
		\[
				R^{q} \pi_{\alg{k}, \ast} \nu_{n}(1)
			\cong
				\begin{cases}
						\alg{k}^{\times} / \alg{k}^{\times p^{n}}
					&	\text{if }
						q = 0,
					\\
						0
					&	\text{otherwise}.
				\end{cases}
		\]
\end{Prop}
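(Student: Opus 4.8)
The plan is to reduce, via Proposition~\ref{0031}, to the small-\'etale cohomology of the complete discrete valuation field $\alg{k}(F') \cong F'((t))$ (a finite product of such when $F'$ is only artinian) and its dependence on $F'$: concretely, $R^{q} \pi_{\alg{k}, \ast} G$ is the \'etale sheafification on $\Spec F^{\perar}_{\et}$ of the presheaf $F' \mapsto H^{q}(\alg{k}(F')_{\et}, G)$. I will use repeatedly that $\Spec \alg{k}(F')$ is affine, so that $H^{q}(\alg{k}(F')_{\et}, M) = 0$ for $q \ge 1$ whenever $M$ is quasi-coherent (comparison with Zariski cohomology); this applies to $\Ga$ and to the de Rham--Witt sheaves, and since $\alg{k}(F')$ is a (product of) field(s) it also gives $H^{1}(\alg{k}(F')_{\et}, \Gm) = \Pic(\alg{k}(F')) = 0$.

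For $\Lambda = \Z/p\Z$: the field $\alg{k}(F')$ has $p$-cohomological dimension $\le 1$, so $H^{q}(\alg{k}(F')_{\et}, \Lambda) = 0$ for $q \ge 2$; moreover $H^{0} = \Lambda$ and, by Artin--Schreier, $H^{1}(\alg{k}(F')_{\et}, \Lambda) \cong \alg{k}(F')/\wp\alg{k}(F')$. Sheafifying over $F'$ — equivalently, applying $R\pi_{\alg{k}, \ast}$ to $0 \to \Lambda \to \Ga \xrightarrow{\wp} \Ga \to 0$ and using $R^{q}\pi_{\alg{k}, \ast}\Ga = 0$ for $q \ge 1$ — gives $R^{0}\pi_{\alg{k}, \ast}\Lambda = \Ker(\wp \colon \alg{k} \to \alg{k}) = \Lambda$, $R^{1}\pi_{\alg{k}, \ast}\Lambda = \alg{k}/\wp\alg{k}$, and $R^{q}\pi_{\alg{k}, \ast}\Lambda = 0$ for $q \ge 2$. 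Note that $\alg{k}/\wp\alg{k}$ is genuinely nonzero, since $\wp$ is not an \'etale epimorphism of $\alg{k}$ (solving $x^{p} - x = y$ for $y$ with a pole requires a wildly ramified extension of $\alg{k}(F')$, not a base change of $F'$); this is the opposite of what happens for $\nu_{n}(1)$ below.

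For $\nu_{n}(1)$ I would treat degrees $0$, $\ge 2$, and $1$ in turn. In degree $0$, the Bloch--Kato--Gabber theorem identifies $\nu_{n}(1)$ on $\alg{k}_{\et}$ as the image of $\dlog \colon \Gm \to W_{n}\Omega^{1}$, giving a short exact sequence $1 \to \Gm \xrightarrow{p^{n}} \Gm \xrightarrow{\dlog} \nu_{n}(1) \to 1$; applying $R\pi_{\alg{k}, \ast}$ and using $R^{1}\pi_{\alg{k}, \ast}\Gm = 0$ yields $R^{0}\pi_{\alg{k}, \ast}\nu_{n}(1) = \Coker(p^{n} \colon \alg{k}^{\times} \to \alg{k}^{\times}) = \alg{k}^{\times}/\alg{k}^{\times p^{n}}$. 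In degrees $\ge 2$, I would apply $R\pi_{\alg{k}, \ast}$ to the fundamental exact sequence expressing $\nu_{n}(1)$ on $\alg{k}_{\et}$ as a kernel inside a quotient of $W_{n}\Omega^{1}_{\alg{k}}$ — for $n = 1$ the Artin--Schreier--Cartier sequence $0 \to \nu(1) \to Z\Omega^{1} \xrightarrow{C - 1} \Omega^{1} \to 0$ recalled in Section~\ref{0029}, in general \cite{Shi07} — whose terms other than $\nu_{n}(1)$ are quasi-coherent de Rham--Witt sheaves, hence have vanishing $R^{q}\pi_{\alg{k}, \ast}$ for $q \ge 1$; the long exact sequence then forces $R^{q}\pi_{\alg{k}, \ast}\nu_{n}(1) = 0$ for $q \ge 2$. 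Finally, in degree $1$: $R^{1}\pi_{\alg{k}, \ast}\nu_{n}(1)$ is the sheafification of $F' \mapsto H^{1}(\alg{k}(F')_{\et}, \nu_{n}(1))$, and Kato's local computation — the positive-characteristic content of Proposition~\ref{0041}, applied to the complete discrete valuation field $\alg{k}(F')$ with perfect residue field $F'$, where $H^{1}(\alg{k}(F')_{\et}, \nu_{n}(1)) = H^{2}(\alg{k}(F'), \Lambda_{n}(1))$ is computed by the residue isomorphism — gives $H^{1}(\alg{k}(F')_{\et}, \nu_{n}(1)) \cong \xi_{n}(F')$ naturally in $F'$. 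But the presheaf $F' \mapsto \xi_{n}(F') = W_{n}(F')/\wp W_{n}(F')$ has zero \'etale sheafification on $\Spec F^{\perar}_{\et}$, because $\wp \colon W_{n} \to W_{n}$ is an epimorphism of \'etale sheaves there (Artin--Schreier--Witt); hence $R^{1}\pi_{\alg{k}, \ast}\nu_{n}(1) = 0$.

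The step I expect to be the main obstacle is precisely this degree-$1$ vanishing for $\nu_{n}(1)$: the individual cohomology groups $H^{1}(\alg{k}(F')_{\et}, \nu_{n}(1))$ are in general nonzero, so there is no formal reason for the pushforward sheaf to vanish, and proving that it does requires Kato's explicit identification of that group with $\xi_{n}$ of the residue field together with the observation that $\xi_{n}$ — a genuine sheaf for the Zariski and Nisnevich topologies — becomes trivial on the \'etale site. A secondary point requiring attention is that the Bloch--Kato--Gabber presentation and the fundamental exact sequence must be available for the regular $\F_{p}$-scheme $\Spec\alg{k}(F')$, not merely for smooth varieties over a perfect field, which is why the argument leans on \cite{Shi07} and the constructions of Section~\ref{0029}.
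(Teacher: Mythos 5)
Your argument is correct, and the $\Lambda$-part coincides exactly with the paper's proof (Artin--Schreier sequence $0 \to \Lambda \to \Ga \stackrel{\wp}{\to} \Ga \to 0$ plus $R^{q}\pi_{\alg{k},\ast}\Ga = 0$ for $q \ge 1$). For $\nu_{n}(1)$ you take a genuinely different route. The paper applies $R\pi_{\alg{k},\ast}$ to the single sequence $0 \to \Gm \stackrel{p^{n}}{\to} \Gm \to \nu_{n}(1) \to 0$ and reduces the whole statement, in all degrees at once, to the vanishing $R^{q}\pi_{\alg{k},\ast}\Gm = 0$ for $q \ge 1$, which it cites from \cite[Proof of Proposition 9.1]{Suz21} (in substance: over algebraically closed $F'$ the field $\alg{k}(F')$ is $C_{1}$ by Lang's theorem, so $\Pic$, $\Br$ and the higher cohomology of $\Gm$ all sheafify to zero). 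You extract only degree $0$ from that sequence, using nothing beyond Hilbert 90, and then supply degrees $\ge 2$ from the de Rham--Witt fundamental exact sequence (quasi-coherence of $W_{n}\Omega^{1}$ on the affine $\Spec\alg{k}(F')$) and degree $1$ from Kato's identification $H^{1}(\alg{k}(F'),\nu_{n}(1)) \cong \xi_{n}(F')$ together with the \'etale-local surjectivity of $\wp$ on $W_{n}$. Both are valid; note that your degree-$1$ step is essentially equivalent to the $q = 2$ case of the cited $\Gm$-vanishing, since the Kummer sequence identifies $H^{1}(\alg{k}(F'),\nu_{n}(1))$ with $\Br(\alg{k}(F'))[p^{n}]$ and Kato's residue isomorphism is precisely the computation of that Brauer group as $\xi_{n}(F')$. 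What your route buys is independence from the $C_{1}$/Brauer-group input, at the cost of invoking \cite{Shi07} for the fundamental exact sequence on the regular (non-smooth) scheme $\Spec\alg{k}(F')$; what the paper's route buys is brevity and uniformity.
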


\begin{proof}
	The first isomorphism follows from the exact sequence
		\[
				0
			\to
				\Lambda
			\to
				\Ga
			\stackrel{\wp}{\to}
				\Ga
			\to
				0
		\]
	in $\Ab(\alg{k}_{\et})$ and $R^{q} \pi_{\alg{k}, \ast} \Ga = 0$ for $q \ge 1$.
	For the second, using the exact sequence
		\[
				0
			\to
				\Gm
			\stackrel{p^{n}}{\to}
				\Gm
			\to
				\nu_{n}(1)
			\to
				0
		\]
	in $\Ab(\alg{k}_{\et})$, it suffices to show $R^{q} \pi_{\alg{k}, \ast} \Gm = 0$ for $q \ge 1$.
	This is proved in \cite[Proof of Proposition 9.1]{Suz21}.
\end{proof}


\subsection{Duality statement}

Let $\alg{O}_{K}$ be a lifting system for $\Order_{K}$.
Set $\alg{K}(F') = \alg{O}_{K}(F') \tensor_{\Order_{K}} K$ for $F' \in F^{\perar}$.
The cohomological dimension of $R \pi_{\alg{K}, \ast} \Lambda_{n}(r)$ is $2$
and there is a trace morphism:

\begin{Prop} \label{0500}
	For any $q \ge 3$, $r \in \Z$ and $n \ge 1$, we have
	$R^{q} \pi_{\alg{K}, \ast} \Lambda_{n}(r) = 0$.
	There exists a unique morphism
		\begin{equation} \label{0358}
				R^{2} \pi_{\alg{K}, \ast} \Lambda_{n}(2)
			\to
				\Lambda_{n}
		\end{equation}
	in $\Ab(F^{\perar}_{\et})$ such that for any field $F' \in F^{\perar}$,
	the composite
		\[
				H^{2}(\alg{K}(F'), \Lambda_{n}(2))
			\to
				\Gamma \bigl(
					F',
					R^{2} \pi_{\alg{K}, \ast} \Lambda_{n}(2)
				\bigr)
			\to
				\Lambda_{n}
		\]
	with the natural map is the map \eqref{0437} for the field $\alg{K}(F')$.
\end{Prop}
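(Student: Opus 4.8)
The plan is to prove the two assertions separately: first the vanishing $R^{q}\pi_{\alg{K},\ast}\Lambda_{n}(r)=0$ for $q\ge 3$, then the construction and the characterization of the trace.

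\textbf{Vanishing.} To show that the sheaf $R^{q}\pi_{\alg{K},\ast}\Lambda_{n}(r)$ on $\Spec F^{\perar}_{\et}$ vanishes for $q\ge 3$ it suffices to check that all of its stalks vanish. By Proposition \ref{0166} the topos-theoretic points are the $p_{(x',F')}$ with $F'\in F^{\perar}$ a field and $x'$ the spectrum of a separable closure $\closure{F}'$ of $F'$, which is an algebraic closure since $F'$ is perfect. Since $\alg{K}(\closure{F}')$ is the filtered direct limit of the $\alg{K}(F'')$ over finite \'etale extensions $F''/F'$ inside $\closure{F}'$, and \'etale cohomology commutes with such limits (\cite[Expos\'e VII, Corollaire 5.9]{AGV72b}, as in the proof of Proposition \ref{0314}), the stalk of $R^{q}\pi_{\alg{K},\ast}\Lambda_{n}(r)$ at $p_{(x',F')}$ is $H^{q}(\alg{K}(\closure{F}'),\Lambda_{n}(r))$. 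For $q\ge 4$ this is $0$ by Proposition \ref{0041}. For $q=3$ and $r=2$, the identification \eqref{0491} applied to $\alg{K}(\closure{F}')$ rewrites this group as $H^{1}\bigl(\Gal(\closure{\closure{F}'}/\closure{F}'),\,-\bigr)$, which is $0$ because the Galois group is trivial; for general $r$ the iterated residue isomorphisms of \cite[\S 5]{Kat79} identify $H^{3}(\alg{K}(\closure{F}'),\Lambda_{n}(r))$ with an $H^{1}$ of the separably closed field $\closure{F}'$ with $p^{n}$-torsion coefficients, hence again $0$.

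\textbf{The trace morphism.} The morphisms \eqref{0437} are functorial in the two-dimensional local field in the following sense: for a finite \'etale extension $F'\to F''$ in $F^{\perar}$ the residue maps of $\alg{K}(F')$ and $\alg{K}(F'')$ are compatible with pullback along the unramified extension $\alg{K}(F')\to\alg{K}(F'')$ and with the identity on $\Lambda_{n}$, by the standard compatibility of boundary maps with restriction along unramified extensions. Hence $F'\mapsto\bigl(H^{2}(\alg{K}(F'),\Lambda_{n}(2))\to\Lambda_{n}\bigr)$ is a morphism of presheaves of abelian groups on $\Spec F^{\perar}$. Sheafifying, the source becomes $R^{2}\pi_{\alg{K},\ast}\Lambda_{n}(2)$, which by definition is the sheaf associated to $F'\mapsto H^{2}(\alg{K}(F'),\Lambda_{n}(2))$ (the cohomology of the slice over $F'$ agrees with small-\'etale-site cohomology of $\alg{K}(F')$ by the exactness in Proposition \ref{0031}), while $\Lambda_{n}$ is already a sheaf; this produces \eqref{0358}. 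More intrinsically, \eqref{0358} can be realized as the composite of the boundary map $R^{2}\pi_{\alg{K},\ast}\Lambda_{n}(2)\to R^{3}\pi_{\alg{O}_{K},!}\mathfrak{T}_{n}(2)$ coming from the localization triangle \eqref{0434}, the absolute purity isomorphism $R^{3}\pi_{\alg{O}_{K},!}\mathfrak{T}_{n}(2)\cong R^{0}\pi_{\alg{k},\ast}\nu_{n}(1)$ (\cite{Sat07}) followed by the identification $R^{0}\pi_{\alg{k},\ast}\nu_{n}(1)\cong\alg{k}^{\times}/\alg{k}^{\times p^{n}}$ of Proposition \ref{0451}, and the $p^{n}$-reduction of the valuation $\alg{k}^{\times}/\alg{k}^{\times p^{n}}\to\Lambda_{n}$; unwinding definitions, this composite induces \eqref{0437} on $F'$-sections and so agrees with the sheafified map.

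\textbf{Uniqueness and the main obstacle.} Uniqueness is then formal: a morphism of sheaves $R^{2}\pi_{\alg{K},\ast}\Lambda_{n}(2)\to\Lambda_{n}$ is determined by its stalks, which by the first step are the groups $H^{2}(\alg{K}(\closure{F}'),\Lambda_{n}(2))$ for algebraically closed $\closure{F}'\in F^{\perar}$, and the prescribed compatibility with \eqref{0437} over all fields pins down the induced map on each such stalk. The routine but essential technical input is the stalk identification of the first step; the step I expect to be the main obstacle is the precise compatibility underlying the trace — namely, for the sheafification construction, the functoriality of Kato's residue \eqref{0437} under the finite \'etale base changes $\alg{K}(F')\to\alg{K}(F'')$, and, for the localization-triangle realization, the compatibility of Sato's purity isomorphism with Kato's identifications \eqref{0455} and \eqref{0437}.
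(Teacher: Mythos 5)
Your proof is correct and follows essentially the same route as the paper: the paper's proof simply observes that $R^{q}\pi_{\alg{K},\ast}\Lambda_{n}(r)$ is the \'etale sheafification of the presheaf $F'\mapsto H^{q}(\alg{K}(F'),\Lambda_{n}(r))$ and invokes Proposition \ref{0041} (with the sheafification killing the $H^{3}$ term via \eqref{0491}, and the functoriality of \eqref{0437} giving existence and uniqueness of the trace). Your stalkwise vanishing argument and presheaf-sheafification construction of the trace are just a more detailed writeup of the same argument.
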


\begin{proof}
	Since $R^{q} \pi_{\alg{K}, \ast} \Lambda_{n}(r) \in \Ab(F^{\perar}_{\et})$ is
	the \'etale sheafification of the presheaf
		\[
				F'
			\mapsto
				H^{q}(\alg{K}(F'), \Lambda_{n}(r)),
		\]
	this follows from Proposition \ref{0041}.
\end{proof}

Hence we have canonical morphisms
	\[
			R \pi_{\alg{K}, \ast} \Lambda_{n}(2)
		\to
			\Lambda[-2]
		\to
			\Lambda_{\infty}[-2]
	\]
in $D(F^{\perar}_{\et})$.
Here is the duality for $K$ over $\Spec F^{\perar}_{\et}$:

\begin{Prop} \label{0055}
	Let $n \ge 1$ and $r, r' \in \Z$ with $r + r' = 2$.
	\begin{enumerate}
		\item
			We have
			$R \pi_{\alg{K}, \ast} \Lambda_{n}(r) \in \genby{\mathcal{W}_{F}}_{F^{\perar}_{\et}}$.
		\item
			The composite morphism
				\begin{equation} \label{0439}
							R \pi_{\alg{K}, \ast} \Lambda_{n}(r)
						\tensor^{L}
							R \pi_{\alg{K}, \ast} \Lambda_{n}(r')
					\to
						R \pi_{\alg{K}, \ast} \Lambda_{n}(2)
					\to
						\Lambda_{\infty}[-2]
				\end{equation}
			of \eqref{0327} and \eqref{0358} is a perfect pairing in $D(F^{\perar}_{\et})$.
	\end{enumerate}
\end{Prop}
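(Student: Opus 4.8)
First I would prove (1), since (2) builds on it, and I would begin with three preliminary reductions. By the short exact sequences relating $\Lambda_{n}(r)$ for consecutive $n$ one reduces both statements to $n = 1$: statement (1) because $\genby{\mathcal{W}_{F}}_{F^{\perar}_{\et}}$ is triangulated, and statement (2) by the five lemma applied to the morphism of localization triangles these sequences induce, using that the trace morphisms \eqref{0358} and the inclusions $\Lambda_{n} \into \Lambda_{\infty}$ are compatible in $n$. Next, a restriction--corestriction argument along the prime-to-$p$ extension $K(\zeta_{p}) / K$ (using finite-morphism duality as in Proposition~\ref{0452}) reduces to the case $\zeta_{p} \in K$; then the cyclotomic character is trivial modulo $p$, so $\Lambda(r) \cong \Lambda$ on $\Spec \alg{K}_{\et}$ for every $r$, leaving a single sheaf to work with. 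So one may assume $n = 1$ and $\zeta_{p} \in K$.

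For (1), I would use the localization triangle
	\[
			R \pi_{\alg{O}_{K}, !} \mathfrak{T}_{1}(r)
		\to
			R \pi_{\alg{O}_{K}, \ast} \mathfrak{T}_{1}(r)
		\to
			R \pi_{\alg{K}, \ast} \Lambda(r)
	\]
of Section~\ref{0037}. Since $\mathfrak{T}_{1}(r)$ is torsion and $(\alg{O}_{K}(F'), \ideal{p}_{K} \alg{O}_{K}(F'))$ is a henselian pair, Proposition~\ref{0341} gives $R \pi_{\alg{O}_{K}, \ast} \mathfrak{T}_{1}(r) \cong R \pi_{\alg{k}, \ast} i^{\ast} \mathfrak{T}_{1}(r)$, while $R \pi_{\alg{O}_{K}, !} \mathfrak{T}_{1}(r) = R \pi_{\alg{k}, \ast} R i^{!} \mathfrak{T}_{1}(r)$ by definition. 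Because $\alg{k}(F') = F'((t))$ has $p$-rank one, the only nonzero logarithmic Hodge--Witt sheaves on $\alg{k}$ are $\nu(0) = \Lambda$ and $\nu(1)$, so by the definition of $\mathfrak{T}_{1}(r)$ both $i^{\ast} \mathfrak{T}_{1}(r)$ and $R i^{!} \mathfrak{T}_{1}(r)$ are built out of $\Lambda$ and $\nu(1)$; Proposition~\ref{0451} then shows that $R \pi_{\alg{k}, \ast}$ applied to them has cohomology sheaves among $\Lambda$, $\alg{k} / \wp \alg{k}$ and $\alg{k}^{\times} / \alg{k}^{\times p}$, each in $\mathcal{W}_{F}$ (the first finite étale $p$-primary; the others having a Tate vector group as identity component and finite étale $p$-primary $\pi_{0}$). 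Hence $R \pi_{\alg{K}, \ast} \Lambda(r) \in \genby{\mathcal{W}_{F}}_{F^{\perar}_{\et}}$.

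For (2), part (1) lets me apply Proposition~\ref{0026} to $R \pi_{\alg{K}, \ast} \Lambda(r)$ and $R \pi_{\alg{K}, \ast} \Lambda(r')[-2]$, reducing the perfectness of \eqref{0439} to that of the pairing $R \varepsilon_{\ast} R \pi_{\alg{K}, \ast} \Lambda(r) \tensor^{L} R \varepsilon_{\ast} R \pi_{\alg{K}, \ast} \Lambda(r') \to \xi_{\infty}[-3]$ in $D(F^{\perar}_{\zar})$. Over the Zariski site the relevant higher $\Ext$-sheaves vanish (the point of Sections~\ref{0154} and~\ref{0155}), so both sides are represented by explicit complexes of group sheaves in degrees $[0, 2]$ whose differentials and cup product are Kato's symbol calculus of Section~\ref{0040}. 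I would then put the symbol filtration $U^{\bullet}$ of Propositions~\ref{0043} and~\ref{0044} on a model of $R \varepsilon_{\ast} R \pi_{\alg{K}, \ast} \Lambda(r)$; it is finite and exhaustive since the graded pieces vanish for $m$ large, and by Proposition~\ref{0046} the cup product is filtered, killing $U^{m} \tensor U^{m'}$ when $m + m' > f_{K}$ and inducing an explicit residue pairing on the $(m, f_{K} - m)$ graded pieces. By the standard argument for pairings of finitely filtered complexes it then suffices to prove perfectness on associated graded. There Proposition~\ref{0046} presents the pairing as a sum of evaluation pairings between the Tate vector groups $\alg{k}$, $\Omega_{\alg{k}}^{1}$ and their duals (perfect by Proposition~\ref{0027}), the multiplication pairing $\Lambda \tensor^{L} \xi \to \xi_{\infty}$ (perfect by Proposition~\ref{0028}), and the trace pairing $\Lambda \tensor \xi \to \xi$ in degrees $(0, 2)$ and $(2, 0)$; the graded pieces that are two-term complexes over $\alg{k}$ (arising from $R \pi_{\alg{k}, \ast} \Lambda$ through the operators $d$, $C$, $\wp$) are identified as residue-duals of one another by Proposition~\ref{0047}, reducing them again to Propositions~\ref{0027} and~\ref{0028}. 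Assembling these gives perfectness on associated graded, hence (2).

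The conceptual steps are routine; the real work --- and the main obstacle --- is the bookkeeping needed to verify that the abstract cup product \eqref{0327} underlying \eqref{0439} actually transports, through the localization triangle, through the henselian-invariance identification, and through the symbol filtration, to precisely the residue pairings of Proposition~\ref{0046}, with the correct signs and the correct identifications of Tate twists and $\zeta_{p}$-normalizations, so that Propositions~\ref{0027}, \ref{0028} and~\ref{0047} apply literally. A secondary check is that the reduction to $\zeta_{p} \in K$ is compatible with the target $\Lambda_{\infty}$, i.e.\ that corestriction along $K(\zeta_{p}) / K$ intertwines the trace morphisms \eqref{0358}.
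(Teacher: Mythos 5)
Your part (2) follows the paper's route exactly: reduce to $\zeta_{p} \in K$ and $n = 1$ (norm/direct-summand argument for $K(\zeta_{p})/K$ — this is where closure under direct summands in the definition of $\genby{\mathcal{W}_{F}}_{F^{\perar}_{\et}}$ is used — and d\'evissage in $n$), pass to $\Spec F^{\perar}_{\zar}$ via Proposition \ref{0026}, and check perfectness on the graded pieces of Kato's symbol filtration using Propositions \ref{0046}, \ref{0027}, \ref{0028} and \ref{0047}. This is precisely Proposition \ref{0050} and its proof. The ``bookkeeping'' you flag at the end is exactly what Section \ref{0048} does when it defines $U^{m} H^{q} \mathcal{F}_{K}$ functorially and quotes Proposition \ref{0046} for the induced pairings on $\gr^{m}$.

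For part (1) you take a genuinely different route, and it contains a false step. The paper proves (1) directly: the sheafified Kato filtrations \eqref{0440} and \eqref{0441} exhibit $H^{q} \mathcal{E}_{K}$ for $q = 1, 2$ as successive extensions of $\alg{k}$, $\Omega_{\alg{k}}^{1}$, $\alg{k}/\alg{k}^{p}$, $\alg{k}^{\times}/\alg{k}^{\times p}$, $\alg{k}/\wp\alg{k}$ and $\Lambda$, all in $\mathcal{W}_{F}$. You instead go through the localization triangle for $\alg{O}_{K}$ and henselian invariance (Proposition \ref{0341}), which is viable in principle (it is close to how the paper later proves Proposition \ref{0445}), but your justification that $i^{\ast}\mathfrak{T}_{1}(r)$ and $R i^{!}\mathfrak{T}_{1}(r)$ are ``built out of $\Lambda$ and $\nu(1)$'' is wrong. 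Since $\mathfrak{T}_{1}(r)$ is the fiber of $\tau_{\le r} R j_{\ast}\Lambda(r) \to \nu(r-1)[-r]$, the complex $i^{\ast}\mathfrak{T}_{1}(r)$ involves the truncated nearby cycles $\tau_{\le r} R\Psi\Lambda$, whose cohomology sheaves carry the Bloch--Kato filtration with graded pieces including $\Ga$, $\Ga/\Ga^{p}$ and $\Omega^{1}$ in addition to $\Lambda$ and $\nu(1)$ (see the tables in the proof of Proposition \ref{0448}); the $p$-rank of the residue field only controls the boundary term $\nu(r-1)$, not the nearby cycles. Your conclusion survives because $R\pi_{\alg{k},\ast}$ of these extra pieces are the Tate vector groups $\alg{k}$, $\Omega_{\alg{k}}^{1}$ and the quotient $\alg{k}/\alg{k}^{p}$, which lie in $\mathcal{W}_{F}$, but the list of building blocks and of resulting sheaves must be enlarged accordingly, and the computation you would then be doing is essentially the one the paper performs anyway for Proposition \ref{0445}. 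The paper's direct argument for (1) is shorter and avoids $\mathfrak{T}_{1}(r)$ entirely.
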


We will prove this below.
After the proof, we will pass to $\Spec F^{\ind\rat}_{\pro\et}$
in Section \ref{0052}.


\subsection{Duality in the perfect artinian Zariski topology}
\label{0048}

We work with the Zariski version $\Spec F^{\perar}_{\zar}$
to write down the duality pairing
and prove a Zariski version of the duality.

Assume that $K$ contains a primitive $p$-th root of unity $\zeta_{p}$.
We identify $\Lambda(r)$ for any $r$ with $\Lambda$ by this choice of $\zeta_{p}$.
We fix a prime element $\varpi_{K}$ of $K$
and a prime element $\varpi_{k}$ of $k$.
Let
	\[
				\mathcal{E}_{K}
			=
				R \pi_{\alg{K}, \ast} \Lambda
			\in
				D(F^{\perar}_{\et}),
		\quad
				\mathcal{F}_{K}
			=
				R \varepsilon_{\ast} \mathcal{E}_{K}
			\in
				D(F^{\perar}_{\zar}).
	\]
We have $H^{0} \mathcal{F}_{K} \cong \Lambda$.
Also, $\Gamma(F', H^{q} \mathcal{F}_{K}) = H^{q}(\alg{K}(F'), \Lambda)$
for any $q$ and $F' \in F^{\perar}$.
Hence $H^{q} \mathcal{F} = 0$ for $q \ge 4$ and we have a canonical isomorphism
	\[
			H^{3} \mathcal{F}_{K} \isomto \xi
	\]
in $\Ab(F^{\perar}_{\zar})$ by Proposition \ref{0041}.
By what we saw in \eqref{0491},
this morphism agrees with the composite
	\[
			H^{3} \mathcal{F}_{K}
		\to
			R^{1} \varepsilon_{\ast} H^{2} \mathcal{E}_{K}
		\to
			R^{1} \varepsilon_{\ast} \Lambda
		=
			\xi,
	\]
where the second morphism is \eqref{0358}.

\begin{Prop} \label{0050}
	The composite
		\[
				\mathcal{F}_{K} \tensor^{L} \mathcal{F}_{K}
			\to
				\mathcal{F}_{K}
			\to
				\xi_{\infty}[-3]
		\]
	obtained by applying $R \varepsilon_{\ast}$ to \eqref{0439}
	is a perfect pairing in $D(F^{\perar}_{\zar})$.
\end{Prop}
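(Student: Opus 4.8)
The goal is to show that the two induced morphisms
\[
	\mathcal{F}_{K}\to R\sheafhom_{F^{\perar}_{\zar}}(\mathcal{F}_{K},\xi_{\infty})[-3]
\]
and its transpose are isomorphisms; by symmetry it suffices to treat one of them, and since being an isomorphism in $D(F^{\perar}_{\zar})$ is checked on cohomology sheaves, the whole point of working Zariski-locally is that the target can then be computed degree by degree. I would begin by pinning down $\mathcal{F}_{K}$. We already know $H^{0}\mathcal{F}_{K}\cong\Lambda$, $H^{\geq 4}\mathcal{F}_{K}=0$ and $H^{3}\mathcal{F}_{K}\cong\xi$, with the last isomorphism compatible with the composite $\mathcal{F}_{K}\to\xi_{\infty}[-3]$ as recalled before the statement. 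For $H^{1}\mathcal{F}_{K}$ and $H^{2}\mathcal{F}_{K}$ I would run the spectral sequence $R^{p}\varepsilon_{\ast}H^{q}\mathcal{E}_{K}\Rightarrow H^{p+q}\mathcal{F}_{K}$ together with Kato's symbol filtration on $H^{q}(\alg{K}(F'),\Lambda)$ (Propositions \ref{0043} and \ref{0044}, sheafified over $F^{\perar}$): the associated graded pieces of $H^{q}\mathcal{E}_{K}$ are, in every case, a Tate vector group (the sheaves $\alg{k}$, $\Omega^{1}_{\alg{k}}$, $\alg{k}/\alg{k}^{p}$, realized by $F'\mapsto F'((t))$ and its variants), the sheaf $\alg{k}^{\times}/\alg{k}^{\times p}$, the sheaf $\xi(\alg{k})$, or the constant sheaf $\Lambda$. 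The conclusion I want to extract is that $\mathcal{F}_{K}$ is built by finitely many distinguished triangles from the elementary blocks $\Lambda$, $\xi$ and Tate vector groups, for each of which $R\sheafhom_{F^{\perar}_{\zar}}(-,\xi_{\infty})$ is completely understood by Propositions \ref{0022}, \ref{0023} and \ref{0025}; in particular $R\sheafhom_{F^{\perar}_{\zar}}(\mathcal{F}_{K},\xi_{\infty})$ is bounded and carries no hidden extension data beyond what the triangles prescribe.

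Next I would identify the pairing on these pieces. On $H^{0}\tensor^{L} H^{3}$ and $H^{3}\tensor^{L} H^{0}$ it is, after the identifications above, the multiplication $\Lambda\tensor^{L}\xi\to\xi\into\xi_{\infty}$, which is a perfect pairing by Proposition \ref{0028}. On $H^{1}\tensor^{L} H^{2}$ the essential input is Kato's triangularity: the pairing \eqref{0042} kills $U^{m}H^{2}\times U^{m'}H^{1}$ whenever $m+m'>f_{K}$, so the symbol filtration is strictly compatible with the pairing, and for $m+m'=f_{K}$ one gets an induced pairing $\gr^{m}H^{2}\tensor^{L}\gr^{m'}H^{1}\to\xi$, equal (under Propositions \ref{0043} and \ref{0044}) to the residue pairing of differential forms written out in Proposition \ref{0046}. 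Along the way I would check that the identification $H^{3}\mathcal{F}_{K}\cong\xi$ transports the various target maps $\gr^{m}H^{2}\times\gr^{m'}H^{1}\to\xi(F)$ of Proposition \ref{0046} to the one coming from \eqref{0439}; this is exactly the compatibility between \eqref{0437} and \eqref{0455}, \eqref{0491}.

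Then I would verify perfection on each pair of matching graded pieces. Here Proposition \ref{0047} is the hinge: its two exact sequences exhibit $\alg{k}/\alg{k}^{p}$, $\alg{k}^{\times}/\alg{k}^{\times p}$ and $\xi(\alg{k})$, $\Lambda$ as kernels and cokernels of $\Omega^{1}_{\alg{k}}\xrightarrow{C}\Omega^{1}_{\alg{k}}$ and $\Omega^{1}_{\alg{k}}\xrightarrow{C-1}\Omega^{1}_{\alg{k}}$, and present all the relevant pairings as restrictions of the residue pairing $\Omega^{1}_{\alg{k}}\times\alg{k}\to\Ga$. Since the residue pairing realizes $\alg{k}$ and $\Omega^{1}_{\alg{k}}$ as mutually dual Tate vector groups, composing with $\Ga\onto\xi\into\xi_{\infty}$ makes it a perfect pairing by Proposition \ref{0027}; applying $R\sheafhom_{F^{\perar}_{\zar}}(-,\xi_{\infty})$ to the diagrams of Proposition \ref{0047} and chasing (a five-lemma in $D(F^{\perar}_{\zar})$, with Proposition \ref{0028} supplying the constant and $\xi$ terms) forces perfection of each $\gr^{m}H^{2}\tensor^{L}\gr^{f_{K}-m}H^{1}\to\xi_{\infty}$, as well as of the extreme pairs $\gr^{0}H^{2}\tensor^{L}\gr^{f_{K}}H^{1}$ and $\gr^{f_{K}}H^{2}\tensor^{L}\gr^{0}H^{1}$.

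Finally, for the assembly: a dévissage along the truncation filtration of $\mathcal{F}_{K}$ and the symbol filtrations on $H^{1}$, $H^{2}$ reduces perfection of $\mathcal{F}_{K}\tensor^{L}\mathcal{F}_{K}\to\xi_{\infty}[-3]$ to perfection of the individual graded-piece pairings together with the $H^{0}$--$H^{3}$ pairing and the vanishing of the ``off-diagonal'' contributions --- and the latter is automatic over $\Spec F^{\perar}_{\zar}$, where $R\sheafhom(-,\xi_{\infty})$ sends connected groups to degree zero and, by Proposition \ref{0028}, interchanges $\Lambda$ and $\xi$ with no higher $\sheafext$. I expect the main obstacle, and the bulk of the work, to be the third step: matching Kato's explicit residue formulas with the abstract Tate duality of Proposition \ref{0027} term by term --- tracking the integers $m$, the elements $b_{m}$, $c_{m}$, and the factor $-m$ in the $p\nmid m$ case of Proposition \ref{0046} --- and checking that these identifications are compatible with the Cartier differentials in Proposition \ref{0047}. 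Once the Zariski topology has removed the higher-Ext obstructions, everything else is formal.
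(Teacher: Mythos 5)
Your proposal is correct and follows essentially the same route as the paper's proof: reduce to the pairings $H^{q}\mathcal{F}_{K}\tensor^{L}H^{3-q}\mathcal{F}_{K}\to\xi_{\infty}$, handle $q=0,3$ by Proposition \ref{0028}, reduce $q=1,2$ to the graded pieces of Kato's filtration, establish the $p\nmid m$ case as the residue pairing of dual Tate vector groups via Proposition \ref{0027}, and deduce the remaining cases from the Cartier-operator diagrams of Proposition \ref{0047}. The only cosmetic difference is that you compute $H^{q}\mathcal{F}_{K}$ through the spectral sequence for $R\varepsilon_{\ast}$, whereas the paper reads it off directly from $\Gamma(F',H^{q}\mathcal{F}_{K})=H^{q}(\alg{K}(F'),\Lambda)$; this does not affect the argument.
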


We will prove this below.
For $m \ge 0$ and $q = 1, 2$, define
$U^{m} H^{q} \mathcal{F}_{K} \in \Ab(F^{\perar}_{\zar})$
to be the functor
$F' \mapsto U^{m} H^{q}(\alg{K}(F'), \Lambda)$,
and $\gr^{m} H^{q} \mathcal{F}_{K} \in \Ab(F^{\perar}_{\zar})$ its graded pieces.
For $m \ge 0$, Propositions \ref{0043} and \ref{0044} give isomorphisms
	\begin{equation} \label{0440}
			\gr^{m} H^{2} \mathcal{F}_{K}
		\cong
			\begin{cases}
						\alg{k}^{\times} / \alg{k}^{\times p}
				&	\text{if }
						m = 0,
				\\
						\alg{k} / \alg{k}^{p}
				&	\text{if }
						0 < m < f_{K},\, p \mid m,
				\\
						\Omega_{\alg{k}}^{1}
				&	\text{if }
						0 < m < f_{K},\, p \nmid m,
				\\
						\xi \oplus \alg{k} / \wp \alg{k}
				&	\text{if }
						m = f_{K},
				\\
						0
				&	\text{if }
						m > f_{K}
			\end{cases}
	\end{equation}
and
	\begin{equation} \label{0441}
			\gr^{m} H^{1} \mathcal{F}_{K}
		\cong
			\begin{cases}
						\Lambda \oplus \alg{k}^{\times} / \alg{k}^{\times p}
				&	\text{if }
						m = 0,
				\\
						\alg{k} / \alg{k}^{p}
				&	\text{if }
						0 < m < f_{K},\, p \mid m,
				\\
						\alg{k}
				&	\text{if }
						0 < m < f_{K},\, p \nmid m,
				\\
						\alg{k} / \wp \alg{k}
				&	\text{if }
						m = f_{K},
				\\
						0
				&	\text{if }
						m > f_{K},
			\end{cases}
	\end{equation}
where the image $\wp \alg{k}$ of $\wp$ and the quotients are taken in $\Ab(F^{\perar}_{\zar})$.
We have a pairing
	\[
				H^{2} \mathcal{F}_{K}
			\times
				H^{1} \mathcal{F}_{K}
		\to
			H^{3} \mathcal{F}_{K}
		\to
			\xi_{\infty}
	\]
in $\Ab(F^{\perar}_{\zar})$.
For $m, m' \ge 0$ with $m + m' > f_{K}$, this is zero on
	$
			U^{m} H^{2} \mathcal{F}_{K}
		\times
			U^{m'} H^{1} \mathcal{F}_{K}
	$
and, for $m, m' \ge 0$ with $m + m' = f_{K}$, induces a pairing
	\begin{equation} \label{0049}
				\gr^{m} H^{2} \mathcal{F}_{K}
			\times
				\gr^{m'} H^{1} \mathcal{F}_{K}
		\to
			\xi_{\infty}.
	\end{equation}
By Proposition \ref{0046}
and via the above isomorphisms,
this pairing can be translated into the following pairings:
	\begin{align*}
				\alg{k}^{\times} / \alg{k}^{\times p} \times \alg{k} / \wp \alg{k}
		&	\to
				\xi_{\infty},
		\\
				(x, y)
		&	\mapsto
				\Res(y \dlog(x)).
	\end{align*}
for $m = 0$;
	\begin{align*}
				\alg{k} / \alg{k}^{p} \times \alg{k} / \alg{k}^{p}
		&	\to
				\xi_{\infty},
		\\
				(x, y)
		&	\mapsto
				\Res(x d y).
	\end{align*}
for $0 < m < f_{K}$ and $p \mid m$;
	\begin{align*}
				\Omega_{\alg{k}}^{1} \times \alg{k}
		&	\to
				\xi_{\infty},
		\\
				(\omega, y)
		&	\mapsto
				- m \Res(y \omega).
	\end{align*}
for $0 < m < f_{K}$ and $p \nmid m$;
the sum of
	\begin{align*}
				\xi \times \Lambda
		&	\to
				\xi_{\infty},
		\\
				(x, i)
		&	\mapsto
				i x
	\end{align*}
and
	\begin{align*}
				\alg{k} / \wp \alg{k} \times \alg{k}^{\times} / \alg{k}^{\times p}
		&	\to
				\xi_{\infty},
		\\
				(x, y)
		&	\mapsto
				- \Res(x \dlog y).
	\end{align*}
for $m = f_{K}$.

\begin{proof}[Proof of Proposition \ref{0050}]
	We need to show that the induced morphism
		\[
				H^{q} \mathcal{F}_{K} \tensor^{L} H^{q'} \mathcal{F}_{K}
			\to
				\xi_{\infty}
		\]
	for $q + q' = 3$ is a perfect pairing.
	For $q = 0$ and $3$, this follows from Proposition \ref{0028}.
	For $q = 1$ or $2$, it is enough to show that the induced morphism
		\[
				\gr^{m} H^{q} \mathcal{F}_{K} \tensor^{L} \gr^{m'} H^{q'} \mathcal{F}_{K}
			\to
				\xi_{\infty}
		\]
	for $m + m' = f_{K}$ is a perfect pairing.
	The pairing $\Omega_{k}^{1} \times k \to \xi_{1}(F)$,
	$(\omega, y) \mapsto \Res(y \omega)$,
	lifts to the perfect pairing
	$\Omega_{k}^{1} \times k \to F$
	of Tate vector spaces over $k$.
	Therefore Proposition \ref{0027}
	implies that the induced morphisms
		\[
				\Omega_{\alg{k}}^{1} \tensor^{L} \alg{k}
			\to
				\xi_{\infty}
		\]
	is a perfect pairing in $D(F^{\perar}_{\zar})$.
	By the above descriptions of \eqref{0049},
	this implies the case $0 < m < f_{K}$, $p \nmid m$.
	With Proposition \ref{0028},
	all other cases follow from
	Proposition \ref{0047}.
\end{proof}


\subsection{Proof of the duality}
\label{0052}

Now we return to $\Spec F^{\perar}_{\et}$ and prove the duality.

\begin{proof}[Proof of Proposition \ref{0055}]
	First assume that $\zeta_{p} \in K$ and $n = 1$.
	For any $q$, the \'etale sheafification of $H^{q} \mathcal{F}_{K}$ is $H^{q} \mathcal{E}_{K}$.
	Hence $H^{0} \mathcal{E}_{K} \cong \Lambda$ and $H^{q} \mathcal{E}_{K} = 0$ for $q \ge 3$
	(as $\xi$ sheafifies to zero).
	Also \eqref{0440} and \eqref{0441} show that for $q = 1, 2$,
	the sheaf $H^{q} \mathcal{E}_{K}$ has a filtration
	whose successive subquotients are isomorphic to either
		\[
			\alg{k}^{\times} / \alg{k}^{\times p}, \quad
			\alg{k} / \alg{k}^{p}, \quad
			\Omega_{\alg{k}}^{1}, \quad
			\alg{k} / \wp \alg{k}, \quad
			\Lambda, \quad
			\alg{k}.
		\]
	All these subquotients are isomorphic to either
	$\Ga^{\N}$, $\Ga^{\oplus \N}$, $\Lambda$ or their finite product.
	Hence they belong to $\mathcal{W}_{F}$.
	Therefore $H^{q} \mathcal{E}_{K} \in \mathcal{W}_{F}$.
	Now applying Proposition \ref{0026}
	for $G = G' = \mathcal{E}_{K}$ and using
	Proposition \ref{0050},
	we prove the proposition in this case.
	
	Since $p \nmid [K(\zeta_{p}) : K]$,
	by the usual argument using norm maps,
	we can see that the proposition for $K(\zeta_{p})$ implies the proposition for $K$
	by taking a direct summand when $n = 1$.
	(This is where the closure by direct summands
	for the definition of $\genby{\mathcal{W}_{F}}_{F^{\perar}_{\et}}$ is necessary.)
	This implies the case $n \ge 1$.
\end{proof}

The proof above is showing slightly stronger results about
$R \pi_{\alg{K}, \ast} \Lambda$ (when $\zeta_{p} \in K$):

\begin{Prop} \label{0502}
	Assume $\zeta_{p} \in K$.
	Then $R^{q} \pi_{\alg{K}, \ast} \Lambda \in \mathcal{W}_{F}$ for all $q$.
	The kernel of the morphism \eqref{0358} with $n = 1$ is a connected group in $\mathcal{W}_{F}$.
\end{Prop}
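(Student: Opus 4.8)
The first assertion is already established in the course of proving Proposition \ref{0055}: there, for $\zeta_{p} \in K$ and $n = 1$, one shows that $R^{q} \pi_{\alg{K}, \ast} \Lambda = H^{q} \mathcal{E}_{K}$ vanishes for $q \ge 3$, equals $\Lambda$ for $q = 0$, and for $q = 1, 2$ carries the finite filtration coming from Kato's symbol filtration (equations \eqref{0440} and \eqref{0441}) whose graded pieces, after \'etale sheafification, are finite products of copies of $\Ga^{\N}$, $\Ga^{\oplus}$ and $\Lambda$; all of these, and hence $H^{q} \mathcal{E}_{K}$ itself, lie in $\mathcal{W}_{F}$.

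For the second assertion, the plan is to identify the morphism \eqref{0358} (with $n = 1$, and $\Lambda(2)$ identified with $\Lambda$ via $\zeta_{p}$) explicitly on cohomology objects. Using Kato's computation \eqref{0455} together with the compatibility recalled at \eqref{0491}, one sees that \eqref{0358} factors as the composite of the boundary map $H^{2} \mathcal{E}_{K} \to R^{1} \pi_{\alg{k}, \ast} \Lambda(1)$, the canonical isomorphism $R^{1} \pi_{\alg{k}, \ast} \Lambda(1) \cong R^{0} \pi_{\alg{k}, \ast} \nu_{1}(1) = \alg{k}^{\times} / \alg{k}^{\times p}$ of Proposition \ref{0451}, and the valuation map $v \colon \alg{k}^{\times} / \alg{k}^{\times p} \to \Lambda$. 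By the tame symbol formula --- equivalently, by Proposition \ref{0043}\eqref{0175}, which gives the isomorphism $\gr^{0} H^{2}(K, \Lambda) \cong k^{\times}/k^{\times p}$ --- the boundary map is surjective, annihilates $U^{1} H^{2} \mathcal{E}_{K}$, and induces an isomorphism $\gr^{0} H^{2} \mathcal{E}_{K} \isomto \alg{k}^{\times} / \alg{k}^{\times p}$. Hence \eqref{0358} is the composite $H^{2} \mathcal{E}_{K} \onto \gr^{0} H^{2} \mathcal{E}_{K} \cong \alg{k}^{\times} / \alg{k}^{\times p} \xrightarrow{v} \Lambda$.

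Next I would analyse the two sides. \'Etale-locally $\alg{k}^{\times} / \alg{k}^{\times p}$ splits as $\Lambda \times \Ker v$, with $\Ker v$ the image of the units modulo $p$-th powers, which is connected (a successive extension of copies of $\Ga^{\N}$, the roots-of-unity contribution vanishing since the residue fields are perfect); thus $v$ is exactly the projection onto $\pi_{0}(\alg{k}^{\times} / \alg{k}^{\times p}) = \Lambda$. On the other hand, all the graded pieces $\gr^{m} H^{2} \mathcal{E}_{K}$ with $m \ge 1$ are connected: over $\Spec F^{\perar}_{\et}$ the groups $\alg{k} / \alg{k}^{p}$ and $\Omega_{\alg{k}}^{1}$ are Tate vector groups, the $\xi$-summand of $\gr^{f_{K}}$ sheafifies to zero, and $\alg{k} / \wp \alg{k} \cong \Ga^{\oplus}$. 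Therefore the quotient map $H^{2} \mathcal{E}_{K} \onto \gr^{0} H^{2} \mathcal{E}_{K}$ identifies $\pi_{0}(H^{2} \mathcal{E}_{K})$ with $\pi_{0}(\gr^{0} H^{2} \mathcal{E}_{K}) = \Lambda$, and \eqref{0358} is, up to this isomorphism, the canonical morphism $H^{2} \mathcal{E}_{K} \to \pi_{0}(H^{2} \mathcal{E}_{K})$. Consequently its kernel is $(H^{2} \mathcal{E}_{K})^{0}$, which is connected and, being the identity component of the $\mathcal{W}_{F}$-object $H^{2} \mathcal{E}_{K}$, again lies in $\mathcal{W}_{F}$.

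The main obstacle is the explicit identification in the second paragraph: it requires tracking Kato's boundary maps and symbol filtration through the \'etale sheafification (in particular recording which Zariski-local graded pieces collapse or change shape over $\Spec F^{\perar}_{\et}$, such as the vanishing of the $\xi$-summand), and checking in the relative, sheaf-theoretic setting --- not merely over a single field --- that the first boundary map kills $U^{1} H^{2} \mathcal{E}_{K}$ and induces the asserted isomorphism on $\gr^{0}$.
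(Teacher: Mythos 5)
Your proposal is correct and follows the same route the paper intends: the paper offers no separate proof of this proposition, stating only that it is "shown by the proof above" of Proposition \ref{0055}, i.e.\ by the \'etale sheafification of the filtrations \eqref{0440}--\eqref{0441}, which is exactly what you use. Your explicit verification of the second assertion --- that \eqref{0358} kills the connected $U^{1}$-part and the unit part of $\gr^{0} H^{2} \mathcal{E}_{K} \cong \alg{k}^{\times}/\alg{k}^{\times p}$, hence is the projection onto $\pi_{0}(H^{2}\mathcal{E}_{K}) = \Lambda$ --- is the detail the paper leaves implicit, and it is carried out correctly.
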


We now bring the duality to $\Spec F^{\ind\rat}_{\pro\et}$:

\begin{Thm}
	Let $n \ge 1$ and $r, r' \in \Z$ with $r + r' = 2$.
	\begin{enumerate}
		\item
			We have
			$R \alg{\Gamma}(\alg{K}, \Lambda_{n}(r)) \in D^{b}(\Ind \Pro \Alg_{u} / F)$.
		\item
			The morphism
				\[
							R \alg{\Gamma}(\alg{K}, \Lambda_{n}(r))
						\tensor^{L}
							R \alg{\Gamma}(\alg{K}, \Lambda_{n}(r'))
					\to
						\Lambda_{\infty}[-2]
				\]
			obtained by applying $\algebrize$ to the morphism \eqref{0439}
			is a perfect pairing in $D(F^{\ind\rat}_{\pro\et})$.
	\end{enumerate}
\end{Thm}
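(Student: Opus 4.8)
The plan is to deduce the theorem directly from Proposition \ref{0055} by transporting the duality along the functor $\algebrize$. First I would check that $R \alg{\Gamma}(\alg{K}, \Lambda_{n}(r)) = \algebrize R \pi_{\alg{K}, \ast} \Lambda_{n}(r)$ lands in $D^{b}(\Ind \Pro \Alg_{u} / F)$. By part (a) of Proposition \ref{0055}, the object $R \pi_{\alg{K}, \ast} \Lambda_{n}(r)$ lies in $\genby{\mathcal{W}_{F}}_{F^{\perar}_{\et}}$, and Proposition \ref{0152} says precisely that $\algebrize$ carries this subcategory into $D^{b}(\Ind \Pro \Alg_{u} / F)$. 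So statement (1) is immediate.

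For statement (2), the key point is that the morphism appearing in the theorem is obtained from the morphism \eqref{0439} of Proposition \ref{0055} by applying $\algebrize$, and this is exactly the setup of Proposition \ref{0010}. Concretely, set $G = R \pi_{\alg{K}, \ast} \Lambda_{n}(r)$ and $G' = R \pi_{\alg{K}, \ast} \Lambda_{n}(r')$; both are objects of $\genby{\mathcal{W}_{F}}_{F^{\perar}_{\et}}$ by Proposition \ref{0055}(a). The morphism \eqref{0439} is a morphism $G \tensor^{L} G' \to \Lambda_{\infty}[-2]$ in $D(F^{\perar}_{\et})$; equivalently, after the shift, it is a morphism $G[1] \tensor^{L} G'[1] \to \Lambda_{\infty}$, or one can apply Proposition \ref{0010} with target $\Lambda_{\infty}$ shifted, noting that shifts do not affect whether a pairing is perfect. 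Proposition \ref{0055}(b) asserts this is a perfect pairing in $D(F^{\perar}_{\et})$, and Proposition \ref{0010} then gives that the induced morphism $\algebrize G \tensor^{L} \algebrize G' \to \Lambda_{\infty}[-2]$ via \eqref{0453} is a perfect pairing in $D(F^{\ind\rat}_{\pro\et})$. Since $\algebrize G = R \alg{\Gamma}(\alg{K}, \Lambda_{n}(r))$ and $\algebrize G' = R \alg{\Gamma}(\alg{K}, \Lambda_{n}(r'))$ by definition, and the cup product \eqref{0453} applied to \eqref{0327} is exactly the pairing constructed in the theorem statement, we are done.

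The one genuine bookkeeping point — and the step I expect to require the most care — is verifying that the morphism named in the theorem, "obtained by applying $\algebrize$ to \eqref{0439}", really coincides with the morphism $\algebrize G \tensor^{L} \algebrize G' \to \algebrize(G \tensor^{L} G') \to \Lambda_{\infty}[-2]$ that feeds into Proposition \ref{0010}. This is a compatibility between the lax monoidal structure \eqref{0453} of $\algebrize$ and the cup product \eqref{0327} for $R \pi_{\alg{K}, \ast}$, together with the fact that $\algebrize \Lambda_{\infty} \cong \Lambda_{\infty}$ (which holds because $\Lambda_{\infty}$ is $h$-acyclic and $h$-compatible, being an object — in the relevant derived sense — covered by the discussion around Proposition \ref{0496}). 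The compatibility is formal but should be stated: it follows from functoriality of \eqref{0453} in its two arguments and the construction of the product morphism in Section \ref{0381} via \eqref{0327}. Once this identification is made, everything else is a direct citation.

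Finally I would remark that the $h$-acyclicity hypothesis needed to identify $R \Gamma(F', R \alg{\Gamma}(\alg{K}, \Lambda_{n}(r)))$ with $R \Gamma(\alg{K}(F'), \Lambda_{n}(r))$ — relevant if one wants the concrete field-level consequences — is supplied by Proposition \ref{0153}, since $R \pi_{\alg{K}, \ast} \Lambda_{n}(r) \in \genby{\mathcal{W}_{F}}_{F^{\perar}_{\et}}$; but for the statement of the theorem as given, only Propositions \ref{0055}, \ref{0152} and \ref{0010} are needed.
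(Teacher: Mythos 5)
Your proof is correct and takes essentially the same route as the paper, which simply cites Propositions \ref{0055}, \ref{0152}, \ref{0153} and \ref{0010}. One small adjustment: Proposition \ref{0153} is needed not merely for the field-level consequences you mention at the end, but to guarantee the $h$-compatibility under which the product morphism \eqref{0453} --- and hence the pairing ``obtained by applying $\algebrize$'' --- is defined at all.
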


\begin{proof}
	This follows from Propositions \ref{0055}, \ref{0152}, \ref{0153} and \ref{0010}.
\end{proof}


\subsection{Duality for the ring of integers}
\label{0056}

We give a version of the above duality for $\Order_{K}$ instead of $K$.
The coefficient sheaves are now $p$-adic \'etale Tate twists.

For $n \ge 1$, we have a distinguished triangle
	\[
			\mathfrak{T}_{n}(2)
		\to
			R j_{\ast} \Lambda_{n}(2)
		\to
			i_{\ast} \nu_{n}(1)[-2]
	\]
in $D(\alg{O}_{K, \et})$.
Comparing this with the distinguished triangle
	\[
			i_{\ast} R i^{!} \mathfrak{T}_{n}(2)
		\to
			\mathfrak{T}_{n}(2)
		\to
			R j_{\ast} \Lambda_{n}(2),
	\]
we know that $R i^{!} \mathfrak{T}_{n}(2) \cong i_{\ast} \nu_{n}(1)[-3]$.
Hence
	\begin{equation} \label{0499}
			R \pi_{\alg{O}_{K}, !} \mathfrak{T}_{n}(2)
		\cong
			R \pi_{\alg{k}, \ast} \nu_{n}(1)[-3]
		\cong
			\alg{k}^{\times} / \alg{k}^{\times p^{n}}[-3]
	\end{equation}
in $D(F^{\perar}_{\et})$.
Composing this with the normalized valuation map to $\Lambda$, we obtain a canonical morphism
	\begin{equation} \label{0412}
			R \pi_{\alg{O}_{K}, !} \mathfrak{T}_{n}(2)
		\to
			\Lambda_{n}[-3]
	\end{equation}
such that the composite
	\[
			R \pi_{\alg{K}, \ast} \Lambda_{n}(2)
		\to
			R \pi_{\alg{O}_{K}, !} \mathfrak{T}_{n}(2)[1]
		\to
			\Lambda_{n}[-2]
	\]
with the natural morphism is the morphism \eqref{0358}.
For any $r, r' \in \Z$, we have a product structure
	\[
			\mathfrak{T}_{n}(r) \tensor^{L} \mathfrak{T}_{n}(r')
		\to
			\mathfrak{T}_{n}(r + r')
	\]
as in \cite[Proposition 4.2.6]{Sat07}
(which is
	\[
			j_{!} \Lambda_{n}(r) \tensor^{L} \mathfrak{T}_{n}(r')
		\cong
			j_{!} \Lambda_{n}(r + r')
		\to
			\mathfrak{T}_{n}(r + r')
	\]
if $r < 0$).
Applying \eqref{0443} to this, we obtain a canonical morphism
	\begin{equation} \label{0442}
				R \pi_{\alg{O}_{K}, \ast}
				\mathfrak{T}_{n}(r)
			\tensor^{L}
				R \pi_{\alg{O}_{K}, !}
				\mathfrak{T}_{n}(r')
		\to
			R \pi_{\alg{O}_{K}, !}
			\mathfrak{T}_{n}(r + r').
	\end{equation}
Here is a duality for $\Order_{K}$:

\begin{Prop} \label{0445}
	Let $n \ge 1$ and $r, r' \in \Z$ with $r + r' = 2$.
	\begin{enumerate}
		\item
			We have
				$
						R \pi_{\alg{O}_{K}, \ast} \mathfrak{T}_{n}(r),
						R \pi_{\alg{O}_{K}, !} \mathfrak{T}_{n}(r)
					\in
						\genby{\mathcal{W}_{F}}_{F^{\perar}_{\et}}
				$,
			which are concentrated in degrees $\le 2$, $\le 3$, respectively.
		\item
			The composite morphism
				\begin{equation} \label{0446}
							R \pi_{\alg{O}_{K}, \ast} \mathfrak{T}_{n}(r)
						\tensor^{L}
							R \pi_{\alg{O}_{K}, !} \mathfrak{T}_{n}(r')
					\to
						R \pi_{\alg{O}_{K}, !} \mathfrak{T}_{n}(2)
					\to
						\Lambda_{\infty}[-3]
				\end{equation}
			of \eqref{0442} and \eqref{0412} is a perfect pairing in $D(F^{\perar}_{\et})$.
	\end{enumerate}
\end{Prop}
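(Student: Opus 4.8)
\textbf{Proof proposal for Proposition \ref{0445}.}
The plan is to deduce this from the duality for $\alg{K}$ (Proposition \ref{0055}) via the localization triangle relating $\Order_{K}$, $K$ and $k$. First I would record the basic triangle \eqref{0434}, which in our situation reads
\[
		R \pi_{\alg{O}_{K}, !} \mathfrak{T}_{n}(r)
	\to
		R \pi_{\alg{O}_{K}, \ast} \mathfrak{T}_{n}(r)
	\to
		R \pi_{\alg{K}, \ast} \Lambda_{n}(r),
\]
using that $j^{\ast} \mathfrak{T}_{n}(r) = \Lambda_{n}(r)$ on $\alg{K}_{\et}$. Combined with \eqref{0499}, this presents $R \pi_{\alg{O}_{K}, !} \mathfrak{T}_{n}(r)$ as an extension involving $\alg{k}^{\times} / \alg{k}^{\times p^{n}}[-3]$ (from $R\pi_{\alg{O}_{K},!}$ of the part supported at the closed point, via $Ri^!\mathfrak{T}_n(r)$) and $R\pi_{\alg{K},\ast}\Lambda_n(r)$, while $R \pi_{\alg{O}_{K}, \ast} \mathfrak{T}_{n}(r)$ fits between $R\pi_{\alg{k},\ast}$ of a Hodge–Witt sheaf and $R\pi_{\alg{K},\ast}\Lambda_n(r)$. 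For the first assertion I would check that each of these building blocks is in $\genby{\mathcal{W}_{F}}_{F^{\perar}_{\et}}$: $R\pi_{\alg{K},\ast}\Lambda_n(r)$ is by Proposition \ref{0055}(1); $\alg{k}^\times/\alg{k}^{\times p^n}$ and $R\pi_{\alg{k},\ast}\nu_n(1)$, $R\pi_{\alg{k},\ast}\Lambda$ are handled by Proposition \ref{0451} and the same $\mathcal{W}_F$-membership arguments as in the proof of Proposition \ref{0055} (these are built from $\Ga^{\N}$, $\Ga^{\oplus}$, $\Lambda$). Since $\genby{\mathcal{W}_{F}}_{F^{\perar}_{\et}}$ is triangulated, the extensions stay inside it. The degree bounds ($\le 2$ for $R\pi_{\alg{O}_K,\ast}$, $\le 3$ for $R\pi_{\alg{O}_K,!}$) follow from the cohomological dimension statements in Proposition \ref{0500}, Proposition \ref{0451}, and the shift by $[-3]$ in \eqref{0499}.

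For the duality itself, the strategy is to compare the two localization triangles — one for $(r)$ computing $R\pi_{\alg{O}_K,\ast}$ and one for $(r')$ computing $R\pi_{\alg{O}_K,!}$ — using the cup-product compatibility developed in Section \ref{0381}, specifically the machinery around \eqref{0364}, \eqref{0368}, \eqref{0369} and Proposition \ref{0430} (applied with the closed immersion $i\colon \alg{k}\into\alg{O}_K$ and open $j\colon\alg{K}\into\alg{O}_K$). Concretely, I would set up the morphism of distinguished triangles in which the top row is the localization triangle for $R\pi_{\alg{O}_K,\ast}\mathfrak{T}_n(r)$ and the bottom row is the $R\sheafhom_{F^{\perar}_{\et}}(\var,\Lambda_\infty[-3])$-dual of the localization triangle for $R\pi_{\alg{O}_K,!}\mathfrak{T}_n(r')$, with the three vertical maps induced by \eqref{0446} on the total spaces, by \eqref{0368} on the $k$-parts, and by the $\alg{K}$-pairing \eqref{0439} (shifted) on the $K$-parts. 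The commutativity of this diagram is the content of Proposition \ref{0430} together with the compatibility of \eqref{0412} with \eqref{0358} noted right after \eqref{0412}. By the five lemma in the triangulated category, it then suffices that the outer two vertical maps are isomorphisms: the $\alg{K}$-part is Proposition \ref{0055}(2), and the $\alg{k}$-part reduces to the claim that the pairing
\[
		R\pi_{\alg{k},\ast}\nu_n(r-1)
	\tensor^{L}
		R\pi_{\alg{k},\ast}\nu_n(r'-1)
	\to
		R\pi_{\alg{k},\ast}\nu_n(1)
	\cong
		\alg{k}^{\times}/\alg{k}^{\times p^n}[0]
	\to
		\Lambda_\infty[-0]
\]
— i.e. after identifying $Ri^!\mathfrak{T}_n(r)=i_\ast\nu_n(r-1)[-3]$ — is perfect in $D(F^{\perar}_{\et})$; this is a duality for the complete discrete valuation field $\alg{k}$ with coefficients in logarithmic Hodge–Witt sheaves, which is the standard Milne-type duality and can be proved by the same reduction to $\Spec F^{\perar}_{\zar}$ (Propositions \ref{0026}, \ref{0027}, \ref{0028}) used for $\alg{K}$, using Proposition \ref{0451}.

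The main obstacle I anticipate is the bookkeeping for the $\alg{k}$-factor: one must correctly identify $Ri^!\mathfrak{T}_n(r)$ for \emph{all} $r$ (the case $r<0$ where $\mathfrak{T}_n(r)=j_!\Lambda_n(r)$ behaves differently, and the Bloch–Kato boundary map enters for $r\ge 0$), verify that under the excision isomorphism of Proposition \ref{0404} the cup product \eqref{0368} really matches the symbol/residue pairing on $\nu_n$ computed by Kato, and confirm that the trace normalizations in \eqref{0412} and \eqref{0358} are consistent so that the comparison diagram commutes \emph{on the nose} in the derived category rather than just up to sign. Once the $\alg{k}$-duality $R\pi_{\alg{k},\ast}\nu_n(r-1)\tensor^L R\pi_{\alg{k},\ast}\nu_n(r'-1)\to\Lambda_\infty$ is in hand (which itself needs the $\mathcal{W}_F$-formalism to even make sense of "perfect pairing" and then Propositions \ref{0026}--\ref{0028} to verify it Zariski-locally), the five-lemma argument is formal. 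The reduction $n\ge 1\Rightarrow n=1$ and removal of the hypothesis $\zeta_p\in K$ go through verbatim as in the proof of Proposition \ref{0055}, using norm maps and closure under direct summands in $\genby{\mathcal{W}_{F}}_{F^{\perar}_{\et}}$.
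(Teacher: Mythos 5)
Your reduction of part (1) and your five-lemma strategy both rest on the identification $R i^{!} \mathfrak{T}_{n}(r) \cong \nu_{n}(r-1)[-3]$ for \emph{all} $r$, but this is only available for $r \ge 2$ (the paper derives it for $r=2$ from the defining triangle of $\mathfrak{T}_{n}(2)$ and the cohomological dimension of $K$); for $r = 0, 1$ the object $R i^{!}\mathfrak{T}_{n}(r)$ is not a shifted logarithmic Hodge--Witt sheaf. Moreover the boundary duality you propose does not even typecheck: $\nu_{n}(r-1)\tensor\nu_{n}(r'-1)$ has total twist $r+r'-2=0$, not $1$, and in the case $(r,r')=(0,2)$ one factor is zero while the other gives $\alg{k}^{\times}/\alg{k}^{\times p^{n}}\neq 0$, so the claimed perfect pairing on the $\alg{k}$-part is false as stated.

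The deeper problem is that the d\'evissage is circular. Under the cup product \eqref{0368}, the duality partner of $R\pi_{\alg{k},\ast}Ri^{!}\mathfrak{T}_{n}(r)=R\pi_{\alg{O}_{K},!}\mathfrak{T}_{n}(r)$ is $R\pi_{\alg{k},\ast}i^{\ast}\mathfrak{T}_{n}(r')\cong R\pi_{\alg{O}_{K},\ast}\mathfrak{T}_{n}(r')$, not another $Ri^{!}$-term. So in the morphism of triangles you want to set up, the two nontrivial vertical maps are the $K$-duality of Proposition \ref{0055} and the map $R\pi_{\alg{O}_{K},!}\mathfrak{T}_{n}(r)\to(R\pi_{\alg{O}_{K},\ast}\mathfrak{T}_{n}(r'))^{\vee}$ --- which is the other half of the very statement being proved. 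The five lemma therefore only shows that the two induced morphisms of \eqref{0446} are equivalent to each other given the $K$-duality; it cannot establish either one. The missing input, supplied by the paper in Proposition \ref{0448}, is the explicit computation (via the Bloch--Kato filtration tables for $i^{\ast}\mathfrak{T}(r)$, carried out over $\Spec F^{\perar}_{\zar}$ where higher Ext groups vanish) showing that $H^{q}\mathcal{F}_{\Order_{K}}^{r}$ injects into $H^{q}\mathcal{F}_{K}$ with image exactly the annihilator of $H^{q'}\mathcal{F}_{\Order_{K}}^{r'}$ under the perfect pairing of Proposition \ref{0050}; no purely formal localization argument replaces this. (Your final reductions $n\ge 1\Rightarrow n=1$ and to the case $\zeta_{p}\in K$ are fine in spirit, though the latter uses Sato's trace morphisms for $\mathfrak{T}(r)$ and the projection formula rather than a verbatim repetition of the norm argument for constant coefficients.)
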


We will prove this below.
Since this reduces to Proposition \ref{0055} if $r > 2$
and both $R \pi_{\alg{O}_{K}, \ast} \mathfrak{T}_{n}(r)$
and $R \pi_{\alg{O}_{K}, !} \mathfrak{T}_{n}(r')$ are zero if $r < 0$,
we may assume that $r$ (and hence also $r'$) is $0, 1$ or $2$.
We again work with the Zariski topology first and then \'etale sheafify.

First, assume that $\zeta_{p} \in K$ and $n = 1$.
Let
	\begin{gather*}
				\mathcal{E}_{\Order_{K}}^{r}
			=
				R \pi_{\alg{O}_{K}, \ast} \mathfrak{T}_{n}(r),
		\quad
				\mathcal{F}_{\Order_{K}}^{r}
			=
				R \varepsilon_{\ast}
				\mathcal{E}_{\Order_{K}}^{r}.
		\\
				\mathcal{E}_{\Order_{K}}'^{r}
			=
				R \pi_{\alg{O}_{K}, !} \mathfrak{T}_{n}(r),
		\quad
				\mathcal{F}_{\Order_{K}}'^{r}
			=
				R \varepsilon_{\ast}
				\mathcal{E}_{\Order_{K}}'^{r}.
	\end{gather*}
We have a distinguished triangle
	\[
			\mathcal{F}_{\Order_{K}}'^{r}
		\to
			\mathcal{F}_{\Order_{K}}^{r}
		\to
			\mathcal{F}_{K}.
	\]
in $D(F^{\perar}_{\zar})$.

\begin{Prop} \label{0448}
	The objects $\mathcal{F}_{\Order_{K}}^{r}$ and $\mathcal{F}_{\Order_{K}}'^{r}$
	are concentrated in degrees $\le 2$ and $\le 3$, respectively.
	The composite
		\[
					\mathcal{F}_{\Order_{K}}^{r}
				\tensor^{L}
					\mathcal{F}_{\Order_{K}}'^{r'}
			\to
				R \varepsilon_{\ast} \Lambda_{\infty}[-3]
			\to
				\xi_{\infty}[-4]
		\]
	induced by \eqref{0446} is a perfect pairing in $D(F^{\perar}_{\zar})$.
\end{Prop}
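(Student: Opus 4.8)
The plan is to run the same machine as in the proof of Proposition \ref{0050}: make the elementary reductions, compute the two sides explicitly via excision at the closed point, and then deduce perfectness from the already-established duality for $\alg{K}$ together with a local duality at $\Spec\alg{k}$, glued by a five-lemma argument along the localization triangle. The elementary reductions are those already indicated before the statement: the case $r>2$ is Proposition \ref{0055}, the case $r<0$ is trivial, and a norm-map argument (as in the proof of Proposition \ref{0055}, using $p\nmid[K(\zeta_p):K]$ and the closure of $\genby{\mathcal{W}_F}_{F^{\perar}_{\zar}}$-type categories under direct summands) reduces the general case to $\zeta_p\in K$, $n=1$, $r\in\{0,1,2\}$. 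Fixing $\zeta_p$ to identify $\Lambda(r)\cong\Lambda$, the object $\mathcal{F}_K$ no longer depends on $r$.

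Next I would treat the ``$!$''-side, which is local. By the excision/purity identification recalled at \eqref{0499} we have $R\pi_{\alg{O}_K,!}\mathfrak{T}_1(2)\cong R\pi_{\alg{k},\ast}\nu_1(1)[-3]$, and the analogous description of $Ri^!\mathfrak{T}_1(r')$ for $r'=0,1$, combined with Proposition \ref{0451}, gives $R\pi_{\alg{O}_K,!}\mathfrak{T}_1(r')$ explicitly (for $r'=0$ one first uses Gabber's affine base change, Proposition \ref{0341}, to identify $R\pi_{\alg{O}_K,\ast}\Lambda$ with $R\pi_{\alg{k},\ast}\Lambda$, then takes the fibre of $R\pi_{\alg{O}_K,!}\Lambda\to R\pi_{\alg{O}_K,\ast}\Lambda$). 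Feeding this, together with the structure of $R\pi_{\alg{K},\ast}\Lambda$ from Propositions \ref{0055} and \ref{0502} (all cohomology in $\mathcal{W}_F$, vanishing above degree $2$), into the localization triangle $R\pi_{\alg{O}_K,!}\mathfrak{T}_1(r)\to R\pi_{\alg{O}_K,\ast}\mathfrak{T}_1(r)\to R\pi_{\alg{K},\ast}\Lambda$, one sees that all cohomology sheaves of $\mathcal{E}_{\Order_K}^r$ and $\mathcal{E}_{\Order_K}'^r$ lie in $\mathcal{W}_F$, and the asserted degree bounds for $\mathcal{F}_{\Order_K}^r$ and $\mathcal{F}_{\Order_K}'^r$ follow upon applying $R\varepsilon_\ast$; the point is that the boundary map onto the local term is surjective — its $\Lambda$-component is the trace \eqref{0412}, which detects the valuation — so the relevant top cohomology sheaf is connected and contributes nothing under $R^1\varepsilon_\ast$.

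For the perfect-pairing statement I would assemble the pairing \eqref{0446} (after $R\varepsilon_\ast$) from three inputs using the compatibility of cup products with localization triangles developed in Section \ref{0381} (in particular Proposition \ref{0430}, applied to $\Spec\alg{O}_K\supset\Spec\alg{K}$ with complement $\Spec\alg{k}$): namely, the duality for $\alg{K}$ over $\Spec F^{\perar}_{\zar}$ from Proposition \ref{0050}; the auxiliary dualities of Propositions \ref{0027} and \ref{0028}; and an explicit ``boundary pairing'' at the closed point. Concretely, \eqref{0446} factors through a cup product on $\Spec\alg{k}_\et$ between $i^\ast\mathfrak{T}_1(r)$ and $Ri^!\mathfrak{T}_1(r')$ landing in $Ri^!\mathfrak{T}_1(2)\cong\nu_1(1)[-3]$, followed by $R\pi_{\alg{k},\ast}$ and the valuation $\nu_1(1)\to\Lambda$; using Kato's calculations recalled in Section \ref{0040} (the residue pairing and the compatibility \eqref{0491} of the trace maps), one identifies this, via Proposition \ref{0451}, with a direct sum of the evaluation pairing of a Tate vector space against its dual and the multiplication pairing $\Lambda\tensor^{L}\xi\to\xi_\infty$ — both perfect by Propositions \ref{0027} and \ref{0028}. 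This produces a morphism of distinguished triangles in which the two outer vertical arrows (the $\alg{K}$-duality and the boundary duality) are isomorphisms, so the middle arrow — the morphism induced by \eqref{0446} — is an isomorphism; interchanging the roles of $r$ and $r'$ gives the second isomorphism.

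The main obstacle is the explicit identification in the third step. One must bookkeep the shifts contributed by $Ri^!$, $R\pi_{\alg{k},\ast}$ and $R\varepsilon_\ast$, reconcile the valuation/trace normalization of \eqref{0412} with the residue normalization of Kato's formulas in Propositions \ref{0044} and \ref{0046}, and check that the cup product on $p$-adic vanishing cycles $i^\ast\mathfrak{T}_1(r)$ is compatible, on graded pieces, with the pairings recorded there. This is not conceptually deep, but it is the step where the arithmetic of two-dimensional local fields genuinely intervenes and where the compatibility diagrams of Section \ref{0381} must be chased carefully.
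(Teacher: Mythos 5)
There is a genuine gap in the duality step, and it sits exactly where you place your ``third input''. The boundary pairing at the closed point --- the pairing between $R\pi_{\alg{k},\ast}i^{\ast}\mathfrak{T}_1(r)$ and $R\pi_{\alg{k},\ast}Ri^{!}\mathfrak{T}_1(r')$ --- is not an independent input: since $R\pi_{\alg{O}_K,\ast}\cong R\pi_{\alg{k},\ast}i^{\ast}$ (Gabber's theorem, Proposition \ref{0341}) and $R\pi_{\alg{O}_K,!}=R\pi_{\alg{k},\ast}Ri^{!}$ by definition \eqref{0433}, this pairing \emph{is} the pairing \eqref{0446} whose perfectness is to be proved. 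Correspondingly, in the morphism of triangles furnished by Proposition \ref{0430}, the \emph{middle} vertical arrow is the known $\alg{K}$-duality of Proposition \ref{0050}, while \emph{both} outer arrows are the two unknown duality morphisms, namely $H^{q}\mathcal{F}_{\Order_K}^{r}\to(H^{q'+1}\mathcal{F}_{\Order_K}'^{r'})^{\vee}$ and $H^{q+1}\mathcal{F}_{\Order_K}'^{r}\to(H^{q'}\mathcal{F}_{\Order_K}^{r'})^{\vee}$; the five lemma therefore proves nothing by itself. The paper's additional input is an exact-annihilator computation: after showing, via the tables of $\gr^{m}H^{q}i^{\ast}\mathfrak{T}(r)$ coming from the Bloch--Kato filtration on $R\Psi\Lambda$, that $H^{q}\mathcal{F}_{\Order_K}^{r}\into H^{q}\mathcal{F}_{K}$ is injective with cokernel $H^{q+1}\mathcal{F}_{\Order_K}'^{r}$ (this is also how the degree bounds are actually obtained), one checks entry by entry against the tables of Section \ref{0048} that $H^{q}\mathcal{F}_{\Order_K}^{r}$ is precisely the annihilator of $H^{q'}\mathcal{F}_{\Order_K}^{r'}$ under the perfect pairing on $\mathcal{F}_{K}$. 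Only this makes one outer arrow an isomorphism, whence the other.

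Relatedly, your claim that Proposition \ref{0451} identifies the boundary pairing with a sum of a Tate-vector-space evaluation pairing and $\Lambda\tensor^{L}\xi\to\xi_{\infty}$ is only correct in the single easy case $Ri^{!}\mathfrak{T}_1(2)\cong\nu(1)[-3]$. For $r'\in\{0,1\}$ the complex $Ri^{!}\mathfrak{T}_1(r')$ --- equivalently the cokernel of $H^{q'}\mathcal{F}_{\Order_K}^{r'}\into H^{q'}\mathcal{F}_{K}$ --- carries the full wildly ramified structure, with graded pieces among $\Omega_{\alg{k}}^{1}$, $\alg{k}/\alg{k}^{p}$, $\alg{k}$, $\alg{k}^{\times}/\alg{k}^{\times p}$, $\alg{k}/\wp\alg{k}$, and the relevant dualities are the residue pairings of Propositions \ref{0046} and \ref{0047} fed into Propositions \ref{0027} and \ref{0028}; Proposition \ref{0451} computes only $R\pi_{\alg{k},\ast}\Lambda$ and $R\pi_{\alg{k},\ast}\nu_{n}(1)$ and does not touch these pieces. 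Your preliminary reductions are fine but belong to the proof of Proposition \ref{0445} (Proposition \ref{0448} is already stated under the standing assumptions $\zeta_{p}\in K$, $n=1$, $r\in\{0,1,2\}$); as written, however, the perfectness argument is circular.
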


\begin{proof}
	Let $R \Psi \Lambda = i^{\ast} R j_{\ast} \Lambda \in D(\alg{k}_{\et})$ and
	$R^{q} \Psi \Lambda = H^{q} R \Psi \Lambda$.
	For $q = 1, 2$,
	the filtration $U^{m} H^{q}(K, \Lambda)$ of $H^{q}(K, \Lambda)$ defines 
	a filtration $U^{m} R^{q} \Psi \Lambda$
	of the sheaf $R^{q} \Psi \Lambda$
	by varying the residue field $k$
	and taking the \'etale sheafification in $k$
	(which is the Bloch-Kato filtration \cite[Section (1.2)]{BK86}).
	By taking the \'etale sheafification in $k$ in Propositions \ref{0043} and \ref{0044},
	we know that its graded pieces $\gr^{m} R^{q} \Psi \Lambda$ are given by
		\[
				\gr^{m} R^{2} \Psi \Lambda
			\cong
				\begin{cases}
						\nu(1)
					&	\text{if }
						m = 0,
					\\
						\Ga / \Ga^{p}
					&	\text{if }
						0 < m < f_{K},\, p \mid m,
					\\
						\Omega^{1}
					&	\text{if }
						0 < m < f_{K},\, p \nmid m,
					\\
						0
					&	\text{otherwise},
				\end{cases}
		\]
		\[
				\gr^{m} R^{1} \Psi \Lambda
			\cong
				\begin{cases}
						\Lambda \oplus \nu(1)
					&	\text{if }
						m = 0,
					\\
						\Ga / \Ga^{p}
					&	\text{if }
						0 < m < f_{K},\, p \mid m,
					\\
						\Ga
					&	\text{if }
						0 < m < f_{K},\, p \nmid m,
					\\
						0
					&	\text{otherwise}
				\end{cases}
		\]
	(see also \cite[Corollary (1.4.1)]{BK86}),
	and $R^{0} \Psi \Lambda \cong \Lambda$ and $R^{q} \Psi \Lambda = 0$ for $q \ge 3$.
	
	We have a distinguished triangle
		\[
				i^{\ast} \mathfrak{T}(r)
			\to
				\tau_{\le r} R \Psi \Lambda
			\to
				\nu(r - 1)[-r]
		\]
	in $D(\alg{k}_{\et})$.
	Let $U^{m} H^{q} i^{\ast} \mathfrak{T}(r) \subset H^{q} i^{\ast} \mathfrak{T}(r)$ be
	the inverse image of $U^{m} R^{q} \Psi \Lambda$
	and $\gr^{m} H^{q} i^{\ast} \mathfrak{T}(r)$ its graded pieces.
	Then the morphism $i^{\ast} \mathfrak{T}(r) \to R \Psi \Lambda$
	and the above descriptions of $\gr^{m} R^{q} \Psi \Lambda$ induce
	the following isomorphisms for $\gr^{m} H^{q} i^{\ast} \mathfrak{T}(r)$:
		\[
			\begin{array}{c|cc}
					\gr^{m} H^{q} i^{\ast} \mathfrak{T}(1)
				&
					q = 0
				&
					q = 1
				\\ \hline
					m = 0
				&
					\Lambda
				&
					\nu(1)
				\\
					\begin{array}{c}
						0 < m < f_{K},\\ p \mid m
					\end{array}
				&
					0
				&
					\Ga / \Ga^{p}
				\\
					\begin{array}{c}
						0 < m < f_{K},\\ p \nmid m
					\end{array}
				&
					0
				&
					\Ga
			\end{array}
		\]
		\[
			\begin{array}{c|ccc}
					\gr^{m} H^{q} i^{\ast} \mathfrak{T}(2)
				&
					q = 0
				&
					q = 1
				&
					q = 2
				\\ \hline
					m = 0
				&
					\Lambda
				&
					\Lambda \oplus \nu(1)
				&
					0
				\\
					\begin{array}{c}
						0 < m < f_{K},\\ p \mid m
					\end{array}
				&
					0
				&
					\Ga / \Ga^{p}
				&
					\Ga / \Ga^{p}
				\\
					\begin{array}{c}
						0 < m < f_{K},\\ p \nmid m
					\end{array}
				&
					0
				&
					\Ga
				&
					\Omega^{1}
			\end{array}
		\]
	Of course we have $i^{\ast} \mathfrak{T}(0) \cong \Lambda$.
	The graded pieces for values of $q$ and $m$ not mentioned in the above tables are all zero.
	
	Let $U^{m} H^{q} \mathcal{F}_{\Order_{K}}^{r}$ be the inverse image of
	$U^{m} H^{q} \mathcal{F}_{K}$ via the morphism $\mathcal{F}_{\Order_{K}}^{r} \to \mathcal{F}_{K}$.
	Let $\gr^{m} H^{q} \mathcal{F}_{\Order_{K}}^{r}$ be the graded pieces for this filtration.
	Then the above tables induce the following isomorphisms for $\gr^{m} H^{q} \mathcal{F}_{\Order_{K}}^{r}$:
		\[
			\begin{array}{c|cc}
					\gr^{m} H^{q} \mathcal{F}_{\Order_{K}}^{0}
				&
					q = 0
				&
					q = 1
				\\ \hline
					m = 0
				&
					\Lambda
				&
					0
				\\
					\begin{array}{c}
						0 < m < f_{K},\\ p \mid m
					\end{array}
				&
					0
				&
					0
				\\
					\begin{array}{c}
						0 < m < f_{K},\\ p \nmid m
					\end{array}
				&
					0
				&
					0
				\\
					m = f_{K}
				&
					0
				&
					\alg{k} / \wp \alg{k}
			\end{array}
		\]
		\[
			\begin{array}{c|ccc}
					\gr^{m} H^{q} \mathcal{F}_{\Order_{K}}^{1}
				&
					q = 0
				&
					q = 1
				&
					q = 2
				\\ \hline
					m = 0
				&
					\Lambda
				&
					\alg{k}^{\times} / \alg{k}^{\times p}
				&
					0
				\\
					\begin{array}{c}
						0 < m < f_{K},\\ p \mid m
					\end{array}
				&
					0
				&
					\alg{k} / \alg{k}^{p}
				&
					0
				\\
					\begin{array}{c}
						0 < m < f_{K},\\ p \nmid m
					\end{array}
				&
					0
				&
					\alg{k}
				&
					0
				\\
					m = f_{K}
				&
					0
				&
					\alg{k} / \wp \alg{k}
				&
					\xi
			\end{array}
		\]
		\[
			\begin{array}{c|ccc}
					\gr^{m} H^{q} \mathcal{F}_{\Order_{K}}^{2}
				&
					q = 0
				&
					q = 1
				&
					q = 2
				\\ \hline
					m = 0
				&
					\Lambda
				&
					\Lambda \oplus \alg{k}^{\times} / \alg{k}^{\times p}
				&
					0
				\\
					\begin{array}{c}
						0 < m < f_{K},\\ p \mid m
					\end{array}
				&
					0
				&
					\alg{k} / \alg{k}^{p}
				&
					\alg{k} / \alg{k}^{p}
				\\
					\begin{array}{c}
						0 < m < f_{K},\\ p \nmid m
					\end{array}
				&
					0
				&
					\alg{k}
				&
					\Omega_{\alg{k}}^{1}
				\\
					m = f_{K}
				&
					0
				&
					\alg{k} / \wp \alg{k}
				&
					\alg{k} / \wp \alg{k} \oplus \xi
			\end{array}
		\]
	Again the graded pieces for values of $q$ and $m$ not mentioned in the above tables are all zero.
	From these tables, it follows that the morphism
	$H^{q} \mathcal{F}_{\Order_{K}}^{r} \to H^{q} \mathcal{F}_{K}$
	is injective for all $q$,
	and the cokernel of this injection is $H^{q + 1} \mathcal{F}_{\Order_{K}}'^{r}$.
	Also $H^{q} \mathcal{F}_{\Order_{K}}^{r}$ and $H^{q} \mathcal{F}_{\Order_{K}}'^{r}$ are
	finite successive extensions of Tate vector groups, $\Lambda$ and $\xi$.
	Therefore
		\[
				\sheafext_{F^{\perar}_{\zar}}^{j}(
					H^{q} \mathcal{F}_{\Order_{K}}^{r},
					\xi_{\infty}
				)
			=
				\sheafext_{F^{\perar}_{\zar}}^{j}(
					H^{q} \mathcal{F}_{\Order_{K}}'^{r},
					\xi_{\infty}
				)
			=
				0
		\]
	for all $j \ge 1$ by Propositions \ref{0027} and \ref{0028}.
	Proposition \ref{0430} then gives a morphism of short exact sequences
		\[
			\begin{CD}
					0
				@>>>
					H^{q} \mathcal{F}_{\Order_{K}}^{r}
				@>>>
					H^{q} \mathcal{F}_{K}
				@>>>
					H^{q + 1} \mathcal{F}_{\Order_{K}}'^{r}
				@>>>
					0
				\\ @. @VVV @VVV @VVV @. \\
					0
				@>>>
					(H^{q' + 1} \mathcal{F}_{\Order_{K}}'^{r'})^{\vee}
				@>>>
					(H^{q'} \mathcal{F}_{K})^{\vee}
				@>>>
					(H^{q'} \mathcal{F}_{\Order_{K}}^{r'})^{\vee}
				@>>>
					0
			\end{CD}
		\]
	for $q + q' = 3$, where $(\var)^{\vee}$ denotes $\sheafhom_{F^{\perar}_{\zar}}(\var, \xi_{\infty})$.
	The middle vertical morphism is an isomorphism
	by Proposition \ref{0050}.
	From the above tables of $\gr^{m} H^{q} \mathcal{F}_{\Order_{K}}^{r}$,
	we can see that $H^{q} \mathcal{F}_{\Order_{K}}^{r}$ is the exact annihilator
	of $H^{q'} \mathcal{F}_{\Order_{K}}^{r'}$ in the pairing
	$H^{q} \mathcal{F}_{K} \times H^{q'} \mathcal{F}_{K} \to \xi_{\infty}$,
	namely that the left vertical morphism is an isomorphism.
	This implies the result.
\end{proof}

\begin{proof}[Proof of Proposition \ref{0445}]
	First assume that $\zeta_{p} \in K$ and $n = 1$.
	The \'etale sheafifications of
	$H^{q} \mathcal{F}_{\Order_{K}}^{r}$
	and $H^{q} \mathcal{F}_{\Order_{K}}'^{r}$ are
	$H^{q} \mathcal{E}_{\Order_{K}}^{r}$
	and $H^{q} \mathcal{E}_{\Order_{K}}'^{r}$, respectively.
	Hence the tables in the proof of Proposition \ref{0448} show that
	$H^{q} \mathcal{E}_{\Order_{K}}^{r}$
	and $H^{q} \mathcal{E}_{\Order_{K}}'^{r}$ are in $\mathcal{W}_{F}$.
	Thus
		$
				\mathcal{E}_{\Order_{K}}^{r},
				\mathcal{E}_{\Order_{K}}'^{r}
			\in
				\genby{\mathcal{W}_{F}}_{F^{\perar}_{\et}}
		$.
	Now Propositions \ref{0448} and \ref{0026} give the result in this case.
	
	Next assume that $n = 1$.
	Let $K' = K(\zeta_{p})$, with residue field $k'$
	We may assume that the residue field of $k'$ is $F$.
	For $F' \in F^{\perar}$, let $\alg{O}_{K'}(F') = \alg{O}_{K}(F') \tensor_{\Order_{K}} \Order_{K'}$.
	Define $\Spec \alg{O}_{K', \et}$, $\Spec \alg{K}'_{\et}$, $\Spec \alg{k}'_{\et}$ and so on similarly.
	Let $f \colon \Spec \alg{O}_{K', \et} \to \Spec \alg{O}_{K, \et}$ be the natural morphism.
	Then we have canonical morphisms $\mathfrak{T}(r) \to f_{\ast} \mathfrak{T}(r)$
	and $f_{\ast}\mathfrak{T}(r) \to \mathfrak{T}(r)$
	by \cite[Theorem 1.1.2]{Sat07}.
	The composite $\mathfrak{T}(r) \to f_{\ast} \mathfrak{T}(r) \to \mathfrak{T}(r)$
	is multiplication by $[K' : K]$ by the projection formula \cite[Corollary 7.2.4]{Sat07}
	and hence an isomorphism.
	Hence $R \pi_{\alg{O}_{K}, \ast} \mathfrak{T}(r)$ and $R \pi_{\alg{O}_{K}, !} \mathfrak{T}(r)$
	are direct summands of
	$R \pi_{\alg{O}_{K'}, \ast} \mathfrak{T}(r)$ and $R \pi_{\alg{O}_{K'}, !} \mathfrak{T}(r)$,
	respectively.
	Now the same projection formula implies that the statement for $\Order_{K'}$
	implies that for $\Order_{K}$.
	
	The statement for general $n$ follows from that for $n = 1$.
\end{proof}

\begin{Thm}
	Let $n \ge 1$ and $r, r' \in \Z$ with $r + r' = 2$.
	\begin{enumerate}
		\item
			We have
				$
						R \alg{\Gamma}(\alg{O}_{K}, \Lambda_{n}(r)),
						R \alg{\Gamma}_{c}(\alg{O}_{K}, \Lambda_{n}(r)),
					\in
						D^{b}(\Ind \Pro \Alg_{u} / F)
				$.
		\item
			The morphism
				\[
							R \alg{\Gamma}(\alg{O}_{K}, \Lambda_{n}(r))
						\tensor^{L}
							R \alg{\Gamma}_{c}(\alg{O}_{K}, \Lambda_{n}(r'))
					\to
						\Lambda_{\infty}[-3]
				\]
			obtained by applying $\algebrize$ to the morphism \eqref{0446}
			is a perfect pairing in $D(F^{\ind\rat}_{\pro\et})$.
	\end{enumerate}
\end{Thm}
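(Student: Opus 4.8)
The plan is to transfer Proposition \ref{0445} from $\Spec F^{\perar}_{\et}$ to $\Spec F^{\ind\rat}_{\pro\et}$, following the proof of the duality theorem for $\alg{K}$ given above essentially word for word; the only substantial input is Proposition \ref{0445} itself, everything else being a formal consequence of the comparison results of Section \ref{0309}.

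First I would deduce (1). By Proposition \ref{0445} (1) the objects $R \pi_{\alg{O}_{K}, \ast} \mathfrak{T}_{n}(r)$ and $R \pi_{\alg{O}_{K}, !} \mathfrak{T}_{n}(r)$ lie in $\genby{\mathcal{W}_{F}}_{F^{\perar}_{\et}}$, and by Proposition \ref{0152} the functor $\algebrize$ carries this subcategory into $D^{b}(\Ind \Pro \Alg_{u} / F)$. Since $R \alg{\Gamma}(\alg{O}_{K}, \Lambda_{n}(r))$ and $R \alg{\Gamma}_{c}(\alg{O}_{K}, \Lambda_{n}(r))$ are by definition the $\algebrize$-images of these two objects, part (1) follows.

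For (2), I would first observe that the same two objects are $h$-compatible by Proposition \ref{0153}, so the product morphism \eqref{0453} is available on them, and that $\algebrize$ is a triangulated functor and hence commutes with shifts. Applying $\algebrize$ to \eqref{0446} and precomposing with \eqref{0453} on the source thus produces a morphism
	\[
				R \alg{\Gamma}(\alg{O}_{K}, \Lambda_{n}(r))
			\tensor^{L}
				R \alg{\Gamma}_{c}(\alg{O}_{K}, \Lambda_{n}(r'))
		\to
			\Lambda_{\infty}[-3]
	\]
in $D(F^{\ind\rat}_{\pro\et})$, the target being $\Lambda_{\infty}[-3]$ via the identification used already in the $\alg{K}$-case above. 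Shifting the compactly supported factor by $[3]$ rewrites this as a pairing with values in $\Lambda_{\infty}$ between two objects of $\genby{\mathcal{W}_{F}}_{F^{\perar}_{\et}}$, a subcategory closed under translation; Proposition \ref{0010} then says that this pairing is perfect in $D(F^{\ind\rat}_{\pro\et})$ if and only if the pairing \eqref{0446} is perfect in $D(F^{\perar}_{\et})$, which is exactly Proposition \ref{0445} (2). The case of general $n$ reduces to $n = 1$ just as in Proposition \ref{0445}.

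The one point that will require attention, though it is purely formal, is to check that the morphism produced above is literally the morphism to which Proposition \ref{0010} is applied, i.e.\ that $\algebrize$ of \eqref{0446} composed with \eqref{0453} agrees with the morphism induced by \eqref{0453} in the sense of that proposition. This follows from the compatibility of \eqref{0453} with composition of morphisms, together with the observation that \eqref{0446} is itself the composite of the cup product \eqref{0442} --- which comes from \eqref{0368} via \eqref{0443} --- with the trace map \eqref{0412}. So no real obstacle arises beyond the explicit computations already carried out in Propositions \ref{0448} and \ref{0445}; all the genuine work sits there rather than in the present deduction.
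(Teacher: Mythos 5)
Your proposal is correct and follows the same route as the paper, which deduces the theorem from Propositions \ref{0445}, \ref{0152}, \ref{0153} and \ref{0010} exactly as you do; the compatibility check you flag at the end is indeed the only formal point, and it is handled by the functoriality of \eqref{0453}. (The reduction to $n=1$ is already contained in Proposition \ref{0445} itself, so it need not be repeated here.)
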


\begin{proof}
	This follows from Propositions \ref{0445}, \ref{0152}, \ref{0153} and \ref{0010}.
\end{proof}


\section{Two-dimensional local fields with mixed characteristic residue field}
\label{0190}

Let $k$ be a complete discrete valuation field of characteristic zero with residue field $F$.
Let $K$ be a henselian discrete valuation field with residue field $k$.
We will give a duality for $\Order_{K}$.
We already have a duality for $k$ by \cite{Suz22Duality}.
Since $k$ has characteristic zero,
we can just apply Verdier duality to pass to $\Order_{K}$.
We still need to make sure the duality statement can be stated
in the manner parallel to the duality in the previous section,
so that these two duality statements can be applied to
the local fields of arbitrary height one primes of two-dimensional local rings
in a coherent manner.
Since the statement for $k$ in \cite{Suz22Duality} treats slightly more general coefficients than $\Lambda_{n}(r)$,
we can prove a slightly more general result than needed.


\subsection{Nearby cycle duality}
\label{0191}

Let $D^{b}_{c}(k_{\et}) \subset D^{b}(k_{\et})$ be the full subcategory of objects
with constructible cohomology sheaves.

\begin{Prop} \label{0192}
	Let $N \in D^{b}_{c}(k_{\et})$
	and set $M = R \sheafhom_{k_{\et}}(N, \Q / \Z(1))$.
	View them as objects of $D(\alg{k}_{\et})$ via pullback.
	\begin{enumerate}
		\item \label{0193}
			We have $R \pi_{\alg{k}, \ast} N \in D^{b}(\Alg / F)$.
		\item \label{0194}
			There exists a canonical morphism
				\[
						R \pi_{\alg{k}, \ast} \Q / \Z(1)
					\to
						\Q / \Z[-1].
				\]
		\item \label{0195}
			The composite morphism
				\[
						R \pi_{\alg{k}, \ast} N \tensor^{L} R \pi_{\alg{k}, \ast} M
					\to
						R \pi_{\alg{k}, \ast} \Q / \Z(1)
					\to
						\Q / \Z[-1].
				\]
			is a perfect pairing in $D(F^{\perar}_{\et})$.
	\end{enumerate}
\end{Prop}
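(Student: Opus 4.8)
The plan is to deduce all three parts from the local duality of \cite{Suz13} for complete discrete valuation fields of characteristic zero with perfect residue field. First I would record that, by \eqref{0450}, for each $F' \in F^{\perar}$ the ring $\alg{k}(F')$ is a complete discrete valuation field of characteristic zero with residue field $F'$, and that by Proposition \ref{0031} the cohomology sheaf $R^{q} \pi_{\alg{k}, \ast}(\var)$ is the \'etale sheafification of $F' \mapsto H^{q}(\alg{k}(F'), \var)$; thus $\pi_{\alg{k}, \ast}$ merely bundles together the Galois cohomology of the family of local fields $\{\alg{k}(F')\}_{F'}$, and \eqref{0193}--\eqref{0195} assert precisely that the algebraic-group structure, the trace map and the duality pairing of \cite{Suz13} for these fields are realized, \'etale-sheaf-theoretically over $\Spec F^{\perar}_{\et}$, by $R \pi_{\alg{k}, \ast}$ applied to $N$, $M$ and $\Q/\Z(1)$.

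For \eqref{0194}, I would note that, since $\alg{k}(F')$ has characteristic zero and (after passing to a geometric point) algebraically closed residue field, its Brauer group vanishes, so $R \pi_{\alg{k}, \ast} \Q/\Z(1)$ is concentrated in degrees $\le 1$ with $R^{1} \pi_{\alg{k}, \ast} \Q/\Z(1) \cong \alg{k}^{\times} \tensor \Q/\Z$ by Kummer theory. A morphism from such a complex to $\Q/\Z[-1]$ is the same datum as a sheaf morphism $\alg{k}^{\times} \tensor \Q/\Z \to \Q/\Z$, and I take the one induced by the normalized valuation $\alg{k}^{\times} \onto \Z$; this is the \cite{Suz13} trace map.

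For \eqref{0193} and \eqref{0195}, the strategy is to invoke the local duality of \cite{Suz13} over the site used there and then transport it to $\Spec F^{\perar}_{\et}$. By that duality, the functor $F' \mapsto R \Gamma(\alg{k}(F'), N)$ is represented by an object of $D^{b}(\Alg/F)$ and its cup product with $F' \mapsto R \Gamma(\alg{k}(F'), M)$, followed by the trace, is a perfect pairing; this is proved there for general constructible coefficients (the prime-to-$p$ part being a standard relative local duality over the residue field, the $p$-primary part being the substance of \cite{Suz13}), possibly after a routine dévissage in $N$. It then remains to identify $R \pi_{\alg{k}, \ast}$ over $\Spec F^{\perar}_{\et}$ with the derived pushforward appearing in \cite{Suz13}; for this I would use the $h$-compatibility and $h$-acyclicity of the cohomology objects at hand (as in Proposition \ref{0153}), together with Propositions \ref{0152} and \ref{0010}, the last of which reduces the perfectness of the pairing in $D(F^{\perar}_{\et})$ to its perfectness after applying $\algebrize$, i.e.\ to the \cite{Suz13} statement.

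The main obstacle I anticipate is this transport step: \cite{Suz13}, \cite{Suz20} and \cite{Suz21} are set up over three related but distinct sites — the rational \'etale, the ind-rational pro-\'etale and the perfect artinian \'etale site — and, in contrast to Proposition \ref{0055}, the cohomology of $\alg{k}$ for a general constructible $N$ is built not only from the unipotent groups of $\mathcal{W}_{F}$ but also from multiplicative and finite \'etale contributions coming from $\alg{k}(F')^{\times}$ and from the residue-field Galois cohomology, so one must check that the comparison formalism recalled and developed in Section \ref{0309} applies to all of them, extending it past $\mathcal{W}_{F}$ where necessary (or invoking the corresponding generalities of \cite{Suz21}). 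Once the transport is in hand, \eqref{0193} is immediate from boundedness together with the fact that each cohomology sheaf is the perfection of a group scheme of finite type, and \eqref{0195} from the \cite{Suz13} pairing.
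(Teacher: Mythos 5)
Your proposal is correct and follows essentially the same route as the paper, whose entire proof is to invoke \cite[Theorem A]{Suz13} and note that the passage from $\Spec F^{\rat}_{\et}$ to $\Spec F^{\perar}_{\et}$ works just as the passage to $\Spec F^{\ind\rat}_{\pro\et}$ in \cite[Theorem 2.1.5]{Suz13}; your extra discussion of the trace map and of extending the comparison formalism beyond $\mathcal{W}_{F}$ is exactly the content of that site-transport step.
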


\begin{proof}
	This follows from \cite[Theorem A]{Suz22Duality},
	with $\Spec F^{\rat}_{\et}$ replaced by $\Spec F^{\perar}_{\et}$
	in the same way as $\Spec F^{\rat}_{\et}$ replaced by $\Spec F^{\ind\rat}_{\et}$
	in \cite[Theorem 2.1.5]{Suz22Duality}.
\end{proof}

Note that if $N$ has order prime to $p$,
then $R \pi_{\alg{k}, \ast} N$ is a complex of finite \'etale group schemes over $F$.

Let $j_{0} \colon \Spec K_{\et} \to \Spec \Order_{K, \et}$ and
$i_{0} \colon \Spec k_{\et} \to \Spec \Order_{K, \et}$ be the natural morphisms.
Let $N \in D^{b}_{c}(\Order_{K, \et})$.
Then $i_{0}^{\ast} N, R i_{0}^{!} N \in D^{b}_{c}(k_{\et})$.
Set $M = R \sheafhom_{\Order_{K, \et}}(N, \Q / \Z(2))$.
We have a canonical morphism
$R i_{0}^{!} \Q / \Z(2) \cong \Q / \Z(1)[-2]$.
The composite morphism
	\[
			i_{0}^{\ast} N \tensor^{L} R i_{0}^{!} M
		\to
			R i_{0}^{!} \Q / \Z(2)
		\to
			\Q / \Z(1)[-2]
	\]
is a perfect pairing in $D(k_{\et})$ by Verdier duality
(see for example \cite[Expos\'e I, Theorem 5.1]{Ill77}).
Combining this with the above proposition, we get:

\begin{Prop} \label{0059}
	Let $N \in D^{b}_{c}(\Order_{K, \et})$
	and set $M = R \sheafhom_{\Order_{K, \et}}(N, \Q / \Z(2))$.
	\begin{enumerate}
		\item \label{0196}
			We have $R \pi_{\alg{k}, \ast} i_{0}^{\ast} N, R \pi_{\alg{k}, \ast} R i_{0}^{!} M \in D^{b}(\Alg / F)$.
		\item \label{0197}
			There exists a canonical morphism
				\[
						R \pi_{\alg{k}, \ast} R i_{0}^{!} \Q / \Z(2)
					\to
						\Q / \Z[-3].
				\]
		\item \label{0198}
			The composite morphism
				\[
							R \pi_{\alg{k}, \ast} i_{0}^{\ast} N
						\tensor^{L}
							R \pi_{\alg{k}, \ast} R i_{0}^{!} M
					\to
						R \pi_{\alg{k} \ast} R i_{0}^{!} \Q / \Z(2)
					\to
						\Q / \Z[-3]
				\]
			is a perfect pairing in $D(F^{\perar}_{\et})$.
	\end{enumerate}
\end{Prop}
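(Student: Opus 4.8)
The statement is a straightforward combination of two inputs: the nearby cycle duality over $\Spec F^{\perar}_{\et}$ for the characteristic-zero residue field $k$ (Proposition \ref{0192}) and classical Verdier duality for the henselian discrete valuation ring $\Order_{K}$. The plan is to run through the three parts in order, reducing each to these two facts.

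First I would establish part \eqref{0196}. Since $N \in D^{b}_{c}(\Order_{K, \et})$, both $i_{0}^{\ast} N$ and $R i_{0}^{!} N$ lie in $D^{b}_{c}(k_{\et})$: for $i_{0}^{\ast}$ this is immediate, and for $R i_{0}^{!}$ one uses the localization triangle $i_{0 \ast} R i_{0}^{!} N \to N \to R j_{0 \ast} j_{0}^{\ast} N$ together with the fact that $R j_{0 \ast}$ preserves bounded constructibility on a trait. Because $M = R \sheafhom_{\Order_{K, \et}}(N, \Q/\Z(2))$ is again in $D^{b}_{c}(\Order_{K, \et})$ (internal Hom of constructible objects is constructible on $\Order_{K, \et}$), the object $R i_{0}^{!} M$ is in $D^{b}_{c}(k_{\et})$ as well. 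Now apply Proposition \ref{0192}\eqref{0193} (with $\Spec F^{\rat}_{\et}$ replaced by $\Spec F^{\perar}_{\et}$ as there) to $i_{0}^{\ast} N$ and to $R i_{0}^{!} M$, viewed as objects of $D(\alg{k}_{\et})$ via pullback, to conclude $R \pi_{\alg{k}, \ast} i_{0}^{\ast} N, R \pi_{\alg{k}, \ast} R i_{0}^{!} M \in D^{b}(\Alg/F)$.

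For part \eqref{0197} I would first recall the canonical isomorphism $R i_{0}^{!} \Q/\Z(2) \cong \Q/\Z(1)[-2]$ in $D(\Order_{K, \et})$ (purity for the trait $\Spec \Order_{K}$ with its closed point $\Spec k$, noting $k$ has characteristic zero so there is no $p$-adic subtlety), which is the isomorphism already used in the paragraph preceding the statement. Pulling back to $\alg{k}_{\et}$ and applying $R \pi_{\alg{k}, \ast}$ gives $R \pi_{\alg{k}, \ast} R i_{0}^{!} \Q/\Z(2) \cong R \pi_{\alg{k}, \ast} \Q/\Z(1)[-2]$; then composing with the shift by $[-2]$ of the morphism $R \pi_{\alg{k}, \ast} \Q/\Z(1) \to \Q/\Z[-1]$ from Proposition \ref{0192}\eqref{0194} produces the desired morphism $R \pi_{\alg{k}, \ast} R i_{0}^{!} \Q/\Z(2) \to \Q/\Z[-3]$.

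Part \eqref{0198} is the heart of the matter, and the key point is that the two duality operations compose correctly. By Verdier duality on the trait (for example \cite[Expos\'e I, Theorem 5.1]{Ill77}), the composite $i_{0}^{\ast} N \tensor^{L} R i_{0}^{!} M \to R i_{0}^{!} \Q/\Z(2) \to \Q/\Z(1)[-2]$ is a perfect pairing in $D(k_{\et})$, i.e.\ identifies $R i_{0}^{!} M$ with $R \sheafhom_{k_{\et}}(i_{0}^{\ast} N, \Q/\Z(1))[-2]$. Pulling this back to $D(\alg{k}_{\et})$ (pullback along $\alg{k}_{\et} \to k_{\et}$ commutes with $\tensor^{L}$ and, for constructible objects, with $R\sheafhom$, so the pairing remains perfect), I can then feed $i_{0}^{\ast} N$ and $R i_{0}^{!} M$ into Proposition \ref{0192}\eqref{0195}: the composite $R \pi_{\alg{k}, \ast} i_{0}^{\ast} N \tensor^{L} R \pi_{\alg{k}, \ast} R i_{0}^{!} M \to R \pi_{\alg{k}, \ast} \Q/\Z(1) \to \Q/\Z[-1]$ is a perfect pairing in $D(F^{\perar}_{\et})$. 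It remains to check that the cup-product morphism \eqref{0327} for $R\pi_{\alg{k},\ast}$ is compatible with the two layers of pairing, i.e.\ that the composite pairing through $R \pi_{\alg{k}, \ast} R i_{0}^{!} \Q/\Z(2) \to \Q/\Z[-3]$ agrees (up to the sign conventions of the localization triangle) with the Verdier-then-nearby-cycle pairing; this is the standard compatibility of $R i_{0}^{!}$ with cup products (\eqref{0364}) and of $R\pi_{\alg{k},\ast}$ with cup products (\eqref{0327}), applied as in \eqref{0368}. The main obstacle I anticipate is precisely this bookkeeping: making sure the identification $R i_{0}^{!} \Q/\Z(2) \cong \Q/\Z(1)[-2]$ is the one compatible with the product structures on Tate twists, so that no spurious sign or twist enters, and that pullback to $\alg{k}_{\et}$ genuinely preserves the perfectness of the Verdier pairing (which uses that $N$, $M$ are bounded constructible so that $R\sheafhom$ commutes with the pullback). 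Once these compatibilities are in place, combining the two perfect pairings gives the result.
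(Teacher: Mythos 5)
Your proposal is correct and follows essentially the same route as the paper: the paper's proof is precisely to note that $i_{0}^{\ast} N, R i_{0}^{!} M \in D^{b}_{c}(k_{\et})$, that Verdier duality on the trait makes $i_{0}^{\ast} N \tensor^{L} R i_{0}^{!} M \to R i_{0}^{!} \Q / \Z(2) \cong \Q / \Z(1)[-2]$ a perfect pairing in $D(k_{\et})$, and then to feed this into Proposition \ref{0192}. The compatibility bookkeeping you flag at the end is exactly the content of the word ``combining'' in the paper's one-line argument, so nothing is missing.
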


This contains a duality statement for $D^{b}_{c}(K_{\et})$
as a special case where $N$ is the $R j_{0, \ast}$ of an object of $D^{b}_{c}(K_{\et})$.


\subsection{Duality with relative sites}
\label{0199}

We use the notation of Section \ref{0037}.
We fix a lifting system $\alg{O}_{K}$ for $K$.
We have a commutative diagram of morphisms of sites
	\[
		\begin{CD}
				\Spec \alg{K}_{\et}
			@> j >>
				\Spec \alg{O}_{K, \et}
			@< i <<
				\Spec \alg{k}_{\et}
			\\ @VV f_{K} V @VV f_{\Order_{K}} V @VV f_{k} V \\
				\Spec K_{\et}
			@> j_{0} >>
				\Spec \Order_{K, \et}
			@< i_{0} <<
				\Spec k_{\et},
		\end{CD}
	\]
where the vertical morphisms come from
Proposition \ref{0031}.
The functors $i^{\ast}$ and $i_{0}^{\ast}$
(resp.\ $R i^{!}$ and $R i_{0}^{!}$) are compatible with each other:

\begin{Prop} \label{0200}
	The natural morphism of distinguished triangles
		\[
			\begin{CD}
					f_{k}^{\ast} R i_{0}^{!}
				@>>>
					f_{k}^{\ast} i_{0}^{\ast}
				@>>>
					f_{k}^{\ast} i_{0}^{\ast} R j_{0, \ast} j_{0}^{\ast}
				\\ @VVV @VVV @VVV \\
					R i^{!} f_{\Order_{K}}^{\ast}
				@>>>
					i^{\ast} f_{\Order_{K}}^{\ast}
				@>>>
					i^{\ast} R j_{\ast} j^{\ast} f_{\Order_{K}}^{\ast}
			\end{CD}
		\]
	is an isomorphism of triangles.
\end{Prop}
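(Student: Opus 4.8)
The plan is to reduce everything to the two-out-of-three property for morphisms of distinguished triangles together with a single base-change statement at the level of schemes.

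First I would observe that the middle vertical morphism $f_k^* i_0^* N \to i^* f_{\Order_K}^* N$ is an isomorphism for purely formal reasons: the square with morphisms $i$, $f_k$, $i_0$, $f_{\Order_K}$ commutes on the level of underlying functors, and all four are morphisms of sites with exact pullback (the vertical ones by Proposition \ref{0031}, the horizontal ones by Proposition \ref{0036}), so
\[
	f_k^* i_0^* = (i_0 \compose f_k)^* = (f_{\Order_K} \compose i)^* = i^* f_{\Order_K}^*,
\]
and this identification is compatible with the isomorphism $j^* f_{\Order_K}^* \cong f_K^* j_0^*$ used to form the right-hand terms (again because $j_0 \compose f_K = f_{\Order_K} \compose j$). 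By the two-out-of-three property it then suffices to show that the right-hand vertical morphism $f_k^* i_0^* Rj_{0,*} j_0^* N \to i^* Rj_* j^* f_{\Order_K}^* N$ is an isomorphism. Using $j^* f_{\Order_K}^* \cong f_K^* j_0^*$ and the identification above, this amounts to the base-change assertion that the canonical morphism
\[
	f_{\Order_K}^* Rj_{0,*} M_0 \to Rj_* f_K^* M_0
\]
is an isomorphism, for $M_0 = j_0^* N \in D^+(K_\et)$ (with torsion cohomology, which is the case of interest).

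Next I would reduce this to a statement about the small \'etale sites of the fibres. By Proposition \ref{0031} it is enough to check that the two sides have isomorphic restrictions to $\Spec\alg{O}_K(F')_\et$ for every $F' \in F^{\perar}$. The functors $f_{\Order_K}^*$ and $f_K^*$ commute with these restrictions and are there given by the scheme-theoretic pullbacks along $\Spec\alg{O}_K(F') \to \Spec\Order_K$, resp.\ $\Spec\alg{K}(F') \to \Spec K$, by Proposition \ref{0311}; and $Rj_{0,*}$, $Rj_*$ are computed fibrewise, since restriction to a fibre has an exact left adjoint and hence preserves K-injectives (as in Proposition \ref{0313} and the discussion around Proposition \ref{0032}). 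Thus the claim reduces to: for each $F'$, the base-change morphism for the cartesian square with vertical arrows $g_{F'} \colon \Spec\alg{O}_K(F') \to \Spec\Order_K$ and $g'_{F'} \colon \Spec\alg{K}(F') \to \Spec K$ is an isomorphism on $M_0$.

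This last point is where the argument has content, and I expect it to be the main obstacle. The map $g_{F'}$ is an unramified extension of henselian discrete valuation rings whose induced map on each $\ideal{p}_K^n$-quotient is formally \'etale (the residue extension $F'/F$ is a perfect, hence formally \'etale, extension); equivalently, $i^* Rj_{0,*}$ is a nearby-cycle functor that is insensitive to such base change. I would prove the required isomorphism exactly as the invariance under completion in Proposition \ref{0173} is proved --- reducing to the corresponding well-known isomorphisms for $R\Gamma(\Order_{K,\et},\var)$ versus $R\Gamma(\Spec\alg{O}_K(F')_\et,\var)$ and for the generic fibres --- or simply cite the analogous base-change results of \cite{Suz13}. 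Granting this, the right-hand vertical morphism is an isomorphism, hence so is the left-hand one, and the morphism of distinguished triangles is an isomorphism.
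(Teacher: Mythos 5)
Your formal reductions track the paper's proof exactly: the middle vertical arrow is an isomorphism for trivial reasons, the two-out-of-three property reduces everything to the right-hand vertical arrow, i.e.\ to the base-change morphism $f_{\Order_{K}}^{\ast} R j_{0, \ast} \to R j_{\ast} f_{K}^{\ast}$, and Proposition \ref{0031} reduces that to the corresponding statement for the single square of small \'etale sites attached to $\Order_{K} \to \Order_{K_{1}}$, $K \to K_{1} = \alg{K}(F_{1})$ for each field $F_{1} \in F^{\perar}$. Up to that point there is nothing to object to.

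The gap is in the last step, which you correctly flag as the one with content but then do not actually prove. First, the analogy with Proposition \ref{0173} is inapt: that proposition is invariance of \'etale cohomology under \emph{completion} (same residue field), whereas here the residue field genuinely changes from $k$ to $k_{1} = \alg{k}(F_{1})$, and $R \Gamma(\Order_{K, \et}, \var)$ and $R \Gamma(\Order_{K_{1}, \et}, \var)$ are \emph{not} isomorphic (already $H^{1}(\Order_{K}, \Lambda) = H^{1}(k, \Lambda)$ versus $H^{1}(k_{1}, \Lambda)$ differ); what is needed is commutation of $f^{\ast}$ past $R j_{0, \ast}$, not an invariance of global sections. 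Second, ``unramified with formally \'etale residue extension, hence nearby cycles are insensitive to the base change'' is an assertion, not an argument, and as stated it makes no use of the standing hypothesis of this section that $k$ has characteristic zero. That hypothesis is exactly what makes the claim easy: on stalks at a geometric point of $\Spec k_{1}$ the morphism in question is $R \Gamma(I_{K}, M) \to R \Gamma(I_{K_{1}}, M)$ for the inertia groups, and since the residue characteristic of $K$ is zero both inertia groups are tame and canonically isomorphic to $\Hat{\Z}$, compatibly with $K \to K_{1}$ (a uniformizer of $\Order_{K}$ stays a uniformizer of $\alg{O}_{K}(F_{1})$). This is the paper's one-line justification. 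Were $k$ of characteristic $p$, your heuristic would be at best the content of a nontrivial theorem --- indeed in that case the paper does not argue this way at all but instead computes nearby cycles via Bloch--Kato filtrations in Sections \ref{0174} and \ref{0217}. You should also note a small slip: the residue extension of $\Order_{K} \to \alg{O}_{K}(F')$ is $k \to \alg{k}(F')$, not $F \to F'$.
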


\begin{proof}
	The middle vertical morphism is obviously an isomorphism.
	The right vertical morphism is given by
		\[
				f_{k}^{\ast} i_{0}^{\ast} R j_{0, \ast} j_{0}^{\ast}
			\cong
				i^{\ast} f_{\Order_{K}}^{\ast} R j_{0, \ast} j_{0}^{\ast}
			\to
				i^{\ast} R j_{\ast} f_{K}^{\ast} j_{0}^{\ast}
			\cong
				i^{\ast} R j_{\ast} j^{\ast} f_{\Order_{K}}^{\ast}.
		\]
	Hence it is enough to show that
	$f_{\Order_{K}}^{\ast} R j_{0, \ast} \to R j_{\ast} f_{K}^{\ast}$
	is an isomorphism.
	Let $F_{1} \in F^{\perar}$ be a field
	and set $K_{1} = \alg{K}(F_{1})$.
	Consider the natural morphisms
		\[
			\begin{CD}
					\Spec K_{1, \et}
				@> j_{1} >>
					\Spec \Order_{K_{1}, \et}
				\\ @VV f_{K_{1}/ K} V @VV f_{\Order_{K_{1}} / \Order_{K}} V \\
					\Spec K_{\et}
				@> j_{0} >>
					\Spec \Order_{K, \et}.
			\end{CD}
		\]
	By Proposition \ref{0031},
	it is enough to show that
	$f_{\Order_{K_{1}} / \Order_{K}}^{\ast} R j_{0, \ast} \to R j_{1, \ast} f_{K_{1} / K}^{\ast}$
	is an isomorphism for any $F_{1}$.
	This follows from the fact that the inertia groups of $K$ and $K_{1}$ are both isomorphic to $\Hat{\Z}$.
\end{proof}

\begin{Prop} \label{0201}
	For any $N \in D(\Order_{K, \et})$, we have
		\[
				R \pi_{\alg{k}, \ast} i_{0}^{\ast} N
			\cong
				R \pi_{\alg{k}, \ast} i^{\ast} N
			\cong
				R \pi_{\alg{O}_{K}, \ast} N,
			\quad
				R \pi_{\alg{k}, \ast} R i_{0}^{!} N
			\cong
				R \pi_{\alg{k}, \ast} R i^{!} N
			\cong
				R \pi_{\alg{O}_{K}, !} N.
		\]
	(where appropriate pullback functors are omitted).
\end{Prop}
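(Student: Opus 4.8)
The plan is to reduce the proposition to three facts already in hand: Proposition~\ref{0313} applied to the henselian $F^{\perar}$-algebra $\alg{O}_{K}$, Proposition~\ref{0200} (compatibility of $R i^{!}$ with $R i_{0}^{!}$ under the two pullbacks), and the definition $R\pi_{\alg{O}_{K},!} = R\pi_{\alg{k},\ast} R i^{!}$ recorded in Section~\ref{0037} via \eqref{0433}. The first thing I would pin down is the meaning of the omitted pullbacks: an object $N\in D(\Order_{K,\et})$ is silently pulled back to $\Spec\alg{O}_{K,\et}$ along $f_{\Order_{K}}$ before $i^{\ast}$, $R i^{!}$, $R\pi_{\alg{O}_{K},\ast}$ or $R\pi_{\alg{O}_{K},!}$ is applied, while $i_{0}^{\ast}N$ and $R i_{0}^{!}N$ in $D(k_{\et})$ are silently pulled back to $\Spec\alg{k}_{\et}$ along $f_{k}$ before $R\pi_{\alg{k},\ast}$ is applied; with these conventions the two displayed chains are well-posed.

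For the rightmost identifications, $R\pi_{\alg{k},\ast} R i^{!} \cong R\pi_{\alg{O}_{K},!}$ is nothing but the definition \eqref{0433}, and $R\pi_{\alg{k},\ast} i^{\ast} \cong R\pi_{\alg{O}_{K},\ast}$ follows from Proposition~\ref{0313} once its hypotheses are verified for $\alg{A}=\alg{O}_{K}$. Here I would note that, by Definition~\ref{0038}, each $\alg{O}_{K}(F')$ with $F'$ a field is a henselian discrete valuation ring whose maximal ideal is generated by a prime element $\varpi_{K}$ of $\Order_{K}$; since $\alg{O}_{K}$ commutes with finite products, a general $\alg{O}_{K}(F')$ is a finite product of such rings, its quotient by the Jacobson radical is precisely $\alg{k}(F')$, and every transition map $\alg{O}_{K}(F')\to\alg{O}_{K}(F'')$, being a homomorphism of $\Order_{K}$-algebras, carries $\varpi_{K}$ to $\varpi_{K}$ and hence the Jacobson radical into the Jacobson radical. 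Since the morphism $i$ of Section~\ref{0037} is the morphism $i_{\alg{k},\et}$ of Proposition~\ref{0313} for $\alg{A}=\alg{O}_{K}$, Proposition~\ref{0313} then yields $R\pi_{\alg{k},\ast} i^{\ast}\cong R\pi_{\alg{O}_{K},\ast}$.

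For the leftmost identifications I would pass through the commutative square with corners $\Spec\alg{k}_{\et}$, $\Spec\alg{O}_{K,\et}$, $\Spec k_{\et}$, $\Spec\Order_{K,\et}$. As all four pullback functors involved are exact, the square gives $f_{k}^{\ast} i_{0}^{\ast}\cong i^{\ast} f_{\Order_{K}}^{\ast}$, and applying $R\pi_{\alg{k},\ast}$ gives $R\pi_{\alg{k},\ast} i_{0}^{\ast}N\cong R\pi_{\alg{k},\ast} i^{\ast}N$. The analogous statement for the shriek functors, $f_{k}^{\ast} R i_{0}^{!}N\cong R i^{!} f_{\Order_{K}}^{\ast}N$, is exactly the assertion that the left vertical arrow of Proposition~\ref{0200} is an isomorphism; applying $R\pi_{\alg{k},\ast}$ gives $R\pi_{\alg{k},\ast} R i_{0}^{!}N\cong R\pi_{\alg{k},\ast} R i^{!}N$. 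Concatenating with the identifications of the previous paragraph produces both lines of the proposition.

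The genuinely substantive step — the comparison of $R i^{!}$ with $R i_{0}^{!}$, which ultimately rests on the inertia groups of $K$ and of each $\alg{K}(F')$ being $\Hat{\Z}$ — has already been carried out in Proposition~\ref{0200}, so no essentially new obstacle arises here. The only points that need real care are the hypothesis check for Proposition~\ref{0313} in the case of an arbitrary, not necessarily canonical, lifting system, and making sure the several natural identifications compose coherently rather than merely holding one at a time; both are routine given the formalism of Sections~\ref{0029} and \ref{0037}.
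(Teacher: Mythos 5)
Your proposal is correct and follows essentially the same route as the paper, whose proof is simply the citation "this follows from the previous proposition": the left-hand identifications come from the isomorphism of triangles in Proposition~\ref{0200}, the identification $R\pi_{\alg{k},\ast} i^{\ast}\cong R\pi_{\alg{O}_{K},\ast}$ is Proposition~\ref{0313}, and $R\pi_{\alg{O}_{K},!}=R\pi_{\alg{k},\ast}Ri^{!}$ is the definition recorded in Section~\ref{0037}. Your additional verification of the hypotheses of Proposition~\ref{0313} for a general lifting system is a correct and worthwhile elaboration of a step the paper leaves implicit.
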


\begin{proof}
	This follows from the previous proposition.
\end{proof}

In particular, we have canonical morphisms
	\begin{equation} \label{0510}
			R \pi_{\alg{K}, \ast} \Q / \Z(2)
		\to
			R \pi_{\alg{O}_{K}, !} \Q / \Z(2)[1]
		\to
			\Q / \Z[-2]
	\end{equation}
in $D(F^{\perar}_{\et})$.
Now we can state the duality in terms of
$R \pi_{\alg{O}_{K}, \ast}$ and $R \pi_{\alg{O}_{K}, !}$:

\begin{Prop} \label{0449}
	Let $N \in D^{b}_{c}(\Order_{K, \et})$
	and set $M = R \sheafhom_{\Order_{K, \et}}(N, \Q / \Z(2))$.
	View them as objects of $D(\alg{O}_{K, \et})$ via pullback.
	\begin{enumerate}
		\item
			We have
			$R \pi_{\alg{O}_{K}, \ast} N, R \pi_{\alg{O}_{K}, !} N \in D^{b}(\Alg / F)$.
		\item
			The composite morphism
				\begin{equation} \label{0459}
							R \pi_{\alg{O}_{K}, \ast} N
						\tensor^{L}
							R \pi_{\alg{O}_{K}, !} M
					\to
						R \pi_{\alg{O}_{K}, !} \Q / \Z(2)
					\to
						\Q / \Z[-3]
				\end{equation}
			is a perfect pairing in $D(F^{\perar}_{\et})$.
	\end{enumerate}
\end{Prop}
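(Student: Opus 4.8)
The plan is to deduce the statement from Proposition~\ref{0059} by transporting it along the identifications of Proposition~\ref{0201}. This is possible precisely because $k$ has characteristic zero: Verdier duality over $k$ is available directly, so the pairing need not be analyzed through a change of topology, and everything reduces to bookkeeping rather than to an explicit computation as in the case of a residue field of characteristic $p$.

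First I would record that, since $N \in D^{b}_{c}(\Order_{K, \et})$, biduality holds: the canonical morphism $N \to R \sheafhom_{\Order_{K, \et}}(M, \Q / \Z(2))$ is an isomorphism, so $M$ also lies in $D^{b}_{c}(\Order_{K, \et})$ and Proposition~\ref{0059} applies both to the pair $(N, M)$ and to the pair $(M, N)$. Combined with Proposition~\ref{0201}, which identifies $R \pi_{\alg{O}_{K}, \ast}(\var)$ with $R \pi_{\alg{k}, \ast} i_{0}^{\ast}(\var)$ and $R \pi_{\alg{O}_{K}, !}(\var)$ with $R \pi_{\alg{k}, \ast} R i_{0}^{!}(\var)$, these two applications of Proposition~\ref{0059}~\eqref{0196} give Statement~(1).

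For Statement~(2) I would prove that, under the isomorphisms of Proposition~\ref{0201}, the morphism \eqref{0459} is identified with the pairing of Proposition~\ref{0059}~\eqref{0198}; perfectness in $D(F^{\perar}_{\et})$ then follows immediately from that proposition. There are two compatibilities to establish. The easy one is that the trace morphism $R \pi_{\alg{O}_{K}, !} \Q / \Z(2) \to \Q / \Z[-3]$ entering \eqref{0459} — which by construction comes from \eqref{0510} — corresponds, via Proposition~\ref{0201}, to the morphism of Proposition~\ref{0059}~\eqref{0197}: both are built from the Verdier-type isomorphism $R i^{!} \Q / \Z(2) \cong \Q / \Z(1)[-2]$ (respectively $R i_{0}^{!} \Q / \Z(2) \cong \Q / \Z(1)[-2]$, matched by Proposition~\ref{0200}) followed by $R \pi_{\alg{k}, \ast}$ of the canonical morphism $R \pi_{\alg{k}, \ast} \Q / \Z(1) \to \Q / \Z[-1]$ of Proposition~\ref{0192}~\eqref{0194}. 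The harder one — which I expect to be the main obstacle — is to match the two cup products. The cup product entering \eqref{0459} unwinds, via \eqref{0443} and \eqref{0368}, into the relative-site projection morphism $i^{\ast}(\var) \tensor^{L} R i^{!}(\var) \to R i^{!}(\var \tensor^{L} \var)$ of \eqref{0364} followed by $R \pi_{\alg{k}, \ast}$, whereas the cup product implicit in Proposition~\ref{0059}~\eqref{0198} is the analogous projection morphism on $\Spec k_{\et}$ (part of Verdier duality) followed by $R \pi_{\alg{k}, \ast}$. To reconcile them I would realize $R i^{!}$ and $R i_{0}^{!}$ as the right derived functors of the explicit mapping-fiber constructions of Section~\ref{0381}, so that the isomorphism $R i^{!} f_{\Order_{K}}^{\ast} \cong f_{k}^{\ast} R i_{0}^{!}$ of Proposition~\ref{0200} becomes manifestly compatible with the projection morphisms; the boundedness of $N$ and $M$ then ensures, by Proposition~\ref{0379}, that the different presentations of the cup product agree on the derived category level.

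The bulk of the work is thus the bookkeeping in this last step: verifying that one square relating the relative-site cup product to the Verdier-duality cup product commutes, with the intervening base-change, projection-formula and trace morphisms correctly threaded through Proposition~\ref{0200}. Once this is done, Statement~(2) is a direct transcription of Proposition~\ref{0059}~\eqref{0198}.
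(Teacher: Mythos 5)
Your proposal is correct and follows the same route as the paper, whose entire proof of this proposition reads ``This follows from previous propositions'' --- i.e.\ it is exactly the transport of Proposition~\ref{0059} along Propositions~\ref{0200} and~\ref{0201} that you describe. The compatibility checks you flag (trace morphisms and cup products under the identification $f_{k}^{\ast} R i_{0}^{!} \cong R i^{!} f_{\Order_{K}}^{\ast}$) are precisely what the paper leaves implicit, and your way of handling them via the mapping-fiber presentations of Section~\ref{0381} is sound.
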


\begin{proof}
	This follows from previous propositions.
\end{proof}

\begin{Thm}
	Let $N \in D^{b}_{c}(\Order_{K, \et})$
	and set $M = R \sheafhom_{\Order_{K, \et}}(N, \Q / \Z(2))$.
	\begin{enumerate}
		\item
			We have
				$
						R \alg{\Gamma}(\alg{O}_{K}, N),
						R \alg{\Gamma}_{c}(\alg{O}_{K}, N),
					\in
						D^{b}(\Alg / F)
				$.
		\item
			The morphism
				\[
							R \alg{\Gamma}(\alg{O}_{K}, N)
						\tensor^{L}
							R \alg{\Gamma}_{c}(\alg{O}_{K}, M)
					\to
						\Q / \Z[-3]
				\]
			obtained by applying $\algebrize$ to the morphism \eqref{0459}
			is a perfect pairing in $D(F^{\ind\rat}_{\pro\et})$.
	\end{enumerate}
\end{Thm}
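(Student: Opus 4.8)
The plan is to deduce the theorem from Proposition~\ref{0449} by transporting everything along $\algebrize$, in exactly the way the mixed-characteristic theorems of Sections~\ref{0052} and~\ref{0056} were deduced from Propositions~\ref{0055} and~\ref{0445}. By Proposition~\ref{0449} we already know $R \pi_{\alg{O}_{K}, \ast} N,\ R \pi_{\alg{O}_{K}, !} M \in D^{b}(\Alg / F)$ and that \eqref{0459} is a perfect pairing in $D(F^{\perar}_{\et})$; and $R \alg{\Gamma}(\alg{O}_{K}, N) = \algebrize R \pi_{\alg{O}_{K}, \ast} N$, $R \alg{\Gamma}_{c}(\alg{O}_{K}, M) = \algebrize R \pi_{\alg{O}_{K}, !} M$ by definition. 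So the content is: (a) $\algebrize$ fixes these two objects, so that they stay in $D^{b}(\Alg / F)$; and (b) a morphism $G \tensor^{L} G' \to \Q / \Z[-3]$ between objects with cohomology in $\Alg / F$ is a perfect pairing over $\Spec F^{\perar}_{\et}$ if and only if the morphism induced by $\algebrize$ is one over $\Spec F^{\ind\rat}_{\pro\et}$.

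To organise (a) and (b) I would split $N$, hence $M$ and $\Q / \Z = \Q_{p} / \Z_{p} \oplus \bigoplus_{\ell \ne p} \Q_{\ell} / \Z_{\ell}$, into its $p$-primary and prime-to-$p$ parts. For the prime-to-$p$ part, Proposition~\ref{0201} and the remark after Proposition~\ref{0192} identify $R \pi_{\alg{O}_{K}, \ast} N$ and $R \pi_{\alg{O}_{K}, !} M$ with bounded complexes of finite \'etale group schemes over $F$; such objects are $h$-compatible and $h$-acyclic and are fixed by $\algebrize$ (the finite-\'etale case of \cite[\S7]{Suz21}), and their pairing with $\Q / \Z$ is Pontryagin duality of the underlying finite Galois modules, which is unaffected by the passage to $\Spec F^{\ind\rat}_{\pro\et}$ (Serre duality in the form of \cite{Suz20}). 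For the $p$-primary part, one observes that since $F$ is perfect and $N$ is $p$-power torsion, every finite-type group scheme occurring as a cohomology object of $R \pi_{\alg{O}_{K}, \ast} N$ or $R \pi_{\alg{O}_{K}, !} M$ becomes unipotent after perfection (a $p$-power-torsion smooth connected algebraic group is unipotent, and infinitesimal multiplicative groups perfect to the trivial group), hence lies in $\mathcal{W}_{F}$; therefore $R \pi_{\alg{O}_{K}, \ast} N$ and $R \pi_{\alg{O}_{K}, !} M$ lie in $\genby{\mathcal{W}_{F}}_{F^{\perar}_{\et}}$, and Propositions~\ref{0152}, \ref{0153}, \ref{0501} and~\ref{0010} apply verbatim to give (a) and (b) for this part. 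Reassembling the two parts yields both assertions of the theorem.

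The step requiring the most care is the interface with Section~\ref{0309}: Propositions~\ref{0152}, \ref{0153} and~\ref{0010} are set up for $\mathcal{W}_{F}$, a category of $p$-primary unipotent objects, whereas for a general constructible $N$ the complexes $R \pi_{\alg{O}_{K}, \ast} N$ and $R \pi_{\alg{O}_{K}, !} M$ genuinely carry prime-to-$p$ finite \'etale cohomology outside $\mathcal{W}_{F}$. This is what forces the $p$-primary/prime-to-$p$ decomposition, and for the prime-to-$p$ summand one must supply the counterparts of those propositions --- elementary, but not formally contained in Section~\ref{0309} --- namely that $\algebrize$ fixes finite \'etale group schemes and commutes with duality against $\Q / \Z$ for them. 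Once these are in place, together with the unipotence observation for the $p$-primary summand, the transport along $\algebrize$ is formal, and the theorem follows.
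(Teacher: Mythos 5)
Your proposal follows the same route as the paper: the paper's own proof of this theorem is literally the one-line deduction from Propositions \ref{0449}, \ref{0152}, \ref{0153} and \ref{0010}. The extra work you do --- splitting $N$ (hence $M$ and $\Q/\Z$) into $p$-primary and prime-to-$p$ parts and treating the finite \'etale prime-to-$p$ summand separately --- does not appear in the paper, but the concern motivating it is legitimate: Propositions \ref{0152}, \ref{0153} and \ref{0010} are stated only for $\genby{\mathcal{W}_{F}}_{F^{\perar}_{\et}}$, whose objects are unipotent with finite \'etale \emph{$p$-primary} $\pi_{0}$, whereas for a general constructible $N$ the complexes $R \pi_{\alg{O}_{K}, \ast} N$ and $R \pi_{\alg{O}_{K}, !} M$ genuinely carry prime-to-$p$ finite \'etale cohomology lying outside $\mathcal{W}_{F}$. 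Your two observations --- that the $p$-primary part does land in $\genby{\mathcal{W}_{F}}_{F^{\perar}_{\et}}$ (a connected quasi-algebraic group killed by a power of $p$ is unipotent, and multiplicative $p$-torsion dies under perfection), and that for the prime-to-$p$ part one only needs the elementary facts that $\algebrize$ fixes finite \'etale group schemes and commutes with Pontryagin duality against $\Q/\Z$ --- are exactly what is needed to make the paper's citation of those propositions literally applicable. So your proof is correct, takes the same approach, and is somewhat more careful than the paper's at this point.
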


\begin{proof}
	This follows from Propositions \ref{0449}, \ref{0152}, \ref{0153} and \ref{0010}.
\end{proof}


\section{Preliminary calculations on regular two-dimensional local rings}
\label{0060}

Let $A$ be a local ring with residue field $F$.
As in \cite[\S 4]{Sai86}, assume all of the following:
\begin{enumerate}
	\item \label{0202}
		$A$ is regular, excellent, two-dimensional and henselian.
	\item \label{0203}
		The fraction field $K$ of $A$ has characteristic zero.
	\item \label{0205}
		$A$ contains a primitive $p$-th root of unity $\zeta_{p}$.
	\item \label{0206}
		Either of the following holds:
		\begin{enumerate}
			\item \label{0061}
				$(p)$ is divisible by exactly one prime ideal $\ideal{p}$
				and $A / \ideal{p}$ is regular, or
			\item \label{0062}
				$(p)$ is divisible by exactly two prime ideals
				$\ideal{p}_{\alpha}$ and $\ideal{p}_{\beta}$,
				and $\ideal{p}_{\alpha} + \ideal{p}_{\beta} = \ideal{m}$,
				and both $A / \ideal{p}_{\alpha}$ and $A / \ideal{p}_{\beta}$ are regular.
		\end{enumerate}
\end{enumerate}
This is the situation we encounter after the embedded resolution
of $(\Spec A, \Spec A / \sqrt{(p)})$.
In this section, we mostly recall the results of \cite[\S 4]{Sai86}.
Basically we describe $H^{q}(A[1 / p], \Lambda)$ through
embedding into $H^{q}(K, \Lambda)$ or $H^{q}(K_{\ideal{q}}^{h}, \Lambda)$,
where we have Merkurjev-Suslin's theorem \cite{MS82}.
This description will be used as a local theory in Sections \ref{0217} and \ref{0277}

Set $R = A[1 / p]$.
For any $q \ge 0$, let $K_{q}$ be the $q$-th Quillen K-group functor
and $\Bar{K}_{q}$ the cokernel of multiplication by $p$ on $K_{q}$.
We use the notation of Section \ref{0463}.
Let $P' \subset P$ be the subset consisting of primes not dividing $p$
(so it is $P \setminus \{\ideal{p}\}$ in Case \eqref{0061}
and $P \setminus \{\ideal{p}_{\alpha}, \ideal{p}_{\beta}\}$ in Case \eqref{0062}).
In Case \eqref{0062}, for $\nu = \alpha$ or $\beta$,
we write $K_{\nu}^{h} = K_{\ideal{p}_{\nu}}^{h}$ and
$A_{\nu}^{h} = A_{\ideal{p}_{\nu}}^{h}$.


\subsection{First calculations}
\label{0310}

\begin{Prop} \label{0063}
	The map
	$H^{q}(R, \Lambda) \to H^{q}(K, \Lambda)$
	is injective for all $q$.
	We have $H^{q}(R, \Lambda) = H^{q}(K, \Lambda) = 0$ for all $q \ge 4$.
	If $F$ is algebraically closed, then
	$H^{q}(R, \Lambda) = H^{q}(K, \Lambda) = 0$ for all $q \ge 3$.
\end{Prop}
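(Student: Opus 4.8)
The proof follows Saito \cite{Sai86}: the plan is to embed $H^{q}(R, \Lambda)$ into $H^{q}(K, \Lambda)$ by a purity/Gersten argument on $\Spec R$, and then to bound the $p$-cohomological dimension of $K$. First I would record the geometry of $\Spec R$. Since $p \in \ideal{m}$, the primes of $A$ not containing $p$ are $(0)$ and the height-one primes $\ideal{p} \in P'$, so $\Spec R = \Spec A \setminus V(p)$ is a one-dimensional, regular, excellent affine scheme whose residue fields $\kappa(\ideal{p})$ have characteristic $0$. Moreover $A$ is regular, hence a UFD (Auslander--Buchsbaum), so $\Cl(R)$ is a quotient of $\Cl(A) = 0$; thus $R$ is a Dedekind domain with trivial class group, every closed point being principal. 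Using $\zeta_{p} \in A$ I identify $\Lambda(r) \cong \Lambda$ on $\Spec R$ and on $\Spec K$, so twists are immaterial throughout.

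For the injectivity, since $p$ is invertible on $\Spec R$ absolute purity applies and, with $j'' \colon \Spec K \into \Spec R$, yields a distinguished triangle
\[
\bigoplus_{\ideal{p} \in P'} R\Gamma(\kappa(\ideal{p}), \Lambda(-1))[-2] \to R\Gamma(R, \Lambda) \to R\Gamma(K, \Lambda).
\]
Hence $H^{q}(R, \Lambda) \to H^{q}(K, \Lambda)$ is injective for all $q$ if and only if the total residue map $H^{q-1}(K, \Lambda) \to \bigoplus_{\ideal{p}} H^{q-2}(\kappa(\ideal{p}), \Lambda(-1))$ is surjective for all $q$. For $q \le 1$ this is vacuous. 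For $q = 2$ it is the valuation map $K^{\times}/p \to \bigoplus_{\ideal{p}} \Z/p$, whose cokernel is $\Cl(R)/p = 0$. For $q = 3, 4$ the target is, after untwisting, $\bigoplus H^{q-2}(\kappa(\ideal{p}), \mu_{p}^{\otimes(q-2)})$, which is generated by symbols (trivially when $q-2 \le 1$, and by Merkurjev--Suslin \cite{MS82} when $q-2 = 2$); I realize each symbol $\{\bar u_{1}, \dots, \bar u_{q-2}\}$ as the residue at $\ideal{p}$ of a symbol $\{u_{1}, \dots, u_{q-2}, \pi_{\ideal{p}}\} \in H^{q-1}(K)$, with $u_{i} \in R$ lifting $\bar u_{i}$ and $\pi_{\ideal{p}}$ a generator of the principal ideal $\ideal{p}$, chosen (using that $R$ is Dedekind with $\Cl(R) = 0$) to be units at every point of a prescribed finite set, and sum over that set; for $q \ge 5$ the target vanishes since $\mathrm{cd}_{p}(\kappa(\ideal{p})) \le \mathrm{cd}_{p}(F) + 1 \le 2$. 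Once surjectivity is known for $q \le 4$, the triangle also gives $H^{q}(R, \Lambda) = 0$ for $q \ge 5$, and $H^{4}(R, \Lambda) \hookrightarrow H^{4}(K, \Lambda)$.

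For the vanishing it then suffices to show $H^{q}(K, \Lambda) = 0$ for $q \ge 4$, and for $q \ge 3$ when $F$ is algebraically closed; equivalently $\mathrm{cd}_{p}(K) \le 3$, resp.\ $\le 2$. This is the cohomological dimension computation of \cite[\S 4]{Sai86}: using that $A$ is henselian local, $R\Gamma(K, \Lambda)$ is the Galois cohomology of $F$ (with $\mathrm{cd}_{p}(F) \le 1$, resp.\ $= 0$) with coefficients in the ``geometric'' cohomology of $\mathrm{Frac}(A^{\mathrm{sh}})$; the latter has $\mathrm{cd}_{p} \le 2$, which one sees by reducing to the completion (a complete two-dimensional regular local ring with separably closed residue field, explicitly finite over $W(\overline{F})[[t]]$) and applying Serre's estimate for henselian discrete valuation fields twice. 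Combined with the injectivity established above, this gives $H^{q}(R, \Lambda) = H^{q}(K, \Lambda) = 0$ in the stated ranges.

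The main obstacle is the surjectivity of the higher residue maps for $q = 3, 4$: one must produce a single class on $K$ with prescribed residues at finitely many primes and unramified at all the others, which is precisely where the norm residue theorem (Merkurjev--Suslin, in weight $\le 2$) is indispensable and where the construction of the auxiliary symbols must carefully use supporting uniformizers together with $\Cl(R) = 0$ to kill ramification away from the chosen primes. A secondary, more bookkeeping difficulty is the bound on $\mathrm{cd}_{p}(K)$ at the place(s) dividing $p$, where one cannot localize naively (localizations of a henselian ring are not henselian) and must pass through strict henselization and completion.
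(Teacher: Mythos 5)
Your skeleton agrees with the paper's: both start from the localization/purity exact sequence
\[
\cdots \to H^{q}(R, \Lambda) \to H^{q}(K, \Lambda) \to \bigoplus_{\ideal{q} \in P'} H^{q-1}(\kappa(\ideal{q}), \Lambda(-1)) \to H^{q+1}(R, \Lambda) \to \cdots,
\]
reduce injectivity to surjectivity of the total residue map one degree down, and finish with the bound $\mathrm{cd}_{p}(K) \le \mathrm{cd}_{p}(F) + 2$ from \cite[Theorem (5.1)]{Sai86} (your ``apply Serre's estimate twice'' gloss is too quick to stand on its own --- $\mathrm{Frac}(W(\closure{F})[[t]])$ is not visibly a tower of two henselian discrete valuation fields --- but citing Saito's theorem, as the paper does, is fine). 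For $q \le 2$ and $q \ge 5$ your argument is correct, and for $q = 3$ the needed surjectivity of $H^{2}(K,\Lambda) \to \bigoplus \kappa(\ideal{q})^{\times}/p$ is exactly the cited surjectivity of $K_{2}(K) \to \bigoplus_{\ideal{q} \in P'} \kappa(\ideal{q})^{\times}$ (\cite[Lemma (1.16)]{Sai87}), so relying on that reference is acceptable.

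The gap is in the $q = 4$ step, where you diverge from the paper. You want to show $H^{3}(K,\Lambda) \to \bigoplus_{\ideal{q}} H^{2}(\kappa(\ideal{q}), \Lambda(-1))$ is surjective by writing each Merkurjev--Suslin generator $\{\bar u_{1}, \bar u_{2}\}$ of $H^{2}(\kappa(\ideal{p}))$ as the residue of $\{u_{1}, u_{2}, \pi_{\ideal{p}}\}$, with the $u_{i}$ chosen to be units on a prescribed finite set $S$. This only controls the residues \emph{at the points of $S$}; the symbol $\{u_{1}, u_{2}, \pi_{\ideal{p}}\}$ still has uncontrolled residues at the finitely many primes outside $S$ where $u_{1}$ or $u_{2}$ fails to be a unit, and these residues now live in the nontrivial groups $H^{2}(\kappa(\ideal{q}),\Lambda) \cong F_{\ideal{q}}/\wp F_{\ideal{q}}$. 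So your construction only shows that $[\alpha]$ equals $[\gamma]$ in the cokernel for some $\gamma$ supported outside $S$, and iterating never terminates; there is no weight-two analogue of Saito's lemma to cite here. The paper avoids this entirely: since $\mathrm{cd}_{p}(K_{\closure{F}}) = 2$ and $\mathrm{cd}_{p}(\kappa(\closure{\ideal{q}})) = 1$, the map in question is identified with $H^{1}(\Gal(\closure{F}/F), -)$ applied to the surjection $H^{2}(K_{\closure{F}}, \Lambda(2)) \onto \bigoplus \kappa(\closure{\ideal{q}})^{\times}/p$ (the $q=3$ case over $\closure{F}$), and a surjection of torsion Galois modules stays surjective after $H^{1}$ because $\mathrm{cd}_{p}(F) \le 1$. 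You should replace your direct moving argument at $q=4$ by this Galois-descent step (or supply the missing moving lemma, which is substantially harder than what you have written).
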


\begin{proof}
	Consider the localization exact sequence (combined with purity)
		\begin{equation} \label{0462}
				\cdots
			\to
				H^{q}(R, \Lambda)
			\to
				H^{q}(K, \Lambda)
			\to
				\bigoplus_{\ideal{q} \in P'}
				H^{q - 1}(\kappa(\ideal{q}), \Lambda)
			\to
				H^{q + 1}(R, \Lambda)
			\to
				\cdots.
		\end{equation}
	We have
		\[
				H^{q}(\kappa(\ideal{q}), \Lambda)
			\cong
				\begin{cases}
						\Lambda
					&	\text{if }
						q = 0,
					\\
						\kappa(\ideal{q})^{\times} / \kappa(\ideal{q})^{\times p}
					&	\text{if }
						q = 1,
					\\
						F_{\ideal{q}} / \wp F_{\ideal{q}}
					&	\text{if }
						q = 2,
					\\
						0
					&	\text{if }
						q \ge 3.
				\end{cases}
		\]
	In the sequence \eqref{0462},
	the injectivity of the first map $H^{q}(R, \Lambda) \to H^{q}(K, \Lambda)$ for $q \le 2$
	(and the surjectivity of the second map for $q \le 1$) is obvious.
	For $q = 3$, the boundary map
		\[
				K_{2}(K)
			\to
				\bigoplus_{\ideal{q} \in P'}
					\kappa(\ideal{q})^{\times}
		\]
	is surjective (see \cite[Chapter V, (6.6.1)]{Wei13}, \cite[Lemma (1.16)]{Sai87}).
	Hence
		$
				H^{2}(K, \Lambda)
			\to
				\bigoplus_{\ideal{q} \in P'}
					\kappa(\ideal{q})^{\times} / \kappa(\ideal{q})^{\times p}
		$
	is surjective.
	The injectivity for $q = 3$ follows.
	For $q = 4$, let $A_{\closure{F}}$ be the henselian lift
	of the algebraic closure $\closure{F}$ to $A$
	and $K_{\closure{F}}$ its fraction field.
	Then $K_{\closure{F}}$ has $p$-cohomological dimension $2$
	by \cite[Theorem (5.1)]{Sai86}.
	Hence the map
	$H^{3}(K, \Lambda) \to \bigoplus_{\ideal{q} \in P'} H^{2}(\kappa(\ideal{q}), \Lambda)$
	can be identified with the natural map
		\[
				H^{1} \bigl(
					\Gal(\closure{F} / F),
					H^{2}(K_{\closure{F}}, \Lambda(2))
				\bigl)
			\to
				H^{1} \left(
					\Gal(\closure{F} / F),
					\bigoplus_{\closure{\ideal{q}} \in \closure{P}'}
							\kappa(\closure{\ideal{q}})^{\times}
						/
							\kappa(\closure{\ideal{q}})^{\times p}
				\right),
		\]
	where $\closure{P}'$ is the set of height one primes of $A_{\closure{F}}$ not dividing $p$.
	This is surjective already before taking $H^{1}(\Gal(\closure{F} / F), \var)$,
	and hence itself surjective.
	We have $H^{4}(K, \Lambda) = 0$ by the fact on the cohomological dimension cited above.
	These imply $H^{4}(R, \Lambda) = 0$.
	The statement for algebraically closed $F$ is implicit in the above.
\end{proof}

\begin{Prop} \label{0064}
	The \'etale Chern class map \cite[Chapter V, Example 11.10]{Wei13}
		\[
				\Bar{K}_{2}(R)
			\to
				H^{2}(R, \Lambda)
		\]
	is surjective.
	In the natural commutative digram
		\[
			\begin{CD}
					\Bar{K}_{2}(R)
				@>>>
					\Bar{K}_{2}(K)
				\\ @VVV @V \wr VV \\
					H^{2}(R, \Lambda)
				@>>>
					H^{2}(K, \Lambda),
			\end{CD}
		\]
	the group $H^{2}(R, \Lambda)$ is identified with the image of the map
	$\Bar{K}_{2}(R) \to \Bar{K}_{2}(K)$.
\end{Prop}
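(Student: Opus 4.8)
The plan is to reduce everything to Merkurjev--Suslin's theorem \cite{MS82} together with the localization sequences in $K$-theory and in \'etale cohomology, essentially recalling \cite[\S 4]{Sai86}. First I would record that, via the chosen $\zeta_p$, the Galois symbol gives an isomorphism $\Bar{K}_2(E) \isomto H^2(E, \Lambda)$ for every field $E$ of characteristic $\ne p$; in particular the right vertical arrow of the displayed square is an isomorphism, while Kummer theory identifies $H^1(\kappa(\ideal{q}), \Lambda) \cong \kappa(\ideal{q})^\times / \kappa(\ideal{q})^{\times p}$ for each $\ideal{q} \in P'$. The square commutes by functoriality of \'etale Chern classes \cite[Chapter V, Example 11.10]{Wei13} under the pullback $\Spec K \to \Spec R$, the horizontal maps being $K_2(R) \to K_2(K)$ modulo $p$ and the restriction $H^2(R, \Lambda) \to H^2(K, \Lambda)$.

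Next I would identify the subgroup $H^2(R, \Lambda) \subset H^2(K, \Lambda)$ explicitly. By Proposition \ref{0063} this restriction map is injective, so by exactness of \eqref{0462} at $q = 2$ its image is precisely the kernel of the residue map $H^2(K, \Lambda) \to \bigoplus_{\ideal{q} \in P'} H^1(\kappa(\ideal{q}), \Lambda)$. Under the symbol isomorphisms this residue map corresponds to the reduction modulo $p$ of the tame symbols $\partial_\ideal{q} \colon K_2(K) \to \kappa(\ideal{q})^\times$, a compatibility which is standard for discrete valuation fields (see e.g.\ \cite{BK86}, \cite{Kat86Hasse}) and is used throughout \cite{Sai86}. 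Granting this, proving both assertions of the proposition amounts to showing that the image of $\Bar{K}_2(R) \to \Bar{K}_2(K)$ equals $\bigcap_{\ideal{q} \in P'} \Ker(\Bar{\partial}_\ideal{q})$, where $\Bar{\partial}_\ideal{q} \colon \Bar{K}_2(K) \to \kappa(\ideal{q})^\times / \kappa(\ideal{q})^{\times p}$ is the reduced tame symbol: indeed this at once yields surjectivity of $\Bar{K}_2(R) \to H^2(R, \Lambda)$ and the stated description of $H^2(R, \Lambda)$ as the image of $\Bar{K}_2(R) \to \Bar{K}_2(K)$.

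For that equality I would use that $R = A[1/p]$ is a one-dimensional regular excellent domain with fraction field $K$ whose maximal ideals are exactly the primes in $P'$ (since $p \in \ideal{m}$, the point $\ideal{m}$ is removed and only the generic point and the height-one primes off $p$ remain). The Quillen localization/d\'evissage sequence for $\Spec K \hookrightarrow \Spec R$ gives an exact sequence
\[
	K_2(R) \to K_2(K)
	\xrightarrow{\ (\partial_\ideal{q})_{\ideal{q} \in P'} \ }
	\bigoplus_{\ideal{q} \in P'} \kappa(\ideal{q})^\times ,
\]
so the image of $K_2(R) \to K_2(K)$ is exactly $\bigcap_\ideal{q} \Ker \partial_\ideal{q}$; reducing modulo $p$ gives the inclusion ``$\subseteq$''. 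Conversely, given $\Bar{\alpha} \in \Bar{K}_2(K)$ with $\Bar{\partial}_\ideal{q}(\Bar{\alpha}) = 0$ for all $\ideal{q}$, lift it to $x \in K_2(K)$; then each $\partial_\ideal{q}(x)$ is a $p$-th power $v_\ideal{q}^p$ in $\kappa(\ideal{q})^\times$, trivial for all but finitely many $\ideal{q}$. Using surjectivity of $(\partial_\ideal{q})_\ideal{q} \colon K_2(K) \to \bigoplus_{\ideal{q} \in P'} \kappa(\ideal{q})^\times$ onto the full direct sum --- the same input as in the proof of Proposition \ref{0063} (\cite[Chapter V, (6.6.1)]{Wei13}, \cite[Lemma (1.16)]{Sai87}) --- choose $y \in K_2(K)$ with $\partial_\ideal{q}(y) = v_\ideal{q}$ for every $\ideal{q}$; then $\partial_\ideal{q}(x - p y) = 0$ for all $\ideal{q}$, so $x - p y$ lies in the image of $K_2(R) \to K_2(K)$, while $x - p y \equiv x \pmod{p K_2(K)}$, whence $\Bar{\alpha}$ lies in the image of $\Bar{K}_2(R)$. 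The main obstacle is the residue/tame-symbol compatibility of the second paragraph: it is the one point where care with twists and signs is needed, though it may be taken over from \cite[\S 4]{Sai86}.
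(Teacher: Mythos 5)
Your proposal is correct and follows essentially the same route as the paper: Merkurjev--Suslin, the $K$-theory localization sequence with surjective boundary map $K_{2}(K) \to \bigoplus_{\ideal{q} \in P'} \kappa(\ideal{q})^{\times}$, the cohomological localization sequence from Proposition \ref{0063}, and the tame-symbol/residue compatibility. Your explicit lifting argument for the reverse inclusion is just an unwinding of the fact the paper uses implicitly, namely that reducing the right-exact sequence $K_{2}(R) \to K_{2}(K) \to \bigoplus_{\ideal{q}} \kappa(\ideal{q})^{\times} \to 0$ modulo $p$ preserves right-exactness.
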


\begin{proof}
	The isomorphism
	$\Bar{K}_{2}(K) \isomto H^{2}(K, \Lambda)$
	is Merkurjev-Suslin.
	We have a localization exact sequence
		\[
				K_{2}(R)
			\to
				K_{2}(K)
			\to
				\bigoplus_{\ideal{q} \in P'}
					\kappa(\ideal{q})^{\times}
			\to
				0
		\]
	(see \cite[Chapter V, (6.6.1)]{Wei13}, \cite[Lemma (1.16)]{Sai87}).
	Hence we have an exact sequence
		\[
				\Bar{K}_{2}(R)
			\to
				\Bar{K}_{2}(K)
			\to
				\bigoplus_{\ideal{q} \in P'}
					\kappa(\ideal{q})^{\times} / \kappa(\ideal{q})^{\times p}
			\to
				0.
		\]
	Comparing this with the localization exact sequence
		\begin{equation} \label{0065}
				0
			\to
				H^{2}(R, \Lambda)
			\to
				H^{2}(K, \Lambda)
			\to
				\bigoplus_{\ideal{q} \in P'}
					\kappa(\ideal{q})^{\times} / \kappa(\ideal{q})^{\times p}
			\to
				0
		\end{equation}
	obtained in the previous proposition, we get the result.
\end{proof}

\begin{Prop} \label{0066}
	The group $H^{2}(R, \Lambda)$ is generated by
	Steinberg symbols $\{x, y\}$ with $x, y \in R^{\times}$.
\end{Prop}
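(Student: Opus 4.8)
The plan is to reduce the statement to an assertion about $\Bar{K}_{2}$ and then to manipulate symbols. By Proposition \ref{0064} the group $H^{2}(R, \Lambda)$ is the image of $\Bar{K}_{2}(R)$ in $\Bar{K}_{2}(K) \cong H^{2}(K, \Lambda)$, and by the localization sequence \eqref{0065} it is exactly the kernel of the total tame-symbol map:
\[
		H^{2}(R, \Lambda)
	=
		\Ker\Bigl(
			H^{2}(K, \Lambda)
			\to
			\bigoplus_{\ideal{q} \in P'}
				\kappa(\ideal{q})^{\times} / \kappa(\ideal{q})^{\times p}
		\Bigr).
\]
Since $A$ is a two-dimensional regular local ring it is a unique factorization domain; hence every height one prime $\ideal{q}$ is principal, say $\ideal{q} = (f_{\ideal{q}})$, the group $K^{\times} / R^{\times}$ is free abelian on $P'$, and $R^{\times} = A^{\times} \langle \pi \rangle$ in Case \eqref{0061} (with $\ideal{p} = (\pi)$), resp.\ $R^{\times} = A^{\times} \langle \pi_{\alpha}, \pi_{\beta} \rangle$ in Case \eqref{0062}. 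It therefore suffices to show that every $\alpha \in H^{2}(K, \Lambda)$ whose tame symbol at each $\ideal{q} \in P'$ vanishes modulo $p$ lies in the subgroup generated by the symbols $\{x, y\}$ with $x, y \in R^{\times}$.

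The main step is a purification of a symbol presentation. First I would invoke Merkurjev--Suslin \cite{MS82} to write $\alpha = \sum_{i} \{a_{i}, b_{i}\}$ with $a_{i}, b_{i} \in K^{\times}$, split off the prime parts of the $a_{i}, b_{i}$ by bilinearity, and regroup so that $\alpha$ becomes a sum of symbols of three kinds: $\{u, v\}$ with $u, v \in R^{\times}$ (which are already allowed); symbols $\{f_{\ideal{q}}, u\}$ with $\ideal{q} \in P'$ and $u$ a unit at $\ideal{q}$; and cross-terms $\{f_{\ideal{q}}, f_{\ideal{q}'}\}$ with $\ideal{q}, \ideal{q}' \in P'$. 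One then removes the last two kinds, working over the finite set $S \subset P'$ of primes that occur. For a fixed $\ideal{q}_{0} \in S$, the tame symbol at $\ideal{q}_{0}$ of the part of $\alpha$ involving $f_{\ideal{q}_{0}}$ is, up to sign, the residue of the accumulated second entry $c \in K^{\times}$; the vanishing hypothesis forces $\bar{c} \in \kappa(\ideal{q}_{0})^{\times}$ to be a $p$-th power. As the residue fields $\kappa(\ideal{q})$ are infinite, a standard avoidance argument in $A$ produces an element $e \in K^{\times}$, a unit at every prime of $S$ and at the primes dividing $p$, whose residue at $\ideal{q}_{0}$ is a chosen $p$-th root of $\bar{c}$; replacing $c$ by $c/e^{p}$ alters $\{f_{\ideal{q}_{0}}, c\}$ only by $p\,\{f_{\ideal{q}_{0}}, e\}$, hence not at all in $H^{2}(K, \Lambda)$, and makes $c \equiv 1$ at $\ideal{q}_{0}$. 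Then the Steinberg relation applied to $1 - c$ and $c$, together with a subsidiary induction on the order of vanishing of $c - 1$ at $\ideal{q}_{0}$, rewrites $\{f_{\ideal{q}_{0}}, c\}$ as symbols whose entries are units at $\ideal{q}_{0}$, so that $\ideal{q}_{0}$ has been eliminated; iterating over $S$ leaves only symbols of units of $R$.

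The hard part will be the bookkeeping in this elimination: factoring off $f_{\ideal{q}_{0}}$ and applying Steinberg relations introduce new symbols whose divisors must be controlled so that the support $S$ genuinely shrinks and the tame-symbol hypothesis at the remaining primes stays available, and the weak-approximation step must be made precise in $A$ using that the $\ideal{q} \in S$ are finitely many distinct height one primes with infinite residue fields. All of this is carried out in \cite[\S 4]{Sai86}, whose computations this section recalls; in the write-up I would follow that argument, taking care only that the conclusion is phrased so as to feed directly into the local theories of Sections \ref{0217} and \ref{0277}.
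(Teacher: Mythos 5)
Your starting point (Proposition \ref{0064} plus the sequence \eqref{0065}, identifying $H^{2}(R,\Lambda)$ with the image of $\Bar{K}_{2}(R)$ in $\Bar{K}_{2}(K)$, equivalently with the kernel of the total tame symbol) matches the paper, but from there you take a genuinely different and much harder route. The paper never purifies a Merkurjev--Suslin symbol presentation of an arbitrary class. Instead it compares the localization sequences for $K_{2}(A)$ and $K_{2}(R)$, reduces mod $p$, and applies the snake lemma to get
\[
	0
	\to
	\Im(K_{2}(A) \to \Bar{K}_{2}(K))
	\to
	\Im(K_{2}(R) \to \Bar{K}_{2}(K))
	\to
	\bigoplus_{\ideal{q} \in P \setminus P'}
		\kappa(\ideal{q})^{\times} / \kappa(\ideal{q})^{\times p}
	\to
	\Lambda
	\to
	0.
\]
The first term is generated by symbols of units because $K_{2}$ of a \emph{local} ring is generated by symbols (\cite[Chapter III, Theorem 5.10.5]{Wei13}), and the cokernel is hit by the explicit symbols $\{x,\varpi\}$ (resp.\ $\{x,\varpi_{\alpha}\}$, $\{x,\varpi_{\beta}\}$, $\{\varpi_{\alpha},\varpi_{\beta}\}$) with $x \in A^{\times}$, all of which have entries in $R^{\times}$ since the $\varpi$'s divide $p$. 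All of the combinatorial difficulty you are proposing to handle by hand is thus absorbed into the cited theorem for the local ring $A$, leaving only a small residue-field computation at the primes above $p$.

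The gap in your version is precisely the step you flag as ``bookkeeping'': your elimination procedure is not shown to terminate. The approximation element $e$ and, more seriously, the element $a$ in the Steinberg step $c = 1 - a f_{\ideal{q}_{0}}^{n}$ have divisors you cannot control outside a finite set, so the new symbols $\{a, c\}$ may be non-units at height one primes not in $S$; the support does not ``genuinely shrink'' but can grow at each step, and the induction has no visible terminating invariant. There is also the case $p \mid n$ in the Steinberg manipulation, which your ``subsidiary induction on the order of vanishing'' gestures at but does not resolve. Making this work amounts to reproving the generation-by-symbols theorem for $K_{2}$ of a two-dimensional local ring, and it is not, as far as I can see, carried out in \cite[\S 4]{Sai86} (that section computes the filtration on $H^{q}$, taking the symbol generation as input). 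I recommend replacing the purification argument by the snake-lemma reduction to $K_{2}(A)$.
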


\begin{proof}
	We have localization exact sequences
		\begin{gather*}
					K_{2}(A)
				\to
					K_{2}(K)
				\to
					\bigoplus_{\ideal{q} \in P}
						\kappa(\ideal{q})^{\times}
				\to
					\Z
				\to
					0,
			\\
					K_{2}(R)
				\to
					K_{2}(K)
				\to
					\bigoplus_{\ideal{q} \in P'}
						\kappa(\ideal{q})^{\times}
				\to
					0.
		\end{gather*}
	From these, since $\Z$ is torsion-free, we have exact sequences
		\begin{gather*}
					0
				\to
					\Im(K_{2}(A) \to \Bar{K}_{2}(K))
				\to
					\Bar{K}_{2}(K)
				\to
					\bigoplus_{\ideal{q} \in P}
						\kappa(\ideal{q})^{\times} / \kappa(\ideal{q})^{\times p}
				\to
					\Lambda
				\to
					0,
			\\
					0
				\to
					\Im(K_{2}(R) \to \Bar{K}_{2}(K))
				\to
					\Bar{K}_{2}(K)
				\to
					\bigoplus_{\ideal{q} \in P'}
						\kappa(\ideal{q})^{\times} / \kappa(\ideal{q})^{\times p}
				\to
					0.
		\end{gather*}
	The snake lemma then gives an exact sequence
		\[
				0
			\to
				\Im(K_{2}(A) \to \Bar{K}_{2}(K))
			\to
				\Im(K_{2}(R) \to \Bar{K}_{2}(K))
			\to
				\bigoplus_{\ideal{q} \in P \setminus P'}
					\kappa(\ideal{q})^{\times} / \kappa(\ideal{q})^{\times p}
			\to
				\Lambda
			\to
				0.
		\]
	As $A$ is local, $K_{2}(A)$ is generated by symbols (\cite[Chapter III, Theorem 5.10.5]{Wei13}).
	Consider the subgroup of $\Im(K_{2}(R) \to \Bar{K}_{2}(K))$ generated by
	symbols of the following form:
		\[
			\begin{cases}
						\{x, \varpi\}
					\quad \text{with} \quad x \in A^{\times}
				&	\text{in Case \eqref{0061}},
				\\
						 \{x, \varpi_{\alpha}\},
						 \{x, \varpi_{\beta}\},
						 \{\varpi_{\alpha}, \varpi_{\beta}\}
					\quad \text{with} \quad x \in A^{\times}
				&	\text{in Case \eqref{0062}},
			\end{cases}
		\]
	where $\varpi$ (resp.\ $\varpi_{\alpha}, \varpi_{\beta}$) is a generator of $\ideal{p}$
	(resp.\ $\ideal{p}_{\alpha}, \ideal{p}_{\beta}$).
	This subgroup surjects onto the kernel of
		$
				\bigoplus_{\ideal{q} \in P \setminus P'}
					\kappa(\ideal{q})^{\times} / \kappa(\ideal{q})^{\times p}
			\to
				\Lambda
		$.
	Thus $\Im(K_{2}(R) \to \Bar{K}_{2}(K))$ is generated by symbols.
\end{proof}

\begin{Prop} \label{0067}
	The natural map
		\[
				H^{q}(R, \Lambda)
			\to
				\bigoplus_{\ideal{q} \in P \setminus P'}
					H^{q}(K_{\ideal{q}}^{h}, \Lambda)
		\]
	is injective for all $q \le 2$.
\end{Prop}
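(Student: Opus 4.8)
The strategy is to reduce everything to a statement inside $H^{q}(K,\Lambda)$ using the injection $H^{q}(R,\Lambda)\into H^{q}(K,\Lambda)$ of Proposition \ref{0063}: it then suffices to show that a class $\alpha\in H^{q}(R,\Lambda)$, viewed in $H^{q}(K,\Lambda)$, whose image in $H^{q}(K_{\ideal{q}}^{h},\Lambda)$ vanishes for every $\ideal{q}\in P\setminus P'$ must itself be zero. The case $q=0$ is immediate, since $H^{0}(R,\Lambda)=\Lambda$ maps diagonally into $\bigoplus_{\ideal{q}\in P\setminus P'}\Lambda$ and $P\setminus P'$ is non-empty.

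So let $q$ be $1$ or $2$ and let $\alpha$ be as above. The first point is that $\alpha$ is \emph{unramified along every height one prime of $A$}. Along $\ideal{q}\in P'$ this is forced by the very description of $H^{q}(R,\Lambda)$ inside $H^{q}(K,\Lambda)$ --- Kummer theory together with $\Pic(R)=0$ (as the regular local ring $A$ is factorial) for $q=1$, and the exact sequence \eqref{0065} for $q=2$. Along $\ideal{q}\in P\setminus P'$ it holds because the residue at $\ideal{q}$ --- the normalized valuation for $q=1$, the tame symbol modulo $p$ for $q=2$ --- factors through restriction to $H^{q}(K_{\ideal{q}}^{h},\Lambda)$, where $\alpha$ vanishes. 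Being unramified everywhere, $\alpha$ comes from $H^{q}(\Spec A,\Lambda)$; for $q=2$ this is precisely the exactness at the left of the sequence appearing in the proof of Proposition \ref{0066}, whose first term $\Im\bigl(K_{2}(A)\to\Bar{K}_{2}(K)\bigr)$ is, by the argument of Proposition \ref{0064} applied to $\Spec A$, the group $H^{2}(\Spec A,\Lambda)$. By Gabber's affine analogue of proper base change (Proposition \ref{0341}, applied to the henselian pair $(A,\ideal{m})$) we have $H^{2}(\Spec A,\Lambda)\cong H^{2}(F,\Lambda)$, and this vanishes because $F$ is perfect of characteristic $p$, so $\mathrm{cd}_{p}(F)\le 1$; hence $\alpha=0$ and the case $q=2$ is settled. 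For $q=1$ one identifies $H^{1}(\Spec A,\Lambda)$ with $H^{1}(F,\Lambda)$ in the same way; this need not be zero, but the composite $H^{1}(\Spec A,\Lambda)\to H^{1}(A_{\ideal{q}}^{h},\Lambda)\to H^{1}(K_{\ideal{q}}^{h},\Lambda)$ is injective for every $\ideal{q}$ (a connected \'etale cover of the normal scheme $\Spec A$, respectively $\Spec A_{\ideal{q}}^{h}$, stays connected on the generic point), so the vanishing of $\alpha$ in $H^{1}(K_{\ideal{q}}^{h},\Lambda)$ for even one $\ideal{q}\in P\setminus P'$ gives $\alpha=0$.

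The delicate point is the step from ``unramified along all height one primes'' to ``coming from $H^{q}(\Spec A,\Lambda)$'': at the primes dividing $p$ the usual cohomological purity for the constant sheaf $\Z/p\Z$ fails because of wild ramification, which is exactly why in degree $2$ the argument is routed through Merkurjev--Suslin and the $K$-theoretic localization sequences recalled in Propositions \ref{0064} and \ref{0066} rather than through a Gersten resolution, and why degrees $0$ and $1$ must be treated by hand.
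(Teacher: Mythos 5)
Your argument has a genuine gap, and it is exactly at the point you flag in your closing paragraph without realizing it is fatal. At a prime $\ideal{q} \in P \setminus P'$ the only consequence you extract from the hypothesis ``$\alpha$ dies in $H^{q}(K_{\ideal{q}}^{h},\Lambda)$'' is the vanishing of a residue: the valuation mod $p$ for $q=1$, the tame symbol mod $p$ for $q=2$. At the primes dividing $p$ this residue only detects the bottom piece of the Bloch--Kato filtration, and ``all residues vanish'' does \emph{not} imply that the class extends over $\Spec A$. Concretely, for $q=2$ your chain of reductions would show that every class with trivial tame symbols vanishes, because you identify $\Im\bigl(K_{2}(A)\to\Bar{K}_{2}(K)\bigr)$ with $H^{2}(\Spec A,\Lambda)\cong H^{2}(F,\Lambda)=0$. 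That identification is false --- the argument of Proposition~\ref{0064} rests on cohomological purity along the removed divisors, which fails for $\Lambda=\Z/p\Z$ along $V(p)$ --- and so is the conclusion: the symbols $\{1+\Tilde{x}\varpi^{m},\Tilde{y}\}$ with $\Tilde{y}\in A^{\times}$ and $0<m<f_{A}$ are Steinberg symbols of \emph{units} of $A$ (note $1+\Tilde{x}\varpi^{m}\equiv 1 \bmod \ideal{m}$), hence have trivial tame symbol at every height-one prime and lie in $\Im\bigl(K_{2}(A)\to\Bar{K}_{2}(K)\bigr)$, yet by Propositions~\ref{0069} and~\ref{0043} they generate the nonzero graded pieces $\gr^{m}H^{2}(R,\Lambda)\cong \Omega_{B}^{1}$, $B/B^{p}$; their nonvanishing is seen only through the finer structure of $H^{2}(K_{\ideal{p}}^{h},\Lambda)$, i.e.\ through the full hypothesis, not through residues. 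The same defect breaks $q=1$: a unit $u\in A^{\times}$ has valuation $0$ everywhere, but $K(u^{1/p})/K$ is in general wildly ramified along $V(p)$ and does not extend to a finite \'etale cover of $\Spec A$ (compare Proposition~\ref{0070}, whose nonzero middle graded pieces consist of exactly such unit classes). Note also that Propositions~\ref{0069} and~\ref{0070} are proved in the paper \emph{using} the present statement, so they cannot be invoked to patch the argument.

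The paper's proof is arranged precisely to sidestep this. For $q=1$ it uses the full strength of the hypothesis: the covering of $\Spec R$ is completely decomposed --- not merely unramified --- over each $K_{\ideal{q}}^{h}$, hence trivial there, hence extends to a finite \'etale covering of $X$; only then does Zariski--Nagata purity (applied to the codimension-two point $\ideal{m}$) extend it to $\Spec A$ and identify it with a constant extension of $F$, which complete decomposition then forces to be trivial. For $q=2$ it replaces $\Lambda$ by $\Gm$ (legitimate since $\Pic(R)=0$) and uses the localization sequence $H^{2}(X,\Gm)\to H^{2}(R,\Gm)\to\bigoplus_{\ideal{q}\in P\setminus P'}H^{3}_{\ideal{q}}(A_{\ideal{q}}^{h},\Gm)$ together with purity for Brauer groups of regular schemes to kill $H^{2}(X,\Gm)[p^{\infty}]$; the boundary map factors through $H^{2}(K_{\ideal{q}}^{h},\Gm)$, so once again the whole restriction, and not just a residue, is what is being assumed to vanish. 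Any repair of your approach has to use the hypothesis at this level of strength.
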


\begin{proof}
	The injectivity is obvious for $q = 0$.
	For $q = 1$, an element of the kernel corresponds to a finite \'etale covering of $X$
	completely decomposed at the points of $P \setminus P'$.
	The purity of branch locus (\cite[Tag 0BMA]{Sta21}) shows that
	such a covering extends to a finite \'etale covering of $\Spec A$.
	A finite \'etale covering of $\Spec A$ comes from a finite extension of $F$.
	As it is completely decomposed at the points of $P \setminus P'$,
	it is trivial.
	
	For $q = 2$, first we have $H^{2}(R, \Lambda) \cong H^{2}(R, \Gm)[p]$
	since $\Pic(R) = 0$.
	We have a localization exact sequence
		\[
				H^{2}(X, \Gm)
			\to
				H^{2}(R, \Gm)
			\to
				\bigoplus_{\ideal{q} \in P \setminus P'}
					H^{3}_{\ideal{q}}(A_{\ideal{q}}^{h}, \Gm),
		\]
	where $H^{\ast}_{\ideal{q}}$ denotes cohomology with support on the closed point.
	We have
		\[
				H^{2}(X, \Gm)[p^{\infty}]
			=
				H^{2}(A, \Gm)[p^{\infty}]
			=
				H^{2}(F, \Gm)[p^{\infty}]
			=
				0
		\]
	by the regularity of $A$ and the purity for Brauer groups
	(\cite[Theorem (6.1), Corollary (6.2)]{Gro68}).
	The second map factors as
		\[
				H^{2}(R, \Gm)
			\to
				\bigoplus_{\ideal{q} \in P \setminus P'}
					H^{2}(K_{\ideal{q}}^{h}, \Gm)
			\to
				\bigoplus_{\ideal{q} \in P \setminus P'}
					H^{3}_{\ideal{q}}(A_{\nu}^{h}, \Gm).
		\]
	Thus the map is injective.
\end{proof}

The injectivity holds for $q \ge 3$ as well.
We will see this in the next two subsections.


\subsection{Structure of cohomology by symbols: good case}
\label{0068}

Assume \eqref{0061}
at the beginning of this section.
Set $B = A / \ideal{p}$ and $k = \kappa(\ideal{p})$.
Let $\varpi$ be a generator of $\ideal{p}$,
which we take as a uniformizer for $K_{\ideal{p}}^{h}$.
For $q \ge 0$,
we have a symbol map
	\[
			(R^{\times})^{\tensor q}
		\to
			H^{q}(R, \Lambda),
		\quad
			x_{1} \tensor \dots \tensor x_{q}
		\mapsto
			\{x_{1}, \dots, x_{q}\}.
	\]
For $m \ge 1$, define $U^{m} H^{q}(R, \Lambda)$
to be the subgroup of $H^{q}(R, \Lambda)$
generated by symbols $\{x_{1}, \dots, x_{q}\}$
with $x_{1} \in 1 + \ideal{p}^{m}$.
For $m = 0$, we set $U^{m} H^{q}(R, \Lambda) = H^{q}(R, \Lambda)$.
Define
	\[
			\gr^{m} H^{q}(R, \Lambda)
		=
				U^{m} H^{q}(R, \Lambda)
			/
				U^{m + 1} H^{q}(R, \Lambda).
	\]
The map $H^{q}(R, \Lambda) \to H^{q}(K_{\ideal{p}}^{h}, \Lambda)$
maps $U^{m} H^{q}(R, \Lambda)$ into the subgroup
$U^{m} H^{q}(K_{\ideal{p}}^{h}, \Lambda)$
recalled in Section \ref{0040}.
Let $e_{A} = v_{\ideal{p}}(p)$
and set $f_{A} = p e_{A} / (p - 1)$,
where $v_{\ideal{p}}$ denotes the normalized valuation of $A_{\ideal{p}}^{h}$.

\begin{Prop} \label{0069}
	Let $m \ge 0$.
	\begin{enumerate}
		\item \label{0207}
			If $m = 0$, then
				\begin{align*}
							B^{\times} / B^{\times p}
					&	\isomto
							\gr^{m} H^{2}(R, \Lambda),
					\\
							x
					&	\mapsto
							\{\Tilde{x}, \varpi\},
				\end{align*}
			where $\Tilde{x}$ denotes any lift of $x$ to $A$.
		\item \label{0208}
			If $0 < m < f_{A}$ and $p \mid m$, then
				\begin{align*}
							B / B^{p}
					&	\isomto
							\gr^{m} H^{2}(R, \Lambda),
					\\
							x
					&	\mapsto
							\{1 + \Tilde{x} \varpi^{m}, \varpi\}.
				\end{align*}
		\item \label{0209}
			If $0 < m < f_{A}$ and $p \nmid m$, then
				\begin{align*}
							\Omega_{B}^{1}
					&	\isomto
							\gr^{m} H^{2}(R, \Lambda),
					\\
							x \dlog(y)
					&	\mapsto
							\{1 + \Tilde{x} \varpi^{m}, \Tilde{y}\},
				\end{align*}
			where $x \in B$ and $y \in B^{\times}$.
		\item \label{0210}
			If $m = f_{A}$, then
				\begin{align*}
							\xi_{1}(F)
					&	\isomto
							\gr^{m} H^{2}(R, \Lambda),
					\\
							x
					&	\mapsto
							\{1 + \Tilde{x} (\zeta_{p} - 1)^{p}, \varpi\},
				\end{align*}
			where $\Tilde{x}$ denotes any lift of $x$ to $A$.
		\item \label{0211}
			If $m > f_{A}$, then $\gr^{m} H^{2}(R, \Lambda) = 0$.
	\end{enumerate}
\end{Prop}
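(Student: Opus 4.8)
The plan is to reduce everything to Kato's local structure theorem (Proposition \ref{0043}) for the complete two-dimensional local field $K_{\ideal{p}}^{h}$, using the injectivity result of Proposition \ref{0067} for $q = 2$, which identifies $H^{2}(R, \Lambda)$ with its image in $H^{2}(K_{\ideal{p}}^{h}, \Lambda)$. The key point is that the filtration $U^{m} H^{2}(R, \Lambda)$ is, essentially by definition, the image (under the injection) of the intersection of $H^{2}(R, \Lambda)$ with $U^{m} H^{2}(K_{\ideal{p}}^{h}, \Lambda)$; this needs to be checked, but it follows from Proposition \ref{0066}, which tells us $H^{2}(R, \Lambda)$ is generated by symbols $\{x, y\}$ with $x, y \in R^{\times}$, together with the fact that such symbols that lie in $U^{m}$ of the local field can be rewritten (modulo lower filtration) as symbols of the prescribed form $\{1 + \Tilde{x}\varpi^{m}, \cdot\}$. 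Once this identification of the filtration is in place, each graded piece $\gr^{m} H^{2}(R, \Lambda)$ embeds into $\gr^{m} H^{2}(K_{\ideal{p}}^{h}, \Lambda)$, which Kato computes explicitly in terms of $k$, $\Omega_{k}^{1}$, and so on.

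First I would make the identification of filtrations precise: show that the natural map $\gr^{m} H^{2}(R, \Lambda) \to \gr^{m} H^{2}(K_{\ideal{p}}^{h}, \Lambda)$ is injective with image the ``integral'' part, i.e., the subgroup cut out by the condition that the relevant differential forms and elements of $k$ descend to $B$ and $\Omega_{B}^{1}$ rather than merely lying in $k = \kappa(\ideal{p})$ and $\Omega_{k}^{1}$. Concretely, for $0 < m < f_{A}$ with $p \nmid m$, Kato gives $\gr^{m} H^{2}(K_{\ideal{p}}^{h}, \Lambda) \cong \Omega_{k}^{1}$ via $x\,\dlog(y) \mapsto \{1 + \Tilde{x}c_{m}, \Tilde{y}\}$; since we may take $c_{m} = \varpi^{m}$ and since a symbol $\{1 + \Tilde{x}\varpi^{m}, \Tilde{y}\}$ with $\Tilde{x} \in A$, $\Tilde{y} \in A^{\times}$ lies in $H^{2}(R, \Lambda)$, the image contains $\Omega_{B}^{1}$; the reverse inclusion, that any element of $\gr^{m} H^{2}(R, \Lambda)$ has a representative with $\Tilde{x} \in A$ modulo $U^{m+1}$, comes from the symbol generation of Proposition \ref{0066} and the standard Steinberg-relation manipulations that move the ``weight'' onto the first slot. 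The cases $m = 0$ (where $\gr^{0} H^{2}(K_{\ideal{p}}^{h}, \Lambda) \cong k^{\times}/k^{\times p}$ and the $R$-part is $B^{\times}/B^{\times p}$, noting $\Pic(B) = 0$ as $B$ is regular local), $p \mid m$ (image $B/B^{p} \subset k/k^{p}$), and $m = f_{A}$ (where Kato's $\gr$ has a $\xi(F) \oplus \xi(k)$; here the $R$-part picks out only the $\xi(F) = \xi_{1}(F)$ summand attached to $\varpi$, since $\{1 + \Tilde{y}(\zeta_{p}-1)^{p}, \varpi_{K}\}$-type terms with $\varpi_{K}$ a uniformizer of $K_{\ideal{p}}^{h}$ would force a pole) are handled by the same mechanism. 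Finally $\gr^{m} H^{2}(R, \Lambda) = 0$ for $m > f_{A}$ is immediate from vanishing of $\gr^{m} H^{2}(K_{\ideal{p}}^{h}, \Lambda)$ and injectivity.

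The main obstacle I anticipate is the second inclusion in the filtration identification: showing that every class in $U^{m} H^{2}(R, \Lambda)$ is, modulo $U^{m+1}$, represented by a symbol of the exact normalized shape appearing in the statement (with the first entry $1 + \Tilde{x}\varpi^{m}$ and the second entry a unit, or $\varpi$ itself). A priori $U^{m} H^{2}(R, \Lambda)$ is generated by symbols $\{x_1, \dots\}$ with $x_1 \in 1 + \ideal{p}^{m}$ and arbitrary other entries in $R^{\times} = A[1/p]^{\times}$, so the other entries may involve powers of $\varpi$. One must use bilinearity and the Steinberg relation $\{u, -u\} = 0$, $\{u, 1-u\} = 0$ to reorganize $\{1 + a\varpi^{m}, \varpi^{j} v\}$ (with $v \in A^{\times}$) into a sum of a symbol with unit second entry and a symbol $\{1 + a'\varpi^{m}, \varpi\}$, controlling the error modulo $U^{m+1}$; this is exactly the kind of computation Saito carries out in \cite[\S 4]{Sai86} and Kato in \cite[\S 2]{Kat79}, so I would cite those and reproduce only the short additional step needed to descend from $K_{\ideal{p}}^{h}$ to $R$. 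A secondary technical point is the case $p \mid m$, where the second entry must also be $\varpi$ and one has to check that no ``$\Omega$-type'' terms survive; this follows because for $p \mid m$ the only nonzero local graded piece is the $k/k^{p}$ one attached to $\{1 + \Tilde{x}b_m^{p}, \varpi_K\}$, and integrality forces the exponent structure to collapse to $\{1 + \Tilde{x}\varpi^{m}, \varpi\}$.
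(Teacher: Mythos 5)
Your proposal is essentially the paper's proof: well-definedness and surjectivity of the displayed maps onto $\gr^{m} H^{2}(R, \Lambda)$ come from the symbol-generation results (Propositions \ref{0064} and \ref{0066}) plus the classical normalization of symbols, and injectivity comes from composing with $\gr^{m} H^{2}(R, \Lambda) \to \gr^{m} H^{2}(K_{\ideal{p}}^{h}, \Lambda)$ and observing, via Kato's isomorphisms in Proposition \ref{0043} and the injectivity of Proposition \ref{0067}, that the composites are the evident injections $B^{\times}/B^{\times p} \into k^{\times}/k^{\times p}$, $B/B^{p} \into k/k^{p}$, $\Omega_{B}^{1} \into \Omega_{k}^{1}$, $\xi(F) \into \xi(F) \oplus \xi(k)$. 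One caveat on logical order: the identity $U^{m} H^{2}(R, \Lambda) = H^{2}(R, \Lambda) \cap U^{m} H^{2}(K_{\ideal{p}}^{h}, \Lambda)$ that you propose to establish as a first step is Proposition \ref{0071} of the paper, which is \emph{deduced from} the present proposition (and \ref{0070}) rather than used to prove it, and deriving it directly from Proposition \ref{0066} is delicate because a sum of symbols over $R$ whose total class lies in $U^{m}$ of the local field need not be a sum of individual $U^{m}$-symbols over $R$; your second paragraph already contains the direct argument (surjectivity onto $\gr^{m} H^{2}(R, \Lambda)$ with $U^{m}$ defined by symbols, then injectivity of the composite), so that preliminary step should simply be dropped. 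A further small slip: in the case $m = f_{A}$ the summand of Kato's $\xi(F) \oplus \xi(k)$ reached by symbols $\{\cdot, \varpi\}$ is the $\xi(k)$ one (since $\varpi = \varpi_{K}$), and the image of the $R$-classes is the copy of $\xi(F) \cong B/\wp B$ inside it, while the $\xi(F)$ summand attached to $\varpi_{k}$ is the one that is not hit.
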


\begin{proof}
	The maps are well-defined and surjective by classical calculations
	and Propositions \ref{0064}
	and \ref{0066}.
	They are injective by the injectivity result in
	Proposition \ref{0067}
	and by comparison with
	Proposition \ref{0043}.
\end{proof}

\begin{Prop} \label{0070}
	Let $m \ge 0$.
	\begin{enumerate}
		\item \label{0212}
			If $m = 0$, then
				\begin{align*}
							\Lambda \oplus B^{\times} / B^{\times p}
					&	\isomto
							\gr^{m} H^{1}(R, \Lambda),
					\\
							(i, x)
					&	\mapsto
							\{\varpi^{i} \Tilde{x}\}.
				\end{align*}
		\item \label{0213}
			If $0 < m < f_{A}$ and $p \mid m$, then
				\begin{align*}
							B / B^{p}
					&	\isomto
							\gr^{m} H^{1}(R, \Lambda),
					\\
							x
					&	\mapsto
							\left\{
									1
								+
									\Tilde{x}
									\frac{
										(\zeta_{p} - 1)^{p}
									}{
										\varpi^{f_{A} - m}
									}
							\right\}.
				\end{align*}
		\item \label{0214}
			If $0 < m < f_{A}$ and $p \nmid m$, then
				\begin{align*}
							B
					&	\isomto
							\gr^{m} H^{1}(R, \Lambda),
					\\
							x
					&	\mapsto
							\left\{
								1 + \Tilde{x} \frac{(\zeta_{p} - 1)^{p}}{\varpi^{f_{A} - m}}
							\right\}.
				\end{align*}
		\item \label{0215}
			If $m = f_{\ideal{p}}$, then
				\begin{align*}
							\xi_{1}(F)
					&	\isomto
							\gr^{m} H^{1}(R, \Lambda),
					\\
							x
					&	\mapsto
							\{1 + \Tilde{x} (\zeta_{p} - 1)^{p}\}.
				\end{align*}
		\item \label{0216}
			If $m > f_{A}$, then $\gr^{m} H^{1}(R, \Lambda) = 0$.
	\end{enumerate}
\end{Prop}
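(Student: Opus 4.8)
The plan is to follow the proof of Proposition \ref{0069} essentially verbatim, replacing the inputs about $H^{2}$ by the corresponding ones about $H^{1}$. The starting point is that $\Pic(R) = 0$: since $A$ is regular local we have $\Cl(A) = 0$, hence $\Cl(R) = \Cl(A) / \Z \cdot [\ideal{p}] = 0$, and $R$ is normal, so $\Pic(R) = \Cl(R) = 0$. As $\zeta_{p} \in A \subset R$, Kummer theory gives $H^{1}(R, \Lambda) \cong R^{\times} / R^{\times p}$. Since $A$ is a UFD in which $\ideal{p} = (\varpi)$ is the only prime inverted in $R$, one has $R^{\times} = A^{\times} \cdot \varpi^{\Z}$, and reduction modulo $\ideal{p}$ identifies $A^{\times} / (1 + \ideal{p})$ with $B^{\times}$; this is what makes the groups $\Lambda \oplus B^{\times}/B^{\times p}$, $B / B^{p}$, $B$, $\xi_{1}(F)$ the evident candidates for the graded pieces. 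I would first check the maps are well defined, the only nontrivial point being a valuation computation at $\ideal{p}$: from $v_{\ideal{p}}(\zeta_{p} - 1) = e_{A}/(p-1)$ one gets $v_{\ideal{p}}((\zeta_{p} - 1)^{p}) = f_{A}$, so $(\zeta_{p} - 1)^{p}/\varpi^{f_{A} - m} \in A$ is a non-unit of $\ideal{p}$-valuation $m$ (and a unit at every other height one prime, since $\zeta_{p} - 1$ divides $p$), whence the displayed elements genuinely lie in $1 + \ideal{p}^{m}$ and represent classes in $U^{m} H^{1}(R, \Lambda)$. Surjectivity of the maps onto the graded pieces is then the classical description of the unit filtration of $R^{\times}/R^{\times p}$ in the style of Bloch--Kato and Kato \cite[\S 6]{Kat79}, carried out for the two-dimensional regular ring $R$ exactly as in \cite[\S 4]{Sai86}.

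For injectivity I would invoke Proposition \ref{0067}: in Case \eqref{0061}, $P \setminus P' = \{\ideal{p}\}$, so the natural map $H^{1}(R, \Lambda) \to H^{1}(K_{\ideal{p}}^{h}, \Lambda)$ is injective, and it is compatible with the symbol filtrations ($U^{m} H^{1}(R, \Lambda)$ is generated by $\{x\}$ with $x \in 1 + \ideal{p}^{m} A$, which maps into $U^{m} H^{1}(K_{\ideal{p}}^{h}, \Lambda)$). Now $K_{\ideal{p}}^{h}$ is a two-dimensional local field whose residue field is $\kappa(\ideal{p}) = \mathrm{Frac}(B)$, a discrete valuation field with residue field $F$ and with $e_{K_{\ideal{p}}^{h}} = e_{A}$, $f_{K_{\ideal{p}}^{h}} = f_{A}$, so Proposition \ref{0044} (applied, possibly after completing along $\ideal{p}$ in the sense of Proposition \ref{0173}) computes $\gr^{m} H^{1}(K_{\ideal{p}}^{h}, \Lambda)$. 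Comparing, the graded map $\gr^{m} H^{1}(R, \Lambda) \to \gr^{m} H^{1}(K_{\ideal{p}}^{h}, \Lambda)$ is identified, via the stated isomorphisms, with one of the obvious maps: the identity on $\Lambda$ and on $\xi_{1}(F)$, the inclusion $B^{\times}/B^{\times p} \into \kappa(\ideal{p})^{\times}/\kappa(\ideal{p})^{\times p}$, the inclusion $B / B^{p} \into \kappa(\ideal{p})/\kappa(\ideal{p})^{p}$, or the inclusion $B \into \kappa(\ideal{p})$ of the valuation ring. Each of these is injective: a unit (resp.\ element) of the DVR $B$ that is a $p$-th power in $\kappa(\ideal{p})$ has a $p$-th root of valuation $0$ (resp.\ $\ge 0$), hence lies in $B^{\times}$ (resp.\ $B$), using that $B$ has perfect residue field. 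This gives the injectivity of the maps in the statement.

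The main obstacle is the bookkeeping around strict compatibility of the filtrations: injectivity of the total map $H^{1}(R, \Lambda) \into H^{1}(K_{\ideal{p}}^{h}, \Lambda)$ does not by itself imply injectivity of the graded maps, for which one needs $U^{m} H^{1}(R, \Lambda) = H^{1}(R, \Lambda) \cap U^{m} H^{1}(K_{\ideal{p}}^{h}, \Lambda)$. The clean way around this, as in \cite[\S 4]{Sai86}, is to run the argument in the order: establish surjectivity onto the claimed $\gr^{m} H^{1}(R, \Lambda)$ together with the explicit form of the generators, then note that these generators map to \emph{nonzero} classes in $\gr^{m} H^{1}(K_{\ideal{p}}^{h}, \Lambda)$ unless they already vanish in $\gr^{m} H^{1}(R,\Lambda)$ — which simultaneously pins down the associated graded and yields the strictness of the filtration. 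The remaining steps are routine symbol manipulations, which I would not reproduce in detail.
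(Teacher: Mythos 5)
Your proposal is correct and follows essentially the same route as the paper, whose proof of this proposition is simply ``similar to (and easier than)'' Proposition \ref{0069}: well-definedness and surjectivity by the classical symbol/unit-group calculations (here Kummer theory and $\Pic(R)=0$ replace the role of Propositions \ref{0064} and \ref{0066}), and injectivity via Proposition \ref{0067} combined with the comparison with Proposition \ref{0044}. Your explicit treatment of the strictness of the filtration (deducing injectivity of the graded maps from surjectivity onto $\gr^{m}$ plus injectivity of the candidate groups into $\gr^{m} H^{1}(K_{\ideal{p}}^{h},\Lambda)$) is exactly the point the paper records afterwards as Proposition \ref{0071}.
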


\begin{proof}
	Similar to (and easier than) the previous proposition.
\end{proof}

\begin{Prop} \label{0071}
	Let $q = 1$ or $2$ and $m \ge 0$.
	Then $H^{q}(R, \Lambda) \cap U^{m} H^{q}(K_{\ideal{p}}^{h}, \Lambda) = U^{m} H^{q}(R, \Lambda)$.
\end{Prop}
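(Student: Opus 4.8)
The plan is to compare the two filtrations through the injection $H^{q}(R, \Lambda) \hookrightarrow H^{q}(K_{\ideal{p}}^{h}, \Lambda)$ supplied by Proposition \ref{0067} (valid here since $P \setminus P' = \{\ideal{p}\}$) and to reduce the whole statement to the injectivity of the induced maps on graded pieces. The inclusion $U^{m} H^{q}(R, \Lambda) \subseteq H^{q}(R, \Lambda) \cap U^{m} H^{q}(K_{\ideal{p}}^{h}, \Lambda)$ is already recorded in the text above, so only the reverse inclusion is at issue. I would prove it by induction on $m$. The case $m = 0$ is trivial, since $U^{0}$ is the whole group on both sides. For the inductive step, take $\alpha \in H^{q}(R, \Lambda)$ whose image in $H^{q}(K_{\ideal{p}}^{h}, \Lambda)$ lies in $U^{m + 1} H^{q}(K_{\ideal{p}}^{h}, \Lambda)$, hence a fortiori in $U^{m} H^{q}(K_{\ideal{p}}^{h}, \Lambda)$; by the inductive hypothesis $\alpha \in U^{m} H^{q}(R, \Lambda)$, and since the map sends $U^{m}$ into $U^{m}$ and $U^{m+1}$ into $U^{m+1}$, the class of $\alpha$ in $\gr^{m} H^{q}(R, \Lambda)$ is killed by the map $\gr^{m} H^{q}(R, \Lambda) \to \gr^{m} H^{q}(K_{\ideal{p}}^{h}, \Lambda)$. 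So, provided this last map is injective, the class of $\alpha$ vanishes, i.e.\ $\alpha \in U^{m + 1} H^{q}(R, \Lambda)$, closing the induction. Thus everything comes down to: for $q = 1, 2$ and all $m \ge 0$, the map $\gr^{m} H^{q}(R, \Lambda) \to \gr^{m} H^{q}(K_{\ideal{p}}^{h}, \Lambda)$ is injective.

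To check that, I would first dispose of $m > f_{A}$: both sides vanish by Propositions \ref{0069} and \ref{0070} (resp.\ \ref{0043} and \ref{0044}). For $0 \le m \le f_{A}$ I would identify the graded map explicitly by comparing symbol formulas. The isomorphisms of Propositions \ref{0069} and \ref{0070} are given by exactly the same symbols $\{1 + \Tilde{x}\varpi^{m}, \Tilde{y}\}$, $\{\varpi^{i}\Tilde{x}\}$, and so on, as those of Propositions \ref{0043} and \ref{0044}, with the generator $\varpi$ of $\ideal{p}$ as the uniformizer of $K_{\ideal{p}}^{h}$ and an element of $B = A / \ideal{p}$ mapped to its image in the fraction field $k = \kappa(\ideal{p})$; running through the cases, the graded map is identified with the natural map induced by the localization $B \hookrightarrow k$ — one of $B^{\times} / B^{\times p} \to k^{\times} / k^{\times p}$, $B / B^{p} \to k / k^{p}$, or $\Omega_{B}^{1} \to \Omega_{k}^{1}$ — together with the identity on any $\Lambda$-summand and, when $m = f_{A}$, the natural map $\xi_{1}(F) = F / \wp F \to k / \wp k = \xi(k)$. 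Pinning down this identification, i.e.\ checking that the two families of isomorphisms match on the nose, is the step that takes the most care, but it is essentially bookkeeping once the normalizations are aligned, and is the same comparison already invoked in the proofs of Propositions \ref{0069} and \ref{0070}.

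It then remains to observe that each of these natural maps is injective, using that $B = A / \ideal{p}$ is, by hypothesis \eqref{0061}, a regular one-dimensional — hence discrete valuation — ring with fraction field $k$ and perfect residue field $F$. Normality of $B$ yields the injectivity of $B^{\times} / B^{\times p} \to k^{\times} / k^{\times p}$ and of $B / B^{p} \to k / k^{p}$: an element of $B$ that becomes a $p$-th power in $k$ has valuation divisible by $p$, hence is a $p$-th power of a unit, resp.\ an element, of $B$. Since $F$ is perfect, $\Omega_{B}^{1} \cong \Omega_{B / F}^{1}$ is free of rank one over $B$, so $\Omega_{B}^{1} \to \Omega_{B}^{1} \tensor_{B} k = \Omega_{k}^{1}$ is injective. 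Finally $F / \wp F \to k / \wp k$ is injective by a one-line valuation argument: if $a \in F$ equals $b^{p} - b$ with $b \in k$, then $v(b) \ge 0$ (otherwise $v(b^{p} - b) = p\, v(b) < 0$), so $b \in B$, and reduction modulo the maximal ideal of $B$ gives $a = \bar{b}^{p} - \bar{b} \in \wp F$. I do not expect any genuine obstacle in this proof; the only points demanding attention are the convention-matching in the symbol computations and a small amount of care about the meaning of $\Omega^{1}$ for the equicharacteristic henselian discrete valuation ring $B$.
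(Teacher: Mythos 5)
Your proof is correct and follows the same route as the paper: the paper's proof of Proposition \ref{0071} consists precisely of observing that $\gr^{m} H^{q}(R, \Lambda) \to \gr^{m} H^{q}(K_{\ideal{p}}^{h}, \Lambda)$ is injective (which is immediate from Propositions \ref{0069} and \ref{0070}, whose displayed maps are isomorphisms matching those of Propositions \ref{0043} and \ref{0044}) and leaving the filtration induction implicit. Your final paragraph, checking injectivity of $B^{\times}/B^{\times p} \to k^{\times}/k^{\times p}$, $B/B^{p} \to k/k^{p}$, $\Omega_{B}^{1} \to \Omega_{k}^{1}$ and $F/\wp F \to k/\wp k$, is re-deriving the injectivity input already built into the proofs of Propositions \ref{0069} and \ref{0070}, so it is sound but could simply be cited.
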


\begin{proof}
	By the previous propositions,
	the natural map
	$\gr^{m} H^{q}(R, \Lambda) \to \gr^{m} H^{q}(K_{\ideal{p}}^{h}, \Lambda)$
	is injective.
	This implies the result.
\end{proof}

\begin{Prop} \label{0503}
	Consider the maps
		\[
				H^{2}(R, \Lambda)
			\to
				H^{2}(K_{\ideal{p}}^{h}, \Lambda)
			\to
				\Lambda,
		\]
	where the first map is the natural one and the second \eqref{0437}.
	Their composite is zero.
\end{Prop}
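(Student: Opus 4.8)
The plan is to reduce to Steinberg symbols and then compute the two boundary maps explicitly. By Proposition~\ref{0066}, the group $H^{2}(R, \Lambda)$ is generated by symbols $\{x, y\}$ with $x, y \in R^{\times}$, and the natural map $H^{2}(R, \Lambda) \to H^{2}(K_{\ideal{p}}^{h}, \Lambda)$ is induced by the ring homomorphism $R = A[1/p] \to K_{\ideal{p}}^{h}$ (which exists because $p$ is a unit in the characteristic-zero field $K_{\ideal{p}}^{h}$) and is compatible with symbols; hence it is enough to show that the image of each such $\{x,y\}$ under the composite map of the statement is zero. By Proposition~\ref{0041}, the map $H^{2}(K_{\ideal{p}}^{h}, \Lambda) \to \Lambda$ appearing in \eqref{0437} is the composite $H^{2}(K_{\ideal{p}}^{h}, \Lambda(2)) \to H^{1}(\kappa(\ideal{p}), \Lambda(1)) \to H^{0}(F, \Lambda) = \Lambda$, where the first arrow is the residue (tame symbol) along the normalized valuation $v_{\ideal{p}}$ of $A_{\ideal{p}}^{h}$, for which the chosen generator $\varpi$ of $\ideal{p}$ is a uniformizer, and the second arrow is the normalized valuation $v_{B}$ of $B = A/\ideal{p}$ reduced modulo $p$; here we use that, since $A$ is regular two-dimensional and $A/\ideal{p}$ is regular, $B$ is a discrete valuation ring with $\operatorname{Frac}(B) = \kappa(\ideal{p})$ and residue field $F$.

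Next I would describe $R^{\times}$. As $A$ is a regular local ring it is a unique factorization domain, and the hypothesis that $\sqrt{(p)}$ is the single prime $\ideal{p}$ forces $p = u_{0}\varpi^{e_{A}}$ with $u_{0} \in A^{\times}$; it follows that an element of $R = A[1/p]$ is a unit precisely when it is the product of a unit of $A$ and a power of $\varpi$, so $R^{\times} = A^{\times}\cdot\varpi^{\Z}$. Writing $x = u\varpi^{a}$ and $y = w\varpi^{b}$ with $u, w \in A^{\times}$ and $a, b \in \Z$, the tame-symbol formula gives, in $H^{1}(\kappa(\ideal{p}), \Lambda(1)) = \kappa(\ideal{p})^{\times}/p$, that the residue of $\{x, y\}$ is the class of $(-1)^{ab}\,\overline{u^{b}w^{-a}}$, the bar denoting reduction modulo $\ideal{p}$. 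Since $u, w \in A^{\times}$, this element lies in $B^{\times} = \Order_{\kappa(\ideal{p})}^{\times}$. Applying the second boundary map $\kappa(\ideal{p})^{\times}/p \to \Z/p$, which is $v_{B}$ modulo $p$ and hence annihilates the classes of units, we obtain zero. As this holds for every generator $\{x,y\}$ of $H^{2}(R, \Lambda)$, the composite map of the statement is zero.

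I expect no genuine obstacle here: the argument is a direct symbol computation. The only points needing care are the compatibility of the restriction map with symbol maps and the identification of the composite $H^{2}(K_{\ideal{p}}^{h}, \Lambda) \to \Lambda$ with the iterated residue-then-valuation boundary, both classical facts underlying Propositions~\ref{0041}, \ref{0043} and \ref{0044}. One may also phrase the conclusion more structurally: by Proposition~\ref{0067} the map $H^{2}(R, \Lambda) \to H^{2}(K_{\ideal{p}}^{h}, \Lambda)$ is injective and its image has residue contained in the unramified part $H^{1}(B, \Lambda(1)) \subset H^{1}(\kappa(\ideal{p}), \Lambda(1))$, on which the valuation boundary vanishes.
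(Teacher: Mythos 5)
Your argument is correct and is essentially the paper's proof with the symbol computation written out: the paper simply cites Proposition~\ref{0069}~\eqref{0207} together with Proposition~\ref{0043}~\eqref{0175}, which say precisely that the image of $H^{2}(R,\Lambda)$ in $\gr^{0}H^{2}(K_{\ideal{p}}^{h},\Lambda)\cong k^{\times}/k^{\times p}$ is $B^{\times}/B^{\times p}$, so the valuation boundary kills it — exactly the structural rephrasing you give in your last paragraph. The explicit tame-symbol calculation you perform (using $R^{\times}=A^{\times}\cdot\varpi^{\Z}$ and Proposition~\ref{0066}) is the content underlying those cited propositions, so there is no substantive difference.
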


\begin{proof}
	This follows from Propositions \ref{0069} \eqref{0207} and \ref{0043} \eqref{0175}.
\end{proof}

\begin{Prop} \label{0072}
	We have $H^{3}(R, \Lambda) = 0$.
\end{Prop}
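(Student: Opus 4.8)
The plan is to reduce to the case of an algebraically closed residue field, where vanishing in degree $\geq 3$ is already available, by a Hochschild--Serre argument, and then to kill the remaining cohomology using its symbol filtration.

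First I would introduce, exactly as in the proof of Proposition~\ref{0063}, the henselian lift $A_{\closure{F}}$ of $\closure{F}$ to $A$, set $R_{\closure{F}} = A_{\closure{F}}[1/p]$, and write $G = \Gal(\closure{F}/F)$. Note that $A_{\closure{F}}$ is a filtered union of finite \'etale local $A$-algebras, each still satisfying the hypotheses \eqref{0202}--\eqref{0206} of this section in Case~\eqref{0061} (the unique prime over $\ideal{p}$ does not split, since $A/\ideal{p}$ is henselian local), so the algebraically closed case of Proposition~\ref{0063} gives $H^{q}(R_{\closure{F}}, \Lambda) = 0$ for all $q \geq 3$. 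Since $F$ is perfect of characteristic $p$ we have $\mathrm{cd}_{p}(G) \leq 1$, so the Hochschild--Serre spectral sequence
\[
	E_{2}^{s,t} = H^{s}\bigl(G, H^{t}(R_{\closure{F}}, \Lambda)\bigr) \Longrightarrow H^{s+t}(R, \Lambda)
\]
vanishes outside $s \leq 1$, $t \leq 2$. In total degree $3$ only $E_{\infty}^{1,2}$ can survive, and for degree reasons $E_{\infty}^{1,2} = E_{2}^{1,2}$ while $E_{2}^{0,3} = H^{0}(G, H^{3}(R_{\closure{F}},\Lambda)) = 0$; hence $H^{3}(R, \Lambda) \cong H^{1}\bigl(G, H^{2}(R_{\closure{F}}, \Lambda)\bigr)$, and everything comes down to showing this group is zero.

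For that I would apply Proposition~\ref{0069} to $A_{\closure{F}}$, which is legitimate by passing to the finite layers and taking the colimit (\'etale cohomology with torsion coefficients commutes with filtered colimits of rings, and symbols are functorial). Writing $\closure{B} = A_{\closure{F}}/\ideal{p}_{\closure{F}}$, the symbol filtration $U^{\bullet} H^{2}(R_{\closure{F}}, \Lambda)$ is $G$-stable and finite (nonzero only in the finitely many degrees $0 \leq m \leq f_{A_{\closure{F}}}$), and its graded pieces are, $G$-equivariantly, among $\closure{B}^{\times}/\closure{B}^{\times p}$, $\closure{B}/\closure{B}^{p}$, $\Omega_{\closure{B}}^{1}$ and $\xi_{1}(\closure{F})$, the last being zero since $\closure{F}$ is algebraically closed. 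Each of the first three is a connected unipotent ind-pro-algebraic group over $\closure{F}$: a uniformizer of $\closure{B}$ may be chosen inside $B = A/\ideal{p}$, so the $\ideal{m}_{\closure{B}}$-adic graded pieces are $\Ga(\closure{F}) = \closure{F}$, and in characteristic $p$ the relevant $p$-th power maps are Frobenius-type, so $\closure{B}/\closure{B}^{p}$ and $(1+\ideal{m}_{\closure{B}})/(1+\ideal{m}_{\closure{B}})^{p}$ again have graded pieces $\closure{F}$ (using $\closure{F}^{\times}/\closure{F}^{\times p} = 0$). For such a $G$-module $V$ one has $H^{1}(G, V) = 0$: this reduces by $\Ga$-d\'evissage to $H^{1}(G, \Ga(\closure{F})) = 0$ (additive Hilbert~90) and $H^{2}(G, \Ga(\closure{F})) = 0$ ($\mathrm{cd}_{p}(G) \leq 1$), the pro-parts being handled by a Mittag--Leffler argument on the systems of $G$-invariants. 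A d\'evissage up the finite filtration of $H^{2}(R_{\closure{F}}, \Lambda)$ then yields $H^{1}\bigl(G, H^{2}(R_{\closure{F}}, \Lambda)\bigr) = 0$, hence $H^{3}(R, \Lambda) = 0$.

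The formal part — the spectral-sequence inputs and the vanishing $H^{\geq 1}(G, \Ga(\closure{F})) = 0$ — is routine. The main obstacle is the structural claim in the last paragraph: one must check carefully that the symbol filtration and the identifications of Proposition~\ref{0069} are $G$-equivariant, and that the multiplicative graded piece $\closure{B}^{\times}/\closure{B}^{\times p}$ really is a connected unipotent ind-pro-group over $\closure{F}$ in characteristic $p$ (so that additive Hilbert~90 applies), including the Mittag--Leffler bookkeeping for the inverse limits occurring in its pro-part.
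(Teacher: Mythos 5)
Your reduction is exactly the paper's: Hochschild--Serre with $\mathrm{cd}_p(\Gal(\closure{F}/F)) \le 1$ and the vanishing of $H^{\ge 3}(R_{\closure{F}}, \Lambda)$ give $H^{3}(R, \Lambda) \cong H^{1}\bigl(\Gal(\closure{F}/F), H^{2}(R_{\closure{F}}, \Lambda)\bigr)$, and one then runs a d\'evissage over the graded pieces of Proposition~\ref{0069} applied to $A_{\closure{F}}$. The divergence, and the gap, is in how you kill $H^{1}(G, -)$ of the graded pieces, above all the $m = 0$ piece $B_{\closure{F}}^{\times}/(B_{\closure{F}}^{\times})^{p}$. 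In this section $A$ is only henselian, so $B_{\closure{F}} = A_{\closure{F}}/\ideal{p}_{\closure{F}}$ is a henselian, generally non-complete, DVR. Consequently $U^{(1)}_{B_{\closure{F}}}/(U^{(1)}_{B_{\closure{F}}})^{p}$ is \emph{not} the group of $\closure{F}$-points of a connected pro-unipotent group: it is a proper dense subgroup of the corresponding group for $\Hat{B}_{\closure{F}}$, and it is not the inverse limit of its quotients by the images of the higher unit groups (a compatible system of approximations converges only in the completion). So the ``Mittag--Leffler bookkeeping on the pro-part'' that you defer to computes the (continuous) cohomology of the completed, non-discrete module, not of the discrete $G$-module actually appearing in the $E_{2}^{1,2}$-term. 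This distinction is not cosmetic in this paper: $H^{2}(R,\Lambda)$ itself genuinely changes under completion (cf.\ Propositions~\ref{0073} and~\ref{0078}), precisely because quantities like $B/B^{p}$ and $B^{\times}/B^{\times p}$ do.

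The paper sidesteps this for the multiplicative piece by a completion-insensitive computation: $H^{1}\bigl(G, B_{\closure{F}}^{\times}/(B_{\closure{F}}^{\times})^{p}\bigr) \cong H^{2}(B, \Gm)[p] \cong H^{2}(F, \Gm)[p] = 0$, using $\Pic(B) = 0$ and the vanishing of the $p$-torsion of the Brauer group of a perfect field of characteristic $p$. For the $m > 0$ pieces ($B_{\closure{F}}/B_{\closure{F}}^{p}$ and $\Omega^{1}_{B_{\closure{F}}}$) your conclusion is correct but the clean justification is not the pro-algebraic one either: write $B_{\closure{F}} = \dirlim_{F'} B_{F'}$ over finite subextensions, note that each $B_{F'}$ (and $\Omega^{1}_{B_{F'}} = \Omega^{1}_{B} \tensor_{B} B_{F'}$) is an induced $\Gal(F'/F)$-module by the normal basis theorem for the finite \'etale extension $B_{F'}/B$, and use the Frobenius twist to handle $B_{\closure{F}}^{p}$; this is a purely colimit argument with no inverse limits. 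You should replace your treatment of the $m=0$ piece by the Brauer-group identification (or some other argument valid for henselian $B$) for the proof to close.
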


\begin{proof}
	Using the same notation as the proof of
	Proposition \ref{0063},
	for $R_{\closure{F}} := R \tensor_{A} A_{\closure{F}}$,
	we have
		\[
				H^{3}(R, \Lambda)
			\cong
				H^{1}(\Gal \bigl(
					\closure{F} / F),
					H^{2}(R_{\closure{F}}, \Lambda)
				\bigr).
		\]
	We have
		\[
				\gr^{m} H^{2}(R_{\closure{F}}, \Lambda)
			\cong
				\begin{cases}
						B_{\closure{F}}^{\times} / (B_{\closure{F}}^{\times})^{p}
					&	\text{if }
						m = 0,
					\\
						B_{\closure{F}} / B_{\closure{F}}^{p}
					&	\text{if }
						0 < m < f_{A},\, p \mid m,
					\\
						\Omega_{B_{\closure{F}}}^{1}
					&	\text{if }
						0 < m < f_{A},\, p \nmid m,
					\\
						0
					&	\text{if }
						m \ge f_{A}
				\end{cases}
		\]
	by Proposition \ref{0069}.
	Applying $H^{1}(\Gal(\closure{F} / F), \var)$ to these graded pieces for $m > 0$ yields zero.
	For $m = 0$, we have
		\[
				H^{1} \bigl(
					\Gal(\closure{F} / F),
					B_{\closure{F}}^{\times} / (B_{\closure{F}}^{\times})^{p}
				\bigr)
			\cong
				H^{2}(B, \Gm)[p]
			\cong
				H^{2}(F, \Gm)[p]
			=
				0.
		\]
	The result then follows.
\end{proof}

In particular, the map
$H^{q}(R, \Lambda) \to H^{q}(K_{\ideal{p}}^{h}, \Lambda)$ is injective for all $q$.

Let $\Hat{A}$ be the completion of $A$.
We have corresponding rings $\Hat{R} = R \tensor_{A} \Hat{A}$,
 $\Hat{B} = \Hat{A} / \ideal{p} \Hat{A}$ and
$\Hat{k} = \Hat{\kappa}(\ideal{p})$.
Let $\Hat{K}_{\ideal{p}}$ be the complete local field of $\Hat{A}$ at $\ideal{p} \Hat{A}$.
Its residue field is $\Hat{k}$.
Even though $H^{q}(R, \Lambda)$ changes by replacing $A$ by $\Hat{A}$,
the quotient $H^{q}(K_{\ideal{p}}, \Lambda) / H^{q}(R, \Lambda)$ does not:

\begin{Prop} \label{0073}
	For any $q$, the natural map
		\[
				\frac{
					H^{q}(K_{\ideal{p}}, \Lambda)
				}{
					H^{q}(R, \Lambda)
				}
			\to
				\frac{
					H^{q}(\Hat{K}_{\ideal{p}}, \Lambda)
				}{
					H^{q}(\Hat{R}, \Lambda)
				}
		\]
	is an isomorphism.
\end{Prop}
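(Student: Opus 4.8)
The plan is to compare the natural $U$-filtrations on the four groups involved. By Proposition \ref{0071} applied both to $A$ and to $\Hat{A}$ --- which is complete, hence henselian, and still satisfies the standing hypotheses of Section \ref{0060} in Case \eqref{0061}, with $B$, $k$, $\varpi$ replaced by $\Hat{B} = \Hat{A}/\ideal{p}\Hat{A}$, $\Hat{k} = \Hat{\kappa}(\ideal{p})$ and the same $\varpi$ --- the injections $H^q(R, \Lambda) \into H^q(K_{\ideal{p}}, \Lambda)$ and $H^q(\Hat{R}, \Lambda) \into H^q(\Hat{K}_{\ideal{p}}, \Lambda)$ are strictly compatible with the filtrations $U^m$. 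Since the map of the proposition preserves valuations, it carries $1 + \ideal{p}^m$ into $1 + \Hat{\ideal{p}}^m$ and hence $U^m$ into $U^m$ on each of the four groups, so it induces a morphism between the two quotients respecting their induced filtrations, with
\[
\gr^m\!\left(\frac{H^q(K_{\ideal{p}}, \Lambda)}{H^q(R, \Lambda)}\right) \cong \frac{\gr^m H^q(K_{\ideal{p}}, \Lambda)}{\text{image of }\gr^m H^q(R, \Lambda)}
\]
and likewise with hats. These filtrations are exhaustive, and finite: the vanishing $U^{m} H^q(K_{\ideal{p}}, \Lambda) = 0$ for $m > f_A$ (Propositions \ref{0043}, \ref{0044}) forces $U^{m} H^q(R, \Lambda) = 0$ there as well. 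Thus it suffices to prove that the induced map on the graded pieces of the quotients is an isomorphism for every $q$ and $m$.

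For $q = 0$ all four groups are $\Lambda$ and the quotients vanish; for $q \ge 4$ everything vanishes (Propositions \ref{0063}, \ref{0041}). For $q = 3$ we have $H^3(R, \Lambda) = H^3(\Hat{R}, \Lambda) = 0$ by Proposition \ref{0072} and its analogue over $\Hat{A}$, while $H^3(K_{\ideal{p}}, \Lambda)$ and $H^3(\Hat{K}_{\ideal{p}}, \Lambda)$ are canonically identified with $\xi_1(F)$ by Proposition \ref{0041} compatibly with the map, so the quotients agree. This leaves $q = 1, 2$, where I would combine the symbol descriptions of the graded pieces from Propositions \ref{0069}, \ref{0070} (for $R$ and $\Hat{R}$) and Propositions \ref{0043}, \ref{0044} (for $K_{\ideal{p}}$ and $\Hat{K}_{\ideal{p}}$). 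As these descriptions are given by symbol maps, which are natural in the ring, the comparison squares commute, and the graded pieces of the quotients work out to: $\Z/p\Z$ generated by the valuation class at $m = 0$; cokernels of $\Omega_B^1$-type or $B/B^p$-type, namely $\Omega_{\Hat{k}}^1 / \Omega_{\Hat{B}}^1$ resp.\ $\Hat{k}/(\Hat{B} + \Hat{k}^p)$ at $0 < m < f_A$ (together with their un-completed analogues over $A$); and sums of $\xi_1(F)$ and $\Hat{k}/(F + \wp\Hat{k})$ (resp.\ the $k$-version) at $m = f_A$.

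The heart of the matter, and the step I expect to be the main obstacle, is that each of these residue-field cokernels is unchanged under the completion maps $B \to \Hat{B}$ and $k \to \Hat{k}$. The basic input is that $B \to \Hat{B}$ preserves the valuation and induces $B/\ideal{m}_B^n \isomto \Hat{B}/\ideal{m}_{\Hat{B}}^n$ for all $n$; an elementary argument (injectivity because valuations match, surjectivity by lifting truncations) then yields $k/B \isomto \Hat{k}/\Hat{B}$. From this I would deduce $\Omega_k^1/\Omega_B^1 \isomto \Omega_{\Hat{k}}^1/\Omega_{\Hat{B}}^1$, using that $\Omega^1$ of an excellent henselian characteristic-$p$ discrete valuation ring with perfect residue field is free of rank one on $\dlog$ of a uniformizer, so the cokernel is $(k/B)\,\dlog\varpi$; likewise $k^p/B^p \isomto \Hat{k}^p/\Hat{B}^p$, hence $k/(B + k^p) \isomto \Hat{k}/(\Hat{B} + \Hat{k}^p)$. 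For the pieces involving $\xi_1(F)$ and $k/(F + \wp k)$ I would invoke Artin--Schreier theory, $\xi(k) \cong H^1(k, \Lambda)$, together with the fact that a henselian valued field and its completion have the same absolute Galois group, so that $H^1(k, \Lambda) \isomto H^1(\Hat{k}, \Lambda)$ with the $F$-subspace corresponding on both sides.

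Assembling these identifications, every graded piece of the quotient filtration maps isomorphically, and since the filtrations are finite, exhaustive and separated, the map $H^q(K_{\ideal{p}}, \Lambda)/H^q(R, \Lambda) \to H^q(\Hat{K}_{\ideal{p}}, \Lambda)/H^q(\Hat{R}, \Lambda)$ is an isomorphism for every $q$. The difficulty is not conceptual depth but bookkeeping: one must track precisely which residue-field module occurs in each bidegree and verify, case by case, that it is built only from "principal-part" data, on which $B \to \Hat{B}$ is an isomorphism, plus a constant part, together with the Galois-cohomological fact in the top filtration step.
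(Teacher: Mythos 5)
Your proposal is correct and follows essentially the same route as the paper: reduce $q=0,3,\ge 4$ to Propositions \ref{0041}, \ref{0063} and \ref{0072}, use the strictness from Proposition \ref{0071} to pass to graded pieces of the quotient for $q=1,2$, and then observe via Propositions \ref{0069}, \ref{0070}, \ref{0043}, \ref{0044} that each graded cokernel is built from $k/B$, $k/(B+k^p)$, $\Omega^1_k/\Omega^1_B$, $k^\times/B^\times k^{\times p}$ and Artin--Schreier quotients, all of which are unchanged under completion. The paper leaves this last invariance as an observation, whereas you spell it out; your extra details are sound.
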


\begin{proof}
	This is obvious for $q = 0$.
	For $q = 3$, this follows from
	Propositions \ref{0041}
	and \ref{0072}.
	
	Let $q = 1$ or $2$.
	By Proposition \ref{0071},
	the image of $U^{m} H^{q}(K_{\ideal{p}}, \Lambda)$
	in $H^{q}(K_{\ideal{p}}, \Lambda) / H^{q}(R, \Lambda)$ defines a finite filtration
	whose graded pieces are given by
		\[
			\frac{
				\gr^{m} H^{q}(K_{\ideal{p}}, \Lambda)
			}{
				\gr^{m} H^{q}(R, \Lambda)
			}.
		\]
	The same is true for
	$H^{q}(\Hat{K}_{\ideal{p}}, \Lambda) / H^{q}(\Hat{R}, \Lambda)$.
	We need to see that the natural map
		\[
				\frac{
					\gr^{m} H^{q}(K_{\ideal{p}}, \Lambda)
				}{
					\gr^{m} H^{q}(R, \Lambda)
				}
			\to
				\frac{
					\gr^{m} H^{q}(\Hat{K}_{\ideal{p}}, \Lambda)
				}{
					\gr^{m} H^{q}(\Hat{R}, \Lambda)
				}
		\]
	is an isomorphism.
	For this, by Propositions \ref{0069} and \ref{0070},
	it suffices to observe that the cokernels of the natural maps
		\[
				B^{\times} / B^{\times p} \into k^{\times} / k^{\times p},
			\quad
				B / B^{p} \into k / k^{p},
			\quad
				\Omega_{B}^{1} \into \Omega_{k}^{1},
			\quad
				B \into k
		\]
	do not change by replacing $B$ by $\Hat{B}$ and $k$ by $\Hat{k}$.
\end{proof}


\subsection{Structure of cohomology by symbols: bad case}
\label{0074}

Next, assume \eqref{0062}.
Below we generally follow the notation of \cite[Section 4]{Sai86}.
For $\nu = \alpha$ and $\beta$,
let $\varpi_{\nu}$ be a generator of $\ideal{p}_{\nu}$.
Set $\Bar{A}_{\nu} = A / \ideal{p}_{\nu}$ and
$k_{\nu} = \kappa(\ideal{p}_{\nu})$.
Let $e_{\nu}$ be the normalized valuation of $p$ in $K_{\nu}^{h}$
and set $f_{\nu} = p e_{\nu} / (p - 1)$.
Set $\nu' = \alpha$ if $\nu = \beta$ and $\nu' = \beta$ if $\nu = \alpha$.
For an integer $j \ge 1$, let $\Tilde{A}_{\nu}(j)$ be
the image of $\varpi_{\nu'}^{- j} \Bar{A}_{\nu}$ in
$k_{\nu} / \wp k_{\nu}$.
Let $\Tilde{U}_{k_{\nu}}^{(j)}$ be the image of
$1 + \varpi_{\nu'}^{j} \Bar{A}_{\nu}$ in
$k_{\nu}^{\times} / k_{\nu}^{\times p}$.

Define $H^{1}(R, \Lambda)_{\alpha}$ to be the kernel of the map
from $H^{1}(R, \Lambda)$ to $H^{1}(K_{\beta}^{h}, \Lambda)$.
Define $H^{1}(R, \Lambda)_{\beta}$ to be the quotient of
$H^{1}(R, \Lambda)$ by $H^{1}(R, \Lambda)_{\alpha}$.
Define $H^{2}(R, \Lambda)_{\beta}$ to be the kernel of the map
from $H^{2}(R, \Lambda)$ to $H^{2}(K_{\alpha}^{h}, \Lambda)$.
Define $H^{2}(R, \Lambda)_{\alpha}$ to be the quotient of
$H^{2}(R, \Lambda)$ by $H^{2}(R, \Lambda)_{\beta}$.
We have a commutative diagram with exact rows
	\[
		\begin{CD}
				0
			@>>>
				H^{q}(R, \Lambda)_{\nu}
			@>>>
				H^{q}(R, \Lambda)
			@>>>
				H^{q}(R, \Lambda)_{\nu'}
			@>>>
				0
			\\ @. @VVV @VVV @VVV @. \\
				0
			@>>>
				H^{q}(K_{\nu}^{h}, \Lambda)
			@>>>
					H^{q}(K_{\alpha}^{h}, \Lambda)
				\oplus
					H^{q}(K_{\beta}^{h}, \Lambda)
			@>>>
				H^{q}(K_{\nu'}^{h}, \Lambda)
			@>>>
				0,
		\end{CD}
	\]
where $q = 2$ (resp.\ $q = 1$)
if $\nu = \beta$ (resp.\ $\nu = \alpha$).

For $q = 1$ or $2$ and $\nu =\alpha$ or $\beta$,
the group $H^{q}(R, \Lambda)_{\nu}$ agrees with the groups $\Delta_{\nu}^{q}$
in the notation of \cite[(4.9)]{Sai86}
by Propositions \ref{0064}
and \ref{0066}.
For $m \ge 0$, define
	\begin{gather*}
				U^{m} H^{q}(R, \Lambda)_{\nu}
			=
				H^{q}(R, \Lambda)_{\nu} \cap U^{m} H^{q}(K_{\nu}^{h}, \Lambda),
		\\
				\gr^{m} H^{q}(R, \Lambda)_{\nu}
			=
				U^{m} H^{q}(R, \Lambda)_{\nu} / U^{m + 1} H^{q}(R, \Lambda)_{\nu}.
	\end{gather*}
We have an injection $\gr^{m} H^{q}(R, \Lambda)_{\nu} \into \gr^{m} H^{q}(K_{\nu}^{h}, \Lambda)$.

\begin{Prop}[{\cite[(4.11)]{Sai86}}] \label{0075}
	In Propositions \ref{0043}
	and \ref{0044},
	take $K = K_{\alpha}^{h}$, $\varpi = \varpi_{\alpha}$,
	$b_{m} = \varpi_{\alpha}^{m / p}$ and $c_{m} = \varpi_{\alpha}^{m} / \varpi_{\beta}$.
	Then, under the isomorphisms in
	Propositions \ref{0043}
	and \ref{0044},
	the image of $\gr^{m} H^{q}(R, \Lambda)_{\alpha} \into \gr^{m} H^{q}(K_{\alpha}^{h}, \Lambda)$
	is given as follows:
		\[
				\gr^{m} H^{2}(R, \Lambda)_{\alpha}
			\cong
				\begin{cases}
							k_{\alpha}^{\times} / k_{\alpha}^{\times p}
					&	\text{if }
							m = 0,
					\\
							\Bar{A}_{\alpha} / \Bar{A}_{\alpha}^{p}
					&	\text{if }
							0 < m < f_{\alpha},\ p \mid m,
					\\
							\Omega_{\Bar{A}_{\alpha}}^{1}
					&	\text{if }
							0 < m < f_{\alpha},\ p \nmid m,
					\\
							\xi(F) \oplus \Tilde{A}_{\alpha}(f_{\beta})
					&	\text{if }
							m = f_{\alpha},
				\end{cases}
		\]
		\[
				\gr^{m} H^{1}(R, \Lambda)_{\alpha}
			\cong
				\begin{cases}
							0 \oplus \Tilde{U}_{k_{\alpha}}^{(f_{\beta} + 1)}
					&	\text{if }
							m = 0,
					\\
							\Bar{A}_{\alpha} / \Bar{A}_{\alpha}^{p}
					&	\text{if }
							0 < m < f_{\alpha},\ p \mid m,
					\\
							\Bar{A}_{\alpha}
					&	\text{if }
							0 < m < f_{\alpha},\ p \nmid m,
					\\
							0
					&	\text{if }
							m = f_{\alpha},
				\end{cases}
		\]
\end{Prop}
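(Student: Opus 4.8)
The plan is to reduce to Saito's computation \cite[(4.11)]{Sai86} via the identification recorded just above the statement: by Propositions \ref{0064} and \ref{0066}, for $q=1,2$ the group $H^{q}(R,\Lambda)_{\nu}$ coincides with Saito's group $\Delta^{q}_{\nu}$ of \cite[(4.9)]{Sai86} (it is the image in $H^{q}(K^{h}_{\nu},\Lambda)$ of the subgroup of $H^{q}(K,\Lambda)$ generated by Steinberg symbols of units of $R$, and the map $H^{q}(R,\Lambda)\to H^{q}(K^{h}_{\alpha},\Lambda)\oplus H^{q}(K^{h}_{\beta},\Lambda)$ is injective by Proposition \ref{0067}). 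Under this identification the assertion is exactly \cite[(4.11)]{Sai86}, provided one checks that the normalizations prescribed here ($\varpi=\varpi_{\alpha}$, $b_{m}=\varpi_{\alpha}^{m/p}$, $c_{m}=\varpi_{\alpha}^{m}/\varpi_{\beta}$) are admissible for Kato's symbol formulas of Propositions \ref{0043} and \ref{0044} applied to $K=K^{h}_{\alpha}$: the first two are the evident elements of $\ideal{p}_{\alpha}$-adic valuations $m/p$ and $m$, and $c_{m}=\varpi_{\alpha}^{m}/\varpi_{\beta}$ (which also has $\ideal{p}_{\alpha}$-adic valuation $m$ since $\varpi_{\beta}\notin\ideal{p}_{\alpha}$) is the choice for which the relevant generating symbols become integral, as explained below. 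No ideas beyond \cite{Sai86} are required; the remaining paragraphs sketch the mechanism.

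First I would record the structure of $R^{\times}$ and of $\zeta_{p}-1$. As $A$ is regular local of dimension two it is a UFD, so $\ideal{p}_{\nu}=(\varpi_{\nu})$ is principal; since $p=\varpi_{\alpha}^{e_{\alpha}}\varpi_{\beta}^{e_{\beta}}\cdot(\text{unit})$ we have $R=A[1/p]=A[\varpi_{\alpha}^{-1},\varpi_{\beta}^{-1}]$ and hence $R^{\times}=A^{\times}\cdot\varpi_{\alpha}^{\Z}\cdot\varpi_{\beta}^{\Z}$. Moreover $\Q_{p}(\zeta_{p})/\Q_{p}$ is ramified of degree $p-1$, so $(p-1)\mid e_{\nu}$ and $\zeta_{p}-1=\varpi_{\alpha}^{e_{\alpha}/(p-1)}\varpi_{\beta}^{e_{\beta}/(p-1)}\cdot(\text{unit})$ in $A$, whence $(\zeta_{p}-1)^{p}=\varpi_{\alpha}^{f_{\alpha}}\varpi_{\beta}^{f_{\beta}}\cdot(\text{unit})$. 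For the inclusion ``$\supseteq$'' in each row of the two tables one exhibits explicit symbols of units of $R$ and reads off their images via Propositions \ref{0043} and \ref{0044}. For $0<m<f_{\alpha}$, $p\nmid m$, use $\{1+\tilde{x}\,\varpi_{\alpha}^{m}/\varpi_{\beta},\,\tilde{y}\}=\{\varpi_{\beta}+\tilde{x}\varpi_{\alpha}^{m},\,\tilde{y}\}-\{\varpi_{\beta},\tilde{y}\}$ with $\tilde{x}\in A$ lifting an element of $\Bar{A}_{\alpha}$ and $\tilde{y}\in A^{\times}$ (and also $\tilde y=\varpi_{\beta}$), where $\varpi_{\beta}+\tilde{x}\varpi_{\alpha}^{m}\in R^{\times}$ because its zero divisor avoids $\ideal{p}_{\alpha}$ and, for a suitable lift, every height-one prime outside $\{\ideal{p}_{\alpha},\ideal{p}_{\beta}\}$; this yields exactly $\Omega^{1}_{\Bar{A}_{\alpha}}\subset\Omega^{1}_{k_{\alpha}}$, and similarly for $p\mid m$ and for $m=0$ (where all of $k_{\alpha}^{\times}$ is hit since $A^{\times}\cdot\varpi_{\beta}^{\Z}$ surjects onto $k_{\alpha}^{\times}$). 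In the deep level $m=f_{\alpha}$ the essential point is the trade afforded by $(\zeta_{p}-1)^{p}=\varpi_{\alpha}^{f_{\alpha}}\varpi_{\beta}^{f_{\beta}}u$: the symbol $\{1+b\,\varpi_{\alpha}^{f_{\alpha}}u,\,\varpi_{\alpha}\}$ with $b\in A$ is integral and equals $\{1+b\,\varpi_{\beta}^{-f_{\beta}}(\zeta_{p}-1)^{p},\,\varpi_{\alpha}\}$, so by Proposition \ref{0043}\,\eqref{0178} its $\xi(k_{\alpha})$-component is the class of $\Bar{b}\,\Bar{\varpi}_{\beta}^{-f_{\beta}}\in\varpi_{\beta}^{-f_{\beta}}\Bar{A}_{\alpha}$, i.e. precisely $\Tilde{A}_{\alpha}(f_{\beta})$; dividing $b$ by $\varpi_{\beta}$ destroys integrality, so one cannot push further. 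The analogous trade with $1+(\text{unit})\varpi_{\beta}^{j}$ produces $\Tilde{U}^{(f_{\beta}+1)}_{k_{\alpha}}$ and kills the $\Lambda$-part of $\gr^{0}H^{1}$.

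The main obstacle is the reverse inclusion ``$\subseteq$'', that the image is no larger than claimed. Here the injection $H^{q}(R,\Lambda)\hookrightarrow H^{q}(K^{h}_{\alpha},\Lambda)\oplus H^{q}(K^{h}_{\beta},\Lambda)$ does not suffice by itself: a pair of local classes comes from $R$ only if it is coherent at the crossing point $\ideal{m}$, where $\ideal{p}_{\alpha}$ and $\ideal{p}_{\beta}$ meet transversally ($\ideal{p}_{\alpha}+\ideal{p}_{\beta}=\ideal{m}$). This coherence is a two-dimensional reciprocity relation at the two-dimensional local field obtained by completing $K^{h}_{\alpha}$ (equivalently $K^{h}_{\beta}$) at $\ideal{m}$; substituting the explicit descriptions of Propositions \ref{0043} and \ref{0044} into it forces, for a class lying in $H^{q}(R,\Lambda)_{\alpha}$ (hence dying in $H^{q}(K^{h}_{\beta},\Lambda)$), that its $\alpha$-component has trivial $\varpi_{\alpha}$-part in $\gr^{0}H^{1}$ and falls into the shifted modules $\Tilde{A}_{\alpha}(\cdot)$, $\Tilde{U}^{(\cdot)}_{k_{\alpha}}$ in the wild levels, with the shift equal to $f_{\beta}$. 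Getting this shift exactly right in both directions, and organizing the bookkeeping across all values of $m$, is the delicate part, and at that point I would simply reproduce the computation of \cite[(4.11)]{Sai86}, which involves only abelian groups and is unaffected by the choice of base topology in this paper.
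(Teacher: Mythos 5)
Your proposal matches the paper exactly: the paper gives no proof of this proposition, stating it as a direct citation of Saito's computation \cite[(4.11)]{Sai86} after recording (in the preceding paragraph, via Propositions \ref{0064}, \ref{0066} and \ref{0067}) that $H^{q}(R,\Lambda)_{\nu}$ coincides with Saito's $\Delta_{\nu}^{q}$ from \cite[(4.9)]{Sai86}. Your additional sketch of the symbol computations and the normalization of $b_{m}$, $c_{m}$ is consistent with that reduction and goes beyond what the paper records.
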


\begin{Prop}[{\cite[(4.11)]{Sai86}}] \label{0076}
	In Propositions \ref{0043}
	and \ref{0044},
	take $K = K_{\beta}^{h}$, $\varpi = \varpi_{\beta}$,
	$b_{m} = \varpi_{\beta}^{m / p} \varpi_{\alpha}^{f_{\alpha} / p}$ and
	$c_{m} = \varpi_{\beta}^{m} \varpi_{\alpha}^{f_{\alpha}}$.
	Then, under the isomorphisms in
	Propositions \ref{0043}
	and \ref{0044},
	the image of $\gr^{m} H^{q}(R, \Lambda)_{\beta} \into \gr^{m} H^{q}(K_{\beta}^{h}, \Lambda)$
	is given as follows:
		\[
				\gr^{m} H^{2}(R, \Lambda)_{\beta}
			\cong
				\begin{cases}
							\Tilde{U}_{k_{\beta}}^{(f_{\alpha} + 1)}
					&	\text{if }
							m = 0,
					\\
							\Bar{A}_{\beta} / \Bar{A}_{\beta}^{p}
					&	\text{if }
							0 < m < f_{\beta},\ p \mid m,
					\\
							\Omega_{\Bar{A}_{\beta}}^{1}
					&	\text{if }
							0 < m < f_{\beta},\ p \nmid m,
					\\
							0
					&	\text{if }
							m = f_{\beta},
				\end{cases}
		\]
		\[
				\gr^{m} H^{1}(R, \Lambda)_{\beta}
			\cong
				\begin{cases}
							\Lambda \oplus k_{\beta}^{\times} / k_{\beta}^{\times p}
					&	\text{if }
							m = 0,
					\\
							\Bar{A}_{\beta} / \Bar{A}_{\beta}^{p}
					&	\text{if }
							0 < m < f_{\beta},\ p \mid m,
					\\
							\Bar{A}_{\beta}
					&	\text{if }
							0 < m < f_{\beta},\ p \nmid m,
					\\
							\Tilde{A}_{\beta}(f_{\alpha})
					&	\text{if }
							m = f_{\beta}.
				\end{cases}
		\]
\end{Prop}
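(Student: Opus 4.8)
This is Saito's computation \cite[(4.11)]{Sai86}, and the plan is to deduce it in the present setup exactly as Proposition \ref{0075} is deduced, with the roles of $\alpha$ and $\beta$ and of the degrees $1$ and $2$ interchanged. The ingredients are: that $H^{2}(R, \Lambda)$ is generated by Steinberg symbols and $H^{1}(R, \Lambda) \cong R^{\times} / R^{\times p}$ (Propositions \ref{0064} and \ref{0066}); that $H^{q}(R, \Lambda)_{\beta}$ coincides with Saito's $\Delta^{q}_{\beta}$ from \cite[(4.9)]{Sai86}, by those same two propositions; that, by construction, $H^{1}(R, \Lambda)_{\beta}$ is a quotient of $H^{1}(R, \Lambda)$ mapping injectively into $H^{1}(K_{\beta}^{h}, \Lambda)$, while $H^{2}(R, \Lambda)_{\beta}$ is the kernel of $H^{2}(R, \Lambda) \to H^{2}(K_{\alpha}^{h}, \Lambda)$ and hence, together with the injectivity of Proposition \ref{0067} (using $P \setminus P' = \{\ideal{p}_{\alpha}, \ideal{p}_{\beta}\}$), also embeds into $H^{2}(K_{\beta}^{h}, \Lambda)$; and finally Kato's structure results for $K_{\beta}^{h}$, namely Propositions \ref{0043} and \ref{0044} with the normalizations $\varpi = \varpi_{\beta}$, $b_{m} = \varpi_{\beta}^{m / p} \varpi_{\alpha}^{f_{\alpha} / p}$, $c_{m} = \varpi_{\beta}^{m} \varpi_{\alpha}^{f_{\alpha}}$ (these particular choices being what makes the symbol representatives below defined over $R$).

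With this in hand, each $\gr^{m} H^{q}(R, \Lambda)_{\beta}$ embeds into $\gr^{m} H^{q}(K_{\beta}^{h}, \Lambda)$, and the content of the proposition is the identification of the image. I would run the induction on $m$ just as in the proof of Proposition \ref{0069}: for $0 < m < f_{\beta}$ the symbol representatives appearing in Propositions \ref{0043} and \ref{0044} (with the above $b_{m}, c_{m}$) are visibly defined over $R$, and one checks that the resulting classes exhaust $\gr^{m} H^{q}(K_{\beta}^{h}, \Lambda)$ — for $q = 2$ one must additionally see that the kernel-at-$\alpha$ condition imposes nothing in this range, and for $q = 1$ that the second symbol entry may be taken in $\Bar{A}_{\beta}^{\times}$ — giving the stated groups $\Bar{A}_{\beta} / \Bar{A}_{\beta}^{p}$ and $\Omega_{\Bar{A}_{\beta}}^{1}$ (resp.\ $\Bar{A}_{\beta}$).

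The endpoint cases $m = 0$ and $m = f_{\beta}$ are the substantive ones: there the image is a proper subgroup of the full graded piece, and matching it with $\Tilde{U}_{k_{\beta}}^{(f_{\alpha} + 1)}$ and $\Tilde{A}_{\beta}(f_{\alpha})$ requires tracking simultaneously the $\ideal{p}_{\alpha}$- and $\ideal{p}_{\beta}$-adic valuations of the symbol representatives, the shift by $f_{\alpha}$ coming from $v_{\ideal{p}_{\alpha}}(p) = e_{\alpha}$ and the relation $\ideal{p}_{\alpha} + \ideal{p}_{\beta} = \ideal{m}$, together with — for $q = 2$ — the vanishing at $\ideal{p}_{\alpha}$. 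This endpoint bookkeeping, which is exactly what is carried out in \cite[(4.9)--(4.11)]{Sai86}, is the one real obstacle; everything else is a direct combination of Propositions \ref{0064}, \ref{0066}, \ref{0067}, \ref{0043} and \ref{0044}.
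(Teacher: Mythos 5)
Your outline is consistent with the paper, which gives no proof of this proposition at all: it is quoted verbatim from Saito \cite[(4.11)]{Sai86}, the only glue being the identification of $H^{q}(R,\Lambda)_{\nu}$ with Saito's $\Delta^{q}_{\nu}$ via Propositions \ref{0064} and \ref{0066} (stated in the paragraph preceding Proposition \ref{0075}), which you also supply. Since you likewise defer the substantive endpoint computations to \cite[(4.9)--(4.11)]{Sai86}, your proposal takes essentially the same route as the paper.
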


\begin{Prop} \label{0504}
	Consider the maps
		\[
				H^{2}(R, \Lambda)
			\to
				\bigoplus_{\nu = \alpha, \beta}
					H^{2}(K_{\nu}^{h}, \Lambda)
			\to
				\Lambda^{2},
		\]
	where the first map is the natural one and the second \eqref{0437}.
	Their composite surjects onto the subgroup
	$(\Lambda^{2})_{0} \subset \Lambda^{2}$ of elements with zero sum.
\end{Prop}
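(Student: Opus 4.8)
The plan is parallel to the good-case analogue Proposition~\ref{0503}, except that now the two local residues at $\ideal{m}$ will cancel each other rather than both vanish, and this cancellation is exactly what confines the image to the zero-sum subgroup. First I would invoke the proof of Proposition~\ref{0066}: under assumption \eqref{0062} the group $H^{2}(R, \Lambda) = \Im(\Bar{K}_{2}(R) \to \Bar{K}_{2}(K))$ is generated by the Steinberg symbols
\[
	\{u, v\}, \quad
	\{u, \varpi_{\alpha}\}, \quad
	\{u, \varpi_{\beta}\}, \quad
	\{\varpi_{\alpha}, \varpi_{\beta}\}
	\qquad (u, v \in A^{\times}),
\]
where $\varpi_{\nu}$ is a generator of $\ideal{p}_{\nu}$ (these lie in $R^{\times}$ since $\ideal{p}_{\alpha}, \ideal{p}_{\beta}$ divide $(p)$; the last three families are the generators of the subgroup exhibited there, and the first accounts for $\Im(K_{2}(A) \to \Bar{K}_{2}(K))$, which is generated by symbols in $A^{\times}$). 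So it is enough to evaluate the composite $\Phi = (\Phi_{\alpha}, \Phi_{\beta})$ of the statement on each of these four families.

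The computation rests on Kato's description of the boundary: for $\nu = \alpha, \beta$ the residue field $\kappa(\ideal{p}_{\nu})$ is a henselian discrete valuation field with residue field $F_{\ideal{p}_{\nu}} = F$, and the composite \eqref{0437}
\[
	H^{2}(K_{\nu}^{h}, \Lambda(2))
	\to
	H^{1}(\kappa(\ideal{p}_{\nu}), \Lambda(1))
	\to
	H^{0}(F, \Lambda) = \Lambda
\]
sends a symbol $\{a, b\}$ to $v_{\nu}(\partial_{\nu}\{a, b\})$, where $\partial_{\nu}$ is the tame symbol $\partial_{\nu}\{a, b\} = (-1)^{v_{\nu}(a) v_{\nu}(b)}\, \overline{a^{v_{\nu}(b)} b^{-v_{\nu}(a)}}$ in $\kappa(\ideal{p}_{\nu})^{\times}/\kappa(\ideal{p}_{\nu})^{\times p}$ and $v_{\nu}$ denotes both the valuation of $K_{\nu}^{h}$ and the induced valuation on $\kappa(\ideal{p}_{\nu})$. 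Now $\varpi_{\alpha}$ is a unit in $A^{h}_{\ideal{p}_{\beta}}$ and $\varpi_{\beta}$ a unit in $A^{h}_{\ideal{p}_{\alpha}}$, while $u \in A^{\times}$ reduces to a unit everywhere; a direct check of tame symbols then gives $\Phi(\{u, v\}) = \Phi(\{u, \varpi_{\alpha}\}) = \Phi(\{u, \varpi_{\beta}\}) = (0, 0)$. For the remaining symbol, the hypothesis $\ideal{p}_{\alpha} + \ideal{p}_{\beta} = \ideal{m}$ forces $\varpi_{\beta}$ to reduce to a uniformizer of $\kappa(\ideal{p}_{\alpha})$ and $\varpi_{\alpha}$ to a uniformizer of $\kappa(\ideal{p}_{\beta})$, so
\[
	\Phi_{\alpha}(\{\varpi_{\alpha}, \varpi_{\beta}\}) = v_{\alpha}(\overline{\varpi_{\beta}}^{-1}) = -1,
	\qquad
	\Phi_{\beta}(\{\varpi_{\alpha}, \varpi_{\beta}\}) = v_{\beta}(\overline{\varpi_{\alpha}}) = +1
\]
in $\Lambda$.

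Consequently the image of $\Phi$ is the subgroup of $\Lambda^{2}$ generated by $(0,0)$ and $(-1, 1)$, which is exactly the zero-sum subgroup $(\Lambda^{2})_{0}$ (isomorphic to $\Lambda$, and generated by any $(a, -a)$ with $a$ a generator of $\Lambda$); in particular $\Phi$ surjects onto $(\Lambda^{2})_{0}$, and incidentally also lands inside it. I do not expect a genuine obstacle here: the inclusion $\Phi(H^{2}(R, \Lambda)) \subseteq (\Lambda^{2})_{0}$ is built into the asymmetry $a^{v(b)}/b^{v(a)}$ of the tame symbol — a local shadow of the two-dimensional reciprocity law — so no global input is needed beyond the symbolic description of $H^{2}(R, \Lambda)$ from Propositions~\ref{0064} and \ref{0066}. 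The only point demanding care is the bookkeeping of signs relating \eqref{0437}, the Galois symbol, and the tame symbol; whatever convention one fixes, the two components of $\Phi$ on $\{\varpi_{\alpha}, \varpi_{\beta}\}$ emerge as $\pm 1$ and $\mp 1$ with one of them a generator of $\Lambda$, so the conclusion is insensitive to it.
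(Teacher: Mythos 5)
Your proof is correct, and it reaches the conclusion by a somewhat different route from the paper's. The paper's own proof is very short: it first asserts that the image is a rank one subgroup of $\Lambda^{2}$, citing the filtration computations of Propositions \ref{0075} and \ref{0076} together with Proposition \ref{0043} \eqref{0175} (roughly: $\Phi$ kills $H^{2}(R,\Lambda)_{\beta}$, whose $\gr^{0}$ consists of units, and factors through the valuation on $\gr^{0} H^{2}(R,\Lambda)_{\alpha} \cong k_{\alpha}^{\times}/k_{\alpha}^{\times p}$), and then exhibits $\{\varpi_{\alpha},\varpi_{\beta}\} \mapsto (-1,1)$. You instead bound the image from above by invoking the explicit symbol generators of $H^{2}(R,\Lambda)$ extracted from Propositions \ref{0064} and \ref{0066} and evaluating the composite of the tame symbol and the valuation on each of the four families $\{u,v\}$, $\{u,\varpi_{\alpha}\}$, $\{u,\varpi_{\beta}\}$, $\{\varpi_{\alpha},\varpi_{\beta}\}$. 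The two arguments share their essential content --- the same distinguished element $\{\varpi_{\alpha},\varpi_{\beta}\}$ and the same tame-symbol computation giving $(-1,1)$, with the zero-sum constraint coming from the antisymmetry $a^{v(b)}b^{-v(a)}$ --- but yours is more elementary and self-contained in that it avoids the finer graded-piece descriptions of Propositions \ref{0075} and \ref{0076}, at the cost of having to verify the generation statement and check every generator; the paper's version leverages structural results it needs anyway for the surrounding duality arguments. Your hedging about sign conventions is appropriate and does not affect the conclusion, since the image is generated by $(\pm 1, \mp 1)$ under any consistent normalization of the boundary maps in \eqref{0437}.
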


\begin{proof}
	By Propositions \ref{0075}, \ref{0076} and \ref{0043} \eqref{0175},
	the image is a rank one subgroup.
	Hence it is enough to give an element of $H^{2}(R, \Lambda)$
	that maps to $(- 1, 1) \in \Lambda^{2}$.
	Such an element is given by the symbol $\{\varpi_{\alpha}, \varpi_{\beta}\}$.
\end{proof}

\begin{Prop} \label{0077}
	For $\nu = \alpha$ and $\beta$,
	the map $H^{3}(R, \Lambda) \to H^{3}(K_{\nu}^{h}, \Lambda)$
	is an isomorphism.
\end{Prop}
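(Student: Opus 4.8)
The plan is to reduce to the algebraically closed case, as in the proof of Proposition \ref{0072}, and then compare filtrations. Put $G_{F} = \Gal(\closure{F}/F)$; since $F$ has characteristic $p$, $G_{F}$ has $p$-cohomological dimension $\le 1$, so $H^{i}(G_{F}, M) = 0$ for every $p$-torsion $G_{F}$-module $M$ and every $i \ge 2$. Let $A_{\closure{F}}$ be the henselian unramified lift of $\closure{F}$ to $A$, with fraction field $K_{\closure{F}}$, set $R_{\closure{F}} = R \tensor_{A} A_{\closure{F}}$, and let $K_{\nu, \closure{F}}^{h}$ denote the corresponding local fields at the primes above $\ideal{p}_{\nu}$ (a finite product, handled one factor at a time). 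By Proposition \ref{0063} we have $H^{\ge 3}(R_{\closure{F}}, \Lambda) = 0$ and $H^{\ge 4}(R, \Lambda) = 0$, and by Proposition \ref{0041} we have $H^{3}(K_{\nu, \closure{F}}^{h}, \Lambda) \cong \xi(\closure{F}) = 0$ and $H^{\ge 4}(K_{\nu}^{h}, \Lambda) = 0$. Hence the Hochschild--Serre spectral sequence degenerates and yields natural isomorphisms
\[
 H^{3}(R, \Lambda) \cong H^{1}\bigl(G_{F}, H^{2}(R_{\closure{F}}, \Lambda)\bigr), \qquad H^{3}(K_{\nu}^{h}, \Lambda) \cong H^{1}\bigl(G_{F}, H^{2}(K_{\nu, \closure{F}}^{h}, \Lambda)\bigr),
\]
compatibly with the restriction maps. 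So it suffices to show $H^{2}(R_{\closure{F}}, \Lambda) \to H^{2}(K_{\nu, \closure{F}}^{h}, \Lambda)$ induces an isomorphism on $H^{1}(G_{F}, \var)$. Because the hypotheses on $A$ are symmetric in $\ideal{p}_{\alpha}$ and $\ideal{p}_{\beta}$, while the definitions of $H^{q}(R, \Lambda)_{\nu}$ only depend on this labelling, it is enough to treat $\nu = \alpha$.

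Next I would exploit the $U^{\bullet}$-filtrations. Base-changing Propositions \ref{0075}, \ref{0076} and \ref{0043} to $\closure{F}$, every graded piece $\gr^{m} H^{2}(K_{\alpha, \closure{F}}^{h}, \Lambda)$ and $\gr^{m} H^{2}(R_{\closure{F}}, \Lambda)_{\alpha}$ with $m \ge 1$, as well as all graded pieces of $H^{2}(R_{\closure{F}}, \Lambda)_{\beta}$ (including $\gr^{0}$, which by Proposition \ref{0076} is a quotient of a group of principal units of $k_{\beta, \closure{F}}$ of sufficiently high level), is a successive extension of commutative affine $\closure{F}$-group schemes that are ind- or pro-$\closure{F}$-vector groups, or $\wp$-cokernels of such. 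For any such module $V$ one has $H^{1}(G_{F}, V) = 0$ by additive Hilbert~90 (using $\mathrm{cd}_{p}(G_{F}) \le 1$, and noting that $\varprojlim^{1}$ of a tower of $\closure{F}$-vector spaces with surjective transition maps vanishes, which handles the pro/ind pieces, while the $\wp$-cokernel case follows from $H^{2}(G_{F}, \Lambda) = 0$), and $H^{2}(G_{F}, V) = 0$ by the dimension bound. Running the long exact sequences of the finite filtration repeatedly then gives $H^{1}(G_{F}, H^{2}(R_{\closure{F}}, \Lambda)_{\beta}) = H^{2}(G_{F}, H^{2}(R_{\closure{F}}, \Lambda)_{\beta}) = 0$, hence $H^{1}(G_{F}, H^{2}(R_{\closure{F}}, \Lambda)) \isomto H^{1}(G_{F}, H^{2}(R_{\closure{F}}, \Lambda)_{\alpha})$; and similarly $H^{1}(G_{F}, H^{2}(R_{\closure{F}}, \Lambda)_{\alpha}) \isomto H^{1}(G_{F}, \gr^{0} H^{2}(R_{\closure{F}}, \Lambda)_{\alpha})$ and $H^{1}(G_{F}, H^{2}(K_{\alpha, \closure{F}}^{h}, \Lambda)) \isomto H^{1}(G_{F}, \gr^{0} H^{2}(K_{\alpha, \closure{F}}^{h}, \Lambda))$.

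Finally I would assemble these. By definition the map $H^{2}(R, \Lambda) \to H^{2}(K_{\alpha}^{h}, \Lambda)$ kills $H^{2}(R, \Lambda)_{\beta}$, factors as $H^{2}(R, \Lambda) \onto H^{2}(R, \Lambda)_{\alpha} \into H^{2}(K_{\alpha}^{h}, \Lambda)$, and is strictly compatible with the $U^{\bullet}$-filtrations; moreover on $\gr^{0}$ it is an isomorphism $k_{\alpha, \closure{F}}^{\times}/(k_{\alpha, \closure{F}}^{\times})^{p} \isomto k_{\alpha, \closure{F}}^{\times}/(k_{\alpha, \closure{F}}^{\times})^{p}$, since Proposition \ref{0075} identifies the image of $\gr^{0} H^{2}(R, \Lambda)_{\alpha}$ in $\gr^{0} H^{2}(K_{\alpha}^{h}, \Lambda)$ with all of $k_{\alpha}^{\times}/k_{\alpha}^{\times p}$, which is $\gr^{0} H^{2}(K_{\alpha}^{h}, \Lambda)$ by Proposition \ref{0043}\eqref{0175}. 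Combining this with the two chains of isomorphisms above shows that $H^{2}(R_{\closure{F}}, \Lambda) \to H^{2}(K_{\alpha, \closure{F}}^{h}, \Lambda)$ induces an isomorphism on $H^{1}(G_{F}, \var)$, whence $H^{3}(R, \Lambda) \isomto H^{3}(K_{\alpha}^{h}, \Lambda)$; the case $\nu = \beta$ follows by the symmetry noted above. In particular, together with $H^{4}(R, \Lambda) = 0 = H^{4}(K_{\nu}^{h}, \Lambda)$, this also settles the injectivity of $H^{q}(R, \Lambda) \to \bigoplus_{\nu} H^{q}(K_{\nu}^{h}, \Lambda)$ for $q \ge 3$ promised after Proposition \ref{0067}. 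The main obstacle is the bookkeeping in the middle step: one must verify that every graded piece of both sides, apart from the single distinguished $\Lambda$-summand inside $\gr^{0}$, is $G_{F}$-acyclic in positive degrees over $\closure{F}$, which requires reading off the explicit descriptions in Propositions \ref{0075}, \ref{0076}, \ref{0043} and taking care of the infinite-dimensional pieces ($\Omega^{1}$, $k/k^{p}$, the unit-group filtrations).
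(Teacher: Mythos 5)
Your proposal is correct and follows essentially the same route as the paper: reduce via Hochschild--Serre (using $\mathrm{cd}_p \le 1$ and the vanishing of $H^{\ge 3}$ over $\closure{F}$) to comparing $H^{1}(G_{F},\var)$ of $H^{2}$ over $\closure{F}$, then use the filtrations of Propositions \ref{0075}, \ref{0076} and \ref{0043} to show every graded piece except the $\gr^{0}$ part of the $\alpha$-component is $G_{F}$-acyclic. The only cosmetic difference is at the end: the paper composes further with the valuation map at $\ideal{p}_{\alpha}$ to identify both sides with $H^{1}(G_{F},\Lambda) \cong F/\wp F$, whereas you stop at the (equally sufficient) observation that the map on $\gr^{0}$ is already an isomorphism of $k_{\alpha,\closure{F}}^{\times}/(k_{\alpha,\closure{F}}^{\times})^{p}$.
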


\begin{proof}
	It is enough to show this for $\nu = \alpha$.
	The map can be written as
		\[
				H^{1} \bigl(
					\Gal(\closure{F} / F),
					H^{2}(R_{\closure{F}}, \Lambda)
				\bigr)
			\to
				H^{1} \bigl(
					\Gal(\closure{F} / F),
					H^{2}((K_{\alpha}^{h})_{\closure{F}}, \Lambda)
				\bigr).
		\]
	By Propositions \ref{0075}
	and \ref{0076},
	the group $H^{2}(R_{\closure{F}}, \Lambda)$ has a filtration
	whose graded pieces are isomorphic to either
		\begin{equation} \label{0478}
			\Bar{A}_{\nu}^{\ur} / \Bar{A}_{\nu}^{\ur p},\;
			\Omega_{\Bar{A}_{\nu}^{\ur}}^{1},\;
			\Tilde{A}_{\nu}^{\ur}(m),\;
			\Tilde{U}_{k_{\nu}}^{\ur (m)},\;
			k_{\nu}^{\ur \times} / k_{\nu}^{\ur \times p}
		\end{equation}
	for some $\nu$ and $m$, where these are defined as the groups
		\[
			\Bar{A}_{\nu} / \Bar{A}_{\nu}^{p},\;
			\Omega_{\Bar{A}_{\nu}}^{1},\;
			\Tilde{A}_{\nu}(m),\;
			\Tilde{U}_{k_{\nu}}^{(m)},\;
			k_{\nu}^{\times} / k_{\nu}^{\times p}
		\]
	with $A$ replaced by $A_{\closure{F}}$.
	Some classical calculations show that
	$H^{1}(\Gal(\closure{F} / F), \var)$ of the first four groups in \eqref{0478} is zero.
	The map
		\[
				H^{2}(R_{\closure{F}}, \Lambda)
			\onto
					k_{\alpha}^{\ur \times}
				/
					(k_{\alpha}^{\ur \times})^{p}
		\]
	is given by the tame symbol at $\ideal{p}_{\alpha}$.
	This, with the valuation map at $\ideal{p}_{\alpha}$, induces an isomorphism
		\[
				H^{1} \bigl(
					\Gal(\closure{F} / F),
					H^{2}(R_{\closure{F}}, \Lambda)
				\bigr)
			\isomto
				H^{1} \bigl(
					\Gal(\closure{F} / F),
					\Lambda
				\bigr)
			\cong
				F / \wp F.
		\]
	Similarly, we have
		\[
				H^{1} \bigl(
					\Gal(\closure{F} / F),
					H^{2}((K_{\alpha}^{h})_{\closure{F}}, \Lambda)
				\bigr)
			\isomto
				F / \wp F.
		\]
	As these isomorphisms are compatible,
	the result follows.
\end{proof}

In particular, the map
$H^{q}(R, \Lambda) \to \bigoplus_{\nu = \alpha, \beta} H^{q}(K_{\nu}^{h}, \Lambda)$
is injective for all $q$.

Let $\Hat{A}$ be the completion of $A$ and set $\Hat{R} = R \tensor_{A} \Hat{A}$.
Let $\Hat{K}_{\nu}$ be the complete local field of $\Hat{A}$ at $\ideal{p}_{\nu} \Hat{A}$.

\begin{Prop} \label{0078}
	For any $q$, the natural map
		\[
				\frac{
					\bigoplus_{\nu = \alpha, \beta}
						H^{q}(K_{\nu}^{h}, \Lambda)
				}{
					H^{q}(R, \Lambda)
				}
			\to
				\frac{
					\bigoplus_{\nu = \alpha, \beta}
						H^{q}(\Hat{K}_{\nu}, \Lambda)
				}{
					H^{q}(\Hat{R}, \Lambda)
				}
		\]
	is an isomorphism.
\end{Prop}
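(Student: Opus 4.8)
The plan is to run the argument of Proposition \ref{0073} in the present ``bad'' case, treating the cohomological degree $q$ separately. First note that $\Hat{A}$ again satisfies all the hypotheses at the beginning of Section \ref{0060}: it is a complete (hence excellent, henselian) regular two-dimensional local ring with $\operatorname{char} \Hat{K} = 0$, it contains $\zeta_{p}$, and $(p)$ still factors as $\ideal{p}_{\alpha} \Hat{A} \cdot \ideal{p}_{\beta} \Hat{A}$ with $\ideal{p}_{\alpha} \Hat{A} + \ideal{p}_{\beta} \Hat{A} = \Hat{\ideal{m}}$ and $\Hat{A} / \ideal{p}_{\nu} \Hat{A}$ regular; so Propositions \ref{0075}, \ref{0076} and \ref{0077} are available for $\Hat{A}$ as well, with residue data $\Bar{A}_{\nu}$, $k_{\nu}$ replaced by $\Hat{\Bar{A}}_{\nu}$, $\Hat{k}_{\nu}$. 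For $q = 0$ both quotients are $\Lambda$ and the map is the identity. For $q \ge 4$ both $H^{q}(R, \Lambda)$ and $H^{q}(K_{\nu}^{h}, \Lambda)$ vanish by Propositions \ref{0063} and \ref{0041}, so both quotients are zero. For $q = 3$, Proposition \ref{0077} identifies $H^{3}(R, \Lambda)$ with $H^{3}(K_{\nu}^{h}, \Lambda)$ for each $\nu$, and by Proposition \ref{0041} together with the compatibility recalled after it each $H^{3}(K_{\nu}^{h}, \Lambda) \to H^{3}(\Hat{K}_{\nu}, \Lambda)$ is an isomorphism (both canonically $\xi_{1}(F)$, the residue field being $F$ in all cases); hence, by the commutative square, $H^{3}(R, \Lambda) \to H^{3}(\Hat{R}, \Lambda)$ is an isomorphism too, and a diagram chase yields the claim.

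The substantive case is $q = 1, 2$. On each local factor we have the Bloch--Kato filtration $U^{m} H^{q}(K_{\nu}^{h}, \Lambda)$, and by the very definition $U^{m} H^{q}(R, \Lambda)_{\nu} = H^{q}(R, \Lambda)_{\nu} \cap U^{m} H^{q}(K_{\nu}^{h}, \Lambda)$ the injection $\gr^{m} H^{q}(R, \Lambda)_{\nu} \into \gr^{m} H^{q}(K_{\nu}^{h}, \Lambda)$ holds, so the filtration by images of $U^{m} H^{q}(K_{\nu}^{h}, \Lambda)$ is a finite filtration on $\Coker\bigl(H^{q}(R, \Lambda)_{\nu} \to H^{q}(K_{\nu}^{h}, \Lambda)\bigr)$ with graded pieces
\[
	\gr^{m} H^{q}(K_{\nu}^{h}, \Lambda) \big/ \gr^{m} H^{q}(R, \Lambda)_{\nu}.
\]
These are computed from Propositions \ref{0043} and \ref{0044} (the ambient local field) and Propositions \ref{0075} and \ref{0076} (the subgroup): they are $0$ at $m = 0$ when $q = 2$, $\nu = \alpha$, and otherwise are quotients such as $(k_{\nu} / k_{\nu}^{p}) / (\Bar{A}_{\nu} / \Bar{A}_{\nu}^{p})$, $\Omega_{k_{\nu}}^{1} / \Omega_{\Bar{A}_{\nu}}^{1}$, $(k_{\nu} / \wp k_{\nu}) / \Tilde{A}_{\nu}(m)$, $(k_{\nu}^{\times} / k_{\nu}^{\times p}) / \Tilde{U}_{k_{\nu}}^{(m)}$ and finite direct sums thereof. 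Exactly as in the last paragraph of the proof of Proposition \ref{0073}, each such quotient is unchanged when $A$ is replaced by $\Hat{A}$: the underlying ``boundary part'' $k_{\nu} / \Bar{A}_{\nu} \cong \bigoplus_{j \ge 1} F \varpi_{\nu'}^{-j}$, and likewise its multiplicative and differential variants, depend only on $F$ and not on whether the discrete valuation ring $\Bar{A}_{\nu}$ is complete, since $F$ is perfect. Hence $\Coker\bigl(H^{q}(R, \Lambda)_{\nu} \to H^{q}(K_{\nu}^{h}, \Lambda)\bigr)$ together with its filtration is completion-invariant.

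Finally I would pass from the ``$\Delta$''-groups $H^{q}(R, \Lambda)_{\nu}$ to $H^{q}(R, \Lambda)$. Applying the snake lemma to the commutative diagram with exact rows of Section \ref{0074}, whose vertical maps are injective by Propositions \ref{0067} and \ref{0077}, presents $\Coker\bigl(H^{q}(R, \Lambda) \to \bigoplus_{\nu} H^{q}(K_{\nu}^{h}, \Lambda)\bigr)$ as an extension of $\Coker\bigl(H^{q}(R, \Lambda)_{\nu'} \to H^{q}(K_{\nu'}^{h}, \Lambda)\bigr)$ by $\Coker\bigl(H^{q}(R, \Lambda)_{\nu} \to H^{q}(K_{\nu}^{h}, \Lambda)\bigr)$, and all of this is functorial in the completion map $A \to \Hat{A}$. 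Combining with the filtrations above, $\Coker\bigl(H^{q}(R, \Lambda) \to \bigoplus_{\nu} H^{q}(K_{\nu}^{h}, \Lambda)\bigr)$ acquires a finite filtration, functorial in $A \mapsto \Hat{A}$, whose associated graded was just shown to be completion-invariant; since a filtered morphism of finitely filtered abelian groups that is an isomorphism on associated graded is itself an isomorphism, the map between the two quotients is an isomorphism. The real obstacle here is not conceptual but organizational: keeping the Bloch--Kato filtrations and the short exact sequences relating $H^{q}(R, \Lambda)_{\alpha}$, $H^{q}(R, \Lambda)$ and $H^{q}(R, \Lambda)_{\beta}$ strictly functorial along $A \to \Hat{A}$, so that invariance of the associated graded upgrades to invariance of the quotient itself --- precisely the step where the good case (Proposition \ref{0073}) is also delicate, with no new phenomenon beyond what Propositions \ref{0075} and \ref{0076} already encode.
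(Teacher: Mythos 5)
Your proposal is correct and follows essentially the same route as the paper: dispose of $q = 0, 3$ (and $q \ge 4$) via Propositions \ref{0041}, \ref{0063} and \ref{0077}, and for $q = 1, 2$ filter the cokernel by the images of the $U^{m}$'s, reducing to the completion-invariance of the graded quotients $\gr^{m} H^{q}(K_{\nu}^{h}, \Lambda) / \gr^{m} H^{q}(R, \Lambda)_{\nu}$ computed from Propositions \ref{0043}, \ref{0044}, \ref{0075} and \ref{0076}. The only difference is presentational: you make the snake-lemma assembly of the two $\nu$-components explicit, whereas the paper states directly that the filtration on the full quotient has exactly those graded pieces.
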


\begin{proof}
	This is obvious for $q = 0$.
	For $q = 3$, this follows from
	Propositions \ref{0041}
	and \ref{0077}.
	
	Let $q = 1$ or $2$.
	The image of $\bigoplus_{\nu = \alpha, \beta} U^{m} H^{q}(K_{\nu}^{h}, \Lambda)$
	in
		$
			(
				\bigoplus_{\nu = \alpha, \beta}
					H^{q}(K_{\nu}^{h}, \Lambda)
			) / H^{q}(R, \Lambda)
		$
	defines a finite filtration whose graded pieces are given by
	$\gr^{m} H^{q}(K_{\alpha}^{h}, \Lambda) / \gr^{m} H^{q}(R, \Lambda)_{\alpha}$
	and $\gr^{m} H^{q}(K_{\beta}^{h}, \Lambda) / \gr^{m} H^{q}(R, \Lambda)_{\beta}$.
	Hence it is enough to see that for $\nu = \alpha, \beta$ and $m \ge 1$, the groups
		\[
				k_{\nu} / \Bar{A}_{\nu},
			\quad
				k_{\nu} / (k_{\nu}^{p} + \Bar{A}_{\nu}),
			\quad
				\Omega_{k_{\nu}}^{1} / \Omega_{\Bar{A}_{\nu}}^{1},
			\quad
				\xi(k_{\nu}) / \Tilde{A}_{\nu}(m),
			\quad
				(k_{\nu}^{\times} / k_{\nu}^{\times p}) / \Tilde{U}_{k_{\nu}}^{(m)}
		\]
	depend only on the completion $\Hat{A}$.
	But this is obvious.
\end{proof}


\section{Tubular neighborhoods of affine curves}
\label{0217}

In this section,
we first give a duality for cohomology of smooth affine curves over $F$
with coefficients in vector bundles, $\Lambda$ and $\nu(1)$
(Section \ref{0333}).
This is more or less the classical coherent duality,
but we need to formulate it
in the style of fibered sites of Section \ref{0325}.
With this duality formulated in this style,
we give a duality for cohomology of $p$-adic tubular neighborhoods of smooth affine curves
for the rest of the section.
We build relative and fibered sites associated with tubular neighborhoods
and define nearby cycle functors in this setting
in Section \ref{0340}.
Then we introduce filtrations on nearby cycles by symbols
and prove a duality for the zero-th graded piece (over the relative \'etale site of the curve)
in Section \ref{0227}.
For the other graded pieces, in Section \ref{0250},
we work over the relative Nisnevich site of the curve to prove the duality.
In Section \ref{0482},
we combine these two pieces of the duality,
yielding the desired duality for cohomology of tubular neighborhoods.

\subsection{Relative sites for affine curves and duality}
\label{0333}

Let $B$ be a smooth geometrically connected $F$-algebra of dimension $1$
with function field $k$.
Set $V = \Spec B$.
Let $Y$ be the smooth compactification of $V$.
Set $T = Y \setminus V$.
For each $x \in T$,
let $\Hat{\Order}_{k_{x}}$ be the completed local ring of $Y$ at $x$,
with residue field $F_{x}$ and fraction field $\Hat{k}_{x}$.

For $F' \in F^{\perar}$, set
	\begin{gather*}
				\alg{B}(F')
			=
				B \tensor_{F} F',
		\\
				\Hat{\alg{O}}_{k_{x}}(F')
			=
				\Hat{\Order}_{k_{x}} \ctensor_{F} F',
		\\
				\Hat{\alg{k}}_{x}(F')
			=
				\Hat{\alg{O}}_{k_{x}}(F') \tensor_{\Hat{\Order}_{k_{x}}} \Hat{k}_{x},
	\end{gather*}
Note that $\Hat{\tensor}$ in the second line is over $F$ and not over $F_{x}$.
We have another functor
	\[
			F_{x}'
		\mapsto
			(\Hat{\Order}_{k_{x}} \ctensor_{F_{x}} F_{x}') \tensor_{\Hat{\Order}_{k_{x}}} \Hat{k}_{x}
	\]
on $F_{x}^{\perar}$,
which is the functor \eqref{0450} and
whose Weil restriction $\Weil_{F_{x} / F}$ to $F^{\perar}$ is the above $\Hat{\alg{k}}_{x}$.
The functors $\alg{B}$, $\Hat{\alg{O}}_{k_{x}}$ and $\Hat{\alg{k}}_{x}$ are $F^{\perar}$-algebras,
and we have morphisms $\alg{B} \to \Hat{\alg{k}}_{x} \gets \Hat{\alg{O}}_{k_{x}}$ of $F^{\perar}$-algebras.
Hence, for $\tau = \et$ or $\nis$, we have sites
$\Spec \alg{B}_{\tau}$ and $\Spec \Hat{\alg{k}}_{x, \tau}$ and morphisms of sites
	\[
			\pi_{\Hat{\alg{k}}_{x}, \tau}
		\colon
			\Spec \Hat{\alg{k}}_{x, \tau}
		\stackrel{\pi_{\Hat{\alg{k}}_{x} / \alg{B}, \tau}}{\to}
			\Spec \alg{B}_{\tau}
		\stackrel{\pi_{\alg{B}, \tau}}{\to}
			\Spec F^{\perar}_{\tau}.
	\]
We apply the constructions in Section \ref{0325} to
the morphisms
	\[
			\bigsqcup_{x \in T}
				\Spec \Hat{\alg{k}}_{x, \tau}
		\to
			\Spec \alg{B}_{\tau}
		\to
			\Spec F^{\perar}_{\tau}.
	\]
Denote the total site
$(\bigsqcup_{x \in T} \Spec \Hat{\alg{k}}_{x, \tau} \to \Spec \alg{B}_{\tau})$
by $\Spec \alg{B}_{\Hat{c}, \tau}$.
(The hat for the subscript $c$ emphasizes
the complete local field $\Hat{k}_{x}$ rather than the henselian local field.)
The morphisms $\pi_{\alg{B}, \tau}$ and $\pi_{\Hat{\alg{k}}_{x}, \tau}$ induce morphisms
	\[
			\Bar{\pi}_{\alg{B}, \tau},
			\Bar{\pi}_{\Hat{\alg{k}}_{x}, \tau}
		\colon
			\Spec \alg{B}_{\Hat{c}, \tau}
		\to
			\Spec F^{\perar}_{\tau}.
	\]
We have a functor
	\[
			\Bar{\pi}_{\alg{B}, \tau, \Hat{!}}
		=
			\left[
					\Bar{\pi}_{\alg{B}, \tau, \ast}
				\to
					\bigoplus_{x \in T}
						\Bar{\pi}_{\Hat{\alg{k}}_{x}, \tau, \ast}
			\right][-1]
		\colon
			\Ch(\alg{B}_{\Hat{c}, \tau})
		\to
			\Ch(F^{\perar}_{\tau}).
	\]
We have a morphism
	\begin{equation} \label{0342}
				R \Bar{\pi}_{\alg{B}, \tau, \ast} G
			\tensor^{L}
				R \Bar{\pi}_{\alg{B}, \tau, \Hat{!}} H
		\to
			R \Bar{\pi}_{\alg{B}, \tau, \Hat{!}}(G \tensor^{L} H)
	\end{equation}
in $D(F^{\perar}_{\tau})$ functorial in $G, H \in D(\alg{B}_{\Hat{c}, \tau})$.
Let $\varepsilon \colon \Spec \alg{B}_{\Hat{c}, \et} \to \Spec \alg{B}_{\Hat{c}, \nis}$
be the morphism defined by the identity functor.

The sheaf $\Ga$ on $\Spec \alg{B}_{\et}$ and on $\Spec \Hat{\alg{k}}_{x, \et}$ naturally extends to
a sheaf on $\Spec \alg{B}_{\Hat{c}, \et}$:
it is the triple
	\[
		\left(
			\Ga,
			(\Ga)_{x \in T},
				\Ga
			\to
				\bigoplus_{x \in T}
					\pi_{\Hat{\alg{k}}_{x} / \alg{B}, \ast} \Ga
		\right).
	\]
We similarly have sheaves $\Gm$, $\Omega^{1}$, $\nu(1)$ and $M$ (a quasi-coherent sheaf on $B$).
We have exact sequences
	\begin{gather} \label{0484}
				0
			\to
				\nu(1)
			\to
				\Omega^{1}
			\stackrel{C - 1}{\to}
				\Omega^{1}
			\to
				0,
		\\ \notag
				0
			\to
				\Gm
			\stackrel{p}{\to}
				\Gm
			\to
				\nu(1)
			\to
				0
	\end{gather}
in $\Ab(\alg{B}_{\Hat{c}, \et})$, where $C$ is the Cartier operator.
Hence $R^{n} \varepsilon_{\ast} \nu(1) = 0$ for $n \ge 2$.
Define
	\[
			\xi(1)
		=
			R^{1} \varepsilon_{\ast} \nu(1).
	\]
We have an exact sequence
	\begin{equation} \label{0337}
			0
		\to
			\nu(1)
		\to
			\Omega^{1}
		\stackrel{C - 1}{\to}
			\Omega^{1}
		\to
			\xi(1)
		\to
			0
	\end{equation}
in $\Ab(\alg{B}_{\Hat{c}, \nis})$.
Define
	\[
			\Omega_{\alg{B}}^{1}
		=
			\pi_{\alg{B}, \ast} \Omega^{1},
		\quad
			\Omega_{\Hat{\alg{k}}_{x}}^{1}
		=
			\pi_{\Hat{\alg{k}}_{x}, \ast} \Omega^{1},
	\]
which are Tate vector groups associated with
$\Omega_{B}^{1}$, $\Omega_{k_{x}}^{1}$, respectively.
Now we calculate the sheaves
$R^{q} \pi_{\alg{B}, \ast} \nu(1)$ and $R^{q} \Bar{\pi}_{\alg{B}, \Hat{!}} \nu(1)$
and define a trace morphism (or the residue map) in the \'etale topology:

\begin{Prop} \label{0080}
	We have $R^{q} \pi_{\alg{B}, \ast} \nu(1) = R^{q} \pi_{\Hat{\alg{k}}_{x}, \ast} \nu(1) = 0$
	for $q \ge 1$.
	We have an isomorphism
		$
				\pi_{\Hat{\alg{k}}_{x}, \ast} \nu(1)
			\cong
				\Hat{\alg{k}}_{x}^{\times} / \Hat{\alg{k}}_{x}^{\times p}
		$
	and a commutative diagram with exact rows
		\[
			\begin{CD}
					0
				@>>>
					\Pic_{Y / F}^{0}[p]
				@>>>
					\pi_{\alg{B}, \ast} \nu(1)
				@>>>
					\left(
						\bigoplus_{x \in T}
							\Weil_{F_{x} / F} \Lambda
					\right)_{0}
				@>>>
					0
				\\ @. @VVV @VVV @VV \mathrm{incl} V @. \\
					0
				@>>>
					\bigoplus_{x \in T}
						\Hat{\alg{O}}_{k_{x}}^{\times} / \Hat{\alg{O}}_{k_{x}}^{\times p}
				@>> \mathrm{incl} >
					\bigoplus_{x \in T}
						\pi_{\Hat{\alg{k}}_{x}, \ast} \nu(1)
				@>>>
					\bigoplus_{x \in T}
						\Weil_{F_{x} / F} \Lambda
				@>>>
					0,
			\end{CD}
		\]
	where the left upper term $\Pic_{Y / F}^{0}[p]$ is
	(the perfection of) the part of the Jacobian of $Y$ killed by $p$,
	the right upper term $(\bigoplus_{x \in T} \Weil_{F_{x} / F} \Lambda)_{0}$ is the kernel
	of the sum of the norm maps
	$\bigoplus_{x \in T} \Weil_{F_{x} / F} \Lambda \onto \Lambda$,
	and the right lower horizontal morphism
		$
				\pi_{\Hat{\alg{k}}_{x}, \ast} \nu(1)
			\to
				\bigoplus_{x \in T}
					\Weil_{F_{x} / F} \Lambda
		$
	is the valuation map in each factor.
\end{Prop}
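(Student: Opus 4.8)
The plan is to reduce everything to standard étale-cohomological statements about the curve $Y$ and its complete local rings, checked after restriction to each field $F' \in F^{\perar}$ via Proposition \ref{0031}, and then repackaged into the fibered-site language of Section \ref{0325}.

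First I would establish the vanishing $R^{q}\pi_{\alg{B},\ast}\nu(1) = R^{q}\pi_{\Hat{\alg{k}}_{x},\ast}\nu(1) = 0$ for $q \ge 1$. By Proposition \ref{0031} it suffices to prove this after restriction, i.e.\ that $H^{q}(\alg{B}(F'),\nu(1)) = 0$ and $H^{q}(\Hat{\alg{k}}_{x}(F'),\nu(1)) = 0$ for $q \ge 1$. Using the second exact sequence displayed in the excerpt, $0 \to \Gm \xrightarrow{p} \Gm \to \nu(1) \to 0$, this reduces to controlling $H^{q}(\var,\Gm)$ for $q \ge 1$. For the complete two-dimensional local field $\Hat{\alg{k}}_{x}(F')$ this is exactly the statement cited in the proof of Proposition \ref{0451} (namely \cite[Proof of Proposition 9.1]{Suz21}), so $\pi_{\Hat{\alg{k}}_{x},\ast}\nu(1) \cong \Hat{\alg{k}}_{x}^{\times}/\Hat{\alg{k}}_{x}^{\times p}$ follows at once. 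For the affine curve $\alg{B}(F') = B \tensor_{F} F'$, which is an affine curve over the field $F'$, I would use that $H^{1}(\alg{B}(F'),\Gm) = \Pic(\alg{B}(F'))$ and $H^{q}(\alg{B}(F'),\Gm) = 0$ for $q \ge 2$ (the curve is affine, regular, over a field, so its Brauer group and higher cohomology vanish; this is classical, e.g.\ via the Leray sequence for $V \into Y$ and $\Br(Y)[p^{\infty}] = 0$ over a perfect field together with purity). Hence $\pi_{\alg{B},\ast}\nu(1) \cong \Pic/p$ and the higher direct images vanish.

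Next I would identify the two rows. The bottom row is just the sum over $x \in T$ of the local exact sequences $0 \to \Hat{\Order}_{k_{x}}^{\times}/p \to \Hat{k}_{x}^{\times}/p \xrightarrow{v} \Z/p \to 0$ (the normalized valuation splitting off the units), base-changed to $F'$ and then Weil-restricted from $F_{x}$ to $F$; this gives $\pi_{\Hat{\alg{k}}_{x},\ast}\nu(1)$ as an extension of $\Weil_{F_{x}/F}\Lambda$ by $\Hat{\alg{O}}_{k_{x}}^{\times}/\Hat{\alg{O}}_{k_{x}}^{\times p}$, using the earlier observation that the functor $F'_{x} \mapsto (\Hat{\Order}_{k_{x}}\ctensor_{F_{x}}F'_{x})\tensor\Hat{k}_{x}$ has Weil restriction $\Hat{\alg{k}}_{x}$. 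For the top row, I would run the localization (excision) sequence for $V \into Y$ applied to $\nu(1) = W_{1}\Omega_{\log}^{1}$ and its boundary maps to $\nu(0) = \Z/p$ at the finitely many points of $T$: this produces
\[
0 \to \Pic_{Y/F}^{0}[p] \to \pi_{\alg{B},\ast}\nu(1) \to \Bigl(\bigoplus_{x \in T}\Weil_{F_{x}/F}\Lambda\Bigr)_{0} \to 0,
\]
where the left term is $H^{1}(Y,\nu(1))$, which by the classical computation of logarithmic Hodge–Witt cohomology of a curve is $\Pic_{Y/F}^{0}[p]$ (the $p$-torsion of the Jacobian, perfected), the surjection onto $H^{1}(V,\nu(1)) = \pi_{\alg{B},\ast}\nu(1)$ has image the degree-zero part because $H^{2}(Y,\nu(1))$ receives the total-degree map, and the sum-of-residues relation forces the kernel condition at the level of $\bigoplus_{x}\Weil_{F_{x}/F}\Lambda$. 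Here I must be careful to check, after restriction to $F'$, that the boundary maps $H^{1}(\alg{B}(F'),\nu(1)) \to \bigoplus_{x}H^{0}(F_{x}',\Z/p)$ are exactly the normalized valuations, so that the right-hand square of the diagram (commuting with the inclusion $\bigl(\bigoplus \Weil_{F_{x}/F}\Lambda\bigr)_{0} \into \bigoplus \Weil_{F_{x}/F}\Lambda$) commutes; this is the compatibility of the Gysin/residue maps for $\nu(1)$ on $Y$ with the valuation maps on each $\Hat{k}_{x}$, which follows from the functoriality of the symbol/residue formalism (as in \cite{Kat79}, or the construction of \cite{Sat07}).

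The main obstacle I anticipate is not the vanishing statements, which are routine, but verifying the commutativity and exactness of the displayed diagram \emph{as a diagram of $F^{\perar}$-sheaves}, i.e.\ uniformly in $F'$ with the correct Weil-restriction bookkeeping: one must match the residue maps on the geometric side (boundary of the localization sequence for $V \into Y$, which naturally lands in $\bigoplus_{x \in T}$ of sheaves on $F_{x}^{\perar}$) with the valuation maps on the local side (each living over $\Hat{\alg{k}}_{x}$, whose $\pi_{\ast}$ is a Weil restriction from $F_{x}$), and confirm that the vertical "inclusion" arrow really is the inclusion of the degree-zero subgroup. Once the diagram is checked after restriction to every field $F' \in F^{\perar}$, Proposition \ref{0032} upgrades it to a diagram in $\Ab(F^{\perar}_{\et})$, completing the proof.
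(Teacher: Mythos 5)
Your strategy --- compute fiberwise over $F' \in F^{\perar}$, run the localization sequence for $V \into Y$ with coefficients in $\nu(1)$, identify the boundary terms with valuations, and sheafify --- is exactly the paper's. But the write-up contains degree errors that make two steps false as stated. First, the reduction ``it suffices that $H^{q}(\alg{B}(F'),\nu(1)) = 0$ for all $F'$ and all $q \ge 1$'' is wrong: from $0 \to \Gm \xrightarrow{p} \Gm \to \nu(1) \to 0$ one has $\Pic(\alg{B}(F'))/p \into H^{1}(\alg{B}(F'),\nu(1))$, which is typically nonzero for non-closed $F'$ (take $Y$ an elliptic curve and $T$ a rational point, so $\Pic(V_{F'}) = E(F')$ and $E(F')/p \ne 0$ in general). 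What vanishes is the sheafification, and that must be checked on stalks, i.e.\ for algebraically closed $F'$; there the vanishing of $H^{1}$ needs the $p$-divisibility of $\Pic(V_{\closure{F'}})$ --- equivalently, the surjectivity of the connecting map $\bigoplus_{x} \Weil_{F_{x}/F}\Lambda \to H^{1}(\alg{Y},\nu(1)) \cong \Lambda$ (the sum of the norm maps) in the localization sequence --- an input your first paragraph never supplies. Second, ``$\pi_{\alg{B},\ast}\nu(1) \cong \Pic/p$'' is false and contradicts the proposition itself: $\pi_{\alg{B},\ast}\nu(1)$ is the sheafification of $F' \mapsto H^{0}(\alg{B}(F'),\nu(1))$, the displayed extension of $(\bigoplus_{x}\Weil_{F_{x}/F}\Lambda)_{0}$ by $\Pic^{0}_{Y/F}[p]$, while $\Pic/p$ computes $H^{1}$, whose sheafification is zero.

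The same shift runs through your treatment of the top row: $\Pic^{0}_{Y/F}[p]$ is (the sheafification of) $H^{0}(Y,\nu(1))$, not $H^{1}$; $\pi_{\alg{B},\ast}\nu(1)$ is $H^{0}(V,\nu(1))$, not $H^{1}$; and the total-degree obstruction lives in $H^{1}(Y,\nu(1)) \cong \Lambda$, not $H^{2}$ (which vanishes). You appear to slide into the flat-cohomology indexing $H^{q}(\var,\Lambda(1)) \cong H^{q-1}(\var,\nu(1))$ midway through. Once the degrees are corrected, your second paragraph becomes the paper's proof: the localization triangle for $\alg{B}(F') \subset \alg{Y}(F')$, the computation that $H^{q}(\alg{Y}(\closure{F'}),\nu(1))$ equals $\Pic[p]$, $\Lambda$, $0$ for $q = 0$, $1$, $\ge 2$, the identification of the local terms with $\Weil_{F_{x}/F}\Lambda$ concentrated in a single degree, and the surjectivity of the sum of the norm maps together give the vanishing of $R^{\ge 1}\pi_{\alg{B},\ast}\nu(1)$ and the top exact row in one stroke. (Minor: $\Hat{k}_{x}$ is a one-dimensional, not two-dimensional, local field; the local factors are indeed handled, as you say, by Weil-restricting Proposition \ref{0451}.)
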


\begin{proof}
	The sheaf $R^{q} \pi_{\alg{B}, \ast} \nu(1)$ is
	the \'etale sheafification of the presheaf
	$F' \mapsto H^{q}(\alg{B}(F'), \nu(1))$.
	This presheaf as a functor in $F'$ commutes with filtered direct limits.
	Let $\alg{Y}(F') = Y \times_{F} F'$
	and $\alg{T}(F') = \alg{Y}(F') \setminus \Spec \alg{B}(F')$.
	Consider the localization distinguished triangles
		\[
				R \Gamma(\alg{Y}(F'), \nu(1))
			\to
				R \Gamma(\alg{B}(F'), \nu(1))
			\to
				\bigoplus_{x \in \alg{T}(F')}
					R \Gamma_{x}(\Order_{\alg{Y}(F'), x}^{h}, \nu(1))[1]
		\]
	If $F'$ is algebraically closed, then
	$H^{q}(\alg{Y}(F'), \nu(1))$
	is isomorphic to $\Pic(\alg{Y}(F'))[p]$ if $q = 0$;
	$\Lambda$ if $q = 1$; and zero otherwise.
	Hence the \'etale sheafifications of the presheaves
	$F' \mapsto H^{q}(\alg{Y}(F'), \nu(1))$ are $\Pic^{0}_{Y / F}[p]$ if $q = 0$;
	$\Lambda$ if $q = 1$; and zero otherwise.
	Similarly, the \'etale sheafification of
	$F' \mapsto H_{x}^{q + 1}(\Order_{\alg{Y}(F'), x}^{h}, \nu(1))$
	is $\bigoplus_{x \in T} \Weil_{F_{x} / F} \Lambda$ if $q = 0$
	and zero otherwise.
	Via these isomorphisms, the connecting morphism
	$\bigoplus_{x \in T} \Weil_{F_{x} / F} \Lambda \to \Lambda$
	is given by the sum of the norm maps.
	Hence we obtain the vanishing
	$R^{q} \pi_{\alg{B}, \ast} \nu(1) = 0$ for $q \ge 1$
	and the upper exact sequence of the statement.
	
	On the other hand,
	we have $R^{q} \pi_{\Hat{\alg{k}}_{x}, \ast} \nu(1) = 0$ for $q \ge 1$
	by applying $\Weil_{F_{x} / F}$ to the results of Proposition \ref{0451}.
	We have a commutative diagram
		\[
			\begin{CD}
					\Gamma(\alg{Y}(F'), \nu(1))
				@>>>
					\Gamma(\alg{B}(F'), \nu(1))
				\\ @VVV @VVV \\
					\Gamma(\Hat{\alg{O}}_{k_{x}}(F'), \nu(1))
				@>>>
					\Gamma(\Hat{\alg{k}}_{x}(F'), \nu(1))
			\end{CD}
		\]
	for any $F' \in F^{\perar}$.
	From this, we obtain the desired commutative diagram.
\end{proof}

\begin{Prop} \label{0356}
	We have $R^{q} \Bar{\pi}_{\alg{B}, \Hat{!}} \nu(1) = 0$ for $q \ne 1$.
	The distinguished triangle
		\[
				R \Bar{\pi}_{\alg{B}, \Hat{!}} \nu(1)
			\to
				R \Bar{\pi}_{\alg{B}, \Hat{!}} \Omega^{1}
		\stackrel{C - 1}{\to}
				R \Bar{\pi}_{\alg{B}, \Hat{!}} \Omega^{1}
		\]
	in $D(F^{\perar}_{\et})$ reduces to an exact sequence
		\[
				0
			\to
				R^{1} \Bar{\pi}_{\alg{B}, \Hat{!}} \nu(1)
			\to
				\frac{
					\bigoplus_{x \in T}
						\Omega_{\Hat{\alg{k}}_{x}}^{1}
				}{
					\Omega_{\alg{B}}^{1}
				}
			\stackrel{C - 1}{\to}
				\frac{
					\bigoplus_{x \in T}
						\Omega_{\Hat{\alg{k}}_{x}}^{1}
				}{
					\Omega_{\alg{B}}^{1}
				}
			\to
				0
		\]
	in $\Ab(F^{\perar}_{\et})$.
\end{Prop}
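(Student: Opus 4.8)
The plan is to deduce this from Proposition \ref{0080} together with the localization/compact-support formalism of Section \ref{0325}, combined with the fact that quasi-coherent cohomology is insensitive to the \'etale topology. First I would recall that by definition $R \Bar{\pi}_{\alg{B}, \Hat{!}} = [\Bar{\pi}_{\alg{B}, \ast} \to \bigoplus_{x} \Bar{\pi}_{\Hat{\alg{k}}_{x}, \ast}][-1]$, so that for any sheaf $G$ on $\alg{B}_{\Hat{c}, \et}$ coming from sheaves on $\Spec \alg{B}_{\et}$ and $\Spec \Hat{\alg{k}}_{x, \et}$ we get a distinguished triangle
	\[
			R \Bar{\pi}_{\alg{B}, \Hat{!}} G
		\to
			R \pi_{\alg{B}, \ast} G
		\to
			\bigoplus_{x \in T}
				R \pi_{\Hat{\alg{k}}_{x}, \ast} G
	\]
in $D(F^{\perar}_{\et})$. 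Applying this with $G = \Omega^{1}$ and using that \'etale and Nisnevich (hence Zariski) cohomology agree for the quasi-coherent sheaf $\Omega^{1}$ gives $R^{q} \pi_{\alg{B}, \ast} \Omega^{1} = R^{q} \pi_{\Hat{\alg{k}}_{x}, \ast} \Omega^{1} = 0$ for $q \ge 1$, and the degree-zero terms are the Tate vector groups $\Omega_{\alg{B}}^{1}$ and $\Omega_{\Hat{\alg{k}}_{x}}^{1}$ by the definitions preceding the statement. Hence $R \Bar{\pi}_{\alg{B}, \Hat{!}} \Omega^{1}$ is concentrated in degrees $0$ and $1$, with $H^{0} = \Ker(\Omega_{\alg{B}}^{1} \to \bigoplus_{x} \Omega_{\Hat{\alg{k}}_{x}}^{1})$ and $H^{1} = \Coker$ of the same map; but $\Omega_{\alg{B}}^{1} \to \bigoplus_{x} \Omega_{\Hat{\alg{k}}_{x}}^{1}$ is injective (a global differential form is determined by its images in the completions at the boundary points, $B$ being a domain with fraction field $k$ dense in each $\Hat{k}_{x}$), so $R \Bar{\pi}_{\alg{B}, \Hat{!}} \Omega^{1}$ is concentrated in degree $1$ and equals $(\bigoplus_{x \in T} \Omega_{\Hat{\alg{k}}_{x}}^{1}) / \Omega_{\alg{B}}^{1}$.

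Next I would apply the same triangle with $G = \nu(1)$ and invoke Proposition \ref{0080}, which gives $R^{q} \pi_{\alg{B}, \ast} \nu(1) = R^{q} \pi_{\Hat{\alg{k}}_{x}, \ast} \nu(1) = 0$ for $q \ge 1$. Therefore $R \Bar{\pi}_{\alg{B}, \Hat{!}} \nu(1)$ is a priori concentrated in degrees $0$ and $1$. To kill degree $0$ it suffices to show that the map $\pi_{\alg{B}, \ast} \nu(1) \to \bigoplus_{x} \pi_{\Hat{\alg{k}}_{x}, \ast} \nu(1)$ is a monomorphism in $\Ab(F^{\perar}_{\et})$; but the commutative diagram in Proposition \ref{0080} exhibits $\pi_{\alg{B}, \ast} \nu(1)$ as an extension whose sub $\Pic^{0}_{Y/F}[p]$ injects (via the obvious inclusion into $\bigoplus_x \Hat{\alg{O}}_{k_x}^{\times}/\Hat{\alg{O}}_{k_x}^{\times p}$, since a line bundle on $Y$ with trivial formal completion at every boundary point is trivial on $V$ and hence, being of degree zero, trivial on $Y$) and whose quotient $(\bigoplus_x \Weil_{F_x/F}\Lambda)_0$ injects into $\bigoplus_x \Weil_{F_x/F}\Lambda$ by construction; a diagram chase then yields injectivity, so $R^{q} \Bar{\pi}_{\alg{B}, \Hat{!}} \nu(1) = 0$ for $q \ne 1$.

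Finally, applying $R \Bar{\pi}_{\alg{B}, \Hat{!}}$ to the exact sequence \eqref{0484} for $\nu(1)$ in $\Ab(\alg{B}_{\Hat{c}, \et})$ gives a distinguished triangle
	\[
			R \Bar{\pi}_{\alg{B}, \Hat{!}} \nu(1)
		\to
			R \Bar{\pi}_{\alg{B}, \Hat{!}} \Omega^{1}
		\stackrel{C - 1}{\to}
			R \Bar{\pi}_{\alg{B}, \Hat{!}} \Omega^{1},
	\]
and since the outer two terms are now known to be concentrated in degree $1$ and equal to $(\bigoplus_{x} \Omega_{\Hat{\alg{k}}_{x}}^{1})/\Omega_{\alg{B}}^{1}$, the long exact cohomology sequence collapses to
	\[
			0
		\to
			R^{1} \Bar{\pi}_{\alg{B}, \Hat{!}} \nu(1)
		\to
			\frac{\bigoplus_{x \in T} \Omega_{\Hat{\alg{k}}_{x}}^{1}}{\Omega_{\alg{B}}^{1}}
		\stackrel{C - 1}{\to}
			\frac{\bigoplus_{x \in T} \Omega_{\Hat{\alg{k}}_{x}}^{1}}{\Omega_{\alg{B}}^{1}}
		\to
			0,
	\]
where surjectivity of $C-1$ on the right follows because $C-1$ is already surjective on each $\Omega^{1}_{\Hat{\alg{k}}_x}$ in the \'etale topology (Artin–Schreier–Cartier) and hence on the quotient. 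The main obstacle I anticipate is the verification that the degree-zero term $\pi_{\alg{B}, \ast}\nu(1) \to \bigoplus_x \pi_{\Hat{\alg{k}}_x,\ast}\nu(1)$ is injective as a map of \'etale sheaves — in particular checking, after base change to a perfect extension $F'$ and sheafifying, that no class coming from the Jacobian $\Pic^{0}_{Y/F}[p]$ becomes trivial locally at every boundary point; this is where the geometry of $Y$ (completeness, connectedness) really enters, whereas the rest is formal manipulation of the triangles.
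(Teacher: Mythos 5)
Your core argument is the same as the paper's: the paper's entire proof is the observation that, once $R\Bar{\pi}_{\alg{B},\Hat{!}}\Omega^{1}$ is identified with $\bigl(\bigoplus_{x}\Omega^{1}_{\Hat{\alg{k}}_{x}}\bigr)/\Omega^{1}_{\alg{B}}$ placed in degree $1$, everything follows from the long exact sequence of the Cartier triangle together with the surjectivity of $C-1$ on each $\Omega^{1}_{\Hat{\alg{k}}_{x}}$, which is exactly your last step (and which the paper phrases as $R^{1}\pi_{\Hat{\alg{k}}_{x},\ast}\nu(1)=0$). Note in particular that once you know $R\Bar{\pi}_{\alg{B},\Hat{!}}\Omega^{1}$ is concentrated in degree $1$, the vanishing $R^{0}\Bar{\pi}_{\alg{B},\Hat{!}}\nu(1)=0$ is automatic: in the long exact sequence it sits between $R^{-1}\Bar{\pi}_{\alg{B},\Hat{!}}\Omega^{1}=0$ and $R^{0}\Bar{\pi}_{\alg{B},\Hat{!}}\Omega^{1}=0$. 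Your entire second paragraph is therefore redundant.

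That redundancy matters because the justification you give there is not correct as written. The injectivity of $\Pic^{0}_{Y/F}[p]\to\bigoplus_{x}\Hat{\alg{O}}_{k_{x}}^{\times}/\Hat{\alg{O}}_{k_{x}}^{\times p}$ is not about a line bundle having ``trivial formal completion'' at the boundary points (every line bundle is trivial over the local rings $\Hat{\Order}_{k_{x}}$; the condition concerns the class in $\nu(1)$, which carries more data), and the implication ``trivial on $V$ and of degree zero, hence trivial on $Y$'' is false: for $Y$ an ordinary elliptic curve and $T=\{P,Q\}$ with $P-Q$ a nontrivial $p$-torsion point, $\Order_{Y}(P-Q)$ is degree zero, $p$-torsion, trivial on $V$, and nontrivial on $Y$. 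The injectivity you want is nevertheless true, for a different reason: $H^{0}(Y,\nu(1))$ consists of global logarithmic differential forms, and a nonzero global form on $Y$ remains nonzero in $\Omega^{1}_{\Hat{\alg{k}}_{x}}$ for any single $x$, while $\nu(1)(\Hat{\Order}_{k_{x}})\into\Omega^{1}_{\Hat{\Order}_{k_{x}}}$. But the cleanest repair is simply to delete the paragraph and read off $R^{0}=0$ from the triangle, as the paper does.
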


\begin{proof}
	It is enough to show that
	$C - 1$ is surjective on $\Omega_{\Hat{\alg{k}}_{x}}^{1}$.
	But this follows from
	$R^{1} \pi_{\Hat{\alg{k}}_{x}, \ast} \nu(1) = 0$.
\end{proof}

The sum of the residue maps $\Res \colon \Omega_{\Hat{\alg{k}}_{x}}^{1} \to \Ga$ over $x \in T$
is zero on $\Omega_{\alg{B}}^{1}$ by the residue theorem.
Hence we have a morphism between exact sequences
	\[
		\begin{CD}
				0
			@>>>
				R^{1} \Bar{\pi}_{\alg{B}, !} \nu(1)
			@>>>
				\frac{
					\bigoplus_{x \in T}
						\Omega_{\Hat{\alg{k}}_{x}}^{1}
				}{
					\Omega_{\alg{B}}^{1}
				}
			@> C - 1 >>
				\frac{
					\bigoplus_{x \in T}
						\Omega_{\Hat{\alg{k}}_{x}}^{1}
				}{
					\Omega_{\alg{B}}^{1}
				}
			@>>>
				0
			\\ @. @VVV @V \Res VV @V \Res VV \\
				0
			@>>>
				\Lambda
			@>>>
				\Ga
			@>> \Frob^{-1} - 1 >
				\Ga
			@>>>
				0.
		\end{CD}
	\]
With the inclusion $\Lambda \into \Lambda_{\infty}$, we have morphisms
	\begin{equation} \label{0350}
			R^{1} \Bar{\pi}_{\alg{B}, \Hat{!}} \nu(1)
		\to
			\Lambda_{\infty},
		\quad
			R \Bar{\pi}_{\alg{B}, \Hat{!}} \nu(1)
		\to
			\Lambda_{\infty}[-1]
	\end{equation}
in $\Ab(F^{\perar}_{\et})$, $D(F^{\perar}_{\et})$, respectively.
Here is a slightly different description of this morphism:

\begin{Prop} \label{0357}
	The sum of the residue maps
	$\bigoplus_{x \in T} \Bar{\pi}_{\Hat{\alg{k}}_{x}, \ast} \nu(1) \to \Lambda$
	annihilates the image of $\Bar{\pi}_{\alg{B}, \ast} \nu(1)$.
	The obtained morphism $R^{1} \Bar{\pi}_{\alg{B}, \Hat{!}} \nu(1) \to \Lambda_{\infty}$
	via the exact sequence
		\[
				\Bar{\pi}_{\alg{B}, \ast} \nu(1)
			\to
				\bigoplus_{x \in T} \Bar{\pi}_{\Hat{\alg{k}}_{x}, \ast} \nu(1)
			\to
				R^{1} \Bar{\pi}_{\alg{B}, \Hat{!}} \nu(1)
			\to
				0
		\]
	is the morphism \eqref{0350}.
\end{Prop}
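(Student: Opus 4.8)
The plan is to check that the two a priori different morphisms $R^{1}\Bar{\pi}_{\alg{B},\Hat{!}}\nu(1)\to\Lambda_{\infty}$ agree by tracing both of them through the explicit presentation of $R^{1}\Bar{\pi}_{\alg{B},\Hat{!}}\nu(1)$ as a quotient. Recall from the definition $\Bar{\pi}_{\alg{B},\Hat{!}}=[\Bar{\pi}_{\alg{B},\ast}\to\bigoplus_{x\in T}\Bar{\pi}_{\Hat{\alg{k}}_{x},\ast}][-1]$ that, since $R^{q}\pi_{\alg{B},\ast}\nu(1)=R^{q}\pi_{\Hat{\alg{k}}_{x},\ast}\nu(1)=0$ for $q\ge 1$ by Proposition \ref{0080}, the complex $R\Bar{\pi}_{\alg{B},\Hat{!}}\nu(1)$ is computed by the two-term complex $[\pi_{\alg{B},\ast}\nu(1)\to\bigoplus_{x\in T}\pi_{\Hat{\alg{k}}_{x},\ast}\nu(1)]$ placed in degrees $0,1$. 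Thus $R^{1}\Bar{\pi}_{\alg{B},\Hat{!}}\nu(1)$ is exactly the cokernel of the localization map $\pi_{\alg{B},\ast}\nu(1)\to\bigoplus_{x\in T}\pi_{\Hat{\alg{k}}_{x},\ast}\nu(1)$, which is precisely the exact sequence displayed in the statement; and this is the same presentation as the one used to define the morphism \eqref{0350} via Proposition \ref{0356}. So both descriptions start from the same object presented the same way, and the task is to match the two maps out of the middle term $\bigoplus_{x\in T}\pi_{\Hat{\alg{k}}_{x},\ast}\nu(1)$.

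First I would identify the map used in Proposition \ref{0356}: there the presentation of $R^{1}\Bar{\pi}_{\alg{B},\Hat{!}}\nu(1)$ comes from the exact sequence \eqref{0484} and the resulting exact sequence $0\to R^{1}\Bar{\pi}_{\alg{B},\Hat{!}}\nu(1)\to(\bigoplus_{x}\Omega^{1}_{\Hat{\alg{k}}_{x}})/\Omega^{1}_{\alg{B}}\xrightarrow{C-1}(\bigoplus_{x}\Omega^{1}_{\Hat{\alg{k}}_{x}})/\Omega^{1}_{\alg{B}}\to 0$, and the morphism to $\Lambda_{\infty}$ is induced by the sum of residue maps on $\Omega^{1}$ via the residue theorem (vanishing on $\Omega^{1}_{\alg{B}}$) together with the identification $\Coker(\Frob^{-1}-1)\cong\Lambda$. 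Second, I would identify the map of Proposition \ref{0357}: it uses instead the isomorphism $\pi_{\Hat{\alg{k}}_{x},\ast}\nu(1)\cong\Hat{\alg{k}}_{x}^{\times}/\Hat{\alg{k}}_{x}^{\times p}$ and the residue map $\nu(1)\cong$ (logarithmic differentials) $\to\Lambda$, which on $\dlog$-symbols is $\dlog(u)\mapsto\Res$, vanishing on the image of $\pi_{\alg{B},\ast}\nu(1)$ by the residue theorem again. The key point is that the square
\begin{equation*}
\begin{CD}
\nu(1) @>>> \Omega^{1}\\
@V\Res VV @VV\Res V\\
\Lambda @>>> \Ga
\end{CD}
\end{equation*}
commutes compatibly with $C-1$ versus $\wp=\Frob^{-1}-1$ — this is exactly the content of the compatibilities recorded in Proposition \ref{0047} (the facts $C(x^{p}\omega)=xC\omega$, $\Res(C\omega)=\Res(\omega)^{1/p}$ and $\Res(\omega)^{1/p}=\Res(\omega)$ in $\xi(F)$), applied here sheaf-wise over $F^{\perar}$. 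So both maps are induced by the single residue map on the level of $\nu(1)$/$\Omega^{1}$, via the two compatible resolutions of $\Lambda_{\infty}$, and hence they coincide.

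Concretely I would argue: the inclusion $\nu(1)\into\Omega^{1}$ and the inclusion $\Lambda\into\Ga$ fit into a morphism of the four-term exact sequences $0\to\nu(1)\to\Omega^{1}\xrightarrow{C-1}\Omega^{1}\to\xi(1)\to0$ and $0\to\Lambda\to\Ga\xrightarrow{\wp}\Ga\to\Ga/\wp\to0$ given by the residue map, by Proposition \ref{0047}; applying $R\Bar{\pi}_{\alg{B},\Hat{!}}$ and using that this functor kills $\Omega^{1}$-cohomology in degrees $\ne 0,1$ and that $R^{0}\Bar{\pi}_{\alg{B},\Hat{!}}\Omega^{1}=0$ (since localization of $\Omega^{1}$ is injective, $\Omega^{1}_{\alg{B}}\into\bigoplus_{x}\Omega^{1}_{\Hat{\alg{k}}_{x}}$), one gets $R^{1}\Bar{\pi}_{\alg{B},\Hat{!}}\nu(1)$ as the kernel of $C-1$ on $(\bigoplus_{x}\Omega^{1}_{\Hat{\alg{k}}_{x}})/\Omega^{1}_{\alg{B}}$, and the induced map to $\Lambda=\Ker(\wp$ on $\Ga)=\Coker(\Frob^{-1}-1)$ is the residue map. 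On the other hand, applying $\pi_{\ast}$ (not $\Bar{\pi}_{\Hat{!}}$) to the same morphism of sequences and then taking the cokernel of the localization map gives exactly the description in Proposition \ref{0357}. The two are the same because both are the connecting-map image of the residue map under the snake lemma applied to the same commutative ladder. The main obstacle I anticipate is purely bookkeeping: making sure the identification $\pi_{\Hat{\alg{k}}_{x},\ast}\nu(1)\cong\Hat{\alg{k}}_{x}^{\times}/\Hat{\alg{k}}_{x}^{\times p}$ of Proposition \ref{0080} is compatible, up to the sign conventions built into the normalized valuation and the residue map, with the inclusion $\nu(1)\into\Omega^{1}$ sending a unit $u$ to $\dlog u$ — i.e. that the two ``residue'' normalizations (valuation-then-trace versus residue-of-$\dlog$) agree on the nose rather than up to a sign — but this is a local computation at each $x\in T$ identical to the one already used in Section \ref{0040}, and carries over verbatim. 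Hence the statement follows, and I would end the proof with a reference to Propositions \ref{0047}, \ref{0080} and \ref{0356}.
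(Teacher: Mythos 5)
Your proof is correct and fills in exactly the verification the paper dismisses with the single word ``Obvious'': both maps are the residue map read through the two standard presentations of $R^{1}\Bar{\pi}_{\alg{B},\Hat{!}}\nu(1)$ (as the cokernel of the localization map on $\nu(1)$, and as the kernel of $C-1$ on $(\bigoplus_{x}\Omega^{1}_{\Hat{\alg{k}}_{x}})/\Omega^{1}_{\alg{B}}$), and the two presentations are intertwined by the residue ladder of Proposition \ref{0047}, so the maps agree. The only blemish is the parenthetical identification ``$\Lambda=\Coker(\Frob^{-1}-1)$'', which should be $\Ker$ (the cokernel vanishes \'etale-locally); this does not affect the argument.
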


\begin{proof}
	Obvious.
\end{proof}

Now we calculate
$R^{q} \pi_{\alg{B}, \nis, \ast} \xi(1)$ and $R^{q} \Bar{\pi}_{\alg{B}, \nis, \Hat{!}} \xi(1)$
and define a trace morphism in the Nisnevich topology.
Proposition \ref{0356} also shows that
$R^{q} \varepsilon_{\ast} R^{1} \Bar{\pi}_{\alg{B}, \Hat{!}} \nu(1) = 0$ for $q \ge 2$,
and we have an isomorphism and a morphism
	\begin{equation} \label{0338}
			R^{1} \varepsilon_{\ast} R^{1} \Bar{\pi}_{\alg{B}, \Hat{!}} \nu(1)
		\cong
			\Coker \left(
					C - 1
				\text{ on }
					\frac{
						\bigoplus_{x \in T}
							\Omega_{\Hat{\alg{k}}_{x}}^{1}
					}{
						\Omega_{\alg{B}}^{1}
					}
			\right)
		\stackrel{\Res}{\to}
			\xi
	\end{equation}
in $\Ab(F^{\perar}_{\zar})$ (remember $\xi = \Ga / (\Frob - 1) \Ga$).

\begin{Prop}
	For any $G \in \Ab(\alg{B}_{\Hat{c}, \nis})$ and $q \ge 2$, we have
		\[
				R^{q} \Bar{\pi}_{\alg{B}, \nis, \ast} G
			=
				R^{q - 1} \Bar{\pi}_{\Hat{\alg{k}}_{x}, \nis, \ast} G
			=
				R^{q} \Bar{\pi}_{\alg{B}, \nis, \Hat{!}} G
			=
				0.
		\]
\end{Prop}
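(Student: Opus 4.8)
The claim is a cohomological dimension bound: the Nisnevich cohomology of the total site $\alg{B}_{\Hat{c},\nis}$ has dimension at most $1$ in each of the three component directions ($\alg{B}$, the local pieces $\Hat{\alg{k}}_x$, and the ``compactly supported'' combination). The plan is to reduce everything to two inputs: (i) the small Nisnevich site of a one-dimensional scheme has cohomological dimension $\le 1$, and (ii) the perfect artinian Zariski site $\Spec F^{\perar}_{\zar}$ has cohomological dimension $0$ (fields have no higher Zariski cohomology, as used repeatedly, e.g.\ in the proof of Proposition \ref{0022}).

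First I would treat $R^q\Bar{\pi}_{\alg{B},\nis,\ast}$. By construction $\Bar{\pi}_{\alg{B},\nis}$ factors as $\Spec\alg{B}_{\Hat{c},\nis}\xrightarrow{q_{\alg{B}}}\Spec\alg{B}_{\nis}\xrightarrow{\pi_{\alg{B},\nis}}\Spec F^{\perar}_{\zar}$, and $q_{\alg{B},\ast}$ is exact on abelian sheaves (the general fact about projection premorphisms from a total site, noted just after \eqref{0336}: $q_{i',\ast}$ sends acyclics to acyclics because $q_{i'}^{\ast}$ is exact), so $R\Bar{\pi}_{\alg{B},\nis,\ast}G\cong R\pi_{\alg{B},\nis,\ast}(q_{\alg{B},\ast}G)$. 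Then I would invoke Proposition \ref{0031}: restricting to each $F'\in F^{\perar}$, the sheaf $R^q\pi_{\alg{B},\nis,\ast}H$ is the Zariski sheafification of $F'\mapsto H^q(\alg{B}(F')_{\nis},H|_{\alg{B}(F')})$. Since $\alg{B}(F')=B\tensor_F F'$ is a one-dimensional noetherian scheme, its small Nisnevich site has cohomological dimension $\le 1$ (standard: the Nisnevich dimension is bounded by the Krull dimension, cf.\ Kato--Saito / Nisnevich), so these cohomology groups vanish for $q\ge 2$. The same argument applies verbatim to $R^q\Bar{\pi}_{\Hat{\alg{k}}_x,\nis,\ast}$, where the relevant scheme is $\Spec\Hat{\alg{k}}_x(F')$, zero-dimensional, so in fact $R^q\Bar{\pi}_{\Hat{\alg{k}}_x,\nis,\ast}G=0$ already for $q\ge 1$; shifting the index by one (the statement only asks $q\ge 2$ for $R^{q-1}$) gives the claim with room to spare.

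For $R^q\Bar{\pi}_{\alg{B},\nis,\Hat{!}}$ I would use the defining distinguished triangle
	\[
			R\Bar{\pi}_{\alg{B},\nis,\Hat{!}}G
		\to
			R\Bar{\pi}_{\alg{B},\nis,\ast}G
		\to
			\bigoplus_{x\in T}R\Bar{\pi}_{\Hat{\alg{k}}_x,\nis,\ast}G
	\]
coming from \eqref{0386} (applied in the fibered setting of Section \ref{0325}). Taking the long exact sequence of cohomology sheaves: for $q\ge 2$ the term $R^q\Bar{\pi}_{\alg{B},\nis,\ast}G$ vanishes by the previous step, and the term $R^{q-1}\bigoplus_x\Bar{\pi}_{\Hat{\alg{k}}_x,\nis,\ast}G$ vanishes for $q-1\ge 1$ (again by the previous step, since the local pieces contribute only in degree $0$), so $R^q\Bar{\pi}_{\alg{B},\nis,\Hat{!}}G=0$ for $q\ge 2$. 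This is formal once the two component bounds are in hand.

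The only genuinely non-routine point is input (i): the assertion that $H^q(\alg{B}(F')_{\nis},H)=0$ for $q\ge 2$ when $\alg{B}(F')$ is a one-dimensional noetherian scheme and $H$ is an \emph{arbitrary} abelian sheaf (we need it for general $G$, not just torsion or coherent sheaves). I expect this to be the main obstacle to write cleanly: it follows from the fact that the Nisnevich topos of a noetherian scheme of Krull dimension $d$ has cohomological dimension $\le d$ (a theorem of Nisnevich; see also Kato--Saito), but one should cite it carefully, and note that $\alg{B}(F')$ is indeed noetherian of dimension $\le 1$ since $F'$ is a filtered union of finite products of perfections of finitely generated field extensions and cohomology commutes with such filtered limits (the argument already used in the proof of Proposition \ref{0080}). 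Everything else is bookkeeping with the total-site formalism of Section \ref{0325}.
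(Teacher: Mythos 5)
Your proof is correct and follows essentially the same route as the paper's: the field components $\Hat{\alg{k}}_x(F')$ have trivial Nisnevich cohomology, the curve component reduces to the cohomological-dimension-$\le 1$ bound for the one-dimensional schemes $\alg{B}(F')$ (the paper phrases this as "the localization sequence since $B$ is one-dimensional", which is the same fact you cite as Nisnevich's theorem), and the shriek vanishing follows formally from the defining distinguished triangle. The extra bookkeeping with $q_{\alg{B},\ast}$ and Proposition \ref{0031} is fine but not needed beyond what the paper's terse proof already implicitly uses.
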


\begin{proof}
	The statement $R^{q - 1} \Bar{\pi}_{\Hat{\alg{k}}_{x}, \nis, \ast} G = 0$ is trivial
	since fields have trivial Nisnevich cohomology.
	For $R^{q} \Bar{\pi}_{\alg{B}, \nis, \ast} G$,
	it is enough to see that $H^{q}(B_{\nis}, G) = 0$ for $q \ge 2$.
	But this follows from the localization sequence
	since $B$ is one-dimensional.
\end{proof}

In particular, the functor $R^{1} \Bar{\pi}_{\alg{B}, \nis, \Hat{!}}$ is right-exact.
The exact sequence \eqref{0337} and the residue map induce an isomorphism and a morphism
	\begin{equation} \label{0339}
			R^{1} \Bar{\pi}_{\alg{B}, \nis, \Hat{!}} \xi(1)
		\cong
			\Coker \left(
					C - 1
				\text{ on }
					\frac{
						\bigoplus_{x \in T}
							\Omega_{\Hat{\alg{k}}_{x}}^{1}
					}{
						\Omega_{\alg{B}}^{1}
					}
			\right)
		\stackrel{\Res}{\to}
			\xi
	\end{equation}
in $\Ab(F^{\perar}_{\zar})$.
With the inclusion $\xi \into \xi_{\infty}$, we have morphisms
	\begin{equation} \label{0354}
			R^{1} \Bar{\pi}_{\alg{B}, \nis, \Hat{!}} \xi(1)
		\to
			\xi_{\infty},
		\quad
			R \Bar{\pi}_{\alg{B}, \nis, \Hat{!}} \xi(1)
		\to
			\xi_{\infty}[-1]
	\end{equation}
in $\Ab(F^{\perar}_{\zar})$, $D(F^{\perar}_{\zar})$, respectively.
The two isomorphisms \eqref{0338} and \eqref{0339} are compatible with
the natural isomorphism between
$R^{1} \varepsilon_{\ast} R^{1} \Bar{\pi}_{\alg{B}, \Hat{!}} \nu(1)$
and $R^{1} \Bar{\pi}_{\alg{B}, \nis, \Hat{!}} \xi(1)$ by construction.

We have a representability result for the sheaves at hand:

\begin{Prop} \label{0082}
	Let $G \in \Ab(\alg{B}_{\Hat{c}, \et})$ be either
	$\Lambda$, $\nu(1)$ or a finite projective $B$-module.
	Then
		$
				R \Bar{\pi}_{\alg{B}, \ast} G,
				R \Bar{\pi}_{\alg{B}, \Hat{!}} G
			\in
				\genby{\mathcal{W}_{F}}_{F^{\perar}_{\et}}
		$.
	Moreover, $R^{q} \Bar{\pi}_{\alg{B}, \ast} G \in \mathcal{W}_{F}$ for all $q$.
\end{Prop}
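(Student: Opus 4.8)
\textbf{Proof plan for Proposition \ref{0082}.}
The plan is to check each of the three cases for $G$ separately, reducing everything to the structural results about $\mathcal{W}_{F}$ (Definition \ref{0149}) and the computations of the pushforward and shriek sheaves already collected above. First I would handle the coherent case: if $G = M$ is a finite projective $B$-module, then since \'etale and Zariski cohomology agree for quasi-coherent sheaves, $R^{q} \pi_{\alg{B}, \ast} M$ is the \'etale sheafification of $F' \mapsto H^{q}(\alg{B}(F'), M)$, which vanishes for $q \ge 1$ and for $q = 0$ is the functor $F' \mapsto M \tensor_{B} \alg{B}(F') = M \tensor_{F} F'$. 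This is a finite free $F'$-module functorially in $F'$, hence a Tate vector group of the form $\Ga^{\oplus d}$ (in fact even finite-dimensional, so of Type \eqref{0006} with a single term), which lies in $\mathcal{W}_{F}$; the same applies to $M \tensor_{\Hat{\Order}_{k_{x}}} \Hat{\alg{O}}_{k_{x}}$ after base change to $\Hat{k}_{x}$, giving $\prod_{x} \Omega$-type factors, i.e.\ $\Ga^{\N}$-type groups, which are of Type \eqref{0005}. Then $R \Bar{\pi}_{\alg{B}, \Hat{!}} M$ is the shifted mapping cone $[\pi_{\alg{B}, \ast} M \to \bigoplus_{x} \pi_{\Hat{\alg{k}}_{x}, \ast} M][-1]$, an object built from two terms of $\mathcal{W}_{F}$, hence in $\genby{\mathcal{W}_{F}}_{F^{\perar}_{\et}}$.

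Next I would treat $G = \Lambda$. Here $R^{q} \pi_{\alg{B}, \ast} \Lambda$ is computed via the Artin--Schreier sequence $0 \to \Lambda \to \Ga \to \Ga \to 0$ together with the localization sequence for $V \subset Y$: one gets $R^{0} \pi_{\alg{B}, \ast} \Lambda = \Lambda$, and $R^{1} \pi_{\alg{B}, \ast} \Lambda$ sits in an extension whose sub is a quotient of $\Ga$ (the function-field part) and whose quotient involves $\pi_{0}$ of the Jacobian and the degree-zero part of $\bigoplus_{x} \Weil_{F_{x}/F} \Lambda$, both finite \'etale $p$-primary, while $R^{q} \pi_{\alg{B},\ast} \Lambda = 0$ for $q \ge 2$. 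Each graded piece is therefore either a quotient of $\Ga$ (connected unipotent, of Type \eqref{0006}) or a finite \'etale $p$-primary group, so $R^{q} \pi_{\alg{B},\ast} \Lambda \in \mathcal{W}_{F}$; for the complete local pieces, $R^{q} \pi_{\Hat{\alg{k}}_{x},\ast} \Lambda$ is given by Proposition \ref{0451} after applying $\Weil_{F_{x}/F}$, yielding $\Lambda$ and $\Hat{\alg{k}}_{x}/\wp\Hat{\alg{k}}_{x}$ (a quotient of a $\Ga^{\N}$), again in $\mathcal{W}_{F}$. Taking the cone as before gives $R \Bar{\pi}_{\alg{B}, \Hat{!}} \Lambda \in \genby{\mathcal{W}_{F}}_{F^{\perar}_{\et}}$.

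For $G = \nu(1)$ I would invoke the computations already made in Propositions \ref{0080} and \ref{0356}: $R^{q} \pi_{\alg{B}, \ast} \nu(1) = 0$ for $q \ge 1$, and $\pi_{\alg{B}, \ast} \nu(1)$ is an extension of $(\bigoplus_{x} \Weil_{F_{x}/F} \Lambda)_{0}$ (finite \'etale $p$-primary) by $\Pic^{0}_{Y/F}[p]$, which is finite; similarly $\pi_{\Hat{\alg{k}}_{x}, \ast} \nu(1) \cong \Hat{\alg{k}}_{x}^{\times}/\Hat{\alg{k}}_{x}^{\times p}$, an extension of $\Weil_{F_{x}/F}\Lambda$ by $\Hat{\alg{O}}_{k_{x}}^{\times}/\Hat{\alg{O}}_{k_{x}}^{\times p}$; the latter unit group is, via the exponential/truncation filtration, a successive extension of $\Ga^{\N}$-type groups and a finite \'etale piece ($F_{x}^{\times}/F_{x}^{\times p}$, which is trivial as $F_{x}$ is perfect), hence in $\mathcal{W}_{F}$. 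Thus $R \Bar{\pi}_{\alg{B}, \ast} \nu(1)$ and, via Proposition \ref{0356}'s identification of $R^{1} \Bar{\pi}_{\alg{B}, \Hat{!}} \nu(1)$ as the kernel of $C-1$ on a quotient of Tate vector groups, also $R \Bar{\pi}_{\alg{B}, \Hat{!}} \nu(1)$, lie in $\genby{\mathcal{W}_{F}}_{F^{\perar}_{\et}}$. The main obstacle I anticipate is bookkeeping rather than conceptual: verifying that the filtrations on $\pi_{\alg{B},\ast}\Lambda$, on $\Hat{\alg{O}}_{k_{x}}^{\times}/\Hat{\alg{O}}_{k_{x}}^{\times p}$, and on $\pi_{\alg{B},\ast}\nu(1)$ are genuinely finite with graded pieces of the two admissible types (and in particular that the transition maps in the pro/ind presentations are surjective-with-connected-kernel resp.\ injective, as required by Definition \ref{0149}), and that the extension by a finite \'etale $p$-primary group of components is accounted for by the last clause of that definition. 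The final sentence, $R^{q} \Bar{\pi}_{\alg{B}, \ast} G \in \mathcal{W}_{F}$, then follows because in all three cases $R \Bar{\pi}_{\alg{B}, \ast} G$ has cohomology sheaves that are exactly the $R^{q} \pi_{\alg{B}, \ast} G$ in the pushforward slot and the $R^{q-1} \pi_{\Hat{\alg{k}}_{x}, \ast} G$ in the boundary slot (by Proposition \ref{0352}), each of which we have just placed in $\mathcal{W}_{F}$, and $\mathcal{W}_{F}$ is closed under the relevant extensions.
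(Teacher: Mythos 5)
Your plan follows essentially the same route as the paper's proof: handle a finite projective $B$-module $M$ first (global sections give a countable-dimensional vector group, the local pieces give Tate vector groups, and the shriek is their quotient placed in degree $1$), deduce the $\Lambda$ and $\nu(1)$ cases from the exact sequences $0 \to \Lambda \to \Ga \to \Ga \to 0$ and $0 \to \nu(1) \to \Omega^{1} \to \Omega^{1} \to 0$ using that $\genby{\mathcal{W}_{F}}_{F^{\perar}_{\et}}$ is triangulated, and use Proposition \ref{0080} plus an explicit presentation of $\alg{B}/\wp\alg{B}$ for the clause about cohomology objects. Two details are off, though neither is fatal. First, $M \tensor_{B} \alg{B}(F') = M \tensor_{F} F'$ is \emph{not} a finite free $F'$-module: $M$ is finite projective over the one-dimensional ring $B$, hence countably infinite-dimensional over $F$, so $\pi_{\alg{B},\ast} M$ is $\Ga^{\oplus J}$ with $J$ countably infinite --- still of Type \eqref{0006}, but not ``finite-dimensional with a single term.'' Second, your description of $R^{1} \pi_{\alg{B},\ast} \Lambda$ as an extension involving $\Pic^{0}_{Y/F}[p]$ and the degree-zero part of $\bigoplus_{x} \Weil_{F_{x}/F} \Lambda$ conflates it with $\pi_{\alg{B},\ast} \nu(1)$ (that is the content of Proposition \ref{0080}); by Artin--Schreier on the affine curve one simply has $R^{1} \pi_{\alg{B},\ast} \Lambda \cong \alg{B} / \wp \alg{B}$ and $R^{q} \pi_{\alg{B},\ast} \Lambda = 0$ for $q \ge 2$. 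The one genuinely nontrivial verification you defer to ``bookkeeping'' --- that $\alg{B} / \wp \alg{B}$ is of Type \eqref{0006} --- is settled in the paper by exhibiting it as the filtered (countable, not finite) union over $n \ge 0$ of the quasi-algebraic groups $\Gamma(Y, \Order_{Y}(p n T)) \tensor_{F} \Ga / \wp\bigl(\Gamma(Y, \Order_{Y}(n T)) \tensor_{F} \Ga\bigr)$, and you should supply this (or an equivalent pole-order filtration) rather than leave it as an anticipated obstacle.
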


\begin{proof}
	If $G = M$ is a finite projective $B$-module,
	then $R \Bar{\pi}_{\alg{B}, \ast} M = \pi_{\alg{B}, \ast} M$ is
	the underlying $F$-vector group $M$ of countable dimension.
	Hence it is in $\mathcal{W}_{F}$.
	For each $x \in T$,
	the object
		\[
				R \Bar{\pi}_{\Hat{\alg{k}}_{x}, \ast}(M \tensor_{B} \Hat{k}_{x})
			=
				\pi_{\Hat{\alg{k}}_{x}, \ast}(M \tensor_{B} \Hat{k}_{x})
		\]
	is the underlying Tate $F$-vector group $M \tensor_{B} \Hat{k}_{x} \in \mathcal{W}_{F}$.
	The object
		\[
				R \Bar{\pi}_{\alg{B}, \Hat{!}} M
			=
				\frac{
					\bigoplus_{x \in T}
						M \tensor_{B} \Hat{k}_{x}
				}{
					M
				}[-1],
		\]
	in degree $1$, is a profinite-dimensional $F$-vector group whose dual has countable dimension.
	Hence it is in $\mathcal{W}_{F}$.
	These imply
		$
				R \Bar{\pi}_{\alg{B}, \ast} G,
				R \Bar{\pi}_{\alg{B}, \Hat{!}} G
			\in
				\genby{\mathcal{W}_{F}}_{F^{\perar}_{\et}}
		$
	by the exact sequences
	$0 \to \Lambda \to \Ga \to \Ga \to 0$
	and $0 \to \nu(1) \to \Omega^{1} \to \Omega^{1} \to 0$.
	For $R^{q} \Bar{\pi}_{\alg{B}, \ast} \nu(1) \in \mathcal{W}_{F}$,
	one uses Proposition \ref{0080},
	and for $R^{q} \Bar{\pi}_{\alg{B}, \ast} \Lambda \in \mathcal{W}_{F}$,
	it is enough to note that $\alg{B} / \wp \alg{B}$ is the filtered union of
	the quasi-algebraic groups
		\[
			\frac{
				\Gamma(Y, \Order_{Y}(p n T)) \tensor_{F} \Ga
			}{
				\wp \bigl(
					\Gamma(Y, \Order_{Y}(n T)) \tensor_{F} \Ga
				\bigr)
			}
		\]
	over $n \ge 0$.
\end{proof}

Now we give duality results for $\Spec B$:

\begin{Prop} \label{0086}
	Let $M$ be a finite projective $B$-module.
	Set $N = \Hom_{B\mathrm{-module}}(M, \Omega_{B}^{1})$.
	View them as sheaves of abelian groups on $\Spec \alg{B}_{\Hat{c}, \nis}$.
	Consider the pairing $M \times N \to \Omega^{1} \onto \xi(1)$.
	Then the induced morphism
		\[
					R \Bar{\pi}_{\alg{B}, \nis, \ast} M
				\tensor^{L}
					R \Bar{\pi}_{\alg{B}, \nis, \Hat{!}} N
			\to
				R \Bar{\pi}_{\alg{B}, \nis, \Hat{!}} \xi(1)
			\to
				\xi_{\infty}[-1]
		\]
	in $D(F^{\perar}_{\zar})$ is a perfect pairing.
\end{Prop}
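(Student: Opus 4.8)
The plan is to reduce this to the classical coherent (Grothendieck–Serre) duality for the smooth affine curve $\Spec B$ over $F$, repackaged in the language of fibered sites. First I would observe that since $M$ is a finite projective $B$-module and $N = \Hom_{B\text{-module}}(M,\Omega_B^1)$, the pairing $M\times N\to\Omega_B^1$ is perfect as a pairing of $B$-modules; after base change to $F'$ and to the completed local rings $\Hat{\Order}_{k_x}$ it stays perfect, so the underlying statement is really a duality between $\pi_{\alg{B},\nis,\ast}M$, $\pi_{\Hat{\alg{k}}_x,\nis,\ast}N$ and so on, glued along the residue map. Concretely, the object $R\Bar{\pi}_{\alg{B},\nis,\ast}M$ is (by Proposition~\ref{0082} and the computation in its proof) just $\pi_{\alg{B},\ast}M = H^0(Y,\sheafhom(M,\Order_Y(\ast T)))$-type Tate vector group placed in degree $0$, and $R\Bar{\pi}_{\alg{B},\nis,\Hat{!}}N$ is the profinite-dimensional $F$-vector group $\bigl(\bigoplus_{x\in T}N\tensor_B\Hat{k}_x\bigr)/N$ placed in degree $1$, together with the morphism to $\xi_\infty[-1]$ supplied by \eqref{0354}.

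The key computation is then: the morphism $R\Bar{\pi}_{\alg{B},\nis,\ast}M\tensor^L R\Bar{\pi}_{\alg{B},\nis,\Hat{!}}N\to\xi_\infty[-1]$ is induced by the composite of the cup product \eqref{0342} (in its Nisnevich form), the pairing $M\tensor N\to\xi(1)$, and the trace \eqref{0339}. By Proposition~\ref{0082} both sides lie in $\genby{\mathcal{W}_F}_{F^{\perar}_{\et}}$, so by Proposition~\ref{0026} (applied after pushing to the \'etale site via $\varepsilon$, using the compatibility of \eqref{0338} and \eqref{0339}) it suffices to check that the two induced morphisms
\begin{align*}
	R\Bar{\pi}_{\alg{B},\nis,\ast}M
	&\to
	R\sheafhom_{F^{\perar}_{\zar}}\bigl(R\Bar{\pi}_{\alg{B},\nis,\Hat{!}}N,\,\xi_\infty[-1]\bigr),\\
	R\Bar{\pi}_{\alg{B},\nis,\Hat{!}}N
	&\to
	R\sheafhom_{F^{\perar}_{\zar}}\bigl(R\Bar{\pi}_{\alg{B},\nis,\ast}M,\,\xi_\infty[-1]\bigr)
\end{align*}
are isomorphisms. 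Since the left-hand objects are concentrated in a single degree and are successive extensions of Tate vector groups (and of $\Lambda$, $\xi$ only through the trace part), I would unwind both sides into exact sequences of Tate vector groups using Proposition~\ref{0082}, Proposition~\ref{0027} (duality of Tate vector groups: $R\sheafhom_{F^{\perar}_{\zar}}(\Ga^I\oplus\Ga^{\oplus J},\xi_\infty)$ computes the dual Tate vector group) and Proposition~\ref{0028}. Concretely one gets a morphism of short exact sequences whose outer terms compare $\pi_{\alg{B},\ast}M$ with $\bigl((\bigoplus_x N\tensor_B\Hat{k}_x)/N\bigr)^\vee$ and the other way around, and the statement to verify is exactly that $H^0(\Spec B, M)$ is the exact annihilator of $\bigl(\bigoplus_x N\tensor_B\Hat{k}_x\bigr)/N$ inside the local components, under the residue pairing $\bigoplus_x M\tensor_B\Hat{k}_x\times\bigoplus_x N\tensor_B\Hat{k}_x\to\bigoplus_x\Hat{k}_x\xrightarrow{\Res}F$. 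This is precisely the statement of coherent duality (strong approximation / the residue theorem for the curve $Y$): the image of the global sections is exactly the orthogonal complement of the "integral at $T$" part, and the quotient $\bigoplus_x N\tensor_B\Hat{k}_x / N$ pairs perfectly with $M = H^0(Y,M(\ast T))$.

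The main obstacle I anticipate is not the curve-duality input itself (which is classical) but rather bookkeeping: making sure the pairing produced by the abstract fibered-site cup product \eqref{0342} and the trace \eqref{0354} literally matches the naive residue pairing on $\bigoplus_x M\tensor_B\Hat{k}_x$, and that the identifications of Proposition~\ref{0082} are compatible with duals in the sense needed to invoke Propositions~\ref{0027} and \ref{0028}. The cleanest route is to first prove everything base-field-theoretically over $F$ (i.e.\ show the map of complexes of $F$-vector spaces underlying the pairing is a quasi-isomorphism, using the exact sequence $\pi_{\alg{B},\ast}M\to\bigoplus_x\pi_{\Hat{\alg{k}}_x,\ast}M\to R^1\Bar{\pi}_{\alg{B},\Hat{!}}M\to 0$ of Proposition~\ref{0357}'s analogue and its $N$-counterpart), then promote to the relative sites since the pairing is defined functorially in $F'\in F^{\perar}$ and each side is representable by an object of $\mathcal{W}_F$. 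I would also use the exact sequences \eqref{0484} to reduce any $\nu(1)$- or $\xi(1)$-coefficient comparison to the vector-bundle case, so that the only genuinely new ingredient beyond Propositions~\ref{0027}, \ref{0028}, \ref{0082} is the classical residue-theorem duality for $\Omega_B^1$, applied with $M$ and its twist $N$.
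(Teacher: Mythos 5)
Your proposal is correct and follows essentially the same route as the paper: identify $R\Bar{\pi}_{\alg{B},\nis,\ast}M$ as the discrete vector group $M\tensor_B\alg{B}$ in degree $0$ and $R\Bar{\pi}_{\alg{B},\nis,\Hat{!}}N$ as $\bigl(\bigoplus_{x\in T}N\tensor_B\Hat{\alg{k}}_x\bigr)/(N\tensor_B\alg{B})$ in degree $1$, rewrite the pairing as the residue pairing, invoke classical coherent duality to see that the underlying pairing of Tate vector spaces over $F$ is perfect, and conclude by Proposition~\ref{0027}. The only stray step is your appeal to Proposition~\ref{0026} and a passage through the \'etale site, which is unnecessary (checking that the two adjoint morphisms are isomorphisms is just the definition of a perfect pairing, and the whole argument stays in the Zariski site), but this does not affect the validity of the proof.
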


\begin{proof}
	We have
		\[
				R \Bar{\pi}_{\alg{B}, \nis, \ast} M
			\cong
				M \tensor_{B} \alg{B},
			\quad
				R \Bar{\pi}_{\alg{B}, \nis, \Hat{!}} N
			\cong
				\frac{
					\bigoplus_{x \in T}
						N \tensor_{B} \Hat{\alg{k}}_{x}
				}{
					N \tensor_{B} \alg{B}
				}[-1].
		\]
	The pairing (shifted by $1$) can alternatively be given by the natural morphisms
		\[
					M \tensor_{B} \alg{B}
				\times
					\frac{
						\bigoplus_{x \in T}
							N \tensor_{B} \Hat{\alg{k}}_{x}
					}{
						N \tensor_{B} \alg{B}
					}
			\to
				\frac{
					\bigoplus_{x \in T}
						\Omega_{\Hat{\alg{k}}_{x}}^{1}
				}{
					\Omega_{\alg{B}}^{1}
				}
			\stackrel{\Res}{\longrightarrow}
				\Ga
			\to
				\xi_{\infty}.
		\]
	The pairing
		\[
					M
				\times
					\frac{
						\bigoplus_{x \in T}
							N \tensor_{B} \Hat{k}_{x}
					}{
						N
					}
			\to
				\frac{
					\bigoplus_{x \in T}
						\Omega_{\Hat{k}_{x}}^{1}
				}{
					\Omega_{B}^{1}
				}
			\stackrel{\Res}{\longrightarrow}
				F
		\]
	is a perfect pairing of Tate vector spaces over $F$ by coherent duality.
	Hence the result follows from
	Proposition \ref{0027}.
\end{proof}

\begin{Prop} \label{0483}
	Let $M$ be a finite projective $B$-module.
	Set $N = \Hom_{B\mathrm{-module}}(M, \Omega_{B}^{1})$.
	View them as sheaves of abelian groups on $\Spec \alg{B}_{\Hat{c}, \et}$.
	Consider the pairing $M \times N \to \Omega^{1} \onto \nu(1)[1]$,
	where the last morphism is the connecting morphism for \eqref{0484}.
	Then the induced morphism
		\[
					R \Bar{\pi}_{\alg{B}, \ast} M
				\tensor^{L}
					R \Bar{\pi}_{\alg{B}, \Hat{!}} N
			\to
				R \Bar{\pi}_{\alg{B}, \Hat{!}} \nu(1)[1]
			\to
				\Lambda_{\infty}
		\]
	in $D(F^{\perar}_{\et})$ is a perfect pairing.
\end{Prop}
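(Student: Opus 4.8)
The plan is to deduce this étale duality from its Nisnevich counterpart, Proposition \ref{0086}, by descending the topology through Proposition \ref{0026}. First I would observe that $N = \Hom_{B\mathrm{-module}}(M, \Omega_{B}^{1})$ is again a finite projective $B$-module, since $\Omega_{B}^{1}$ is an invertible $B$-module; hence Proposition \ref{0082} applies to both $M$ and $N$, so the source objects $R \Bar{\pi}_{\alg{B}, \ast} M$ and $R \Bar{\pi}_{\alg{B}, \Hat{!}} N$ lie in $\genby{\mathcal{W}_{F}}_{F^{\perar}_{\et}}$. By Proposition \ref{0026} it is therefore enough to prove that, writing $\varepsilon$ both for $\Spec \alg{B}_{\Hat{c}, \et} \to \Spec \alg{B}_{\Hat{c}, \nis}$ and for $\Spec F^{\perar}_{\et} \to \Spec F^{\perar}_{\zar}$, the $R \varepsilon_{\ast}$-image of the pairing of the statement, followed by the morphism \eqref{0024}, is a perfect pairing in $D(F^{\perar}_{\zar})$; and for this I would show it coincides with the pairing of Proposition \ref{0086}.

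The identification of the source terms is immediate from the commutative square relating $\Bar{\pi}_{\alg{B}}$ in the two topologies, which gives $R \varepsilon_{\ast} R \Bar{\pi}_{\alg{B}, \et, \ast} \cong R \Bar{\pi}_{\alg{B}, \nis, \ast} R \varepsilon_{\ast}$ and the analogous isomorphism for the $\Hat{!}$-functors, together with the quasi-coherence of $M$ and $N$ (so that $R \varepsilon_{\ast} M = M$ and $R \varepsilon_{\ast} N = N$, and likewise over the complete local fields and over $F$, additive groups having vanishing higher cohomology on these étale sites). For the target, Proposition \ref{0356} shows $R \Bar{\pi}_{\alg{B}, \et, \Hat{!}} \nu(1)$ is concentrated in degree $1$; applying $R \varepsilon_{\ast}$ and using the vanishing $R^{q} \varepsilon_{\ast} R^{1} \Bar{\pi}_{\alg{B}, \et, \Hat{!}} \nu(1) = 0$ for $q \ge 2$ together with the identification \eqref{0338} of $R^{1} \varepsilon_{\ast} R^{1} \Bar{\pi}_{\alg{B}, \et, \Hat{!}} \nu(1)$ with $R^{1} \Bar{\pi}_{\alg{B}, \nis, \Hat{!}} \xi(1)$ via \eqref{0339}, one checks that $R \varepsilon_{\ast}$ of the étale trace \eqref{0350} composed with \eqref{0024} is exactly the Nisnevich trace \eqref{0354} — this is the compatibility of these residue-based trace maps already noted after \eqref{0339}. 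Finally, that $R \varepsilon_{\ast}$ carries the étale cup product \eqref{0342} and the pairing $M \times N \to \Omega^{1} \onto \nu(1)[1]$ (the connecting map of \eqref{0484}) to the Nisnevich cup product \eqref{0342} and to $M \times N \to \Omega^{1} \onto \xi(1)$ is the naturality of the cup-product and boundary morphisms under the morphism of fibered sites induced by $\varepsilon$, in the spirit of Proposition \ref{0332}, applied with an extra $R \varepsilon_{\ast}$ at the base level to accommodate the change of base site from $\Spec F^{\perar}_{\et}$ to $\Spec F^{\perar}_{\zar}$. Assembling these three identifications matches the $R \varepsilon_{\ast}$-image of the pairing of the statement with the pairing of Proposition \ref{0086}, which is perfect; hence the original pairing is perfect by Proposition \ref{0026}.

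The main obstacle I expect is this last identification: the cup-product machinery of Section \ref{0325} is built over a fixed base site, whereas here the base changes under $\varepsilon$, so one must either mildly extend that functoriality or factor it through $R \varepsilon_{\ast}$ on the base while carefully matching the three residue maps \eqref{0350}, \eqref{0354}, \eqref{0024} and tracking the passage from the degree-$1$ object $R \Bar{\pi}_{\alg{B}, \et, \Hat{!}} \nu(1)$ to the two-term complex $R \varepsilon_{\ast} R \Bar{\pi}_{\alg{B}, \et, \Hat{!}} \nu(1)$ whose top cohomology sheaf carries the $\xi$-coefficients. The remaining verifications — quasi-coherence bookkeeping and naturality of \eqref{0327} — are routine.
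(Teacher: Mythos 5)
Your proposal is correct and follows essentially the same route as the paper, whose proof of this statement is simply the citation of Propositions \ref{0086}, \ref{0082} and \ref{0026} — i.e.\ exactly your reduction: representability via Proposition \ref{0082} (using that $N$ is again finite projective), descent of topology via Proposition \ref{0026}, and identification with the Nisnevich pairing of Proposition \ref{0086}. The compatibility checks you flag (matching the traces \eqref{0350}, \eqref{0354}, \eqref{0024} and the cup products under $R\varepsilon_{\ast}$) are left implicit in the paper, and your spelled-out version of them is consistent with the constructions in Section \ref{0333}.
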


\begin{proof}
	This follows from Propositions \ref{0086}, \ref{0082} and \ref{0026}.
\end{proof}

\begin{Prop} \label{0087}
	Let $(G, H)$ be either pair of objects
	$(\Lambda, \nu(1))$ or $(\nu(1), \Lambda)$
	of $\Ab(\alg{B}_{\Hat{c}, \et})$.
	Consider the natural pairing $G \times H \to \nu(1)$.
	Then the induced morphism
		\[
					R \Bar{\pi}_{\alg{B}, \ast} G
				\tensor^{L}
					R \Bar{\pi}_{\alg{B}, \Hat{!}} H
			\to
				R \Bar{\pi}_{\alg{B}, \Hat{!}} \nu(1)
			\to
				\Lambda_{\infty}[-1]
		\]
	in $D(F^{\perar}_{\et})$ is a perfect pairing.
\end{Prop}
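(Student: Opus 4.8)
The plan is to deduce Proposition~\ref{0087} from the duality for line bundles, Proposition~\ref{0483}, by resolving $\Lambda$ and $\nu(1)$ by means of the Artin--Schreier and Cartier exact sequences. Since $F$ is perfect, the sheaf $\Omega^{1}$ on $\Spec \alg{B}_{\Hat{c}, \et}$ restricts to a line bundle on $\Spec \alg{B}$ with $\sheafhom_{B}(\Affine^{1}, \Omega^{1}) = \Omega^{1}$ and $\sheafhom_{B}(\Omega^{1}, \Omega^{1}) = \Affine^{1}$, so Proposition~\ref{0483} applies to the pairs $(M, N) = (\Affine^{1}, \Omega^{1})$ and $(M, N) = (\Omega^{1}, \Affine^{1})$; in either orientation it identifies $R \Bar{\pi}_{\alg{B}, \ast} \Affine^{1}$ with $R \sheafhom_{F^{\perar}_{\et}}(R \Bar{\pi}_{\alg{B}, \Hat{!}} \Omega^{1}, \Lambda_{\infty})$ and $R \Bar{\pi}_{\alg{B}, \ast} \Omega^{1}$ with $R \sheafhom_{F^{\perar}_{\et}}(R \Bar{\pi}_{\alg{B}, \Hat{!}} \Affine^{1}, \Lambda_{\infty})$, together with the companion identifications obtained by exchanging the roles of $\ast$ and $\Hat{!}$.

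I would treat the two cases $(G, H) = (\Lambda, \nu(1))$ and $(G, H) = (\nu(1), \Lambda)$ in parallel. For the first, the exact sequence $0 \to \Lambda \to \Affine^{1} \xrightarrow{\wp} \Affine^{1} \to 0$ (or its variant with $\Frob^{-1} - 1$) exhibits $R \Bar{\pi}_{\alg{B}, \ast} \Lambda$ as the mapping fibre of the corresponding operator on $R \Bar{\pi}_{\alg{B}, \ast} \Affine^{1}$, and $0 \to \nu(1) \to \Omega^{1} \xrightarrow{C - 1} \Omega^{1} \to 0$ exhibits $R \Bar{\pi}_{\alg{B}, \Hat{!}} \nu(1)$ as the mapping fibre of $C - 1$ on $R \Bar{\pi}_{\alg{B}, \Hat{!}} \Omega^{1}$. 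The natural pairing $\Lambda \times \nu(1) \to \nu(1)$ is the restriction of the multiplication $\Affine^{1} \times \Omega^{1} \to \Omega^{1}$ along $\Lambda \into \Affine^{1}$ and $\nu(1) \into \Omega^{1}$, and the trace morphism \eqref{0350} was built precisely so that the residue map carries $C - 1$ to $\Frob^{-1} - 1$. Passing to $R \Bar{\pi}_{\alg{B}, \ast}(\var) \tensor^{L} R \Bar{\pi}_{\alg{B}, \Hat{!}}(\var)$ and using the cup product \eqref{0342}, Proposition~\ref{0483}, and the functoriality of the cup product formalism of Section~\ref{0325}, one obtains a morphism of distinguished triangles whose outer two vertical arrows are the isomorphisms furnished by Proposition~\ref{0483}; by the five lemma the remaining vertical arrow, which is one of the two adjoint morphisms attached to the pairing of Proposition~\ref{0087}, is an isomorphism. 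Running the same argument with the roles of the two resolutions interchanged gives the other adjoint morphism, so the pairing is perfect. The case $(\nu(1), \Lambda)$ is handled identically, now applying Proposition~\ref{0483} to $(M, N) = (\Omega^{1}, \Affine^{1})$.

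The step I expect to be the main obstacle is verifying that, under the duality of Proposition~\ref{0483} between $R \Bar{\pi}_{\alg{B}, \ast} \Omega^{1}$ and $R \Bar{\pi}_{\alg{B}, \Hat{!}} \Affine^{1}$ (and its companion), the operator $C - 1$ on one side corresponds to the transpose of $\wp$ (respectively $\Frob^{-1} - 1$) on the other, and that the induced pairing on mapping fibres really is the pairing in the statement rather than a Frobenius twist of it. Both points come down to the identities $C(a^{p} \omega) = a \, C(\omega)$ and $\Res(C \omega) = \Res(\omega)^{1/p}$ together with the residue theorem --- that is, to the very commutative squares used to construct \eqref{0350} --- so that the matching of operators and the compatibility of pairings hold already at the level of complexes; once this is pinned down the rest is the formal five-lemma argument above, and the fact that all the objects involved lie in $\genby{\mathcal{W}_{F}}_{F^{\perar}_{\et}}$ (Proposition~\ref{0082}) guarantees that $R \sheafhom_{F^{\perar}_{\et}}(\var, \Lambda_{\infty})$ is compatible with the relevant triangles.
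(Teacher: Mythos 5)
Your proposal is correct and is essentially the paper's own argument: the proof given there simply says that the exact sequences $0 \to \Lambda \to \Ga \to \Ga \to 0$ and $0 \to \nu(1) \to \Omega^{1} \to \Omega^{1} \to 0$ reduce the statement to Proposition \ref{0483}, which is exactly your reduction. The compatibility of $\wp$ with $C - 1$ under the residue pairing that you flag as the delicate point is indeed the content of the squares used to construct \eqref{0350} (cf.\ Proposition \ref{0047}), so your elaboration fills in the details the paper leaves implicit.
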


\begin{proof}
	Using the exact sequences
	$0 \to \Lambda \to \Ga \to \Ga \to 0$
	and $0 \to \nu(1) \to \Omega^{1} \to \Omega^{1} \to 0$,
	this reduces to Proposition \ref{0483}.
\end{proof}


\subsection{Relative sites for tubular neighborhoods}
\label{0340}

Let $A$ be a ring.
Assume all of the following:
\begin{enumerate}
	\item \label{0485}
		$A$ is a two-dimensional regular integral domain.
	\item \label{0486}
		$A$ contains a primitive $p$-th root of unity $\zeta_{p}$.
	\item \label{0487}
		The radical $I$ of the ideal $(p)$ of $A$ is principal.
	\item \label{0488}
		$B := A / I$ is a one-dimensional geometrically connected smooth algebra over $F$.
	\item \label{0489}
		The pair $(A, I)$ is complete.
\end{enumerate}
This means that $\Spec A$ is a $p$-adic tubular neighborhood of the curve $\Spec B$.
The conditions \eqref{0486} and \eqref{0487} are for simplicity.
We will apply the results of this section to two-dimensional local rings in the next section
after enough reduction steps where these conditions are satisfied.
In particular, the coefficient sheaf for the duality will be just $\Lambda$, not $\Lambda_{n}(r)$.
We do not try to give best general results for tubular neighborhoods of their own.

Note that $A$ is excellent by \cite[Proposition 2.3]{Gre82} and \cite[Main Theorem 1]{KS21}.
Let $\varpi$ be a generator of $I$.
Set $R = A[1 / p]$.
We use the same notation as Section \ref{0333}
applied to $B$.
For example, let $Y$ be the smooth compactification of $V := \Spec B$.

We build relative sites for $A$.
For any $F' \in F^{\perar}$,
the $B$-algebra $\alg{B}(F') = B \tensor_{F} F'$ is relatively perfect.
Define $\Hat{\alg{A}}(F')$ to be the Kato canonical lifting of $\alg{B}(F')$ over $A$.
If $F'$ has only one direct factor,
then it satisfies the same conditions above as $A$,
with $F$, $B$, $I$ replaced by $F'$, $\alg{B}(F')$ and $I \Hat{\alg{A}}(F')$, respectively.
Set $\Hat{\alg{R}}(F') = \Hat{\alg{A}}(F') \tensor_{A} R$.
For a point $x'$ of $\Spec \alg{B}(F')$ ($\subset \Spec \Hat{\alg{A}}(F')$),
let $\Hat{\alg{A}}(F')_{x'}^{h}$ and $\Hat{\alg{A}}(F')_{x'}^{sh}$ be
the henselian and strictly henselian local rings, respectively, of $\Hat{\alg{A}}(F')$ at $x'$.
They are two-dimensional (resp.\ one-dimensional) excellent regular local rings
if $x'$ is a closed (resp.\ generic) point of $\Spec B'$.
Set $\Hat{\alg{R}}(F')_{x'}^{h} = \Hat{\alg{A}}(F')_{x'}^{h} \tensor_{A} R$
and $\Hat{\alg{R}}(F')_{x'}^{sh} = \Hat{\alg{A}}(F')_{x'}^{sh} \tensor_{A} R$.

For any closed point $x \in Y$,
the complete local field $\Hat{k}_{x}$ of $Y$ at $x$ is relatively perfect over $B$.
Define $\Hat{\Order}_{K_{\eta_{x}}}$ to be the canonical lifting of $\Hat{k}_{x}$ over $A$.
It is a complete discrete valuation ring with prime element $\varpi$ and residue field $\Hat{k}_{x}$.
Let $\Hat{K}_{\eta_{x}}$ be the fraction field of $\Hat{\Order}_{K_{\eta_{x}}}$.
For any $F' \in F^{\perar}$,
set $\Hat{\alg{k}}_{x}(F') = \Hat{k}_{x} \Hat{\tensor}_{F} F'$ as in
Section \ref{0333}.
It is relatively perfect over $\Hat{k}_{x}$.
Define $\Hat{\alg{O}}_{K_{\eta_{x}}}(F')$ to be the canonical lifting of
$\Hat{\alg{k}}_{x}(F')$ over $\Hat{\Order}_{K_{\eta_{x}}}$.
There exists a unique $A$-algebra homomorphism $\Hat{\alg{A}}(F') \to \Hat{\alg{O}}_{K_{\eta_{x}}}(F')$
whose reduction $(\var) \tensor_{A} B$ is the natural map $\alg{B}(F') \to \Hat{\alg{k}}_{x}(F')$.
Set $\Hat{\alg{K}}_{\eta_{x}}(F') = \Hat{\alg{O}}_{K_{\eta_{x}}}(F') \tensor_{\Hat{\Order}_{K_{\eta_{x}}}} \Hat{K}_{\eta_{x}}$.
(The strange notation ``$\eta_{x}$'' is for consistency
when the results of this section are applied in Section \ref{0303}.)
For a point $x'$ of $\Spec \Hat{\alg{k}}_{x}(F')$ ($\subset \Spec \Hat{\alg{O}}_{K_{\eta_{x}}}(F')$),
let $\Hat{\alg{O}}_{K_{\eta_{x}}}(F')_{x'}^{h}$ and $\Hat{\alg{O}}_{K_{\eta_{x}}}(F')_{x'}^{sh}$ be
the henselian and strictly henselian local rings, respectively, of $\Hat{\alg{O}}_{K_{\eta_{x}}}(F')$ at $x'$.
They are discrete valuation rings
with residue fields $\Hat{\alg{k}}_{x}(F')_{x'}$ and $\Hat{\alg{k}}_{x}(F')_{x'}^{\sep}$, respectively.
(Note that $\Hat{\alg{k}}_{x}(F')$ is a finite product of complete discrete valuation fields.)
Set $\Hat{\alg{K}}_{\eta_{x}}(F')_{x'}^{h} = \Hat{\alg{O}}_{K_{\eta_{x}}}(F')_{x'}^{h} \tensor_{A} R$
and $\Hat{\alg{K}}_{\eta_{x}}(F')_{x'}^{sh} = \Hat{\alg{O}}_{K_{\eta_{x}}}(F')_{x'}^{sh} \tensor_{A} R$.

We have a commutative diagram
	\[
		\begin{CD}
				B
			@>>>
				\alg{B}(F')
			\\ @VVV @VVV \\
				\Hat{k}_{x}
			@>>>
				\Hat{\alg{k}}_{x}(F')
		\end{CD}
	\]
All the morphisms are relatively perfect.
The Kato canonical lifting of this diagram to $A$ is
	\[
		\begin{CD}
				A
			@>>>
				\Hat{\alg{A}}(F')
			\\ @VVV @VVV \\
				\Hat{\Order}_{K_{\eta_{x}}}
			@>>>
				\Hat{\alg{O}}_{K_{\eta_{x}}}(F'),
		\end{CD}
	\]
The base change $(\var) \tensor_{A} R$ of this latter diagram gives a commutative digram
	\[
		\begin{CD}
				R
			@>>>
				\Hat{\alg{R}}(F')
			\\ @VVV @VVV \\
				\Hat{K}_{\eta_{x}}
			@>>>
				\Hat{\alg{K}}_{\eta_{x}}(F').
		\end{CD}
	\]

The functors
$\alg{B}, \Hat{\alg{A}}, \Hat{\alg{R}}, \Hat{\alg{k}}_{x}, \Hat{\alg{O}}_{K_{\eta_{x}}}, \Hat{\alg{K}}_{\eta_{x}}$
are $F^{\perar}$-algebras.
The one $\Hat{\alg{O}}_{K_{\eta_{x}}}$ is the canonical lifting system for $\Hat{\Order}_{K_{\eta_{x}}}$.
We have a natural commutative diagram
	\[
		\begin{CD}
				\Hat{\alg{K}}_{\eta_{x}}
			@<<<
				\Hat{\alg{O}}_{K_{\eta_{x}}}
			@>>>
				\Hat{\alg{k}}_{x}
			\\ @AAA @AAA @AAA \\
				\Hat{\alg{R}}
			@<<<
				\Hat{\alg{A}}
			@>>>
				\alg{B}.
		\end{CD}
	\]
It defines a commutative diagram
	\[
		\begin{CD}
				\Spec \Hat{\alg{K}}_{\eta_{x}, \et}
			@> j_{x} >>
				\Spec \Hat{\alg{O}}_{K_{\eta_{x}}, \et}
			@< i_{x} <<
				\Spec \Hat{\alg{k}}_{x, \et}
			\\
			@V \pi_{\Hat{\alg{K}}_{\eta_{x}} / \Hat{\alg{R}}} VV
			@V \pi_{\Hat{\alg{O}}_{K_{\eta_{x}}} / \Hat{\alg{A}}} VV
			@VV \pi_{\Hat{\alg{k}}_{x} / \alg{B}} V
			\\
				\Spec \Hat{\alg{R}}_{\et}
			@>> j >
				\Spec \Hat{\alg{A}}_{\et}
			@<< i <
				\Spec \alg{B}_{\et}
		\end{CD}
	\]
of morphisms of sites.
Define $\Spec \Hat{\alg{R}}_{\Hat{c}, \et}$ and $\Spec \Hat{\alg{A}}_{\Hat{c}, \et}$ to be the total sites
	\[
			\left(
					\bigsqcup_{x \in T}
						\Spec \Hat{\alg{K}}_{\eta_{x}, \et}
				\to
					\Spec \Hat{\alg{R}}_{\et}
			\right)
		\quad \text{and} \quad
			\left(
					\bigsqcup_{x \in T}
						\Spec \Hat{\alg{O}}_{K_{\eta_{x}}, \et}
				\to
					\Spec \Hat{\alg{A}}_{\et}
			\right),
	\]
respectively (and $\Spec \alg{B}_{\Hat{c}, \et}$ as before).
The morphisms $\pi_{\Hat{\alg{R}}}$ and $\pi_{\Hat{\alg{K}}_{\eta_{x}}}$ induce morphisms
	\[
			\Bar{\pi}_{\Hat{\alg{R}}},
			\Bar{\pi}_{\Hat{\alg{K}}_{\eta_{x}}}
		\colon
			\Spec \Hat{\alg{R}}_{\Hat{c}, \et}
		\to
			\Spec F^{\perar}_{\et},
	\]
and the morphisms $\pi_{\Hat{\alg{A}}}$ and $\pi_{\Hat{\alg{O}}_{K_{\eta_{x}}}}$ induce morphisms
	\[
			\Bar{\pi}_{\Hat{\alg{A}}},
			\Bar{\pi}_{\Hat{\alg{O}}_{K_{\eta_{x}}}}
		\colon
			\Spec \Hat{\alg{A}}_{\Hat{c}, \et}
		\to
			\Spec F^{\perar}_{\et}.
	\]
We have functors
	\begin{gather*}
				\Bar{\pi}_{\Hat{\alg{R}}, \Hat{!}}
			=
				\left[
						\Bar{\pi}_{\Hat{\alg{R}}, \ast}
					\to
						\bigoplus_{x \in T}
							\Bar{\pi}_{\Hat{\alg{K}}_{\eta_{x}}, \ast}
				\right][-1]
			\colon
				\Ch(\Hat{\alg{R}}_{\Hat{c}, \et})
			\to
				\Ch(F^{\perar}_{\et}),
		\\
				\Bar{\pi}_{\Hat{\alg{A}}, \Hat{!}}
			=
				\left[
						\Bar{\pi}_{\Hat{\alg{A}}, \ast}
					\to
						\bigoplus_{x \in T}
							\Bar{\pi}_{\Hat{\alg{O}}_{K_{\eta_{x}}}, \ast}
				\right][-1]
			\colon
				\Ch(\Hat{\alg{A}}_{\Hat{c}, \et})
			\to
				\Ch(F^{\perar}_{\et}).
	\end{gather*}
We have a morphism
	\begin{equation} \label{0344}
				R \Bar{\pi}_{\Hat{\alg{R}}, \ast} G
			\tensor^{L}
				R \Bar{\pi}_{\Hat{\alg{R}}, \Hat{!}} H
		\to
			R \Bar{\pi}_{\Hat{\alg{R}}, \Hat{!}}(G \tensor^{L} H)
	\end{equation}
in $D(F^{\perar}_{\et})$ functorial in $G, H \in D(\Hat{\alg{R}}_{\Hat{c}, \et})$
and a morphism
	\[
				R \Bar{\pi}_{\Hat{\alg{A}}, \ast} G
			\tensor^{L}
				R \Bar{\pi}_{\Hat{\alg{A}}, \Hat{!}} H
		\to
			R \Bar{\pi}_{\Hat{\alg{A}}, \Hat{!}}(G \tensor^{L} H)
	\]
in $D(F^{\perar}_{\et})$ functorial in $G, H \in D(\Hat{\alg{A}}_{\Hat{c}, \et})$.

We will study the duality for
$R \Bar{\pi}_{\Hat{\alg{R}}, \ast}$ and $R \Bar{\pi}_{\Hat{\alg{R}}, \Hat{!}}$.
For this, we use the following nearby cycle constructions.
The pairs $(j, j_{x})_{x \in T}$ and $(i, i_{x})_{x \in T}$ of morphisms induce morphisms
	\[
			\Spec \Hat{\alg{R}}_{\Hat{c}, \et}
		\stackrel{\Bar{\jmath}}{\to}
			\Spec \Hat{\alg{A}}_{\Hat{c}, \et}
		\stackrel{\Bar{\imath}}{\gets}
			\Spec \alg{B}_{\Hat{c}, \et},
	\]
respectively.
Define
	\begin{gather*}
					\Psi
				=
					i^{\ast} j_{\ast}
			\colon
				\Ab(\Hat{\alg{R}}_{\et})
			\to
				\Ab(\alg{B}_{\et}),
		\\
					\Psi_{x}
				=
					i_{x}^{\ast} j_{x, \ast}
			\colon
				\Ab(\Hat{\alg{K}}_{\eta_{x}, \et})
			\to
				\Ab(\Hat{\alg{k}}_{x, \et}),
		\\
					\Bar{\Psi}
				=
					\Bar{\imath}^{\ast} \Bar{\jmath}_{\ast}
			\colon
				\Ab(\Hat{\alg{R}}_{\Hat{c}, \et})
			\to
				\Ab(\alg{B}_{\Hat{c}, \et}).
	\end{gather*}
Their derived functors are described as follows.

\begin{Prop} \label{0349}
	Let $\Bar{G} \in D(\Hat{\alg{R}}_{\Hat{c}, \et})$.
	Let $G$ and $G_{x}$ (where $x \in T$) be the natural projections
	to $\Spec \Hat{\alg{R}}_{\et}$ and $\Spec \Hat{\alg{K}}_{\eta_{x}, \et}$, respectively.
	Then for any $q \in \Z$,
	the sheaf $R^{q} \Bar{\Psi} \Bar{G} \in \Ab(\alg{B}_{\Hat{c}, \et})$ is given by the triple
		\[
			\left(
				R^{q} \Psi G,\,
				(R^{q} \Psi_{x} G)_{x \in T},\,
					R^{q} \Psi G
				\to
					\bigoplus_{x \in T}
						\pi_{\Hat{\alg{k}}_{x} / \alg{B}, \ast}
						R^{q} \Psi_{x} G_{x}
			\right).
		\]
\end{Prop}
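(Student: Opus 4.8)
The statement is a concrete identification of the cohomology sheaves of the total-site nearby-cycle functor $R\Bar{\Psi}$ in terms of the component nearby-cycle functors $R^q\Psi$ and $R^q\Psi_x$, together with the natural gluing morphism. The plan is to reduce everything to the description of cohomology sheaves on a total site given in Proposition \ref{0352} (and, in the two-term case, Proposition \ref{0345}). First I would observe that $\Spec\Hat{\alg{R}}_{\Hat{c},\et}$ and $\Spec\Hat{\alg{A}}_{\Hat{c},\et}$ and $\Spec\alg{B}_{\Hat{c},\et}$ are all total sites over the poset $\{\bullet\le\bullet\}$, and that the morphisms $\Bar{\jmath}$ and $\Bar{\imath}$ are each built from compatible morphisms on the two projection factors (namely $(j,(j_x)_{x})$ and $(i,(i_x)_{x})$), where on the $T$-factor we use the disjoint-union decomposition of Section \ref{0325}. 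Thus $\Bar\Psi=\Bar\imath^{\ast}\Bar\jmath_{\ast}$ is computed factor-wise up to the gluing data.

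The key steps in order: (1) represent $\Bar{G}$ by a K-injective complex of sheaves on $\Spec\Hat{\alg{R}}_{\Hat{c},\et}$; by the remarks in Section \ref{0325} (the projection functors $q_{\Hat{\alg{R}}}$ and $q_{\Hat{\alg{K}}_{\eta_x}}$ have exact pullbacks, so their pushforwards preserve K-injectives), its projections $G$ and $G_x$ are represented by K-injective complexes on the respective component sites, and the gluing morphism $G\to\bigoplus_x R f_{x\ast}G_x$ is represented on the level of these complexes. (2) Apply $\Bar\jmath_{\ast}$: since the underlying functor of $\Bar\jmath$ respects the fibered structure, $\Bar\jmath_{\ast}$ of the K-injective representative is the total-site object whose $\Spec\Hat{\alg{A}}_{\et}$-projection is $j_{\ast}$ of the representative of $G$ and whose $\Spec\Hat{\alg{O}}_{K_{\eta_x}}$-projection is $j_{x,\ast}$ of that of $G_x$, with the induced gluing morphism. (3) Apply $\Bar\imath^{\ast}$, which is exact and is computed factor-wise as $i^{\ast}$ on the base and $i_x^{\ast}$ on each $T$-factor, with gluing morphism obtained by pullback. (4) Take $H^q$ of the resulting complex: by Proposition \ref{0352}, $H^q$ of a total-site complex is the tuple of $H^q$ of its projections with the induced morphisms on $H^q$. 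Combining (2)--(4) identifies the base-projection of $R^q\Bar\Psi\Bar{G}$ with $R^q\Psi G$, the $x$-th $T$-projection with $R^q\Psi_x G_x$, and the gluing morphism with $R^q$ applied to $\Psi G\to\bigoplus_x \pi_{\Hat{\alg{k}}_x/\alg{B},\ast}\Psi_x G_x$, which is exactly the asserted triple.

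The main obstacle is bookkeeping the gluing morphism correctly through the sequence $\Bar\jmath_{\ast}$ then $\Bar\imath^{\ast}$: one must check that the morphism ``$\Psi G\to\bigoplus_x\pi_{\Hat{\alg{k}}_x/\alg{B},\ast}\Psi_x G_x$'' obtained from the total-site formalism coincides with the one built from the commutative square relating $i^{\ast}j_{\ast}$ on the base and $i_x^{\ast}j_{x,\ast}$ on the fibers — i.e.\ that the base-change maps $\pi_{\Hat{\alg{k}}_x/\alg{B}}^{\ast}i^{\ast}j_{\ast}\to i_x^{\ast}j_{x,\ast}\pi_{\Hat{\alg{K}}_{\eta_x}/\Hat{\alg{R}}}^{\ast}$ are the ones appearing implicitly in the fibered-site structure of $\Bar\jmath$ and $\Bar\imath$. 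This is a diagram chase that is routine but must be done with care; once it is in place, the cohomology-sheaf computation is immediate from Proposition \ref{0352}. (An alternative, perhaps cleaner, route is to invoke Proposition \ref{0345} directly, viewing $\Bar\Psi=\Bar\imath^{\ast}\Bar\jmath_{\ast}$ as the composite of two total-site morphisms over $\{\bullet\le\bullet\}$ and reading off $H^q$ of each stage; I would present the argument in whichever of these two forms is shortest.)
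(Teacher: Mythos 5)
Your proposal is correct and matches the paper's argument: the paper's entire proof is ``This follows from Proposition \ref{0345}'', which is precisely the alternative route you name at the end (and your longer K-injective unwinding is just an explicit version of the same computation). No gaps.
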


\begin{proof}
	This follows from Proposition \ref{0345}.
\end{proof}

\begin{Prop} \label{0347}
	Let $G \in D^{+}(\Hat{\alg{R}}_{\et})$,
	$F' \in F^{\perar}$,
	$x' \in \Spec \alg{B}(F')$
	and $q \in \Z$.
	\begin{enumerate}
		\item
			The stalk of $R^{q} \Psi G \in \Ab(\alg{B}_{\et})$ at $x'$ is given by
			$H^{q}(\Hat{\alg{R}}(F')_{x'}^{sh}, G|_{\Hat{\alg{R}}(F'), \et})$.
		\item
			The stalk of $R^{q} \varepsilon_{\ast} R \Psi G \in \Ab(\alg{B}_{\nis})$ at $x'$ is given by
			$H^{q}(\Hat{\alg{R}}(F')_{x'}^{h}, G|_{\Hat{\alg{R}}(F'), \et})$.
	\end{enumerate}
	Here $G|_{\Hat{\alg{R}}(F'), \et} \in D^{+}(\Hat{\alg{R}}(F')_{\et})$ is
	pulled back to $D^{+}(\Hat{\alg{R}}(F')_{x', \et}^{h})$ and $\Hat{\alg{R}}(F')_{x', \et}^{sh})$
	and taken cohomology.
\end{Prop}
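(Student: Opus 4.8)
The plan is to reduce both statements, via the relative-site formalism of Section \ref{0029}, to the standard description of stalks of nearby cycles on the absolute scheme $\Spec \Hat{\alg{A}}(F')$, and then to a routine computation with (strict) henselizations. Throughout I write $j_{F'} \colon \Spec \Hat{\alg{R}}(F')_{\et} \to \Spec \Hat{\alg{A}}(F')_{\et}$ and $i_{F'} \colon \Spec \alg{B}(F')_{\et} \to \Spec \Hat{\alg{A}}(F')_{\et}$ for the open and closed immersions underlying $j$ and $i$, and set $G' = G|_{\Hat{\alg{R}}(F')_{\et}}$, pulled back as indicated in the statement.

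The first step is the reduction to the absolute scheme. By Proposition \ref{0166}, the stalk at $x'$ of a sheaf on $\Spec \alg{B}_{\et}$ (resp.\ $\Spec \alg{B}_{\nis}$) is the stalk at $x'$ of its restriction to $\Spec \alg{B}(F')_{\et}$ (resp.\ $\Spec \alg{B}(F')_{\nis}$). Since $\Psi = i^{\ast} j_{\ast}$ and $i^{\ast}$, $i_{F'}^{\ast}$ are exact, we have $R \Psi = i^{\ast} R j_{\ast}$, and I would show $(R \Psi G)|_{\alg{B}(F')_{\et}} \cong i_{F'}^{\ast} R j_{F', \ast} G'$. Here $i^{\ast}$ commutes with restriction to the fibre by Proposition \ref{0311}; and $j_{\ast}$ commutes with restriction already at the sheaf level, directly from the fibrewise description of $j_{\ast}$ through Proposition \ref{0032}, while the restriction functors $(\var)|_{\Hat{\alg{R}}(F')_{\et}}$ and $(\var)|_{\Hat{\alg{A}}(F')_{\et}}$ are exact and preserve K-injectives, being the composite of a localization restriction (which has the exact left adjoint $j_{!}$, hence preserves injectives) with the exact pushforward of the morphism of sites in Proposition \ref{0031} (which preserves injectives since its pullback is exact). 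Therefore restriction commutes with $R j_{\ast}$ as well, which gives the displayed isomorphism, and likewise $(R \varepsilon_{\ast} R \Psi G)|_{\alg{B}(F')_{\nis}} \cong R \varepsilon'_{\ast}\bigl(i_{F'}^{\ast} R j_{F', \ast} G'\bigr)$, where $\varepsilon' \colon \Spec \alg{B}(F')_{\et} \to \Spec \alg{B}(F')_{\nis}$ is the identity morphism of sites.

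For Statement (1), the \'etale stalk of $R^{q}\Psi G$ at the point $p_{(x', F')}$ is the value of $R^{q}\bigl(i_{F'}^{\ast} R j_{F', \ast}\bigr) G'$ on the strictly henselian local scheme $\Spec \Hat{\alg{A}}(F')_{x'}^{sh}/I$, that is, the stalk of $R^{q} j_{F', \ast} G'$ at a geometric point $\bar x'$ over $x'$ (restriction to the closed subscheme does not change stalks there). By the usual computation of $R j_{\ast}$ for an open immersion this stalk is $H^{q}\bigl((\Spec \Hat{\alg{A}}(F')_{x'}^{sh} \times_{A} R)_{\et}, G'\bigr)$, and $\Spec \Hat{\alg{A}}(F')_{x'}^{sh} \times_{A} R = \Spec \Hat{\alg{R}}(F')_{x'}^{sh}$ by the definitions in Section \ref{0340}; the only point to note is that an injective resolution of $G'$ restricted to the small \'etale site of the filtered inverse limit $\Spec \Hat{\alg{R}}(F')_{x'}^{sh}$ is still acyclic, since \'etale cohomology commutes with such limits (\cite[Expos\'e VII, Corollaire 5.9]{AGV72b}), exactly as in the proof of Proposition \ref{0314}. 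This gives Statement (1). For Statement (2), the Nisnevich stalk at $x'$ of $R \varepsilon'_{\ast}\bigl(i_{F'}^{\ast} R j_{F', \ast} G'\bigr)$ equals $R \Gamma\bigl((\Spec \Order_{\alg{B}(F'), x'}^{h})_{\et},\, i_{h}^{\ast} R j_{h, \ast} G'\bigr)$, where $i_{h}$ and $j_{h}$ are the base changes of $i_{F'}$ and $j_{F'}$ to the henselian local scheme $\Spec \Hat{\alg{A}}(F')_{x'}^{h}$ (whose closed point is that of $\Spec \Order_{\alg{B}(F'), x'}^{h}$, with residue field $\kappa(x')$); passing to this base change again uses that cohomology commutes with the inverse limit defining the henselization. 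Computing the \'etale cohomology of this henselian local scheme as the Galois cohomology of $\kappa(x')$ with coefficients in the stalk at $\bar x'$, and identifying that stalk as before with $R \Gamma\bigl((\Spec \Hat{\alg{R}}(F')_{x'}^{sh})_{\et}, G'\bigr)$, we reach
\[
    R \Gamma\bigl(\Gal(\kappa(x')^{\sep}/\kappa(x')),\, R \Gamma((\Spec \Hat{\alg{R}}(F')_{x'}^{sh})_{\et}, G')\bigr),
\]
and the Hochschild--Serre spectral sequence for the pro-(finite \'etale) $\Gal(\kappa(x')^{\sep}/\kappa(x'))$-cover $\Spec \Hat{\alg{R}}(F')_{x'}^{sh} \to \Spec \Hat{\alg{R}}(F')_{x'}^{h}$ identifies this with $R \Gamma\bigl((\Spec \Hat{\alg{R}}(F')_{x'}^{h})_{\et}, G'\bigr)$. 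Taking $H^{q}$ yields Statement (2).

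The substance of the argument is concentrated in the first step, namely verifying that restriction to a fibre commutes with the derived functors $R j_{\ast}$ and $R \varepsilon_{\ast}$; the remaining ingredients are classical and the compatibility of inverting $p$ with (strict) henselization is built into the definitions in Section \ref{0340}. I therefore expect no essential obstacle, only careful bookkeeping of the reductions, and I note that the argument works for arbitrary $G \in D^{+}(\Hat{\alg{R}}_{\et})$, with no torsion hypothesis needed since Steps 2 and 3 use the stalk formula for $R j_{\ast}$ and the Hochschild--Serre spectral sequence rather than any proper base change statement.
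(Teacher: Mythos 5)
Your argument is correct; the paper itself offers no proof beyond ``Obvious,'' and your write-up is precisely the standard unwinding the author intends: reduce to the fibre $\Spec \Hat{\alg{R}}(F')$ via Proposition \ref{0166} together with the compatibility of $i^{\ast}$, $R j_{\ast}$ and $R \varepsilon_{\ast}$ with restriction, then apply the usual stalk formula for $R j_{\ast}$ along an open immersion, the limit theorem for the (strict) henselization, and, for the Nisnevich statement, Hochschild--Serre for the pro-Galois cover $\Spec \Hat{\alg{R}}(F')_{x'}^{sh} \to \Spec \Hat{\alg{R}}(F')_{x'}^{h}$. I see no gaps, and your closing remark that no torsion hypothesis is needed is also accurate.
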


\begin{proof}
	Obvious.
\end{proof}

\begin{Prop} \label{0348}
	Let $G \in D^{+}(\Hat{\alg{K}}_{\eta_{x}, \et})$,
	$F' \in F^{\perar}$,
	$x' \in \Spec \Hat{\alg{k}}_{x}(F')$
	and $q \in \Z$.
	\begin{enumerate}
		\item
			The stalk of $R^{q} \Psi_{x} G \in \Ab(\Hat{\alg{k}}_{x, \et})$ at $x'$ is given by
			$H^{q}(\Hat{\alg{K}}_{\eta_{x}}(F')_{x'}^{sh}, G|_{\Hat{\alg{K}}_{\eta_{x}}(F'), \et})$.
		\item
			The stalk of $R^{q} \varepsilon_{\ast} R \Psi_{x} G \in \Ab(\Hat{\alg{k}}_{x, \nis})$ at $x'$ is given by
			$H^{q}(\Hat{\alg{K}}_{\eta_{x}}(F')_{x'}^{h}, G|_{\Hat{\alg{K}}_{\eta_{x}}(F'), \et})$.
	\end{enumerate}
	Here $G|_{\Hat{\alg{K}}_{\eta_{x}}(F'), \et} \in D^{+}(\Hat{\alg{K}}_{\eta_{x}}(F')_{\et})$ is
	pulled back to $D^{+}(\Hat{\alg{K}}_{\eta_{x}}(F')_{x', \et}^{h})$ and $\Hat{\alg{K}}_{\eta_{x}}(F')_{x', \et}^{sh})$
	and taken cohomology.
\end{Prop}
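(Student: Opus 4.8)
The plan is to reduce the statement to the well-known description of the stalks of a higher direct image along an open immersion of schemes, exactly as in the (one-line) proof of Proposition \ref{0347}. First I would invoke Proposition \ref{0031} to pass, for each fixed $F' \in F^{\perar}$, from the relative sites $\Spec \Hat{\alg{K}}_{\eta_{x}, \et}$, $\Spec \Hat{\alg{O}}_{K_{\eta_{x}}, \et}$, $\Spec \Hat{\alg{k}}_{x, \et}$ to the ordinary small \'etale sites of the schemes $\Spec \Hat{\alg{K}}_{\eta_{x}}(F')$, $\Spec \Hat{\alg{O}}_{K_{\eta_{x}}}(F')$, $\Spec \Hat{\alg{k}}_{x}(F')$: the functor $\Psi_{x} = i_{x}^{\ast} j_{x, \ast}$, its right derived functor $R \Psi_{x}$, the comparison functor $\varepsilon_{\ast}$, and the formation of stalks at points $x'$ all commute with this restriction (compare Proposition \ref{0345}).

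On the level of schemes, $\Spec \Hat{\alg{O}}_{K_{\eta_{x}}}(F')$ is a finite disjoint union of spectra of complete discrete valuation rings, corresponding to the decomposition of $\Hat{\alg{k}}_{x}(F')$ into a product of complete discrete valuation fields; here $\Spec \Hat{\alg{k}}_{x}(F')$ is the reduced closed subscheme cut out by $\varpi$ and $\Spec \Hat{\alg{K}}_{\eta_{x}}(F') = \Spec \Hat{\alg{O}}_{K_{\eta_{x}}}(F')[1/\varpi]$ is its open complement, so that $j_{x}$ is an open immersion and $i_{x}$ its closed complement. For the first assertion, I would compute the stalk of $R^{q} \Psi_{x} G = R^{q}(i_{x}^{\ast} j_{x, \ast}) G$ at $x'$ as the filtered colimit of $H^{q}(U'[1/\varpi], G)$ over \'etale neighborhoods $U'$ of $x'$ in $\Spec \Hat{\alg{O}}_{K_{\eta_{x}}}(F')$, which in the limit is the \'etale cohomology of the punctured strict henselization $\Spec \Hat{\alg{K}}_{\eta_{x}}(F')_{x'}^{sh}$; this is the standard stalk computation used for ordinary two-dimensional local rings (as in \cite[Chapter III]{Mil80}). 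For the second assertion, $R \varepsilon_{\ast}$ is pushforward from the \'etale to the Nisnevich site, whose stalk at the Nisnevich point $x'$ is the \'etale cohomology of the (non-strict) henselization; combined with the same colimit this yields $H^{q}(\Hat{\alg{K}}_{\eta_{x}}(F')_{x'}^{h}, G|_{\Hat{\alg{K}}_{\eta_{x}}(F'), \et})$.

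I do not expect a genuine obstacle: the content coincides with that of Proposition \ref{0347}, and the only point needing care is purely bookkeeping. Namely, $\Hat{\alg{k}}_{x}(F')$ is in general only a finite product of fields, so "a point $x'$ of $\Spec \Hat{\alg{k}}_{x}(F')$" and the associated (strictly) henselian local rings of $\Hat{\alg{O}}_{K_{\eta_{x}}}(F')$ must be read factor by factor; since $\Psi_{x}$, $\varepsilon_{\ast}$, and stalks are all compatible with finite disjoint unions, this causes no trouble, and one concludes as above.
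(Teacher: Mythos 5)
Your argument is correct and is exactly the standard stalk computation (filtered colimit over \'etale, resp.\ Nisnevich, neighborhoods, plus restriction to the small sites via Propositions \ref{0031} and \ref{0345}) that the paper has in mind: its own proof of Proposition \ref{0348} is simply ``Obvious.'' Nothing further is needed.
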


\begin{proof}
	Obvious.
\end{proof}

As $\Bar{\pi}_{\Hat{\alg{A}}} \compose i$ and $\Bar{\pi}_{\alg{B}}$
are equal as morphisms $\Spec \alg{B}_{\et} \to \Spec F^{\perar}_{\et}$,
we have natural morphisms
$R \Bar{\pi}_{\Hat{\alg{A}}, \ast} \to R \Bar{\pi}_{\alg{B}, \ast} i^{\ast}$
and $R \Bar{\pi}_{\Hat{\alg{A}}, \Hat{!}} \to R \Bar{\pi}_{\alg{B}, \Hat{!}} i^{\ast}$.
Hence we have two natural transformations
	\[
			R \Bar{\pi}_{\Hat{\alg{R}}, \ast} \to R \Bar{\pi}_{\alg{B}, \ast} R \Bar{\Psi},\,
			R \Bar{\pi}_{\Hat{\alg{R}}, \Hat{!}} \to R \Bar{\pi}_{\alg{B}, \Hat{!}} R \Bar{\Psi}
		\colon
			D(\Hat{\alg{R}}_{\Hat{c}, \et})
		\to
			D(F^{\perar}_{\et}).
	\]

\begin{Prop} \label{0355}
	The above two natural transformations are isomorphisms
	when restricted to $D_{\tor}^{+}(\Hat{\alg{R}}_{\Hat{c}, \et})$.
\end{Prop}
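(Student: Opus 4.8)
The plan is to reduce the assertion, via the nearby-cycle identity $R \Bar{\Psi} \cong \Bar{\imath}^{\ast} R \Bar{\jmath}_{\ast}$, to Gabber's affine analogue of proper base change (Proposition \ref{0341}) applied stratum by stratum. The first step is to rewrite both sides in terms of $R \Bar{\jmath}_{\ast}$. Since the projection $\Spec \Hat{\alg{R}}_{\Hat{c}, \et} \to \Spec \Hat{\alg{A}}_{\et}$ factors through $\Bar{\jmath}$, and each $\pi_{\Hat{\alg{K}}_{\eta_{x}}}$ factors as $\pi_{\Hat{\alg{O}}_{K_{\eta_{x}}}} \compose j_{x}$ followed by the component projection, and since pushforward along a premorphism of sites preserves acyclicity, we get $R \Bar{\pi}_{\Hat{\alg{R}}, \ast} \cong R \Bar{\pi}_{\Hat{\alg{A}}, \ast} R \Bar{\jmath}_{\ast}$, $R \Bar{\pi}_{\Hat{\alg{K}}_{\eta_{x}}, \ast} \cong R \Bar{\pi}_{\Hat{\alg{O}}_{K_{\eta_{x}}}, \ast} R \Bar{\jmath}_{\ast}$, and $R \Bar{\Psi} \cong \Bar{\imath}^{\ast} R \Bar{\jmath}_{\ast}$ (cf.\ Proposition \ref{0349}). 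By Proposition \ref{0345} the projections of $R \Bar{\jmath}_{\ast} G$ to the strata are $R j_{\ast}$ and $R j_{x, \ast}$ of the projections of $G$, which stay torsion and bounded below; hence $R \Bar{\jmath}_{\ast}$ maps $D_{\tor}^{+}(\Hat{\alg{R}}_{\Hat{c}, \et})$ into $D_{\tor}^{+}(\Hat{\alg{A}}_{\Hat{c}, \et})$. Thus both natural transformations of the statement are the base-change morphisms $R \Bar{\pi}_{\Hat{\alg{A}}, \ast} \to R \Bar{\pi}_{\alg{B}, \ast} \Bar{\imath}^{\ast}$ and $R \Bar{\pi}_{\Hat{\alg{A}}, \Hat{!}} \to R \Bar{\pi}_{\alg{B}, \Hat{!}} \Bar{\imath}^{\ast}$ of the preceding paragraph, evaluated at $R \Bar{\jmath}_{\ast} G$, and it suffices to prove that these two morphisms are isomorphisms on $D_{\tor}^{+}(\Hat{\alg{A}}_{\Hat{c}, \et})$.

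For the $\ast$-version, one notes that $R \Bar{\pi}_{\Hat{\alg{A}}, \ast}$ on the total site sees only the $\Spec \Hat{\alg{A}}_{\et}$-projection, whereas by Proposition \ref{0345} the $\Spec \alg{B}_{\et}$-projection of $\Bar{\imath}^{\ast} H$ is $i^{\ast}$ of the $\Spec \Hat{\alg{A}}_{\et}$-projection of $H$; after this identification the morphism becomes precisely the one of Proposition \ref{0341} for the surjection $\Hat{\alg{A}} \onto \alg{B}$ of $F^{\perar}$-algebras. This applies because $(A, I)$ is complete and $\Hat{\alg{A}}(F')$ is the Kato canonical lifting of $\alg{B}(F')$ over $A$, so $\Hat{\alg{A}}(F')$ is $I$-adically complete with $\Hat{\alg{A}}(F') / I \Hat{\alg{A}}(F') \cong \alg{B}(F')$, making $(\Hat{\alg{A}}(F'), I \Hat{\alg{A}}(F'))$ a henselian pair; and projecting to a stratum, being exact, preserves torsion and boundedness below. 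This gives the $\ast$-statement.

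For the $\Hat{!}$-version I would compare the two distinguished triangles defining $R \Bar{\pi}_{\Hat{\alg{A}}, \Hat{!}}$ and $R \Bar{\pi}_{\alg{B}, \Hat{!}}$ as mapping fibers. The middle vertical arrow is the $\ast$-isomorphism just established, and the outer one, $\bigoplus_{x} R \Bar{\pi}_{\Hat{\alg{O}}_{K_{\eta_{x}}}, \ast} \to \bigoplus_{x} R \Bar{\pi}_{\Hat{\alg{k}}_{x}, \ast} \Bar{\imath}^{\ast}$, is handled by the identical argument with $\Hat{\alg{A}}, \alg{B}, I$ replaced by $\Hat{\alg{O}}_{K_{\eta_{x}}}, \Hat{\alg{k}}_{x}, (\varpi)$, the relevant henselian pair being $(\Hat{\alg{O}}_{K_{\eta_{x}}}(F'), \varpi \Hat{\alg{O}}_{K_{\eta_{x}}}(F'))$ since $\Hat{\alg{O}}_{K_{\eta_{x}}}(F')$ is a finite product of complete discrete valuation rings with residue ring $\Hat{\alg{k}}_{x}(F')$. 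Two of the three vertical arrows being isomorphisms then forces the third.

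The main obstacle is organizational rather than conceptual: one must carefully match the natural transformations of the statement with the composites of base-change maps just described, keep the several fibered and total sites and their projection functors straight, and verify at each stratum the exactness, torsion-preservation, and henselian-pair hypotheses that feed Proposition \ref{0341}. No input beyond Propositions \ref{0341}, \ref{0345} and \ref{0349} should be required.
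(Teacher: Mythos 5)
Your proof is correct and follows the paper's strategy: reduce along $R\Bar{\jmath}_{\ast}$ to the stratum-wise comparison morphisms $R \pi_{\Hat{\alg{A}}, \ast} \to R \pi_{\alg{B}, \ast} i^{\ast}$ and $R \pi_{\Hat{\alg{O}}_{K_{\eta_{x}}}, \ast} \to R \pi_{\Hat{\alg{k}}_{x}, \ast} i_{x}^{\ast}$, proving the first is an isomorphism on torsion bounded-below objects via Gabber's theorem (Proposition \ref{0341}). The only divergence is at the boundary strata, where the paper invokes the finite \'etale lifting adjunction of Proposition \ref{0313} (valid on all of $D(\Hat{\alg{O}}_{K_{\eta_{x}}, \et})$, with no torsion or boundedness hypothesis), whereas you apply Gabber a second time to the complete pair $(\Hat{\alg{O}}_{K_{\eta_{x}}}(F'), \varpi)$; both are valid on the restricted domain $D_{\tor}^{+}(\Hat{\alg{R}}_{\Hat{c}, \et})$.
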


\begin{proof}
	By Proposition \ref{0341},
	we have $R \pi_{\Hat{\alg{A}}, \ast} \isomto R \pi_{\alg{B}, \ast} i^{\ast}$
	as $D_{\tor}^{+}(\Hat{\alg{A}}_{\et}) \to D(F^{\perar}_{\et})$.
	This implies that the first morphism is an isomorphism on $D_{\tor}^{+}(\Hat{\alg{R}}_{\Hat{c}, \et})$.
	By Proposition \ref{0313},
	we have $R \pi_{\Hat{\alg{O}}_{K_{\eta_{x}}}, \ast} \isomto R \pi_{\Hat{\alg{k}}_{x}, \ast} i_{x}^{\ast}$ for any $x \in T$
	as $D(\Hat{\alg{O}}_{K_{\eta_{x}}, \et}) \to D(F^{\perar}_{\et})$.
	This implies that the second morphism is also an isomorphism on $D_{\tor}^{+}(\Hat{\alg{R}}_{\Hat{c}, \et})$.
\end{proof}

For $G, H \in D(\Hat{\alg{R}}_{\Hat{c}, \et})$, we have a morphism
	\begin{equation} \label{0346}
			R \Bar{\Psi} G \tensor^{L} R \Bar{\Psi} H
		\to
			R \Bar{\Psi}(G \tensor^{L} H)
	\end{equation}
in $D(\alg{B}_{\Hat{c}, \et})$ functorial in $G, H$
by \eqref{0327} applied to $R \Bar{\jmath}_{\ast}$
and applying $\Bar{\imath}^{\ast}$.
Hence we have a morphism
	\begin{equation} \label{0343}
				R \Bar{\pi}_{\alg{B}, \ast} R \Bar{\Psi} G
			\tensor^{L}
				R \Bar{\pi}_{\alg{B}, \Hat{!}} R \Bar{\Psi} H
		\to
			R \Bar{\pi}_{\alg{B}, \Hat{!}} R \Bar{\Psi}(G \tensor^{L} H)
	\end{equation}
in $D(F^{\perar}_{\et})$ functorial in $G, H$
by \eqref{0342}.

\begin{Prop} \label{0361}
	The morphisms \eqref{0344} and \eqref{0343} are compatible,
	that is, the diagram
		\[
			\begin{CD}
						R \Bar{\pi}_{\Hat{\alg{R}}, \ast} G
					\tensor^{L}
						R \Bar{\pi}_{\Hat{\alg{R}}, \Hat{!}} H
				@>>>
					R \Bar{\pi}_{\Hat{\alg{R}}, \Hat{!}}(G \tensor^{L} H)
				\\ @VVV @VVV \\
						R \Bar{\pi}_{\alg{B}, \ast} R \Bar{\Psi} G
					\tensor^{L}
						R \Bar{\pi}_{\alg{B}, \Hat{!}} R \Bar{\Psi} H
				@>>>
					R \Bar{\pi}_{\alg{B}, \Hat{!}} R \Bar{\Psi}(G \tensor^{L} H)
			\end{CD}
		\]
	in $D(F^{\perar}_{\et})$ is commutative.
\end{Prop}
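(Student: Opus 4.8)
The plan is to argue exactly as in the proofs of Propositions \ref{0329}, \ref{0332} and \ref{0410}: the assertion is a commutativity statement in $D(F^{\perar}_{\et})$, and I will reduce it to the commutativity of an explicit finite diagram of complexes, where it becomes a routine check. First I would unwind the two derived cup products appearing as the horizontal arrows. By construction (Sections \ref{0325} and \ref{0340}), the morphism \eqref{0344} is obtained, via Proposition \ref{0326}, from the chain-level morphism of functoriality $\Bar{\pi}_{\Hat{\alg{R}}, \ast} \sheafhom_{\Hat{\alg{R}}_{\Hat{c}, \et}}(G, H) \to \sheafhom_{F^{\perar}_{\et}}(\Bar{\pi}_{\Hat{\alg{R}}, \Hat{!}} G, \Bar{\pi}_{\Hat{\alg{R}}, \Hat{!}} H)$; likewise \eqref{0343} is the derived functor of the composite of the analogous chain-level morphisms for $\Bar{\pi}_{\alg{B}, \ast}$, $\Bar{\pi}_{\alg{B}, \Hat{!}}$ and for the nearby-cycle cup product \eqref{0346} (which in turn comes from \eqref{0327} for $R \Bar{\jmath}_{\ast}$ and the exactness of $\Bar{\imath}^{\ast}$, so that $R \Bar{\Psi} = \Bar{\imath}^{\ast} R \Bar{\jmath}_{\ast}$). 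The vertical arrows, namely the natural transformations $R \Bar{\pi}_{\Hat{\alg{R}}, \ast} \to R \Bar{\pi}_{\alg{B}, \ast} R \Bar{\Psi}$ and $R \Bar{\pi}_{\Hat{\alg{R}}, \Hat{!}} \to R \Bar{\pi}_{\alg{B}, \Hat{!}} R \Bar{\Psi}$ defined just before Proposition \ref{0355}, are built from the identities $\Bar{\pi}_{\Hat{\alg{R}}} = \Bar{\pi}_{\Hat{\alg{A}}} \compose \Bar{\jmath}$ and $\Bar{\pi}_{\Hat{\alg{A}}} \compose \Bar{\imath} = \Bar{\pi}_{\alg{B}}$ together with the unit $\id \to R \Bar{\imath}_{\ast} \Bar{\imath}^{\ast}$.

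Next I would choose suitable representatives. Since the projection-type pushforwards ($q_{\ast}$ of Section \ref{0325}, hence also $\Bar{\pi}_{\ast}$ and, on K-injectives, the cone functors $\Bar{\pi}_{\Hat{!}}$) preserve K-injectivity, and since $\Bar{\imath}^{\ast}$ and $\Bar{\jmath}^{\ast}$ are exact, I can take a K-injective representative of $G$ (and of $H$) on $\Spec \Hat{\alg{R}}_{\Hat{c}, \et}$, a K-injective resolution of its $\Bar{\jmath}_{\ast}$ on $\Spec \Hat{\alg{A}}_{\Hat{c}, \et}$, and pull back along $\Bar{\imath}$; with these choices all four corners of the square and every map between them are represented by honest complexes and honest chain maps, and the two horizontal arrows are the chain-level morphisms of Proposition \ref{0326} (no further resolution needed). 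The square of complexes then decomposes, as in the final part of the proof of Proposition \ref{0385} and in Propositions \ref{0376} and \ref{0379}, into two pieces: the naturality of the chain-level morphism $f_{\ast} \sheafhom(\var, \var) \to \sheafhom(f_{\ast}(\var), f_{\ast}(\var))$ under the morphisms of sites $\Bar{\jmath}$ and $\Bar{\imath}$ and under the unit $\id \to R \Bar{\imath}_{\ast} \Bar{\imath}^{\ast}$, and the bookkeeping of the mapping cones defining $\Bar{\pi}_{\Hat{\alg{R}}, \Hat{!}} = [\Bar{\pi}_{\Hat{\alg{R}}, \ast} \to \bigoplus_{x \in T} \Bar{\pi}_{\Hat{\alg{K}}_{\eta_{x}}, \ast}][-1]$, $\Bar{\pi}_{\alg{B}, \Hat{!}}$ and the corresponding cones at the intermediate level. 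Both pieces reduce to the corresponding underived statements, which are purely formal manipulations of the functoriality of $\sheafhom$ and $f_{\ast}$.

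I expect the main obstacle not to be conceptual but bookkeeping: one must verify that the chosen chain-level models genuinely represent the derived morphisms in play, in particular that the completion/henselization-invariance transformations $R \Bar{\pi}_{\Hat{\alg{R}}, \ast} \to R \Bar{\pi}_{\alg{B}, \ast} R \Bar{\Psi}$ and its compact-support analogue — built from the nearby-cycle functor $\Bar{\Psi} = \Bar{\imath}^{\ast} \Bar{\jmath}_{\ast}$ — are compatible, at the chain level, with the mapping-cone presentations of $\Bar{\pi}_{\alg{B}, \Hat{!}}$ and $\Bar{\pi}_{\Hat{\alg{R}}, \Hat{!}}$ used in \eqref{0342}, \eqref{0344} and \eqref{0343}. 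Once this compatibility of models is recorded, the commutativity of the resulting diagram of complexes is routine, and applying Proposition \ref{0326} and passing to $D(F^{\perar}_{\et})$ yields the statement; so the written proof will read, in the style of Propositions \ref{0329} and \ref{0410}, "This reduces to the corresponding underived diagram in $\Ch(F^{\perar}_{\et})$, which is routine to check."
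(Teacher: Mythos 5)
Your proposal is correct and follows essentially the same route as the paper: since $R \Bar{\Psi} = \Bar{\imath}^{\ast} R \Bar{\jmath}_{\ast}$ and the vertical arrows come from $\Bar{\pi}_{\Hat{\alg{A}}} \compose \Bar{\imath} = \Bar{\pi}_{\alg{B}}$, the square is exactly an instance of the general pullback-compatibility of the cup product already established in Proposition \ref{0332} (with Proposition \ref{0329} handling the $R \Bar{\jmath}_{\ast}$ step), and the paper's proof is just that citation. The chain-level unwinding and model-verification you describe is precisely what is packaged in the proofs of Propositions \ref{0329} and \ref{0332}, so it need not be redone here.
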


\begin{proof}
	This follows from Proposition \ref{0332}.
\end{proof}


\subsection{%
	\texorpdfstring{Duality for $\gr^{0}$}
	{Duality for gr 0}
}
\label{0227}

We continue the notation from the previous subsection.
Set $\Bar{\mathcal{E}} = R \Bar{\Psi} \Lambda \in D(\alg{B}_{\Hat{c}, \et})$,
$\mathcal{E} = R \Psi \Lambda \in D(\alg{B}_{\et})$ and
$\mathcal{E}_{x} = R \Psi_{x} \Lambda \in D(\Hat{\alg{k}}_{x, \et})$,
where $x \in T$.
For any $q \ge 0$, the sheaf $H^{q} \Bar{\mathcal{E}}$ is given by the triple
	\[
		\left(
			H^{q} \mathcal{E},\,
			(H^{q} \mathcal{E}_{x})_{x \in T},\,
				H^{q} \mathcal{E}
			\to
				\bigoplus_{x \in T}
					\pi_{\Hat{\alg{k}}_{x} / \alg{B}, \ast}
					H^{q} \mathcal{E}_{x}
		\right)
	\]
by Proposition \ref{0349}.
By \eqref{0346}, the multiplication map $\Lambda \times \Lambda \to \Lambda$ induces a morphism
	\begin{equation} \label{0093}
		\Bar{\mathcal{E}} \tensor^{L} \Bar{\mathcal{E}} \to \Bar{\mathcal{E}}.
	\end{equation}

\begin{Prop} \label{0228}
	The cohomology sheaves of
	$\mathcal{E}$, $\mathcal{E}_{x}$ with $x \in T$ and $\Bar{\mathcal{E}}$
	are all zero in degrees $\ne 0, 1, 2$.
\end{Prop}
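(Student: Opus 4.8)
The plan is to reduce the vanishing of the cohomology sheaves of $\Bar{\mathcal{E}}$ to the vanishing of the cohomology sheaves of $\mathcal{E}$ and of each $\mathcal{E}_{x}$, and then to compute each of these by a nearby-cycle argument. Indeed, by Proposition \ref{0349} the triple describing $H^{q}\Bar{\mathcal{E}}$ has entries $H^{q}\mathcal{E}$ and $H^{q}\mathcal{E}_{x}$, so $H^{q}\Bar{\mathcal{E}}=0$ as soon as $H^{q}\mathcal{E}=0$ and all $H^{q}\mathcal{E}_{x}=0$. Thus it suffices to treat $\mathcal{E}=R\Psi\Lambda$ and $\mathcal{E}_{x}=R\Psi_{x}\Lambda$ separately.

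For $\mathcal{E}_{x}$, recall $\Psi_{x}=i_{x}^{\ast}j_{x,\ast}$ attached to the diagram $\Spec\Hat{\alg{K}}_{\eta_{x},\et}\to\Spec\Hat{\alg{O}}_{K_{\eta_{x}},\et}\gets\Spec\Hat{\alg{k}}_{x,\et}$, where $\Hat{\alg{O}}_{K_{\eta_{x}}}$ is a two-dimensional local field setup of mixed characteristic with residue field a complete discrete valuation field of characteristic $p$ — exactly the situation of Section \ref{0174}. By Proposition \ref{0348}, the stalk of $R^{q}\Psi_{x}\Lambda$ at a point $x'$ is $H^{q}$ of a strictly henselian local ring $\Hat{\alg{K}}_{\eta_{x}}(F')_{x'}^{sh}$, which is a henselian discrete valuation field of characteristic zero whose residue field is a complete discrete valuation field of characteristic $p$ with separably closed residue field; such a field has $p$-cohomological dimension $2$ by Kato's theorem (Proposition \ref{0041} applied with the residue field replaced by its separable closure), so $H^{q}=0$ for $q\geq 3$, giving $R^{q}\Psi_{x}\Lambda=0$ for $q\geq 3$. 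For $\mathcal{E}=R\Psi\Lambda$, by Proposition \ref{0347} the stalk of $R^{q}\Psi\Lambda$ at $x'$ is $H^{q}(\Hat{\alg{R}}(F')_{x'}^{sh},\Lambda)$, where $\Hat{\alg{R}}(F')_{x'}^{sh}=A'[1/p]$ for a strictly henselian two-dimensional (or one-dimensional, at generic $x'$) regular excellent local ring $A'$ containing $\zeta_{p}$. This is covered by the calculations of Section \ref{0060}: at a generic point $x'$ one gets a local field and Proposition \ref{0041} (applied geometrically) gives vanishing above degree $2$; at a closed point $x'$, the ring $A'$ satisfies hypotheses \eqref{0202}--\eqref{0206} (after base change to $\closure{F}$, so $F$ is algebraically closed), and Proposition \ref{0063} together with Proposition \ref{0072} / Proposition \ref{0077} shows $H^{q}(A'[1/p],\Lambda)=0$ for $q\geq 3$. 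Hence $R^{q}\Psi\Lambda=0$ for $q\geq 3$ as well.

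The remaining point is the vanishing in negative degrees, which is immediate: $\Lambda$ sits in degree $0$ and $\Psi$, $\Psi_{x}$, $\Bar{\Psi}$ are compositions of a pushforward (left exact) with a pullback (exact), so $R^{q}$ vanishes for $q<0$. Assembling these, $H^{q}\mathcal{E}=H^{q}\mathcal{E}_{x}=0$ for $q\notin\{0,1,2\}$, whence $H^{q}\Bar{\mathcal{E}}=0$ for $q\notin\{0,1,2\}$.

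The main obstacle is the bound at closed points $x'$ for $\mathcal{E}$: one must be careful that after strict henselization and base change the two-dimensional local ring $A'$ genuinely falls under one of the two cases \eqref{0061}, \eqref{0062} of Section \ref{0060} — i.e.\ that $(p)$ in $A'$ is divisible by one or two regular primes and, in the two-prime case, that the resolution hypothesis holds. Since $\Hat{\alg{A}}$ is the Kato canonical lifting of the smooth curve $\alg{B}$, the special fiber is (a strict henselization of) the smooth curve $\Spec\alg{B}(F')$, hence $\sqrt{(p)}=I\Hat{\alg{A}}$ is a single regular prime and $A'/\sqrt{(p)A'}$ is regular; so in fact we are always in the ``good case'' \eqref{0061}, and the cited vanishing (Proposition \ref{0072}) applies directly. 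One must also note that $H^{q}(K,\Lambda)=0$ for $q\geq 4$ (Proposition \ref{0041} or Proposition \ref{0063}) even before passing to $\closure{F}$, which handles the degree-$3$ contribution of generic points over a non-closed $F$; combined with the geometric vanishing in degree $\geq 3$ this closes the argument.
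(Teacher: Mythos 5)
Your proof is correct and follows essentially the same route as the paper: reduce $\Bar{\mathcal{E}}$ to $\mathcal{E}$ and the $\mathcal{E}_{x}$ via Proposition \ref{0349}, compute stalks at strictly henselian points via Propositions \ref{0347} and \ref{0348}, and invoke Proposition \ref{0041} for the local-field factors and Proposition \ref{0063} (in the geometric, case-\eqref{0061} situation of Section \ref{0060}) for $\mathcal{E}$. Your additional remarks on generic points and on why one is always in the ``good case'' \eqref{0061} are sound and only make explicit what the paper leaves implicit.
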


\begin{proof}
	Let $F' \in F^{\perar}$.
	Let $x'$ be a closed point of $\Spec \alg{B}(F')$.
	The stalk of $H^{q} \mathcal{E}$ at $x'$ is
	$H^{q}(\Hat{\alg{R}}(F')_{x'}^{sh}, \Lambda)$
	by Proposition \ref{0347}.
	The ring $\Hat{\alg{A}}(F')_{x'}^{sh}$ is a two-dimensional local ring
	satisfying the conditions listed at the beginning of
	Section \ref{0060},
	particularly \eqref{0061}.
	Its residue field is algebraically closed.
	Hence $H^{q}(\Hat{\alg{R}}(F')_{x'}^{sh}, \Lambda)$ is zero for $q \ne 0, 1, 2$
	by Proposition \ref{0063}.
	Thus $H^{q} \mathcal{E} = 0$ for $q \ne 0, 1, 2$.
	
	The stalk of $H^{q} \mathcal{E}_{x}$ at $x'$ is
	$H^{q}(\Hat{\alg{K}}_{\eta_{x}}(F')_{x'}^{sh}, \Lambda)$
	by Proposition \ref{0348}.
	The ring $\Hat{\alg{K}}_{\eta_{x}}(F')_{x'}^{sh}$ is a henselian discrete valuation field
	with residue field $\Hat{\alg{k}}_{x}(F')_{x'}^{\sep}$.
	Hence $H^{q} \mathcal{E}_{x} = 0$ for $q \ne 0, 1, 2$
	by Proposition \ref{0041}.
	The statement for $\Bar{\mathcal{E}}$ then follows.
\end{proof}

We will introduce a filtration on $H^{q} \Bar{\mathcal{E}}$.
Consider the exact sequence
$0 \to \Lambda \to \Gm \stackrel{p}{\to} \Gm \to 0$
in $\Ab(\Hat{\alg{R}}_{\Hat{c}, \et})$.
It defines a morphism
	\[
			\{\var\}
		\colon
			\Bar{\Psi} \Gm
		\to
			R^{1} \Bar{\Psi} \Lambda
	\]
in $\Ab(\alg{B}_{\Hat{c}, \et})$.
For any $q \ge 0$, the cup product then defines a morphism
	\[
			\{\var, \dots, \var\}
		\colon
			(\Bar{\Psi} \Gm)^{\tensor q}
		\to
			R^{q} \Bar{\Psi} \Lambda
		=
			H^{q} \Bar{\mathcal{E}}
	\]
from the tensor power over $\Z$,
which we call the symbol map.
We can similarly define symbol maps
	\[
				(\Psi \Gm)^{\tensor q}
			\to
				R^{q} \Psi \Lambda
			=
				H^{q} \mathcal{E},
		\quad
				(\Psi_{x} \Gm)^{\tensor q}
			\to
				R^{q} \Psi_{x} \Lambda
			=
				H^{q} \mathcal{E}_{x}.
	\]
For any $F' \in F^{\perar}$ and any point $x' \in \alg{B}(F')_{x'}^{sh}$,
the morphism $(\Psi \Gm)^{\tensor q} \to R^{q} \Psi \Lambda$ on the stalk at $x'$
is given by the usual symbol map
	\[
			((\Hat{\alg{R}}(F')_{x'}^{sh})^{\times})^{\tensor q}
		\to
			H^{q}(\Hat{\alg{R}}(F')_{x'}^{sh}, \Lambda);
		\quad
			(x_{1}, \dots, x_{q})
		\mapsto
			\{x_{1}, \dots, x_{q}\}.
	\]
The morphism $(\Psi_{x} \Gm)^{\tensor q} \to R^{q} \Psi_{x} \Lambda$ on the stalk at $x'$
is similarly described.

For $m \ge 1$, note that
$1 + I^{m} \Bar{\imath}^{\ast} \Ga \subset \Bar{\imath}^{\ast} \Gm \subset \Bar{\Psi} \Gm$
in $\Ab(\alg{B}_{\Hat{c}, \et})$.
Let $U^{m} H^{q} \Bar{\mathcal{E}}$ be the subsheaf of $H^{q} \Bar{\mathcal{E}}$
generated by the image of
$\{1 + \Bar{\imath}^{\ast} \Ga, \Bar{\Psi} \Gm, \dots, \Bar{\Psi} \Gm\}$.
For $m = 0$, we set $U^{0} H^{q} \Bar{\mathcal{E}}$ to be the whole sheaf $H^{q} \Bar{\mathcal{E}}$.
Define
	\[
			\gr^{m} H^{q} \Bar{\mathcal{E}}
		=
				U^{m} H^{q} \Bar{\mathcal{E}}
			/
				U^{m + 1} H^{q} \Bar{\mathcal{E}}.
	\]
We similarly define sheaves
$U^{m} H^{q} \mathcal{E}$,
$\gr^{m} H^{q} \mathcal{E}$,
$U^{m} H^{q} \mathcal{E}_{x}$
and $\gr^{m} H^{q} \mathcal{E}_{x}$
for each $x \in T$.
The cup product
$H^{q} \Bar{\mathcal{E}} \times H^{q'} \Bar{\mathcal{E}} \to H^{q + q'} \Bar{\mathcal{E}}$
maps $U^{m} H^{q} \Bar{\mathcal{E}} \times U^{m'} H^{q'} \Bar{\mathcal{E}}$ to
$U^{m + m'} H^{q + q'} \Bar{\mathcal{E}}$
(see \cite[Lemma (4.1)]{BK86}).
A similar property holds for $\mathcal{E}$ and $\mathcal{E}_{x}$.
We have $H^{0} \Bar{\mathcal{E}} \cong \Lambda$.

Now we describe the graded pieces.
Let $e_{A}$ be the $\varpi$-adic valuation of $p$
and set $f_{A} = p e_{A} / (p - 1)$.
In what follows,
note the isomorphism $\dlog \colon \Gm / \Gm^{p} \isomto \nu(1)$.

\begin{Prop} \label{0229}
	Let $m \ge 0$.
	\begin{enumerate}
		\item \label{0230}
			If $m = 0$, then
				\begin{align*}
							\Gm / \Gm^{p}
					&	\isomto
							\gr^{m} H^{2} \Bar{\mathcal{E}},
					\\
							x
					&	\mapsto
							\{\Tilde{x}, \varpi\}.
				\end{align*}
		\item \label{0231}
			If $0 < m < f_{A}$ and $p \mid m$, then
				\begin{align*}
							\Ga / \Ga^{p}
					&	\isomto
							\gr^{m} H^{2} \Bar{\mathcal{E}},
					\\
							x
					&	\mapsto
							\{1 + \Tilde{x} \varpi^{m}, \varpi\}.
				\end{align*}
		\item \label{0232}
			If $0 < m < f_{A}$ and $p \nmid m$, then
				\begin{align*}
							\Omega^{1}
					&	\isomto
							\gr^{m} H^{2} \Bar{\mathcal{E}},
					\\
							x \dlog(y)
					&	\mapsto
							\{1 + \Tilde{x} \varpi^{m}, \Tilde{y}\},
				\end{align*}
			where $x \in \Ga$ and $y \in \Gm$.
		\item \label{0233}
			If $m \ge f_{A}$, then $\gr^{m} H^{2} \Bar{\mathcal{E}} = 0$.
	\end{enumerate}
\end{Prop}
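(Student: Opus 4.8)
The plan is to compute the graded pieces $\gr^{m} H^{2} \Bar{\mathcal{E}}$ stalk-wise on the relative \'etale site $\Spec \alg{B}_{\Hat{c}, \et}$ and then glue. Recall from Proposition \ref{0349} that $H^{2} \Bar{\mathcal{E}}$ is given by the triple $(H^{2}\mathcal{E}, (H^{2}\mathcal{E}_{x})_{x\in T}, \varphi)$, and that the filtration $U^{m}$ and the symbol maps are all defined component-wise. So it suffices to prove the analogous isomorphisms for $\gr^{m} H^{2}\mathcal{E}$ on $\Spec \alg{B}_{\et}$ (compatibly, by the same formulas on the $\Hat{\alg{k}}_{x}$-components via Proposition \ref{0043}, which by the residue-field shift is literally the statement there with $k$ allowed to vary). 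A sheaf on $\Spec \alg{B}_{\et}$ is determined by its stalks at points $x'\in \Spec\alg{B}(F')$ for fields $F'\in F^{\perar}$, and by Proposition \ref{0347} the stalk of $H^{2}\mathcal{E}$ at such $x'$ is $H^{2}(\Hat{\alg{R}}(F')_{x'}^{sh}, \Lambda)$. The ring $\Hat{\alg{A}}(F')_{x'}^{sh}$ is a two-dimensional excellent regular henselian local ring with algebraically closed residue field, with $I$ still principal and $\Hat{\alg{A}}(F')_{x'}^{sh}/I$ regular: exactly the setup of Section \ref{0060}, Case \eqref{0061}.

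First I would invoke Proposition \ref{0069} applied to $A = \Hat{\alg{A}}(F')_{x'}^{sh}$ (with $B$ there equal to $\Hat{\alg{A}}(F')_{x'}^{sh}/I$, which is a strictly henselian local ring of a smooth curve, hence whose Picard and Brauer groups vanish). That proposition already gives, for each such stalk, the identifications: $\gr^{0} H^{2} \cong B^{\times}/B^{\times p}$, $\gr^{m}H^{2}\cong B/B^{p}$ for $0<m<f_A$, $p\mid m$, $\gr^{m}H^{2}\cong \Omega^{1}_{B}$ for $0<m<f_A$, $p\nmid m$, $\gr^{f_A}H^{2}\cong \xi_{1}(F)$, and $0$ for $m>f_A$. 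The one point that needs care compared with the plain two-dimensional local case is the top piece $m = f_A$: in Proposition \ref{0069}\eqref{0210} the residue field $F$ of $A$ contributes $\xi_{1}(F)$, but here the residue field is algebraically closed, so $\xi_{1}(\overline{F}) = 0$; hence $\gr^{f_A} H^{2}\mathcal{E} = 0$ at every stalk, which is why the claimed statement lists $\gr^{m}H^{2}\Bar{\mathcal{E}} = 0$ already for $m\ge f_{A}$ rather than for $m > f_A$. I would spell out this vanishing explicitly (it also follows from the table in the proof of Proposition \ref{0448}, where $\gr^{f_A}H^{2}i^{\ast}\mathfrak{T}(2)$ has no $\xi$-component — the $\alg{k}/\wp\alg{k}$ and $\xi$ contributions there come from the \emph{non}-strictly-henselian situation).

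Next I would check that these stalk-wise isomorphisms are induced by a genuine morphism of sheaves on $\Spec\alg{B}_{\et}$, given by the stated formulas $x\mapsto\{\tilde x,\varpi\}$, $x\mapsto\{1+\tilde x\varpi^{m},\varpi\}$, $x\,\dlog(y)\mapsto\{1+\tilde x\varpi^{m},\tilde y\}$ — this is just unwinding that the symbol map $(\Psi\Gm)^{\otimes q}\to H^{q}\mathcal{E}$ restricts on stalks to the usual symbol map (noted after the definition of the symbol map above) and that $\varpi$ generates $I$ section-wise. Here $\Gm/\Gm^{p}$, $\Ga/\Ga^{p}$, $\Omega^{1}$ are the sheaves on $\Spec\alg{B}_{\et}$ defined earlier; the target sheaves in the proposition are written that way so that the isomorphism is as sheaves, not merely on points. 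Compatibility of the $\mathcal{E}$-side and the $\mathcal{E}_{x}$-side under $\varphi$ is automatic because both come from the single symbol map on $\Bar{\Psi}\Gm$. Finally, by Proposition \ref{0349} the triple assembles to the asserted isomorphism for $\gr^{m}H^{2}\Bar{\mathcal{E}}$, completing the argument.

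The main obstacle is the bookkeeping at $m = f_A$ — making sure the collapse of the top graded piece to zero (because of the algebraically closed residue field at every relevant stalk) is correctly reconciled with the form of Proposition \ref{0069}, and that no boundary contribution from the discrete-valuation components $\mathcal{E}_{x}$ sneaks back in (it does not, since Proposition \ref{0043}\eqref{0178}'s $\xi(F)\oplus\xi(k)$ contribution is likewise killed after passing to separably closed residue fields at the stalks of $\Hat{\alg{k}}_{x}$). Everything else is a direct translation of the already-recalled Kato/Saito calculations through Propositions \ref{0347}, \ref{0348} and \ref{0349}.
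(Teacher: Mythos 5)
Your argument is correct and matches the paper's, whose entire proof is the citation ``This follows from Propositions \ref{0069} and \ref{0043}''; you have filled in exactly the intended details (stalk-wise reduction via Propositions \ref{0347}--\ref{0349}, and the collapse of the $m=f_{A}$ graded piece because $\xi_{1}$ of a separably closed field vanishes, which is precisely why the statement reads $m\ge f_{A}$ rather than $m>f_{A}$). The only cosmetic point is that your stalk computation as written applies to closed points $x'$, where the strict henselization is two-dimensional; this suffices because closed geometric points form a conservative family for a finite-type $F'$-scheme, and it is the same reduction the paper itself makes in the proof of Proposition \ref{0228}.
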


\begin{proof}
	This follows from
	Propositions \ref{0069} and \ref{0043}.
\end{proof}

\begin{Prop} \label{0234}
	Let $m \ge 0$.
	\begin{enumerate}
		\item \label{0235}
			If $m = 0$, then
				\begin{align*}
							\Lambda \oplus \Gm / \Gm^{p}
					&	\isomto
							\gr^{m} H^{1} \Bar{\mathcal{E}},
					\\
							(i, x)
					&	\mapsto
							\{\varpi^{i} \Tilde{x}\}.
				\end{align*}
		\item \label{0236}
			If $0 < m < f_{A}$ and $p \mid m$, then
				\begin{align*}
							\Ga / \Ga^{p}
					&	\isomto
							\gr^{m} H^{1} \Bar{\mathcal{E}},
					\\
							x
					&	\mapsto
							\left\{
									1
								+
									\Tilde{x}
									\frac{
										(\zeta_{p} - 1)^{p}
									}{
										\varpi^{f_{A} - m}
									}
							\right\}.
				\end{align*}
		\item \label{0237}
			If $0 < m < f_{A}$ and $p \nmid m$, then
				\begin{align*}
							\Ga
					&	\isomto
							\gr^{m} H^{1} \Bar{\mathcal{E}},
					\\
							x
					&	\mapsto
							\left\{
								1 + \Tilde{x} \frac{(\zeta_{p} - 1)^{p}}{\varpi^{f_{A} - m}}
							\right\}.
				\end{align*}
		\item \label{0238}
			If $m \ge f_{A}$, then $\gr^{m} H^{1} \Bar{\mathcal{E}} = 0$.
	\end{enumerate}
\end{Prop}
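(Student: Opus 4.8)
The plan is to follow the proof of Proposition~\ref{0229} line for line, with the structure results for $H^2$ (Propositions~\ref{0069} and~\ref{0043}) replaced by the corresponding structure results for $H^1$ (Propositions~\ref{0070} and~\ref{0044}). First I would use Proposition~\ref{0349} to present $H^1\Bar{\mathcal{E}}$ as the triple built from $H^1\mathcal{E}\in\Ab(\alg{B}_{\et})$ and the sheaves $H^1\mathcal{E}_x\in\Ab(\Hat{\alg{k}}_{x,\et})$ ($x\in T$) together with their transition morphisms. Since the symbol filtration $U^m$ is defined through the globally defined subsheaf $1+I^m\Bar{\imath}^{\ast}\Ga$ of $\Bar{\Psi}\Gm$, both $U^mH^1\Bar{\mathcal{E}}$ and $\gr^mH^1\Bar{\mathcal{E}}$ are formed componentwise and compatibly with the transition maps, so it suffices to identify $\gr^mH^1\mathcal{E}$ and each $\gr^mH^1\mathcal{E}_x$ separately.

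For the component $H^1\mathcal{E}$: by Proposition~\ref{0347} the stalk of $H^1\mathcal{E}$ at a closed point $x'$ of $\Spec\alg{B}(F')$ (with $F'\in F^{\perar}$ a field) is $H^1(\Hat{\alg{R}}(F')_{x'}^{sh},\Lambda)$, and $\Hat{\alg{A}}(F')_{x'}^{sh}$ is a two-dimensional excellent regular henselian local ring, with algebraically closed residue field, which satisfies the hypotheses of Section~\ref{0060} in case~\eqref{0061}, the uniformizer of $\ideal{p}$ being $\varpi$ and $e_A=v_\varpi(p)$. Hence Proposition~\ref{0070} (together with Proposition~\ref{0071}, which matches the symbol-generated $U^m$ with the filtration on the attached local field) computes each $\gr^m$ of this stalk; the formulas there are explicit natural symbol expressions, so the pointwise identifications glue to sheaf isomorphisms $\gr^mH^1\mathcal{E}\cong\Lambda\oplus\Gm/\Gm^p$, $\Ga/\Ga^p$, $\Ga$ in the three ranges. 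For $m=f_A$ the stalk is $\xi_1$ of an algebraically closed field, hence zero; for $m>f_A$ it is zero pointwise by Proposition~\ref{0070}~\eqref{0216}; this gives $\gr^mH^1\mathcal{E}=0$ for $m\ge f_A$. The generic point of $\Spec\alg{B}(F')$ is treated the same way, using the one-dimensional analogue (a mixed-characteristic henselian discrete valuation field with separably closed residue field), where once more the would-be $m=f_A$ piece involves $\xi$ of a separably closed field and vanishes.

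For the components $H^1\mathcal{E}_x$: by Proposition~\ref{0348} the relevant stalks are the groups $H^1(\Hat{\alg{K}}_{\eta_x}(F')_{x'}^{sh},\Lambda)$, where $\Hat{\alg{K}}_{\eta_x}(F')_{x'}^{sh}$ is a mixed-characteristic henselian discrete valuation field of absolute ramification index $e_A$ (so $f_{\Hat{K}_{\eta_x}}=f_A$) with separably closed residue field of characteristic $p$; here Proposition~\ref{0044} applies, its computation of $\gr^mH^1$ depending only on the residue field as an $\F_p$-algebra and not on its being a complete discrete valuation field, and $\xi$ of a separably closed field being zero kills the $m=f_A$ piece as before. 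That the symbol maps descend to $\gr^m$ follows from $\{U^m,\var\}\subset U^m$ (\cite[Lemma (4.1)]{BK86}); surjectivity is the classical symbol computation recalled in the cited propositions; and injectivity follows, exactly as in the proof of Proposition~\ref{0229}, by comparing the $\mathcal{E}$-stalks with the $\mathcal{E}_x$-stalks through the injectivity of $H^1(R,\Lambda)\to\bigoplus_{\ideal{q}}H^1(K_{\ideal{q}}^{h},\Lambda)$ (Proposition~\ref{0067}) and Kato's formulas (Proposition~\ref{0044}). I expect the main difficulty to be not mathematical depth — everything needed is essentially in \cite{Sai86} and \cite{Kat79} — but bookkeeping: keeping the symbol filtrations on $\mathcal{E}$ and on the $\mathcal{E}_x$ aligned under the transition morphisms of the triple, and verifying that the pieces that drop out at $m=f_A$ are precisely the $\xi$-terms that die upon passage to strict henselizations and separably closed residue fields.
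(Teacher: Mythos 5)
Your proposal is correct and takes essentially the same route as the paper: the paper's own proof is the one-line citation ``This follows from Propositions \ref{0070} and \ref{0044}'', and your argument is precisely the intended unpacking of that citation (stalkwise reduction via Propositions \ref{0347} and \ref{0348} to the two-dimensional local rings and local fields, with the $m \ge f_{A}$ vanishing coming from $\xi$ of separably closed fields being zero). No gaps.
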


\begin{proof}
	This follows from
	Propositions \ref{0070} and \ref{0044}.
\end{proof}

\begin{Prop} \label{0249}
	For any $q \in \Z$ and $m \ge 0$,
	the objects
		\begin{gather*}
				R \Bar{\pi}_{B, \ast} U^{m} H^{q} \Bar{\mathcal{E}}, \quad
				R \Bar{\pi}_{B, \ast} \gr^{m} H^{q} \Bar{\mathcal{E}}, \quad
				R \Bar{\pi}_{B, \Hat{!}} U^{m} H^{q} \Bar{\mathcal{E}}, \quad
				R \Bar{\pi}_{B, \Hat{!}} \gr^{m} H^{q} \Bar{\mathcal{E}},
			\\
				R \Bar{\pi}_{B, \ast} \Bar{\mathcal{E}}, \quad
				R \Bar{\pi}_{B, \Hat{!}} \Bar{\mathcal{E}}
		\end{gather*}
	are in $\genby{\mathcal{W}_{F}}_{F^{\perar}_{\et}}$.
	Moreover, $R^{q} \Bar{\pi}_{B, \ast} \Bar{\mathcal{E}} \in \mathcal{W}_{F}$ for all $q$.
\end{Prop}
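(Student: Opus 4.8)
The plan is to reduce the statement for $\Bar{\mathcal{E}}$ and for all the filtration and graded pieces to the representability results already established for $\Spec B$ in Section \ref{0333}, namely Proposition \ref{0082}, together with the structural description of the graded pieces in Propositions \ref{0229} and \ref{0234}. First I would recall that $\genby{\mathcal{W}_{F}}_{F^{\perar}_{\et}}$ is a triangulated subcategory of $D(F^{\perar}_{\et})$ closed under direct summands, so it suffices to show that each of the listed objects is built from objects of $\mathcal{W}_{F}$ by finitely many distinguished triangles (and summand-splittings). For the graded pieces $\gr^{m} H^{q} \Bar{\mathcal{E}}$, Propositions \ref{0229}, \ref{0234} and Proposition \ref{0228} (which bounds the degrees to $0,1,2$) identify each one with an explicit sheaf on $\Spec \alg{B}_{\Hat{c}, \et}$ that is (a finite product of) $\Lambda$, $\Gm / \Gm^{p} \cong \nu(1)$, $\Ga / \Ga^{p}$, $\Omega^{1}$, or $\Ga$. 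Each of these is among the sheaves $G$ treated by Proposition \ref{0082}: $\Lambda$ and $\nu(1)$ are covered directly; $\Ga$ is a finite projective $B$-module (of rank $1$); $\Ga / \Ga^{p}$ and $\Omega^{1}$ arise from $\Ga$ and $\nu(1)$ via the exact sequences $0 \to \Lambda \to \Ga \to \Ga \to 0$ and $0 \to \nu(1) \to \Omega^{1} \to \Omega^{1} \to 0$, which give distinguished triangles after applying $R \Bar{\pi}_{\alg{B}, \ast}$ or $R \Bar{\pi}_{\alg{B}, \Hat{!}}$. Hence $R \Bar{\pi}_{B, \ast}(\gr^{m} H^{q} \Bar{\mathcal{E}})$ and $R \Bar{\pi}_{B, \Hat{!}}(\gr^{m} H^{q} \Bar{\mathcal{E}})$ lie in $\genby{\mathcal{W}_{F}}_{F^{\perar}_{\et}}$, and the last clause of Proposition \ref{0082} gives $R^{q} \Bar{\pi}_{B, \ast}(\gr^{m} H^{q'} \Bar{\mathcal{E}}) \in \mathcal{W}_{F}$ for all $q, q'$.

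Next I would pass from the graded pieces to $U^{m} H^{q} \Bar{\mathcal{E}}$ and $H^{q} \Bar{\mathcal{E}}$ by descending induction on $m$. Propositions \ref{0229} and \ref{0234} (together with Proposition \ref{0228}) show the filtration is \emph{finite}: $\gr^{m} H^{q} \Bar{\mathcal{E}} = 0$ for $m \ge f_{A}$, so $U^{m} H^{q} \Bar{\mathcal{E}} = 0$ for $m$ large. From the short exact sequences
	\[
			0
		\to
			U^{m + 1} H^{q} \Bar{\mathcal{E}}
		\to
			U^{m} H^{q} \Bar{\mathcal{E}}
		\to
			\gr^{m} H^{q} \Bar{\mathcal{E}}
		\to
			0
	\]
in $\Ab(\alg{B}_{\Hat{c}, \et})$, applying $R \Bar{\pi}_{\alg{B}, \ast}$ and $R \Bar{\pi}_{\alg{B}, \Hat{!}}$ yields distinguished triangles; since the two outer terms are in $\genby{\mathcal{W}_{F}}_{F^{\perar}_{\et}}$ by induction, so is the middle one. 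Taking $m = 0$ gives the claim for $H^{q} \Bar{\mathcal{E}}$ itself, and for the representability of $R^{q} \Bar{\pi}_{B, \ast} U^{m} H^{q'} \Bar{\mathcal{E}}$ in $\mathcal{W}_{F}$ one uses that $R^{q} \Bar{\pi}_{B, \ast}(\gr^{m} H^{q'} \Bar{\mathcal{E}}) = 0$ for $q \ge 2$ (visible from Proposition \ref{0082}, since $B$ is one-dimensional and the local factors are fields), so the long exact cohomology sequence of each triangle has $\mathcal{W}_{F}$-terms on both sides and $\mathcal{W}_{F}$ is closed under extensions (being an exact subcategory of $\Ind \Pro \Alg_{u} / F$, as noted after Definition \ref{0149}).

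Finally, for the objects $R \Bar{\pi}_{B, \ast} \Bar{\mathcal{E}}$ and $R \Bar{\pi}_{B, \Hat{!}} \Bar{\mathcal{E}}$ themselves: Proposition \ref{0228} gives that $\Bar{\mathcal{E}}$ has cohomology concentrated in degrees $0, 1, 2$, so the canonical filtration by truncations $\tau_{\le q}$ realizes $\Bar{\mathcal{E}}$ as a finite iterated extension of $H^{q} \Bar{\mathcal{E}}[-q]$ for $q = 0, 1, 2$; applying $R \Bar{\pi}_{\alg{B}, \ast}$ or $R \Bar{\pi}_{\alg{B}, \Hat{!}}$ to those truncation triangles and using the just-proven results for each $H^{q} \Bar{\mathcal{E}}$ shows the total objects lie in $\genby{\mathcal{W}_{F}}_{F^{\perar}_{\et}}$. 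For the last assertion $R^{q} \Bar{\pi}_{B, \ast} \Bar{\mathcal{E}} \in \mathcal{W}_{F}$, one combines the vanishing $R^{q} \Bar{\pi}_{B, \ast}(-) = 0$ for $q \ge 2$ on each $H^{q'} \Bar{\mathcal{E}}$ with the spectral sequence (or the long exact sequences of the truncation triangles), getting finite successive extensions of objects of $\mathcal{W}_{F}$. The main obstacle I anticipate is purely bookkeeping: making sure the filtration on $H^{q} \Bar{\mathcal{E}}$ given by the symbol-defined subsheaves $U^{m}$ genuinely has graded pieces equal to the sheaves named in Propositions \ref{0229}, \ref{0234} — i.e.\ that these propositions, which are stated stalkwise via the comparison with Kato's local computations in Propositions \ref{0069}, \ref{0070}, \ref{0043}, \ref{0044}, indeed describe the sheaves $\gr^{m}$ and not merely their stalks — but this identification is exactly what those propositions assert, so no further work is needed beyond invoking them.
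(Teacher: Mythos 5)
Your proposal is correct and follows essentially the same route as the paper, whose proof is a one-line appeal to Propositions \ref{0228}, \ref{0229}, \ref{0234} and \ref{0082}: you have simply spelled out the implicit d\'evissage (finite symbol filtration, truncation triangles, closure of $\genby{\mathcal{W}_{F}}_{F^{\perar}_{\et}}$ under triangles and of $\mathcal{W}_{F}$ under extensions). The only slip is cosmetic: $\Omega^{1}$ is itself a finite projective $B$-module so is covered by Proposition \ref{0082} directly, and $\Ga / \Ga^{p}$ is handled by the Frobenius sequence $0 \to \Ga \to \Ga \to \Ga / \Ga^{p} \to 0$ rather than the Artin--Schreier one; neither affects the argument.
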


\begin{proof}
	This follows from the previous propositions
	and Proposition \ref{0082}.
\end{proof}

By Propositions \ref{0228} and \ref{0229} \eqref{0230}, we have a canonical morphism
	\begin{equation} \label{0351}
			\Bar{\mathcal{E}}
		\to
			\nu(1)[-2]
	\end{equation}
in $D(\alg{B}_{\Hat{c}, \et})$.
With \eqref{0093}, we have canonical morphisms
	\begin{equation} \label{0095}
			\Bar{\mathcal{E}} \tensor^{L} \Bar{\mathcal{E}}
		\to
			\Bar{\mathcal{E}}
		\to
			\nu(1)[-2]
	\end{equation}
in $D(\alg{B}_{\Hat{c}, \et})$.

We now split this pairing into the $\gr^{0}$ part and the $U^{1}$ part.
For $i = 1, 2$, define $\tau_{\ge i}' \Bar{\mathcal{E}}$ to be the unique mapping cone of
the morphism $U^{1} H^{i} \Bar{\mathcal{E}}[-i] \to \tau_{\ge i} \Bar{\mathcal{E}}$.
Similarly, define $\tau_{\le i}' \Bar{\mathcal{E}}$ to be the unique mapping fiber of
the morphism $\tau_{\le i} \Bar{\mathcal{E}} \to \gr^{0} H^{i} \Bar{\mathcal{E}}[-i]$.

\begin{Prop} \label{0096}
	Set $(\var)^{\vee} = R \sheafhom_{\alg{B}_{\Hat{c}, \et}}(\var, \nu(1))$.
	Then there exists a unique way to extend
	the morphism \eqref{0095}
	to morphisms of distinguished triangles
		\[
			\begin{CD}
					H^{0} \Bar{\mathcal{E}}
				@>>>
					\Bar{\mathcal{E}}
				@>>>
					\tau_{\ge 1} \Bar{\mathcal{E}}
				\\ @VVV @VVV @VVV \\
					(\gr^{0} H^{2} \Bar{\mathcal{E}})^{\vee}
				@>>>
					\Bar{\mathcal{E}}^{\vee}[-2]
				@>>>
					(\tau_{\le 2}' \Bar{\mathcal{E}})^{\vee}[-2],
			\end{CD}
		\]
		\[
			\begin{CD}
					U^{1} H^{1} \Bar{\mathcal{E}}[-1]
				@>>>
					\tau_{\ge 1} \Bar{\mathcal{E}}
				@>>>
					\tau_{\ge 1}' \Bar{\mathcal{E}}
				\\ @VVV @VVV @VVV \\
					(U^{1} H^{2} \Bar{\mathcal{E}})^{\vee}
				@>>>
					(\tau_{\le 2}' \Bar{\mathcal{E}})^{\vee}[-2]
				@>>>
					(\tau_{\le 1} \Bar{\mathcal{E}})^{\vee}[-2],
			\end{CD}
		\]
		\[
			\begin{CD}
					\gr^{0} H^{1} \Bar{\mathcal{E}}[-1]
				@>>>
					\tau_{\ge 1}' \Bar{\mathcal{E}}
				@>>>
					H^{2} \Bar{\mathcal{E}}[-2]
				\\ @VVV @VVV @VVV \\
					(\gr^{0} H^{1} \Bar{\mathcal{E}})^{\vee}[-1]
				@>>>
					(\tau_{\le 1} \Bar{\mathcal{E}})^{\vee}[-2]
				@>>>
					(\tau_{\le 1}' \Bar{\mathcal{E}})^{\vee}[-2],
			\end{CD}
		\]
		\[
			\begin{CD}
					U^{1} H^{2} \Bar{\mathcal{E}}[-2]
				@>>>
					H^{2} \Bar{\mathcal{E}}[-2]
				@>>>
					\gr^{0} H^{2} \Bar{\mathcal{E}}[-2]
				\\ @VVV @VVV @VVV \\
					(U^{1} H^{1} \Bar{\mathcal{E}})^{\vee}[-1]
				@>>>
					(\tau_{\le 1}' \Bar{\mathcal{E}})^{\vee}[-2]
				@>>>
					(H^{0} \Bar{\mathcal{E}})^{\vee}[-2].
			\end{CD}
		\]
	The obtained pairings
	$H^{0} \Bar{\mathcal{E}} \times \gr^{0} H^{2} \Bar{\mathcal{E}} \to \nu(1)$,
	$\gr^{0} H^{1} \Bar{\mathcal{E}} \times \gr^{0} H^{1} \Bar{\mathcal{E}} \to \nu(1)$
	and $\gr^{0} H^{2} \Bar{\mathcal{E}} \times H^{0} \Bar{\mathcal{E}} \to \nu(1)$
	are induced by the usual cup products.
\end{Prop}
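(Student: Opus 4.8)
The plan is to prove the statement by a standard ``truncation decomposition'' argument. The pairing \eqref{0095} lives in $D(\alg{B}_{\Hat{c}, \et})$, and we want to decompose it compatibly with the canonical filtration by truncation and the symbol filtration $U^{\bullet}$ on the cohomology sheaves $H^{q} \Bar{\mathcal{E}}$. First I would note that $\Bar{\mathcal{E}}$ is concentrated in degrees $0, 1, 2$ by Proposition \ref{0228}, with $H^{0} \Bar{\mathcal{E}} \cong \Lambda$, so all the truncation triangles written in the statement exist and are the obvious ones. The objects $\tau_{\ge i}' \Bar{\mathcal{E}}$ and $\tau_{\le i}' \Bar{\mathcal{E}}$ are defined as mapping cones/fibers of specific morphisms, so they exist (though not a priori functorially); the point of the proposition is that the pairing \eqref{0095} is compatible with all of them simultaneously.

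The key steps, in order, would be the following. First I would establish the compatibility of the cup product with the filtrations: the cup product $H^{q} \Bar{\mathcal{E}} \tensor H^{q'} \Bar{\mathcal{E}} \to H^{q + q'} \Bar{\mathcal{E}}$ sends $U^{m} \tensor U^{m'}$ into $U^{m + m'}$ (cited in the text from \cite[Lemma (4.1)]{BK86}), so in particular $H^{0} \Bar{\mathcal{E}} \tensor U^{1} H^{q} \Bar{\mathcal{E}} \to U^{1} H^{q} \Bar{\mathcal{E}}$ and the induced pairing on $\gr^{0}$ factors as stated. Second, I would observe that by Proposition \ref{0228} the target $\nu(1)[-2]$ is concentrated in degree $2$, so for the adjoint morphisms, $R \sheafhom_{\alg{B}_{\Hat{c}, \et}}(G, \nu(1))$ for $G$ concentrated in a single degree $q$ is concentrated in degrees $\ge 2 - q$ in a range that makes the diagrams forced: the connecting maps in the truncation triangles pin down the dotted vertical arrows uniquely because the relevant $\Hom$ groups between a truncation-below object and a shift of a truncation-above object vanish. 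Third, I would build the four morphisms of triangles successively: the first diagram splits off $H^{0} \Bar{\mathcal{E}} = \Lambda$ and its Poincar\'e partner $\gr^{0} H^{2} \Bar{\mathcal{E}}$ (at the top of the filtration, $\gr^{m} H^{q} = 0$ for $m \ge f_{A}$ and $q = 1, 2$ by Propositions \ref{0229} \eqref{0233} and \ref{0234} \eqref{0238}, so $U^{1} H^{2}$ is the full ``non-symbol'' part paired against $U^{1} H^{1}$); the remaining three diagrams are obtained by iterating the same procedure on $\tau_{\ge 1} \Bar{\mathcal{E}}$, peeling off $U^{1} H^{1}$, then $\gr^{0} H^{1}$, then $U^{1} H^{2}$, each time using that the dual of a one-term complex is again concentrated in one degree so the octahedral axiom produces the next triangle with a unique comparison. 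Finally, I would identify the resulting pairings on $\gr^{0}$-pieces with the cup products: this is essentially a matter of unwinding definitions, using that the symbol map $\{\var, \dots, \var\}$ was defined via cup product with the Kummer class $\{\var\} \colon \Bar{\Psi} \Gm \to R^{1} \Bar{\Psi} \Lambda$.

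The main obstacle I expect is the \emph{uniqueness} part, i.e.\ checking that the comparison morphisms of triangles are forced. This requires a careful bookkeeping of which $\Ext$-sheaves vanish: concretely, one needs $\sheafext^{j}_{\alg{B}_{\Hat{c}, \et}}(H^{q} \Bar{\mathcal{E}}, \nu(1)) $ to vanish or behave well in the relevant range, and more importantly one needs the $\Hom$ group in which the ``difference of two lifts'' would live to be zero. Because $\Bar{\mathcal{E}}$ is a fibered-site object (a triple consisting of a sheaf on $\alg{B}_{\et}$, sheaves on the $\Hat{\alg{k}}_{x, \et}$, and gluing data), these $\Ext$-sheaves are not simply computed locally, and one has to track the contribution of the boundary points $x \in T$ through Proposition \ref{0349}. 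However, since we only need vanishing after one further truncation (the target $\nu(1)$ sits in degree $2$ and the source pieces sit in degrees $0, 1, 2$, so the obstruction groups are $\Hom$ of objects in adjacent or well-separated degrees), the vanishing is forced by the t-structure, and the whole argument reduces to routine diagram chasing with the octahedral axiom. The final identification of the $\gr^{0}$-pairings with cup products is then formal, since everything in sight was built from the cup product on $\Bar{\mathcal{E}}$ and the definition of the symbol maps.
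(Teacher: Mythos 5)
Your proposal is correct and follows essentially the same route as the paper: the completing arrows exist and are unique by t-structure (degree) vanishing, and the required compositions vanish because the cup product sends $U^{m} \tensor U^{m'}$ into $U^{m + m'}$ while the trace morphism $H^{2} \Bar{\mathcal{E}} \to \nu(1)$ annihilates $U^{1} H^{2} \Bar{\mathcal{E}}$. Your worry about computing $\sheafext$-sheaves on the fibered site is unnecessary for exactly the reason you then give — only Hom-vanishing between objects separated by the t-structure is used — so the argument goes through as you describe.
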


\begin{proof}
	To construct the first diagram,
	note that the composite
	$H^{0} \Bar{\mathcal{E}} \to (\tau_{\le 2}' \Bar{\mathcal{E}})^{\vee}[-2]$
	factors through $(U^{1} H^{2} \Bar{\mathcal{E}})^{\vee}$ by a degree reason.
	This corresponds to the cup product pairing
		$
					H^{0} \Bar{\mathcal{E}}
				\times
					U^{1} H^{2} \Bar{\mathcal{E}}
			\to
				\nu(1)
		$.
	This is zero since the morphism $H^{2} \Bar{\mathcal{E}} \to \nu(1)$ annihilates
	$U^{1} H^{2} \Bar{\mathcal{E}}$.
	Also, any morphism from $H^{0} \Bar{\mathcal{E}}$ to
	$(\tau_{\le 2}' \Bar{\mathcal{E}})^{\vee}[-3]$ is zero by a degree reason.
	Then we obtain the first diagram by a formal argument on triangulated categories.
	The rest of the diagrams can be obtained in a similar way,
	using instead the fact that the cup product pairings
		$
					U^{1} H^{1} \Bar{\mathcal{E}}
				\times
					H^{1} \Bar{\mathcal{E}}
			\to
				\nu(1)
		$,
		$
					H^{1} \Bar{\mathcal{E}}
				\times
					U^{1} H^{1} \Bar{\mathcal{E}}
			\to
				\nu(1)
		$,
		$
					U^{1} H^{2} \Bar{\mathcal{E}}
				\times
					H^{0} \Bar{\mathcal{E}}
			\to
				\nu(1)
		$
	are zero.
\end{proof}

By \eqref{0351} and \eqref{0350},
we have canonical morphisms
	\begin{equation} \label{0359}
			R \pi_{\alg{B}, \Hat{!}} \Bar{\mathcal{E}}
		\to
			R \pi_{\alg{B}, \Hat{!}} \nu(1)[-2]
		\to
			\Lambda_{\infty}[-3]
	\end{equation}
in $D(F^{\perar}_{\et})$.
With this and \eqref{0342} and \eqref{0095},
we obtain morphisms
	\[
				R \pi_{\alg{B}, \ast} \Bar{\mathcal{E}}
			\tensor^{L}
				R \pi_{\alg{B}, \Hat{!}} \Bar{\mathcal{E}}
		\to
			R \pi_{\alg{B}, \Hat{!}} \Bar{\mathcal{E}}
		\to
			\Lambda_{\infty}[-3]
	\]
in $D(F^{\perar}_{\et})$.
Here is the duality for the $\gr^{0}$ part:

\begin{Prop} \label{0098}
	Let $q \in \{0, 1, 2\}$ and set $q' = 2 - q$.
	Consider the cup product pairing
	$\gr^{0} H^{q} \Bar{\mathcal{E}} \times \gr^{0} H^{q'} \Bar{\mathcal{E}} \to \nu(1)$
	in $\Ab(\alg{B}_{\Hat{c}, \et})$.
	The induced morphism
		\[
					R \Bar{\pi}_{\alg{B}, \ast} \gr^{0} H^{q} \Bar{\mathcal{E}}
				\tensor^{L}
					R \Bar{\pi}_{\alg{B}, \Hat{!}} \gr^{0} H^{q'} \Bar{\mathcal{E}}
			\to
				R \Bar{\pi}_{\alg{B}, \Hat{!}} \nu(1)
			\to
				\Lambda_{\infty}[-1]
		\]
	in $D(F^{\perar}_{\et})$,
	where the last morphism is \eqref{0350},
	is a perfect pairing.
\end{Prop}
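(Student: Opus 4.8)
The plan is to reduce all three cases $q\in\{0,1,2\}$ to the affine-curve duality of Proposition~\ref{0087} for the pairs $(\Lambda,\nu(1))$ and $(\nu(1),\Lambda)$, whose targets $R\Bar{\pi}_{\alg{B},\Hat{!}}\nu(1)\to\Lambda_{\infty}[-1]$ (via \eqref{0350}) agree with the one in the present statement. For $q=0$ and $q=2$, by Proposition~\ref{0234}~\eqref{0235} and Proposition~\ref{0229}~\eqref{0230} together with $\dlog\colon\Gm/\Gm^{p}\isomto\nu(1)$ one has $\gr^{0}H^{0}\Bar{\mathcal{E}}\cong\Lambda$ and $\gr^{0}H^{2}\Bar{\mathcal{E}}\cong\nu(1)$, and by the last sentence of Proposition~\ref{0096} the pairing in question is induced by the usual cup product. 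Since $H^{0}\Bar{\mathcal{E}}$ is generated by $1$ and $1\cup\{\Tilde{z},\varpi\}=\{\Tilde{z},\varpi\}$, the cup product $\gr^{0}H^{0}\Bar{\mathcal{E}}\times\gr^{0}H^{2}\Bar{\mathcal{E}}\to\nu(1)$ is identified with the multiplication pairing $\Lambda\times\nu(1)\to\nu(1)$, and symmetrically for $\gr^{0}H^{2}\Bar{\mathcal{E}}\times\gr^{0}H^{0}\Bar{\mathcal{E}}\to\nu(1)$; so Proposition~\ref{0087} settles $q=0$ and $q=2$.

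The substantive case is $q=1$, where the first task is to compute the cup-product pairing on $\gr^{0}H^{1}\Bar{\mathcal{E}}$ through the identification $\gr^{0}H^{1}\Bar{\mathcal{E}}\cong\Lambda\oplus\nu(1)$ of Proposition~\ref{0234}~\eqref{0235}, under which $i\in\Lambda$ corresponds to $\{\varpi^{i}\}$ and $\dlog(x)\in\nu(1)$ corresponds to $\{\Tilde{x}\}$, with target $\gr^{0}H^{2}\Bar{\mathcal{E}}\cong\nu(1)$. Expanding $\{\varpi^{i}\Tilde{x}\}\cup\{\varpi^{j}\Tilde{y}\}$ by the symbol calculus gives $\{\varpi^{i},\varpi^{j}\}+\{\varpi^{i},\Tilde{y}\}+\{\Tilde{x},\varpi^{j}\}+\{\Tilde{x},\Tilde{y}\}$, and I would evaluate each term in $\gr^{0}H^{2}\Bar{\mathcal{E}}$. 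Using the Steinberg relation $\{\varpi,-\varpi\}=0$, the first term is $ij\{-1,\varpi\}$, which vanishes because the class of $-1$ in $\Gm/\Gm^{p}$ is trivial (for $p$ odd since $-1=(-1)^{p}$, and for $p=2$ since the special fibre $\Spec B$ has characteristic $2$). The term $\{\Tilde{x},\Tilde{y}\}$, being a symbol with both entries units, is at each stalk the inflation of a class in $H^{2}$ of a strictly henselian local ring of the special fibre (or of a separably closed $\Hat{\alg{k}}_{x}$-stalk), and such an $H^{2}$ vanishes for cohomological-dimension reasons exactly as in the proof of Proposition~\ref{0228}; hence this term is $0$ in $\gr^{0}H^{2}\Bar{\mathcal{E}}$. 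The two remaining terms contribute $j\dlog(x)-i\dlog(y)$, so the pairing on $\Lambda\oplus\nu(1)$ is the hyperbolic pairing whose off-diagonal blocks $\Lambda\times\nu(1)\to\nu(1)$ and $\nu(1)\times\Lambda\to\nu(1)$ are the multiplication pairings and whose diagonal blocks vanish.

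It then remains to assemble perfectness. By Proposition~\ref{0249} the objects $R\Bar{\pi}_{\alg{B},\ast}\Lambda$, $R\Bar{\pi}_{\alg{B},\ast}\nu(1)$, $R\Bar{\pi}_{\alg{B},\Hat{!}}\Lambda$, $R\Bar{\pi}_{\alg{B},\Hat{!}}\nu(1)$ all lie in $\genby{\mathcal{W}_{F}}_{F^{\perar}_{\et}}$, so the perfect-pairing statements of Proposition~\ref{0087} apply and the following block-matrix argument makes sense. Since $R\Bar{\pi}_{\alg{B},\ast}$ and $R\Bar{\pi}_{\alg{B},\Hat{!}}$ commute with the decomposition $\gr^{0}H^{1}\Bar{\mathcal{E}}\cong\Lambda\oplus\nu(1)$, the induced morphism (in either direction) is a $2\times 2$ block morphism with zero diagonal entries and off-diagonal entries the isomorphisms supplied by Proposition~\ref{0087} for $(\Lambda,\nu(1))$ and $(\nu(1),\Lambda)$; a block morphism of this shape is an isomorphism, which gives the case $q=1$. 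I expect the main obstacle to be the bookkeeping in the middle step: tracking the symbol maps and the isomorphism $\dlog\colon\Gm/\Gm^{p}\isomto\nu(1)$ through the graded pieces so as to match the cup product with the multiplication pairings, and in particular pinning down the vanishing of the $\nu(1)\tensor\nu(1)$ block by the stalkwise cohomological-dimension argument.
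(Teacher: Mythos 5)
Your proof follows essentially the same route as the paper's: identify the $\gr^{0}$ pairings explicitly through Propositions \ref{0229} and \ref{0234} (the multiplication pairings $\Lambda \times \nu(1) \to \nu(1)$ and $\nu(1) \times \Lambda \to \nu(1)$ for $q = 0, 2$, and for $q = 1$ the pairing $((i,a),(j,b)) \mapsto (-1)^{i j} a^{j} b^{-i}$ on $\Lambda \oplus \nu(1)$) and then conclude from Proposition \ref{0087}. The paper simply writes down this last formula and invokes Proposition \ref{0087}; your ``zero-diagonal block matrix'' packaging of the $q = 1$ case is the same argument, and your observation that the $\Lambda \tensor \Lambda$ block dies because $-1$ is a $p$-th power in characteristic $p$ is correct (the paper keeps the factor $(-1)^{i i_{x}}$, which is harmless either way).

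One sub-step is justified incorrectly, although the fact you are justifying is true (and is asserted without proof in the paper's own formula). You claim that $\{\Tilde{x},\Tilde{y}\}$ with both entries units vanishes in $\gr^{0} H^{2} \Bar{\mathcal{E}}$ because it is inflated from an $H^{2}$ that ``vanishes for cohomological-dimension reasons exactly as in the proof of Proposition \ref{0228}.'' That proof, via Proposition \ref{0063}, only kills the stalks $H^{q}(\Hat{\alg{R}}(F')^{sh}_{x'}, \Lambda)$ for $q \ge 3$; the $H^{2}$ of these stalks is nonzero (already its $\gr^{0}$ is $B^{\times}/B^{\times p}$ of the curve stalk by Proposition \ref{0069}), and since the symbol is built from the Kummer sequence on the generic fibre, there is no factorization of $\{\Tilde{x},\Tilde{y}\}$ through cohomology of the special fibre. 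The correct reason is that the inverse of the isomorphism $\Gm/\Gm^{p} \isomto \gr^{0} H^{2} \Bar{\mathcal{E}}$, $x \mapsto \{\Tilde{x},\varpi\}$, of Proposition \ref{0229} \eqref{0230} is induced by the boundary (tame symbol) map to $\nu(1)$ underlying Propositions \ref{0069} and \ref{0043}, and the tame symbol of a symbol with two unit entries is trivial; equivalently, $\{\Tilde{x},\Tilde{y}\}$ lies in $U^{1} H^{2} \Bar{\mathcal{E}}$. With that repair the argument is complete.
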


\begin{proof}
	After the identifications in Propositions \ref{0229} and \ref{0234}, the pairing
	$\gr^{0} H^{0} \Bar{\mathcal{E}} \times \gr^{0} H^{2} \Bar{\mathcal{E}} \to \nu(1)$
	is the multiplication map
	$\Lambda \times \nu(1) \to \nu(1)$;
	the pairing
	$\gr^{0} H^{2} \Bar{\mathcal{E}} \times \gr^{0} H^{0} \Bar{\mathcal{E}} \to \nu(1)$
	is the multiplication map
	$\nu(1) \times \Lambda \to \nu(1)$;
	and the pairing
	$\gr^{0} H^{1} \Bar{\mathcal{E}} \times \gr^{0} H^{1} \Bar{\mathcal{E}} \to \nu(1)$
	is the morphism
		\begin{align*}
			&
						\bigl(
							(i, a), (j, b)
						\bigr)
					\in
						(\Lambda \oplus \nu(1)) \times (\Lambda \oplus \nu(1))
			\\
			&	\quad
				\mapsto
						\{\varpi^{i} \Tilde{a}, \varpi^{j} \Tilde{b}\}
					\in
						\gr^{0} H^{2} \Bar{\mathcal{E}}
			\\
			&	\quad
				\leftrightarrow
						(-1)^{i j} a^{j} b^{-i}
					\in
						\nu(1),
		\end{align*}
	where we identified $\nu(1)$ with $\Gm / \Gm^{p}$.
	(Note that $H^{2} \Bar{\mathcal{E}}$ consists of
	not only $\Lambda \oplus \nu(1) \in \Ab(\alg{B}_{\et})$
	but also $\Lambda \oplus \nu(1) \in \Ab(\Hat{\alg{k}}_{x, \et})$ for each $x \in T$
	and a compatibility between them.)
	Hence the result follows from
	Proposition \ref{0087}.
\end{proof}


\subsection{%
	\texorpdfstring{Duality for $U^{1}$ in the Nisnevich topology}
	{Duality for U 1 in the Nisnevich topology}
}
\label{0250}

Set $\mathcal{F} = R \varepsilon_{\ast} \mathcal{E} \in D(\alg{B}_{\nis})$,
$\mathcal{F}_{x} = R \varepsilon_{\ast} \mathcal{E}_{x} \in D(\Hat{\alg{k}}_{x, \nis})$
and $\Bar{\mathcal{F}} = R \varepsilon_{\ast} \Bar{\mathcal{E}} \in D(\alg{B}_{\Hat{c}, \nis})$.
For any $q \in \Z$, the sheaf
$H^{q} \Bar{\mathcal{F}} \in \Ab(\alg{B}_{\Hat{c}, \nis})$
is given by the triple
	\[
		\left(
			H^{q} \mathcal{F},\,
			(H^{q} \mathcal{F}_{x})_{x \in T},\,
				H^{q} \mathcal{F}
			\to
				\bigoplus_{x \in T}
					\pi_{\Hat{\alg{k}}_{x} / \alg{B}, \nis, \ast}
					H^{q} \mathcal{F}_{x}
		\right)
	\]
by Proposition \ref{0345}.
The morphism \eqref{0093} induces a morphism
	\[
				\Bar{\mathcal{F}}
			\tensor^{L}
				\Bar{\mathcal{F}}
		\to
			\Bar{\mathcal{F}}.
	\]

The exact sequence
$0 \to \Lambda \to \Gm \stackrel{p}{\to} \Gm \to 0$
in $\Ab(\alg{R}_{\Hat{c}, \et})$ induces a morphism
	$
			R \Bar{\Psi} \Gm
		\to
			R \Bar{\Psi} \Lambda[1]
		=
			\Bar{\mathcal{E}}[1]
	$
in $D(\alg{B}_{\Hat{c}, \et})$, hence a morphism
	$
			R \varepsilon_{\ast} R \Bar{\Psi} \Gm
		\to
			R \varepsilon_{\ast} \Bar{\mathcal{E}}[1]
		=
			\Bar{\mathcal{F}}[1]
	$
in $D(\alg{B}_{\Hat{c}, \et})$, hence a morphism
	\[
			\{\var\}
		\colon
			\varepsilon_{\ast} \Bar{\Psi} \Gm
		\to
			H^{1} \Bar{\mathcal{F}},
	\]
hence a morphism
	\[
			\{\var, \dots, \var\}
		\colon
			(\varepsilon_{\ast} \Bar{\Psi} \Gm)^{\tensor q}
		\to
			H^{q} \Bar{\mathcal{F}}
	\]
for any $q \ge 0$.
Using this, for $m \ge 0$,
define a subsheaf $U^{m} H^{q} \Bar{\mathcal{F}}$ of $H^{q} \Bar{\mathcal{F}}$
in the same way we defined $U^{m} H^{q} \Bar{\mathcal{E}}$.
Define $\gr^{m} H^{q} \Bar{\mathcal{F}}$ similarly.
Note that since $R^{1} \varepsilon_{\ast} \Gm = 0$ by Hilbert's theorem 90,
we know that $\Gm / \Gm^{p} \cong \nu(1)$ also in $\Ab(\alg{B}_{\Hat{c}, \nis})$.

\begin{Prop} \label{0100}
	Let $m \ge 0$.
	\begin{enumerate}
		\item \label{0251}
			If $m = 0$, then
				\begin{align*}
							\Gm / \Gm^{p}
					&	\isomto
							\gr^{m} H^{2} \Bar{\mathcal{F}},
					\\
							x
					&	\mapsto
							\{\Tilde{x}, \varpi\}.
				\end{align*}
		\item \label{0252}
			If $0 < m < f_{A}$ and $p \mid m$, then
				\begin{align*}
							\Ga / \Ga^{p}
					&	\isomto
							\gr^{m} H^{2} \Bar{\mathcal{F}},
					\\
							x
					&	\mapsto
							\{1 + \Tilde{x} \varpi^{m}, \varpi\}.
				\end{align*}
		\item \label{0253}
			If $0 < m < f_{A}$ and $p \nmid m$, then
				\begin{align*}
							\Omega^{1}
					&	\isomto
							\gr^{m} H^{2} \Bar{\mathcal{F}},
					\\
							x \dlog(y)
					&	\mapsto
							\{1 + \Tilde{x} \varpi^{m}, \Tilde{y}\},
				\end{align*}
			where $x \in \Ga$ and $y \in \Gm$.
		\item \label{0254}
			If $m = f_{A}$, then
				\begin{align*}
							\xi(1) \oplus \xi
					&	\isomto
							\gr^{m} H^{2} \Bar{\mathcal{F}},
					\\
							(x \dlog y, z)
					&	\mapsto
								\{1 + \Tilde{x} (\zeta_{p} - 1)^{p}, \Tilde{y}\}
							+
								\{1 + \Tilde{z} (\zeta_{p} - 1)^{p}, \varpi\}.
				\end{align*}
		\item \label{0255}
			If $m > f_{A}$, then $\gr^{m} H^{2} \Bar{\mathcal{F}} = 0$.
	\end{enumerate}
\end{Prop}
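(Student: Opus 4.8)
\textbf{Proof proposal for Proposition \ref{0100}.}
The plan is to deduce the Nisnevich version from the \'etale version that has essentially already been written down in Propositions \ref{0229} and \ref{0234} (for the lower graded pieces $0\le m<f_{A}$) together with the extra top piece coming from $R^{1}\varepsilon_{\ast}$ of the relevant sheaves. First I would note, as in the statement, that $\Gm/\Gm^{p}\cong\nu(1)$ holds already in $\Ab(\alg{B}_{\Hat{c}, \nis})$ since $R^{1}\varepsilon_{\ast}\Gm=0$ by Hilbert 90, and hence the symbol maps $\{\var,\dots,\var\}$ on $\Bar{\mathcal{F}}$ are the $R\varepsilon_{\ast}$-images of the symbol maps on $\Bar{\mathcal{E}}$. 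Concretely this gives, for each $m$ and $q$, a canonical morphism $R\varepsilon_{\ast}\bigl(\gr^{m}H^{q}\Bar{\mathcal{E}}\bigr)\to\gr^{m}H^{q}\Bar{\mathcal{F}}$ arising from the filtered piece of the spectral sequence $R^{s}\varepsilon_{\ast}H^{t}\Bar{\mathcal{E}}\Rightarrow H^{s+t}\Bar{\mathcal{F}}$; since $\Bar{\mathcal{E}}$ is concentrated in degrees $0,1,2$ (Proposition \ref{0228}) and the sheaves $H^{t}\Bar{\mathcal{E}}$ have filtrations with graded pieces among $\Lambda$, $\Ga$, $\Omega^{1}$, $\nu(1)$ and their finite products (Propositions \ref{0229}, \ref{0234}), this spectral sequence degenerates enough to compute everything.

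Second, the point-wise calculation: for a field $F'\in F^{\perar}$ and a closed point $x'$ of $\Spec\alg{B}(F')$, the stalk of $R^{q}\varepsilon_{\ast}R\Psi\Lambda$ at $x'$ is $H^{q}(\Hat{\alg{R}}(F')^{h}_{x'},\Lambda)$ by Proposition \ref{0347}(2), and $\Hat{\alg{A}}(F')^{h}_{x'}$ is a two-dimensional excellent regular henselian local ring of the type treated in Section \ref{0060}, Case \eqref{0061}. So for $0<m<f_{A}$ and $m=f_{A}$ the description of $\gr^{m}H^{2}$ is exactly Proposition \ref{0069}(2)--(4) and Proposition \ref{0070}, applied with $B=\Hat{\alg{A}}(F')^{h}_{x'}/\varpi$, and sheafifying over $x'$ and over $F'$ in the Nisnevich topology of $\alg{B}$. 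For the boundary contributions at $x\in T$, the stalk is $H^{q}(\Hat{\alg{K}}_{\eta_{x}}(F')^{h}_{x'},\Lambda)$ by Proposition \ref{0348}(2) — a henselian (not complete!) discrete valuation field with residue field $\Hat{\alg{k}}_{x}(F')_{x'}$ — and here the relevant description of $\gr^{m}H^{2}$ comes from Proposition \ref{0043} (with the subtlety that for henselian, as opposed to complete, local fields of mixed characteristic Kato's symbol description still holds, which is part of \cite{Kat79}); in particular $\gr^{f_{A}}H^{2}\mathcal{F}_{x}\cong\xi(F_{x})\oplus\xi(k_{x})$, giving after Weil restriction and sheafification the boundary piece matching $\xi(1)\oplus\xi$.

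Third, I would assemble the triple description: by Proposition \ref{0345} the sheaf $H^{q}\Bar{\mathcal{F}}$ is the triple $\bigl(H^{q}\mathcal{F},(H^{q}\mathcal{F}_{x})_{x},\,H^{q}\mathcal{F}\to\bigoplus_{x}\pi_{\Hat{\alg{k}}_{x}/\alg{B},\nis,\ast}H^{q}\mathcal{F}_{x}\bigr)$, and the filtration $U^{m}$ is compatible with the triple structure since it is defined by the same symbols on all components. For $0\le m<f_{A}$ one checks that the transition map $\gr^{m}H^{2}\mathcal{F}\to\pi_{\Hat{\alg{k}}_{x}/\alg{B},\nis,\ast}\gr^{m}H^{2}\mathcal{F}_{x}$ is (after the identifications) the natural localization $\Gm/\Gm^{p}\to\Gm/\Gm^{p}$, $\Ga/\Ga^{p}\to\Ga/\Ga^{p}$, or $\Omega^{1}\to\Omega^{1}$, so the triple collapses to the single sheaf $\gr^{m}H^{2}\Bar{\mathcal{F}}$ named in \eqref{0251}--\eqref{0253}; this uses that the \'etale-sheafified Bloch--Kato graded pieces for the two-dimensional ring and for its local fields agree in these ranges (Propositions \ref{0069}, \ref{0043}, \ref{0075}, \ref{0076}). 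For $m=f_{A}$ the interesting case: $\gr^{f_{A}}H^{2}\mathcal{F}=0$ coming from the regular two-dimensional ring by Proposition \ref{0069}\eqref{0211} (there is no $m=f_{A}$ contribution there), so the whole of $\gr^{f_{A}}H^{2}\Bar{\mathcal{F}}$ sits in the boundary term $R^{1}\Bar{\pi}$-type contribution; tracing through, the $\xi$-part comes from $R^{1}\varepsilon_{\ast}$ of $\gr^{f_{A}}H^{1}\mathcal{E}\cong\nu(1)$-type pieces (compare Propositions \ref{0234}\eqref{0238}, \ref{0070}\eqref{0215}) and the $\xi(1)$-part from the $\Omega^{1}$ pieces. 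The main obstacle I expect is precisely this $m=f_{A}$ bookkeeping: one must carefully match the $R^{1}\varepsilon_{\ast}$-contribution (which did not appear in the \'etale picture because $\xi$ and $\xi(1)$ sheafify to zero \'etale-locally) against the correct mix of $\xi$ and $\xi(1)$ and verify there is no leftover term — i.e.\ that $\gr^{m}H^{2}\Bar{\mathcal{F}}=0$ for $m>f_{A}$ and that nothing extra appears at $m=f_{A}$ from the boundary fields, which amounts to the statement that the henselian local fields $\Hat{\alg{K}}_{\eta_{x}}(F')^{h}_{x'}$ contribute $\xi(F_{x})$ in degree $2$ (not in degree $1$) and that this exactly accounts for the $\xi$-summand. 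The routine verifications (well-definedness and surjectivity of the symbol maps, injectivity via the localization injections of Propositions \ref{0067}, \ref{0071}) I would cite rather than repeat.
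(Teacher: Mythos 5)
Your overall route -- compute stalk-wise and read everything off from Proposition \ref{0069} at closed points and Proposition \ref{0043} at generic and boundary points -- is exactly the paper's proof, which is literally the one-line citation ``This follows from Propositions \ref{0069} and \ref{0043}.'' Your treatment of the cases $m=0$ and $0<m<f_{A}$ is fine.

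However, your handling of the case $m=f_{A}$ contains a concrete error. You assert that $\gr^{f_{A}}H^{2}\mathcal{F}=0$ at closed points, citing Proposition \ref{0069}\eqref{0211}; but \eqref{0211} is the statement for $m>f_{A}$. The relevant item is Proposition \ref{0069}\eqref{0210}, which says $\gr^{f_{A}}H^{2}(R,\Lambda)\cong\xi_{1}(F)\ne 0$ via $z\mapsto\{1+\Tilde{z}(\zeta_{p}-1)^{p},\varpi\}$ -- and this is precisely the $\xi$-summand of $\xi(1)\oplus\xi$ in statement \eqref{0254}, with exactly that symbol. So the $m=f_{A}$ piece does \emph{not} ``sit entirely in the boundary term,'' and your subsequent attribution of the $\xi$-part to ``$R^{1}\varepsilon_{\ast}$ of $\gr^{f_{A}}H^{1}\mathcal{E}\cong\nu(1)$-type pieces'' is inconsistent with Proposition \ref{0234}\eqref{0238}, which you cite and which says $\gr^{f_{A}}H^{1}\Bar{\mathcal{E}}=0$. (The identification $U^{f_{A}}H^{2}\Bar{\mathcal{F}}=R^{1}\varepsilon_{\ast}H^{1}\Bar{\mathcal{E}}$ is Proposition \ref{0104}, which in the paper is \emph{deduced from} Propositions \ref{0100} and \ref{0101}, so arguing in that direction would be circular anyway.) The correct bookkeeping is: the $\xi$-summand comes from Proposition \ref{0069}\eqref{0210} at the closed-point stalks of $\mathcal{F}$ (and from the $\xi(F)$-part of Proposition \ref{0043}\eqref{0178} at the generic and boundary stalks), while the $\xi(1)$-summand, carried by the symbol $\{1+\Tilde{x}(\zeta_{p}-1)^{p},\Tilde{y}\}$, comes from the $\xi(k)$-part of Proposition \ref{0043}\eqref{0178} at the generic point of the curve and at the boundary fields $\Hat{\alg{K}}_{\eta_{x}}(F')^{h}_{x'}$; note also that your stalk analysis omits the generic point of $\Spec\alg{B}(F')$, which is where Proposition \ref{0043} (rather than \ref{0069}) governs the interior component $\mathcal{F}$.
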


\begin{proof}
	This follows from
	Propositions \ref{0069} and \ref{0043}.
\end{proof}

\begin{Prop} \label{0101}
	Let $m \ge 0$.
	\begin{enumerate}
		\item \label{0256}
			If $m = 0$, then
				\begin{align*}
							\Lambda \oplus \Gm / \Gm^{p}
					&	\isomto
							\gr^{m} H^{1} \Bar{\mathcal{F}},
					\\
							(i, x)
					&	\mapsto
							\{\varpi^{i} \Tilde{x}\}.
				\end{align*}
		\item \label{0257}
			If $0 < m < f_{A}$ and $p \mid m$, then
				\begin{align*}
							\Ga / \Ga^{p}
					&	\isomto
							\gr^{m} H^{1} \Bar{\mathcal{F}},
					\\
							x
					&	\mapsto
							\left\{
									1
								+
									\Tilde{x}
									\frac{
										(\zeta_{p} - 1)^{p}
									}{
										\varpi^{f_{A} - m}
									}
							\right\}.
				\end{align*}
		\item \label{0258}
			If $0 < m < f_{A}$ and $p \nmid m$, then
				\begin{align*}
							\Ga
					&	\isomto
							\gr^{m} H^{1} \Bar{\mathcal{F}},
					\\
							x
					&	\mapsto
							\left\{
								1 + \Tilde{x} \frac{(\zeta_{p} - 1)^{p}}{\varpi^{f_{A} - m}}
							\right\}.
				\end{align*}
		\item \label{0259}
			If $m = f_{A}$, then
				\begin{align*}
							\xi
					&	\isomto
							\gr^{m} H^{1} \Bar{\mathcal{F}},
					\\
							x
					&	\mapsto
							\left\{
								1 + \Tilde{x} (\zeta_{p} - 1)^{p}
							\right\}.
				\end{align*}
		\item \label{0260}
			If $m > f_{A}$, then $\gr^{m} H^{1} \Bar{\mathcal{F}} = 0$.
	\end{enumerate}
\end{Prop}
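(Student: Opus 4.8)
The plan is to carry out exactly the same scheme as the proof of Proposition \ref{0100}: the groups $\gr^{m} H^{1} \Bar{\mathcal{F}}$ are computed stalkwise from the cohomology of the henselian local rings $\Hat{\alg{R}}(F')_{x'}^{h}$ (at closed points) and $\Hat{\alg{K}}_{\eta_{x}}(F')_{x'}^{h}$ (at generic/boundary points), and these are controlled by the structure-by-symbols results of Sections \ref{0068} and \ref{0040}. First I would note that $H^{1} \Bar{\mathcal{F}}$, as a sheaf on $\Spec \alg{B}_{\Hat{c}, \nis}$, is given by the triple $(H^{1} \mathcal{F}, (H^{1} \mathcal{F}_{x})_{x \in T}, \dots)$ of Section \ref{0250}, and the symbol map $\{\var\} \colon \varepsilon_{\ast} \Bar{\Psi} \Gm \to H^{1} \Bar{\mathcal{F}}$ together with its stalk descriptions reduces the computation of $U^{m}$ and $\gr^{m}$ to a stalk-by-stalk statement via Propositions \ref{0347} and \ref{0348}.

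Next I would treat the two kinds of stalks separately. At a closed point $x'$ of $\Spec \alg{B}(F')$, the ring $\Hat{\alg{A}}(F')_{x'}^{h}$ is a two-dimensional henselian regular local ring of the type considered in Section \ref{0060}, Case \eqref{0061} (with $I = (\varpi)$, residue field $\alg{B}(F')_{x'}$, and $\zeta_{p}$ present), so the stalk of $\gr^{m} H^{1} \mathcal{F}$ at $x'$ is identified with $\gr^{m} H^{1}(\Hat{\alg{R}}(F')_{x'}^{h}, \Lambda)$ as computed by Proposition \ref{0070}; taking the Nisnevich sheafification over $x'$ converts $B$ there into $\Ga$, $\Gm/\Gm^{p}$, $\xi_1(F)$, etc., exactly as in the table. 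At a boundary point $x \in T$, the complete discrete valuation field $\Hat{\alg{K}}_{\eta_{x}}(F')_{x'}^{h}$ has residue field $\Hat{\alg{k}}_{x}(F')_{x'}$, so the stalk of $\gr^{m} H^{1} \mathcal{F}_{x}$ is $\gr^{m} H^{1}$ of a two-dimensional local field and is given by Proposition \ref{0044}; again Nisnevich-sheafifying over the curve gives $\Ga$, $\Gm/\Gm^{p}$, and $\xi$ (the last since over a field the residue gives $\xi = \Ga/(\Frob - 1)\Ga$). Assembling the closed-point and boundary-point contributions into the triple description, one reads off the five cases of the statement; the $m = f_A$ case yields $\xi$ (not $\xi(1) \oplus \xi$ as for $H^2$) because the $\xi(1)$-summand there comes from the $\Omega^1$-valued symbol which only appears in degree $2$.

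The main obstacle, as in Proposition \ref{0100}, is the bookkeeping of the $\varpi$-adic filtration and its compatibility under the transition maps of the fibered site, i.e.\ checking that the filtration $U^{m} H^{1} \Bar{\mathcal{F}}$ defined globally by symbols $\{1 + \Bar{\imath}^{\ast}\Ga, \Bar{\Psi}\Gm, \dots\}$ restricts on each stalk to the Bloch--Kato-type filtration used in Sections \ref{0040} and \ref{0068}, so that the stalkwise isomorphisms of Propositions \ref{0070} and \ref{0044} glue to an isomorphism of sheaves on $\Spec \alg{B}_{\Hat{c}, \nis}$. Since $1 + I^m \Bar{\imath}^{\ast}\Ga$ is already a subsheaf of $\Bar{\imath}^{\ast}\Gm \subset \Bar{\Psi}\Gm$ and the symbol maps are compatible with localization at points, this compatibility is routine but must be spelled out; I would phrase it exactly as in the proof of Proposition \ref{0229}, citing \cite[Lemma (4.1)]{BK86} for the behavior of the filtration under cup products. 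With that in place, the five displayed cases follow immediately from Propositions \ref{0070} and \ref{0044}.
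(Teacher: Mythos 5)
Your proposal is correct and follows essentially the same route as the paper, whose proof simply cites Propositions \ref{0070} and \ref{0044} — the stalkwise structure-by-symbols results at closed points and boundary points respectively — exactly as you do. The additional detail you supply (the triple description of $H^{1}\Bar{\mathcal{F}}$, the compatibility of the symbol filtration with localization, and the explanation of why $m = f_{A}$ gives $\xi$ rather than $\xi(1)\oplus\xi$) is a faithful elaboration of what the paper leaves implicit.
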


\begin{proof}
	This follows from
	Propositions \ref{0070} and \ref{0044}.
\end{proof}

We clarify the relation between $U^{m} H^{q} \Bar{\mathcal{E}}$ and $U^{m} H^{q} \Bar{\mathcal{F}}$.

\begin{Prop} \label{0104}
	Let $q \in \{1, 2\}$.
	Consider the spectral sequence
		\[
				E_{2}^{i j}
			=
				R^{i} \varepsilon_{\ast} H^{j} \Bar{\mathcal{E}}
			\Longrightarrow
				H^{i + j} \Bar{\mathcal{F}}
		\]
	and the induced exact sequence
		\[
				0
			\to
				R^{1} \varepsilon_{\ast} H^{q - 1} \Bar{\mathcal{E}}
			\to
				H^{q} \Bar{\mathcal{F}}
			\to
				\varepsilon_{\ast} H^{q} \Bar{\mathcal{E}}
			\to
				0.
		\]
	As subsheaves of $H^{q} \Bar{\mathcal{F}}$, we have
	$U^{f_{A}} H^{q} \Bar{\mathcal{F}} = R^{1} \varepsilon_{\ast} H^{q - 1} \Bar{\mathcal{E}}$.
\end{Prop}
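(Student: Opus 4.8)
The plan is to combine the explicit symbol computations for the graded pieces of $H^q\bar{\mathcal{E}}$ and $H^q\bar{\mathcal{F}}$ (Propositions \ref{0229}, \ref{0234}, \ref{0100}, \ref{0101}) with an analysis of the edge morphism $\rho\colon H^q\bar{\mathcal{F}}\to\varepsilon_*H^q\bar{\mathcal{E}}$ of the spectral sequence. First I would record that on the sheaves that actually occur the spectral sequence $E_2^{ij}=R^i\varepsilon_*H^j\bar{\mathcal{E}}\Rightarrow H^{i+j}\bar{\mathcal{F}}$ concentrates in the rows $i=0,1$: we have $H^0\bar{\mathcal{E}}\cong\Lambda$ and, by Proposition \ref{0234}, $H^1\bar{\mathcal{E}}$ has a finite filtration with graded pieces among $\Lambda$, $\Gm/\Gm^p\cong\nu(1)$, $\Ga/\Ga^p$ and $\Ga$, and for each of these $R^n\varepsilon_*$ vanishes for $n\ge 2$: use $R^{\ge1}\varepsilon_*\Ga=0$, $R^{\ge2}\varepsilon_*\nu(1)=0$, the Artin--Schreier sequence $0\to\Lambda\to\Ga\xrightarrow{\wp}\Ga\to0$, and $0\to\Ga\xrightarrow{x\mapsto x^p}\Ga\to\Ga/\Ga^p\to0$. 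Hence $R^2\varepsilon_*H^{q-1}\bar{\mathcal{E}}=0$ for $q=1,2$, so the spectral sequence yields the stated short exact sequence $0\to R^1\varepsilon_*H^{q-1}\bar{\mathcal{E}}\to H^q\bar{\mathcal{F}}\xrightarrow{\rho}\varepsilon_*H^q\bar{\mathcal{E}}\to0$ with $\ker\rho=R^1\varepsilon_*H^{q-1}\bar{\mathcal{E}}$.

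Next I would check that $\rho$ carries the symbol filtration $U^{\bullet}H^q\bar{\mathcal{F}}$ into $\varepsilon_*(U^{\bullet}H^q\bar{\mathcal{E}})$. Indeed the symbol maps on $\bar{\mathcal{F}}$ and on $\bar{\mathcal{E}}$ are both induced by the connecting map of $0\to\Lambda\to\Gm\xrightarrow{p}\Gm\to0$, applied respectively to $R\varepsilon_*R\bar\Psi$ and to $R\bar\Psi$, while $\rho$ is the edge map $H^qR\varepsilon_*\bar{\mathcal{E}}\to\varepsilon_*H^q\bar{\mathcal{E}}$; a short diagram chase shows $\rho$ sends a product of symbols to the corresponding product of symbols. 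Since $U^{f_A}H^q\bar{\mathcal{E}}=0$ by Propositions \ref{0229} \eqref{0233} and \ref{0234} \eqref{0238}, this gives $\rho(U^{f_A}H^q\bar{\mathcal{F}})\subseteq\varepsilon_*(U^{f_A}H^q\bar{\mathcal{E}})=0$, so that $U^{f_A}H^q\bar{\mathcal{F}}\subseteq\ker\rho=R^1\varepsilon_*H^{q-1}\bar{\mathcal{E}}$.

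For the reverse inclusion I would run a filtration argument on $\ker\rho$. The filtration $U^{m}H^q\bar{\mathcal{F}}\cap\ker\rho$ for $0\le m\le f_A$ (with $U^{f_A+1}H^q\bar{\mathcal{F}}=0$) is finite, and its $m$-th graded piece injects into the kernel of the induced map $\bar\rho_m\colon\gr^mH^q\bar{\mathcal{F}}\to\gr^m(\varepsilon_*H^q\bar{\mathcal{E}})$. For $m<f_A$, comparing the symbol formulas of Propositions \ref{0100}, \ref{0101} with those of Propositions \ref{0229}, \ref{0234} identifies $\gr^mH^q\bar{\mathcal{F}}$ and $\gr^mH^q\bar{\mathcal{E}}$, via the same symbols, with the same sheaf (a finite direct sum of copies of $\Lambda$, $\nu(1)$, $\Ga/\Ga^p$ and $\Ga$); since $\varepsilon_*$ fixes each of these and $\gr^m(\varepsilon_*H^q\bar{\mathcal{E}})\hookrightarrow\varepsilon_*\gr^mH^q\bar{\mathcal{E}}$, the map $\bar\rho_m$ is injective, hence the $m$-th graded piece of $\ker\rho$ vanishes for $m<f_A$. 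Its $f_A$-th graded piece equals $U^{f_A}H^q\bar{\mathcal{F}}\cap\ker\rho=U^{f_A}H^q\bar{\mathcal{F}}$ by the previous paragraph. Thus $\ker\rho$ admits a finite filtration all of whose graded pieces vanish except the top one, which is $U^{f_A}H^q\bar{\mathcal{F}}$; therefore $R^1\varepsilon_*H^{q-1}\bar{\mathcal{E}}=\ker\rho=U^{f_A}H^q\bar{\mathcal{F}}$.

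The main obstacle will be the bookkeeping in the last paragraph: one must verify carefully that $\varepsilon_*$ really leaves the sheaves $\Ga/\Ga^p$ and $\nu(1)$ unchanged on the fibered site $\Spec\alg{B}_{\hat c}$ and that $\bar\rho_m$ is the identity under the matching of symbols, i.e. that the étale-versus-Nisnevich comparison is compatible with the symbol descriptions of both filtrations simultaneously; the rest is a routine spectral-sequence and filtration argument.
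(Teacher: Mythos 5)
Your proof is correct, but it takes a genuinely different route from the paper's. The paper's own argument is two lines and purely formal: from Propositions \ref{0100} and \ref{0101} one reads off that $U^{f_{A}} H^{1} \Bar{\mathcal{F}} \cong \xi$ and $U^{f_{A}} H^{2} \Bar{\mathcal{F}} \cong \xi(1) \oplus \xi$ \'etale-sheafify to zero, while the quotient $H^{q} \Bar{\mathcal{F}} / U^{f_{A}} H^{q} \Bar{\mathcal{F}}$ is an \'etale sheaf (its graded pieces for $m < f_{A}$ all are). A subsheaf $N \subseteq H^{q} \Bar{\mathcal{F}}$ with $\varepsilon^{\ast} N = 0$ and \'etale quotient is unique, since by adjunction any morphism from such an $N$ to an \'etale sheaf vanishes; as $R^{1} \varepsilon_{\ast} H^{q - 1} \Bar{\mathcal{E}}$ has the same two properties by the spectral sequence, the two subsheaves coincide. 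This uniqueness trick entirely sidesteps what you correctly identify as your main burden: verifying that the edge morphism $\rho$ is compatible with the symbol maps and that $\bar{\rho}_{m}$ is injective on each graded piece for $m < f_{A}$ (which in turn needs the \'etale-versus-Nisnevich comparison for $\Ga / \Ga^{p}$, $\nu(1)$, $\Omega^{1}$, etc., on the fibered site). In exchange, your argument is more informative -- it identifies the map $\rho$ explicitly on every graded piece -- and it also supplies a proof of the degeneration to the stated short exact sequence (vanishing of $R^{\ge 2} \varepsilon_{\ast}$ on the sheaves that occur), which the proposition's statement and the paper's proof take for granted. Both arguments are valid; yours is just considerably longer than necessary for the statement at hand.
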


\begin{proof}
	By the previous propositions,
	we know that $U^{f_{A}} H^{1} \Bar{\mathcal{F}} \cong \xi$
	and $U^{f_{A}} H^{2} \Bar{\mathcal{F}} \cong \xi(1) \oplus \xi$ \'etale-sheafify to zero
	and $H^{q} \Bar{\mathcal{F}} / U^{f_{A}} H^{q} \Bar{\mathcal{F}}$ is an \'etale sheaf.
	Such a subsheaf is necessarily $R^{1} \varepsilon_{\ast} H^{q - 1} \Bar{\mathcal{E}}$.
\end{proof}

The morphism \eqref{0095} induces morphisms
	\[
			\Bar{\mathcal{F}} \tensor^{L} \Bar{\mathcal{F}}
		\to
			R \varepsilon_{\ast} \nu(1)[-2]
		\to
			\xi(1)[-3].
	\]
For $q \in \Z$ and $q' = 3 - q$, this induces a pairing
	\begin{equation} \label{0353}
			H^{q} \Bar{\mathcal{F}} \times H^{q'} \Bar{\mathcal{F}}
		\to
			\xi(1).
	\end{equation}
Consider its restriction
	\begin{equation} \label{0105}
			U^{1} H^{q} \Bar{\mathcal{F}} \times U^{1} H^{q'} \Bar{\mathcal{F}}
		\to
			\xi(1).
	\end{equation}

\begin{Prop} \label{0272}
	Let $q \in \{1, 2\}$ and set $q' = 3 - q$.
	Consider the morphism
		\[
					\varepsilon_{\ast} U^{1} H^{q} \Bar{\mathcal{E}}
				\tensor^{L}
					\varepsilon_{\ast} U^{1} H^{q'} \Bar{\mathcal{E}}
			\to
				R \varepsilon_{\ast} \nu(1)[1]
			\to
				\xi(1)
		\]
	induced by the morphism
	$U^{1} H^{q} \Bar{\mathcal{E}} \tensor^{L} U^{1} H^{q'} \Bar{\mathcal{E}} \to \nu(1)[1]$
	in Proposition \ref{0096}.
	The induced pairing
		\begin{equation} \label{0515}
					U^{1} H^{q} \Bar{\mathcal{F}}
				\times
					U^{1} H^{q'} \Bar{\mathcal{F}}
			\to
					\varepsilon_{\ast} U^{1} H^{q} \Bar{\mathcal{E}}
				\times
					\varepsilon_{\ast} U^{1} H^{q} \Bar{\mathcal{E}}
			\to
				\xi(1)
		\end{equation}
	agrees with the pairing \eqref{0105}.
\end{Prop}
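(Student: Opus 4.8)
The plan is to trace both pairings back to the \'etale cup product \eqref{0095} on $\Bar{\mathcal{E}}$ and to verify that applying $R \varepsilon_{\ast}$ and restricting to the $U^{1}$-parts gives the same answer along two routes. First I would recall how \eqref{0353} is built: by the naturality of the cup-product morphism \eqref{0327} with respect to $\varepsilon$, the multiplication $\Bar{\mathcal{F}} \tensor^{L} \Bar{\mathcal{F}} \to \Bar{\mathcal{F}}$ is the composite of the lax monoidal morphism $\Bar{\mathcal{F}} \tensor^{L} \Bar{\mathcal{F}} = R \varepsilon_{\ast} \Bar{\mathcal{E}} \tensor^{L} R \varepsilon_{\ast} \Bar{\mathcal{E}} \to R \varepsilon_{\ast}(\Bar{\mathcal{E}} \tensor^{L} \Bar{\mathcal{E}})$ with $R \varepsilon_{\ast}$ of \eqref{0093}; then \eqref{0353} is $H^{3}$ of the further composite with $R \varepsilon_{\ast} \nu(1)[-2] \to \xi(1)[-3]$, precomposed with the external product $H^{q} \Bar{\mathcal{F}} \tensor H^{q'} \Bar{\mathcal{F}} \to H^{3}(\Bar{\mathcal{F}} \tensor^{L} \Bar{\mathcal{F}})$.

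Next I would restrict to $U^{1} H^{q} \Bar{\mathcal{F}} \times U^{1} H^{q'} \Bar{\mathcal{F}}$. On the one hand, by the compatibility of the symbol filtration with cup products (\cite[Lemma (4.1)]{BK86}, which applies equally to $\Bar{\mathcal{F}}$), the vanishing $\gr^{m} H^{\bullet} \Bar{\mathcal{F}} = 0$ for $m > f_{A}$ (Propositions \ref{0100} and \ref{0101}), and the fact that the target morphism $H^{3} \Bar{\mathcal{F}} \to \xi(1)$ factors through $R^{1} \varepsilon_{\ast} \gr^{0} H^{2} \Bar{\mathcal{E}} = R^{1}\varepsilon_{\ast}\nu(1) = \xi(1)$ (so that it annihilates the image of $R^{1} \varepsilon_{\ast} U^{1} H^{2} \Bar{\mathcal{E}}$), the restricted pairing factors through the quotients $U^{1} H^{q} \Bar{\mathcal{F}} / U^{f_{A}} H^{q} \Bar{\mathcal{F}}$, which by Proposition \ref{0104} are canonically $\varepsilon_{\ast} U^{1} H^{q} \Bar{\mathcal{E}}$ --- exactly the domain of \eqref{0515}. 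On the other hand, the structural decomposition of the \'etale cup product \eqref{0095} carried out in Proposition \ref{0096} isolates the contribution of the $U^{1}$-parts as precisely the morphism $U^{1} H^{q} \Bar{\mathcal{E}} \tensor^{L} U^{1} H^{q'} \Bar{\mathcal{E}} \to \nu(1)[1]$ appearing in the statement; applying $R \varepsilon_{\ast}$ to it and threading the result through the edge maps relating $R \varepsilon_{\ast}$ with the truncations of $\Bar{\mathcal{E}}$ is what yields \eqref{0515}.

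It then remains to check that the two operations --- ``restrict \eqref{0353} to the $U^{1}$-parts and pass to the quotient by $U^{f_{A}}$'' and ``apply $R \varepsilon_{\ast}$ to the $\nu(1)[1]$-pairing of Proposition \ref{0096}'' --- agree. As in the proofs of Propositions \ref{0361}, \ref{0329} and \ref{0427}, after representing everything by complexes this reduces to the commutativity of an explicit diagram in the homotopy category, verifiable on the underived level. I expect the bookkeeping in this last step to be the main obstacle: since the relevant coefficient sheaves $\Ga$, $\Ga / \Ga^{p}$, $\Omega^{1}$ and $\nu(1)$ are all killed by $p$, the pairing $U^{1} H^{q} \Bar{\mathcal{E}} \tensor^{L} U^{1} H^{q'} \Bar{\mathcal{E}} \to \nu(1)[1]$ is genuinely derived --- it is a morphism out of the $\Tor_{1}$-term --- and one must carefully track how $R \varepsilon_{\ast}$ of this term enters the spectral sequence $R^{i} \varepsilon_{\ast} H^{j} \Bar{\mathcal{E}} \Longrightarrow H^{i + j} \Bar{\mathcal{F}}$ and lands, compatibly with the direct definition, on $R^{1} \varepsilon_{\ast} \nu(1) = \xi(1)$. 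Once the correct diagram is written down, its commutativity is routine.
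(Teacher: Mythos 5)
Your strategy coincides with the paper's: both reduce the claim to the compatibility, under $R \varepsilon_{\ast}$, of the decomposition of the cup product performed in Proposition \ref{0096} with the directly defined pairing \eqref{0105}, using Proposition \ref{0104} to identify $U^{1} H^{q} \Bar{\mathcal{F}} / U^{f_{A}} H^{q} \Bar{\mathcal{F}}$ with $\varepsilon_{\ast} U^{1} H^{q} \Bar{\mathcal{E}}$ and the multiplicativity of the symbol filtration to see that \eqref{0105} kills $U^{f_{A}}$ in each variable. The step you defer --- ``once the correct diagram is written down, its commutativity is routine'' --- is, however, the entire content of the argument, and your worry about tracking the $\Tor_{1}$-term through the spectral sequence $R^{i} \varepsilon_{\ast} H^{j} \Bar{\mathcal{E}} \Rightarrow H^{i+j} \Bar{\mathcal{F}}$ is avoidable: the paper works instead with the adjoint ($R \sheafhom(\var, \nu(1))$) form of the diagrams of Proposition \ref{0096}. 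Concretely (for $q = 2$), one takes the morphism $\Bar{\mathcal{E}} \to R \sheafhom_{\alg{B}_{\Hat{c}, \et}}(\tau_{\le 1}' \Bar{\mathcal{E}}, \nu(1))[-2]$ furnished by the first three diagrams, applies $H^{2} R \varepsilon_{\ast}$ --- using $\varepsilon^{\ast} \Bar{\mathcal{F}} \cong \Bar{\mathcal{E}}$ to rewrite $R \varepsilon_{\ast} R \sheafhom_{\alg{B}_{\Hat{c}, \et}}(\Bar{\mathcal{E}}, \nu(1))$ as $R \sheafhom_{\alg{B}_{\Hat{c}, \nis}}(\Bar{\mathcal{F}}, R \varepsilon_{\ast} \nu(1))$ --- and composes with $R \varepsilon_{\ast} \nu(1) \to \xi(1)[-1]$; the resulting arrow $H^{2} \Bar{\mathcal{F}} \to \sheafhom(U^{1} H^{1} \Bar{\mathcal{F}}, \xi(1))$ is \eqref{0105}, while $H^{2}$ of the fourth diagram of Proposition \ref{0096} exhibits \eqref{0515} as the restriction of the same arrow to $U^{1} H^{2} \Bar{\mathcal{F}} / U^{f_{A}} H^{2} \Bar{\mathcal{F}}$. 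So your outline is on the right track, but as written it stops just short of the comparison that constitutes the proof; you should carry out this identification (or an equivalent one) explicitly.
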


\begin{proof}
	We only explain this for $q = 2$.
	The other case is similar.
	Define $\tau_{\le 1}' \Bar{\mathcal{F}}$ to be the canonical mapping fiber of the morphism
	$\tau_{\le 1} \Bar{\mathcal{F}} \to H^{1} \Bar{\mathcal{F}} / U^{1} H^{1} \Bar{\mathcal{F}}[-1]$.
	Consider the commutative diagram
		\[
			\begin{CD}
					\Bar{\mathcal{E}}
				@>>>
					H^{2} \Bar{\mathcal{E}}[-2]
				\\
				@VVV @VVV
				\\
					R \sheafhom_{\alg{B}_{\Hat{c}, \et}}(
						\Bar{\mathcal{E}},
						\nu(1)
					)[-2]
				@>>>
					R \sheafhom_{\alg{B}_{\Hat{c}, \et}}(
						\tau_{\le 1}' \Bar{\mathcal{E}},
						\nu(1)
					)[-2]
			\end{CD}
		\]
	coming from the first three diagrams in
	Proposition \ref{0096}.
	Apply $H^{2} R \varepsilon_{\ast}$.
	Since $\Bar{\mathcal{E}} \cong \varepsilon^{\ast} \Bar{\mathcal{F}}$,
	we have a commutative diagram
		\[
			\begin{CD}
					H^{2} \Bar{\mathcal{F}}
				@>>>
					\dfrac{
						H^{2} \Bar{\mathcal{F}}
					}{
						U^{f_{A}} H^{2} \Bar{\mathcal{F}}
					}
				\\
				@VVV @VVV
				\\
					\sheafext_{\alg{B}_{\Hat{c}, \nis}}^{0}(
						\Bar{\mathcal{F}},
						R \varepsilon_{\ast} \nu(1)
					)
				@>>>
					\sheafext_{\alg{B}_{\Hat{c}, \nis}}^{0}(
						\tau_{\le 1}' \Bar{\mathcal{F}},
						R \varepsilon_{\ast} \nu(1)
					).
			\end{CD}
		\]
	Applying the morphism $R \varepsilon_{\ast} \nu(1) \to \xi(1)[-1]$
	to the lower row, we have a commutative diagram
		\[
			\begin{CD}
					H^{2} \Bar{\mathcal{F}}
				@>>>
					\dfrac{
						H^{2} \Bar{\mathcal{F}}
					}{
						U^{f_{A}} H^{2} \Bar{\mathcal{F}}
					}
				\\
				@VVV @VVV
				\\
					\sheafext_{\alg{B}_{\Hat{c}, \nis}}^{-1}(
						\Bar{\mathcal{F}},
						\xi(1)
					)
				@>>>
					\sheafhom_{\alg{B}_{\Hat{c}, \nis}}(
						U^{1} H^{1} \Bar{\mathcal{F}},
						\xi(1)
					).
			\end{CD}
		\]
	The pairing $H^{2} \Bar{\mathcal{F}} \times U^{1} H^{1} \Bar{\mathcal{F}} \to \xi(1)$
	coming from this diagram is \eqref{0105}.
	On the other hand, applying $H^{2}$ to the fourth diagram of
	Proposition \ref{0096},
	we have a commutative diagram
		\[
			\begin{CD}
					\dfrac{
						U^{1} H^{2} \Bar{\mathcal{F}}
					}{
						U^{f_{A}} H^{2} \Bar{\mathcal{F}}
					}
				@>>>
					\dfrac{
						H^{2} \Bar{\mathcal{F}}
					}{
						U^{f_{A}} H^{2} \Bar{\mathcal{F}}
					}
				\\ @VVV @VVV \\
					\sheafhom_{\alg{B}_{\Hat{c}, \nis}}(
						U^{1} H^{1} \Bar{\mathcal{F}},
						\xi(1)
					)
				@=
					\sheafhom_{\alg{B}_{\Hat{c}, \nis}}(
						U^{1} H^{1} \Bar{\mathcal{F}},
						\xi(1)
					).
			\end{CD}
		\]
	The pairing $U^{1} H^{2} \Bar{\mathcal{F}} \times U^{1} H^{1} \Bar{\mathcal{F}} \to \xi(1)$
	coming from the left vertical morphism is \eqref{0515}.
	Combining the above two diagrams, we get the result.
\end{proof}

Consider the projections
	\begin{gather} \label{0516}
				H^{q} \mathcal{F} \times H^{q'} \mathcal{F}
			\to
				\xi(1),
		\\ \label{0517}
				H^{q} \mathcal{F}_{x} \times H^{q'} \mathcal{F}_{x}
			\to
				\xi(1)
	\end{gather}
of the morphism \eqref{0353} to
$\Spec \alg{B}_{\et}$ and $\Spec \Hat{\alg{k}}_{x, \et}$ (where $x \in T$), respectively.
We describe these pairings.
For $F' \in F^{\perar}$,
the pairing \eqref{0516} on the stalk at a point $x' \in \Spec \alg{B}(F')$ is
	\begin{align*}
		&
					H^{q}(\Hat{\alg{R}}(F')_{x'}^{h}, \Lambda)
				\times
					H^{q'}(\Hat{\alg{R}}(F')_{x'}^{h}, \Lambda)
		\\
		&	\quad
			\to
					H^{3}(\Hat{\alg{R}}(F')_{x'}^{h}, \Lambda)
		\\
		&	\quad
			\to
				H^{1}(\alg{B}(F')_{x'}^{h}, \nu(1))
		\\
		&	\quad
			\cong
				\Gamma(\alg{B}(F')_{x'}^{h}, \xi(1)),
	\end{align*}
where $H^{3}(\Hat{\alg{R}}(F')_{x'}^{h}, \Lambda) = 0$ if $x'$ is a closed point
by Proposition \ref{0072}
and the second map is the Kato boundary map if $x'$ is a generic point.
The pairing \eqref{0517} on the stalk at a point $x' \in \Spec \Hat{\alg{k}}_{x}(F')$ is
	\begin{align*}
		&
					H^{q}(\Hat{\alg{K}}_{\eta_{x}}(F')_{x'}^{h}, \Lambda)
				\times
					H^{q'}(\Hat{\alg{K}}_{\eta_{x}}(F')_{x'}^{h}, \Lambda)
		\\
		&	\quad
			\to
					H^{3}(\Hat{\alg{K}}_{\eta_{x}}(F')_{x'}^{h}, \Lambda)
		\\
		&	\quad
			\to
				H^{1}(\Hat{\alg{k}}_{x}(F')_{x'}^{h}, \nu(1))
		\\
		&	\quad
			\cong
				\Gamma(\Hat{\alg{k}}_{x}(F')_{x'}^{h}, \xi(1)),
	\end{align*}
where the second map is the Kato boundary map.
Hence the restriction of \eqref{0353} to
$U^{m} H^{q} \Bar{\mathcal{F}} \times U^{m'} H^{q'} \Bar{\mathcal{F}}$
is zero if $m + m' > f_{A}$.
Hence it induces a pairing
	\begin{equation} \label{0106}
			\gr^{m} H^{q} \Bar{\mathcal{F}} \times \gr^{m'} H^{q'} \Bar{\mathcal{F}}
		\to
			\xi(1)
	\end{equation}
for pairs $(m, m')$ with $m + m' = f_{A}$ and $m, m' > 0$.
This pairing can be completely calculated as follows.

\begin{Prop} \label{0273}
	Via the isomorphisms in
	Propositions \ref{0100} and \ref{0101},
	the pairing \eqref{0106} becomes
	the following pairings:
		\begin{align*}
					\Ga / \Ga^{p} \times \Ga / \Ga^{p}
			&	\to
					\xi(1),
			\\
					(a, b)
			&	\mapsto
					a d b.
		\end{align*}
	for $q = 2$, $0 < m < f_{K}$ and $p \mid m$;
		\begin{align*}
					\Omega_{B}^{1} \times \Ga
			&	\to
					\xi(1),
			\\
					(\omega, b)
			&	\mapsto
					- m b \omega.
		\end{align*}
	for $q = 1$, $0 < m < f_{K}$ and $p \nmid m$.
\end{Prop}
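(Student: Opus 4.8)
The plan is to verify the asserted identity of pairings of sheaves on $\alg{B}_{\Hat{c}, \nis}$ stalk by stalk, exploiting that $\xi(1)$ has vanishing stalk at every closed point of $\Spec \alg{B}(F')$, so that only the generic points of the $\Spec \alg{B}(F')$ (for $F'$ a field) and the boundary components $\Spec \Hat{\alg{k}}_{x}(F')$ need attention.

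First I would make the pairing explicit. By Proposition \ref{0104}, for $m, m' > 0$ with $m + m' = f_{A}$ the pieces $\gr^{m} H^{q} \Bar{\mathcal{F}}$, $\gr^{m'} H^{q'} \Bar{\mathcal{F}}$ lie in the \'etale part $\varepsilon_{\ast} H^{\bullet} \Bar{\mathcal{E}}$, and by Proposition \ref{0272} the pairing \eqref{0106} is the one induced, via $\varepsilon_{\ast}$ and the morphism $R \varepsilon_{\ast} \nu(1) \to \xi(1)[-1]$, from $U^{1} H^{q} \Bar{\mathcal{E}} \tensor^{L} U^{1} H^{q'} \Bar{\mathcal{E}} \to \nu(1)[1]$ of Proposition \ref{0096}; the last morphism is built purely formally from the \'etale cup product $\Bar{\mathcal{E}} \tensor^{L} \Bar{\mathcal{E}} \to \Bar{\mathcal{E}} \to \nu(1)[-2]$, hence commutes with all restrictions. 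After the symbol identifications of Propositions \ref{0100} and \ref{0101}, \eqref{0106} is therefore a morphism of sheaves $\Ga / \Ga^{p} \tensor \Ga / \Ga^{p} \to \xi(1)$ (case $q = 2$, $p \mid m$) or $\Omega^{1} \tensor \Ga \to \xi(1)$ (case $q = 1$, $p \nmid m$), to be compared with $(a, b) \mapsto [a\, db]$, resp.\ $(\omega, b) \mapsto [-m\, b\, \omega]$, where the bracket is the image under $\Omega^{1} \onto \Omega^{1}/(C - 1)\Omega^{1} = \xi(1)$ from \eqref{0337}. Since these sites have enough points (the henselian local rings of $\Spec \alg{B}(F')$ and the fields $\Hat{\alg{k}}_{x}(F')_{x'}$), it suffices to compare the two morphisms on all stalks. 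At a closed point $x'$ of $\Spec \alg{B}(F')$ the residue field $\kappa(x')$ is perfect, so $\Omega^{1}_{\kappa(x')} = 0$ and hence $\xi(1)_{x'} \cong H^{1}_{\et}(\kappa(x'), \nu(1)) = 0$ (using $\xi(1) = R^{1} \varepsilon_{\ast} \nu(1)$ and Gabber's affine base change, Proposition \ref{0341}); both morphisms vanish there. So one is reduced to the generic points of $\Spec \alg{B}(F')$ and to the boundary components.

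At such a point the relevant ring is a two-dimensional local field of mixed characteristic: $\Hat{\alg{R}}(F')_{x'}^{h} = K_{\ideal{p}}^{h}$ with residue field the function field $\kappa(\ideal{p})$ at a generic point, and $\Hat{\alg{K}}_{\eta_{x}}(F')_{x'}^{h}$ with residue field the complete discrete valuation field $\Hat{\alg{k}}_{x}(F')_{x'}$ at a boundary component — the latter being exactly the situation of Section \ref{0174}. By the stalk-wise descriptions of \eqref{0516}, \eqref{0517} recalled just before the statement, the stalk of \eqref{0106} there is Kato's pairing \eqref{0042} for that two-dimensional local field, restricted to the symbol filtration and followed by the Kato boundary $H^{3} \to H^{1}(\text{residue field}, \nu(1))$, the target being the corresponding stalk of $\xi(1)$ via \eqref{0337}. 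Hypothesis \eqref{0487} forces $v_{\ideal{p}}(p) = e_{A}$, hence $f_{K_{\ideal{p}}^{h}} = f_{A}$; and the global symbol isomorphisms of Propositions \ref{0100}, \ref{0101} degenerate at $x'$ to the local ones of Propositions \ref{0043}, \ref{0044} (this is how those propositions were proved). So the stalk is Kato's graded pairing \eqref{0045}, evaluated in Proposition \ref{0046} \eqref{0186}, \eqref{0187}: it is $(x, y) \mapsto \Res(x\, dy)$ for $p \mid m$ and $(\omega, y) \mapsto - m \Res(y \omega)$ for $p \nmid m$, where in the present context ``$\Res$'' is the class map $\Omega^{1}_{\kappa(x')} \onto \Omega^{1}_{\kappa(x')}/(C - 1) = \xi(1)_{x'}$ — there is no residue-of-valuation step, the residue field of the two-dimensional local field not itself being a local field. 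These stalks agree with those of $(a, b) \mapsto [a\, db]$, resp.\ $(\omega, b) \mapsto [-m\, b\, \omega]$, so by the previous paragraph the morphisms coincide. For the mid-range formulas $0 < m < f_{A}$ one may cite Kato's computations in \cite{Kat79} directly (of which Proposition \ref{0046} is the complete-residue-field case), as these are purely computations with Galois symbols and differential forms, valid also when the residue field is a function field.

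The part I expect to be the main obstacle is the genuinely derived (``$\Ext^{1}$'') nature of the pairing — the shift $[1]$ in Proposition \ref{0096} — so that \eqref{0106} is not a literal cup product into $H^{3}$ (that cup product vanishes over closed points, where $H^{3}$ of the relevant ring is zero by Proposition \ref{0072}); channelling this through Proposition \ref{0272} so that on the graded pieces it reduces to Kato's classical symbol pairing, and then matching the two incarnations of $\xi(1)$ (globally $R^{1} \varepsilon_{\ast} \nu(1)$, locally $H^{1}$ of a residue field with $\nu(1)$-coefficients), is where the real work lies. A secondary point is the bookkeeping of the coefficient $- m$: at a generic point the $H^{2}$-level is $m' = f_{A} - m$, so Kato's formula carries the coefficient $- m' \equiv m \pmod p$, and one must track the sign conventions in the Kato boundary map and the graded-commutative cup product (and use that $\xi(1)$ is $p$-torsion with $p \mid f_{A}$) to arrive at $- m$ in $\xi(1)$.
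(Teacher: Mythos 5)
Your proposal is correct and follows essentially the same route as the paper: the paper's entire proof is the single sentence that this is a local statement established in the proof of \cite[\S 6, (14)]{Kat79}, i.e.\ it reduces the identification to Kato's explicit symbol/differential-form computation at each point. Your stalkwise reduction (vanishing of $\xi(1)$ at closed points, then Kato's graded pairing at generic points and boundary components, with the residue map replaced by the class map onto $\Omega^{1}/(C-1)\Omega^{1}$) is exactly the content left implicit in the words ``local statement,'' so it fills in rather than deviates from the paper's argument.
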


\begin{proof}
	This is a local statement given in the proof of
	\cite[\S 6, (14)]{Kat79}.
\end{proof}

Hence we obtain a duality for these graded pieces:

\begin{Prop} \label{0274}
	Let $q \in \{1, 2\}$ and $0 < m < f_{A}$
	Set $q' = 3 - q$ and $m' = f_{A} - m$.
	Then the composite
		\[
					R \Bar{\pi}_{\alg{B}, \nis, \ast} \gr^{m} H^{q} \Bar{\mathcal{F}}
				\tensor^{L}
					R \Bar{\pi}_{\alg{B}, \nis, \Hat{!}} \gr^{m'} H^{q'} \Bar{\mathcal{F}}
			\to
				R \Bar{\pi}_{\alg{B}, \nis, \Hat{!}} \xi(1)
			\to
				\xi_{\infty}[-1]
		\]
	of the pairing \eqref{0106} and the morphism \eqref{0354} is a perfect pairing.
\end{Prop}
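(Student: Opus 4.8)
The plan is to reduce the asserted perfect pairing over $\Spec F^{\perar}_{\zar}$ to the concrete Tate-vector-space dualities already established in Section \ref{0333}, via the explicit description of the pairing \eqref{0106} furnished by Proposition \ref{0273}. First I would use Proposition \ref{0104} to identify the graded pieces: since $U^{f_{A}} H^{q} \Bar{\mathcal{F}} = R^{1} \varepsilon_{\ast} H^{q-1} \Bar{\mathcal{E}}$ and $H^{q}\Bar{\mathcal{F}}/U^{f_{A}}H^{q}\Bar{\mathcal{F}}$ is \'etale, the pieces $\gr^{m} H^{q}\Bar{\mathcal{F}}$ for $0 < m < f_{A}$ coincide with $\varepsilon_{\ast} \gr^{m} H^{q}\Bar{\mathcal{E}}$, so by Propositions \ref{0229} and \ref{0234} they are represented (as triples over $\Spec \alg{B}_{\Hat{c},\nis}$) by $\Ga/\Ga^{p}$, $\Ga$ or $\Omega^{1}$ according to the parity of $m$. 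The sheaves $\Ga/\Ga^{p}$, $\Ga$, $\Omega^{1}$ on $\Spec \alg{B}_{\Hat{c},\nis}$ are exactly of the shape treated in Propositions \ref{0086} and \ref{0483}: finite projective $B$-modules and their quotients modulo $p$-th powers (using $\Ga/\Ga^{p} \cong \nu(0)$-type objects and the exact sequences $0 \to \Ga/\Ga^{p} \to \Omega^{1} \stackrel{C}{\to} \Omega^{1} \to 0$ at the relevant spot).

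Next I would match the pairing. By Proposition \ref{0273}, after the identifications of Propositions \ref{0100} and \ref{0101}, the pairing \eqref{0106} is, for $p \mid m$ (and $q = 2$), the map $(a,b) \mapsto a\,db$ on $\Ga/\Ga^{p} \times \Ga/\Ga^{p} \to \xi(1)$, and for $p \nmid m$ (and $q = 1$), the map $(\omega, b) \mapsto -m\,b\omega$ on $\Omega_{B}^{1} \times \Ga \to \xi(1)$. In the first case I would observe that $\Omega^{1} \stackrel{d}{\to}$ nothing, but the bilinear form $(a,b)\mapsto a\,db$ followed by projection $\Omega^{1} \onto \xi(1) = \Coker(C-1 \text{ on } \Omega^{1})$ is, after identifying $\Ga/\Ga^{p}$ with $\Hom_{B}(\Ga/\Ga^{p}, \Omega_{B}^{1})$ via $b \mapsto (a \mapsto a\,db)$, exactly the pairing appearing in Proposition \ref{0086} (with $M$ the perfection of a finite projective module). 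In the second case the nondegeneracy of $\Omega_{B}^{1} \times B \to F$, $(\omega, y) \mapsto \Res(y\omega)$ is precisely coherent duality on the curve, again packaged in Proposition \ref{0086}; the harmless scalar $-m \in \F_{p}^{\times}$ (recall $0 < m < f_{A}$ and $p \nmid m$) does not affect perfectness. Thus in each case the pairing $R\Bar{\pi}_{\alg{B},\nis,\ast}\gr^{m}H^{q}\Bar{\mathcal{F}} \tensor^{L} R\Bar{\pi}_{\alg{B},\nis,\Hat{!}}\gr^{m'}H^{q'}\Bar{\mathcal{F}} \to \xi_{\infty}[-1]$ is identified with the one in Proposition \ref{0086}, which is a perfect pairing.

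The one point requiring care, and which I expect to be the main obstacle, is the bookkeeping that ties the abstract morphism $R\Bar{\pi}_{\alg{B},\nis,\Hat{!}}\xi(1) \to \xi_{\infty}[-1]$ of \eqref{0354} to the residue-map description used in Proposition \ref{0086}: one must verify that the composite in the statement of Proposition \ref{0274} really is the coherent-duality residue pairing, not merely something abstractly isomorphic to it. This amounts to checking that the trace morphism \eqref{0339}, built from the Cartier-operator presentation $0 \to \nu(1) \to \Omega^{1} \stackrel{C-1}{\to} \Omega^{1} \to \xi(1) \to 0$ and the residue theorem, is the same as the $\Res$-map on $\bigoplus_{x\in T}\Omega^{1}_{\Hat{\alg{k}}_{x}}/\Omega^{1}_{\alg{B}}$ used in the proof of Proposition \ref{0086}; this compatibility was recorded when \eqref{0338} and \eqref{0339} were introduced. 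Granting this, the proof is a two-line deduction: apply Proposition \ref{0104} and Proposition \ref{0273} to reduce to the shape of Proposition \ref{0086}, then invoke Proposition \ref{0086} directly. I would also note that $\gr^{m} H^{q}\Bar{\mathcal{F}}$ for the stated range lies in the class of objects for which $R\Bar{\pi}_{\alg{B},\nis,\ast}$ and $R\Bar{\pi}_{\alg{B},\nis,\Hat{!}}$ are computed by Tate vector groups (Proposition \ref{0082} combined with Proposition \ref{0249}), so the derived tensor and derived Hom behave as expected and no higher-$\Ext$ obstruction intervenes, exactly as in the proof of Proposition \ref{0086}.
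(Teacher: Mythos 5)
Your proposal is correct and follows essentially the same route as the paper: the paper's proof is simply "This follows from Propositions \ref{0273} and \ref{0086}," i.e., identify the pairing on graded pieces explicitly via Proposition \ref{0273} and then invoke the coherent-duality statement of Proposition \ref{0086}. Your additional care about reducing the $\Ga/\Ga^{p}$ self-duality to Proposition \ref{0086} via the Cartier-operator sequence, and about matching the trace morphism \eqref{0354} with the residue map, fills in details the paper leaves implicit but does not change the argument.
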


\begin{proof}
	This follows from Propositions \ref{0273} and \ref{0086}.
\end{proof}

\begin{Prop} \label{0275}
	Let $q \in \{1, 2\}$ and set $q' = 3 - q$.
	Then the composite
		\begin{equation} \label{0107}
					R \Bar{\pi}_{\alg{B}, \nis, \ast}
					\frac{
						U^{1} H^{q} \Bar{\mathcal{F}}
					}{
						U^{f_{A}} H^{q} \Bar{\mathcal{F}}
					}
				\tensor^{L}
					R \Bar{\pi}_{\alg{B}, \nis, \Hat{!}}
					\frac{
						U^{1} H^{q'} \Bar{\mathcal{F}}
					}{
						U^{f_{A}} H^{q'} \Bar{\mathcal{F}}
					}
			\to
				R \Bar{\pi}_{\alg{B}, \nis, \Hat{!}} \xi(1)
			\to
				\xi_{\infty}[-1]
		\end{equation}
	of the pairing \eqref{0105}
	and the morphism \eqref{0354} is a perfect pairing.
\end{Prop}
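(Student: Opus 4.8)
The plan is to reduce Proposition \ref{0275} to the graded-piece statement of Proposition \ref{0274} together with the identification of the filtration provided by Proposition \ref{0104}. By the previous propositions, $U^{1} H^{q} \Bar{\mathcal{F}} / U^{f_{A}} H^{q} \Bar{\mathcal{F}}$ is a sheaf with a finite filtration (induced by the $U^{m}$, for $1 \le m < f_{A}$) whose graded pieces $\gr^{m} H^{q} \Bar{\mathcal{F}}$ are, after the identifications of Propositions \ref{0100} and \ref{0101}, built from $\Ga$, $\Ga / \Ga^{p}$ and $\Omega^{1}$. First I would set up the two compatible filtrations: on $R \Bar{\pi}_{\alg{B}, \nis, \ast}(U^{1} H^{q} \Bar{\mathcal{F}} / U^{f_{A}} H^{q} \Bar{\mathcal{F}})$ by pushing forward the filtration by the $U^{m}$, and dually on $R \Bar{\pi}_{\alg{B}, \nis, \Hat{!}}(U^{1} H^{q'} \Bar{\mathcal{F}} / U^{f_{A}} H^{q'} \Bar{\mathcal{F}})$ by the opposite filtration (indexed so that the $m$-th piece on the left pairs with the $(f_{A} - m)$-th piece on the right). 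The pairing \eqref{0105} together with the triangle \eqref{0353} restricted to the $U$-filtration respects these filtrations: by the stalkwise description of \eqref{0516} and \eqref{0517} (via the Kato boundary maps), the pairing of $U^{m}$ with $U^{m'}$ lands in $U^{f_{A}}$-level and hence vanishes modulo it whenever $m + m' > f_{A}$, so it descends to pairings on the associated graded, which are precisely the pairings \eqref{0106}.

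The core of the argument is then the standard fact that a pairing of two filtered objects that is perfect on each associated graded piece is perfect on the whole, provided the filtrations are finite and the relevant $R\sheafhom$'s behave well. Concretely, I would argue by downward induction on the length of the filtration: write short exact sequences $0 \to U^{m+1}/U^{f_{A}} \to U^{m}/U^{f_{A}} \to \gr^{m} \to 0$ on the left and the matching sequences $0 \to \gr^{m'} \to U^{\le f_{A}-m'}/\cdots \to \cdots \to 0$ on the right (with $m' = f_{A} - m$), apply $R \Bar{\pi}_{\alg{B}, \nis, \ast}$ and $R \Bar{\pi}_{\alg{B}, \nis, \Hat{!}}$ to get distinguished triangles in $D(F^{\perar}_{\zar})$, and then use the five lemma (in triangulated form) together with the fact that $R \Bar{\pi}_{\alg{B}, \nis, \Hat{!}} \xi(1) \to \xi_{\infty}[-1]$ of \eqref{0354} is a common target. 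The base case and each inductive step are exactly Proposition \ref{0274}; one needs to check that the induced morphism of triangles formed by the duality maps commutes, which follows from the compatibility of cup product with the localization/filtration triangles established in Proposition \ref{0430} (applied in the fibered-site setting of Section \ref{0325}, cf.\ Proposition \ref{0427}), exactly as in the proof of Proposition \ref{0448}. The representability input — that all the $R \Bar{\pi}_{\alg{B}, \nis, \ast}$ and $R \Bar{\pi}_{\alg{B}, \nis, \Hat{!}}$ applied to the graded pieces lie in $\genby{\mathcal{W}_{F}}_{F^{\perar}_{\et}}$, so that higher $\sheafext$ into $\xi_{\infty}$ vanish — comes from Proposition \ref{0249}, which is what makes the five-lemma bookkeeping work without derived correction terms.

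I expect the main obstacle to be not the induction itself but the verification that the duality morphisms assemble into a genuine morphism of distinguished triangles, i.e.\ that the squares
\[
	\begin{CD}
			R \Bar{\pi}_{\alg{B}, \nis, \ast} (U^{m+1}/U^{f_{A}})
		@>>>
			R \Bar{\pi}_{\alg{B}, \nis, \ast} (U^{m}/U^{f_{A}})
		\\ @VVV @VVV \\
			R \sheafhom(R \Bar{\pi}_{\alg{B}, \nis, \Hat{!}}(\,\cdot\,), \xi_{\infty}[-1])
		@>>>
			R \sheafhom(R \Bar{\pi}_{\alg{B}, \nis, \Hat{!}}(\,\cdot\,), \xi_{\infty}[-1])
	\end{CD}
\]
commute, which requires tracing through the compatibility of the cup product \eqref{0342} with the filtration triangles and with the residue/trace morphism \eqref{0354}. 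This is formal but delicate, and the cleanest route is to reduce to the corresponding underived statement on the level of complexes (as in Propositions \ref{0329}, \ref{0332} and \ref{0385}), where the pairing is given by an explicit chain-level formula and the filtrations are honest subcomplexes, and then conclude by the general derived-functor compatibility. With that in hand, the perfectness of \eqref{0107} follows by a finite induction from Proposition \ref{0274}.
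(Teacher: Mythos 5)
Your proposal is correct and follows essentially the same route as the paper: the paper's proof of Proposition \ref{0275} is simply the finite devissage from Proposition \ref{0274}, i.e.\ the filtration by the $U^{m}$ is finite, the pairing \eqref{0105} kills $U^{m} \times U^{m'}$ for $m + m' > f_{A}$ and hence induces the graded pairings \eqref{0106}, and perfectness on each graded piece propagates to the whole by the triangulated five lemma — exactly the induction you describe.
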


\begin{proof}
	This follows from the previous proposition.
\end{proof}

Now we bring this result in the Nisnevich/Zariski topology to the desired \'etale topology:

\begin{Prop} \label{0276}
	Let $q \in \{1, 2\}$ and set $q' = 3 - q$.
	Consider the morphisms
		\[
					U^{1} H^{q} \Bar{\mathcal{E}}
				\tensor^{L}
					U^{1} H^{q'} \Bar{\mathcal{E}}
			\to
				\nu(1)[1]
		\]
	in $D(\alg{B}_{\Hat{c}, \et})$ constructed in Proposition \ref{0096}.
	The induced morphism
		\begin{equation} \label{0108}
					R \Bar{\pi}_{\alg{B}, \ast} U^{1} H^{q} \Bar{\mathcal{E}}
				\tensor^{L}
					R \Bar{\pi}_{\alg{B}, \Hat{!}} U^{1} H^{q'} \Bar{\mathcal{E}}
			\to
				R \Bar{\pi}_{\alg{B}, \Hat{!}} \nu(1)[1]
			\to
				\Lambda_{\infty}
		\end{equation}
	in $D(F^{\perar}_{\et})$, where the last morphism is \eqref{0350},
	is a perfect pairing.
\end{Prop}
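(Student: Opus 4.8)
The plan is to deduce Proposition \ref{0276} from its Nisnevich/Zariski counterpart Proposition \ref{0275} together with the ``change of topology'' machinery of Section \ref{0309}, in exact parallel with how Proposition \ref{0098} was obtained (for the $\gr^{0}$ part) and how the analogous \'etale statements were extracted from Zariski statements throughout Sections \ref{0048}--\ref{0052}. First I would record that all the objects in sight lie in $\genby{\mathcal{W}_{F}}_{F^{\perar}_{\et}}$: by Proposition \ref{0249}, the objects $R \Bar{\pi}_{\alg{B}, \ast} U^{1} H^{q} \Bar{\mathcal{E}}$ and $R \Bar{\pi}_{\alg{B}, \Hat{!}} U^{1} H^{q'} \Bar{\mathcal{E}}$ belong to $\genby{\mathcal{W}_{F}}_{F^{\perar}_{\et}}$, and $\Lambda_{\infty}$ is the standard dualizing object there. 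Hence Proposition \ref{0026} applies: the morphism \eqref{0108} is a perfect pairing in $D(F^{\perar}_{\et})$ if and only if the induced morphism obtained by applying $R \varepsilon_{\ast}$ and composing with $R \varepsilon_{\ast} \Lambda_{\infty} \to \xi_{\infty}[-1]$ is a perfect pairing in $D(F^{\perar}_{\zar})$.

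Next I would identify the $R \varepsilon_{\ast}$-image of \eqref{0108} with the pairing \eqref{0107} of Proposition \ref{0275}. The key inputs are: $R \varepsilon_{\ast} R \Bar{\pi}_{\alg{B}, \ast} \cong R \Bar{\pi}_{\alg{B}, \nis, \ast} R \varepsilon_{\ast}$ and $R \varepsilon_{\ast} R \Bar{\pi}_{\alg{B}, \Hat{!}} \cong R \Bar{\pi}_{\alg{B}, \nis, \Hat{!}} R \varepsilon_{\ast}$ (which follow since $\varepsilon$ commutes with the relevant projections and with the mapping-cone defining $\Bar{\pi}_{\Hat{!}}$), together with Proposition \ref{0104}, which tells us that $R \varepsilon_{\ast}$ of the sheaf $U^{1} H^{q} \Bar{\mathcal{E}}$ computes the quotient $U^{1} H^{q} \Bar{\mathcal{F}} / U^{f_{A}} H^{q} \Bar{\mathcal{F}}$; more precisely, since $U^{1} H^{q} \Bar{\mathcal{E}}$ is an \'etale sheaf one has $R \varepsilon_{\ast} U^{1} H^{q} \Bar{\mathcal{E}} \cong \varepsilon_{\ast} U^{1} H^{q} \Bar{\mathcal{E}} \cong U^{1} H^{q} \Bar{\mathcal{F}} / U^{f_{A}} H^{q} \Bar{\mathcal{F}}$. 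The compatibility of the pairing $U^{1} H^{q} \Bar{\mathcal{E}} \tensor^{L} U^{1} H^{q'} \Bar{\mathcal{E}} \to \nu(1)[1]$ of Proposition \ref{0096} with the pairing \eqref{0105} after applying $R \varepsilon_{\ast}$ is exactly Proposition \ref{0272}. Finally, the morphism $R \Bar{\pi}_{\alg{B}, \Hat{!}} \nu(1)[1] \to \Lambda_{\infty}$ of \eqref{0350} becomes, after $R \varepsilon_{\ast}$, the morphism $R \Bar{\pi}_{\alg{B}, \nis, \Hat{!}} \xi(1) \to \xi_{\infty}[-1]$ of \eqref{0354}, by the compatibility of the two trace morphisms recorded just after \eqref{0339}. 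Assembling these identifications, the $R \varepsilon_{\ast}$-image of \eqref{0108} is precisely \eqref{0107}.

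With that identification in hand, Proposition \ref{0275} says \eqref{0107} is a perfect pairing in $D(F^{\perar}_{\zar})$, so by the ``only if/if'' equivalence of Proposition \ref{0026} we conclude that \eqref{0108} is a perfect pairing in $D(F^{\perar}_{\et})$, which is the assertion. The main obstacle I anticipate is not any single deep step but the bookkeeping in the second paragraph: one must check that the chain of canonical identifications ($R \varepsilon_{\ast}$ commuting past the two relative-pushforward-with-support functors, Proposition \ref{0104} identifying the relevant subsheaves, Proposition \ref{0272} matching the two pairings, and the trace-map compatibility) is genuinely compatible with the cup-product morphisms \eqref{0342}, \eqref{0344} and not merely with the underlying objects. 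This is ``formal'' in the sense used repeatedly in Section \ref{0029} (cf. Propositions \ref{0367}, \ref{0376}, \ref{0379}, \ref{0410}), so I would phrase it as reducing to the commutativity of the corresponding diagram of underived functors on $\Ch(\alg{B}_{\Hat{c}, \et})$ and then invoke Proposition \ref{0326} to pass to derived categories, exactly as in the proof of Proposition \ref{0385}.
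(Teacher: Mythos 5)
Your proposal is correct and follows essentially the same route as the paper: reduce via Proposition \ref{0026} to the Zariski/Nisnevich statement, identify the $R\varepsilon_{\ast}$-image of \eqref{0108} with the pairing \eqref{0107} using Propositions \ref{0104} and \ref{0272}, and conclude by Proposition \ref{0275}. The paper's proof is just a terser version of your second paragraph, so no further comment is needed.
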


\begin{proof}
	We have
		\[
					R \varepsilon_{\ast}
					R \Bar{\pi}_{\alg{B}, \ast}
					U^{1} H^{q} \Bar{\mathcal{E}}
				\cong
					R \Bar{\pi}_{\alg{B}, \nis, \ast}
					\frac{
						U^{1} H^{q} \Bar{\mathcal{F}}
					}{
						U^{f_{A}} H^{q} \Bar{\mathcal{F}}
					}
		\]
	by Proposition \ref{0104}.
	By Proposition \ref{0272},
	the morphism \eqref{0107} can also be given by the composite
		\[
					R \varepsilon_{\ast}
					R \Bar{\pi}_{\alg{B}, \ast} U^{1} H^{q} \Bar{\mathcal{E}}
				\tensor^{L}
					R \varepsilon_{\ast}
					R \Bar{\pi}_{\alg{B}, \Hat{!}} U^{1} H^{q'} \Bar{\mathcal{E}}
			\to
				R \varepsilon_{\ast}
				\Lambda_{\infty}[1]
			\to
				\xi_{\infty},
		\]
	where the first morphism is $R \varepsilon_{\ast}$ of \eqref{0108}.
	Hence the result follows from the previous proposition and
	Proposition \ref{0026}.
\end{proof}


\subsection{Duality in full}
\label{0482}

We combine the duality for $\gr^{0}$ and $U^{1}$ into one:

\begin{Prop} \label{0363}
	The morphism
		\[
					R \Bar{\pi}_{\alg{B}, \ast} \Bar{\mathcal{E}}
				\tensor^{L}
					R \Bar{\pi}_{\alg{B}, \Hat{!}} \Bar{\mathcal{E}}
			\to
				R \Bar{\pi}_{\alg{B}, \Hat{!}} \nu(1)[-2]
			\to
				\Lambda_{\infty}[-3]
		\]
	in $D(F^{\perar}_{\et})$ induced by by \eqref{0095} and \eqref{0350}
	is a perfect pairing.
\end{Prop}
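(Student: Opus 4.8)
The plan is to assemble the full duality pairing from its graded pieces, using the filtration on $\Bar{\mathcal{E}}$ by the subsheaves $U^{m} H^{q} \Bar{\mathcal{E}}$ together with the compatibility diagrams of Proposition \ref{0096}. First I would apply the functors $R \Bar{\pi}_{\alg{B}, \ast}$ and $R \Bar{\pi}_{\alg{B}, \Hat{!}}$ to each of the four morphisms of distinguished triangles in Proposition \ref{0096}, and then apply $R \sheafhom_{F^{\perar}_{\et}}(\var, \Lambda_{\infty})$ (composed with the trace morphism \eqref{0359}) to dualize. The cup product morphism \eqref{0344} is natural with respect to the localization triangles for $\alg{B}_{\Hat{c}, \et}$ by the formalism of Section \ref{0325} (in particular Proposition \ref{0332} and the compatibility behind Proposition \ref{0361}), so the morphisms of triangles in Proposition \ref{0096} push down to morphisms of distinguished triangles in $D(F^{\perar}_{\et})$ relating $R \Bar{\pi}_{\alg{B}, \ast}$ applied to the pieces of $\Bar{\mathcal{E}}$ with the $\Lambda_{\infty}[-3]$-duals of $R \Bar{\pi}_{\alg{B}, \Hat{!}}$ applied to the complementary pieces.

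The key steps, in order, are: (i) record that all the relevant objects $R \Bar{\pi}_{\alg{B}, \ast}(\var)$ and $R \Bar{\pi}_{\alg{B}, \Hat{!}}(\var)$ for the pieces $U^{m} H^{q} \Bar{\mathcal{E}}$, $\gr^{m} H^{q} \Bar{\mathcal{E}}$ lie in $\genby{\mathcal{W}_{F}}_{F^{\perar}_{\et}}$ by Proposition \ref{0249}, so that Serre duality (Proposition \ref{0497}) and the passage between topologies (Proposition \ref{0026}) are available and behave exactly; (ii) establish the perfectness of the pairing on $\gr^{0}$ in all three bidegrees by Proposition \ref{0098}; (iii) establish the perfectness of the pairing on $U^{1}$ in bidegrees $(1,2)$ and $(2,1)$ by Proposition \ref{0276}; (iv) using the four morphisms of triangles from Proposition \ref{0096} and the five-lemma in the triangulated category $D(F^{\perar}_{\et})$ (i.e.\ repeated use of the fact that a morphism of distinguished triangles with two perfect legs has a perfect third leg), conclude that the pairing $R \Bar{\pi}_{\alg{B}, \ast} \Bar{\mathcal{E}} \tensor^{L} R \Bar{\pi}_{\alg{B}, \Hat{!}} \Bar{\mathcal{E}} \to \Lambda_{\infty}[-3]$ is perfect. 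Concretely, one climbs the filtration: first deduce the duality between $\tau_{\le 1} \Bar{\mathcal{E}}$ and $(\tau_{\ge 1}' \Bar{\mathcal{E}})^{\vee}$-type objects from the $\gr^{0}$ and $U^{1}$ pieces in degree $1$, then fold in degree $2$ via the remaining triangles, and finally recover the whole of $\Bar{\mathcal{E}}$.

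The bookkeeping obstacle is that the pairing $\Bar{\mathcal{E}} \tensor^{L} \Bar{\mathcal{E}} \to \nu(1)[-2]$ is not simply a direct sum of its graded pieces; the filtration pieces interact, and Proposition \ref{0096} is precisely the tool that makes the induced pairings on adjacent filtration quotients into exact-annihilator pairs. So the main work is to verify that the morphisms of triangles obtained by applying $R \Bar{\pi}_{\alg{B}, \ast}$, $R \Bar{\pi}_{\alg{B}, \Hat{!}}$ and $(\var)^{\vee}$ to the diagrams of Proposition \ref{0096} really are morphisms of distinguished triangles in $D(F^{\perar}_{\et})$ and that the ``two out of three perfect implies the third'' principle applies at each stage; this needs the compatibility of the cup product \eqref{0344} with the localization triangles \eqref{0372}, \eqref{0373} (equivalently, the content of Propositions \ref{0430}, \ref{0361} and \ref{0410} specialized to $\Spec \alg{B}_{\Hat{c}, \et}$) and the fact that $R \sheafhom_{F^{\perar}_{\et}}(\var, \Lambda_{\infty})$ is exact on $\genby{\mathcal{W}_{F}}_{F^{\perar}_{\et}}$ (Proposition \ref{0497}). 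The genuinely hard inputs—the coherent duality for curves in fibered-site form (Proposition \ref{0086}) and Kato's symbol computations underlying Propositions \ref{0098} and \ref{0276}—are already in hand, so the remaining argument is formal triangulated-category manipulation, and I expect the write-up to be short: cite Proposition \ref{0096} for the four morphisms of triangles, cite Propositions \ref{0098} and \ref{0276} for the perfectness of the graded pairings, and conclude by the five-lemma.
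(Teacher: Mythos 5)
Your proposal is correct and follows exactly the paper's route: the paper's proof is the one-line citation of Proposition \ref{0096} (the four morphisms of triangles splitting the pairing into $\gr^{0}$ and $U^{1}$ parts), Proposition \ref{0098} (perfectness on $\gr^{0}$) and Proposition \ref{0276} (perfectness on $U^{1}$), assembled by the standard two-out-of-three argument for perfect pairings in morphisms of distinguished triangles. Your additional remarks on representability (Proposition \ref{0249}) and on the compatibility of the cup product with the localization triangles are exactly the implicit prerequisites the paper relies on.
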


\begin{proof}
	This follows from Propositions \ref{0096}, \ref{0098} and \ref{0276}.
\end{proof}

We will interpret this result in terms of
$R \Bar{\pi}_{\Hat{\alg{R}}, \ast}$ and $R \Bar{\pi}_{\Hat{\alg{R}}, \Hat{!}}$.
We have canonical isomorphisms
	\[
				R \Bar{\pi}_{\Hat{\alg{R}}, \ast} \Lambda
			\isomto
				R \Bar{\pi}_{\alg{B}, \ast} \Bar{\mathcal{E}},
		\quad
				R \Bar{\pi}_{\Hat{\alg{R}}, \Hat{!}} \Lambda
			\isomto
				R \Bar{\pi}_{\alg{B}, \Hat{!}} \Bar{\mathcal{E}}
	\]
in $D(F^{\perar}_{\et})$ by Proposition \ref{0355}.

\begin{Prop} \label{0426}
	We have
		$
				R \Bar{\pi}_{\Hat{\alg{R}}, \ast} \Lambda,
				R \Bar{\pi}_{\Hat{\alg{R}}, \Hat{!}} \Lambda
			\in
				\genby{\mathcal{W}_{F}}_{F^{\perar}_{\et}}
		$.
	Moreover, $R^{q} \Bar{\pi}_{\Hat{\alg{R}}, \ast} \Lambda \in \mathcal{W}_{F}$
	for all $q$.
\end{Prop}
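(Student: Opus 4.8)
The plan is to reduce Proposition \ref{0426} to the corresponding statement on the curve $\alg{B}$, which has already been established. First I would use the two canonical isomorphisms recalled immediately before the statement, namely $R \Bar{\pi}_{\Hat{\alg{R}}, \ast} \Lambda \isomto R \Bar{\pi}_{\alg{B}, \ast} \Bar{\mathcal{E}}$ and $R \Bar{\pi}_{\Hat{\alg{R}}, \Hat{!}} \Lambda \isomto R \Bar{\pi}_{\alg{B}, \Hat{!}} \Bar{\mathcal{E}}$ in $D(F^{\perar}_{\et})$; these come from Proposition \ref{0355}, which applies since $\Lambda$ lies in $D_{\tor}^{+}(\Hat{\alg{R}}_{\Hat{c}, \et})$ and $\Bar{\mathcal{E}} = R \Bar{\Psi} \Lambda$ by definition. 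Because $\genby{\mathcal{W}_{F}}_{F^{\perar}_{\et}}$ is a full subcategory of $D(F^{\perar}_{\et})$, the first two membership assertions transport verbatim along these isomorphisms, and since the isomorphisms are isomorphisms in the derived category they induce isomorphisms $R^{q} \Bar{\pi}_{\Hat{\alg{R}}, \ast} \Lambda \cong R^{q} \Bar{\pi}_{\alg{B}, \ast} \Bar{\mathcal{E}}$ on cohomology objects, so the last assertion transfers as well. All three right-hand statements are exactly what Proposition \ref{0249} provides, so the proof concludes; in effect Proposition \ref{0426} is a formal corollary.

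The genuine content sits one and two levels below. Proposition \ref{0249} rests on the structure-by-symbols descriptions of the graded pieces $\gr^{m} H^{q} \Bar{\mathcal{E}}$ in Propositions \ref{0229} and \ref{0234}: each such piece is a finite product of copies of $\Ga^{\N}$, $\Ga^{\oplus}$, $\Omega^{1}$, $\nu(1)$ and $\Lambda$, hence lies in $\mathcal{W}_{F}$, after which one applies the representability result Proposition \ref{0082} — that $R \Bar{\pi}_{\alg{B}, \ast}$ and $R \Bar{\pi}_{\alg{B}, \Hat{!}}$ of $\Lambda$, $\nu(1)$ and vector bundles on the curve land in $\genby{\mathcal{W}_{F}}_{F^{\perar}_{\et}}$ — together with stability of that generated category under the relevant extensions and shifts. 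The refinement $R^{q} \Bar{\pi}_{\alg{B}, \ast} \Bar{\mathcal{E}} \in \mathcal{W}_{F}$ additionally invokes the computation of $R^{q} \Bar{\pi}_{\alg{B}, \ast} \nu(1)$ in Proposition \ref{0080} and the fact that $\alg{B} / \wp \alg{B}$ is a filtered union of connected unipotent quasi-algebraic groups.

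Where I expect the real difficulty to have been — and it is not in Proposition \ref{0426} itself — is Proposition \ref{0355}: the assertion that $R \Bar{\Psi}$ intertwines $R \Bar{\pi}_{\Hat{\alg{R}}, \ast}$ (resp.\ $R \Bar{\pi}_{\Hat{\alg{R}}, \Hat{!}}$) with $R \Bar{\pi}_{\alg{B}, \ast} R \Bar{\Psi}$ (resp.\ $R \Bar{\pi}_{\alg{B}, \Hat{!}} R \Bar{\Psi}$) on torsion coefficients. That comparison is assembled from Gabber's affine analogue of proper base change (Proposition \ref{0341}) applied to the two-dimensional excellent regular local rings $\Hat{\alg{A}}(F')_{x'}^{sh}$, and from the henselization-versus-completion invariance for the discrete valuation rings $\Hat{\alg{O}}_{K_{\eta_{x}}}(F')_{x'}^{sh}$ (Proposition \ref{0313}), uniformly over all $F' \in F^{\perar}$ and all boundary points $x \in T$, i.e.\ in the fibered-site framework of Section \ref{0325}. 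Granting that machinery, Proposition \ref{0426} follows with no further work.
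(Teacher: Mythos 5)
Your proof is correct and is exactly the paper's argument: the paper deduces Proposition \ref{0426} from Proposition \ref{0249} via the isomorphisms $R \Bar{\pi}_{\Hat{\alg{R}}, \ast} \Lambda \cong R \Bar{\pi}_{\alg{B}, \ast} \Bar{\mathcal{E}}$ and $R \Bar{\pi}_{\Hat{\alg{R}}, \Hat{!}} \Lambda \cong R \Bar{\pi}_{\alg{B}, \Hat{!}} \Bar{\mathcal{E}}$ supplied by Proposition \ref{0355}. Your further remarks correctly identify where the substantive work lies (Propositions \ref{0249}, \ref{0082} and \ref{0355}), but none of that is needed beyond citing those results.
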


\begin{proof}
	This follows from Proposition \ref{0249}.
\end{proof}

\begin{Prop} \label{0421}
	We have $R^{q} \pi_{\Hat{\alg{R}}, \ast} \Lambda = 0$ for $q \ge 3$.
\end{Prop}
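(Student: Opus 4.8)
The plan is to compute $R^{q}\pi_{\Hat{\alg{R}},\ast}\Lambda$ using the nearby cycle presentation already established. Recall from Section \ref{0340} that $R\pi_{\Hat{\alg{A}},\ast}\Lambda\isomto R\pi_{\alg{B},\ast}i^{\ast}\Lambda = R\pi_{\alg{B},\ast}R\Psi\Lambda = R\pi_{\alg{B},\ast}\mathcal{E}$ (using Proposition \ref{0341} via Proposition \ref{0355} applied in the henselian, non-fibered setting, or more directly the fact that $(\Hat{\alg{A}},I\Hat{\alg{A}})$ is a henselian pair and $\Lambda$ is torsion). Hence $R\pi_{\Hat{\alg{R}},\ast}\Lambda\isomto R\pi_{\alg{B},\ast}\mathcal{E}$. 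By Proposition \ref{0228}, $\mathcal{E}$ has cohomology concentrated in degrees $0,1,2$. Since $\Spec\alg{B}$ is a site whose $\tau=\et$ cohomological dimension over any field $F'\in F^{\perar}$ is at most $1$ (an affine smooth curve over a field), the spectral sequence $E_{2}^{ij}=R^{i}\pi_{\alg{B},\ast}H^{j}\mathcal{E}\Rightarrow H^{i+j}R\pi_{\alg{B},\ast}\mathcal{E}$ has $E_{2}^{ij}=0$ for $i\ge 2$, so $H^{q}R\pi_{\alg{B},\ast}\mathcal{E}=0$ for $q\ge 1+2=3$. That gives the claim for $R^{q}\pi_{\Hat{\alg{R}},\ast}\Lambda$ with $q\ge 3$.

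First I would make precise why the relevant cohomological dimension bound holds. The sheaf $R^{i}\pi_{\alg{B},\ast}G$ for $G\in\Ab(\alg{B}_{\et})$ is the \'etale sheafification of $F'\mapsto H^{i}(\alg{B}(F'),G|_{\alg{B}(F')})$, and $\alg{B}(F')=B\tensor_{F}F'$ is a smooth affine curve over the field $F'$ (for $F'$ a field; finite products reduce to this). A smooth affine curve over a field has \'etale cohomological dimension $\le 1$ for torsion sheaves of order prime to the characteristic only, but here we need the $p$-torsion case: an affine curve over a field of characteristic $p$ has $p$-cohomological dimension $\le 1$ as well (Artin–Schreier theory; cf. the use of $R^{q}\pi_{\alg{B},\ast}\nu(1)=0$ for $q\ge1$ in Proposition \ref{0080} and the computation of $R^{q}\pi_{\alg{B},\ast}\Lambda$ implicit in Proposition \ref{0082} via the exact sequence $0\to\Lambda\to\Ga\to\Ga\to0$ together with $R^{q}\pi_{\alg{B},\ast}\Ga=0$ for $q\ge1$). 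So in fact $R^{i}\pi_{\alg{B},\ast}G=0$ for $i\ge 2$ and every torsion $G$, and $E_{2}^{ij}$ is supported in $i\in\{0,1\}$. Combined with $H^{j}\mathcal{E}=0$ for $j\notin\{0,1,2\}$, the abutment vanishes in degrees $\ge 3$.

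The key steps in order are: (1) invoke Proposition \ref{0355} (or Proposition \ref{0341} directly) to identify $R\pi_{\Hat{\alg{R}},\ast}\Lambda\cong R\pi_{\alg{B},\ast}\mathcal{E}$; (2) invoke Proposition \ref{0228} for the degree bound $H^{j}\mathcal{E}=0$ outside $\{0,1,2\}$; (3) establish the $p$-cohomological dimension bound $R^{i}\pi_{\alg{B},\ast}(\text{torsion})=0$ for $i\ge 2$, using the Artin–Schreier sequence $0\to\Lambda\to\Ga\stackrel{\wp}{\to}\Ga\to0$ in $\Ab(\alg{B}_{\et})$ and $R^{i}\pi_{\alg{B},\ast}\Ga=0$ for $i\ge 1$ (which holds since $\Ga$ is quasi-coherent and \'etale, Nisnevich, Zariski cohomology agree on $\alg{B}(F')$), reducing the general torsion case to $\Lambda_{n}$ and then to $\Lambda$; (4) run the spectral sequence $E_{2}^{ij}=R^{i}\pi_{\alg{B},\ast}H^{j}\mathcal{E}\Rightarrow H^{i+j}R\pi_{\alg{B},\ast}\mathcal{E}$ and read off vanishing in degrees $\ge 3$.

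I expect step (3) to be the main point requiring care, since it is the only place where one must genuinely use that affine curves (not just fields) have low $p$-cohomological dimension; but this is essentially already contained in the machinery of Section \ref{0333} (the vanishing $R^{q}\pi_{\alg{B},\ast}\Ga=0$ for $q\ge 1$ is used repeatedly there, e.g.\ in Proposition \ref{0080} and Proposition \ref{0082}), so the proof should amount to a short citation of those facts plus the spectral sequence argument. In fact, given that Proposition \ref{0249} already asserts $R^{q}\Bar{\pi}_{B,\ast}\Bar{\mathcal{E}}\in\mathcal{W}_{F}$ for all $q$ and Proposition \ref{0228} bounds the degrees, one could alternatively phrase the entire argument as: $R\Bar{\pi}_{\Hat{\alg{R}},\ast}\Lambda\cong R\Bar{\pi}_{\alg{B},\ast}\Bar{\mathcal{E}}$ by Proposition \ref{0355}, the right-hand side is concentrated in degrees $\le 2$ by Propositions \ref{0228} and the cohomological-dimension bound, hence so is the left-hand side, and projecting to the non-fibered site $\Spec\Hat{\alg{R}}_{\et}$ (which is a direct factor of the cohomology) gives the statement. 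The cleanest writeup is probably to combine steps (1)(2)(4) into one sentence and relegate (3) to a reference to Proposition \ref{0082}'s proof.
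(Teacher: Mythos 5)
Your reduction to $R\pi_{\alg{B},\ast}\mathcal{E}$ via Propositions \ref{0355}/\ref{0341}, the degree bound of Proposition \ref{0228}, and the spectral sequence $E_{2}^{ij}=R^{i}\pi_{\alg{B},\ast}H^{j}\mathcal{E}$ are all fine and are essentially the route the paper takes. But there is a genuine gap in the degree count. Knowing only $E_{2}^{ij}=0$ for $i\ge 2$ and $H^{j}\mathcal{E}=0$ for $j\notin\{0,1,2\}$ gives vanishing of the abutment for $i+j\ge 4$, not for $i+j\ge 3$: the term $E_{2}^{1,2}=R^{1}\pi_{\alg{B},\ast}H^{2}\mathcal{E}$ survives your cohomological-dimension bound and would contribute to $H^{3}$. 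This is not a removable technicality: for a general torsion sheaf in place of $H^{2}\mathcal{E}$ the corresponding $R^{1}$ is nonzero (e.g.\ $R^{1}\pi_{\alg{B},\ast}\Lambda\cong\alg{B}/\wp\alg{B}\ne 0$, as used in the proof of Proposition \ref{0082}), so "affine curves have $p$-cohomological dimension $\le 1$" cannot by itself close the degree-$3$ case.

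What is needed, and what the paper's proof actually cites, is the finer structure of $H^{2}\mathcal{E}$: by Proposition \ref{0229} the symbol filtration exhibits it as an iterated extension of $\nu(1)\cong\Gm/\Gm^{p}$, $\Ga/\Ga^{p}$ and $\Omega^{1}$, and each of these satisfies $R^{i}\pi_{\alg{B},\ast}=0$ for all $i\ge 1$ --- for $\Omega^{1}$ by quasi-coherence, for $\Ga/\Ga^{p}$ by the Frobenius sequence $0\to\Ga\to\Ga\to\Ga/\Ga^{p}\to 0$ together with $R^{\ge 1}\pi_{\alg{B},\ast}\Ga=0$, and for $\nu(1)$ by Proposition \ref{0080}, which is a genuine computation (localization against the compactification $Y$ and surjectivity of the sum of norm maps), not a formal dimension bound. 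Once you add $R^{1}\pi_{\alg{B},\ast}H^{2}\mathcal{E}=0$ to your steps (1), (2), (4), the argument is complete; your step (3) alone only proves the statement for $q\ge 4$.
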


\begin{proof}
	This follows from Propositions \ref{0229} and \ref{0080}.
\end{proof}

For any $x \in T$, we have a morphism
	\[
			R^{2} \Bar{\pi}_{\Hat{\alg{K}}_{\eta_{x}}, \ast} \Lambda
		\to
			\Weil_{F_{x} / F} \Lambda
	\]
by the Weil restriction of \eqref{0358}.
Hence the sum of the norm maps gives morphisms
	\[
			\bigoplus_{x \in T}
				R^{2} \Bar{\pi}_{\Hat{\alg{K}}_{\eta_{x}}, \ast} \Lambda
		\to
			\bigoplus_{x \in T}
				\Weil_{F_{x} / F} \Lambda
		\to
			\Lambda.
	\]

\begin{Prop} \label{0362}
	The above composite
	$\bigoplus_{x \in T} R^{2} \Bar{\pi}_{\Hat{\alg{K}}_{\eta_{x}}, \ast} \Lambda \to \Lambda$
	annihilates the image of
	$R^{2} \Bar{\pi}_{\Hat{\alg{R}}, \ast} \Lambda$.
	The obtained morphism
		\begin{equation} \label{0360}
				R^{3} \Bar{\pi}_{\Hat{\alg{R}}, \Hat{!}} \Lambda
			\to
				\Lambda_{\infty}
		\end{equation}
	via the exact sequence
		\[
				R^{2} \Bar{\pi}_{\Hat{\alg{R}}, \ast} \Lambda
			\to
				\bigoplus_{x \in T}
					R^{2} \Bar{\pi}_{\Hat{\alg{K}}_{\eta_{x}}, \ast} \Lambda
			\to
				R^{3} \Bar{\pi}_{\Hat{\alg{R}}, \Hat{!}} \Lambda
			\to
				0
		\]
	agrees with the morphism \eqref{0359}.
\end{Prop}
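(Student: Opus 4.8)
The plan is to reduce both assertions to a single identity of morphisms $\bigoplus_{x \in T} R^{2} \Bar{\pi}_{\Hat{\alg{K}}_{\eta_{x}}, \ast} \Lambda \to \Lambda_{\infty}$. By Proposition~\ref{0421} we have $R^{q} \pi_{\Hat{\alg{R}}, \ast} \Lambda = 0$ for $q \ge 3$, so the degree $3$ part of the $\Hat{!}$-localization triangle $R \Bar{\pi}_{\Hat{\alg{R}}, \Hat{!}} \Lambda \to R \Bar{\pi}_{\Hat{\alg{R}}, \ast} \Lambda \to \bigoplus_{x} R \Bar{\pi}_{\Hat{\alg{K}}_{\eta_{x}}, \ast} \Lambda$ is the exact sequence displayed in the statement, with surjective connecting map $\partial \colon \bigoplus_{x} R^{2} \Bar{\pi}_{\Hat{\alg{K}}_{\eta_{x}}, \ast} \Lambda \onto R^{3} \Bar{\pi}_{\Hat{\alg{R}}, \Hat{!}} \Lambda$ whose kernel is the image of $R^{2} \Bar{\pi}_{\Hat{\alg{R}}, \ast} \Lambda$. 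Write $\mu \colon R^{3} \Bar{\pi}_{\Hat{\alg{R}}, \Hat{!}} \Lambda \to \Lambda_{\infty}$ for the morphism obtained from \eqref{0359} via the isomorphism of Proposition~\ref{0355}. Both assertions follow at once once one shows that $\mu \compose \partial$ equals the sum over $x$ of the ``norm $\compose$ trace'' maps $R^{2} \Bar{\pi}_{\Hat{\alg{K}}_{\eta_{x}}, \ast} \Lambda \to \Weil_{F_{x} / F} \Lambda \to \Lambda$: the annihilation of $\mathrm{im}(R^{2} \Bar{\pi}_{\Hat{\alg{R}}, \ast} \Lambda)$ is then forced since $\partial$ kills that image, and $\mu$ is by construction the induced morphism.

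To compute $\mu \compose \partial$ I would transport everything through the nearby cycle isomorphisms of Proposition~\ref{0355} (legitimate since $\Lambda$ is torsion), which identify $R \Bar{\pi}_{\Hat{\alg{R}}, \ast} \Lambda$, $R \Bar{\pi}_{\Hat{\alg{R}}, \Hat{!}} \Lambda$ and $R \Bar{\pi}_{\Hat{\alg{K}}_{\eta_{x}}, \ast} \Lambda$ with $R \Bar{\pi}_{\alg{B}, \ast} \Bar{\mathcal{E}}$, $R \Bar{\pi}_{\alg{B}, \Hat{!}} \Bar{\mathcal{E}}$ and $R \pi_{\Hat{\alg{k}}_{x}, \ast} \mathcal{E}_{x}$, respectively, compatibly with the $\Hat{!}$-triangles and with the projections to boundary components; this is where the fibered-site formalism of Section~\ref{0325} and the sheaf-theoretic descriptions of Propositions~\ref{0349} and \ref{0345} enter. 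Under these identifications \eqref{0359} is, by construction \eqref{0351}, the projection $H^{2} \Bar{\mathcal{E}} \onto \gr^{0} H^{2} \Bar{\mathcal{E}} \cong \nu(1)$ followed by the residue morphism \eqref{0350}, which by Proposition~\ref{0357} is exactly the map on $R^{1} \Bar{\pi}_{\alg{B}, \Hat{!}} \nu(1)$ induced by the sum over $x$ of the residue maps $\pi_{\Hat{\alg{k}}_{x}, \ast} \nu(1) \to \Lambda$, i.e. of the valuation $\Hat{\alg{k}}_{x}^{\times} / \Hat{\alg{k}}_{x}^{\times p} \to \Weil_{F_{x} / F} \Lambda$ (Proposition~\ref{0080}) followed by the norm. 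Since the symbol filtrations on $H^{2} \Bar{\mathcal{E}}$ and on each $H^{2} \mathcal{E}_{x}$ are both induced from the $\Gm$-symbol map, the identification $\gr^{0} H^{2} \Bar{\mathcal{E}} \cong \nu(1)$ restricts on the $x$-component to the local identification $\gr^{0} H^{2} \mathcal{E}_{x} \cong \Gm / \Gm^{p} \cong \nu(1)$ coming, as in Proposition~\ref{0229}, from Propositions~\ref{0069} and \ref{0043}, and the $\gr^{0}$-truncation \eqref{0351} commutes with passage to boundary components. Putting these together, $\mu \compose \partial$ is, in the $x$-th component, the map ``project to $\gr^{0} H^{2} \mathcal{E}_{x} \cong \nu(1)$, push forward to $\Hat{\alg{k}}_{x}^{\times} / \Hat{\alg{k}}_{x}^{\times p}$, apply the valuation into $\Weil_{F_{x}/F}\Lambda$, then the norm''.

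It then remains to identify this $x$-th component with ``norm $\compose$ trace''. The input is Kato's Proposition~\ref{0041}: the trace \eqref{0358} for the two-dimensional local field $\Hat{\alg{K}}_{\eta_{x}}$ (Weil-restricted from $F_{x}$) is, by \eqref{0437}, the boundary to the residue field $\Hat{\alg{k}}_{x}$ followed by $\Weil_{F_{x} / F}$ of the valuation; and under the nearby cycle identification this boundary is precisely the projection $H^{2} \mathcal{E}_{x} \onto \gr^{0} H^{2} \mathcal{E}_{x} \cong \nu(1)$ pushed forward by $\pi_{\Hat{\alg{k}}_{x}, \ast}$ (using also Proposition~\ref{0313} to rewrite $R \pi_{\Hat{\alg{k}}_{x}, \ast} \mathcal{E}_{x}$ as $R \Bar{\pi}_{\Hat{\alg{K}}_{\eta_{x}}, \ast} \Lambda$). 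Hence ``norm $\compose$ trace'' agrees component-wise with $\mu \compose \partial$, which completes the argument.

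I expect the only genuinely delicate point to be the coherence of this diagram chase across the total and fibered sites: one must check that the isomorphisms of Proposition~\ref{0355} are natural not merely as isomorphisms of objects but compatibly with the full $\Hat{!}$-localization triangles and their connecting maps, that the spectral-sequence edge maps of Propositions~\ref{0349} and \ref{0345} behave as claimed, and that \eqref{0351} induces a morphism of these triangles. All the arithmetic content is already packaged in Proposition~\ref{0041} and in Proposition~\ref{0357}, so no new geometric or arithmetic input is required — the work is entirely bookkeeping within the functorial formalism built in Sections~\ref{0029} and \ref{0325}.
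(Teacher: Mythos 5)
Your proposal is correct and follows essentially the same route as the paper: both arguments transport everything through the nearby-cycle identifications, use the morphism \eqref{0351} to map the $\Hat{!}$-localization sequence for $\Lambda$ onto the exact sequence $\Bar{\pi}_{\alg{B}, \ast} \nu(1) \to \bigoplus_{x} \Bar{\pi}_{\Hat{\alg{k}}_{x}, \ast} \nu(1) \to R^{1} \Bar{\pi}_{\alg{B}, \Hat{!}} \nu(1) \to 0$, and then invoke Proposition \ref{0357} together with Kato's description \eqref{0437} of the local trace to match the boundary maps component-wise. The paper merely packages this as a single commutative diagram of exact sequences with \ref{0357} as the punchline, whereas you unpack the two composites explicitly; the content is identical.
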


\begin{proof}
	We have a commutative diagram
		\[
			\begin{CD}
					R^{2} \Bar{\pi}_{\Hat{\alg{R}}, \ast} \Lambda
				@>>>
					\bigoplus_{x \in T}
						R^{2} \Bar{\pi}_{\Hat{\alg{K}}_{\eta_{x}}, \ast} \Lambda
				@>>>
					R^{3} \Bar{\pi}_{\Hat{\alg{R}}, \Hat{!}} \Lambda
				@>>>
					0
				\\ @VVV @VVV @VVV @. \\
					\Bar{\pi}_{\alg{B}, \ast} \nu(1)
				@>>>
					\bigoplus_{x \in T} \Bar{\pi}_{\Hat{\alg{k}}_{x}, \ast} \nu(1)
				@>>>
					R^{1} \Bar{\pi}_{\alg{B}, \Hat{!}} \nu(1)
				@>>>
					0.
			\end{CD}
		\]
	Hence the result follows from Proposition \ref{0357}.
\end{proof}

\begin{Prop} \label{0505}
	The image of the composite
		\[
				R^{2} \Bar{\pi}_{\Hat{\alg{R}}, \ast} \Lambda
			\to
				\bigoplus_{x \in T}
					R^{2} \Bar{\pi}_{\Hat{\alg{K}}_{\eta_{x}}, \ast} \Lambda
			\onto
				\bigoplus_{x \in T}
					\Weil_{F_{x} / F} \Lambda
		\]
	of the natural morphism and the morphism \eqref{0358} is
	the kernel $(\bigoplus_{x \in T} \Weil_{F_{x} / F} \Lambda)_{0}$
	of the sum $\bigoplus_{x \in T} \Weil_{F_{x} / F} \Lambda \to \Lambda$ of the norm maps.
\end{Prop}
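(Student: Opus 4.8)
The plan is to reduce the statement to a surjectivity which then follows from Proposition \ref{0080} by d\'evissage. The first step is to rewrite the composite in the statement. I would unwind the trace map \eqref{0358} for the two-dimensional local field $\Hat{\alg{K}}_{\eta_{x}}$: by \eqref{0412}, \eqref{0499} and (the Weil restriction from $F_{x}$ of) Proposition \ref{0451}, the morphism $R^{2} \Bar{\pi}_{\Hat{\alg{K}}_{\eta_{x}}, \ast} \Lambda \to \Weil_{F_{x} / F} \Lambda$ of \eqref{0358} factors as the Kato residue at the valuation, $R^{2} \Bar{\pi}_{\Hat{\alg{K}}_{\eta_{x}}, \ast} \Lambda \to \pi_{\Hat{\alg{k}}_{x}, \ast} \nu(1) = \Hat{\alg{k}}_{x}^{\times} / \Hat{\alg{k}}_{x}^{\times p}$, followed by the normalized valuation $\pi_{\Hat{\alg{k}}_{x}, \ast} \nu(1) \to \Weil_{F_{x} / F} \Lambda$. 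The residue here is precisely the middle vertical morphism of the commutative diagram in the proof of Proposition \ref{0362}, and the normalized valuation is the map ``valuation in each factor'' of Proposition \ref{0080}. Using the left square of that diagram, the composite in the statement therefore equals
\[
	R^{2} \Bar{\pi}_{\Hat{\alg{R}}, \ast} \Lambda
	\xrightarrow{v}
	\pi_{\alg{B}, \ast} \nu(1)
	\to
	\bigoplus_{x \in T} \pi_{\Hat{\alg{k}}_{x}, \ast} \nu(1)
	\to
	\bigoplus_{x \in T} \Weil_{F_{x} / F} \Lambda,
\]
where $v$ is the morphism $R^{2} \Bar{\pi}_{\Hat{\alg{R}}, \ast} \Lambda \cong R^{2} \pi_{\alg{B}, \ast} \mathcal{E} \to \pi_{\alg{B}, \ast} \nu(1)$ induced by \eqref{0351} (using $R \Bar{\pi}_{\Hat{\alg{R}}, \ast} \Lambda \cong R \pi_{\alg{B}, \ast} \mathcal{E}$ from Proposition \ref{0355}), and the last two arrows are the middle vertical followed by the bottom right-hand arrow of the diagram of Proposition \ref{0080}.

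Next I would show that $v$ is surjective. Via the spectral sequence $R^{i} \pi_{\alg{B}, \ast} H^{j} \mathcal{E} \Rightarrow R^{i + j}$ and the vanishing $R^{i} \pi_{\alg{B}, \ast} = 0$ for $i \ge 2$ on all sheaves occurring in $H^{\bullet} \mathcal{E}$ (Artin--Schreier for $\Lambda$, affine vanishing for $\Ga$ and $\Omega^{1}$, and hence for $\nu(1)$ and $\Ga / \Ga^{p}$; cf.\ Proposition \ref{0080}), together with the concentration of $\mathcal{E}$ in degrees $0, 1, 2$ (Proposition \ref{0228}), the edge morphism gives a surjection $R^{2} \pi_{\alg{B}, \ast} \mathcal{E} \onto \pi_{\alg{B}, \ast} H^{2} \mathcal{E}$. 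Since $\gr^{m} H^{2} \mathcal{E} = 0$ for $m \ge f_{A}$ and $\gr^{m} H^{2} \mathcal{E}$ is $\Ga / \Ga^{p}$ or $\Omega^{1}$ for $0 < m < f_{A}$ (Proposition \ref{0229}), both with vanishing $R^{1} \pi_{\alg{B}, \ast}$, the left-exact $\pi_{\alg{B}, \ast}$ nevertheless sends the surjection $H^{2} \mathcal{E} \onto \gr^{0} H^{2} \mathcal{E} = \nu(1)$ to a surjection; composing, $v$ is surjective.

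Finally, by Proposition \ref{0080} the composite $\pi_{\alg{B}, \ast} \nu(1) \to \bigoplus_{x} \pi_{\Hat{\alg{k}}_{x}, \ast} \nu(1) \to \bigoplus_{x} \Weil_{F_{x} / F} \Lambda$ is the canonical surjection $\pi_{\alg{B}, \ast} \nu(1) \onto (\bigoplus_{x} \Weil_{F_{x} / F} \Lambda)_{0}$ followed by the inclusion into $\bigoplus_{x} \Weil_{F_{x} / F} \Lambda$. Combined with the surjectivity of $v$ and the rewriting of the first step, this shows that the image of the composite in the statement is exactly $(\bigoplus_{x} \Weil_{F_{x} / F} \Lambda)_{0}$. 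The step I expect to cost the most care is the first one: several trace and residue maps are simultaneously in play --- the Kato trace \eqref{0358} (a two-step boundary for the two-dimensional local field), the valuation maps of Proposition \ref{0080}, and the connecting morphisms of the diagram in the proof of Proposition \ref{0362} --- and one must verify that they are all identified compatibly. Once that bookkeeping is in place, the remaining d\'evissage and the appeal to Proposition \ref{0080} are routine.
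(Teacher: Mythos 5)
Your proposal is correct and is essentially the argument the paper intends: the paper's proof is the single line "This follows from Proposition \ref{0080}", and your rewriting via the residue factorization of \eqref{0358}, the surjectivity of $R^{2} \Bar{\pi}_{\Hat{\alg{R}}, \ast} \Lambda \to \pi_{\alg{B}, \ast} \nu(1)$ by d\'evissage on the filtration of $H^{\bullet} \mathcal{E}$, and the final appeal to the diagram of Proposition \ref{0080} simply supply the details left implicit there. No gaps.
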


\begin{proof}
	This follows from Proposition \ref{0080}.
\end{proof}

In particular, we have a commutative diagram
	\[
		\begin{CD}
				@.
					R^{2} \Bar{\pi}_{\Hat{\alg{R}}, \ast} \Lambda
				@>>>
					\bigoplus_{x \in T}
						R^{2} \Bar{\pi}_{\Hat{\alg{K}}_{\eta_{x}}, \ast} \Lambda
				@>>>
					R^{3} \Bar{\pi}_{\Hat{\alg{R}}, \Hat{!}} \Lambda
				@>>>
					0
				\\ @. @VVV @VVV @VVV @. \\
					0
				@>>>
					\left(
						\bigoplus_{x \in T}
							\Weil_{F_{x} / F} \Lambda
					\right)_{0}
				@>>>
					\bigoplus_{x \in T} \Weil_{F_{x} / F} \Lambda
				@>>>
					\Lambda
				@>>>
					0
		\end{CD}
	\]
with exact rows and surjective vertical morphisms.

\begin{Prop} \label{0109}
	The composite morphism
		\[
					R \Bar{\pi}_{\Hat{\alg{R}}, \ast} \Lambda
				\tensor^{L}
					R \Bar{\pi}_{\Hat{\alg{R}}, \Hat{!}} \Lambda
			\to
				R \Bar{\pi}_{\Hat{\alg{R}}, \Hat{!}} \Lambda
			\to
				\Lambda_{\infty}[-3]
		\]
	of \eqref{0344} and \eqref{0360} is a perfect pairing.
\end{Prop}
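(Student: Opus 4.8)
The plan is to transport the statement, via the nearby-cycle comparison of Section~\ref{0340}, to the affine-curve duality of Proposition~\ref{0363}, which has already been established. Since $\Lambda \in D_{\tor}^{+}(\Hat{\alg{R}}_{\Hat{c}, \et})$, Proposition~\ref{0355} provides canonical isomorphisms
\[
		R \Bar{\pi}_{\Hat{\alg{R}}, \ast} \Lambda
	\isomto
		R \Bar{\pi}_{\alg{B}, \ast} R \Bar{\Psi} \Lambda
	=
		R \Bar{\pi}_{\alg{B}, \ast} \Bar{\mathcal{E}},
	\qquad
		R \Bar{\pi}_{\Hat{\alg{R}}, \Hat{!}} \Lambda
	\isomto
		R \Bar{\pi}_{\alg{B}, \Hat{!}} R \Bar{\Psi} \Lambda
	=
		R \Bar{\pi}_{\alg{B}, \Hat{!}} \Bar{\mathcal{E}}.
\]
So the first task is to verify that, under these isomorphisms, the pairing in the statement is carried to the pairing appearing in Proposition~\ref{0363}.

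For the cup-product factor, I would invoke Proposition~\ref{0361}: the morphism \eqref{0344} for $G = H = \Lambda$ corresponds, under the isomorphisms above, to the morphism \eqref{0343} for $G = H = \Lambda$, which is built from the internal product \eqref{0346} on $R\Bar{\Psi}$ applied to the multiplication map $\Lambda \tensor^{L} \Lambda \to \Lambda$; by definition this last composite is precisely the morphism \eqref{0093}, and composing it with \eqref{0351} gives \eqref{0095}. For the trace factor, Proposition~\ref{0362} identifies the morphism \eqref{0360} with the morphism \eqref{0359}, which is the composite $R\Bar{\pi}_{\alg{B},\Hat{!}}\Bar{\mathcal{E}} \to R\Bar{\pi}_{\alg{B},\Hat{!}}\nu(1)[-2] \to \Lambda_{\infty}[-3]$ induced by \eqref{0351} and \eqref{0350}. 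Assembling these, the pairing of the present statement becomes the pairing
\[
			R \Bar{\pi}_{\alg{B}, \ast} \Bar{\mathcal{E}}
		\tensor^{L}
			R \Bar{\pi}_{\alg{B}, \Hat{!}} \Bar{\mathcal{E}}
	\to
		R \Bar{\pi}_{\alg{B}, \Hat{!}} \nu(1)[-2]
	\to
		\Lambda_{\infty}[-3]
\]
of Proposition~\ref{0363}, which is perfect; hence so is the original morphism.

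The genuine content has already been discharged: the deep inputs are Proposition~\ref{0363} (\'etale and coherent duality for the smooth affine curve $V$, assembled with its boundary completions in the style of fibered sites) and Proposition~\ref{0355} (invariance of torsion nearby-cycle cohomology under completion). What remains is diagram-chasing bookkeeping — checking that the three a priori different product structures in play (the abstract cup product $R\Bar{\pi}_{\ast}\tensor^{L}R\Bar{\pi}_{\Hat{!}}\to R\Bar{\pi}_{\Hat{!}}$ of Section~\ref{0325}, the $R\Bar{\Psi}$-multiplication \eqref{0346}, and the symbol-induced product on $\Bar{\mathcal{E}}$) fit into one commutative square, which is exactly what Propositions~\ref{0361} and~\ref{0332} supply, together with the matching of the two descriptions of the trace map. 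I expect this compatibility verification to be the most delicate point, but it is routine and formal, with no analytic or arithmetic obstacle remaining.
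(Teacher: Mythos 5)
Your argument is correct and coincides with the paper's proof: both reduce the statement to Proposition \ref{0363} via the nearby-cycle isomorphisms of Proposition \ref{0355}, using Proposition \ref{0361} for compatibility of the cup products and Proposition \ref{0362} for compatibility of the trace maps. No gap.
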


\begin{proof}
	We have a commutative diagram
		\begin{equation} \label{0473}
			\begin{CD}
						R \Bar{\pi}_{\Hat{\alg{R}}, \ast} \Lambda
					\tensor^{L}
						R \Bar{\pi}_{\Hat{\alg{R}}, \Hat{!}} \Lambda
				@>>>
					R \Bar{\pi}_{\Hat{\alg{R}}, \Hat{!}} \Lambda
				@>>>
					\Lambda_{\infty}[-3]
				\\ @V \wr VV @V \wr VV @| \\
						R \Bar{\pi}_{\alg{B}, \ast} \Bar{\mathcal{E}}
					\tensor^{L}
						R \Bar{\pi}_{\alg{B}, \Hat{!}} \Bar{\mathcal{E}}
				@>>>
					R \Bar{\pi}_{\alg{B}, \Hat{!}} \Bar{\mathcal{E}}
				@>>>
					\Lambda_{\infty}[-3]
			\end{CD}
		\end{equation}
	by Propositions \ref{0361} and \ref{0362}.
	Hence the result follows from Proposition \ref{0363}.
\end{proof}

Define
	\[
				R \alg{\Gamma}(\Hat{\alg{R}}, \Lambda)
			=
				\algebrize
				R \Bar{\pi}_{\Hat{\alg{R}}, \ast} \Lambda,
		\quad
				R \alg{\Gamma}_{c}(\Hat{\alg{R}}, \Lambda)
			=
				\algebrize
				R \Bar{\pi}_{\Hat{\alg{R}}, \Hat{!}} \Lambda.
	\]

\begin{Thm} \mbox{}
	\begin{enumerate}
		\item
			We have
				$
						R \alg{\Gamma}(\Hat{\alg{R}}, \Lambda),
						R \alg{\Gamma}_{c}(\Hat{\alg{R}}, \Lambda),
					\in
						D^{b}(\Ind \Pro \Alg_{u} / F)
				$.
		\item
			The morphism
				\[
							R \alg{\Gamma}(\Hat{\alg{R}}, \Lambda)
						\tensor^{L}
							R \alg{\Gamma}_{c}(\Hat{\alg{R}}, \Lambda)
					\to
						\Lambda_{\infty}[-3]
				\]
			obtained by applying $\algebrize$ to the morphism \eqref{0473}
			is a perfect pairing in $D(F^{\ind\rat}_{\pro\et})$.
	\end{enumerate}
\end{Thm}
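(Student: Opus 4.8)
The plan is to reduce everything to the duality already proved over $\Spec F^{\perar}_{\et}$ in Proposition \ref{0109}, exactly as was done for the analogous theorems in Sections \ref{0174} and \ref{0190}. That is, I would first establish that both objects in question lie in a category ($\genby{\mathcal{W}_{F}}_{F^{\perar}_{\et}}$) on which the comparison functor $\algebrize$ is well-behaved, then transport both the objects and the pairing to $\Spec F^{\ind\rat}_{\pro\et}$ using the formal transfer results of Section \ref{0309}.

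Concretely, the key steps, in order, are: (i) invoke Proposition \ref{0426}, which gives $R\Bar{\pi}_{\Hat{\alg{R}}, \ast}\Lambda, R\Bar{\pi}_{\Hat{\alg{R}}, \Hat{!}}\Lambda \in \genby{\mathcal{W}_{F}}_{F^{\perar}_{\et}}$; (ii) apply Proposition \ref{0152}, which says $\algebrize$ restricts to an equivalence of $\genby{\mathcal{W}_{F}}_{F^{\perar}_{\et}}$ onto $\genby{\mathcal{W}_{F}}_{F^{\ind\rat}_{\pro\et}}$ and in particular lands in $D^{b}(\Ind \Pro \Alg_{u} / F)$ --- this yields the first assertion $R\alg{\Gamma}(\Hat{\alg{R}}, \Lambda), R\alg{\Gamma}_{c}(\Hat{\alg{R}}, \Lambda) \in D^{b}(\Ind \Pro \Alg_{u} / F)$; (iii) use Proposition \ref{0153} ($h$-compatibility and $h$-acyclicity of objects of $\genby{\mathcal{W}_{F}}_{F^{\perar}_{\et}}$) so that the product morphism \eqref{0453} is available and applying $\algebrize$ to the top row of \eqref{0473}, i.e.\ to the cup product morphism $R\Bar{\pi}_{\Hat{\alg{R}}, \ast}\Lambda \tensor^{L} R\Bar{\pi}_{\Hat{\alg{R}}, \Hat{!}}\Lambda \to \Lambda_{\infty}[-3]$, makes sense and produces the claimed morphism in $D(F^{\ind\rat}_{\pro\et})$; (iv) apply Proposition \ref{0010} (after the harmless shift by $\Lambda_{\infty}[-3]$, as in the theorem following Proposition \ref{0055}), which reduces the perfectness of the pairing in $D(F^{\ind\rat}_{\pro\et})$ to the perfectness of the original pairing in $D(F^{\perar}_{\et})$; (v) conclude by citing Proposition \ref{0109}, which is precisely that statement.

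There is no genuine obstacle here: all the substantive input --- the symbol filtrations on the nearby cycle complex $\Bar{\mathcal{E}}$, the computation of its graded pieces via Kato's and Saito's formulas, the residue/trace morphisms \eqref{0359} and \eqref{0360}, the coherent and Artin--Verdier-type duality for $\Spec B$ in Propositions \ref{0086}--\ref{0087}, and the passage between the Zariski and étale topologies through Mac Lane's resolution in Section \ref{0154} --- has already been absorbed into Propositions \ref{0363}, \ref{0109} and \ref{0426}. The only point that requires a moment's care is making sure that the morphism to which $\algebrize$ is applied is literally the one displayed in \eqref{0473}, that is, that the identifications $R\Bar{\pi}_{\Hat{\alg{R}}, \ast}\Lambda \cong R\Bar{\pi}_{\alg{B}, \ast}\Bar{\mathcal{E}}$ and $R\Bar{\pi}_{\Hat{\alg{R}}, \Hat{!}}\Lambda \cong R\Bar{\pi}_{\alg{B}, \Hat{!}}\Bar{\mathcal{E}}$ are compatible with both the cup products and the trace maps; this is exactly the content of Propositions \ref{0361} and \ref{0362}, which were included for this purpose, so the verification is immediate.
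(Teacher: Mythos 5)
Your proposal is correct and follows essentially the same route as the paper, whose proof simply cites Propositions \ref{0109}, \ref{0152}, \ref{0153} and \ref{0010}; your additional explicit appeal to Proposition \ref{0426} for membership in $\genby{\mathcal{W}_{F}}_{F^{\perar}_{\et}}$ is a needed ingredient that the paper leaves implicit. No gaps.
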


\begin{proof}
	This follows from Propositions \ref{0109}, \ref{0152}, \ref{0153} and \ref{0010}.
\end{proof}

\begin{Rmk}
	The duality theorem in this section should be closely related to
	the duality for relatively perfect nearby cycles (\cite{KS19}) in the following manner.
	Let $k_{0}$ ($\ni \zeta_{p}$) be a mixed characteristic complete discrete valuation field
	with prime element $\varpi$ and residue field $F$.
	Let $A_{0}$ be a smooth algebra over $\Order_{k_{0}}$ with relative dimension $1$
	and geometrically irreducible fibers.
	Then its $\varpi$-adic completion $A$ satisfies
	the conditions listed at the beginning of Section \ref{0340}.
	The duality theorem in \cite{KS19} gives a duality for
	the relatively perfect nearby cycle functor
	$R \Psi^{\mathrm{RP}} \colon D(R_{0, \Et}) \to D(B_{\mathrm{RPS}})$,
	where $R_{0} = A_{0}[1 / p]$ and $B = A / \varpi A$.
	This should be compatible with the nearby cycle functor
	$R \Psi \colon D(\Hat{\alg{R}}_{\et}) \to D(\alg{B}_{\et})$ of this paper
	in a suitable sense.
	
	This paper might be simplified and
	the results strengthened if we could just use the result of \cite{KS19}.
	This route does not seem possible at present, however,
	since our $A$ does not necessarily contain such a base $\Order_{k_{0}}$.
	Our theory is a fragment of a hypothetical theory of
	$p$-adic nearby cycles ``without a base''.
	It is hoped that there is such a theory applicable to the resolution of singularities
	$X \stackrel{j}{\to} \mathfrak{X} \stackrel{i}{\gets} Y$ of Section \ref{0297}.
\end{Rmk}


\section{Two-dimensional local rings}
\label{0277}

Let $A$ be an excellent henselian normal two-dimensional local ring of mixed characteristic $(0, p)$
with maximal ideal $\ideal{m}$ and residue field $F$.
We use the notation of Section \ref{0463}.
Let $\Hat{A}$ be the completion of $A$.

In Section \ref{0278}, we first formulate the duality for $A$ over $\Spec F^{\perar}_{\et}$.
Its proof finishes at Section \ref{0306}.
For the proof, the case where the embedded resolution of $(\Spec A, \Spec A / \sqrt{(p)})$ is already done
is treated in Section \ref{0113}
using the results of Section \ref{0060}.
In Section \ref{0297},
we take a resolution of $A$ and localize the cohomology theory
along the special fiber (or the reduced exceptional divisor) of the resolution.
This gives a fibered site of the type of Section \ref{0322}
consisting of relative \'etale sites of various henselian local rings and fields
and henselian neighborhoods of smooth affine curves over $F$.
In Section \ref{0303},
we take the completions of these local pieces.
The duality for these completed local pieces are proved
by the duality results of Sections \ref{0174}, \ref{0217} and \ref{0113}.
In advance to that, we show the completion invariance
of the duality statement for $A$ in an earlier Section \ref{0464}.
(It is necessary for $A$ to be complete for the duality to hold,
but we may replace the henselian local fields at height one primes
by complete local fields.)


\subsection{Setup}
\label{0278}

As explained in Introduction, we need to vary the residue field of $A$.
Even though there is a canonical way to do so (see Introduction or Definition \ref{0285} below),
we need a slight more flexibility:

\begin{Def} \label{0279}
	A \emph{lifting system} for $A$ consists of:
	\begin{enumerate}
		\item \label{0280}
			a functor $\alg{A}$ from $F^{\perar}$ to the category of $A$-algebras, and
		\item \label{0281}
			an isomorphism $\alg{A}(F') / \ideal{m} \alg{A}(F') \isomto F'$
			of $F$-algebras functorial in $F'$,
	\end{enumerate}
	satisfying the conditions that:
	\begin{enumerate}
		\item \label{0282}
			$\alg{A}$ commutes with finite products,
		\item \label{0283}
			$\alg{A}(F')$ is flat over $A$ for all $F' \in F^{\perar}$,
		\item \label{0284}
			if $F' \in F^{\perar}$ is a field,
			then $\alg{A}(F')$ is an excellent henselian normal two-dimensional local ring
			with maximal ideal $\ideal{m} \alg{A}(F')$, and
		\item \label{0490}
			if $F' \to F''$ is an \'etale morphism in $F^{\perar}$,
			then $\alg{A}(F') \to \alg{A}(F'')$ is finite.
	\end{enumerate}
\end{Def}

Condition \ref{0283} implies that $\alg{A}(F') / \ideal{m}^{n} \alg{A}(F')$
is the Kato canonical lifting of $F'$ over $A / \ideal{m}^{n}$ for any $n$.
In particular, $\alg{A}(F') \to \alg{A}(F'')$ is flat for all morphisms $F' \to F''$ in $F^{\perar}$.
In Condition \ref{0490}, $F' \to F''$ being \'etale implies it is finite \'etale,
and $\alg{A}(F') \to \alg{A}(F'')$ being finite implies it is finite \'etale.
If $F_{0} \in F^{\perar}$ is a field,
then the restriction of $\alg{A}$ to $F_{0}^{\perar}$ is a lifting system for $\alg{A}(F_{0})$.
Such a non-complete lifting shows up from resolutions of $A$ in Section \ref{0297}.

The Teichm\"uller map gives a canonical $W(F)$-algebra structure on $\Hat{A}$
(\cite[Chapter V, Section 4, Theorem 2.1]{DG70b}).

\begin{Def} \label{0285}
	The \emph{canonical lifting system} for $A$ is defined by
		\[
				\Hat{\alg{A}}(F')
			=
				W(F') \Hat{\tensor}_{W(F)} \Hat{A}
			=
				\invlim_{n}
					(W_{n}(F') \tensor_{W_{n}(F)} (A / \ideal{m}^{n}))
		\]
	for all $F' \in F^{\perar}$.
\end{Def}

This is indeed a lifting system for $A$.
If $F_{0} \in F^{\perar}$ is a field,
then the restriction of $\Hat{\alg{A}}$ to $F_{0}^{\perar}$
is the canonical lifting system for $\Hat{\alg{A}}(F_{0})$.
Below we fix a lifting system $\alg{A}$ for $A$.
For any $F' \in F^{\perar}$,
the completion of $\alg{A}(F')$ with respect to $\ideal{m} \alg{A}(F')$
is canonically isomorphic to $\Hat{\alg{A}}(F')$.
In particular, we have a canonical morphism
$\alg{A} \to \Hat{\alg{A}}$ of $F^{\perar}$-algebras.

The lifting system $\alg{A}$ is an $F^{\perar}$-algebra
(Definition \ref{0312}).
Hence the inclusion morphisms
	\[
			X
		\stackrel{j_{A}}{\into}
			\Spec A
		\stackrel{i_{A}}{\hookleftarrow}
			\Spec F
	\]
define an $F^{\perar}$-scheme $\alg{X}$ and sites and morphisms of sites
	\begin{gather*}
				\alg{X}_{\et}
			\stackrel{j_{\alg{A}}}{\to}
				\Spec \alg{A}_{\et}
			\stackrel{i_{\alg{A}}}{\gets}
				\Spec F^{\perar}_{\et},
		\\
				\pi_{\alg{A}}
			\colon
				\Spec \alg{A}_{\et}
			\to
				\Spec F^{\perar}_{\et},
			\quad
				\pi_{\alg{X}}
			\colon
				\alg{X}_{\et}
			\to
				\Spec F^{\perar}_{\et}.
	\end{gather*}
We have $i_{\alg{A}}^{\ast} \cong \pi_{\alg{A}, \ast}$
by Proposition \ref{0313}.
Hence
	\[
			R \Psi_{\alg{X}}
		:=
			i_{\alg{A}}^{\ast} R j_{\alg{A}, \ast}
		\cong
			R \pi_{\alg{X}, \ast}.
	\]

Let $S \subset P$ be a finite subset
and set $U_{S} = X \setminus S$.
The inclusion morphisms
	\[
			U_{S}
		\stackrel{\lambda_{S}}{\into}
			X
		\xleftarrow{\bigsqcup i_{\ideal{p}}}\joinrel\rhook
			\bigsqcup_{\ideal{p} \in S}
				\Spec \kappa(\ideal{p})
	\]
define an $F^{\perar}$-scheme $\alg{U}_{S}$, an $F^{\perar}$-algebra $\algfrak{\kappa}(\ideal{p})$
and sites and morphisms of sites
	\begin{gather*}
				\alg{U}_{S, \et}
			\stackrel{\lambda_{S}}{\to}
				\alg{X}_{\et}
			\stackrel{\bigsqcup i_{\ideal{p}}}{\longleftarrow}
				\bigsqcup_{\ideal{p} \in S}
					\Spec \algfrak{\kappa}(\ideal{p})_{\et},
		\\
				\pi_{\alg{U}_{S}}
			\colon
				\Spec \alg{U}_{S, \et}
			\to
				\Spec F^{\perar}_{\et},
			\quad
				\pi_{\algfrak{\kappa}(\ideal{p})}
			\colon
				\Spec \algfrak{\kappa}(\ideal{p})_{\et}
			\to
				\Spec F^{\perar}_{\et}.
	\end{gather*}

For each $\ideal{p} \in S$,
we have an $F^{\perar}$-algebra $\alg{A}_{\ideal{p}}^{h} = \alg{O}_{X, \ideal{p}}^{h}$ and
a site $\Spec \alg{A}_{\ideal{p}, \et}^{h} = \Spec \alg{O}_{X, \ideal{p}, \et}^{h}$
by the paragraphs after Proposition \ref{0311}.
Define
	\[
			\alg{K}_{\ideal{p}}^{h}(F')
		=
				\alg{A}_{\ideal{p}}^{h}(F')
			\tensor_{A_{\ideal{p}}^{h}}
				K_{\ideal{p}}^{h}.
	\]
The natural morphisms define morphisms of sites and
a commutative diagram of morphisms of sites
	\[
				\pi_{\alg{A}_{\ideal{p}}^{h}}
			\colon
				\Spec \alg{A}_{\ideal{p}, \et}^{h}
			\to
				\Spec F^{\perar}_{\et},
		\quad
				\pi_{\alg{K}_{\ideal{p}}^{h}}
			\colon
				\Spec \alg{K}_{\ideal{p}, \et}^{h}
			\to
				\Spec F^{\perar}_{\et},
	\]
	\[
		\begin{CD}
				\Spec \alg{K}_{\ideal{p}, \et}^{h}
			@> \lambda_{\ideal{p}}^{h} >>
				\Spec \alg{A}_{\ideal{p}, \et}^{h}
			@< i_{\ideal{p}}^{h} <<
				\Spec \algfrak{\kappa}(\ideal{p})_{\et}
			\\
			@V \pi_{\alg{K}_{\ideal{p}}^{h} / \alg{U}_{S}} VV
			@VV \pi_{\alg{A}_{\ideal{p}}^{h} / \alg{X}} V
			@|
			\\
				\alg{U}_{S, \et}
			@>> \lambda_{S} >
				\alg{X}_{\et}
			@<< i_{\ideal{p}} <
				\Spec \algfrak{\kappa}(\ideal{p})_{\et}.
		\end{CD}
	\]

We apply the constructions in Section \ref{0381} to $\lambda_{S} \colon \alg{U}_{S, \et} \to \alg{X}_{\et}$.
This defines a functor
	\[
			R \pi_{\alg{U}_{S}, !}
		\colon
			D(\alg{U}_{S, \et})
		\stackrel{\lambda_{S, !}}{\longrightarrow}
			D(\alg{X}_{\et})
		\stackrel{R \pi_{\alg{X}, \ast}}{\longrightarrow}
			D(F^{\perar}_{\et}).
	\]
For any $F' \in F^{\perar}$, define
	\[
			R \Gamma_{c}(\alg{U}_{S}(F'), \var)
		=
			R \Gamma(F', R \pi_{\alg{U}_{S}, !}(\var))
		\colon
			D(\alg{U}_{S, \et})
		\to
			D(\Ab).
	\]
For any $q \in \Z$ and $G \in D(\alg{U}_{S, \et})$,
the sheaf $R^{q} \pi_{\alg{U}_{S}, !} G$ is the \'etale sheafification of the presheaf
	\[
			F'
		\mapsto
			H_{c}^{q}(\alg{U}_{S}(F'), G).
	\]
Therefore for any perfect field $F'$ over $F$ with algebraic closure $\closure{F'}$,
we have
	\[
			(R^{q} \pi_{\alg{U}_{S}, !} G)(F')
		=
			\left(
				\dirlim_{F''}
					H_{c}^{q}(\alg{U}_{S}(F''), G)
			\right)^{\Gal(\closure{F'} / F')},
	\]
where $F''$ runs through finite subextensions of $\closure{F'} / F'$.

For $G, H \in D(\alg{U}_{S, \et})$,
we have a canonical morphism
	\begin{equation} \label{0396}
			R \pi_{\alg{U}_{S}, !} G \tensor^{L} R \pi_{\alg{U}_{S}, \ast} H
		\to
			R \pi_{\alg{U}_{S}, !}(G \tensor^{L} H)
	\end{equation}
in $D(F^{\perar}_{\et})$ functorial in $G$ and $H$
by \eqref{0369}.
For $G \in D^{+}(\alg{U}_{S, \et})$, we have a canonical distinguished triangle
	\begin{equation} \label{0110}
			R \pi_{\alg{U}_{S}, !} G
		\to
			R \pi_{\alg{U}_{S}, \ast} G
		\to
			\bigoplus_{\ideal{p} \in S}
				R \pi_{\alg{K}_{\ideal{p}}^{h}, \ast}
				\pi_{\alg{K}_{\ideal{p}}^{h} / \alg{U}_{S}}^{\ast} G
	\end{equation}
by \eqref{0380}.
Applying $R \Gamma(F, \var)$ yields a distinguished triangle
	\[
			R \Gamma_{c}(U_{S}, G)
		\to
			R \Gamma(U_{S}, G)
		\to
			\bigoplus_{\ideal{p} \in S}
				R \Gamma(K_{\ideal{p}}^{h}, G),
	\]
where the pullback of $G$ to $K_{\ideal{p}}^{h}$ is denoted
by the same symbol $G$ by abuse of notation.
Further localization gives a distinguished triangle
	\[
			R \Gamma_{c}(U_{S}, G)
		\to
			R \Gamma(K, G)
		\to
				\bigoplus_{\ideal{p} \in S}
					R \Gamma(K_{\ideal{p}}^{h}, G)
			\oplus
				\bigoplus_{\ideal{p} \in P \setminus S}
					R \Gamma_{\ideal{p}}(A_{\ideal{p}}^{h}, G)[1],
	\]
where $K$ is the fraction field of $A$ and
$R \Gamma_{\ideal{p}}$ denotes the cohomology with closed support.
Here is a trace morphism in this setting:

\begin{Prop} \label{0111}
	Let $n \ge 1$.
	We have $R^{q} \pi_{\alg{X}, \ast} \mathfrak{T}_{n}(2) = 0$ for $q \ge 4$.
	There exists a canonical morphism
		\begin{equation} \label{0498}
				R^{3} \pi_{\alg{X}, \ast} \mathfrak{T}_{n}(2)
			\to
				\Lambda_{n}.
		\end{equation}
\end{Prop}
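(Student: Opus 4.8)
The plan is to reduce to the case $n=1$ and then treat the two assertions separately. For the reduction, I would use the fundamental distinguished triangle $\mathfrak T_{n-1}(2)\xrightarrow{p}\mathfrak T_{n}(2)\to\mathfrak T_{1}(2)$ in $D(\alg X_{\et})$ (Sato, cf.\ \cite{Sat07}) together with the exact sequence $0\to\Lambda_{n-1}\xrightarrow{p}\Lambda_{n}\to\Lambda_{1}\to 0$: applying $R\pi_{\alg X,\ast}$, the vanishing in degrees $\ge 4$ propagates by the triangle and induction on $n$, and once a trace morphism compatible in $n$ is constructed for $n=1$ the general case follows by a diagram chase. So assume $n=1$, and recall that on the one-dimensional scheme $X$ — which is \emph{regular} because $A$, being normal, is $R_{1}$ — one has $\mathfrak T(2)=\operatorname{fib}\bigl[\tau_{\le 2}Rj'_{\ast}\Lambda(2)\to\bigoplus_{x\in X\setminus U'}i_{x,\ast}\nu(1)[-2]\bigr]$, where $j'\colon U'=X\cap\Spec A[1/p]\hookrightarrow X$ and $X\setminus U'$ is the finite set of height-one primes dividing $p$.

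For the vanishing $R^{q}\pi_{\alg X,\ast}\mathfrak T_{n}(2)=0$ for $q\ge 4$, it suffices to check that all stalks vanish, i.e.\ that $H^{q}(\alg X(\closure F'),\mathfrak T(2))=0$ for $q\ge 4$ and every algebraically closed $\closure F'\supseteq F$; here $\alg A(\closure F')$ is again an excellent henselian normal two-dimensional local ring, now with algebraically closed residue field. Applying $R\Gamma(\alg X(\closure F'),-)$ to the defining triangle, the claim reduces to: (a) $H^{q}(U'(\closure F'),\Lambda(2))=0$ for $q\ge 4$, which follows from the localization sequence relating $U'$ to its generic point, using Saito's bound $\operatorname{cd}_{p}\bigl(\operatorname{Frac}\alg A(\closure F')\bigr)\le 2$ \cite[Theorem~(5.1)]{Sai86} and the bound $\operatorname{cd}_{p}(\kappa(\ideal q))\le 1$ at the height-one primes $\ideal q\nmid p$ (each such $\kappa(\ideal q)$ is the fraction field of a one-dimensional henselian local domain of mixed characteristic with algebraically closed residue field); and (b) $H^{q}(\kappa(\ideal p),\nu(1))=0$ for $q\ge 2$ at the finitely many $\ideal p\mid p$, which holds since $\kappa(\ideal p)$ has characteristic $p$ (so $\operatorname{cd}_{p}\le 1$) and $\nu(1)=\Omega^{1}_{\log}$ has no higher coherent cohomology. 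The truncation $\tau_{\le 2}$ is harmless because the stalks of $R^{q}j'_{\ast}\Lambda(2)$ for $q\ge 3$ are Galois cohomology of fraction fields of strictly henselian discrete valuation rings of mixed characteristic with separably closed char-$p$ residue fields, hence vanish. (In degree $3$ the sheaf $R^{3}\pi_{\alg X,\ast}\mathfrak T_{n}(2)$ is in general nonzero, reflecting e.g.\ the divisor class group, which is what makes \eqref{0498} interesting.)

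For the trace morphism \eqref{0498} I would pass through cohomology with support at the closed point. Using $R\pi_{\alg X,\ast}\cong R\Psi_{\alg X}=i_{\alg A}^{\ast}Rj_{\alg A,\ast}$ and the localization triangle on $\Spec\alg A$, together with Gabber's affine analogue of proper base change (as applied in Proposition~\ref{0341}), one identifies $R^{3}\pi_{\alg X,\ast}\mathfrak T_{n}(2)$ with a quotient of the degree-$4$ cohomology of $\Spec\alg A$ with support at $\ideal m$ — the relevant $H^{\ge 4}$ of $\Spec\alg A$ itself vanish because $i_{\alg A}^{\ast}\mathfrak T_{n}(2)$ lies in degrees $\le 2$ and perfect char-$p$ fields have $\operatorname{cd}_{p}\le 1$. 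Composing with the duality trace for the $p$-adic Tate twist on the two-dimensional local ring, constructed as in Saito \cite{Sai86,Sai87} from the boundary maps and Kato's two-dimensional local class field theory (Propositions~\ref{0041} and~\ref{0500}, in particular the morphisms \eqref{0437} and \eqref{0358}) at the height-one primes, yields a morphism to $\Lambda_{n}$; a shriek-side version of this trace already appeared in \eqref{0499}--\eqref{0412}. One then checks that the resulting morphism is independent of the chosen lifting system $\alg A$ and compatible with base change along morphisms in $F^{\perar}$, so that it defines a morphism of presheaves and, after sheafification, the desired map \eqref{0498}, compatibly in $n$.

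The main obstacle is this last step. Saito's duality trace is formulated for finite residue fields, so here it must be produced for an arbitrary perfect $F$ purely from Kato's local theory and shown to be canonical and to glue over $F^{\perar}$ — and one must be careful that this does not rely on the resolution-of-singularities arguments of the later sections, which in turn invoke Proposition~\ref{0111}. By contrast, the vanishing statement is comparatively routine: once one observes that $X$ is regular, it is cohomological-dimension bookkeeping for the scheme $X$, its residue fields, and the fraction field of $A$, via \cite[Theorem~(5.1)]{Sai86}.
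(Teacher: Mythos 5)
Your vanishing argument is essentially the paper's: both come down to a localization sequence on the one-dimensional scheme $X$, the bound $\operatorname{cd}_{p}\le 2$ for the fraction field from \cite[Theorem (5.1)]{Sai86}, and the computation of cohomology with support at the height-one primes; the paper simply runs this for general $n$ and an arbitrary perfect $F'$ and observes that the degree-$3$ terms sheafify to zero, instead of passing to geometric stalks. One slip in your step (b): the term feeding into $H^{4}(X,\mathfrak{T}(2))$ is $H^{1}(\kappa(\ideal{p}),\nu(1))$, so you need vanishing for $q\ge 1$, not $q\ge 2$. Over $\closure{F'}$ this does hold, but not for the reason you give; it is the vanishing of the $p$-torsion of the Brauer group of a local field with algebraically closed residue field (equivalently $R^{q}\pi_{\alg{k},\ast}\nu_{n}(1)=0$ for $q\ge 1$, cf.\ Proposition \ref{0451}), not a cohomological-dimension bound. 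Also, the reduction to $n=1$ is unnecessary --- the argument is uniform in $n$ --- and, for the trace, mildly circular, since a system of traces compatible in $n$ is exactly what has to be produced.

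The genuine gap is the construction of \eqref{0498}, which you yourself leave as ``the main obstacle''. The missing step is not the local input --- the composite \eqref{0437} of Kato's boundary maps at each height-one prime is indeed the right ingredient --- but the reciprocity that makes the sum of these local maps descend to $H^{3}(\alg{X}(F'),\mathfrak{T}_{n}(2))$. Writing $K_{F'}$ for the fraction field of $\alg{A}(F')$ and $P_{F'}$ for its set of height-one primes, the localization sequence presents this group, up to a term $H^{1}(\Gal(\closure{F'}/F'),K_{2}(K_{F'}^{sh})/p^{n})$ that sheafifies to zero, as the cokernel of the tame symbol map
\[
		K_{2}(K_{F'})/p^{n}K_{2}(K_{F'})
	\to
		\bigoplus_{\ideal{p}'\in P_{F'}}
			\kappa(\ideal{p}')^{\times}/\kappa(\ideal{p}')^{\times p^{n}},
\]
so one must show that the sum of the normalized valuation maps to $\Lambda_{n}$ kills the image of $K_{2}(K_{F'})$. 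Your proposal never supplies this, and the detour through $H^{4}_{\ideal{m}}$ plus an appeal to ``Saito's duality trace'' cannot, since Saito's trace is itself built from exactly this reciprocity. The paper gets it from the $K$-theory localization sequence of the two-dimensional local scheme $\Spec\alg{A}(F')$ itself, $K_{2}(K_{F'})\to\bigoplus_{\ideal{p}'}\kappa(\ideal{p}')^{\times}\to\Z\to 0$: the sum of valuations factors through the torsion-free group $\Z$, hence vanishes mod $p^{n}$ on the image of $K_{2}$. With that in hand, functoriality in $F'$ and \'etale sheafification give \eqref{0498} directly; no separate independence-of-lifting or base-change verification is needed.
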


\begin{proof}
	Let $F'$ be a perfect field extension of $F$.
	Let $K_{F'}$ be the fraction field of $\alg{A}(F')$.
	Let $P_{F'}$ be the set of height one prime ideals of $\alg{A}(F')$.
	We have a distinguished triangle
		\[
				R \Gamma(\alg{X}(F'), \mathfrak{T}_{n}(2))
			\to
				R \Gamma(K_{F'}, \Lambda_{n}(2))
			\to
				\bigoplus_{\ideal{p}' \in P_{F'}}
					R \Gamma_{\ideal{p}'}(\alg{A}(F')_{\ideal{p}'}^{h}, \mathfrak{T}_{n}(2))[1].
		\]
	We have
		\[
				R \Gamma_{\ideal{p}'}(\alg{A}(F')_{\ideal{p}'}^{h}, \mathfrak{T}_{n}(2))[1]
			\cong
				\begin{cases}
						R \Gamma(\kappa(\ideal{p}'), \nu_{n}(1))[-2]
					&	\text{if }
						\ideal{p}' \mid p,
					\\
						R \Gamma(\kappa(\ideal{p}'), \Lambda_{n}(1))[-1]
					&	\text{if }
						\ideal{p}' \nmid p,
				\end{cases}
		\]
	whose cohomologies are:
	$\kappa(\ideal{p}')^{\times} / \kappa(\ideal{p}')^{\times p^{n}}$
	in degree $2$;
	$H^{1}(F'_{\ideal{p}'}, \Lambda_{n})$ in degree $3$;
	and zero in degrees $\ge 4$.
	Notice that this cohomology in degree $3$ sheafifies (in $F'$) to zero.
	Let $K_{F'}^{sh}$ be the fraction field of the strict henselization of $\alg{A}(F')$.
	It has cohomological dimension $2$ by \cite[Theorem (5.1)]{Sai86}.
	It follows that
		\[
				H^{q}(K_{F'}, \Lambda_{n}(2))
			\cong
				\begin{cases}
						K_{2}(K_{F'}) / p^{n} K_{2}(K_{F'})
					&	\text{if }
						q = 2,
					\\
						H^{1} \bigl(
							\Gal(\closure{F'} / F'),
							K_{2}(K_{F'}^{sh}) / p^{n} K_{2}(K_{F'}^{sh})
						\bigr)
					&	\text{if }
						q = 3,
					\\
						0
					&	\text{if }
						q \ge 4.
				\end{cases}
		\]
	Notice again that this cohomology in degree $3$ sheafifies to zero.
	It follows that
	$R^{q} \pi_{\alg{X}, \ast} \mathfrak{T}_{n}(2) = 0$ for $q \ge 4$.
	It also yields an exact sequence
		\begin{align*}
			&		K_{2}(K_{F'}) / p^{n} K_{2}(K_{F'})
				\to
					\bigoplus_{\ideal{p}' \in P_{F'}}
						\kappa(\ideal{p}')^{\times} / \kappa(\ideal{p}')^{\times p^{n}}
			\\
			&	\quad
				\to
					H^{3}(\alg{X}(F'), \mathfrak{T}_{n}(2))
				\to
					H^{1} \bigl(
						\Gal(\closure{F'} / F'),
						K_{2}(K_{F'}^{sh}) / p^{n} K_{2}(K_{F'}^{sh})
					\bigr).
		\end{align*}
	The first map is given by the tame symbol maps.
	The localization sequence in K-theory
	(\cite[Chapter V, Proposition 9.2]{Wei13},
	\cite[(0.3)]{Sai87}) shows that the sum of the normalized valuation maps
		\[
				\bigoplus_{\ideal{p}' \in P_{F'}}
					\kappa(\ideal{p}')^{\times} / \kappa(\ideal{p}')^{\times p^{n}}
			\to
				\Lambda_{n}
		\]
	is zero on the image of $K_{2}(K_{F'}) / p^{n} K_{2}(K_{F'})$.
	This defines a map from the kernel of
		\[
				H^{3}(\alg{X}(F'), \mathfrak{T}_{n}(2))
			\to
				H^{1} \bigl(
					\Gal(\closure{F'} / F'),
					K_{2}(K_{F'}^{sh}) / p^{n} K_{2}(K_{F'}^{sh})
				\bigr)
		\]
	to $\Lambda_{n}$ functorial in $F'$.
	After \'etale sheafification in $F'$,
	this defines the desired morphism \eqref{0498}.
\end{proof}

The summary of the above construction of the trace morphism \eqref{0498} is that
if $F$ is algebraically closed, then we have an exact sequence
	\[
			K_{2}(K) / p^{n} K_{2}(K)
		\to
			\bigoplus_{\ideal{p} \in P}
				\kappa(\ideal{p})^{\times} / \kappa(\ideal{p})^{\times p^{n}}
		\to
			H^{3}(X, \mathfrak{T}_{n}(2))
		\to
			0
	\]
and the map $H^{3}(X, \mathfrak{T}_{n}(2)) \to \Lambda_{n}$ is induced by the sum of the normalized valuation maps
on $\kappa(\ideal{p})^{\times}$.
By varying $F$ and taking Galois actions into account, this also characterizes \eqref{0498}.

\begin{Prop} \mbox{} \label{0422}
	\begin{enumerate}
		\item \label{0511}
			We have
				$
						R^{q} \pi_{\alg{U}_{S}, \ast} \mathfrak{T}_{n}(2)
					=
						R^{q} \pi_{\alg{U}_{S}, !} \mathfrak{T}_{n}(2)
					=
						0
				$
			for $q \ge 4$.
		\item \label{0512}
			The composite
				\[
						\bigoplus_{\ideal{p} \in S}
							R^{2} \pi_{\alg{K}_{\ideal{p}}^{h}, \ast} \Lambda_{n}(2)
					\to
						R^{3} \pi_{\alg{U}, !} \mathfrak{T}_{n}(2)
					\to
						R^{3} \pi_{\alg{X}, \ast} \mathfrak{T}_{n}(2)
					\to
						\Lambda_{n}
				\]
			is equal to the composite
				\[
						\bigoplus_{\ideal{p} \in S}
							R^{2} \pi_{\alg{K}_{\ideal{p}}^{h}, \ast} \Lambda_{n}(2)
					\to
						\bigoplus_{\ideal{p} \in S}
							\Weil_{F_{\ideal{p}} / F} \Lambda_{n}
					\to
						\Lambda_{n}
				\]
			of the morphisms \eqref{0358} (for factors with $\ideal{p} \mid p$)
			and \eqref{0510} (for factors with $\ideal{p} \nmid p$)
			and the sum of the norm maps.
	\end{enumerate}
\end{Prop}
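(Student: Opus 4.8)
The plan is to reduce both statements to known facts about the pieces, exactly as the trace morphism \eqref{0498} was constructed in the proof of Proposition \ref{0111}. First I would record the vanishing in \eqref{0511}. For $R^{q} \pi_{\alg{U}_{S}, \ast} \mathfrak{T}_{n}(2)$, we use the localization distinguished triangle for $U_{S} \subset X \supset S$, namely \eqref{0370} applied to $i \colon \bigsqcup_{\ideal{p} \in S} \Spec \kappa(\ideal{p}) \into X$. This expresses $R \pi_{\alg{U}_{S}, \ast} \mathfrak{T}_{n}(2)$ as fitting in a triangle with $R \pi_{\alg{X}, \ast} \mathfrak{T}_{n}(2)$, which is concentrated in degrees $\le 3$ by Proposition \ref{0111}, and $\bigoplus_{\ideal{p} \in S} R \pi_{\alg{K}_{\ideal{p}}^{h}, \ast} \Lambda_{n}(2)$, which is concentrated in degrees $\le 2$ by Proposition \ref{0500} (valid for $\ideal{p} \nmid p$; and by Proposition \ref{0041} it is concentrated in degrees $\le 3$, with the degree-$3$ part sheafifying to zero in $F'$, handled as in Proposition \ref{0111}) or by the mixed-characteristic residue field case of Section \ref{0190} when $\ideal{p} \mid p$. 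Actually the cleanest route: since $i^{\ast} \mathfrak{T}_{n}(2)$ restricted to $\kappa(\ideal{p})$ has cohomology in degrees $\le 2$ and $R \pi_{\algfrak{\kappa}(\ideal{p})}$ adds at most $1$ via $H^{1}$ of an absolute-Galois group over a perfect field, one gets the bound $q \le 4$ directly. Then $R^{4} = 0$ follows from the vanishing in degree $\ge 4$ of both $R \pi_{\alg{X}, \ast} \mathfrak{T}_{n}(2)$ and the local terms. The compactly supported version $R^{q} \pi_{\alg{U}_{S}, !} \mathfrak{T}_{n}(2) = 0$ for $q \ge 4$ then follows from the distinguished triangle \eqref{0380}, since both $R \pi_{\alg{U}_{S}, \ast} \mathfrak{T}_{n}(2)$ and $\bigoplus_{\ideal{p} \in S} R \pi_{\alg{K}_{\ideal{p}}^{h}, \ast} \Lambda_{n}(2)$ are then concentrated in degrees $\le 3$.

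For \eqref{0512}, I would unwind the composite using the explicit description of \eqref{0498} recalled at the end of the proof of Proposition \ref{0111}. The point is that both the map $R^{3} \pi_{\alg{U}, !} \mathfrak{T}_{n}(2) \to R^{3} \pi_{\alg{X}, \ast} \mathfrak{T}_{n}(2)$ and the trace $R^{3} \pi_{\alg{X}, \ast} \mathfrak{T}_{n}(2) \to \Lambda_{n}$ are, after base change to $\closure{F}$ and passing to Galois cohomology, induced by the sum of normalized valuation maps $\bigoplus_{\ideal{p} \in P} \kappa(\ideal{p})^{\times}/\kappa(\ideal{p})^{\times p^{n}} \to \Lambda_{n}$. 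Meanwhile the boundary map from $R^{2} \pi_{\alg{K}_{\ideal{p}}^{h}, \ast} \Lambda_{n}(2)$ into $R^{3} \pi_{\alg{U}, !} \mathfrak{T}_{n}(2)$ is, at the level of each height-one prime, exactly the local boundary whose composition with the valuation is encoded by \eqref{0437}. So for $\ideal{p} \nmid p$ the composite $R^{2} \pi_{\alg{K}_{\ideal{p}}^{h}, \ast} \Lambda_{n}(2) \to \Weil_{F_{\ideal{p}}/F} \Lambda_{n}$ is precisely the (Weil restriction of the) map \eqref{0510}, which by its construction in Section \ref{0199} is the two-step boundary $H^{2}(K_{\ideal{p}}^{h}, \Lambda_{n}(2)) \to H^{1}(\kappa(\ideal{p}), \Lambda_{n}(1)) \to H^{0}(F_{\ideal{p}}, \Lambda_{n})$; for $\ideal{p} \mid p$ it is the map \eqref{0358}, which by Proposition \ref{0500} is the analogue \eqref{0437} for the complete discrete valuation field $\alg{K}_{\ideal{p}}^{h}(F')$. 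Thus after summing over $\ideal{p} \in S$ and composing with the norm $\bigoplus_{\ideal{p}} \Weil_{F_{\ideal{p}}/F} \Lambda_{n} \to \Lambda_{n}$ we recover the contribution of the primes in $S$ to the global valuation map, which is exactly what the left-hand composite computes. The verification is therefore a diagram chase identifying two compositions of boundary maps with a single iterated boundary, done after reduction to an algebraically closed residue field where everything is expressed via K-theory localization sequences.

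The main obstacle I expect is not any single hard estimate but the bookkeeping: one must pin down the normalization of the iterated Kato boundary map \eqref{0437} on each local field $\alg{K}_{\ideal{p}}^{h}$ and check that it matches the normalization built into \eqref{0510} in the equal-characteristic-zero-residue case of Section \ref{0199} and into \eqref{0358} via Proposition \ref{0500} in the mixed-characteristic-residue case, including the compatibility \eqref{0491} with Galois descent from $\closure{F}$. Since both \eqref{0358} and \eqref{0510} were \emph{defined} in terms of these same local boundary maps (the former via Proposition \ref{0041}, the latter via Verdier duality identified with the Kato boundary), the compatibility is essentially formal once the identifications in Proposition \ref{0111} are in hand; I would therefore present the proof as a reduction to Proposition \ref{0111} together with the defining properties of \eqref{0358} and \eqref{0510}, and relegate the normalization check to a sentence citing those constructions.
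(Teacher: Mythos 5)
Your proposal is correct and takes essentially the same (very terse) route as the paper: part \eqref{0511} is exactly the two localization distinguished triangles relating $R\pi_{\alg{U}_{S},\ast}$ and $R\pi_{\alg{U}_{S},!}$ to $R\pi_{\alg{X},\ast}\mathfrak{T}_{n}(2)$ and the local terms at $\ideal{p}\in S$, combined with the degree bounds of Propositions \ref{0111} and \ref{0445} (and the analogue from Section \ref{0190}), and part \eqref{0512} is, as the paper itself says, ``by construction'' of the trace via the valuation maps characterized after Proposition \ref{0111}. Two small corrections: in part \eqref{0511} you have the two local cases swapped --- Proposition \ref{0500} (Section \ref{0174}) covers $\ideal{p}\mid p$ and Section \ref{0190} covers $\ideal{p}\nmid p$, consistent with how you correctly assign \eqref{0358} and \eqref{0510} in part \eqref{0512} --- and the first term of the triangle \eqref{0370} is $\bigoplus_{\ideal{p}\in S} R\pi_{\alg{A}_{\ideal{p}}^{h},!}\mathfrak{T}_{n}(2)$ by Proposition \ref{0404}, not $\bigoplus_{\ideal{p}\in S} R\pi_{\alg{K}_{\ideal{p}}^{h},\ast}\Lambda_{n}(2)$, though the degree bound you need comes out the same.
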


\begin{proof}
	\eqref{0511}
	This follows from the distinguished triangles
		\begin{gather*}
					\bigoplus_{\ideal{p} \in S}
						R \pi_{\alg{A}_{\ideal{p}}^{h}, !} \mathfrak{T}_{n}(2)
				\to
					R \pi_{\alg{X}, \ast} \mathfrak{T}_{n}(2)
				\to
					R \pi_{\alg{U}_{S}, \ast} \mathfrak{T}_{n}(2),
			\\
					R \pi_{\alg{U}_{S}, !} \mathfrak{T}_{n}(2)
				\to
					R \pi_{\alg{X}, \ast} \mathfrak{T}_{n}(2)
				\to
					\bigoplus_{\ideal{p} \in S}
						R \pi_{\alg{A}_{\ideal{p}}^{h}, \ast} \mathfrak{T}_{n}(2)
		\end{gather*}
	and Proposition \ref{0445}.
	
	\eqref{0512}
	This follows from the construction.
\end{proof}

As a consequence, we have canonical morphisms
	\begin{equation} \label{0418}
			R \pi_{\alg{U}_{S}, !} \mathfrak{T}_{n}(2)
		\to
			R \pi_{\alg{X}, \ast} \mathfrak{T}_{n}(2)
		\to
			\Lambda_{n}[-3]
		\to
			\Lambda_{\infty}[-3].
	\end{equation}
Now we can state the duality for $A$ over $\Spec F^{\perar}_{\et}$:

\begin{Prop} \label{0112}
	Assume that $A$ is complete and
	take $\alg{A} = \Hat{\alg{A}}$ to be the canonical lifting system.
	Let $n \ge 1$ and $r, r' \in \Z$ with $r + r' = 2$.
	\begin{enumerate}
		\item \label{0286}
			The objects $R \pi_{\alg{U}_{S}, \ast} \mathfrak{T}_{n}(r)$ and
			$R \pi_{\alg{U}_{S}, !} \mathfrak{T}_{n}(r)$
			belong to $\genby{\mathcal{W}_{F}}_{F^{\perar}_{\et}}$.
		\item \label{0287}
			The morphism
				\[
							R \pi_{\alg{U}_{S}, \ast} \mathfrak{T}_{n}(r)
						\tensor^{L}
							R \pi_{\alg{U}_{S}, !} \mathfrak{T}_{n}(r')
					\to
						R \pi_{\alg{U}_{S}, !} \mathfrak{T}_{n}(2)
					\to
						\Lambda_{\infty}[-3]
				\]
			in $D(F^{\perar}_{\et})$ is a perfect pairing.
	\end{enumerate}
\end{Prop}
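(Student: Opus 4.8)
The plan is to follow Saito's strategy \cite{Sai86} inside the relative- and fibered-site formalism of Sections \ref{0029}, \ref{0325} and \ref{0322}. I would begin with the standard reductions. For $r > 2$ the statement follows from the case $r = 2$, and for $r < 0$ both $R \pi_{\alg{U}_{S}, \ast} \mathfrak{T}_{n}(r)$ and $R \pi_{\alg{U}_{S}, !} \mathfrak{T}_{n}(r')$ vanish; so one may assume $r, r' \in \{0, 1, 2\}$. The usual d\'evissage in $n$ reduces to $n = 1$. Since $[K(\zeta_{p}) : K]$ divides $p - 1$ and is therefore prime to $p$, a transfer argument --- using the projection formula for $p$-adic \'etale Tate twists exactly as in the proof of Proposition \ref{0445}, and Proposition \ref{0452} to descend a perfect pairing along Weil restriction --- reduces, by passing to the normalization of $A$ in $K(\zeta_{p})$, to the case $\zeta_{p} \in A$; here the closure of $\genby{\mathcal{W}_{F}}_{F^{\perar}_{\et}}$ under direct summands is what makes the summand argument legitimate. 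After this, all two-dimensional local rings entering the proof satisfy the hypotheses \eqref{0202}--\eqref{0206} at the start of Section \ref{0060}.

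I would then settle the \emph{enough regular} case, in which the embedded resolution of $(\Spec A, \Spec A / \sqrt{(p)})$ is already trivial, i.e.\ $A$ itself satisfies \eqref{0202}--\eqref{0206} (Section \ref{0113}). There $\Spec A / \sqrt{(p)}$ is one or two regular curves meeting transversally, $R \Psi_{\alg{X}} \Lambda$ carries a symbol filtration whose graded pieces are given by Propositions \ref{0069}, \ref{0075} and \ref{0076}, and the pairing is built from these computations together with the Zariski-topology machinery of Section \ref{0133} and the local-field dualities of Sections \ref{0174} and \ref{0190}, in the same way as the tubular-neighborhood duality of Section \ref{0217} (compare Propositions \ref{0096}, \ref{0098}, \ref{0276} and \ref{0363}); the trace is \eqref{0498} and its compatibility with the local traces is Proposition \ref{0111}.

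For general $A$ I would reduce to the enough-regular case by resolution of singularities (Section \ref{0297}). Pick an embedded resolution $f \colon \mathcal{X} \to \Spec A$ of the pair $(\Spec A, \Spec A / \sqrt{(p)})$ whose special fibre $Y$ is a strict normal crossings divisor with regular components. Because $A$ is complete, $R \pi_{\alg{X}, \ast}$ and $R \pi_{\alg{U}_{S}, !}$ of $\mathfrak{T}_{1}(r)$ are unchanged when $\Spec A$ is replaced by $\mathcal{X}$ (via Gabber's affine proper base change, cf.\ Proposition \ref{0341}, and the completion-invariance of Section \ref{0464}), and the cohomology of $\mathcal{X}$ localized along $Y$ is encoded by a fibered site over a poset of the shape treated in Section \ref{0322}, whose vertices are the relative \'etale sites of: the henselian local rings of $\mathcal{X}$ at the generic points of the components of $Y$ (mixed-characteristic two-dimensional local rings with one-dimensional function-field residue field, whose completed tubular neighborhoods are the $\Hat{\alg{R}}$ of Section \ref{0217}); the henselian local rings of $\mathcal{X}$ at the closed points of $Y$ (enough-regular, handled by Section \ref{0113}); and the height-one local fields $K_{\ideal{p}}^{h}$, $\ideal{p} \in S$ (Sections \ref{0174} and \ref{0190}); with edges recording the gluing local fields. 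The completion steps of Section \ref{0303} --- compatible, via Propositions \ref{0073} and \ref{0078}, with the invariance of Section \ref{0464} --- match these local data with the dualities already proved. Assembling the global pairing from the local ones is then a diagram chase within the cup-product and localization-triangle compatibilities of Section \ref{0322} (Propositions \ref{0385}, \ref{0410}, \ref{0427} and the morphisms of triangles \eqref{0409}--\eqref{0419}), using Proposition \ref{0422} to match the trace maps and Propositions \ref{0357} and \ref{0362} for the residue-theorem cancellation from the curve components: being a perfect pairing into $\Lambda_{\infty}[-3]$ passes through a morphism of distinguished triangles once it holds on two of the three legs, since $R \sheafhom_{F^{\perar}_{\et}}(\var, \Lambda_{\infty})$ is an autoequivalence of $\genby{\mathcal{W}_{F}}_{F^{\perar}_{\et}}$ (Proposition \ref{0497}), giving part \eqref{0287}. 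Part \eqref{0286} holds because each local term lies in $\genby{\mathcal{W}_{F}}_{F^{\perar}_{\et}}$ (Propositions \ref{0055}, \ref{0445}, \ref{0449}, \ref{0426} and their enough-regular analogues) and that subcategory is triangulated and closed under summands.

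The main obstacle is the resolution step: verifying that the cohomology of $\mathcal{X}$ localized along $Y$ is faithfully encoded by a fibered site over the \emph{correct} poset $I$ --- that the henselian (and then complete) local rings at all strata of $Y$, the gluing fields between adjacent strata, and every structure morphism among them organize into a single diagram of relative sites of the type to which Section \ref{0322} applies --- and that the global trace \eqref{0498}, hence \eqref{0418}, factors through this diagram compatibly with each local trace, including the sign and residue bookkeeping at the curve components. The other ingredients --- the local dualities, the $\zeta_{p}$ transfer, the $n$-d\'evissage --- are already available; it is this matching of global and local trace morphisms through the resolution, and the verification (Proposition \ref{0341} and Section \ref{0464}) that no cohomological information is lost, that will require the most care.
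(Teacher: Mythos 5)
Your proposal follows essentially the same route as the paper: reduce to $n=1$, $\zeta_p\in A$ and $U_S=\Spec A[1/p]$ (Section \ref{0306}), settle the enough-regular case by symbol filtrations (Section \ref{0113}), and for general $A$ localize over a resolution via the fibered-site formalism of Section \ref{0322}, complete the local pieces, and assemble the duality by the two-out-of-three argument of Proposition \ref{0427} together with the trace-matching of Propositions \ref{0422} and \ref{0423}. One small caution: the pieces at the generic points of $Y$ are not the henselian local rings at those points (whose residue fields are function fields, which do not behave well under base change of $F$) but henselian/complete neighborhoods of affine opens of the components, i.e.\ the tubular neighborhoods of Section \ref{0217} — which you in fact invoke, so this is a matter of phrasing rather than a gap.
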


We will prove the proposition below.
The proof for the case where $n = 1$, $\zeta_{p} \in A$ and $U_{S} = \Spec A[1 / p]$ will finish
at the end of \ref{0303}.
The general case will be proved at the beginning of Section \ref{0306}.


\subsection{Invariance under completion}
\label{0464}

We continue the notation of Section \ref{0278}.
In particular, $A$ is not necessarily complete
and $\alg{A}$ is not necessarily $\Hat{\alg{A}}$.
We first translate the duality setup to total sites of fibered sites.

Consider the total site
	\[
			\alg{U}_{S, c, \et}
		=
			\left(
					\bigsqcup_{\ideal{p} \in S}
						\Spec \alg{K}_{\ideal{p}, \et}^{h}
				\to
					\alg{U}_{S, \et}
			\right)
	\]
and the functors
	\[
			\Bar{\pi}_{\alg{U}_{S}, \ast},
			\Bar{\pi}_{\alg{U}_{S}, !}
		\colon
			\Ch(\alg{U}_{S, c, \et})
		\to
			\Ch(F^{\perar}_{\et})
	\]
defined at the end of Section \ref{0325},
where $\Bar{\pi}_{\alg{U}_{S}}$ factors as
	\[
			\alg{U}_{S, c, \et}
		\to
			\alg{U}_{S, \et}
		\stackrel{\pi_{\alg{U}_{S}}}{\to}
			\Spec F^{\perar}_{\et}
	\]
and we have
	\[
			\Bar{\pi}_{\alg{U}_{S}, !}
		=
			\left[
					\Bar{\pi}_{\alg{U}_{S}, \ast}
				\to
					\bigoplus_{\ideal{p} \in S}
						\Bar{\pi}_{\alg{K}_{\ideal{p}}^{h}, \ast}
			\right][-1].
	\]
For any $n \ge 1$ and $r, r' \in \Z$, we have isomorphisms
	\[
				R \pi_{\alg{U}_{S}, \ast} \mathfrak{T}_{n}(r)
			\cong
				R \Bar{\pi}_{\alg{U}_{S}, \ast} \mathfrak{T}_{n}(r),
		\quad
				R \pi_{\alg{U}_{S}, !} \mathfrak{T}_{n}(r)
			\cong
				R \Bar{\pi}_{\alg{U}_{S}, !} \mathfrak{T}_{n}(r)
	\]
by Proposition \ref{0397}.
The morphism
	\[
				R \pi_{\alg{U}_{S}, \ast} \mathfrak{T}_{n}(r)
			\tensor^{L}
				R \pi_{\alg{U}_{S}, !} \mathfrak{T}_{n}(r')
		\to
			R \pi_{\alg{U}_{S}, !} \mathfrak{T}_{n}(r + r')
	\]
coming from \eqref{0396} can be identified with the morphism
	\[
				R \Bar{\pi}_{\alg{U}_{S}, \ast} \mathfrak{T}_{n}(r)
			\tensor^{L}
				R \Bar{\pi}_{\alg{U}_{S}, !} \mathfrak{T}_{n}(r')
		\to
			R \Bar{\pi}_{\alg{U}_{S}, !} \mathfrak{T}_{n}(r + r')
	\]
coming from \eqref{0377},
by Proposition \ref{0397}.

Next we provide a completion version of this setup.
Set $\Hat{U}_{S} = U_{S} \times_{\Spec A} \Spec \Hat{A}$.
For $F' \in F^{\perar}$,
let $\Hat{\alg{U}}_{S}(F') = U_{S} \times_{\Spec A} \Spec \Hat{\alg{A}}(F')$.
Then we have an $F^{\perar}$-scheme $\Hat{\alg{U}}_{S}$,
a site $\Hat{\alg{U}}_{S, \et}$ and a morphism of sites
$\pi_{\Hat{\alg{U}}_{S}} \colon \Spec \Hat{\alg{U}}_{S, \et} \to \Spec F^{\perar}_{\et}$
by applying the same procedure as $U$.
For $\ideal{p} \in S$,
let $\Hat{A}_{\ideal{p}}$ be the complete local ring of $\Hat{A}$ at $\ideal{p} \Hat{A}$
(which is different from the completion of $A_{\ideal{p}}^{h}$
since the residue field of this $\Hat{A}_{\ideal{p}}$ is the completion of $\kappa(\ideal{p})$).
Let $\Hat{K}_{\ideal{p}}$ be the fraction field of $\Hat{A}_{\ideal{p}}$.
For $F' \in F^{\perar}$,
let $\Hat{\alg{A}}_{\ideal{p}}(F')$ be the completion of
$\Hat{A}_{\ideal{p}} \tensor_{\Hat{A}} \Hat{\alg{A}}(F')$ with respect to the ideal
$\ideal{p} \Hat{A}_{\ideal{p}} \tensor_{\Hat{A}} \Hat{\alg{A}}(F')$.
It is a lifting system for $\Hat{A}_{\ideal{p}}$
in the sense of Definition \ref{0038}.
Set
	$
			\Hat{\alg{K}}_{\ideal{p}}(F')
		=
				\Hat{\alg{A}}_{\ideal{p}}(F')
			\tensor_{\Hat{A}_{\ideal{p}}}
				\Hat{K}_{\ideal{p}}
	$.
We have a natural morphism $\Spec \Hat{\alg{K}}_{\ideal{p}} \to \Hat{\alg{U}}_{S}$
of $F^{\perar}$-schemes.
Hence we have morphisms of sites
	\[
			\bigsqcup_{\ideal{p} \in S}
				\Spec \Hat{\alg{K}}_{\ideal{p}, \et}
		\to
			\Hat{\alg{U}}_{S, \et}
		\to
			\Spec F^{\perar}_{\et}.
	\]
Applying the methods of Section \ref{0325}, we obtain the total site
	\[
			\Hat{\alg{U}}_{S, \Hat{c}, \et}
		=
			\left(
					\bigsqcup_{\ideal{p} \in S}
						\Spec \Hat{\alg{K}}_{\ideal{p}, \et}
				\to
					\Hat{\alg{U}}_{S, \et}
			\right)
	\]
and the functors
	\[
			\Bar{\pi}_{\Hat{\alg{U}}_{S}, \ast},
			\Bar{\pi}_{\Hat{\alg{U}}_{S}, \Hat{!}}
		\colon
			\Ch(\Hat{\alg{U}}_{S, \Hat{c}, \et})
		\to
			\Ch(F^{\perar}_{\et}),
	\]
where $\Bar{\pi}_{\Hat{\alg{U}}_{S}}$ factors as
	\[
			\Hat{\alg{U}}_{S, \Hat{c}, \et}
		\to
			\Hat{\alg{U}}_{S, \et}
		\stackrel{\pi_{\Hat{\alg{U}}_{S}}}{\to}
			\Spec F^{\perar}_{\et}
	\]
and we have
	\[
			\Bar{\pi}_{\Hat{\alg{U}}_{S}, \Hat{!}}
		=
			\left[
					\Bar{\pi}_{\Hat{\alg{U}}_{S}, \ast}
				\to
					\bigoplus_{\ideal{p} \in S}
						\Bar{\pi}_{\Hat{\alg{K}}_{\ideal{p}}, \ast}
			\right][-1].
	\]

Now we compare the two versions.
We have a commutative diagram
	\[
		\begin{CD}
				\bigsqcup_{\ideal{p} \in S}
					\Spec \Hat{\alg{K}}_{\ideal{p}, \et}
			@> \bigsqcup g_{\ideal{p}} >>
				\bigsqcup_{\ideal{p} \in S}
					\Spec \alg{K}_{\ideal{p}, \et}^{h}
			\\ @VVV @VVV \\
				\Hat{\alg{U}}_{S, \Hat{c}, \et}
			@>> g >
				\alg{U}_{S, c, \et}
		\end{CD}
	\]
of morphisms of sites.
We have morphism $g^{\ast} \mathfrak{T}_{n}(r) \to \mathfrak{T}_{n}(r)$
and $g_{\ideal{p}}^{\ast} \mathfrak{T}_{n}(r) \to \mathfrak{T}_{n}(r)$.
Hence \eqref{0467} and \eqref{0468} give morphisms
	\begin{gather} \label{0469}
				R \Bar{\pi}_{\alg{U}_{S}, \ast} \mathfrak{T}_{n}(r)
			\to
				R \Bar{\pi}_{\Hat{\alg{U}}_{S}, \ast} \mathfrak{T}_{n}(r),
		\\ \label{0470}
				R \Bar{\pi}_{\alg{U}_{S}, !} \mathfrak{T}_{n}(r)
			\to
				R \Bar{\pi}_{\Hat{\alg{U}}_{S}, \Hat{!}} \mathfrak{T}_{n}(r).
	\end{gather}
By Proposition \ref{0332}, these morphisms fit in a commutative diagram
	\begin{equation} \label{0465}
		\begin{CD}
					R \Bar{\pi}_{\alg{U}_{S}, \ast} \mathfrak{T}_{n}(r)
				\tensor^{L}
					R \Bar{\pi}_{\alg{U}_{S}, !} \mathfrak{T}_{n}(r')
			@>>>
				R \Bar{\pi}_{\alg{U}_{S}, !} \mathfrak{T}_{n}(r + r')
			\\ @VVV @VVV \\
					R \Bar{\pi}_{\Hat{\alg{U}}_{S}, \ast} \mathfrak{T}_{n}(r)
				\tensor^{L}
					R \Bar{\pi}_{\Hat{\alg{U}}_{S}, \Hat{!}} \mathfrak{T}_{n}(r')
			@>>>
				R \Bar{\pi}_{\Hat{\alg{U}}_{S}, \Hat{!}} \mathfrak{T}_{n}(r + r').
		\end{CD}
	\end{equation}

\begin{Prop} \label{0466}
	Assume that $A$ is complete and $\alg{A} = \Hat{\alg{A}}$.
	Then the morphisms \eqref{0469} and \eqref{0470} are isomorphisms.
	In particular, the vertical morphisms in \eqref{0465} are isomorphisms.
\end{Prop}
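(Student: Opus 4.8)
The plan is to reduce the assertion to the invariance under completion of the cohomology of the two-dimensional local fields $K_{\ideal{p}}^{h}$, $\ideal{p} \in S$, in the spirit of Proposition \ref{0173}. First I would observe that the hypotheses $\alg{A} = \Hat{\alg{A}}$ and $A$ complete force $\alg{U}_{S}(F') = U_{S} \times_{\Spec A} \Spec \alg{A}(F') = \Hat{\alg{U}}_{S}(F')$ for every $F' \in F^{\perar}$, so that $\alg{U}_{S, \et} = \Hat{\alg{U}}_{S, \et}$ as sites and the morphism $g$ restricts to the identity on these interiors. Since $R \Bar{\pi}_{\alg{U}_{S}, \ast}$ is $R \pi_{\alg{U}_{S}, \ast}$ applied to the first projection of the total site (an exact functor), it follows that \eqref{0469} is an isomorphism, in fact the identity after the canonical identifications.

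Next I would use naturality of the localization triangles: the distinguished triangle
	\[
			R \Bar{\pi}_{\alg{U}_{S}, !} \mathfrak{T}_{n}(r)
		\to
			R \Bar{\pi}_{\alg{U}_{S}, \ast} \mathfrak{T}_{n}(r)
		\to
			\bigoplus_{\ideal{p} \in S}
				R \Bar{\pi}_{\alg{K}_{\ideal{p}}^{h}, \ast} \mathfrak{T}_{n}(r)
	\]
and its completed analogue form a morphism of distinguished triangles via the maps induced by $g$ and the $g_{\ideal{p}}$, whose middle vertical arrow is an isomorphism by the previous step. Hence it suffices to show that the right vertical arrow is an isomorphism, i.e.\ that each natural morphism $R \pi_{\alg{K}_{\ideal{p}}^{h}, \ast} \mathfrak{T}_{n}(r) \to R \pi_{\Hat{\alg{K}}_{\ideal{p}}, \ast} \mathfrak{T}_{n}(r)$ is an isomorphism; this in turn forces \eqref{0470} to be an isomorphism. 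Because $\alg{K}_{\ideal{p}}^{h}(F')$ and $\Hat{\alg{K}}_{\ideal{p}}(F')$ have characteristic zero, the coefficient $\mathfrak{T}_{n}(r)$ there is simply $\Lambda_{n}(r)$, and by Proposition \ref{0031} the claim reduces to the statement that $R \Gamma(\alg{K}_{\ideal{p}}^{h}(F')_{\et}, \Lambda_{n}(r)) \to R \Gamma(\Hat{\alg{K}}_{\ideal{p}}(F')_{\et}, \Lambda_{n}(r))$ is an isomorphism for each field $F' \in F^{\perar}$.

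This is a completion statement for one two-dimensional local field. I would factor the transition from $\alg{K}_{\ideal{p}}^{h}(F')$ to $\Hat{\alg{K}}_{\ideal{p}}(F')$ as the completion of the henselian discrete valuation ring $\alg{A}_{\ideal{p}}^{h}(F')$, which does not change the residue field, followed by the completion of that residue field. The first step is handled exactly as in the proof of Proposition \ref{0173}, by invariance of torsion \'etale cohomology of a henselian valuation ring and its fraction field under completion; the second is obtained from the standard invariance of torsion \'etale cohomology of a one-dimensional henselian local field under completion, fed through the localization sequence for the two-dimensional local field, using --- as in the proof of Proposition \ref{0200} --- that all the discrete valuation fields occurring have inertia group $\Hat{\Z}$. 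The claim about the cup products in the last sentence is then immediate from Proposition \ref{0332}, which already produced the square \eqref{0465}.

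I expect the only genuine obstacle to be the bookkeeping of this double completion: verifying that the comparison isomorphism is the one actually realized by $g$ and the $g_{\ideal{p}}$, and not merely an abstract one. Everything else is formal manipulation of distinguished triangles and of the cup-product formalism of Section \ref{0325}, so no essentially new input should be needed.
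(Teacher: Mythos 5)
Your proof is correct and follows essentially the same route as the paper: the first morphism is an isomorphism because the interiors coincide, and the second reduces via the localization triangles to completion-invariance of $R \pi_{\alg{K}_{\ideal{p}}^{h}, \ast} \Lambda_{n}(r)$, which the paper disposes of by directly citing Proposition \ref{0173}. The ``double completion'' you worry about does not actually arise here, since $A$ complete forces $\kappa(\ideal{p})$ to be already complete, so $\Hat{\alg{A}}_{\ideal{p}}$ is exactly the completed lifting system of Section \ref{0172} and the residue field is unchanged.
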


\begin{proof}
	The first morphism is obviously an isomorphism.
	For the second isomorphism, it is enough to show that
	for all $\ideal{p} \in S$, the morphism
		\[
				R \pi_{\alg{K}_{\ideal{p}}^{h}, \ast} \Lambda_{n}(r)
			\to
				R \pi_{\Hat{\alg{K}}_{\ideal{p}}, \ast} \Lambda_{n}(r)
		\]
	is an isomorphism.
	But this is Proposition \ref{0173}.
\end{proof}

In particular, under the assumptions of the proposition, we have a canonical morphism
	\[
			R \Bar{\pi}_{\Hat{\alg{U}}_{S}, \Hat{!}} \mathfrak{T}_{n}(2)
		\to
			\Lambda_{n}[-3]
	\]
by \eqref{0418}.

\begin{Prop} \label{0471}
	Assume that $A$ is complete and $\alg{A} = \Hat{\alg{A}}$.
	Let $r + r' = 2$.
	Then the morphism in Proposition \ref{0112} is a perfect pairing
	if and only if the morphism
		\[
					R \Bar{\pi}_{\Hat{\alg{U}}_{S}, \ast} \mathfrak{T}_{n}(r)
				\tensor^{L}
					R \Bar{\pi}_{\Hat{\alg{U}}_{S}, \Hat{!}} \mathfrak{T}_{n}(r')
			\to
				R \Bar{\pi}_{\Hat{\alg{U}}_{S}, \Hat{!}} \mathfrak{T}_{n}(2)
			\to
				\Lambda_{n}[-3]
		\]
	above is a perfect pairing.
\end{Prop}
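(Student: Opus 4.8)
The plan is to read this off from the commutative square \eqref{0465} together with Proposition \ref{0466}. Recall first that, by the discussion preceding Proposition \ref{0466} (which uses Proposition \ref{0397}), the pairing appearing in Proposition \ref{0112} is identified with the composite of the top row of \eqref{0465},
\[
	R \Bar{\pi}_{\alg{U}_{S}, \ast} \mathfrak{T}_{n}(r)
	\tensor^{L}
	R \Bar{\pi}_{\alg{U}_{S}, !} \mathfrak{T}_{n}(r')
	\to
	R \Bar{\pi}_{\alg{U}_{S}, !} \mathfrak{T}_{n}(2),
\]
with the trace \eqref{0418}; and that the trace $R \Bar{\pi}_{\Hat{\alg{U}}_{S}, \Hat{!}} \mathfrak{T}_{n}(2) \to \Lambda_{n}[-3]$ used in the statement of Proposition \ref{0471} is, by its very definition at the end of Section \ref{0464}, obtained from \eqref{0418} by transporting along the morphisms \eqref{0469} and \eqref{0470}. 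Thus the diagram obtained by adjoining the two trace morphisms to the bottom-right corners of \eqref{0465} commutes: on the left it is \eqref{0465}, and on the right it commutes by construction of the completed trace.

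First I would invoke Proposition \ref{0466}: under the hypothesis that $A$ is complete and $\alg{A} = \Hat{\alg{A}}$, the vertical morphisms \eqref{0469} and \eqref{0470} of \eqref{0465} are isomorphisms in $D(F^{\perar}_{\et})$. Combining this with the previous paragraph, the two candidate pairings
\[
	R \Bar{\pi}_{\alg{U}_{S}, \ast} \mathfrak{T}_{n}(r)
	\tensor^{L}
	R \Bar{\pi}_{\alg{U}_{S}, !} \mathfrak{T}_{n}(r')
	\to
	\Lambda_{n}[-3],
	\qquad
	R \Bar{\pi}_{\Hat{\alg{U}}_{S}, \ast} \mathfrak{T}_{n}(r)
	\tensor^{L}
	R \Bar{\pi}_{\Hat{\alg{U}}_{S}, \Hat{!}} \mathfrak{T}_{n}(r')
	\to
	\Lambda_{n}[-3]
\]
are identified through isomorphisms of the three objects involved that are compatible with the pairings. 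Since the property of a morphism $G \tensor^{L} G' \to K$ being a perfect pairing (that is, $G \isomto R \sheafhom_{F^{\perar}_{\et}}(G', K)$ and $G' \isomto R \sheafhom_{F^{\perar}_{\et}}(G, K)$) depends only on the triple $(G, G', K)$ up to such compatible isomorphisms, the first pairing is perfect if and only if the second is.

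It remains to match the coefficient $\Lambda_{n}[-3]$ with the $\Lambda_{\infty}[-3]$ appearing in Proposition \ref{0112}: all objects in sight are $p^{n}$-torsion (they are derived pushforwards of the $p^{n}$-torsion complex $\mathfrak{T}_{n}$), and for a $p^{n}$-torsion object $G$ one has $R \sheafhom_{F^{\perar}_{\et}}(G, \Lambda_{n}) \cong R \sheafhom_{F^{\perar}_{\et}}(G, \Lambda_{\infty})$ via $\Lambda_{n} \into \Lambda_{\infty}$ (as $R \sheafhom_{F^{\perar}_{\et}}(\Lambda_{n}, \Lambda_{\infty}) \cong \Lambda_{n}$, cf.\ Proposition \ref{0497}), so the pairing into $\Lambda_{n}[-3]$ is perfect precisely when its composite with $\Lambda_{n}[-3] \to \Lambda_{\infty}[-3]$ is. This gives the asserted equivalence. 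There is no serious obstacle here; the only point needing attention is the bookkeeping of which trace map is which, and this is handled by the observation that the completed trace is defined by transport of structure along \eqref{0470}, so the relevant square commutes tautologically rather than by a separate computation.
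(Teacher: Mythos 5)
Your proof is correct and follows essentially the same route as the paper, whose entire proof is "This follows from Proposition \ref{0466}": the vertical maps of \eqref{0465} are isomorphisms, the completed trace is defined by transport along \eqref{0470}, so the two pairings are identified. Your extra remarks on the tautological commutativity with the traces and on the $\Lambda_{n}$ versus $\Lambda_{\infty}$ coefficient are just a more careful spelling-out of what the paper leaves implicit.
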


\begin{proof}
	This follows from Proposition \ref{0466}.
\end{proof}


\subsection{Enough resolved case}
\label{0113}

Assume that $A$ satisfies the conditions listed at the beginning of
Section \ref{0060}.
We use the notation of Section \ref{0060},
particularly Section \ref{0074}.
Let $S \subset P$ be the subset consisting of primes dividing $p$.
Set $R = A[1 / p]$ and $n = 1$ and let $U = U_{S} = \Spec R$.
We prove Proposition \ref{0112}
in this case.
We only treat the case \eqref{0062}
as the case \eqref{0061} is similar and easier.
Therefore we assume \eqref{0062}.
For the moment, $\alg{A}$ is a general lifting system and not necessarily $\Hat{\alg{A}}$.

For $\nu = \alpha$ or $\beta$, set
	\begin{gather*}
				\mathcal{E}_{R}
			=
				R \pi_{\alg{R}, \ast} \Lambda,
			\quad
				\mathcal{E}_{R}'
			=
				R \pi_{\alg{R}, !} \Lambda,
			\quad
				\mathcal{E}_{K_{\nu}^{h}}
			=
				R \pi_{\alg{K}_{\nu}^{h}, \ast} \Lambda,
		\\
				\mathcal{F}_{R}
			=
				R \varepsilon_{\ast} \mathcal{E}_{R},
			\quad
				\mathcal{F}_{R}'
			=
				R \varepsilon_{\ast} \mathcal{E}_{R}',
			\quad
				\mathcal{F}_{K_{\nu}^{h}}
			=
				R \varepsilon_{\ast} \mathcal{E}_{K_{\nu}^{h}}.
	\end{gather*}
The distinguished triangle \eqref{0110} gives distinguished triangles
\begin{gather}
		\notag
				\mathcal{E}_{R}
			\to
				\bigoplus_{\nu = \alpha, \beta}
					\mathcal{E}_{K_{\nu}^{h}}
			\to
				\mathcal{E}_{R}'[1],
		\\ \label{0456}
				\mathcal{F}_{R}
			\to
				\bigoplus_{\nu = \alpha, \beta}
					\mathcal{F}_{K_{\nu}^{h}}
			\to
				\mathcal{F}_{R}'[1]
	\end{gather}
in $D(F^{\perar}_{\et})$, $D(F^{\perar}_{\zar})$, respectively.

\begin{Prop} \label{0288}
	The distinguished triangle \eqref{0456} induces an exact sequence
		\[
				0
			\to
				H^{q} \mathcal{F}_{R}
			\to
				\bigoplus_{\nu = \alpha, \beta}
					H^{q} \mathcal{F}_{K_{\nu}^{h}}
			\to
				H^{q + 1} \mathcal{F}_{R}'
			\to
				0
		\]
	for all $q$.
	We have
		\[
				H^{q} \mathcal{F}_{R}
			=
				H^{q + 1} \mathcal{F}_{R}'
			=
				0
		\]
	for $q \ge 4$.
\end{Prop}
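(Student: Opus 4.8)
The plan is to extract everything from the long exact sequence in cohomology attached to the distinguished triangle \eqref{0456}. The crucial point is that the connecting morphism in this sequence vanishes, so that the long exact sequence breaks into the short exact sequences asserted. First I would observe that the map on cohomology sheaves induced by $\mathcal{F}_{R} \to \bigoplus_{\nu = \alpha, \beta} \mathcal{F}_{K_{\nu}^{h}}$ is injective in every degree $q$; this is exactly the content of \cite{Suz86}-style injectivity statements already established, namely Propositions \ref{0067} and \ref{0077}, which together say that $H^{q}(R, \Lambda) \to \bigoplus_{\nu = \alpha, \beta} H^{q}(K_{\nu}^{h}, \Lambda)$ is injective for all $q$. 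Since the sheaves $H^{q} \mathcal{F}_{R}$ and $H^{q} \mathcal{F}_{K_{\nu}^{h}}$ are, respectively, the functors $F' \mapsto H^{q}(\alg{R}(F'), \Lambda)$ and $F' \mapsto H^{q}(\alg{K}_{\nu}^{h}(F'), \Lambda)$ (each $\alg{A}(F')$, $\alg{K}_{\nu}^{h}(F')$ being of the type to which those propositions apply, for $F'$ a field, and everything commuting with the relevant limits), injectivity passes to the level of sheaves on $\Spec F^{\perar}_{\zar}$.

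Once that injectivity is in hand, the long exact sequence
$\cdots \to H^{q} \mathcal{F}_{R} \to \bigoplus_{\nu} H^{q} \mathcal{F}_{K_{\nu}^{h}} \to H^{q+1} \mathcal{F}_{R}' \to H^{q+1} \mathcal{F}_{R} \to \cdots$
collapses: the injectivity of $H^{q+1} \mathcal{F}_{R} \to \bigoplus_{\nu} H^{q+1} \mathcal{F}_{K_{\nu}^{h}}$ forces the boundary map $H^{q+1} \mathcal{F}_{R}' \to H^{q+1} \mathcal{F}_{R}$ to be zero, hence one obtains the asserted short exact sequences
\[
	0 \to H^{q} \mathcal{F}_{R} \to \bigoplus_{\nu = \alpha, \beta} H^{q} \mathcal{F}_{K_{\nu}^{h}} \to H^{q + 1} \mathcal{F}_{R}' \to 0
\]
for every $q$. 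For the vanishing statement, I would invoke the cohomological dimension bounds already recorded: Proposition \ref{0063} gives $H^{q}(R, \Lambda) = 0$ for $q \ge 4$, hence $H^{q} \mathcal{F}_{R} = 0$ for $q \ge 4$; and Proposition \ref{0041} gives $H^{q}(K_{\nu}^{h}, \Lambda) = 0$ for $q \ge 4$, hence $H^{q} \mathcal{F}_{K_{\nu}^{h}} = 0$ for $q \ge 4$. Feeding these into the short exact sequence above (with $q \ge 3$ for the last term) yields $H^{q+1} \mathcal{F}_{R}' = 0$ for $q + 1 \ge 4$, i.e. $H^{q} \mathcal{F}_{R}' = 0$ for $q \ge 4$ as well (the case $q = 4$ coming from $q = 3$ in the sequence, where the middle term already vanishes).

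The main obstacle, such as it is, is purely bookkeeping: one must be careful that the injectivity results of Section \ref{0074}, which are phrased for the fields $K_{\nu}^{h}$ of the fixed ring $A$, apply uniformly to $\alg{A}(F')$ for all fields $F' \in F^{\perar}$ so as to give a genuine monomorphism of Zariski sheaves, and that the localization triangle \eqref{0110} (hence \eqref{0456}) is compatible with $\varepsilon$-pushforward and with passing to cohomology sheaves. Both are routine given the general framework of Section \ref{0278} and the fact that each $\alg{A}(F')$ again satisfies the standing hypotheses of Section \ref{0060} in case \eqref{0062}; I do not anticipate any real difficulty. The case \eqref{0061} is handled identically, using the single-prime versions (Proposition \ref{0067} for injectivity, Proposition \ref{0072} for $H^{3}(R,\Lambda)$, and the triangle \eqref{0110} with $S = \{\ideal{p}\}$) in place of their two-prime analogues.
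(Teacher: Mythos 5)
Your argument is the same as the paper's: the paper's proof simply cites Propositions \ref{0063}, \ref{0067} and \ref{0077}, i.e.\ exactly the injectivity-in-all-degrees statement plus the cohomological dimension bound that you use, and the passage from sections over each field $F'$ to Zariski sheaves is as routine as you say. For the statement as actually written the proof is fine.

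However, your parenthetical at the end is wrong, and it is worth flagging because it reveals a misreading. The proposition asserts $H^{q+1}\mathcal{F}_{R}' = 0$ for $q \ge 4$, i.e.\ vanishing of $H^{m}\mathcal{F}_{R}'$ only in degrees $m \ge 5$; it does \emph{not} assert $H^{4}\mathcal{F}_{R}' = 0$. Your claim that ``the case $q = 4$ comes from $q = 3$ in the sequence, where the middle term already vanishes'' is false: in the Zariski topology $H^{3}\mathcal{F}_{K_{\nu}^{h}} \cong \xi \ne 0$ (Proposition \ref{0041} together with the discussion at the start of Section \ref{0048} — this sheaf only dies after \'etale sheafification, and keeping it alive is the whole point of working over $\Spec F^{\perar}_{\zar}$). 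Consequently $H^{4}\mathcal{F}_{R}'$ is the cokernel of the diagonal $\xi \to \xi \oplus \xi$ (using Proposition \ref{0077}), which is $\xi \ne 0$; its nonvanishing is in fact needed for the pairing $H^{0}\mathcal{F}_{R} \times H^{4}\mathcal{F}_{R}' \to \xi$ of Proposition \ref{0289} to be the perfect pairing $\Lambda \times \xi \to \xi$. So delete the over-claim; the correct range $q \ge 4$ in the short exact sequence, where the middle term genuinely vanishes, already gives everything the proposition asserts.
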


\begin{proof}
	This follows from Propositions
	\ref{0063},
	\ref{0067},
	and \ref{0077}.
\end{proof}

Define $(H^{1} \mathcal{F}_{R})_{\alpha}$
to be the kernel of
the morphism from $H^{1} \mathcal{F}_{R}$
to $H^{1} \mathcal{F}_{K_{\beta}^{h}}$.
Define $(H^{1} \mathcal{R})_{\beta}$
to be the quotient of $H^{1} \mathcal{F}_{R}$
by $(H^{1} \mathcal{F}_{R})_{\alpha}$.
Define $(H^{2} \mathcal{F}_{R})_{\beta}$
to be the kernel of
the morphism from $H^{2} \mathcal{F}_{R}$
to $H^{2} \mathcal{F}_{K_{\alpha}^{h}}$.
Define $(H^{2} \mathcal{F}_{R})_{\alpha}$
to be the quotient of $H^{2} \mathcal{F}_{R}$
by $(H^{2} \mathcal{F}_{R})_{\beta}$.

For $q = 1, 2$ and $\nu = \alpha$ or $\beta$,
define $(H^{q} \mathcal{F}_{R}')_{\nu}$ to be the cokernel of the morphism
	\[
			(H^{q} \mathcal{F}_{R})_{\nu}
		\to
			H^{q} \mathcal{F}_{K_{\nu}^{h}}
	\]
For $m \ge 0$
define $U^{m} (H^{q} \mathcal{F}_{R})_{\nu}$
to be the subsheaf of $(H^{q} \mathcal{F}_{R})_{\nu}$ given by
	\[
			F'
		\mapsto
			U^{m} H^{q}(\alg{R}(F'), \Lambda)_{\nu},
	\]
where $U^{m} H^{q}(\alg{R}(F'), \Lambda)_{\nu}$ is as defined in
Section \ref{0068}.
Define $U^{m} (H^{q} \mathcal{F}_{R}')_{\nu}$ to be the image of
$U^{m} H^{q} \mathcal{F}_{K_{\nu}^{h}}$
in $(H^{q} \mathcal{F}_{R}')_{\nu}$.

For $\nu = \alpha$ or $\beta$ and $F' \in F^{\perar}$,
let $\alg{k}_{\nu}(F') = \alg{A}(F') \tensor_{A} k_{\nu}$
and $\Bar{\alg{A}}_{\nu}(F') = \alg{A}(F') \tensor_{A} \Bar{A}_{\nu}$.
Let $\Omega_{\Bar{\alg{A}}_{\nu}}^{1} = \Bar{\alg{A}}_{\nu} \tensor_{\Bar{A}_{\nu}} \Omega_{\Bar{A}_{\nu}}^{1}$.
Set $\nu' = \alpha$ if $\nu = \beta$ and $\nu' = \beta$ if $\nu = \alpha$.
For $j \ge 1$, let $\Tilde{\alg{U}}_{k_{\nu}}^{(j)}$ be the image of
$1 + \varpi_{\nu'}^{j} \Bar{\alg{A}}_{\nu}$ in
$\alg{k}_{\nu}^{\times} / \alg{k}_{\nu}^{\times p}$.
Let $\Tilde{\alg{A}}_{\alpha}(j)$ be the image of
$\varpi_{\nu'}^{-j} \Bar{\alg{A}}_{\nu}$ in $\alg{k}_{\nu} / \wp \alg{k}_{\nu}$.

The isomorphisms in Propositions
\ref{0075} and \ref{0076}
are functorial in perfect field extensions of $F$.
Therefore we have the following:

\begin{Prop} \label{0114}
	The image of
	$\gr^{m} (H^{q} \mathcal{F}_{R})_{\nu} \into \gr^{m} H^{q} \mathcal{F}_{K_{\nu}^{h}}$
	is given as follows:
		\[
				\gr^{m} (H^{2} \mathcal{F}_{R})_{\alpha}
			\cong
				\begin{cases}
							\alg{k}_{\alpha}^{\times} / \alg{k}_{\alpha}^{\times p}
					&	\text{if }
							m = 0,
					\\
							\Bar{\alg{A}}_{\alpha} / \Bar{\alg{A}}_{\alpha}^{p}
					&	\text{if }
							0 < m < f_{\alpha},\ p \mid m,
					\\
							\alg{\Omega}_{\Bar{A}_{\alpha}}^{1}
					&	\text{if }
							0 < m < f_{\alpha},\ p \nmid m,
					\\
							\xi \oplus \Tilde{\alg{A}}_{\alpha}(f_{\beta})
					&	\text{if }
							m = f_{\alpha},
				\end{cases}
		\]
		\[
				\gr^{m} (H^{1} \mathcal{F}_{R})_{\alpha}
			\cong
				\begin{cases}
							0 \oplus \Tilde{\alg{U}}_{k_{\alpha}}^{(f_{\beta} + 1)}
					&	\text{if }
							m = 0,
					\\
							\Bar{\alg{A}}_{\alpha} / \Bar{\alg{A}}_{\alpha}^{p}
					&	\text{if }
							0 < m < f_{\alpha},\ p \mid m,
					\\
							\Bar{\alg{A}}_{\alpha}
					&	\text{if }
							0 < m < f_{\alpha},\ p \nmid m,
					\\
							0
					&	\text{if }
							m = f_{\alpha},
				\end{cases}
		\]
		\[
				\gr^{m} (H^{2} \mathcal{F}_{R})_{\beta}
			\cong
				\begin{cases}
							\Tilde{\alg{U}}_{k_{\beta}}^{(f_{\alpha} + 1)}
					&	\text{if }
							m = 0,
					\\
							\Bar{\alg{A}}_{\beta} / \Bar{\alg{A}}_{\beta}^{p}
					&	\text{if }
							0 < m < f_{\beta},\ p \mid m,
					\\
							\alg{\Omega}_{\Bar{A}_{\beta}}^{1}
					&	\text{if }
							0 < m < f_{\beta},\ p \nmid m,
					\\
							0
					&	\text{if }
							m = f_{\beta},
				\end{cases}
		\]
		\[
				\gr^{m} (H^{1} \mathcal{F}_{R})_{\beta}
			\cong
				\begin{cases}
							\Lambda \oplus \alg{k}_{\beta}^{\times} / \alg{k}_{\beta}^{\times p}
					&	\text{if }
							m = 0,
					\\
							\Bar{\alg{A}}_{\beta} / \Bar{\alg{A}}_{\beta}^{p}
					&	\text{if }
							0 < m < f_{\beta},\ p \mid m,
					\\
							\Bar{\alg{A}}_{\beta}
					&	\text{if }
							0 < m < f_{\beta},\ p \nmid m,
					\\
							\Tilde{\alg{A}}_{\beta}(f_{\alpha})
					&	\text{if }
							m = f_{\beta}.
				\end{cases}
		\]
\end{Prop}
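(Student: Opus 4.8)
The plan is to deduce the statement from Saito's local computations \cite[(4.11)]{Sai86}, recalled here as Propositions \ref{0075} and \ref{0076}, applied one perfect field at a time. By construction (as in Section \ref{0048}), for every field $F' \in F^{\perar}$ the group of sections of $H^{q} \mathcal{F}_{R}$ over $F'$ is $H^{q}(\alg{R}(F'), \Lambda)$, and by the definitions immediately preceding the proposition the sections of $(H^{q} \mathcal{F}_{R})_{\nu}$ and of $U^{m}(H^{q} \mathcal{F}_{R})_{\nu}$ over $F'$ are the subgroups $H^{q}(\alg{R}(F'), \Lambda)_{\nu}$ and $U^{m} H^{q}(\alg{R}(F'), \Lambda)_{\nu}$ in the notation of Section \ref{0068}. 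Hence it suffices to identify the subquotients $\gr^{m} H^{q}(\alg{R}(F'), \Lambda)_{\nu}$ fieldwise and then check that the identifications glue.

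First I would check that for each field $F' \in F^{\perar}$ the ring $\alg{A}(F')$ again satisfies all the hypotheses listed at the beginning of Section \ref{0060} in Case \eqref{0062}, with $F$ replaced by $F'$: it is an excellent henselian regular two-dimensional local ring by Definition \ref{0279}, its fraction field has characteristic zero and contains $\zeta_{p}$ because $A \subset \alg{A}(F')$, and --- using that $\alg{A}(F') / \ideal{m}^{n} \alg{A}(F')$ is the Kato canonical lifting of $F'$ over $A / \ideal{m}^{n}$ --- the ideals $\ideal{p}_{\alpha} \alg{A}(F')$ and $\ideal{p}_{\beta} \alg{A}(F')$ are exactly the primes dividing $p$, their sum is $\ideal{m} \alg{A}(F')$, and $\alg{A}(F') / \ideal{p}_{\nu} \alg{A}(F')$ is the Kato canonical lifting of $F'$ over $A / \ideal{p}_{\nu}$ and hence regular. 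Once this is in hand, Propositions \ref{0075} and \ref{0076} apply to $\alg{A}(F')$ verbatim, using the prime elements $\varpi_{\alpha}, \varpi_{\beta}$ and Saito's auxiliary elements $b_{m}, c_{m}$ pulled back from $A$ together with the same $\zeta_{p}$, and they identify $\gr^{m} H^{q}(\alg{R}(F'), \Lambda)_{\nu}$ --- via the boundary isomorphisms of Propositions \ref{0043} and \ref{0044} for $\alg{K}_{\nu}^{h}(F')$ --- with the appropriate group in the tables: the $F'$-points of $\alg{k}_{\nu}^{\times}/\alg{k}_{\nu}^{\times p}$, $\Bar{\alg{A}}_{\nu}/\Bar{\alg{A}}_{\nu}^{p}$, $\alg{\Omega}_{\Bar{A}_{\nu}}^{1}$, $\xi$, $\Tilde{\alg{A}}_{\nu}(j)$ or $\Tilde{\alg{U}}_{k_{\nu}}^{(j)}$.

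Next I would verify naturality in $F'$. All the data entering these identifications --- choosing lifts of elements of $\Bar{\alg{A}}_{\nu}$ or $\alg{k}_{\nu}$, the symbol expressions such as $\{1 + \Tilde{x} \varpi_{\alpha}^{m}, \varpi_{\beta}\}$, the maps $\dlog$, and the residue and Cartier operators occurring in Propositions \ref{0043} and \ref{0044} --- are compatible with the flat ring homomorphisms $\alg{A}(F') \to \alg{A}(F'')$, since $\varpi_{\alpha}$, $\varpi_{\beta}$, $\zeta_{p}$ and the integers $f_{\alpha}, f_{\beta}$ are pulled back from $A$ and do not vary. Therefore the fieldwise identifications are compatible with the transition morphisms for varying $F' \in F^{\perar}$ and assemble into the asserted isomorphisms of sheaves on $\Spec F^{\perar}_{\zar}$, the targets being the sheaves $\alg{k}_{\nu}^{\times}/\alg{k}_{\nu}^{\times p}$, $\Bar{\alg{A}}_{\nu}/\Bar{\alg{A}}_{\nu}^{p}$, $\alg{\Omega}_{\Bar{A}_{\nu}}^{1}$, $\xi$, $\Tilde{\alg{A}}_{\nu}(j)$ and $\Tilde{\alg{U}}_{k_{\nu}}^{(j)}$ introduced just before the proposition; this yields the four displayed formulas.

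Since the substantive content is entirely contained in \cite[(4.11)]{Sai86}, there is no serious obstacle. The only two points that need care are: that the ``enough resolved'' hypotheses of Section \ref{0060} genuinely descend to every $\alg{A}(F')$ --- in particular that the two prime divisors of $p$ stay prime with regular, transversally meeting quotients after the canonical lifting --- and that the normalizations of the boundary isomorphisms of Propositions \ref{0043} and \ref{0044}, as well as Saito's choices of the auxiliary elements $b_{m}$ and $c_{m}$, are made uniformly (pulled back from $A$) so that naturality in $F'$ is manifest rather than something to be re-verified.
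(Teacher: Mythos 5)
Your proposal is correct and takes essentially the same route as the paper: the paper's entire justification is the one-sentence remark preceding the statement, namely that the isomorphisms of Propositions \ref{0075} and \ref{0076} are functorial in perfect field extensions of $F$, which is exactly the fieldwise-application-plus-naturality argument you spell out. The two points you flag as needing care (descent of the hypotheses of Section \ref{0060} to $\alg{A}(F')$, and uniform normalization of the auxiliary choices $\varpi_{\nu}, b_{m}, c_{m}, \zeta_{p}$) are precisely what the paper leaves implicit, so your elaboration is a faithful filling-in rather than a different proof.
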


Consider the morphism
	\[
				\mathcal{F}_{R}
			\tensor^{L}
				\mathcal{F}_{R}'[1]
		\to
			R \varepsilon_{\ast} \Lambda[-2]
		\to
			\xi[-3].
	\]
For $q + q' = 3$, this gives a morphism
	\begin{equation} \label{0115}
				H^{q} \mathcal{F}_{R}
			\tensor^{L}
				H^{q' + 1} \mathcal{F}_{R}'
		\to
			\xi.
	\end{equation}

\begin{Prop} \label{0289}
	The morphism \eqref{0115}
	is a perfect pairing if $q = 0$ or $3$.
\end{Prop}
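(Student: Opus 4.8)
The plan is to reduce, in the two extreme degrees, to the multiplication pairing of Proposition~\ref{0028}. The first step is to make the objects explicit. For $q=0$ one has $H^0\mathcal{F}_R\cong\Lambda$, the constant sheaf, since $\alg{R}(F')$ is a domain for every field $F'\in F^{\perar}$; and the distinguished triangle \eqref{0456} together with Proposition~\ref{0288} identifies $H^4\mathcal{F}_R'$ with the cokernel of the injection $H^3\mathcal{F}_R\into H^3\mathcal{F}_{K_\alpha^h}\oplus H^3\mathcal{F}_{K_\beta^h}$. By Proposition~\ref{0077} each projection of this map is an isomorphism, and each $H^3\mathcal{F}_{K_\nu^h}\cong\xi$ by Proposition~\ref{0041}, so the image is a rank-one ``diagonal'' copy of $\xi$ in $\xi\oplus\xi$ and $H^4\mathcal{F}_R'\cong\xi$. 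Dually, for $q=3$ one gets $H^3\mathcal{F}_R\cong\xi$ and $H^1\mathcal{F}_R'\cong\Lambda$, the latter being the cokernel of the diagonal $\Lambda\into\Lambda\oplus\Lambda$ coming from $H^0\mathcal{F}_R\into\bigoplus_\nu H^0\mathcal{F}_{K_\nu^h}$.

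The second step is to identify the pairing \eqref{0115} in these degrees with the multiplication pairing $\Lambda\tensor^L\xi\to\xi$ (in the appropriate order, then composed with $\xi\into\xi_\infty$). The morphism \eqref{0115} is deduced, via \eqref{0396} and the global trace of Proposition~\ref{0111}, from the cup products on the local fields $K_\nu^h$; and by Proposition~\ref{0422}~\eqref{0512} the global trace $R^3\pi_{\alg{X},\ast}\mathfrak{T}(2)\to\Lambda$ is compatible, through the sum of normalized valuation maps, with the local traces \eqref{0358} and \eqref{0510}, which in turn are compatible with the boundary isomorphisms \eqref{0455} by the discussion around \eqref{0491}. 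Hence on $H^3\mathcal{F}_R$ the pairing is computed from Kato's pairings $H^q(K_\nu^h,\Lambda)\times H^{q'}(K_\nu^h,\Lambda)\to\xi$ of Proposition~\ref{0046}, which in bidegree $(0,3)$ and $(3,0)$ are just scalar multiplication by $H^0\cong\Lambda$. The reciprocity fact of Proposition~\ref{0504} — that the sum of the two local valuation maps on $H^2$ lands in the zero-sum subgroup of $\Lambda^2$ — is what makes the diagonal identifications of Step~1 compatible with these pairings, so that after passing to the relevant kernel/cokernel the induced pairing is exactly multiplication. Once this is in hand, Proposition~\ref{0028} gives that $\Lambda\tensor^L\xi\to\xi\to\xi_\infty$ is a perfect pairing in $D(F^{\perar}_\zar)$, so that the induced maps $H^0\mathcal{F}_R\to R\sheafhom_{F^{\perar}_\zar}(H^4\mathcal{F}_R',\xi_\infty)$ and $H^4\mathcal{F}_R'\to R\sheafhom_{F^{\perar}_\zar}(H^0\mathcal{F}_R,\xi_\infty)$ are isomorphisms; the case $q=3$ then follows from $q=0$ by the symmetry of the pairing $\mathcal{F}_R\tensor^L\mathcal{F}_R'[1]\to\xi[-3]$.

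The main obstacle will be Step~2: carefully tracking the trace and boundary morphisms so that the induced pairings on $H^0\mathcal{F}_R$, $H^1\mathcal{F}_R'$, $H^3\mathcal{F}_R$ and $H^4\mathcal{F}_R'$ are genuinely the (perfect) multiplication pairing and not a degenerate or twisted variant — in particular getting the signs right and verifying that the global trace on $H^3(\alg{R}(F'),\Lambda)$ is compatible with the two local traces on $H^3(\alg{K}_\nu^h(F'),\Lambda)$ along the rank-one diagonal image. The identification of the objects in Step~1 and the final appeal to Proposition~\ref{0028} are routine.
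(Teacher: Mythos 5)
Your proposal is correct and follows essentially the same route as the paper: the paper's proof simply invokes Proposition \ref{0077} to identify the pairing \eqref{0115} in degrees $(0,4)$ and $(3,1)$ with the multiplication pairings $\Lambda\times\xi\to\xi$ and $\xi\times\Lambda\to\xi$, and then concludes by Proposition \ref{0028}. Your extra care in Step 2 about the compatibility of the global and local trace maps is a reasonable elaboration of what the paper leaves implicit, and your identification of $H^{4}\mathcal{F}_{R}'$ and $H^{1}\mathcal{F}_{R}'$ as cokernels of diagonal embeddings matches the intended argument.
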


\begin{proof}
	By Proposition \ref{0077},
	the pairing can be identified with the multiplication pairing
	$\Lambda \times \xi \to \xi$ or $\xi \times \Lambda \to \xi$.
	Hence the result follows from
	Proposition \ref{0028}.
\end{proof}

\begin{Prop} \label{0290}
	Assume $A$ is complete and $\alg{A} = \Hat{\alg{A}}$.
	The morphism \eqref{0115} is a perfect pairing if $q = 1$ or $2$.
\end{Prop}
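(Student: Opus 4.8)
The proof follows the pattern of the proof of Proposition \ref{0448}. Throughout, write $(\var)^{\vee} = \sheafhom_{F^{\perar}_{\zar}}(\var, \xi_{\infty})$. The first step is a vanishing remark: for every $q$ the sheaves $H^{q}\mathcal{F}_{R}$, $H^{q}\mathcal{F}_{R}'$ and $H^{q}\mathcal{F}_{K_{\nu}^{h}}$ are finite successive extensions of Tate vector groups, of $\Lambda$ and of $\xi$. For $\mathcal{F}_{K_{\nu}^{h}}$ this is read off from \eqref{0440}--\eqref{0441}, and for $\mathcal{F}_{R}$, $\mathcal{F}_{R}'$ it follows from the explicit descriptions in Proposition \ref{0114} together with the exact sequences of Proposition \ref{0288}. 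Hence $\sheafext_{F^{\perar}_{\zar}}^{j}(\var, \xi_{\infty})$ vanishes in positive degrees on all of them by Propositions \ref{0027} and \ref{0028}; in particular $(\var)^{\vee}$ is exact on the exact sequences of Proposition \ref{0288}, and to prove \eqref{0115} is a perfect pairing it is enough to treat its effect on the individual cohomology sheaves.

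Next I would set up the morphism of short exact sequences. For each $\nu$, the field $K_{\nu}^{h}$ is a two-dimensional local field of the kind treated in Section \ref{0174}: it is henselian of characteristic zero with residue field $k_{\nu}$ a complete discrete valuation field of characteristic $p$ and residue field $F$. Hence the Zariski duality for such fields, Proposition \ref{0050}, applies to each $K_{\nu}^{h}$, and summing over $\nu$ the pairing on $\bigoplus_{\nu} H^{q}\mathcal{F}_{K_{\nu}^{h}} \times \bigoplus_{\nu} H^{q'}\mathcal{F}_{K_{\nu}^{h}} \to \xi_{\infty}$ is perfect. Applying the cup-product compatibility of the localization triangle \eqref{0110} (Proposition \ref{0430}, here with $S = \{\ideal{p}_{\alpha}, \ideal{p}_{\beta}\}$ and $U = \Spec R$) produces, for $q + q' = 3$, a morphism of short exact sequences
\[
  \begin{CD}
      0 @>>> H^{q}\mathcal{F}_{R} @>>> \bigoplus_{\nu} H^{q}\mathcal{F}_{K_{\nu}^{h}} @>>> H^{q+1}\mathcal{F}_{R}' @>>> 0 \\
      @. @VVV @V \wr VV @VVV @. \\
      0 @>>> (H^{q'+1}\mathcal{F}_{R}')^{\vee} @>>> \bigl( \bigoplus_{\nu} H^{q'}\mathcal{F}_{K_{\nu}^{h}} \bigr)^{\vee} @>>> (H^{q'}\mathcal{F}_{R})^{\vee} @>>> 0
  \end{CD}
\]
whose middle vertical arrow is an isomorphism. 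By the five lemma it suffices to prove that the left (equivalently the right) vertical arrow is an isomorphism, i.e. that the image of $H^{q}\mathcal{F}_{R}$ in $\bigoplus_{\nu} H^{q}\mathcal{F}_{K_{\nu}^{h}}$ is exactly the annihilator of the image of $H^{q'}\mathcal{F}_{R}$.

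This is checked on the associated graded for the filtrations $U^{m}(H^{q}\mathcal{F}_{R})_{\nu}$ and the $\alpha/\beta$-decomposition. Containment in the annihilator is automatic from the existence of the diagram above. For the reverse containment one compares sizes of graded pieces: by Proposition \ref{0114} — Saito's computation \cite[(4.11)]{Sai86}, recalled in Propositions \ref{0075} and \ref{0076} — one knows the image $\gr^{m}(H^{q}\mathcal{F}_{R})_{\nu} \into \gr^{m}H^{q}\mathcal{F}_{K_{\nu}^{h}}$ explicitly, and the local pairings $\gr^{m}H^{q}\mathcal{F}_{K_{\nu}^{h}} \times \gr^{f_{\nu}-m}H^{q'}\mathcal{F}_{K_{\nu}^{h}} \to \xi_{\infty}$ are given by the concrete formulas in the list following \eqref{0049} (the sheaf versions of Kato's identities \cite[\S 6]{Kat79}, i.e. Proposition \ref{0046}). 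One then verifies that the displayed subquotients of $H^{q}\mathcal{F}_{R}$ are the exact annihilators of those of $H^{q'}\mathcal{F}_{R}$; since the filtrations are finite, exhaustive and separated, this gives the claim. It is here that the hypothesis that $A$ is complete is used: via Proposition \ref{0078} the quotients $\bigoplus_{\nu}H^{q}\mathcal{F}_{K_{\nu}^{h}}/H^{q}\mathcal{F}_{R}$ are insensitive to completion, so the comparison may be carried out with the complete local fields $\Hat{K}_{\nu}$, where the residue/reciprocity bookkeeping closes up and the two sides of the filtration are genuinely in perfect duality (for merely henselian $A$ it would not be).

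The main obstacle is the last paragraph. The annihilator computation on graded pieces is delicate precisely because the two ramification indices $f_{\alpha}$ and $f_{\beta}$ differ and Saito's description of $(H^{q}\mathcal{F}_{R})_{\nu}$ involves the shifted, cross-indexed terms $\Tilde{\alg{A}}_{\nu}(f_{\nu'})$ and $\Tilde{\alg{U}}_{k_{\nu}}^{(f_{\nu'}+1)}$: one must track how the $\alpha$- and $\beta$-parts of $H^{q}\mathcal{F}_{R}$ pair against the top filtration steps of $H^{q'}\mathcal{F}_{K_{\nu}^{h}}$, and the degenerate cases $m = 0$, $m = f_{\nu}$, together with the asymmetry between $q = 1$ and $q = 2$, each need to be handled separately before appealing to the relevant residue-pairing identities. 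By contrast, the remaining cases $q = 0, 3$ are already disposed of in Proposition \ref{0289} precisely because no such filtration bookkeeping is needed there.
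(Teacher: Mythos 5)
Your proposal is correct and follows essentially the same route as the paper's proof: apply Proposition \ref{0050} to the middle terms $\bigoplus_{\nu} H^{q}\mathcal{F}_{K_{\nu}^{h}}$ and reduce perfectness to an exact-annihilator verification on the graded pieces of Saito's filtration via Proposition \ref{0114} and the explicit residue pairings (the paper packages this as a direct comparison of the graded short exact sequences with their duals, enumerating the three concrete cases $\Tilde{\alg{U}}_{k_{\nu}}^{(f_{\nu'}+1)} \subset \alg{k}_{\nu}^{\times}/\alg{k}_{\nu}^{\times p}$, $\Bar{\alg{A}}_{\nu} \subset \alg{k}_{\nu}$, $\Bar{\alg{A}}_{\nu}/\Bar{\alg{A}}_{\nu}^{p} \subset \alg{k}_{\nu}/\alg{k}_{\nu}^{p}$, rather than your five-lemma formulation, but the content is identical). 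One correction: the completeness of $A$ is not used via Proposition \ref{0078} (which is vacuous once $A$ is complete) but exactly where you already invoked it implicitly, namely in ensuring that $k_{\nu}$ is complete and that $\alg{k}_{\nu}$ agrees with the $F^{\perar}$-algebra \eqref{0450}, so that Proposition \ref{0050} and the Tate-vector-group duality of Proposition \ref{0027} genuinely apply to the middle terms and their graded pieces.
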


\begin{proof}
	First note that $\alg{k}_{\nu}$ for $\nu = \alpha$ or $\beta$ agrees with
	the $F^{\perar}$-algebra \eqref{0450} for $k_{\nu}$
	since $A$ (and hence $k_{\nu}$) is complete and $\alg{A} = \Hat{\alg{A}}$.
	
	For $m \ge 0$, $q = 1$ or $2$ and $\nu = \alpha$ or $\beta$,
	we have an exact sequence
		\begin{equation} \label{0116}
				0
			\to
				\gr^{m}(H^{q} \mathcal{F}_{R})_{\nu}
			\to
				\gr^{m} H^{q} \mathcal{F}_{K_{\nu}^{h}}
			\to
				\gr^{m}(H^{q + 1} \mathcal{F}_{R}')_{\nu}
			\to
				0.
		\end{equation}
	By Section \ref{0048}
	and Proposition \ref{0114},
	we know that all these sheaves are finite direct sums of copies of
	$\Lambda$, $\xi$ and Tate vector groups.
	Hence the values of the functor
	$\sheafext_{F^{\perar}_{\zar}}^{i}(\var, \xi)$
	on these sheaves are zero for $i \ge 1$
	by Propositions \ref{0027}
	and \ref{0028}.
	Denote the functor $\sheafhom_{F^{\perar}_{\zar}}(\var, \xi)$ by $(\var)^{\vee}$.
	We thus have an exact sequence
		\[
				0
			\to
				\bigl(
					\gr^{m}(H^{q + 1} \mathcal{F}_{R}')_{\nu}
				\bigr)^{\vee}
			\to
				\bigl(
					\gr^{m} H^{q} \mathcal{F}_{K_{\nu}^{h}}
				\bigr)^{\vee}
			\to
				\bigl(
					\gr^{m}(H^{q} \mathcal{F}_{R})_{\nu}
				\bigr)^{\vee}
			\to
				0.
		\]
	Let $m' = f_{\nu} - m$.
	Compare this sequence with the exact sequence
		\begin{equation} \label{0117}
				0
			\to
				\gr^{m'}(H^{q'} \mathcal{F}_{R})_{\nu}
			\to
				\gr^{m'} H^{q'} \mathcal{F}_{K_{\nu}^{h}}
			\to
				\gr^{m'}(H^{q' + 1} \mathcal{F}_{R}')_{\nu}
			\to
				0.
		\end{equation}
	Proposition \ref{0050}
	gives an isomorphism between the middle terms.
	
	It is enough to show that this isomorphism between the middle terms extends
	to an isomorphism of the sequences.
	By Section \ref{0048}
	and Proposition \ref{0114},
	Write the sequence \eqref{0116}
	as $0 \to C \to D \to E \to 0$ and
	the sequence \eqref{0117}
	as $0 \to C' \to D' \to E' \to 0$,
	so that we have a pairing $D \times D' \to \xi$ inducing an isomorphism
	$D' \isomto D^{\vee}$.
	It is enough to show that
	the kernel of the composite $D' \isomto D^{\vee} \onto C^{\vee}$ is $C'$.
	Let $\nu' = \alpha$ if $\nu = \beta$ and $\nu' = \beta$ if $\nu = \alpha$.
	There are three cases to consider.
	\begin{enumerate}
		\item \label{0291}
			$C \into D$ is
			$\Tilde{\alg{U}}_{k_{\nu}}^{(f_{\nu'} + 1)} \into \alg{k}_{\nu}^{\times} / \alg{k}_{\nu}^{\times p}$;
			$C' \into D'$ is
			$\Tilde{\alg{A}}_{\nu}(f_{\nu'}) \into \alg{k}_{\nu} / \wp \alg{k}_{\nu}$;
			and $D \times D' \to \xi$ is $(x, y) \mapsto \Res(y \dlog x)$.
		\item \label{0292}
			$C \into D$ is
			$\Bar{\alg{A}}_{\nu} \into \alg{k}_{\nu}$;
			$C' \into D'$ is
			$\Omega_{\Bar{\alg{A}}_{\nu}}^{1} \into \Omega_{\alg{k}_{\nu}}^{1}$;
			and $D \times D' \to \xi$ is $(x, \omega) \mapsto \Res(x \omega)$.
		\item \label{0293}
			Both $C \into D$ and $C' \into D'$ are
			$\Bar{\alg{A}}_{\nu} / \Bar{\alg{A}}_{\nu}^{p} \into \alg{k}_{\nu} / \alg{k}_{\nu}^{p}$;
			and $D \times D' \to \xi$ is $(x, y) \mapsto \Res(x dy)$.
	\end{enumerate}
	In all these cases, it is straightforward to check by explicit calculations that
	the kernel of the composite $D' \isomto D^{\vee} \onto C^{\vee}$ is indeed $C'$.
\end{proof}

\begin{Prop} \label{0294}
	Assume $A$ is complete and $\alg{A} = \Hat{\alg{A}}$.
	Then the statement of Proposition \ref{0112} is true
	(under the assumptions of this subsection).
	The object $R^{q} \pi_{\alg{R}, \ast} \Lambda$ is in $\mathcal{W}_{F}$ for all $q$
	and zero for $q \ge 3$.
	The image of the composite
		\[
				R^{2} \pi_{\alg{R}, \ast} \Lambda
			\to
				\bigoplus_{\ideal{q} \in P \setminus P'}
					R^{2} \pi_{\alg{K}_{\ideal{q}}^{h}, \ast} \Lambda
			\onto
				\bigoplus_{\ideal{q} \in P \setminus P'}
					\Lambda
		\]
	of the natural morphism and the morphism \eqref{0358} is
	the subgroup $(\bigoplus_{\ideal{q} \in P \setminus P'} \Lambda)_{0}$ consisting of elements of zero sum.
\end{Prop}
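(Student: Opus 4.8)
\emph{The plan.} Everything will be reduced to the explicit computations of Sections \ref{0068} and \ref{0074} via the Zariski-to-\'etale comparison of Section \ref{0157}. First, since $\zeta_{p} \in A$ and $U_{S} = \Spec R$, the restriction of $\mathfrak{T}_{1}(r)$ to $\alg{U}_{S}$ is canonically $\Lambda(r) \cong \Lambda$, compatibly with cup products, so $R \pi_{\alg{U}_{S}, \ast} \mathfrak{T}_{1}(r) \cong \mathcal{E}_{R}$ and $R \pi_{\alg{U}_{S}, !} \mathfrak{T}_{1}(r) \cong \mathcal{E}_{R}'$ for every $r$, and the pairing of Proposition \ref{0112} becomes the multiplication pairing $\mathcal{E}_{R} \tensor^{L} \mathcal{E}_{R}' \to \mathcal{E}_{R}' \to \Lambda_{\infty}[-3]$ coming from \eqref{0418}. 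The triangle \eqref{0110} writes $\mathcal{E}_{R}'$ as $[\mathcal{E}_{R} \to \bigoplus_{\nu} \mathcal{E}_{K_{\nu}^{h}}][-1]$, and each $\mathcal{E}_{K_{\nu}^{h}}$ lies in $\genby{\mathcal{W}_{F}}_{F^{\perar}_{\et}}$ by Proposition \ref{0055}, so it suffices to treat $\mathcal{E}_{R}$, and in fact to show $H^{q} \mathcal{E}_{R} \in \mathcal{W}_{F}$ for all $q$. Proposition \ref{0063}, applied over every perfect field extension of $F$, gives $H^{q} \mathcal{E}_{R} = 0$ for $q \ge 4$, and since $H^{3}(\alg{R}(F'), \Lambda) \cong F' / \wp F'$ \'etale-sheafifies to $\xi = 0$ in $\Ab(F^{\perar}_{\et})$, also $H^{3} \mathcal{E}_{R} = 0$, which is the stated vanishing for $q \ge 3$. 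For $q \le 2$ one takes the $\nu$-decomposition of Section \ref{0074} and the $U^{\bullet}$-filtrations of $(H^{q} \mathcal{F}_{R})_{\nu}$, and \'etale-sheafifies: by Proposition \ref{0114} the graded pieces become finite iterated extensions of $\Lambda$, Tate vector groups, and zero (all the $\xi$'s sheafify to $0$), hence $H^{q} \mathcal{E}_{R} \in \mathcal{W}_{F}$ because $\mathcal{W}_{F}$ is closed under extensions. This settles the membership assertion of Proposition \ref{0112} and the $\mathcal{W}_{F}$/vanishing claims about $R \pi_{\alg{R}, \ast} \Lambda$.

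\emph{The perfect pairing.} By the previous step and Proposition \ref{0026}, the pairing of Proposition \ref{0112} is perfect if and only if the induced pairing $\mathcal{F}_{R} \tensor^{L} \mathcal{F}_{R}' \to R \varepsilon_{\ast} \Lambda_{\infty}[-3] \to \xi_{\infty}[-4]$ is perfect in $D(F^{\perar}_{\zar})$. By Proposition \ref{0055}, Proposition \ref{0502}, Proposition \ref{0114} and Section \ref{0048}, the cohomology sheaves of $\mathcal{F}_{R}$ (in degrees $0, \dots, 3$) and of $\mathcal{F}_{R}'$ (in degrees $1, \dots, 4$) are finite iterated extensions of $\Lambda$, $\xi$ and Tate vector groups, so Propositions \ref{0027} and \ref{0028} give $\sheafext^{j}_{F^{\perar}_{\zar}}(-, \xi_{\infty}) = 0$ in degrees $j \ge 1$ on all of them. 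Hence $R \sheafhom_{F^{\perar}_{\zar}}(\mathcal{F}_{R}', \xi_{\infty})[-4]$ has cohomology $\sheafhom_{F^{\perar}_{\zar}}(H^{4 - q} \mathcal{F}_{R}', \xi_{\infty})$ in degree $q$, and symmetrically for $\mathcal{F}_{R}$; so the pairing is perfect as soon as the induced cohomology-level pairings $H^{q} \mathcal{F}_{R} \tensor^{L} H^{4 - q} \mathcal{F}_{R}' \to \xi_{\infty}$ are perfect for all $q$. These are precisely the morphisms \eqref{0115}, which are perfect for $q = 0, 3$ by Proposition \ref{0289} and for $q = 1, 2$ by Proposition \ref{0290}. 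This proves Proposition \ref{0112} in the case at hand.

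\emph{The image, and the main obstacle.} Applying Proposition \ref{0504} to $\alg{R}(F')$ for every $F' \in F^{\perar}$ (legitimate since $\alg{A} = \Hat{\alg{A}}$, so each $\alg{A}(F')$ again satisfies the hypotheses of Section \ref{0060} in case \eqref{0062}), the image of $H^{2}(\alg{R}(F'), \Lambda) \to \Lambda^{2}$ is the zero-sum subgroup; passing to \'etale sheaves and using exactness of sheafification identifies the sheaf image with $(\bigoplus_{\ideal{q}} \Lambda)_{0}$, which is the last assertion. The main obstacle is the bookkeeping inside the perfect-pairing step: one must check that the cohomology-level pairings induced by the complex-level pairing built from the trace map \eqref{0418} really coincide with the explicitly computed pairings \eqref{0115} of Propositions \ref{0289}--\ref{0290} (including the degree shift relating $\mathcal{F}_{K_{\nu}^{h}}$ to compact-support cohomology over $\Spec \Order_{K_{\nu}^{h}}$, as in the proof of Proposition \ref{0448}), and that ``perfect into $\xi$'' in those propositions is understood as ``perfect into $\xi_{\infty}$''. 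Everything else reduces to the tables of Sections \ref{0068} and \ref{0074}.
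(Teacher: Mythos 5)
Your proposal is correct and follows essentially the same route as the paper: the paper's proof of this proposition is precisely the assembly of Propositions \ref{0288}--\ref{0290} (cohomology-level perfection of \eqref{0115}), the Zariski-to-\'etale transfer of Proposition \ref{0026}, and Propositions \ref{0503}/\ref{0504} for the image statement, which is exactly the chain you spell out. Your closing remarks on the $\xi$ versus $\xi_{\infty}$ target and on matching the complex-level trace pairing with \eqref{0115} are the right points to be careful about, and they go through as you indicate.
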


\begin{proof}
	This follows from the previous propositions,
	Propositions \ref{0026}, \ref{0503} and \ref{0504}.
\end{proof}

We record here the completion invariance of $R \Bar{\pi}_{\alg{U}_{S}, !}$
without assuming the completeness of $A$.
(In this case, the completion invariance of the other one
$R \Bar{\pi}_{\alg{U}_{S}, \ast}$ does not hold.)
This is a result of explicit calculations of the graded pieces.
It will be used in Section \ref{0303}.

\begin{Prop} \label{0295}
	The morphism \eqref{0470} is an isomorphism (under the assumption of this subsection).
\end{Prop}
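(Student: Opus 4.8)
The plan is to pass to the mapping-fibre description of both sides of \eqref{0470} and to reduce the statement to the completion-invariance already established for two-dimensional local rings in Section~\ref{0074}. First I would invoke Proposition~\ref{0397}, applied both to $\alg{U}_{S}$ and, after replacing $A$ by $\Hat{A}$ with its canonical lifting system, to $\Hat{\alg{U}}_{S}$: it gives $R\Bar{\pi}_{\alg{U}_{S},!}\Lambda\cong R\pi_{\alg{U}_{S},!}\Lambda=\mathcal{E}_{R}'$, the mapping fibre of the morphism $\mathcal{E}_{R}\to\bigoplus_{\ideal{p}\in S}R\pi_{\alg{K}_{\ideal{p}}^{h},\ast}\Lambda$ of \eqref{0110}, and likewise $R\Bar{\pi}_{\Hat{\alg{U}}_{S},\Hat{!}}\Lambda\cong\mathcal{E}_{\Hat{R}}'$, the mapping fibre of $\mathcal{E}_{\Hat{R}}\to\bigoplus_{\ideal{p}\in S}R\pi_{\Hat{\alg{K}}_{\ideal{p}},\ast}\Lambda$; this is legitimate since $\Hat{A}$ again satisfies the conditions at the beginning of Section~\ref{0060}. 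By the construction of \eqref{0470} through \eqref{0468}, together with the compatibility \eqref{0465}, under these identifications \eqref{0470} becomes the morphism induced on mapping fibres by the completion maps $\mathcal{E}_{R}\to\mathcal{E}_{\Hat{R}}$ and $R\pi_{\alg{K}_{\ideal{p}}^{h},\ast}\Lambda\to R\pi_{\Hat{\alg{K}}_{\ideal{p}},\ast}\Lambda$. Neither of these two maps is itself an isomorphism (the residue fields change under completion), so the assertion genuinely concerns the fibres only.

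It is therefore enough to show that $\mathcal{E}_{R}'\to\mathcal{E}_{\Hat{R}}'$ induces an isomorphism on every cohomology sheaf $H^{n}$. Applying the \'etale sheafifications of Propositions~\ref{0067} and~\ref{0077} to the ring $\alg{A}(F')$ for every $F'\in F^{\perar}$ (which satisfies the hypotheses of Section~\ref{0060} when $F'$ is a field, and is a finite product of such rings otherwise), the maps $H^{q}\mathcal{E}_{R}\to\bigoplus_{\ideal{p}\in S}R^{q}\pi_{\alg{K}_{\ideal{p}}^{h},\ast}\Lambda$ and the analogous maps for $\Hat{A}$ are injective for all $q$; hence, as in Proposition~\ref{0288}, $H^{q+1}\mathcal{E}_{R}'$ is the sheaf cokernel of $H^{q}\mathcal{E}_{R}\into\bigoplus_{\ideal{p}\in S}R^{q}\pi_{\alg{K}_{\ideal{p}}^{h},\ast}\Lambda$, and similarly for $\Hat{R}$. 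A morphism of sheaves on $\Spec F^{\perar}_{\et}$ is an isomorphism precisely when it is so on all stalks, and these are computed at the algebraically closed perfect field extensions $\closure{F}'$ of $F$; since the stalk functor is exact, the stalk of $H^{q+1}\mathcal{E}_{R}'$ at $\closure{F}'$ is $\bigl(\bigoplus_{\ideal{p}\in S}H^{q}(\alg{K}_{\ideal{p}}^{h}(\closure{F}'),\Lambda)\bigr)\big/H^{q}(\alg{R}(\closure{F}'),\Lambda)$, and likewise with $\alg{A}$ replaced by $\Hat{\alg{A}}$.

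The sought isomorphism on stalks is then exactly Proposition~\ref{0073} in Case~\eqref{0061} and Proposition~\ref{0078} in Case~\eqref{0062}, applied to the ring $\alg{A}(\closure{F}')$, whose completion is $\Hat{\alg{A}}(\closure{F}')$. Those propositions are proved by reducing the comparison to the completion-invariance of the elementary quotients $k_{\nu}/\Bar{A}_{\nu}$, $k_{\nu}/(k_{\nu}^{p}+\Bar{A}_{\nu})$, $\Omega_{k_{\nu}}^{1}/\Omega_{\Bar{A}_{\nu}}^{1}$, $\xi(k_{\nu})/\Tilde{A}_{\nu}(m)$ and $(k_{\nu}^{\times}/k_{\nu}^{\times p})/\Tilde{U}_{k_{\nu}}^{(m)}$, a construction visibly functorial in $\closure{F}'$; hence the stalkwise isomorphisms are compatible and assemble to an isomorphism of sheaves. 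This proves the proposition.

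The step I expect to require the most care is not the above reduction but the preliminary bookkeeping: matching the morphism \eqref{0470}, defined abstractly through the fibered-site functoriality of Sections~\ref{0322} and~\ref{0464}, with the concrete morphism of mapping fibres $\mathcal{E}_{R}'\to\mathcal{E}_{\Hat{R}}'$; and verifying that the hypotheses of Section~\ref{0060} --- in particular the shape of $(p)$ in Cases~\eqref{0061} and~\eqref{0062} and the regularity of $\alg{A}(F')$ --- are preserved both under the base change $A\rightsquigarrow\alg{A}(F')$ and under the completion $A\rightsquigarrow\Hat{A}$. Once these identifications are in place the argument is simply the sheafification over $F^{\perar}$ of Propositions~\ref{0073} and~\ref{0078}, which are already at our disposal.
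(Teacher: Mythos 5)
Your proof is correct and takes essentially the same route as the paper, whose entire argument is the citation of Proposition \ref{0078} (and Proposition \ref{0073} in Case \eqref{0061}) applied section-wise to $\alg{A}(F')$; your reduction to the mapping-fibre quotients and then to those two propositions is exactly the intended expansion of that one-liner. The only slip is harmless: Proposition \ref{0397} does not apply on the completed side (the $\Hat{\alg{K}}_{\ideal{p}}$ are not henselian punctured neighbourhoods of $\Hat{\alg{U}}_{S}$), but the mapping-fibre description of $R\Bar{\pi}_{\Hat{\alg{U}}_{S},\Hat{!}}$ you want is simply its definition from Section \ref{0464}, so nothing is lost.
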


\begin{proof}
	This follows from
	Proposition \ref{0078}
	(and Proposition \ref{0073}
	when in Case \eqref{0061}).
\end{proof}


\subsection{Localization over resolution of singularities}
\label{0297}

We return to general $A$, $\Hat{A}$ and $S$.
We recall some of the notation in \cite[\S 1]{Sai86}
with some slight additions and modifications.
Let $\pi_{\mathfrak{X} / A} \colon \mathfrak{X} \to \Spec A$ be a resolution of singularities
such that $Y \cup \closure{S} \subset \mathfrak{X}$ is
supported on a strict normal crossing divisor,
where $Y$ is the reduced part of $\mathfrak{X} \times_{A} F$ (\cite[Tag 0BIC]{Sta21}).
Let $Y_{0}$ (resp.\ $Y_{1}$) be the set of closed (resp.\ generic) points of $Y$.
For $\eta \in Y_{1}$, let $\eta_{0} = \closure{\{\eta\}} \cap Y_{0}$
and $F_{\eta}$ the constant field of $\closure{\{\eta\}}$.
For $x \in Y_{0}$, let $A_{x}^{h}$ be the henselian local ring of $\mathfrak{X}$ at $x$
with maximal ideal $\ideal{m}_{A_{x}^{h}}$ and residue field $F_{x}$
and let $Y_{1}^{x}$ be the set of height one primes of $A_{x}^{h}$
lying over some element of $Y_{1}$ (via the morphism $\Spec A_{x}^{h} \to \mathfrak{X}$).
For $\eta \in Y_{1}$ and $x \in \eta_{0}$,
there is a unique $\eta_{x} \in Y_{1}^{x}$ lying over $\eta$.
Let $K_{\eta_{x}}^{h}$ be the henselian local field of $A_{x}^{h}$ at $\eta_{x}$,
$\Order_{K_{\eta_{x}}}^{h}$ its ring of integers
and $\kappa(\eta_{x})^{h}$ its residue field.
For each $x \in Y_{0}$,
let $B_{x}^{h} = \Order(\Spec A_{x}^{h} \times_{\mathfrak{X}} Y)$,
$R_{x}^{h} = \Order(\Spec A_{x}^{h} \times_{\mathfrak{X}} X)$
and $R_{x, S}^{h} = \Order(\Spec A_{x}^{h} \times_{\mathfrak{X}} U_{S})$.

We will take henselian neighborhoods of small enough affine opens
of irreducible components of $Y$.
For each $\eta \in Y_{1}$,
choose an affine open neighborhood $W_{\eta} \subset \mathfrak{X}$ of $\eta$
small enough so that:
\begin{itemize}
	\item \label{0298}
		$W_{\eta} \cap \closure{\{\eta'\}} = \emptyset$
		for any $\eta' \in Y_{1} \setminus \{\eta\}$,
	\item \label{0299}
		$W_{\eta}$ does not contain the specialization of any element of $S$ in $Y$ and
	\item \label{0300}
		$W_{\eta} \cap Y \subset W_{\eta}$ is a principal divisor.
\end{itemize}
Set $T = Y \setminus \bigcup_{\eta \in Y_{1}} (W_{\eta} \cap Y)$
and $B_{\eta, T} = \Order(W_{\eta} \cap Y)$.
For each $\eta \in Y_{1}$,
define $A_{\eta, T}^{h}$ to be the the henselization of the pair
$(\Order(W_{\eta}), \ideal{m} \Order(W_{\eta}))$.
Write $\Spec R_{\eta, T}^{h} = \Spec A_{\eta, T}^{h} \setminus \Spec B_{\eta, T}$.
For any $\eta \in Y_{1}$ and $x \in \eta_{0}$,
we have a canonical $A$-algebra homomorphism
$A_{\eta, T}^{h} \to \Order_{K_{\eta_{x}}}^{h}$
inducing the natural inclusion map
$B_{\eta, T} \into \kappa(\eta_{x})^{h}$ on the quotients.

Now we make the above rings into $F^{\perar}$-algebras.
For any $F' \in F^{\perar}$,
define $\algfrak{X}(F') = \mathfrak{X} \times_{A} \alg{A}(F')$
and $\alg{Y}(F') = Y \times_{F} F'$.
For $x \in Y$, define $\alg{A}_{x}^{h}(F')$ to be the henselization of the pair
$(A_{x}^{h} \tensor_{A} \alg{A}(F'), \ideal{m}_{A_{x}^{h}} \tensor_{A} \alg{A}(F'))$,
and set $\alg{R}_{x}^{h}(F') = \alg{A}_{x}^{h}(F') \tensor_{A_{x}^{h}} R_{x}^{h}$,
$\alg{R}_{x, S}^{h}(F') = \alg{A}_{x}^{h}(F') \tensor_{A_{x}^{h}} R_{x, S}^{h}$,
$\alg{B}_{x}^{h}(F') = \alg{A}_{x}^{h}(F') \tensor_{A_{x}^{h}} B_{x}^{h}$
and $\algfrak{\kappa}(x)^{h}(F') = \alg{B}_{x}^{h}(F') \tensor_{B_{x}^{h}} \kappa(x)^{h}$.
For $\eta \in Y_{1}$,
define $\alg{A}_{\eta, T}^{h}(F')$ to be the henselization of the pair
$(A_{\eta, T}^{h} \tensor_{A} \alg{A}(F'), \ideal{m} A_{\eta, T}^{h} \tensor_{A} \alg{A}(F'))$,
and set $\alg{R}_{\eta, T}^{h}(F') = \alg{A}_{\eta, T}^{h}(F') \tensor_{A_{\eta, T}^{h}} R_{\eta, T}^{h}$
and $\alg{B}_{\eta, T}(F') = \alg{A}_{\eta, T}^{h}(F') \tensor_{A_{\eta, T}^{h}} B_{\eta, T}$.
For $x \in T$ and $\eta_{x} \in Y_{1}^{x}$,
let $\alg{O}_{K_{\eta_{x}}}^{h}(F')$ be the henselization of the pair
$(\Order_{K_{\eta_{x}}}^{h} \tensor_{A} \alg{A}(F'), \ideal{m} \Order_{K_{\eta_{x}}}^{h} \tensor_{A} \alg{A}(F'))$.
Let $\alg{K}_{\eta_{x}}^{h}(F') = \alg{O}_{K_{\eta_{x}}}^{h}(F') \tensor_{\Order_{K_{\eta_{x}}}^{h}} K_{\eta_{x}}^{h}$
and $\algfrak{\kappa}(\eta_{x})^{h}(F') = \alg{O}_{K_{\eta_{x}}}^{h}(F') \tensor_{\Order_{K_{\eta_{x}}}^{h}} \kappa(\eta_{x})^{h}$.

We have a commutative diagram
	\begin{equation} \label{0403}
		\begin{CD}
			@.
				\displaystyle
				\bigsqcup_{
					\substack{
						x \in T \\
						\eta_{x} \in Y_{1}^{x}
					}
				}
					\Spec \alg{K}_{\eta_{x}, \et}^{h}
			@>>>
				\displaystyle
				\bigsqcup_{\eta \in Y_{1}}
					\Spec \alg{R}_{\eta, T, \et}^{h}
			@.
			\\ @. @VVV @VVV @. \\
				\displaystyle
				\bigsqcup_{\ideal{p} \in S}
					\Spec \alg{K}_{\ideal{p}, \et}^{h}
			@>>>
				\displaystyle
				\bigsqcup_{x \in T}
					\Spec \alg{R}_{x, S, \et}^{h}
			@>>>
				\alg{U}_{S, \et}
			@>>>
				\Spec F^{\perar}_{\et}
		\end{CD}
	\end{equation}
of morphisms of sites.
Let
	\begin{gather} \label{0401}
				\alg{U}_{T, \et}
			=
				\left(
						\bigsqcup_{\eta \in Y_{1}}
							\Spec \alg{R}_{\eta, T, \et}^{h}
					\gets
						\bigsqcup_{
							\substack{
								x \in T \\
								\eta_{x} \in Y_{1}^{x}
							}
						}
							\Spec \alg{K}_{\eta_{x}, \et}^{h}
					\to
						\bigsqcup_{x \in T}
							\Spec \alg{R}_{x, S, \et}^{h}
				\right),
		\\ \notag
				\alg{U}_{T, c, \et}
			=
				\left(
						\bigsqcup_{\ideal{p} \in S}
							\Spec \alg{K}_{\ideal{p}, \et}^{h}
					\to
						\alg{U}_{T, \et}
				\right)
	\end{gather}
be the total sites.
Then we are in the situation of Section \ref{0322}.
Let $\pi_{\alg{U}_{T} / \alg{U}_{S}} \colon \alg{U}_{T, \et} \to \alg{U}_{S, \et}$ and
$\Bar{\pi}_{\alg{U}_{T} / \alg{U}_{S}} \colon \alg{U}_{T, c, \et} \to \alg{U}_{S, c, \et}$
be the natural morphisms.
Let
	\begin{equation} \label{0402}
		\begin{gathered}
					\pi_{\alg{U}_{T}, \ast}
				=
					\left[
								\bigoplus_{x \in T}
									\pi_{\alg{R}_{x, S}^{h}, \ast}
							\oplus
								\bigoplus_{\eta \in Y_{1}}
									\pi_{\alg{R}_{\eta, T}^{h}, \ast}
						\to
							\bigoplus_{
								\substack{
									x \in T \\
									\eta_{x} \in Y_{1}^{x}
								}
							}
								\pi_{\alg{K}_{\eta_{x}}^{h}, \ast}
					\right][-1]
				\colon
			\\
					\Ch(\alg{U}_{T, \et})
				\to
					\Ch(F^{\perar}_{\et}),
		\end{gathered}
	\end{equation}
	\begin{gather*}
		\begin{aligned}
					\Bar{\pi}_{\alg{U}_{T}, !}
			&	=
					\left[
							\Bar{\pi}_{\alg{U}_{T}, \ast}
						\to
							\bigoplus_{\ideal{p} \in S}
								\Bar{\pi}_{\alg{K}_{\ideal{p}}^{h}, \ast}
					\right][-1]
			\\
			&	\cong
					\left[
								\bigoplus_{x \in T}
									\Bar{\pi}_{\alg{R}_{x, S}^{h}, \ast}
							\oplus
								\bigoplus_{\eta \in Y_{1}}
									\Bar{\pi}_{\alg{R}_{\eta, T}^{h}, \ast}
						\to
								\bigoplus_{
									\substack{
										x \in T \\
										\eta_{x} \in Y_{1}^{x}
									}
								}
									\Bar{\pi}_{\alg{K}_{\eta_{x}}^{h}, \ast}
							\oplus
								\bigoplus_{\ideal{p} \in S}
									\Bar{\pi}_{\alg{K}_{\ideal{p}}^{h}, \ast}
					\right][-1]
				\colon
		\end{aligned}
		\\
				\Ch(\alg{U}_{T, c, \et})
			\to
				\Ch(F^{\perar}_{\et})
	\end{gather*}
be the functors as defined in \eqref{0399} and \eqref{0400},
where the functors pulled back from $D(\alg{U}_{T, \et})$ to $D(\alg{U}_{T, c, \et})$
are denoted by putting overlines.

We compare these functors with the previously defined functors
$\pi_{\alg{U}_{S}, \ast}$ and $\pi_{\alg{U}_{S}, !}$.
By \eqref{0387} and \eqref{0398}, we have natural transformations
	\begin{gather*}
					\pi_{\alg{U}_{S}, \ast}
				\to
					\pi_{\alg{U}_{T}, \ast} \pi_{\alg{U}_{T} / \alg{U}_{S}}^{\ast}
			\colon
				\Ch(\alg{U}_{S, \et})
			\to
				\Ch(F^{\perar}_{\et}),
		\\
					\Bar{\pi}_{\alg{U}_{S}, !}
				\to
					\Bar{\pi}_{\alg{U}_{T}, !} \Bar{\pi}_{\alg{U}_{T} / \alg{U}_{S}}^{\ast}.
			\colon
				\Ch(\alg{U}_{S, c, \et})
			\to
				\Ch(F^{\perar}_{c, \et}).
	\end{gather*}

\begin{Prop} \label{0411}
	Let $G \in D_{\tor}^{+}(\alg{U}_{\et})$.
	Then the above morphisms applied to $G$,
		\begin{gather} \label{0407}
					R \pi_{\alg{U}_{S}, \ast} G
				\to
					R \pi_{\alg{U}_{T}, \ast} \pi_{\alg{U}_{T} / \alg{U}_{S}}^{\ast} G,
			\\ \label{0408}
					R \Bar{\pi}_{\alg{U}_{S}, !} G
				\to
					R \Bar{\pi}_{\alg{U}_{T}, !} \Bar{\pi}_{\alg{U}_{T} / \alg{U}_{S}}^{\ast} G,
		\end{gather}
	in $D(F^{\perar}_{\et})$ are isomorphisms.
\end{Prop}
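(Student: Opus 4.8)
The plan is to prove Proposition \ref{0411} by reducing the two claimed isomorphisms \eqref{0407} and \eqref{0408} to the local statement that cohomology with support along the special fiber is computed by the henselian neighborhoods of the irreducible components of $Y$ together with the henselian local fields at the generic points $\eta$. The key point is that $R \pi_{\mathfrak{X}, \ast} \pi_{\mathfrak{X} / \alg{X}}^{\ast} \cong R \pi_{\alg{X}, \ast}$ on $D_{\tor}^{+}$, i.e.\ that pulling back along the resolution $\pi_{\mathfrak{X} / A}$ does not change cohomology of torsion sheaves on the open part; this follows from Gabber's affine analogue of proper base change applied along the exceptional locus (or more precisely from the structure of relative sites and Proposition \ref{0341}, as in Proposition \ref{0355}).

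First I would set up, for each $F' \in F^{\perar}$, the two localization distinguished triangles for $\alg{R}_{S}(F')$: one expressing $R \Gamma(\alg{R}_{S}(F'), G)$ via $R \Gamma_{Y}$ of $\algfrak{X}(F')$ (cohomology with support along the special fiber) and $R \Gamma(\alg{R}_{S}(F'), G)$ again on a smaller open, and an analogous one on $\mathfrak{X}$. Since $Y \cup \closure{S}$ is a strict normal crossing divisor, the cohomology with support along $Y$ decomposes into contributions from the generic points $\eta \in Y_{1}$ (giving the henselian local fields $\alg{K}_{\eta_{x}}^{h}$ and the neighborhoods $\alg{R}_{\eta, T}^{h}$) and from the closed points $x \in Y_{0}$ (giving the rings $\alg{R}_{x, S}^{h}$), glued along the generic points of their intersections, which are exactly the points $\eta_{x}$ for $x \in T$. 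This is the fibered-site incarnation of the Mayer--Vietoris sequence for the normal crossing divisor, and it is precisely the shape of the functor $\pi_{\alg{U}_{T}, \ast}$ in \eqref{0402}. Then \eqref{0407} is the statement that the two localization triangles agree, which reduces to the excision/base-change isomorphisms (Proposition \ref{0404} for the decomposition into local factors, Proposition \ref{0314} and Proposition \ref{0341} for the henselian invariance, and \cite[Expos\'e VII, Corollaire 5.9]{AGV72b} for passage to inverse limits); for \eqref{0408} one applies the support version $R \Bar{\pi}_{\alg{U}_{S}, !} = [\Bar{\pi}_{\alg{U}_{S}, \ast} \to \bigoplus_{\ideal{p} \in S} \Bar{\pi}_{\alg{K}_{\ideal{p}}^{h}, \ast}][-1]$ and notes that the extra boundary terms indexed by $\ideal{p} \in S$ match on both sides since $S$ is disjoint from the $W_{\eta}$'s, so it suffices to prove \eqref{0407} together with the compatibility of the boundary maps — which is \eqref{0419} specialized to the diagram \eqref{0403}.

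Concretely, the steps in order: (1) reduce to $G$ concentrated in a single degree and to checking after applying $R \Gamma(F', \var)$ for each field $F' \in F^{\perar}$, using Proposition \ref{0031}; (2) write down the localization triangle on $\algfrak{X}(F')$ along $\alg{Y}(F')$ and identify $R \pi_{\mathfrak{X}, \ast} \pi_{\mathfrak{X} / \alg{X}}^{\ast} G \cong R \pi_{\alg{X}, \ast} G$ on $D_{\tor}^{+}$ via proper/affine base change along the proper birational $\pi_{\mathfrak{X} / A}$; (3) decompose $R \Gamma_{\alg{Y}(F')}(\algfrak{X}(F'), -)$ into the henselian-local contributions at the points of $Y_{0}$ and $Y_{1}$ using Proposition \ref{0404}, and identify the gluing data along the $\eta_{x}$ via the strict normal crossing structure; (4) recognize the resulting complex as exactly $R \pi_{\alg{U}_{T}, \ast} \pi_{\alg{U}_{T} / \alg{U}_{S}}^{\ast} G$, which gives \eqref{0407}; (5) feed this into the functor $[\,\cdot \to \bigoplus_{\ideal{p}}\,\cdot\,][-1]$ and invoke Proposition \ref{0430} / the compatibility diagram \eqref{0419} to get \eqref{0408}. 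The main obstacle I expect is step (3): making the matching between the abstract fibered-site Mayer--Vietoris decomposition and the concrete excision decomposition of $R \Gamma_{\alg{Y}(F')}$ fully rigorous, in particular getting the gluing morphisms at the double points $\eta_{x}$ to agree on the nose (not just up to non-canonical isomorphism) with the transition morphisms $f_{j i}$ of the fibered site over the poset $I$, and checking that the small-neighborhood choices $W_{\eta}$ (disjointness, principality of $W_{\eta}\cap Y$, avoidance of $S$) are exactly what is needed for the contributions of distinct components not to interact. Once the decomposition is pinned down the rest is formal manipulation of distinguished triangles and the base-change isomorphisms already established.
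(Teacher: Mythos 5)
Your overall strategy matches the paper's: reduce to the special fiber by base change, decompose there by excision, and deduce \eqref{0408} from \eqref{0407} via the formal compatibility \eqref{0409}. The paper's proof builds the parallel total sites $\alg{X}_{T,\et}$, $\algfrak{X}_{T,\et}$, $\alg{Y}_{T,\et}$, proves the isomorphism first on the $\alg{Y}$-level by excision on the curve $Y$, and transfers it upward using the classical proper base change theorem for the proper morphism $\algfrak{X}(F') \to \Spec \alg{A}(F')$ together with Propositions \ref{0313} and \ref{0341} for the local pieces; finally it applies the resulting isomorphism to $R j_{\algfrak{X},\ast} R \lambda_{S,\ast} G$. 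One step of your plan as written needs repair: you propose to decompose $R \Gamma_{Y}$ of $\algfrak{X}(F')$, the cohomology \emph{with support} along the special fiber. For the coefficient $R j_{\algfrak{X},\ast} R \lambda_{S,\ast} G$ relevant here this support term is zero (the mapping fiber of $\id \to R j_{\ast} j^{\ast}$ applied to an $R j_{\ast}$-extended object vanishes), so it cannot carry the Mayer--Vietoris decomposition you want. The object the paper decomposes is instead $R \Gamma\bigl(\alg{Y}(F'), i_{\algfrak{X}}^{\ast} R j_{\algfrak{X},\ast} R \lambda_{S,\ast} G\bigr)$, which equals the full cohomology of $\alg{U}_{S}(F')$ by proper base change, and the decomposition is ordinary excision on $Y$ for the closed subset $T$ with open complement $\bigsqcup_{\eta} (W_{\eta} \cap Y)$ (this is exactly where the disjointness of the $W_{\eta} \cap Y$ is used), combined with the localization triangle for each punctured henselian local ring $\Spec B_{x}^{h} \setminus \{x\} = \bigsqcup_{\eta_{x}} \Spec \kappa(\eta_{x})^{h}$. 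Your concern in step (3) about the gluing maps at the $\eta_{x}$ agreeing on the nose is precisely what the fibered-site formalism of Section \ref{0322} is designed to eliminate: all comparison maps arise from canonical morphisms of fibered sites over the fixed poset $I$, so no ad hoc identification is required.
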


\begin{proof}
	We have commutative diagrams of morphisms of sites
		\[
			\begin{CD}
					\displaystyle
					\bigsqcup_{
						\substack{
							x \in T \\
							\eta_{x} \in Y_{1}^{x}
						}
					}
						\Spec \alg{K}_{\eta_{x}, \et}^{h}
				@>>>
					\displaystyle
					\bigsqcup_{\eta \in Y_{1}}
						\Spec \alg{R}_{\eta, T, \et}^{h}
				\\ @VVV @VVV \\
					\displaystyle
					\bigsqcup_{x \in T}
						\Spec \alg{R}_{x, \et}^{h}
				@>>>
					\alg{X}_{\et},
			\end{CD}
		\]
		\[
			\begin{CD}
					\displaystyle
					\bigsqcup_{
						\substack{
							x \in T \\
							\eta_{x} \in Y_{1}^{x}
						}
					}
						\Spec \alg{O}_{K_{\eta_{x}}, \et}^{h}
				@>>>
					\displaystyle
					\bigsqcup_{\eta \in Y_{1}}
						\Spec \alg{A}_{\eta, T, \et}^{h}
				\\ @VVV @VVV \\
					\displaystyle
					\bigsqcup_{x \in T}
						\Spec \alg{A}_{x, \et}^{h}
				@>>>
					\algfrak{X}_{\et},
			\end{CD}
		\]
		\[
			\begin{CD}
					\displaystyle
					\bigsqcup_{
						\substack{
							x \in T \\
							\eta_{x} \in Y_{1}^{x}
						}
					}
						\Spec \algfrak{\kappa}(\eta_{x})_{\et}^{h}
				@>>>
					\displaystyle
					\bigsqcup_{\eta \in Y_{1}}
						\Spec \alg{B}_{\eta, T, \et}
				\\ @VVV @VVV \\
					\displaystyle
					\bigsqcup_{x \in T}
						\Spec \alg{B}_{x, \et}^{h}
				@>>>
					\alg{Y}_{\et}.
			\end{CD}
		\]
	In the definitions of $\alg{U}_{T, \et}$ and $\pi_{\alg{U}_{T}, \ast}$
	in \eqref{0401} and \eqref{0402},
	using these squares instead of the square in \eqref{0403},
	we define sites and functors
		\begin{gather*}
				\alg{X}_{T, \et}, \algfrak{X}_{T, \et}, \alg{Y}_{T, \et},
			\\
					\pi_{\alg{X}_{T}, \ast}
				\colon
					\Ch(\alg{X}_{T, \et})
				\to
					\Ch(F^{\perar}_{\et}),
			\\
					\pi_{\algfrak{X}_{T}, \ast}
				\colon
					\Ch(\algfrak{X}_{T, \et})
				\to
					\Ch(F^{\perar}_{\et}),
			\\
					\pi_{\alg{Y}_{T}, \ast}
				\colon
					\Ch(\alg{Y}_{T, \et})
				\to
					\Ch(F^{\perar}_{\et}).
		\end{gather*}
	We have a commutative diagram of morphisms of sites
		\[
			\begin{CD}
					\alg{U}_{T, \et}
				@> \lambda_{T} >>
					\alg{X}_{T, \et}
				@> j_{\algfrak{X}_{T}} >>
					\algfrak{X}_{T, \et}
				@< i_{\algfrak{X}_{T}} <<
					\alg{Y}_{T, \et}
				\\
				@V \pi_{\alg{U}_{T} / \alg{U}_{S}} VV
				@V \pi_{\alg{X}_{T} / \alg{X}} VV
				@V \pi_{\algfrak{X}_{T} / \algfrak{X}} VV
				@V \pi_{\alg{Y}_{T} / \alg{Y}} VV
				\\
					\alg{U}_{S, \et}
				@>> \lambda_{S} >
					\alg{X}_{\et}
				@>> j_{\algfrak{X}} >
					\algfrak{X}_{\et}
				@<< i_{\algfrak{X}} <
					\alg{Y}_{\et}
			\end{CD}
		\]
	Let
		\begin{gather*}
					\Spec \alg{R}_{x, \et}^{h}
				\stackrel{j_{x}}{\to}
					\Spec \alg{A}_{x, \et}^{h}
				\stackrel{i_{x}}{\gets}
					\Spec \alg{B}_{x, \et}^{h},
			\\
					\Spec \alg{R}_{\eta, T, \et}^{h}
				\stackrel{j_{\eta}}{\to}
					\Spec \alg{A}_{\eta, T, \et}^{h}
				\stackrel{i_{\eta}}{\gets}
					\Spec \alg{B}_{\eta, T, \et}^{h},
			\\
					\Spec \alg{K}_{\eta_{x}, \et}^{h}
				\stackrel{j_{\eta_{x}}}{\to}
					\Spec \Spec \alg{O}_{K_{\eta_{x}}, \et}^{h}
				\stackrel{i_{\eta_{x}}}{\gets}
					\Spec \algfrak{\kappa}(x)_{\et}^{h}
		\end{gather*}
	be the components of $j_{\algfrak{X}_{T}}$ and $i_{\algfrak{X}_{T}}$.
	
	Then for any $G \in D^{+}(\alg{Y}_{\et})$,
	the natural morphism
		\begin{equation} \label{0405}
				R \pi_{\alg{Y}, \ast} G
			\to
				R \pi_{\alg{Y}_{T}, \ast} \pi_{\alg{Y}_{T} / \alg{Y}}^{\ast} G
		\end{equation}
	in $D(F^{\perar}_{\et})$ is an isomorphism by excision
	(see the proof of Proposition \ref{0404}).
	
	Consider the commutative diagram
		\[
			\begin{CD}
					\algfrak{X}_{\et}
				@< i_{\algfrak{X}} <<
					\alg{Y}_{\et}
				\\
				@V \pi_{\algfrak{X} / \alg{A}} VV
				@V \pi_{\alg{Y}} VV
				\\
					\Spec \alg{A}_{\et}
				@< i_{\alg{A}} <<
					\Spec F^{\perar}_{\et}.
			\end{CD}
		\]
	Since $\algfrak{X}(F') \to \Spec \alg{A}(F')$ is proper for any $F' \in F^{\perar}$,
	we have
		\[
				R \pi_{\algfrak{X}, \ast} G
			\isomto
				R \pi_{\alg{Y}, \ast} i_{\algfrak{X}}^{\ast} G
		\]
	for any $G \in D_{\tor}^{+}(\algfrak{X}_{\et})$
	by the proper base change theorem.
	
	For any $G_{x} \in D(\alg{A}_{x, \et}^{h})$,
	$G_{\eta} \in D_{\tor}^{+}(\alg{A}_{\eta, T, \et}^{h})$ and
	$G_{\eta_{x}} \in D(\alg{O}_{K_{\eta_{x}}, \et}^{h})$,
	the natural morphisms
		\begin{gather*}
					R \pi_{\alg{A}_{x}^{h}, \ast} G_{x}
				\to
					R \pi_{\alg{B}_{x}^{h}, \ast} i_{x}^{\ast} G_{x},
			\\
					R \pi_{\alg{A}_{\eta, T}^{h}, \ast} G_{\eta}
				\to
					R \pi_{\alg{B}_{\eta, T}^{h}, \ast} i_{\eta}^{\ast} G_{\eta},
			\\
					R \pi_{\alg{O}_{K_{\eta_{x}}}^{h}, \ast} G_{\eta_{x}}
				\to
					R \pi_{\algfrak{\kappa}(\eta_{x})^{h}, \ast} i_{\eta_{x}}^{\ast} G_{\eta_{x}}
		\end{gather*}
	are isomorphisms by Propositions \ref{0313}, \ref{0341} and \ref{0313}, respectively.
	
	Therefore for any $G \in D_{\tor}^{+}(\algfrak{X}_{\et})$,
	the isomorphism \eqref{0405} applied to $i_{\algfrak{X}}^{\ast} G$
	shows that the morphism
		\[
				R \pi_{\algfrak{X}, \ast} G
			\to
				R \pi_{\algfrak{X}_{T}, \ast} \pi_{\algfrak{X}_{T} / \algfrak{X}}^{\ast} G
		\]
	is an isomorphism.
	For any $G \in D_{\tor}^{+}(\alg{U}_{S, \et})$,
	applying this to $R j_{\mathfrak{X}, \ast} R \lambda_{S, \ast} G$,
	we obtain the desired isomorphism \eqref{0407}.
	The isomorphism \eqref{0408} follows from this by the diagram \eqref{0409}.
\end{proof}

For any $G, H \in D(\alg{U}_{T, c, \et})$, we have a canonical morphism
	\[
				R \Bar{\pi}_{\alg{U}_{T}, !} G
			\tensor^{L}
				R \Bar{\pi}_{\alg{U}_{T}, \ast} H
		\to
			R \Bar{\pi}_{\alg{U}_{T}, !}(G \tensor^{L} H)
	\]
by \eqref{0389}.
This morphism and the morphism \eqref{0396} fit in a commutative diagram
	\[
		\begin{CD}
					R \Bar{\pi}_{\alg{U}, !} G
				\tensor^{L}
					R \Bar{\pi}_{\alg{U}, \ast} H
			@>>>
				R \Bar{\pi}_{\alg{U}_{S}, !}(G \tensor^{L} H)
			\\ @VVV @VVV \\
					R \Bar{\pi}_{\alg{U}_{T}, !} \Bar{\pi}_{\alg{U}_{T} / \alg{U}_{S}}^{\ast} G
				\tensor^{L}
					R \Bar{\pi}_{\alg{U}_{T}, \ast} \Bar{\pi}_{\alg{U}_{T} / \alg{U}_{S}}^{\ast} H
			@>>>
				R \Bar{\pi}_{\alg{U}_{T}, !} \Bar{\pi}_{\alg{U}_{T} / \alg{U}_{S}}^{\ast}(G \tensor^{L} H)
		\end{CD}
	\]
by Proposition \ref{0410}.
With Proposition \ref{0411}, we have
	\[
		\begin{CD}
					R \Bar{\pi}_{\alg{U}_{S}, !} \mathfrak{T}_{n}(r)
				\tensor^{L}
					R \Bar{\pi}_{\alg{U}_{S}, \ast} \mathfrak{T}_{n}(r')
			@>>>
				R \Bar{\pi}_{\alg{U}_{S}, !} \mathfrak{T}_{n}(r + r')
			\\ @| @| \\
					R \Bar{\pi}_{\alg{U}_{T}, !} \mathfrak{T}_{n}(r)
				\tensor^{L}
					R \Bar{\pi}_{\alg{U}_{T}, \ast} \mathfrak{T}_{n}(r')
			@>>>
				R \Bar{\pi}_{\alg{U}_{T}, !} \mathfrak{T}_{n}(r + r')
		\end{CD}
	\]
for $n \ge 1$ and $r, r' \in \Z$.
The morphism \eqref{0418} gives a canonical morphism
	\[
			R \Bar{\pi}_{\alg{U}_{S}, \ast} \mathfrak{T}_{n}(2)
		\cong
			R \Bar{\pi}_{\alg{U}_{T}, !} \mathfrak{T}_{n}(2)
		\to
			\Lambda[-3].
	\]
The digram \eqref{0419} gives a morphism of distinguished triangles
	\begin{equation} \label{0424}
		\begin{CD}
				R \Bar{\pi}_{\alg{U}_{S}, !} \mathfrak{T}_{n}(r)
			@>>>
				R \Bar{\pi}_{\alg{U}_{S}, \ast} \mathfrak{T}_{n}(r)
			@>>>
				\displaystyle
				\bigoplus_{\ideal{p} \in S}
					R \Bar{\pi}_{\alg{K}_{\ideal{p}}^{h}, \ast} \mathfrak{T}_{n}(r)
			\\ @| @VVV @VVV \\
				R \Bar{\pi}_{\alg{U}_{T}, !} \mathfrak{T}_{n}(r)
			@>>>
				\begin{array}{c}
						\bigoplus_{x \in T}
							R \Bar{\pi}_{\alg{R}_{x, S}^{h}, \ast} \mathfrak{T}_{n}(r)
					\\
					\oplus
						\bigoplus_{\eta \in Y_{1}}
							R \Bar{\pi}_{\alg{R}_{\eta, T}^{h}, \ast} \mathfrak{T}_{n}(r)
				\end{array}
			@>>>
				\begin{array}{c}
						\bigoplus_{
							\substack{
								x \in T \\
								\eta_{x} \in Y_{1}^{x}
							}
						}
							R \Bar{\pi}_{\alg{K}_{\eta_{x}}^{h}, \ast} \mathfrak{T}_{n}(r)
					\\
					\oplus
						\bigoplus_{\ideal{p} \in S}
							R \Bar{\pi}_{\alg{K}_{\ideal{p}}^{h}, \ast} \mathfrak{T}_{n}(r).
				\end{array}
		\end{CD}
	\end{equation}


\subsection{Completions}
\label{0303}

We continue the assumption and notation from the previous subsection.
We make the completion version of the constructions of the previous section
and compare these two versions.

For each $\eta \in Y_{1}$,
let $\Hat{A}_{\eta, T}$ be the $\ideal{m} A_{\eta, T}^{h}$-adic completion of $A_{\eta, T}^{h}$
and set $\Hat{R}_{\eta, T} = \Hat{A}_{\eta, T} \tensor_{A_{\eta, T}^{h}} R_{\eta, T}^{h}$.
For each $x \in Y_{0}$,
let $\Hat{A}_{x}$ be the completion of the local ring $A_{x}^{h}$
and set $\Hat{B}_{x} = \Hat{A}_{x} \tensor_{A_{x}^{h}} B_{x}^{h}$
and $\Hat{R}_{x, S} = \Hat{A}_{x} \tensor_{A_{x}^{h}} R_{x, S}^{h}$.
For each $\eta \in Y_{1}$ and $x \in \eta_{0}$,
let $\Hat{\kappa}(\eta_{x})$ be the completion of the local ring $\kappa(\eta_{x})^{h}$.
Let $\Hat{\Order}_{K_{\eta_{x}}}$ be the canonical lifting of $\Hat{\kappa}(\eta_{x})$
to $\Order_{K_{\eta_{x}}}^{h}$
and set $\Hat{K}_{\eta_{x}} = \Hat{\Order}_{K_{\eta_{x}}}[1 / p]$.
For $F' \in F^{\perar}$,
define $\Hat{\alg{A}}_{x}(F')$, $\Hat{\alg{R}}_{\eta, T}(F')$ and so on
using completions instead of henselizations.
We have morphisms of $F^{\perar}$-algebras
$\Hat{\alg{R}}_{T, \eta} \to \Hat{\alg{K}}_{\eta_{x}}$
for $x \in T$ and $\eta = \eta_{x} \in Y_{1}^{x}$
and $\Hat{\alg{R}}_{x, S} \to \Hat{\alg{K}}_{\eta_{x}} \times \Hat{\alg{K}}_{\ideal{p}}$
for $x \in T$ and $\ideal{p} \in S^{x}$.

We have a commutative diagram of morphisms of sites
	\[
		\begin{CD}
			@.
				\displaystyle
				\bigsqcup_{
					\substack{
						x \in T \\
						\eta_{x} \in Y_{1}^{x}
					}
				}
					\Spec \Hat{\alg{K}}_{\eta_{x}, \et}
			@>>>
				\displaystyle
				\bigsqcup_{\eta \in Y_{1}}
					\Spec \Hat{\alg{R}}_{\eta, T, \et}
			@.
			\\ @. @VVV @VVV @. \\
				\displaystyle
				\bigsqcup_{\ideal{p} \in S}
					\Spec \Hat{\alg{K}}_{\ideal{p}, \et}
			@>>>
				\displaystyle
				\bigsqcup_{x \in T}
					\Spec \Hat{\alg{R}}_{x, S, \et}
			@>>>
				\Hat{\alg{U}}_{S, \et}
			@>>>
				\Spec F^{\perar}_{\et},
		\end{CD}
	\]
and a morphism of fibered sites from this diagram to \eqref{0403}.
Let
	\begin{gather*}
				\Hat{\alg{U}}_{\Hat{T}, \et}
			=
				\left(
						\bigsqcup_{\eta \in Y_{1}}
							\Spec \Hat{\alg{R}}_{\eta, T, \et}
					\gets
						\bigsqcup_{
							\substack{
								x \in T \\
								\eta_{x} \in Y_{1}^{x}
							}
						}
							\Spec \Hat{\alg{K}}_{\eta_{x}, \et}
					\to
						\bigsqcup_{x \in T}
							\Spec \Hat{\alg{R}}_{x, S, \et}
				\right),
		\\ \notag
				\Hat{\alg{U}}_{\Hat{T}, \Hat{c}, \et}
			=
				\left(
						\bigsqcup_{\ideal{p} \in S}
							\Spec \Hat{\alg{K}}_{\ideal{p}, \et}
					\to
						\Hat{\alg{U}}_{\Hat{T}, \et}
				\right)
	\end{gather*}
be the total sites.
Then we are in the situation of Section \ref{0322}.
Let
	\begin{gather*}
				\pi_{\Hat{\alg{U}}_{\Hat{T}} / \alg{U}_{T}}
			\colon
				\Hat{\alg{U}}_{\Hat{T}, \et}
			\to
				\alg{U}_{T, \et},
		\\
				\Bar{\pi}_{\Hat{\alg{U}}_{\Hat{T}} / \alg{U}_{T}}
			\colon
				\Hat{\alg{U}}_{\Hat{T}, \Hat{c}, \et}
			\to
				\alg{U}_{T, c, \et}
	\end{gather*}
be the natural morphisms.
Let
	\[
		\begin{gathered}
					\pi_{\Hat{\alg{U}}_{\Hat{T}}, \ast}
				=
					\left[
								\bigoplus_{x \in T}
									\pi_{\Hat{\alg{R}}_{x, S}, \ast}
							\oplus
								\bigoplus_{\eta \in Y_{1}}
									\pi_{\Hat{\alg{R}}_{\eta, T}, \ast}
						\to
							\bigoplus_{
								\substack{
									x \in T \\
									\eta_{x} \in Y_{1}^{x}
								}
							}
								\pi_{\Hat{\alg{K}}_{\eta_{x}}, \ast}
					\right][-1]
				\colon
			\\
					\Ch(\Hat{\alg{U}}_{\Hat{T}, \et})
				\to
					\Ch(F^{\perar}_{\et}),
		\end{gathered}
	\]
	\begin{gather*}
		\begin{aligned}
					\Bar{\pi}_{\Hat{\alg{U}}_{\Hat{T}}, \Hat{!}}
			&	=
					\left[
							\Bar{\pi}_{\Hat{\alg{U}}_{\Hat{T}}, \ast}
						\to
							\bigoplus_{\ideal{p} \in S}
								\Bar{\pi}_{\Hat{\alg{K}}_{\ideal{p}}, \ast}
					\right][-1]
			\\
			&	\cong
					\left[
								\bigoplus_{x \in T}
									\Bar{\pi}_{\Hat{\alg{R}}_{x, S}, \ast}
							\oplus
								\bigoplus_{\eta \in Y_{1}}
									\Bar{\pi}_{\Hat{\alg{R}}_{\eta, T}, \ast}
						\to
								\bigoplus_{
									\substack{
										x \in T \\
										\eta_{x} \in Y_{1}^{x}
									}
								}
									\Bar{\pi}_{\Hat{\alg{K}}_{\eta_{x}}, \ast}
							\oplus
								\bigoplus_{\ideal{p} \in S}
									\Bar{\pi}_{\Hat{\alg{K}}_{\ideal{p}}, \ast}
					\right][-1]
				\colon
		\end{aligned}
		\\
				\Ch(\Hat{\alg{U}}_{\Hat{T}, \Hat{c}, \et})
			\to
				\Ch(F^{\perar}_{\et})
	\end{gather*}
be the functors as defined in \eqref{0399} and \eqref{0400},
where the functors pulled back from $D(\Hat{\alg{U}}_{\Hat{T}, \et})$ to $D(\Hat{\alg{U}}_{\Hat{T}, \Hat{c}, \et})$
are denoted by putting overlines.

For the rest of this subsection,
we assume that $A$ is complete, $\alg{A} = \Hat{\alg{A}}$,
$\zeta_{p} \in A$ and $U_{S} = \Spec A[1 / p]$,
in order to use the results of Sections \ref{0217} and \ref{0113}.

\begin{Prop} \label{0304}
	The morphisms
		\[
				R \Bar{\pi}_{\alg{U}_{T}, \ast} \Lambda
			\to
				R \Bar{\pi}_{\Hat{\alg{U}}_{\Hat{T}}, \ast} \Lambda,
			\quad
				R \Bar{\pi}_{\alg{U}_{T}, !} \Lambda
			\to
				R \Bar{\pi}_{\Hat{\alg{U}}_{\Hat{T}}, \Hat{!}} \Lambda
		\]
	in $D(F^{\perar}_{\et})$ given by \eqref{0414} are isomorphisms.
\end{Prop}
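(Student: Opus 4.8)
The plan is to reduce both isomorphisms to completion invariance of the individual local building blocks. By \eqref{0402} and \eqref{0400}, the functors $R\Bar{\pi}_{\alg{U}_{T},\ast}$, $R\Bar{\pi}_{\alg{U}_{T},\Hat{!}}$ and their hat counterparts are iterated mapping cones of the derived pushforwards $R\pi_{\alg{R}_{x,S}^{h},\ast}$ ($x\in T$), $R\pi_{\alg{R}_{\eta,T}^{h},\ast}$ ($\eta\in Y_{1}$), $R\pi_{\alg{K}_{\eta_{x}}^{h},\ast}$ and $R\pi_{\alg{K}_{\ideal{p}}^{h},\ast}$, and the morphisms \eqref{0414} are compatible with all the localization triangles of Section \ref{0322} --- this is precisely the content of the diagrams \eqref{0416}, \eqref{0420}, \eqref{0415} and the commutative square \eqref{0417} applied to the morphism of fibered sites $\Hat{\alg{U}}_{\Hat{T}}\to\alg{U}_{T}$. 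So repeated application of the five lemma reduces the statement to completion invariance for appropriately chosen sub-combinations of these blocks.

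The subtlety is that for a two-dimensional regular local ring the \emph{ordinary} cohomology $R\pi_{\alg{R}_{x,S}^{h},\ast}\Lambda$ is not completion invariant (Propositions \ref{0073} and \ref{0078}), only the compact-support cohomology of its punctured spectrum is (Proposition \ref{0295}); hence the regrouping must package the former into the latter. For the $\Hat{!}$-statement one uses the triangle \eqref{0391}: since every $\ideal{p}\in S$ specializes to a unique point $x\in T$ (by the choice of the $W_{\eta}$), the term $R[\,\Bar{\pi}_{T_{1},\ast}\to\Bar{\pi}_{T_{3},\ast}\oplus\Bar{\pi}_{T,\ast}\,][-1]\Lambda$ decomposes as $\bigoplus_{x\in T}$ of the compact-support objects $R\Bar{\pi}_{\bullet,!}\Lambda$ of the punctured spectra of the rings $A_{x}^{h}$ (with boundary $Y_{1}^{x}$ together with the primes of $S$ specializing to $x$). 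Because $\pi_{\mathfrak{X} / A}$ is a resolution with $Y\cup\closure{S}$ strict normal crossing and $\zeta_{p}\in A$, each $A_{x}^{h}$ ($x\in Y_{0}$) satisfies the hypotheses at the start of Section \ref{0060} --- with $(p)$ divisible by one height-one prime with regular quotient at a smooth point of $Y$, or by two at a crossing --- so Proposition \ref{0295} applies to it. The remaining term $R\pi_{\alg{R}_{\eta,T}^{h},\ast}\Lambda$ is completion invariant by writing the localization triangle along $Y$ and applying Gabber's affine analogue of proper base change (Proposition \ref{0341}, relatively over $\Spec F^{\perar}_{\et}$) to $Rj_{\ast}\Lambda$, noting that $A_{\eta,T}^{h}$ and $\Hat{\alg{A}}_{\eta,T}$ are henselian pairs with the same completion along $W_{\eta}\cap Y$, so that $R\pi_{\alg{A}_{\eta,T}^{h},\ast}$ of $Rj_{\ast}\Lambda$ and the local cohomology along $Y$ depend only on that common completion. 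For the $\ast$-statement the analogous triangle leaves the raw term $\bigoplus_{x}R\pi_{\alg{R}_{x,S}^{h},\ast}\Lambda$ behind, so instead one compares both $R\Bar{\pi}_{\alg{U}_{T},\ast}\Lambda$ and $R\Bar{\pi}_{\Hat{\alg{U}}_{\Hat{T}},\ast}\Lambda$ to $R\pi_{\alg{U}_{S},\ast}\Lambda$: the first comparison is Proposition \ref{0411}, and the second is the same argument --- excision as in the proof of Proposition \ref{0404}, proper base change for $\mathfrak{X}\to\Spec A$, and Propositions \ref{0313} and \ref{0341} --- applied to the complete local rings $\Hat{\alg{A}}_{x}$, $\Hat{\alg{A}}_{\eta,T}$, $\Hat{\Order}_{K_{\eta_{x}}}$, which are still henselian pairs; here one uses $\Hat{\alg{U}}_{S}=\alg{U}_{S}$ (as $A$ is complete and $\alg{A}=\Hat{\alg{A}}$) together with Proposition \ref{0466} for the boundary data, and similarly for $\Hat{!}$.

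The main obstacle is the combinatorial bookkeeping in the second paragraph: arranging the various localization triangles of Section \ref{0322} so that every non-completion-invariant term ($R\pi_{\alg{R}_{x,S}^{h},\ast}\Lambda$ for the two-dimensional local rings, which genuinely moves under completion) is either repackaged into a compact-support object governed by Proposition \ref{0295} or absorbed into the comparison with $R\pi_{\alg{U}_{S},\ast}\Lambda$, and checking that \eqref{0414} respects these identifications --- formal from the compatibilities \eqref{0416}--\eqref{0417} and from re-running the proof of Proposition \ref{0411} with complete local rings, but requiring care with the indexing sets $Y_{0}$, $Y_{1}$, $S$ and the fibres $Y_{1}^{x}$.
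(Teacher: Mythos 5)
Your treatment of the $\Hat{!}$-statement is essentially the paper's proof: you compare the two localization triangles of Section \ref{0322} via \eqref{0415}/\eqref{0420}, identify the fiber term with $\bigoplus_{x\in T}R\Bar{\pi}_{\alg{R}_{x,S}^{h},!}\Lambda$, and invoke Proposition \ref{0295} for these compact-support objects (using that $R_{x,S}^{h}=A_{x}^{h}[1/p]$ and that each $A_{x}^{h}$ satisfies the hypotheses of Section \ref{0060}), together with completion invariance of $R\Bar{\pi}_{\alg{R}_{\eta,T}^{h},\ast}\Lambda$. For the latter the paper simply cites Fujiwara--Gabber's formal base change theorem; your sketch via Gabber's affine proper base change plus completion invariance of local cohomology along $Y$ is asserting essentially the content of that theorem, so you should cite it rather than treat it as a combination of softer facts.

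The $\ast$-statement is where your route breaks. You propose to prove $R\pi_{\alg{U}_{S},\ast}\Lambda\cong R\Bar{\pi}_{\Hat{\alg{U}}_{\Hat{T}},\ast}\Lambda$ by re-running the proof of Proposition \ref{0411} (excision on $\alg{Y}$, proper base change for $\mathfrak{X}\to\Spec A$, Propositions \ref{0313} and \ref{0341}) with the complete local rings. This does not work: the excision step requires that the sheaves being pushed forward from the local pieces be restrictions of a single sheaf on $\alg{Y}_{\et}$ (resp.\ $\algfrak{X}_{\et}$), and while $i_{x}^{\ast}Rj_{x,\ast}\Lambda$ over the \emph{henselian} rings $A_{x}^{h}$, $\Order_{K_{\eta_{x}}}^{h}$ is the restriction of the global nearby cycle $i^{\ast}Rj_{\algfrak{X},\ast}\Lambda$, the nearby cycle computed over the \emph{completions} $\Hat{A}_{x}$, $\Hat{\Order}_{K_{\eta_{x}}}$ is not ($Rj_{\ast}$ does not commute with the non-\'etale-local base change to the completion). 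This is precisely why $R\pi_{\alg{R}_{x,S}^{h},\ast}\Lambda\to R\pi_{\Hat{\alg{R}}_{x,S},\ast}\Lambda$ fails to be an isomorphism (Propositions \ref{0073}, \ref{0078}) and is the difficulty that forces the fibered-site formalism in the first place. The repair is short and you already have all the ingredients: having proved the $\Hat{!}$-statement, deduce the $\ast$-statement from the morphism of triangles \eqref{0416},
\[
	R \Bar{\pi}_{\alg{U}_{T}, !} \Lambda
	\to
	R \Bar{\pi}_{\alg{U}_{T}, \ast} \Lambda
	\to
	\bigoplus_{\ideal{p} \in S}
	R \Bar{\pi}_{\alg{K}_{\ideal{p}}^{h}, \ast} \Lambda ,
\]
since the outer terms are completion-invariant by the $\Hat{!}$-statement and by Proposition \ref{0173}, respectively.
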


\begin{proof}
	We have morphisms of distinguished triangles
		\[
			\begin{CD}
					R \Bar{\pi}_{\alg{U}_{T}, !} \Lambda
				@>>>
					R \Bar{\pi}_{\alg{U}_{T}, \ast} \Lambda
				@>>>
					\bigoplus_{\ideal{p} \in S}
						R \Bar{\pi}_{\alg{K}_{\ideal{p}}^{h}, \ast} \Lambda
				\\ @VVV @VVV @VVV \\
					R \Bar{\pi}_{\Hat{\alg{U}}_{\Hat{T}}, \Hat{!}} \Lambda
				@>>>
					R \Bar{\pi}_{\Hat{\alg{U}}_{\Hat{T}}, \ast} \Lambda
				@>>>
					\bigoplus_{\ideal{p} \in S}
						R \Bar{\pi}_{\Hat{\alg{K}}_{\ideal{p}}, \ast} \Lambda,
			\end{CD}
		\]
		\[
			\begin{CD}
					\bigoplus_{x \in T}
						R \Bar{\pi}_{\alg{R}_{x, S}^{h}, !} \Lambda
				@>>>
					R \Bar{\pi}_{\alg{U}_{T}, !}  \Lambda
				@>>>
					\bigoplus_{\eta \in Y_{1}}
						R \Bar{\pi}_{\alg{R}_{\eta, T}^{h}, \ast}  \Lambda
				\\ @VVV @VVV @VVV \\
					\bigoplus_{x \in T}
						R \Bar{\pi}_{\Hat{\alg{R}}_{x, S}, \Hat{!}} \Lambda
				@>>>
					R \Bar{\pi}_{\Hat{\alg{U}}_{\Hat{T}}, \Hat{!}} \Lambda
				@>>>
					\bigoplus_{\eta \in Y_{1}}
						R \Bar{\pi}_{\Hat{\alg{R}}_{\eta, T}, \ast} \Lambda
			\end{CD}
		\]
	by \eqref{0416} and \eqref{0415}.
	The morphism
		\[
				R \Bar{\pi}_{\alg{K}_{\ideal{p}}^{h}, \ast} \Lambda
			\to
				R \Bar{\pi}_{\Hat{\alg{K}}_{\ideal{p}}, \ast} \Lambda
		\]
	is an isomorphism by Proposition \ref{0173}.
	For any $F' \in F^{\perar}$, the morphism
		\[
				R \Gamma(\alg{R}_{\eta, T}^{h}(F'), \Lambda)
			\to
				R \Gamma(\Hat{\alg{R}}_{\eta, T}(F'), \Lambda)
		\]
	is an isomorphism by Fujiwara-Gabber's formal base change theorem
	\cite[Corollary 6.6.4]{Fuj95}, \cite[Expos\'e XX, \S 4.4]{ILO14},
	\cite[Corollary 1.18 (2)]{BM21}.
	This implies that the morphism
		\[
				R \Bar{\pi}_{\alg{R}_{\eta, T}^{h}, \ast}  \Lambda
			\to
				R \Bar{\pi}_{\Hat{\alg{R}}_{\eta, T}, \ast} \Lambda
		\]
	is an isomorphism.
	
	Finally we show that the morphism
		\[
				R \Bar{\pi}_{\alg{R}_{x, S}^{h}, !} \Lambda
			\to
				R \Bar{\pi}_{\Hat{\alg{R}}_{x, S}, \Hat{!}} \Lambda
		\]
	is an isomorphism.
	Since $S$ is the set of all primes above $p$,
	we have $R_{x, S}^{h} = A_{x}^{h}[1 / p]$ and $\Hat{R}_{x, S} = \Hat{A}_{x}[1 / p]$.
	The rings $A_{x}^{h}$ and $\Hat{A}_{x}$ satisfy the conditions
	listed at the beginning of Section \ref{0060}.
	Hence we may apply Proposition \ref{0295},
	proving that the above morphism is indeed an isomorphism.
\end{proof}

Note that the morphisms
	\[
				R \Bar{\pi}_{\alg{R}_{\eta, T}^{h}, !}  \Lambda
			\to
				R \Bar{\pi}_{\Hat{\alg{R}}_{\eta, T}, \Hat{!}} \Lambda,
		\quad
				R \Bar{\pi}_{\alg{R}_{x, S}^{h}, \ast} \Lambda
			\to
				R \Bar{\pi}_{\Hat{\alg{R}}_{x, S}, \ast} \Lambda
	\]
are not isomorphisms.

By this proposition, \eqref{0417} gives a commutative diagram
	\[
		\begin{CD}
					R \Bar{\pi}_{\alg{U}_{T}, \ast} \Lambda
				\tensor^{L}
					R \Bar{\pi}_{\alg{U}_{T}, !} \Lambda
			@>>>
				R \Bar{\pi}_{\alg{U}_{T}, !} \Lambda
			\\ @| @| \\
					R \Bar{\pi}_{\Hat{\alg{U}}_{\Hat{T}}, \ast} \Lambda
				\tensor^{L}
					R \Bar{\pi}_{\Hat{\alg{U}}_{\Hat{T}}, \Hat{!}} \Lambda
			@>>>
				R \Bar{\pi}_{\Hat{\alg{U}}_{\Hat{T}}, \Hat{!}} \Lambda
		\end{CD}
	\]
in $D(F^{\perar}_{\et})$.
The diagram \eqref{0420} gives a morphism of distinguished triangles
	\begin{equation} \label{0425}
		\begin{CD}
				R \Bar{\pi}_{\alg{U}_{T}, !} \Lambda
			@>>>
					\displaystyle
					\bigoplus_{x \in T}
						R \Bar{\pi}_{\alg{R}_{x, S}^{h}, \ast} \Lambda
				\oplus
					\displaystyle
					\bigoplus_{\eta \in Y_{1}}
						R \Bar{\pi}_{\alg{R}_{\eta, T}^{h}, \ast} \Lambda
			@>>>
					\displaystyle
					\bigoplus_{
						\substack{
							x \in T \\
							\eta_{x} \in Y_{1}^{x}
						}
					}
						R \Bar{\pi}_{\alg{K}_{\eta_{x}}^{h}, \ast} \Lambda
				\oplus
					\displaystyle
					\bigoplus_{\ideal{p} \in S}
						R \Bar{\pi}_{\alg{K}_{\ideal{p}}^{h}, \ast} \Lambda
			\\ @| @VVV @VVV \\
				R \Bar{\pi}_{\Hat{\alg{U}}_{\Hat{T}}, \Hat{!}} \Lambda
			@>>>
					\displaystyle
					\bigoplus_{x \in T}
						R \Bar{\pi}_{\Hat{\alg{R}}_{x, S}, \ast} \Lambda
				\oplus
					\displaystyle
					\bigoplus_{\eta \in Y_{1}}
						R \Bar{\pi}_{\Hat{\alg{R}}_{\eta, T}, \ast} \Lambda
			@>>>
					\displaystyle
					\bigoplus_{
						\substack{
							x \in T \\
							\eta_{x} \in Y_{1}^{x}
						}
					}
						R \Bar{\pi}_{\Hat{\alg{K}}_{\eta_{x}}, \ast} \Lambda
				\oplus
					\displaystyle
					\bigoplus_{\ideal{p} \in S}
						R \Bar{\pi}_{\Hat{\alg{K}}_{\ideal{p}}, \ast} \Lambda.
		\end{CD}
	\end{equation}

\begin{Prop} \label{0428}
	The objects
		\[
			\bigoplus_{x \in T}
				R \Bar{\pi}_{\Hat{\alg{R}}_{x, S}, \ast} \Lambda,\;
			\bigoplus_{\eta \in Y_{1}}
				R \Bar{\pi}_{\Hat{\alg{R}}_{\eta, T}, \ast} \Lambda,\;
			\bigoplus_{
				\substack{
					x \in T \\
					\eta_{x} \in Y_{1}^{x}
				}
			}
				R \Bar{\pi}_{\Hat{\alg{K}}_{\eta_{x}}, \ast} \Lambda,\;
			\bigoplus_{\ideal{p} \in S}
				R \Bar{\pi}_{\Hat{\alg{K}}_{\ideal{p}}, \ast} \Lambda
		\]
	are all concentrated in degrees $\le 2$ with cohomologies in $\mathcal{W}_{F}$.
	In particular,
		$
				R \Bar{\pi}_{\Hat{\alg{U}}_{\Hat{T}}, \ast} \Lambda,
				R \Bar{\pi}_{\Hat{\alg{U}}_{\Hat{T}}, \Hat{!}} \Lambda
			\in
				\genby{\mathcal{W}_{F}}_{F^{\perar}_{\et}}
		$.
\end{Prop}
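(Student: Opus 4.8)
The plan is to handle each of the four families of local objects separately, invoking the structure theorems established earlier in the paper, and then to assemble the conclusion from the mapping-cone presentations of $\Bar{\pi}_{\Hat{\alg{U}}_{\Hat{T}}, \ast}$ and $\Bar{\pi}_{\Hat{\alg{U}}_{\Hat{T}}, \Hat{!}}$ recalled in Section \ref{0303}. Throughout I use the standing assumptions of Section \ref{0303}: $A$ is complete, $\alg{A} = \Hat{\alg{A}}$, $\zeta_{p} \in A$ and $U_{S} = \Spec A[1/p]$, so that $S$ is the set of all height one primes of $A$ dividing $p$; these ensure that the lifting systems induced on all of the local pieces are the canonical ones and that $\zeta_{p}$ lies in each of the local rings and fields occurring. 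Note also that for a component $\Spec \Hat{\alg{R}}_{\bullet, \et}$ of the fibered site, $\Bar{\pi}_{\Hat{\alg{R}}_{\bullet}, \ast}$ applied to the constant sheaf $\Lambda$ is simply $R \pi_{\Hat{\alg{R}}_{\bullet}, \ast} \Lambda$, since projection to a component of a fibered topos is exact; the same remark applies to the fields.

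For $x \in T$ one has $\Hat{\alg{R}}_{x, S} = \Hat{\alg{A}}_{x}[1/p]$, and, as recorded in the proof of Proposition \ref{0304}, the ring $\Hat{A}_{x}$ satisfies the conditions listed at the beginning of Section \ref{0060}; so Proposition \ref{0294}, applied with $A$ replaced by $\Hat{A}_{x}$, gives that $R^{q} \pi_{\Hat{\alg{R}}_{x, S}, \ast} \Lambda$ lies in $\mathcal{W}_{F}$ for all $q$ and vanishes for $q \ge 3$. For $\eta \in Y_{1}$, the ring $\Hat{\alg{R}}_{\eta, T}$ is a $p$-adic tubular neighborhood of the smooth affine curve $\Spec B_{\eta, T}$, i.e.\ it satisfies the conditions at the beginning of Section \ref{0340} (by the choice of $W_{\eta}$, where $W_{\eta} \cap Y$ is a principal divisor and a smooth geometrically connected affine curve over the constant field of $\closure{\{\eta\}}$); hence Propositions \ref{0421} and \ref{0426} give that $R^{q} \pi_{\Hat{\alg{R}}_{\eta, T}, \ast} \Lambda$ lies in $\mathcal{W}_{F}$ for all $q$ and vanishes for $q \ge 3$, using in addition that Weil restriction along a finite extension carries $\mathcal{W}$-objects to $\mathcal{W}$-objects when the constant field is a proper extension of $F$. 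Finally, the gluing pieces $\Hat{\alg{K}}_{\eta_{x}}$ (with $x \in T$, $\eta_{x} \in Y_{1}^{x}$) and the compactification pieces $\Hat{\alg{K}}_{\ideal{p}}$ (with $\ideal{p} \in S$) are lifting systems of two-dimensional local fields of mixed characteristic whose residue fields $\Hat{\kappa}(\eta_{x})$, $\Hat{\kappa}(\ideal{p})$ are complete discrete valuation fields of characteristic $p$ (for $\ideal{p} \in S$ because $p \in \ideal{p}$; for $\eta_{x}$ because $\eta$ lies in the special fiber $Y$), which is the situation of Section \ref{0174}; so Propositions \ref{0500} and \ref{0502} show that $R^{q} \pi_{\Hat{\alg{K}}_{\eta_{x}}, \ast} \Lambda$ and $R^{q} \pi_{\Hat{\alg{K}}_{\ideal{p}}, \ast} \Lambda$ lie in $\mathcal{W}_{F}$ (for the $\eta_{x}$ pieces with $F_{\eta_{x}} \ne F$, again after Weil restriction, cf.\ Proposition \ref{0452}) and vanish for $q \ge 3$.

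Since the index sets $T$, $Y_{1}$, $S$, $Y_{1}^{x}$ are finite and $\mathcal{W}_{F}$ is closed under finite direct sums, the four displayed direct sums are bounded complexes concentrated in degrees $\le 2$ with cohomology sheaves in $\mathcal{W}_{F}$, and hence lie in $\genby{\mathcal{W}_{F}}_{F^{\perar}_{\et}}$ via the canonical truncation triangles. Then, because $R \Bar{\pi}_{\Hat{\alg{U}}_{\Hat{T}}, \ast} \Lambda$ and $R \Bar{\pi}_{\Hat{\alg{U}}_{\Hat{T}}, \Hat{!}} \Lambda$ are expressed in Section \ref{0303} as iterated mapping cones (shifted by $-1$) of these direct sums and $\genby{\mathcal{W}_{F}}_{F^{\perar}_{\et}}$ is a triangulated subcategory closed under direct summands, both belong to $\genby{\mathcal{W}_{F}}_{F^{\perar}_{\et}}$. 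The proof is essentially a bookkeeping assembly of earlier results; the only points requiring care are matching each local piece with the corresponding earlier setup (the identifications for the $x$- and $\eta$-pieces being already recorded in the proof of Proposition \ref{0304}) and the descent along nontrivial residue field extensions for the $\eta_{x}$-pieces, which I expect to be the mildest source of friction rather than a genuine obstacle.
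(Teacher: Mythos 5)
Your proposal is correct and follows exactly the same route as the paper: the paper's proof is simply the citation of Propositions \ref{0294}, \ref{0421}, \ref{0426}, \ref{0500} and \ref{0502}, which are precisely the results you invoke for the four families of local pieces. Your additional remarks (exactness of the component projections, Weil restriction along finite residue field extensions, assembly via the mapping-cone presentations) are the correct bookkeeping that the paper leaves implicit.
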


\begin{proof}
	This follows from
	Propositions \ref{0294}, \ref{0421}, \ref{0426}, \ref{0500} and \ref{0502}.
\end{proof}

We will define a trace isomorphism for $R \Bar{\pi}_{\Hat{\alg{U}}_{\Hat{T}}, \Hat{!}}$
using the lower triangle of \eqref{0425} as follows.
This triangle induces a morphism
	\begin{equation} \label{0509}
				\bigoplus_{x \in T}
					\pi_{0} R^{2} \Bar{\pi}_{\Hat{\alg{R}}_{x, S}, \ast} \Lambda
			\oplus
				\bigoplus_{\eta \in Y_{1}}
					\pi_{0} R^{2} \Bar{\pi}_{\Hat{\alg{R}}_{\eta, T}, \ast} \Lambda
		\to
				\bigoplus_{
					\substack{
						x \in T \\
						\eta_{x} \in Y_{1}^{x}
					}
				}
					\pi_{0} R^{2} \Bar{\pi}_{\Hat{\alg{K}}_{\eta_{x}}, \ast} \Lambda
			\oplus
				\bigoplus_{\ideal{p} \in S}
					\pi_{0} R^{2} \Bar{\pi}_{\Hat{\alg{K}}_{\ideal{p}}, \ast} \Lambda.
	\end{equation}
For any $x \in T$, $\eta_{x} \in Y_{1}^{x}$ and $\ideal{p} \in S$,
we have isomorphisms
	\begin{equation} \label{0506}
			\pi_{0} R^{2} \Bar{\pi}_{\Hat{\alg{K}}_{\eta_{x}}, \ast} \Lambda
		\isomto
			\Weil_{F_{x} / F} \Lambda,
		\quad
			\pi_{0} R^{2} \Bar{\pi}_{\Hat{\alg{K}}_{\ideal{p}}, \ast} \Lambda
		\isomto
			\Weil_{F_{\ideal{p}} / F} \Lambda
	\end{equation}
by the morphism \eqref{0358} and Proposition \ref{0502}.
With these isomorphisms, we have
	\begin{equation} \label{0507}
			\pi_{0} R^{2} \Bar{\pi}_{\Hat{\alg{R}}_{x, S}, \ast} \Lambda
		\isomto
			\Weil_{F_{x} / F} \left(
				\left(
						\bigoplus_{\eta_{x} \in Y_{1}^{x}} \Lambda
					\oplus
						\bigoplus_{\ideal{p} \in S^{x}} \Lambda
				\right)_{0}
			\right)
	\end{equation}
by Proposition \ref{0294}.
Also for any $\eta \in Y_{1}$, we have a surjection
	\begin{equation} \label{0508}
			\pi_{0} R^{2} \Bar{\pi}_{\Hat{\alg{R}}_{\eta, T}, \ast} \Lambda
		\onto
			\Weil_{F_{\eta} / F} \left(
				\left(
					\bigoplus_{x \in T \cap \eta_{0}}
						\Weil_{F_{x} / F_{\eta}} \Lambda
				\right)_{0}
			\right)
	\end{equation}
and the part $\pi_{0} R^{2} \Bar{\pi}_{\Hat{\alg{R}}_{\eta, T}, \ast} \Lambda$
of the morphism \eqref{0509} factors through this surjection by Proposition \ref{0505}.

\begin{Prop} \label{0423}
	The morphisms \eqref{0509}, \eqref{0506}, \eqref{0507} and \eqref{0508} form an exact sequence
		\begin{align*}
			&
						\bigoplus_{x \in T}
							\Weil_{F_{x} / F} \left(
								\left(
										\bigoplus_{\eta_{x} \in Y_{1}^{x}} \Lambda
									\oplus
										\bigoplus_{\ideal{p} \in S^{x}} \Lambda
								\right)_{0}
						\right)
					\oplus
						\bigoplus_{\eta \in Y_{1}}
							\Weil_{F_{\eta} / F} \left(
								\left(
									\bigoplus_{x \in T \cap \eta_{0}}
										\Weil_{F_{x} / F_{\eta}} \Lambda
								\right)_{0}
						\right)
			\\
			&	\to
						\bigoplus_{
							\substack{
								x \in T \\
								\eta_{x} \in Y_{1}^{x}
							}
						}
							\Weil_{F_{x} / F} \Lambda
					\oplus
						\bigoplus_{\ideal{p} \in S}
							\Weil_{F_{\ideal{p}} / F} \Lambda
			\\
			&	\to
					\Lambda
				\to
					0,
		\end{align*}
	where the last morphism (to $\Lambda$) is the sum of the norm maps.
\end{Prop}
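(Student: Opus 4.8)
The plan is to unwind the morphisms via the identifications \eqref{0506}, \eqref{0507}, \eqref{0508} and reduce the assertion to a combinatorial statement about the dual graph of the normal crossing configuration $D = Y \cup \closure{S}$, whose exactness reflects the connectedness of $D$.

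First I would make explicit what the components of \eqref{0509} become under these identifications. By Proposition \ref{0294} applied to the two-dimensional local rings $\Hat{\alg{A}}_{x}$ for $x \in T$ (whose height one primes dividing $p$ are precisely the branches $\eta_{x} \in Y_{1}^{x}$ of $Y$ at $x$ together with the strict transforms $\ideal{p} \in S^{x}$), the factor $\pi_{0} R^{2} \Bar{\pi}_{\Hat{\alg{R}}_{x, S}, \ast} \Lambda$ maps to $\bigoplus_{\eta_{x} \in Y_{1}^{x}} \Weil_{F_{x} / F} \Lambda \oplus \bigoplus_{\ideal{p} \in S^{x}} \Weil_{F_{x} / F} \Lambda$ by the inclusion of the zero-sum subgroup; similarly Proposition \ref{0505}, applied to the affine curves $W_{\eta} \cap Y$ over $F_{\eta}$ (whose boundary points in $\closure{\{\eta\}}$ form $T \cap \eta_{0}$), shows that $\pi_{0} R^{2} \Bar{\pi}_{\Hat{\alg{R}}_{\eta, T}, \ast} \Lambda$ maps to $\bigoplus_{x \in T \cap \eta_{0}} \Weil_{F_{x} / F} \Lambda$ through the inclusion of the Weil restriction from $F_{\eta}$ of the zero-sum subgroup. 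Thus, writing $H$ for the set of ``half-edges'' $h = (x, \ideal{q})$ with $x \in T$ and $\ideal{q} \in Y_{1}^{x} \cup S^{x}$, the target of \eqref{0509} is $\bigoplus_{h \in H} \Weil_{F_{x(h)} / F} \Lambda$, the last map of Proposition \ref{0423} is the sum of norm maps $\bigoplus_{h} \Weil_{F_{x(h)} / F} \Lambda \to \Lambda$, and the first map of Proposition \ref{0423} is, on the vertex part $\bigoplus_{x} \Weil_{F_{x} / F}((\bigoplus_{h \ni x} \Lambda)_{0})$, the sum of the inclusions of the zero-sum-at-$x$ subgroups, and on the edge part $\bigoplus_{\eta} \Weil_{F_{\eta} / F}((\bigoplus_{h \in \eta} \Weil_{F_{x(h)} / F_{\eta}} \Lambda)_{0})$, the sum of the (Weil-restricted) zero-sum-over-$\eta$ inclusions. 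This identification is the main bookkeeping step and, I expect, the principal obstacle: it requires chasing simultaneously the definition of the boundary maps in the distinguished triangle \eqref{0425}, the trace \eqref{0358}, and Propositions \ref{0294} and \ref{0505}.

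Granting this, surjectivity of the sum of norm maps is immediate, since it is already surjective on a single summand and $S \neq \emptyset$ as $A$ has mixed characteristic. For exactness in the middle, that the composite vanishes is a norm computation: a zero-sum tuple at $x$ has image annihilated by $\mathrm{Nm}_{F_{x} / F}$, and a zero-sum tuple over $\eta$ has image annihilated by $\mathrm{Nm}_{F_{\eta} / F} \circ (\sum_{h \in \eta} \mathrm{Nm}_{F_{x(h)} / F_{\eta}})$. For the reverse inclusion I would check it after base change to $\closure{F}$, Galois-equivariantly: all sheaves in sight are finite \'etale $p$-group schemes, so exactness over $\closure{F}$ with its Galois action suffices, and the identifications above are Galois-equivariant by construction. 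Over $\closure{F}$ the claim becomes that, for the geometric configuration $D' = Y' \cup \closure{S'}$ obtained by base change, the span of the differences $e_{h} - e_{h'}$ over half-edges $h, h'$ meeting at a common vertex or lying on a common component equals the kernel of $\sum_{h} \colon \bigoplus_{h} \Lambda \to \Lambda$. This span is the ``zero-sum within each connected component'' subspace for the graph on half-edges with that adjacency, and that graph is connected precisely because $D'$ is connected; the latter holds since the special fiber of the resolution is geometrically connected (Zariski's main theorem) and each $\closure{\ideal{p}}$ with $\ideal{p} \mid p$ meets $Y$. Hence the span is the full kernel, and descending the Galois action back to $F$ yields the exact sequence of Proposition \ref{0423}.
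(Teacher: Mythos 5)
Your proposal is correct and follows essentially the same route as the paper: the paper's proof simply replaces $F$ by a finite extension trivializing all the residue fields $F_{x}$, $F_{\eta}$, $F_{\ideal{p}}$ and then invokes "simple combinatorics of the irreducible components of $\mathfrak{X} \setminus U_{S} = Y \cup \closure{S}$", which is exactly the connectedness argument for the incidence configuration that you spell out (your Galois-equivariant base change to $\closure{F}$ is the same reduction). Your elaboration of the combinatorics — that the zero-sum subgroups at vertices and along components span the kernel of the total sum precisely because the configuration is connected, which follows from connectedness of the special fiber of the resolution — is the content the paper leaves implicit.
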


\begin{proof}
	We may replace $F$ by a large enough finite extension of $F$
	so that all the fields $F_{x}$, $F_{\eta}$ and $F_{\ideal{p}}$ in the statement are $F$.
	Then the statement is a simple combinatorics of the irreducible components of
	$\mathfrak{X} \setminus U_{S} = Y \cup \closure{S}$.
\end{proof}

This proposition and the lower triangle of \eqref{0425} defines a canonical morphism
	\begin{equation} \label{0513}
				R \Bar{\pi}_{\Hat{\alg{U}}_{\Hat{T}}, \Hat{!}} \Lambda
			\to
				R^{3} \Bar{\pi}_{\Hat{\alg{U}}_{\Hat{T}}, \Hat{!}} \Lambda[-3]
			\to
				\Lambda[-3].
	\end{equation}
This trace morphism and the trace morphism \eqref{0418} are compatible:

\begin{Prop}
	The morphism \eqref{0513} and the morphism
		\[
				R^{3} \Bar{\pi}_{\alg{U}_{S}, !} \Lambda
			\to
				\Lambda
		\]
	of \eqref{0418} are compatible under the isomorphisms
		\[
				R^{3} \Bar{\pi}_{\alg{U}_{S}, !} \Lambda
			\isomto
				R^{3} \Bar{\pi}_{\alg{U}_{T}, !} \Lambda
			\isomto
				R^{3} \Bar{\pi}_{\Hat{\alg{U}}_{\Hat{T}}, \Hat{!}} \Lambda.
		\]
\end{Prop}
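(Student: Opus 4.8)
The plan is to reduce the comparison to a computation on each ``boundary component'' of the resolution and then invoke a uniqueness principle. First I would reduce to the case where $F$ is algebraically closed: both trace morphisms are natural under extension of the base field, and the trace \eqref{0418} is characterized over an algebraically closed base by the explicit description recorded in the summary following Proposition~\ref{0111} (over $\closure{F}$, $\mathfrak{T}_{1}(2)|_{R}\cong\Lambda$ via $\zeta_{p}$, and the trace on $H^{3}$ is the map out of $\Coker\bigl(K_{2}(K)/p\to\bigoplus_{\ideal{p}\in P}\kappa(\ideal{p})^{\times}/p\bigr)$ induced by the sum of normalized valuations). By Proposition~\ref{0428} the ``intermediate'' and ``boundary'' terms of the lower localization triangle of \eqref{0425} are concentrated in degrees $\le2$, so $R^{3}\Bar{\pi}_{\Hat{\alg{U}}_{\Hat{T}},\Hat{!}}\Lambda$ is the cokernel of the degree--$(2,3)$ piece of that triangle; in particular it is generated by the images of the components $\bigoplus_{x\in T,\ \eta_{x}\in Y_{1}^{x}}R^{2}\Bar{\pi}_{\Hat{\alg{K}}_{\eta_{x}},\ast}\Lambda$ and $\bigoplus_{\ideal{p}\in S}R^{2}\Bar{\pi}_{\Hat{\alg{K}}_{\ideal{p}},\ast}\Lambda$. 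It therefore suffices to show that \eqref{0513} and the morphism of \eqref{0418} (transported by the structural isomorphisms $R^{3}\Bar{\pi}_{\alg{U}_{S},!}\Lambda\cong R^{3}\Bar{\pi}_{\alg{U}_{T},!}\Lambda\cong R^{3}\Bar{\pi}_{\Hat{\alg{U}}_{\Hat{T}},\Hat{!}}\Lambda$ of Propositions~\ref{0411} and \ref{0304}) agree after precomposition with each of these component maps.

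On the components indexed by $\ideal{p}\in S$ this is immediate: by construction \eqref{0513} restricts, on such a component, to the norm map $\Weil_{F_{\ideal{p}}/F}\Lambda\to\Lambda$ precomposed with the identification \eqref{0506}, which is exactly the normalized valuation \eqref{0358}; and Proposition~\ref{0422}~\eqref{0512} says that the morphism of \eqref{0418}, precomposed with the corresponding $\ideal{p}$-component map from the localization triangle, is the same composite. Moreover \eqref{0424} and \eqref{0425} are morphisms of distinguished triangles under which the $S$-components go to themselves, so the transport by the structural isomorphisms is compatible with these identifications.

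The substantive point is the components indexed by the generic points $\eta_{x}$ of the special fibre of the resolution, which have no direct counterpart among the height--one primes of $A$. Here \eqref{0513} again restricts to the norm map $\Weil_{F_{x}/F}\Lambda\to\Lambda$ after \eqref{0506}/\eqref{0358}, by its definition through Proposition~\ref{0423}. For the morphism of \eqref{0418} I would unwind the composite $R^{2}\Bar{\pi}_{\Hat{\alg{K}}_{\eta_{x}},\ast}\Lambda\to R^{3}\Bar{\pi}_{\alg{U}_{S},!}\Lambda\to\Lambda$ through the isomorphism of Proposition~\ref{0411} and through the definition of the trace $R\pi_{\alg{X},\ast}\mathfrak{T}_{n}(2)\to\Lambda_{n}[-3]$ given in the proof of Proposition~\ref{0111}. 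Transporting the $\closure{F}$-description of that trace along the resolution $\mathfrak{X}\to\Spec A$ (an isomorphism over $X$) via the morphisms of localization triangles \eqref{0424} and \eqref{0425} shows that the $\eta_{x}$--boundary map picks out the normalized valuation at $\eta_{x}$ on $\kappa(\eta_{x})^{\times}$, and that these contributions are then summed, together with the norm maps, over the closed points $x\in\closure{\{\eta\}}$; the reciprocity identities on the complete curve $\closure{\{\eta\}}$ — which are precisely the relations on which the combinatorial exact sequence of Proposition~\ref{0423} rests — identify this with the norm-composed-with-\eqref{0358} description of \eqref{0513}. Combined with the uniqueness from the first paragraph this yields the claim.

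I expect this last step to be the main obstacle. It requires tracking the structural isomorphism $R\Bar{\pi}_{\alg{U}_{S},!}\Lambda\cong R\Bar{\pi}_{\Hat{\alg{U}}_{\Hat{T}},\Hat{!}}\Lambda$ — itself assembled in Propositions~\ref{0411} and \ref{0304} out of excision, proper base change, and Fujiwara--Gabber formal base change — all the way through the K-theoretic input (the localization sequence for $\Spec A$) underlying Proposition~\ref{0111}, and matching it against the corresponding localization data on $\mathfrak{X}$ indexed by the components $\eta$ of $Y$. No genuinely new input beyond the reciprocity already used for Proposition~\ref{0423} should be needed, but the bookkeeping is delicate: one must keep careful track of which sheaf ($\mathfrak{T}_{n}(r)$ versus $\Lambda$) is being localized at which point, and one must verify that the connecting morphisms in \eqref{0424} and \eqref{0425} are compatible with the boundary maps appearing in the construction of the trace in Proposition~\ref{0111} — an extended but routine diagram chase.
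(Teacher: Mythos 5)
Your overall strategy — check that the two traces agree after precomposition with each boundary component of the localization triangle \eqref{0425}, using that the degree-$3$ cohomology is generated by the images of those components — is sound in principle, and your treatment of the components indexed by $\ideal{p}\in S$ is correct (this is exactly Proposition~\ref{0422}~\eqref{0512} matched against the definition of \eqref{0513} through \eqref{0506}). The genuine gap is the step you yourself flag as the main obstacle: the components indexed by the exceptional generic points $\eta_{x}$. The trace of \eqref{0418} is built from the $K$-theoretic localization sequence on $\Spec A$, which only sees the height-one primes of $A$; computing its composite with the $\eta_{x}$-component maps requires transporting that construction through the resolution and through the chain of structural isomorphisms of Propositions~\ref{0411} and \ref{0304}, and then invoking reciprocity on the complete curves $\closure{\{\eta\}}$. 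You assert the outcome of this computation but do not carry it out, and "routine diagram chase" understates it: one would have to verify compatibility of the connecting morphisms in \eqref{0424} and \eqref{0425} with the boundary maps inside the proof of Proposition~\ref{0111}, which is precisely the content that is not established anywhere in the paper.

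The paper's proof shows this entire step is unnecessary. Both traces are morphisms to the finite \'etale group $\Lambda$ from an object of $D^{b}(\Ind \Pro \Alg_{u} / F)$ concentrated (after the shift) in degrees $\le 2$ (Propositions~\ref{0428} and \ref{0501}), so each factors through $\pi_{0}$ of the top cohomology, which is $\Lambda$ by Proposition~\ref{0423}; a trace is therefore determined by a single endomorphism of $\Lambda$. Since the connecting map from the $S$-components alone, $\bigoplus_{\ideal{p}\in S}\Weil_{F_{\ideal{p}}/F}\Lambda \to \Lambda$, is the sum of the norm maps and hence already \emph{surjective} onto this $\pi_{0} H^{2}\cong\Lambda$, agreement on the $S$-components — the part you correctly dispose of in one line — forces both endomorphisms to be the identity, and the proposition follows (after passing to the algebrization via Proposition~\ref{0152}). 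In short: you should replace the hard $\eta_{x}$ computation by the observation that the trace factors through $\pi_{0}H^{2}$ and that the $S$-components alone generate it.
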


\begin{proof}
	Denote $\eqref{0513}$ by $\Tr_{S}$ and \eqref{0418} by $\Tr_{T}$.
	By \eqref{0424} and \eqref{0425}, we have a commutative diagram
		\[
			\begin{CD}
					\displaystyle
					\bigoplus_{\ideal{p} \in S}
						R \Bar{\pi}_{\alg{K}_{\ideal{p}}^{h}, \ast} \Lambda
				@>>>
					R \Bar{\pi}_{\alg{U}_{S}, !} \Lambda[1]
				@> \Tr_{S} >>
					\Lambda[-2]
				\\ @VVV @| \\
					\displaystyle
						\bigoplus_{
							\substack{
								x \in T \\
								\eta_{x} \in Y_{1}^{x}
							}
						}
							R \Bar{\pi}_{\Hat{\alg{K}}_{\eta_{x}}, \ast} \Lambda
					\oplus
						\displaystyle
						\bigoplus_{\ideal{p} \in S}
							R \Bar{\pi}_{\Hat{\alg{K}}_{\ideal{p}}, \ast} \Lambda
				@>>>
					R \Bar{\pi}_{\Hat{\alg{U}}_{\Hat{T}}, \Hat{!}} \Lambda[1]
				@>> \Tr_{T} >
					\Lambda[-2]
			\end{CD}
		\]
	in $\genby{\mathcal{W}_{F}}_{F^{\perar}_{\et}}$.
	Applying $\algebrize$, we have a commutative diagram
		\begin{equation} \label{0514}
			\begin{CD}
					\displaystyle
					\bigoplus_{\ideal{p} \in S}
						\algebrize
						R \Bar{\pi}_{\alg{K}_{\ideal{p}}^{h}, \ast} \Lambda
				@>>>
					\algebrize
					R \Bar{\pi}_{\alg{U}_{S}, !} \Lambda[1]
				@> \Tr_{S} >>
					\Lambda[-2]
				\\ @VVV @| \\
					\displaystyle
						\bigoplus_{
							\substack{
								x \in T \\
								\eta_{x} \in Y_{1}^{x}
							}
						}
							\algebrize
							R \Bar{\pi}_{\Hat{\alg{K}}_{\eta_{x}}, \ast} \Lambda
					\oplus
						\displaystyle
						\bigoplus_{\ideal{p} \in S}
							\algebrize
							R \Bar{\pi}_{\Hat{\alg{K}}_{\ideal{p}}, \ast} \Lambda
				@>>>
					\algebrize
					R \Bar{\pi}_{\Hat{\alg{U}}_{\Hat{T}}, \Hat{!}} \Lambda[1]
				@>> \Tr_{T} >
					\Lambda[-2]
			\end{CD}
		\end{equation}
	in $D^{b}(\Ind \Pro \Alg_{u} / F)$.
	By Proposition \ref{0152},
	it is enough to show that $\Tr_{S} = \Tr_{T}$ in this diagram.
	By Propositions \ref{0428} and \ref{0501}, the object
	$\algebrize R \Bar{\pi}_{\Hat{\alg{U}}_{\Hat{T}}, \Hat{!}} \Lambda[1]$
	is concentrated in degrees $\le 2$.
	By Proposition \ref{0423},
	the $\pi_{0}$ of its $H^{2}$ is $\Lambda$.
	Hence $\pi_{0} H^{2}$ of the diagram \eqref{0514} can be written as
		\[
			\begin{CD}
					\displaystyle
					\bigoplus_{\ideal{p} \in S}
						\Weil_{F_{\ideal{p}} / F} \Lambda
				@>>>
					\Lambda
				@> \Tr_{S} >>
					\Lambda
				\\ @VVV @| \\
					\displaystyle
						\bigoplus_{
							\substack{
								x \in T \\
								\eta_{x} \in Y_{1}^{x}
							}
						}
							\Weil_{F_{x} / F} \Lambda
					\oplus
						\displaystyle
						\bigoplus_{\ideal{p} \in S}
							\Weil_{F_{\ideal{p}} / F} \Lambda
				@>>>
					\Lambda
				@>> \Tr_{T} = \id >
					\Lambda
			\end{CD}
		\]
	The composite of the upper two horizontal morphisms is the sum of the norm maps
	by Proposition \ref{0422} \eqref{0512}.
	The composite of the lower two horizontal morphisms is the sum of the norm maps
	by the definition of \eqref{0513}.
	Since the first upper horizontal morphism
	$\bigoplus_{\ideal{p} \in S} \Weil_{F_{\ideal{p}} / F} \Lambda \to \Lambda$
	is also the sum of the norm maps and hence surjective,
	we know that $\Tr_{S} \colon \Lambda \to \Lambda$ is the identity map.
	Therefore $\Tr_{S} = \Tr_{T}$ in the diagram \eqref{0514}.
\end{proof}

Thus we have a commutative diagram
	\[
		\begin{CD}
					R \Bar{\pi}_{\alg{U}_{S}, \ast} \Lambda
				\tensor^{L}
					R \Bar{\pi}_{\alg{U}_{S}, !} \Lambda
			@>>>
				\Lambda_{\infty}[-3]
			\\ @| @| \\
					R \Bar{\pi}_{\Hat{\alg{U}}_{\Hat{T}}, \ast} \Lambda
				\tensor^{L}
					R \Bar{\pi}_{\Hat{\alg{U}}_{\Hat{T}}, \Hat{!}} \Lambda
			@>>>
				\Lambda_{\infty}[-3].
		\end{CD}
	\]

\begin{Prop} \label{0429}
	The above is a perfect pairing.
\end{Prop}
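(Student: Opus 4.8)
The plan is to recognize $R \Bar{\pi}_{\Hat{\alg{U}}_{\Hat{T}}, \ast} \Lambda$ and $R \Bar{\pi}_{\Hat{\alg{U}}_{\Hat{T}}, \Hat{!}} \Lambda$ as the objects $R \Bar{\pi}_{T_{\bullet}, \ast} \Lambda$ and $R \Bar{\pi}_{T_{\bullet}, !} \Lambda$ attached, in the formalism of Section \ref{0322}, to the fibered site with
\[
T_{1} = \bigsqcup_{x \in T} \Spec \Hat{\alg{R}}_{x, S, \et}, \quad T_{2} = \bigsqcup_{\eta \in Y_{1}} \Spec \Hat{\alg{R}}_{\eta, T, \et}, \quad T_{3} = \bigsqcup_{x,\, \eta_{x} \in Y_{1}^{x}} \Spec \Hat{\alg{K}}_{\eta_{x}, \et},
\]
and support datum $T = \bigsqcup_{\ideal{p} \in S} \Spec \Hat{\alg{K}}_{\ideal{p}, \et}$ mapping into $T_{1}$; the comparison of the functors $\Bar{\pi}_{\Hat{\alg{U}}_{\Hat{T}}, \ast}$, $\Bar{\pi}_{\Hat{\alg{U}}_{\Hat{T}}, \Hat{!}}$ with $\Bar{\pi}_{T_{\bullet}, \ast}$, $\Bar{\pi}_{T_{\bullet}, !}$ of \eqref{0399}, \eqref{0400} is immediate from the definitions. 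Applying Proposition \ref{0427} with $G = H = \Lambda$ and composing the common target $R \Bar{\pi}_{T_{\bullet}, !}(\Lambda \tensor^{L} \Lambda)$, via the multiplication $\Lambda \tensor^{L} \Lambda \to \Lambda$, with the trace morphism \eqref{0513}, I obtain a morphism of distinguished triangles whose middle vertical arrow is the morphism $R \Bar{\pi}_{\Hat{\alg{U}}_{\Hat{T}}, \ast} \Lambda \to R \sheafhom_{F^{\perar}_{\et}}(R \Bar{\pi}_{\Hat{\alg{U}}_{\Hat{T}}, \Hat{!}} \Lambda, \Lambda_{\infty}[-3])$ induced by the pairing in question. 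That this middle arrow really is the one coming from our pairing is a compatibility to be checked, combining \eqref{0389}, \eqref{0417}, \eqref{0420}, the identifications \eqref{0424}, \eqref{0425} and the trace-compatibility diagram \eqref{0514}. Since all objects involved lie in $\genby{\mathcal{W}_{F}}_{F^{\perar}_{\et}}$ (Proposition \ref{0428}) and Serre duality there is a contravariant autoequivalence squaring to the identity (Proposition \ref{0497}), it suffices to prove that this one induced morphism is an isomorphism; the five lemma for triangulated categories then reduces that to showing that the two outer vertical arrows are isomorphisms.

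The outer arrow over $T_{1}$ identifies $R \Bar{\pi}_{T_{1}, \ast} \Lambda = \bigoplus_{x \in T} R \Bar{\pi}_{\Hat{\alg{R}}_{x, S}, \ast} \Lambda$ with the $\Lambda_{\infty}[-3]$-dual of $R [\Bar{\pi}_{T_{1}, \ast} \to \Bar{\pi}_{T_{3}, \ast} \oplus \Bar{\pi}_{T, \ast}][-1] \Lambda = \bigoplus_{x \in T} R \Bar{\pi}_{\Hat{\alg{R}}_{x, S}, \Hat{!}} \Lambda$, the compact support here removing precisely the boundary components $R \Bar{\pi}_{\Hat{\alg{K}}_{\eta_{x}}, \ast} \Lambda$ for $\eta_{x} \in Y_{1}^{x}$ and $R \Bar{\pi}_{\Hat{\alg{K}}_{\ideal{p}}, \ast} \Lambda$ for $\ideal{p} \in S^{x}$. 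Because $Y \cup \closure{S}$ is supported on a strict normal crossing divisor in $\mathfrak{X}$, each completed local ring $\Hat{A}_{x}$ satisfies the list of hypotheses opening Section \ref{0060} (case \eqref{0061} or \eqref{0062}), so this arrow is an isomorphism by the duality in the enough-resolved case (Proposition \ref{0294}), together with the completion comparisons of Sections \ref{0113} and \ref{0464} (Propositions \ref{0295} and \ref{0471}). The outer arrow over $T_{2}$ identifies $R \Bar{\pi}_{T_{2}, \ast} \Lambda = \bigoplus_{\eta \in Y_{1}} R \Bar{\pi}_{\Hat{\alg{R}}_{\eta, T}, \ast} \Lambda$ with the dual of $R [\Bar{\pi}_{T_{2}, \ast} \to \Bar{\pi}_{T_{3}, \ast}][-1] \Lambda = \bigoplus_{\eta \in Y_{1}} R \Bar{\pi}_{\Hat{\alg{R}}_{\eta, T}, \Hat{!}} \Lambda$, the compact support removing the components $R \Bar{\pi}_{\Hat{\alg{K}}_{\eta_{x}}, \ast} \Lambda$ for $x \in T \cap \eta_{0}$. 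Working over the constant field $F_{\eta}$ of $\closure{\{\eta\}}$ and then applying $\Weil_{F_{\eta} / F}$ via Proposition \ref{0452}, each $\Hat{\alg{R}}_{\eta, T}$ is exactly the object $\Hat{\alg{R}}$ of Section \ref{0217} for the smooth affine curve $W_{\eta} \cap Y$, so this arrow is an isomorphism by the duality for $p$-adic tubular neighborhoods of affine curves (Proposition \ref{0109}).

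The main obstacle is the bookkeeping of the first paragraph: one must verify that the abstract cup-product morphism of distinguished triangles produced by Proposition \ref{0427} — once the trace is inserted and the various local sites are matched with the global fibered site — genuinely computes the concrete pairing of Proposition \ref{0429}, with each ``dualizing'' object correctly recognized as the compact support cohomology of the appropriate local piece carrying precisely the right boundary terms. None of this is conceptually deep, but it is the one step where the signs, the precise placement of the support datum $T$ relative to $T_{1}, T_{2}, T_{3}$, and the various cup-product compatibilities of Section \ref{0322} must be tracked with care. Once this is in place, the two duality inputs (Propositions \ref{0294} and \ref{0109}) and the five lemma conclude the proof.
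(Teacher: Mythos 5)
Your proposal is correct and follows essentially the same route as the paper: apply Proposition \ref{0427} (with the trace \eqref{0513}) to get a morphism of distinguished triangles whose outer arrows are the dualities for $\bigoplus_{\eta} R \Bar{\pi}_{\Hat{\alg{R}}_{\eta, T}, \ast/\Hat{!}} \Lambda$ (Proposition \ref{0109}, with $\Weil_{F_{\eta}/F}$ via Proposition \ref{0452}) and for $\bigoplus_{x} R \Bar{\pi}_{\Hat{\alg{R}}_{x, S}, \ast/\Hat{!}} \Lambda$ (Propositions \ref{0294} and \ref{0471}), then conclude by the five lemma. The paper treats the ``bookkeeping'' you flag as immediate and simply writes down the resulting diagram.
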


\begin{proof}
	Denote $(\var)^{\vee} = R \sheafhom_{F^{\perar}_{\et}}(\var, \Lambda_{\infty})$.
	By Proposition \ref{0427}, we have a morphism of distinguished triangles
		\[
			\begin{CD}
					\displaystyle
					\bigoplus_{\eta \in Y_{1}}
						R \Bar{\pi}_{\Hat{\alg{R}}_{\eta, T}, \Hat{!}} \Lambda
				@>>>
					R \Bar{\pi}_{\Hat{\alg{U}}_{\Hat{T}}, \ast} \Lambda
				@>>>
					\displaystyle
					\bigoplus_{x \in T}
						R \Bar{\pi}_{\Hat{\alg{R}}_{x, S}, \ast} \Lambda
				\\ @VVV @VVV @VVV \\
					\displaystyle
					\bigoplus_{\eta \in Y_{1}}
						(R \Bar{\pi}_{\Hat{\alg{R}}_{\eta, T}, \ast} \Lambda)^{\vee}[-3]
				@>>>
					(R \Bar{\pi}_{\Hat{\alg{U}}_{\Hat{T}}, \Hat{!}} \Lambda)^{\vee}[-3]
				@>>>
					\displaystyle
					\bigoplus_{x \in T}
						(R \Bar{\pi}_{\Hat{\alg{R}}_{x, S}, \Hat{!}} \Lambda)^{\vee}[-3]
			\end{CD}
		\]
	The left vertical morphism is an isomorphism by Propositions \ref{0109}
	(and Proposition \ref{0452}
	if the constant field of $B_{\eta, T}$ is not $F$ but a finite extension of $F$).
	The right vertical morphism is an isomorphism
	by Propositions \ref{0294} and \ref{0471}
	(and similarly Proposition \ref{0452}
	if the residue field of $B_{x}^{h}$ is not $F$ but a finite extension of $F$).
	Hence so is the middle.
\end{proof}

\begin{Prop} \label{0431}
	The statement of Proposition \ref{0112} is true
	if $n = 1$, $\zeta_{p} \in A$ and $U_{S} = \Spec A[1 / p]$.
\end{Prop}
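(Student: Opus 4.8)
The plan is to deduce this special case of Proposition \ref{0112} by assembling the duality results already proved for the fibered sites coming from a resolution of singularities, after a coefficient reduction. First I would exploit the hypotheses $\zeta_{p} \in A$ and $U_{S} = \Spec A[1/p]$: since $U_{S}$ lies in the $p$-invertible locus, $\mathfrak{T}_{1}(r)|_{\alg{U}_{S}}$ is the ordinary Tate twist $\Lambda(r)$ for every $r$, and the chosen $\zeta_{p}$ gives isomorphisms $\Lambda(r) \cong \Lambda$. Under these identifications the product $\mathfrak{T}_{1}(r) \tensor^{L} \mathfrak{T}_{1}(r') \to \mathfrak{T}_{1}(2)$ becomes the multiplication $\Lambda \tensor^{L} \Lambda \to \Lambda$, and I would check (this is the content of \eqref{0412} together with the construction of \eqref{0418} in Section \ref{0278}) that the trace morphism $R \pi_{\alg{U}_{S}, !} \mathfrak{T}_{1}(2) \to \Lambda_{\infty}[-3]$ of \eqref{0418} is carried to the morphism $R \Bar{\pi}_{\alg{U}_{S}, !} \Lambda \to \Lambda_{\infty}[-3]$ used in the commutative square immediately preceding Proposition \ref{0429}. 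Hence it suffices to prove Proposition \ref{0112} with the constant coefficient sheaf $\Lambda$.

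Next I would run the chain of identifications passing from $\alg{U}_{S}$ to the completed fibered site $\Hat{\alg{U}}_{\Hat{T}, \Hat{c}, \et}$ built from a resolution $\mathfrak{X} \to \Spec A$. Proposition \ref{0397} identifies $R \Bar{\pi}_{\alg{U}_{S}, \ast} \Lambda \cong R \pi_{\alg{U}_{S}, \ast} \Lambda$ and $R \Bar{\pi}_{\alg{U}_{S}, !} \Lambda \cong R \pi_{\alg{U}_{S}, !} \Lambda$ together with the cup products; Proposition \ref{0411} (applicable since $\Lambda$ is torsion) identifies these with $R \Bar{\pi}_{\alg{U}_{T}, \ast} \Lambda$ and $R \Bar{\pi}_{\alg{U}_{T}, !} \Lambda$; and Proposition \ref{0304}, whose hypotheses are exactly ours, identifies those with $R \Bar{\pi}_{\Hat{\alg{U}}_{\Hat{T}}, \ast} \Lambda$ and $R \Bar{\pi}_{\Hat{\alg{U}}_{\Hat{T}}, \Hat{!}} \Lambda$. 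These identifications respect the cup products by Proposition \ref{0410} and the commutative diagram following \eqref{0417}, and they respect the trace morphisms by the proposition preceding Proposition \ref{0429}. Assembling these gives precisely the commutative square just before Proposition \ref{0429}, whose lower row is a perfect pairing by Proposition \ref{0429}; therefore the upper row, which is the $\Lambda$-version of the pairing in part~\ref{0287} of Proposition \ref{0112}, is a perfect pairing. The coefficient reduction of the first paragraph then yields part~\ref{0287} for $\mathfrak{T}_{1}(r)$.

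For part~\ref{0286} I would use the same chain in reverse: Proposition \ref{0428} puts $R \Bar{\pi}_{\Hat{\alg{U}}_{\Hat{T}}, \ast} \Lambda$ and $R \Bar{\pi}_{\Hat{\alg{U}}_{\Hat{T}}, \Hat{!}} \Lambda$ in $\genby{\mathcal{W}_{F}}_{F^{\perar}_{\et}}$, and the isomorphisms of Propositions \ref{0304}, \ref{0411} and \ref{0397} transport this to $R \pi_{\alg{U}_{S}, \ast} \Lambda$ and $R \pi_{\alg{U}_{S}, !} \Lambda$; the $\zeta_{p}$-identification $\mathfrak{T}_{1}(r) \cong \Lambda$ then gives the case of $\mathfrak{T}_{1}(r)$.

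There is no genuine obstacle here — all the substantive analytic and geometric input is in the preceding Propositions \ref{0428}, \ref{0429}, \ref{0304} and \ref{0411} — but the point requiring care is the bookkeeping: one must verify that at each stage the identification of objects, of cup-product morphisms, and especially of the two trace morphisms $R \Bar{\pi}_{\Hat{\alg{U}}_{\Hat{T}}, \Hat{!}} \Lambda \to \Lambda[-3]$ arising from \eqref{0418} and from \eqref{0513}, is exactly the one built into the square before Proposition \ref{0429}, and that the $\zeta_{p}$-induced isomorphism $\mathfrak{T}_{1}(r) \cong \Lambda$ intertwines the pairing of Proposition \ref{0112} with the $\Lambda$-pairing throughout. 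These compatibilities are all recorded in the diagrams \eqref{0424}, \eqref{0425}, \eqref{0465} and \eqref{0417}, so the proof amounts to invoking them in the correct order.
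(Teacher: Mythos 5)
Your proposal is correct and follows essentially the same route as the paper: the paper's own proof is simply ``This follows from Propositions \ref{0428} and \ref{0429},'' with all the identifications you spell out (the $\zeta_{p}$-reduction to $\Lambda$ coefficients, the chain through Propositions \ref{0397}, \ref{0411} and \ref{0304}, and the trace compatibility) already built into the commutative square displayed immediately before Proposition \ref{0429}. Your write-up is just a more explicit unpacking of that same bookkeeping.
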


\begin{proof}
	This follows from Propositions \ref{0428} and \ref{0429}.
\end{proof}


\subsection{Main theorem}
\label{0306}

Now we can finish the proof of Proposition \ref{0112}.

\begin{proof}[Proof of Proposition \ref{0112}]
	The case $n \ge 1$ is reduced to the case $n = 1$.
	Let $S' \subset P$ be a finite subset containing $S$.
	Let $U_{S'} = X \setminus S'$.
	Then the statement for $U_{S}$ is equivalent to the statement for $U_{S'}$.
	Indeed, we have a morphism of distinguished triangles
		\[
			\begin{CD}
					\bigoplus_{\ideal{p} \in S' \setminus S}
						R \pi_{\alg{A}_{\ideal{p}}^{h}, !} \mathfrak{T}(r)
				@>>>
					R \pi_{\alg{U}_{S}, \ast} \mathfrak{T}(r)
				@>>>
					R \pi_{\alg{U}_{S'}, \ast} \mathfrak{T}(r)
				\\ @VVV @VVV @VVV \\
					\bigoplus_{\ideal{p} \in S' \setminus S}
						(R \pi_{\alg{A}_{\ideal{p}}^{h}, \ast} \mathfrak{T}(r'))^{\vee}[-3]
				@>>>
					(R \pi_{\alg{U}_{S}, !} \mathfrak{T}(r'))^{\vee}[-3]
				@>>>
					(R \pi_{\alg{U}_{S'}, !} \mathfrak{T}(r'))^{\vee}[-3]
			\end{CD}
		\]
	in $D(F^{\perar}_{\et})$ by Proposition \ref{0430}.
	The left vertical morphism is an isomorphism
	of objects of $\genby{\mathcal{W}_{F}}_{F^{\perar}_{\et}}$
	by Propositions \ref{0445} and \ref{0449}.
	Therefore the invertibility of the other two vertical morphisms is equivalent to each other.
	
	Next, to prove the statement of the theorem,
	we may assume $\zeta_{p} \in A$.
	Indeed, let $A'$ be the normalization of $A$ in $K(\zeta_{p})$.
	Let $U'_{S'}$ be the inverse image of $U_{S}$ by the morphism $\Spec A' \to \Spec A$.
	We may assume that $S$ contains all primes above $p$ and $U'_{S'} \to U_{S}$ is \'etale by the previous step.
	Since $[K(\zeta_{p}) : K]$ is prime to $p$,
	a norm argument shows that the statement for $U'_{S'}$ implies the statement for $U_{S}$.
	
	Now we are reduced to the case $n = 1$, $\zeta_{p} \in A$ and $U_{S} = \Spec A[1 / p]$,
	which is proved in Proposition \ref{0431}.
\end{proof}

\begin{Thm} \label{0308}
	Assume that $A$ is complete and take $\alg{A}$ to be the canonical lifting system.
	Let $n \ge 1$. Let $r, r' \in \Z$ with $r + r' = 2$.
	\begin{enumerate}
		\item
			The objects $R \alg{\Gamma}(\alg{U}_{S}, \mathfrak{T}_{n}(r))$
			and $R \alg{\Gamma}_{c}(\alg{U}_{S}, \mathfrak{T}_{n}(r))$
			belong to $D^{b}(\Ind \Pro \Alg_{u} / F)$.
		\item
			The composite morphism
				\[
							R \alg{\Gamma}(\alg{U}_{S}, \mathfrak{T}_{n}(r))
						\tensor^{L}
							R \alg{\Gamma}_{c}(\alg{U}_{S}, \mathfrak{T}_{n}(r'))
					\to
						R \alg{\Gamma}_{c}(\alg{U}_{S}, \mathfrak{T}_{n}(2))
					\to
						\Lambda_{\infty}[-3]
				\]
			obtained by applying $\algebrize$ to the morphism in Proposition \ref{0112} \eqref{0287}
			is a perfect pairing in $D(F^{\ind\rat}_{\pro\et})$.
	\end{enumerate}
\end{Thm}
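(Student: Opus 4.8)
The plan is to obtain this statement as a purely formal consequence of Proposition \ref{0112}, by pushing the pairing constructed there forward along the functor $\algebrize \colon D(F^{\perar}_{\et}) \to D(F^{\ind\rat}_{\pro\et})$, exactly in the style of the local duality theorems at the ends of Sections \ref{0174}, \ref{0190} and \ref{0482}.

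First I would record that, by Proposition \ref{0112} \eqref{0286}, the objects $R \pi_{\alg{U}_{S}, \ast} \mathfrak{T}_{n}(r)$ and $R \pi_{\alg{U}_{S}, !} \mathfrak{T}_{n}(r)$ lie in $\genby{\mathcal{W}_{F}}_{F^{\perar}_{\et}}$, hence by Proposition \ref{0153} are $h$-compatible and $h$-acyclic. Applying $\algebrize$ and invoking Proposition \ref{0152}, the objects $R \alg{\Gamma}(\alg{U}_{S}, \mathfrak{T}_{n}(r)) = \algebrize R \pi_{\alg{U}_{S}, \ast} \mathfrak{T}_{n}(r)$ and $R \alg{\Gamma}_{c}(\alg{U}_{S}, \mathfrak{T}_{n}(r)) = \algebrize R \pi_{\alg{U}_{S}, !} \mathfrak{T}_{n}(r)$ then land in $\genby{\mathcal{W}_{F}}_{F^{\ind\rat}_{\pro\et}}$, which is contained in $D^{b}(\Ind \Pro \Alg_{u} / F)$. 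This gives the first assertion.

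For the second assertion, the $h$-compatibility makes the product morphism \eqref{0453} available, so that $\algebrize$ applied to the cup product \eqref{0369} and the trace \eqref{0418} underlying the pairing of Proposition \ref{0112} \eqref{0287} produces a morphism $R \alg{\Gamma}(\alg{U}_{S}, \mathfrak{T}_{n}(r)) \tensor^{L} R \alg{\Gamma}_{c}(\alg{U}_{S}, \mathfrak{T}_{n}(r')) \to \Lambda_{\infty}[-3]$ in $D(F^{\ind\rat}_{\pro\et})$, factoring through $R \alg{\Gamma}_{c}(\alg{U}_{S}, \mathfrak{T}_{n}(2))$ via the trace. Finally, Proposition \ref{0010}, applied with $G = R \pi_{\alg{U}_{S}, \ast} \mathfrak{T}_{n}(r)$ and $G' = R \pi_{\alg{U}_{S}, !} \mathfrak{T}_{n}(r')$, transports perfectness: the displayed morphism in $D(F^{\ind\rat}_{\pro\et})$ is a perfect pairing if and only if the original one over $\Spec F^{\perar}_{\et}$ is, and the latter is exactly Proposition \ref{0112} \eqref{0287}.

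The main — and essentially only — point requiring attention is bookkeeping: one must check that $\algebrize$ of the specific morphism constructed in Proposition \ref{0112} \eqref{0287} coincides with the morphism obtained by composing the ind-pro versions of the cup product and the trace that appear in the statement of the theorem. This is precisely the functoriality of \eqref{0453} in $h$-compatible objects (\cite[Proposition 7.8]{Suz21}), together with the fact, from Proposition \ref{0153}, that $R \Gamma(F', \algebrize(\var)) \cong R \Gamma(F', \var)$ on the relevant subcategory, so that no cohomological information is lost in passing from $\Spec F^{\perar}_{\et}$ to $\Spec F^{\ind\rat}_{\pro\et}$. No genuinely new difficulty arises; the entire substance of the duality is already contained in Proposition \ref{0112}.
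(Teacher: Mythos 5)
Your proposal is correct and matches the paper's proof, which is exactly the formal deduction from Propositions \ref{0112}, \ref{0152}, \ref{0153} and \ref{0010} that you describe. The only difference is that you spell out the bookkeeping that the paper leaves implicit.
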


\begin{proof}
	This follows from Propositions \ref{0112}, \ref{0152}, \ref{0153} and \ref{0010}.
\end{proof}

This finishes the proof of Theorems \ref{0128} and \ref{0132}:
Statement \eqref{0129} of the first theorem is
Proposition \ref{0153};
Statement \eqref{0130} follows from Proposition \ref{0530};
and Statement \eqref{0131} is a consequence of the previous two statements.



\begin{thebibliography}{{Sta}21}

\bibitem[AGV72]{AGV72b}
{\em Th\'eorie des topos et cohomologie \'etale des sch\'emas. {T}ome 2}.
\newblock Lecture Notes in Mathematics, Vol. 270. Springer-Verlag, Berlin-New
  York, 1972.
\newblock S{\'e}minaire de G{\'e}om{\'e}trie Alg{\'e}brique du Bois-Marie
  1963--1964 (SGA 4), Dirig{\'e} par M. Artin, A. Grothendieck et J. L.
  Verdier. Avec la collaboration de N. Bourbaki, P. Deligne et B. Saint-Donat.

\bibitem[AM76]{AM76}
M.~Artin and J.~S. Milne.
\newblock Duality in the flat cohomology of curves.
\newblock {\em Invent. Math.}, 35:111--129, 1976.

\bibitem[BK86]{BK86}
Spencer Bloch and Kazuya Kato.
\newblock {$p$}-adic \'etale cohomology.
\newblock {\em Inst. Hautes \'Etudes Sci. Publ. Math.}, (63):107--152, 1986.

\bibitem[BLR90]{BLR90}
Siegfried Bosch, Werner L{\"u}tkebohmert, and Michel Raynaud.
\newblock {\em N\'eron models}, volume~21 of {\em Ergebnisse der Mathematik und
  ihrer Grenzgebiete (3) [Results in Mathematics and Related Areas (3)]}.
\newblock Springer-Verlag, Berlin, 1990.

\bibitem[BM21]{BM21}
Bhargav Bhatt and Akhil Mathew.
\newblock The arc-topology.
\newblock {\em Duke Math. J.}, 170(9):1899--1988, 2021.

\bibitem[BMR14]{BMR14}
Tobias Barthel, J.~P. May, and Emily Riehl.
\newblock Six model structures for {DG}-modules over {DGA}s: model category
  theory in homological action.
\newblock {\em New York J. Math.}, 20:1077--1159, 2014.

\bibitem[BS15]{BS15}
Bhargav Bhatt and Peter Scholze.
\newblock The pro-\'etale topology for schemes.
\newblock {\em Ast\'erisque}, (369):99--201, 2015.

\bibitem[DG70]{DG70b}
Michel Demazure and Pierre Gabriel.
\newblock {\em Groupes alg\'ebriques. {T}ome {I}: {G}\'eom\'etrie alg\'ebrique,
  g\'en\'eralit\'es, groupes commutatifs}.
\newblock Masson \& Cie, \'Editeur, Paris; North-Holland Publishing Co.,
  Amsterdam, 1970.
\newblock Avec un appendice {\it Corps de classes local} par Michiel
  Hazewinkel.

\bibitem[Fuj95]{Fuj95}
Kazuhiro Fujiwara.
\newblock Theory of tubular neighborhood in \'{e}tale topology.
\newblock {\em Duke Math. J.}, 80(1):15--57, 1995.

\bibitem[Gab94]{Gab94}
Ofer Gabber.
\newblock Affine analog of the proper base change theorem.
\newblock {\em Israel J. Math.}, 87(1-3):325--335, 1994.

\bibitem[Gei04]{Gei04Ded}
Thomas Geisser.
\newblock Motivic cohomology over {D}edekind rings.
\newblock {\em Math. Z.}, 248(4):773--794, 2004.

\bibitem[GM74]{GM74}
V.~K. A.~M. Gugenheim and J.~Peter May.
\newblock {\em On the theory and applications of differential torsion
  products}.
\newblock American Mathematical Society, Providence, R.I., 1974.
\newblock Memoirs of the American Mathematical Society, No. 142.

\bibitem[Gre82]{Gre82}
Silvio Greco.
\newblock A note on universally catenary rings.
\newblock {\em Nagoya Math. J.}, 87:95--100, 1982.

\bibitem[Gro68]{Gro68}
Alexander Grothendieck.
\newblock Le groupe de {B}rauer. {III}. {E}xemples et compl\'ements.
\newblock In {\em Dix expos\'es sur la cohomologie des sch\'emas}, volume~3 of
  {\em Adv. Stud. Pure Math.}, pages 88--188. North-Holland, Amsterdam, 1968.

\bibitem[Ill77]{Ill77}
{\em Cohomologie {$l$}-adique et fonctions {$L$}}.
\newblock Lecture Notes in Mathematics, Vol. 589. Springer-Verlag, Berlin-New
  York, 1977.
\newblock S\'eminaire de G\'eometrie Alg\'ebrique du Bois-Marie 1965--1966 (SGA
  5), Edit\'e par Luc Illusie.

\bibitem[ILO14]{ILO14}
Luc Illusie, Yves Laszlo, and Fabrice Orgogozo, editors.
\newblock {\em Travaux de {G}abber sur l'uniformisation locale et la
  cohomologie \'etale des sch\'emas quasi-excellents}.
\newblock Soci\'et\'e Math\'ematique de France, Paris, 2014.
\newblock S\'eminaire \`a l'\'Ecole Polytechnique 2006--2008. [Seminar of the
  Polytechnic School 2006--2008], With the collaboration of Fr\'ed\'eric
  D\'eglise, Alban Moreau, Vincent Pilloni, Michel Raynaud, Jo\"el Riou,
  Beno\^\i t Stroh, Michael Temkin and Weizhe Zheng, Ast\'erisque No. 363-364
  (2014) (2014).

\bibitem[JP91]{JP91}
Mamuka Jibladze and Teimuraz Pirashvili.
\newblock Cohomology of algebraic theories.
\newblock {\em J. Algebra}, 137(2):253--296, 1991.

\bibitem[Kat79]{Kat79}
Kazuya Kato.
\newblock A generalization of local class field theory by using {$K$}-groups.
  {I}.
\newblock {\em J. Fac. Sci. Univ. Tokyo Sect. IA Math.}, 26(2):303--376, 1979.

\bibitem[Kat82]{Kat82}
K.~Kato.
\newblock A generalization of local class field theory by using {$K$}-groups.
  {III}.
\newblock {\em J. Fac. Sci. Univ. Tokyo Sect. IA Math.}, 29(1):31--43, 1982.

\bibitem[Kat86]{Kat86Hasse}
Kazuya Kato.
\newblock A {H}asse principle for two-dimensional global fields.
\newblock {\em J. Reine Angew. Math.}, 366:142--183, 1986.
\newblock With an appendix by Jean-Louis Colliot-Th\'{e}l\`ene.

\bibitem[KS06]{KS06}
Masaki Kashiwara and Pierre Schapira.
\newblock {\em Categories and sheaves}, volume 332 of {\em Grundlehren der
  Mathematischen Wissenschaften [Fundamental Principles of Mathematical
  Sciences]}.
\newblock Springer-Verlag, Berlin, 2006.

\bibitem[KS19]{KS19}
Kazuya Kato and Takashi Suzuki.
\newblock Duality theories for {$p$}-primary \'{e}tale cohomology {III}.
\newblock {\em J. Math. Sci. Univ. Tokyo}, 26(2):223--248, 2019.

\bibitem[KS21]{KS21}
Kazuhiko Kurano and Kazuma Shimomoto.
\newblock Ideal-adic completion of quasi-excellent rings (after {G}abber).
\newblock {\em Kyoto J. Math.}, 61(3):707--722, 2021.

\bibitem[Lod98]{Lod98}
Jean-Louis Loday.
\newblock {\em Cyclic homology}, volume 301 of {\em Grundlehren der
  Mathematischen Wissenschaften [Fundamental Principles of Mathematical
  Sciences]}.
\newblock Springer-Verlag, Berlin, second edition, 1998.
\newblock Appendix E by Mar\'{\i}a O. Ronco, Chapter 13 by the author in
  collaboration with Teimuraz Pirashvili.

\bibitem[Mil80]{Mil80}
J.~S. Milne.
\newblock {\em \'{E}tale cohomology}, volume~33 of {\em Princeton Mathematical
  Series}.
\newblock Princeton University Press, Princeton, N.J., 1980.

\bibitem[Mil06]{Mil06}
J.~S. Milne.
\newblock {\em Arithmetic duality theorems}.
\newblock BookSurge, LLC, Charleston, SC, second edition, 2006.

\bibitem[ML57]{ML57}
Saunders Mac~Lane.
\newblock Homologie des anneaux et des modules.
\newblock In {\em Colloque de topologie alg\'ebrique, {L}ouvain, 1956}, pages
  55--80. Georges Thone, Li\`ege, 1957.

\bibitem[MS82]{MS82}
A.~S. Merkurjev and A.~A. Suslin.
\newblock {$K$}-cohomology of {S}everi-{B}rauer varieties and the norm residue
  homomorphism.
\newblock {\em Izv. Akad. Nauk SSSR Ser. Mat.}, 46(5):1011--1046, 1135--1136,
  1982.

\bibitem[Pir96]{Pir96}
Teimuraz Pirashvili.
\newblock Kan extension and stable homology of {E}ilenberg-{M}ac {L}ane spaces.
\newblock {\em Topology}, 35(4):883--886, 1996.

\bibitem[Sai86]{Sai86}
Shuji Saito.
\newblock Arithmetic on two-dimensional local rings.
\newblock {\em Invent. Math.}, 85(2):379--414, 1986.

\bibitem[Sai87]{Sai87}
Shuji Saito.
\newblock Class field theory for two-dimensional local rings.
\newblock In {\em Galois representations and arithmetic algebraic geometry
  ({K}yoto, 1985/{T}okyo, 1986)}, volume~12 of {\em Adv. Stud. Pure Math.},
  pages 343--373. North-Holland, Amsterdam, 1987.

\bibitem[Sat07]{Sat07}
Kanetomo Sato.
\newblock {$p$}-adic \'{e}tale {T}ate twists and arithmetic duality.
\newblock {\em Ann. Sci. \'{E}cole Norm. Sup. (4)}, 40(4):519--588, 2007.
\newblock With an appendix by Kei Hagihara.

\bibitem[Sat13]{Sat13}
Kanetomo Sato.
\newblock Cycle classes for {$p$}-adic \'{e}tale {T}ate twists and the image of
  {$p$}-adic regulators.
\newblock {\em Doc. Math.}, 18:177--247, 2013.

\bibitem[Sch94]{Sch94}
Peter Schneider.
\newblock {$p$}-adic points of motives.
\newblock In {\em Motives ({S}eattle, {WA}, 1991)}, volume~55 of {\em Proc.
  Sympos. Pure Math.}, pages 225--249. Amer. Math. Soc., Providence, RI, 1994.

\bibitem[Ser60]{Ser60}
Jean-Pierre Serre.
\newblock Groupes proalg\'ebriques.
\newblock {\em Inst. Hautes \'Etudes Sci. Publ. Math.}, (7):67, 1960.

\bibitem[Ser61]{Ser61}
Jean-Pierre Serre.
\newblock Sur les corps locaux \`a corps r\'esiduel alg\'ebriquement clos.
\newblock {\em Bull. Soc. Math. France}, 89:105--154, 1961.

\bibitem[Shi07]{Shi07}
Atsushi Shiho.
\newblock On logarithmic {H}odge-{W}itt cohomology of regular schemes.
\newblock {\em J. Math. Sci. Univ. Tokyo}, 14(4):567--635, 2007.

\bibitem[{Sta}21]{Sta21}
The {Stacks Project Authors}.
\newblock \textit{Stacks Project}.
\newblock \verb+http://stacks.math.columbia.edu+, 2021.

\bibitem[Suz20a]{SuzCurve}
Takashi Suzuki.
\newblock Duality for cohomology of curves with coefficients in abelian
  varieties.
\newblock {\em Nagoya Math. J.}, 240:42--149, 2020.

\bibitem[Suz20b]{Suz20}
Takashi Suzuki.
\newblock Grothendieck's pairing on {N}\'{e}ron component groups: {G}alois
  descent from the semistable case.
\newblock {\em Kyoto J. Math.}, 60(2):593--716, 2020.

\bibitem[Suz21]{Suz21}
Takashi Suzuki.
\newblock An improvement of the duality formalism of the rational \'{e}tale
  site.
\newblock In {\em Algebraic number theory and related topics 2018}, RIMS
  K\^{o}ky\^{u}roku Bessatsu, B86, pages 287--330. Res. Inst. Math. Sci.
  (RIMS), Kyoto, 2021.

\bibitem[Suz22a]{Suz22CFT}
Takashi Suzuki.
\newblock Class field theory, {H}asse principles and {P}icard-{B}rauer duality
  for two-dimensional local rings.
\newblock Preprint: arXiv:2210.01396v2, 2022.

\bibitem[Suz22b]{Suz22Duality}
Takashi Suzuki.
\newblock Duality for local fields and sheaves on the category of fields.
\newblock {\em Kyoto J. Math.}, 62(4):789--864, 2022.

\bibitem[Suz22c]{Suz22Nilp}
Takashi Suzuki.
\newblock Finite generation of nilpotent quotients of fundamental groups of
  punctured spectra.
\newblock Preprint: arXiv:2207.01563v3, 2022.

\bibitem[Wei13]{Wei13}
Charles~A. Weibel.
\newblock {\em The {$K$}-book}, volume 145 of {\em Graduate Studies in
  Mathematics}.
\newblock American Mathematical Society, Providence, RI, 2013.
\newblock An introduction to algebraic $K$-theory.

\end{thebibliography}
\end{document}